\documentclass[oneside,final, letterpaper]{ucr}

\usepackage{amsmath, amssymb, amsfonts, mathrsfs, adjustbox} 
\usepackage[toc, page]{appendix}

\usepackage{xcolor}
\definecolor{darkgreen}{rgb}{0,0.3,0}
\definecolor{darkblue}{rgb}{0,0,0.6}
\usepackage[colorlinks, citecolor=darkgreen, urlcolor=blue, final, hyperindex, pagebackref, linkcolor = darkblue]{hyperref}

\usepackage{amsthm}

\usepackage[capitalize]{cleveref}
\crefname{equation}{Equation}{Equations}

\usepackage{multicol}

\theoremstyle{plain}
\newtheorem{thm}{Theorem}
\newtheorem{prop}[thm]{Proposition}
\newtheorem{cor}[thm]{Corollary}
\newtheorem{lem}[thm]{Lemma}

\theoremstyle{definition}
\newtheorem{defn}[thm]{Definition}
\newtheorem{expl}[thm]{Example}

\theoremstyle{remark}

\newtheorem*{question}{Question}

\numberwithin{thm}{chapter}

\newcommand{\define}[1]{{\bf \boldmath{#1}}\index{#1}}
\newcommand{\maps}{\colon}
\newcommand{\To}{\Rightarrow}
\newcommand{\inv}{^{-1}}
\newcommand{\inta}{\textstyle\int\hspace{-.03in}}

\newcommand{\cod}{\mathrm{cod}}
\newcommand{\colim}{\mathrm{colim}}

\newcommand{\lax}{\mathrm{lax}}
\newcommand{\ob}{\mathrm{ob}}
\newcommand{\op}{^\mathrm{op}}
\newcommand{\rev}{^\mathrm{rev}}
\newcommand{\opl}{\mathrm{opl}}
\newcommand{\pse}{\mathrm{ps}}
\newcommand{\spl}{\mathrm{s}}

\newcommand{\cardinal}[1]{\mathbf{#1}}
\newcommand{\1}{\cardinal 1}
\newcommand{\2}{\cardinal 2}
\newcommand{\n}{\cardinal n}
\newcommand{\m}{\cardinal m}

\newcommand{\category}[1]{\mathcal{#1}}
\newcommand{\A}{\category A}
\newcommand{\B}{\category B}
\newcommand{\C}{\category C}
\newcommand{\D}{\category D}
\newcommand{\J}{\category J}
\newcommand{\K}{\category K}
\renewcommand{\L}{\category L}
\newcommand{\V}{\category V}
\newcommand{\X}{\category X}
\newcommand{\Y}{\category Y}

\newcommand{\pseudofunctor}[1]{\mathcal{#1}}
\newcommand{\F}{\pseudofunctor F}
\newcommand{\G}{\pseudofunctor G}
\newcommand{\M}{\pseudofunctor M}
\newcommand{\psN}{\pseudofunctor N}
\newcommand{\N}{\mathbb N}
\renewcommand{\O}{\mathcal O}

\newcommand{\namedcat}[1]{\mathsf{#1}}

\newcommand{\Cat}{\namedcat{Cat}}
\newcommand{\CAT}{\namedcat{CAT}}
\newcommand{\Cart}{\namedcat{Cart}}
\newcommand{\Cocart}{\namedcat{Cocart}}

\newcommand{\Fam}{\namedcat{Fam}}
\newcommand{\Fib}{\namedcat{Fib}}
\newcommand{\cMonFib}{\namedcat{cMonFib}}
\newcommand{\cocMonOpFib}{\namedcat{cocMonOpFib}}

\newcommand{\Grph}{\namedcat{Grph}}
\newcommand{\ICat}{\namedcat{ICat}}
\newcommand{\OpFib}{\namedcat{OpFib}}
\newcommand{\Mon}{\namedcat{Mon}}
\newcommand{\CMon}{\namedcat{CMon}}
\newcommand{\Comon}{\namedcat{Comon}}
\newcommand{\CoComon}{\namedcat{CoComon}}
\newcommand{\GMon}{\namedcat{GMon}}
\newcommand{\IMon}{\namedcat{IMon}}
\newcommand{\MonTCat}{\namedcat{Mon2Cat}}
\newcommand{\OpICat}{\namedcat{OpICat}}
\newcommand{\PsMon}{\namedcat{PsMon}}

\newcommand{\Set}{\namedcat{Set}}
\newcommand{\TCat}{\namedcat{2Cat}}
\newcommand{\Br}{\namedcat{Br}}
\newcommand{\Sym}{\namedcat{Sym}}
\newcommand{\BrPsMon}{\Br\PsMon}
\newcommand{\SymPsMon}{\Sym\PsMon}

\newcommand{\BrMonTCat}{\Br\MonTCat}
\newcommand{\SymMonTCat}{\Sym\MonTCat}

\newcommand{\MonFib}{\namedcat{MonFib}}
\newcommand{\MonOpFib}{\namedcat{MonOpFib}}
\newcommand{\BrMonFib}{\namedcat{BrMonFib}}
\newcommand{\SymMonFib}{\namedcat{SymMonFib}}

\newcommand{\BrMonOpFib}{\namedcat{BrMonOpFib}}
\newcommand{\SymMonOpFib}{\namedcat{SymMonOpFib}}
\newcommand{\MonICat}{\namedcat{MonICat}}

\newcommand{\MonOpICat}{\namedcat{MonOpICat}}

\newcommand{\NetMod}{\namedcat{NetMod}}

\newcommand{\Maf}{\namedcat{Maf}}
\newcommand{\Mod}{\namedcat{Mod}}
\newcommand{\Comod}{\namedcat{Comod}}

\newcommand{\inp}[1]{{#1}^\mathrm{in}}
\newcommand{\out}[1]{{#1}^\mathrm{out}}

\newcommand{\R}{\mathbb R}

\newcommand{\Int}{\textstyle{\int}}

\newcommand{\Boole}{\mathbb{B}}

\newcommand{\Op}{\namedcat{Op}}
\newcommand{\Opd}{\namedcat{Opd}}

\newcommand{\E}{\category E}

\newcommand{\Inj}{\namedcat{Inj}}
\newcommand{\FinInj}{\namedcat{FinInj}}
\newcommand{\FinBij}{\namedcat{FinBij}}
\newcommand{\SC}{{\S(C)}}

\newcommand{\Ob}{\mathrm{Ob}}
\newcommand{\gn}{\mathfrak g}

\newcommand{\sGrph}{\namedcat{SimpleGph}}
\newcommand{\Quiv}{\namedcat{Quiv}}
\newcommand{\Aut}{\namedcat{Aut}}
\newcommand{\Alg}{\namedcat{Alg}}
\newcommand{\KG}{\namedcat{KG}}

\newcommand{\bink}[1]{\binom{#1}{k}}
\newcommand{\GMV}{\Gamma_{M,\V}}
\newcommand{\Mor}{\mathsf{Mor}}
\newcommand{\PreMonCat}{\mathsf{PreMonCat}}
\newcommand{\Petri}{\namedcat{Petri}}

\renewcommand{\S}{\mathsf{S}}

\newcommand{\assoc}{\alpha}
\newcommand{\braid}{\beta}
\newcommand{\leftor}{\lambda}
\newcommand{\rightor}{\rho}

\newcommand{\mlt}{\mu}
\newcommand{\uni}{\epsilon}

\newcommand{\Ring}{\namedcat{Ring}}

% Network Models
\newcommand{\SG}{\mathrm{SG}}
\newcommand{\DG}{\mathrm{DG}}
\newcommand{\MG}{\mathrm{MG}}
\newcommand{\MGplus}{{\mathrm{MG}^{+}}}
\newcommand{\DMG}{\mathrm{DMG}}
\newcommand{\HG}{\mathrm{HG}}

\newcommand{\Day}{\mathrm{Day}}

\mathchardef\mhyphen="2D

\usepackage{tikz, tikz-cd}
\usetikzlibrary{positioning}
\definecolor {processblue}{cmyk}{0.9, 0.5, 0, 0}

\tikzstyle{simple} = [-, line width = 2.000]
\tikzstyle{arrow} = [-, postaction = {decorate}, decoration = {markings, mark = at position .5 with {\arrow{>}}}, line width = 1.100]
\pgfdeclarelayer{edgelayer}
\pgfdeclarelayer{nodelayer}
\pgfdeclarelayer{bg}
\pgfsetlayers{bg, edgelayer, nodelayer, main}

\tikzstyle{none} = [inner sep = -1pt]
\tikzstyle{species} = [circle, fill = none, draw = black, scale = 1.0]
\tikzstyle{transition} = [rectangle, fill = none, draw = black, scale = 1.15]
\tikzstyle{empty} = [circle, fill = none, draw = none]
\tikzstyle{inputdot} = [circle, fill = black, draw = black, scale = .5]
\tikzstyle{dot} = [circle, fill = black, draw = black]
\tikzstyle{triplebounding} = [rectangle, dashed, fill = none, draw = black, scale = 30.00]
\tikzstyle{bounding} = [rectangle, dashed, fill = none, draw = black, scale = 11.00]
\tikzstyle{simple} = [-, draw = black, line width = 1.000]
\tikzstyle{inarrow} = [-, draw = black, postaction = {decorate}, decoration = {markings, mark = at position .5 with {\arrow{>}}}, line width = 1.000]
\tikzstyle{tick} = [-, draw = black, postaction = {decorate}, decoration = {markings, mark = at position .5 with {\draw (0, -0.1) -- (0, 0.1);}}, line width = 1.000]
\tikzstyle{inputarrow} = [->, draw = black, shorten > = .05cm]

\tikzset{main node/.style = {circle, fill = blue!20, draw, minimum size = 1cm, inner sep = 0pt}, }

\tikzstyle{construct} = [fill = white, draw = black, shape = rectangle]
\tikzstyle{universal} = [fill = black, draw = black, shape = circle]

\tikzstyle{reddot} = [fill = red, draw = red, shape = circle, inner sep = 0pt, minimum size = 4pt]
\tikzstyle{bluedot} = [fill = blue, draw = blue, shape = circle, inner sep = 0pt, minimum size = 4pt]
\tikzstyle{blackdot} = [fill = black, draw = black, shape = circle, inner sep = 0pt, minimum size = 4pt]

\usetikzlibrary{backgrounds, circuits, circuits.ee.IEC, shapes, fit, matrix}

\tikzstyle{tri} = [regular polygon, regular polygon sides = 3, shape border rotate = 1
80, fill = none, draw = black]

\tikzstyle{simple} = [-, line width = 2.000]
\tikzstyle{arrow} = [-, postaction = {decorate}, decoration = {markings, mark = at position .5 with {\arrow{>}}}, line width = 1.100]
\pgfdeclarelayer{edgelayer}
\pgfdeclarelayer{nodelayer}
\pgfsetlayers{edgelayer, nodelayer, main}

\tikzstyle{none} = [inner sep = -1pt]

\definecolor{lblue}{rgb}{0, 250, 255}
\definecolor{llblue}{HTML}{a1ddde}
\definecolor{red}{rgb}{0.8, 0, 0}

\tikzstyle{species} = [circle, fill = yellow, draw = black, scale = 1]
\tikzstyle{catalyst} = [circle, fill = yellow, draw = red, scale = 1]
\tikzstyle{transition} = [rectangle, fill = lblue, draw = black, scale = 1]
\tikzstyle{morphism} = [rectangle, fill = llblue, draw = black, scale = 1]

\tikzstyle{empty} = [circle, fill = none, draw = none]
\tikzstyle{inputdot} = [circle, fill = black, draw = black, scale = .5]
\tikzstyle{dot} = [circle, fill = black, draw = black]

\tikzstyle{simple} = [-, draw = black, line width = 1.000]
\tikzstyle{inarrow} = [-, draw = black, postaction = {decorate}, decoration = {markings, mark = at position .5 with {\arrow{>}}}, line width = 1.000]
\tikzstyle{tick} = [-, draw = black, postaction = {decorate}, decoration = {markings, mark = at position .5 with {\draw (0, -0.1) -- (0, 0.1);}}, line width = 1.000]
\tikzstyle{inputarrow} = [->, draw = black, shorten > = .05cm]

\usepackage{tikz-cd}
\usetikzlibrary{arrows, positioning, fit, matrix, shapes.geometric, external}

\newcommand*\pgfdeclareanchoralias[3]{
  \expandafter\def\csname pgf@anchor@#1@#3\expandafter\endcsname
     \expandafter{\csname pgf@anchor@#1@#2\endcsname}}

\tikzset{
    circnode/.style = {
      circle, draw = red, very thin, outer sep = 0.025em, minimum size = 2em, 
      fill = red, text centered}, 
    integral/.style = {
      circle, draw = black, very thick, outer sep = 0.025em, 
      minimum size = 2em, fill = blue!5, text centered}, 
    multiply/.style = {
      circle, draw = black, very thick, outer sep = 0.025em, 
      minimum size = 2em, fill = blue!5, text centered}, 
    zero/.style = {
      circle, draw = black, very thick, minimum size = 0.15cm, fill = black, 
      inner sep = 0, outer sep = 0}, 
    bang/.style = {
      circle, draw = black, very thick, minimum size = 0.15cm, fill = green!10, 
      inner sep = 0, outer sep = 0}, 
    delta/.style = {
      regular polygon, regular polygon sides = 3, minimum size = 0.4cm, inner
      sep = 0, outer sep = 0.025em, draw = black, very thick, fill = green!10}, 
    codelta/.style = {
      regular polygon, regular polygon sides = 3, shape border rotate = 180, minimum size = 0.4cm, 
      inner sep = 0, outer sep = 0.025em, draw = black, very thick, fill = green!10}, 
    plus/.style = {
      regular polygon, regular polygon sides = 3, shape border rotate = 180, minimum size = 0.4cm, 
      inner sep = 0, outer sep = 0.025em, draw = black, very thick, fill = black}, 
    coplus/.style = {
      regular polygon, regular polygon sides = 3, minimum size = 0.4cm, 
      inner sep = 0, outer sep = 0.025em, draw = black, very thick, fill = black}, 
    sqnode/.style = {
      regular polygon, regular polygon sides = 4, minimum size = 2.6em, 
      draw = black, very thick, inner sep = 0.2em, outer sep = 0.025em, 
      fill = yellow!10, text centered}, 
    bigcirc/.style = {
      circle, draw = black, very thick, text width = 1.6em, outer sep = 0.025em, 
      minimum height = 1.6em, fill = blue!5, text centered}
}

\tikzstyle{tri} = [regular polygon, regular polygon sides = 3, shape border rotate = 1
80, fill = none, draw = black]

\pgfdeclareanchoralias{regular polygon}{corner 2}{left copy}
\pgfdeclareanchoralias{regular polygon}{corner 3}{right copy}
\pgfdeclareanchoralias{regular polygon}{corner 3}{addend}
\pgfdeclareanchoralias{regular polygon}{corner 2}{summand}

\tikzset{
   oriented WD/.style = {%everything after equals replaces "oriented WD" in key.
      every to/.style = {out = 0, in = 180, draw}, 
      label/.style = {
         font = \everymath\expandafter{\the\everymath\scriptstyle}, 
         inner sep = 0pt, 
         node distance = 2pt and -2pt}, 
      semithick, 
      node distance = 1 and 1, 
      decoration = {markings, mark = at position .5 with {\arrow{stealth};}}, 
      ar/.style = {postaction = {decorate}}, 
      execute at begin picture = {\tikzset{
         x = \bbx, y = \bby, 
         every fit/.style = {inner xsep = \bbx, inner ysep = \bby}}}
      }, 
   bbx/.store in = \bbx, 
   bbx = 1.5cm, 
   bby/.store in = \bby, 
   bby = 1.75ex, 
   bb port sep/.store in = \bbportsep, 
   bb port sep = 2, 
   bb port length/.store in = \bbportlen, 
   bb port length = 4pt, 
   bb min width/.store in = \bbminwidth, 
   bb min width = 1cm, 
   bb rounded corners/.store in = \bbcorners, 
   bb rounded corners = 2pt, 
   bb small/.style = {bb port sep = 1, bb port length = 2.5pt, bbx = .4cm, bb min width = .4cm, bby = .7ex}, 
   bb Small/.style = {bb port sep = 1, bb port length = 2.5pt, bbx = .5cm, bb min width = .5cm, bby = 1ex}, 
   bb/.code 2 args = {%When you see this key, run the code below:
      \pgfmathsetlengthmacro{\bbheight}{\bbportsep * (max(#1, #2)+1) * \bby}
      \pgfkeysalso{draw, minimum height = \bbheight, minimum width = \bbminwidth, outer sep = 0pt, 
         rounded corners = \bbcorners, thick, 
         prefix after command = {\pgfextra{\let\fixname\tikzlastnode}}, 
         append after command = {\pgfextra{\draw
            \ifnum #1 = 0{} \else foreach \i in {1, ..., #1} {
               ($(\fixname.north west)!{\i/(#1+1)}!(\fixname.south west)$) +(-\bbportlen, 0) coordinate
               (\fixname_in\i) -- +(\bbportlen, 0) coordinate (\fixname_in\i')}\fi %Define the endpoints of tickmarks
            \ifnum #2 = 0{} \else foreach \i in {1, ..., #2} {
               ($(\fixname.north east)!{\i/(#2+1)}!(\fixname.south east)$) +(-\bbportlen, 0) coordinate
               (\fixname_out\i') -- +(\bbportlen, 0) coordinate (\fixname_out\i)}\fi;
         }}}
   }, 
   bb name/.style = {append after command = {\pgfextra{\node[anchor = north] at (\fixname.north) {#1};}}}
}

\tikzstyle{downtri}=[regular polygon, regular polygon sides=3, draw, thick, fill=blue!20, shape border rotate = 180]

\setlength{\parindent}{1.5em}

\begin{document}

\title{The Grothendieck Construction in Categorical Network Theory}
\author{Joseph Patrick Moeller
}
\degreemonth{December}
\degreeyear{2020}
\degree{Doctor of Philosophy}
\chair{Dr. John C.\ Baez}
\othermembers{
Dr. Wee Liang Gan\\
Dr. Carl Mautner}
\numberofmembers{3}
\field{Mathematics}
\campus{Riverside}

\maketitle
\copyrightpage{}
\approvalpage{}

\degreesemester{Fall}

\begin{frontmatter}

{\ssp\begin{acknowledgements}
First of all, I owe all of my achievements to my wife, Paola. I couldn't have gotten here without my parents: Daniel, Andrea, Tonie, Maria, and Luis, or my siblings: Danielle, Anthony, Samantha, David, and Luis.

I would like to thank my advisor, John Baez, for his support, dedication, and his unique and brilliant style of advising. I could not have become the researcher I am under another's instruction. I would also like to thank Christina Vasilakopoulou, whose kindness, energy, and expertise cultivated a deeper appreciation of category theory in me. My experience was also greatly enriched by my academic siblings: Daniel Cicala, Kenny Courser, Brandon Coya, Jason Erbele, Jade Master, Franciscus Rebro, and Christian Williams, and by my cohort: Justin Davis, Ethan Kowalenko, Derek Lowenberg, Michel Manrique, and Michael Pierce.

I would like to thank the UCR math department. Professors from whom I learned a ton of algebra, topology, and category theory include Julie Bergner, Vyjayanthi Chari, Wee-Liang Gan, Jos\'e Gonzalez, Jacob Greenstein, Carl Mautner, Reinhard Schultz, and Steffano Vidussi. Special thanks goes to the department chair Yat-Sun Poon, as well as Margarita Roman, Randy Morgan, and James Marberry, and many others who keep the whole thing together.

The material in \cref{ch:NetworkModels} consists of work from both \emph{Network models} joint with John Baez, John Foley, and Blake Pollard \cite{NetworkModels}. \cref{ch:NNM} consists of work done in my paper \emph{Noncommutative network models} \cite{NoncommNetMods}. \cref{ch:PetriNets} arose from \emph{Network models from Petri nets with catalysts} joint with Baez and Foley \cite{Catalysts}. \cref{ch:MonGroth} consists of joint work with Christina Vasilakopoulou appearing in our paper \emph{Monoidal Grothendieck construction} \cite{MonGroth}. Part of this work was performed with funding from a subcontract with Metron Scientific Solutions working on DARPA’s Complex Adaptive System Composition and Design Environment (CASCADE) project.
\end{acknowledgements} }

\begin{dedication}
    \null\vfil
    {\large
    \begin{center}
    To Teresa Danielle Moeller.
    \end{center}}
    \vfil\null
\end{dedication}

{\ssp\begin{abstract}
    In this thesis, we present a flexible framework for specifying and constructing operads which are suited to reasoning about network construction. The data used to present these operads is called a \emph{network model}, a monoidal variant of Joyal's combinatorial species. The construction of the operad required that we develop a monoidal lift of the Grothendieck construction. We then demonstrate how concepts like priority and dependency can be represented in this framework. For the former, we generalize Green's graph products of groups to the context of universal algebra. For the latter, we examine the emergence of monoidal fibrations from the presence of catalysts in Petri nets.
\end{abstract}}

\setcounter{tocdepth}{1}
\tableofcontents
\end{frontmatter}

{\ssp
\chapter{Introduction}

\section*{Search and Rescue}
Imagine that you have a network of boats, planes, and drones tasked with rescuing sailors who have fallen overboard in a hurricane. You want to be able to task these agents to search certain areas for survivors in an intelligent way. You do not want to waste time and resources by double searching some areas while other areas get neglected. Also, if one of the searchers gets taken out by the storm, you must update the tasking so that other agents can cover the areas which the downed agent has yet to search, as well as recording that there is a new known person in need of rescue.

In 2015, DARPA launched a program called Complex Adaptive System Composition and Design Environment, or CASCADE. The goal of this program was to write software that would be able to handle this sort of tasking of agents in a network in a flexible and responsive way. The bulk of this thesis was developed while I was working on this project with Metron Scientific Solutions Inc., developing a mathematically principled foundation around which this software could be designed. John Baez, John Foley, Blake Pollard, and I developed the theory of \emph{network models} to address this challenge \cite{NetworkModels}. 

\section*{Network Operads}
Large complex networks can be viewed as being built up from small simple pieces. This sort of many-to-one composition is perfectly suited to being modeled using \emph{operads}. While a category can be described as a system of composition for a collection of arrows which have a specified input type and a specified output type, an operad is a system of composition for a collection of trees which have a specified \emph{family} of input types and a single specified output type. 
\[
\begin{tikzpicture}
\begin{pgfonlayer}{nodelayer}
	\node [style=none] (1) at (0.25, 2.5) {};
	\node [style=none] (2) at (0.5, 2.5) {};
	\node [style=none] (3) at (0.75, 2.5) {};
	\node [style=none] (4) at (1.25, 2.5) {};
	\node [style=none] (5) at (1.75, 2.5) {};
	\node [style=none] (6) at (2.25, 2.5) {};
	\node [style=none] (7) at (2.5, 2.5) {};
	\node [style=none] (8) at (2.75, 2.5) {};
	\node [style=none] (9) at (0.5, 2) {};
	\node [style=none] (10) at (1.5, 2) {};
	\node [style=none] (11) at (2.5, 2) {};
	\node [style=none] (12) at (0.5, 1.5) {};
	\node [style=none] (13) at (1.5, 1.5) {};
	\node [style=none] (14) at (2.5, 1.5) {};
	\node [style=none] (15) at (0.5, 1) {};
	\node [style=none] (16) at (1.5, 1) {};
	\node [style=none] (17) at (2.5, 1) {};
	\node [style=none] (18) at (1.5, 0.5) {};
    \node [style=none] (19) at (1.5, 0) {};
	\node [style=none] () at (3.5, 1) {$\mapsto$};
	\node [style=none] () at (4, 0.5) {}; % spacing
\end{pgfonlayer}
\begin{pgfonlayer}{edgelayer}
	\draw (18.center) to (19.center);
	\draw (15.center) to (18.center);
	\draw (16.center) to (18.center);
	\draw (17.center) to (18.center);
	\draw (9.center) to (12.center);
	\draw (10.center) to (13.center);
	\draw (11.center) to (14.center);
	\draw (1.center) to (9.center);
	\draw (2.center) to (9.center);
	\draw (3.center) to (9.center);
	\draw (4.center) to (10.center);
	\draw (5.center) to (10.center);
	\draw (6.center) to (11.center);
	\draw (7.center) to (11.center);
	\draw (8.center) to (11.center);
\end{pgfonlayer}
\end{tikzpicture}
\begin{tikzpicture}
\begin{pgfonlayer}{nodelayer}
	\node [style=none] (1) at (0.25, 2) {};
	\node [style=none] (2) at (0.5, 2) {};
	\node [style=none] (3) at (0.75, 2) {};
	\node [style=none] (4) at (1.25, 2) {};
	\node [style=none] (5) at (1.75, 2) {};
	\node [style=none] (6) at (2.25, 2) {};
	\node [style=none] (7) at (2.5, 2) {};
	\node [style=none] (8) at (2.75, 2) {};
	\node [style=none] (9) at (0.5, 1.5) {};
	\node [style=none] (10) at (1.5, 1.5) {};
	\node [style=none] (11) at (2.5, 1.5) {};
	\node [style=none] (15) at (0.5, 1) {};
	\node [style=none] (16) at (1.5, 1) {};
	\node [style=none] (17) at (2.5, 1) {};
	\node [style=none] (18) at (1.5, 0.5) {};
    \node [style=none] (19) at (1.5, 0) {};
	\node [style=none] () at (3.5, 1) {$\mapsto$};
	\node [style=none] () at (4, 0.5) {}; % spacing
\end{pgfonlayer}
\begin{pgfonlayer}{edgelayer}
	\draw (18.center) to (19.center);
	\draw (15.center) to (18.center);
	\draw (16.center) to (18.center);
	\draw (17.center) to (18.center);
	\draw (9.center) to (15.center);
	\draw (10.center) to (16.center);
	\draw (11.center) to (17.center);
	\draw (1.center) to (9.center);
	\draw (2.center) to (9.center);
	\draw (3.center) to (9.center);
	\draw (4.center) to (10.center);
	\draw (5.center) to (10.center);
	\draw (6.center) to (11.center);
	\draw (7.center) to (11.center);
	\draw (8.center) to (11.center);
\end{pgfonlayer}
\end{tikzpicture}
\begin{tikzpicture}
\begin{pgfonlayer}{nodelayer}
	\node [style=none] (1) at (0, 1.25) {};
	\node [style=none] (2) at (3/8, 1.25) {};
	\node [style=none] (3) at (6/8, 1.25) {};
	\node [style=none] (4) at (9/8, 1.25) {};
	\node [style=none] (5) at (12/8, 1.25) {};
	\node [style=none] (6) at (15/8, 1.25) {};
	\node [style=none] (7) at (18/8, 1.25) {};
	\node [style=none] (8) at (21/8, 1.25) {};
	\node [style=none] (18) at (21/16, 0.5) {};
    \node [style=none] (19) at (21/16, 0) {};
    \node [style=none] () at (1, -0.5) {};
\end{pgfonlayer}
\begin{pgfonlayer}{edgelayer}
	\draw (18.center) to (19.center);
	\draw (1.center) to (18.center);
	\draw (2.center) to (18.center);
	\draw (3.center) to (18.center);
	\draw (4.center) to (18.center);
	\draw (5.center) to (18.center);
	\draw (6.center) to (18.center);
	\draw (7.center) to (18.center);
	\draw (8.center) to (18.center);
\end{pgfonlayer}
\end{tikzpicture}
\]
We use the word ``operations'' instead of ``trees'', hence the name \emph{operad}. Like categories, operads were originally developed in algebraic topology \cite{Mayoperad, BV}. Also like categories, operads have since found applications elsewhere, including physics and computer science \cite{MarklShniderStasheff, SetOperads}. We include a review of the basics of operads needed for this thesis in \cref{app:operads}.

In a network operad, the operations describe ways of sticking together a collection of networks to form a new larger network. To get a network operad, we treat a network as one of these operations and define the composition as overlaying a bunch of small networks on top of a large base network. For example, in the following picture, we are considering simple graphs as a sort of network. On the left, we are starting with a base network consisting of nine nodes and four edges, and we are attempting to attach more edges by overlaying three smaller graphs. The result of the operadic composition is on the right.
\[
\scalebox{0.6}{
\begin{tikzpicture}
\begin{pgfonlayer}{nodelayer}
	\node [style=species] (1) at (1, 2.5) {$1$};
	\node [style=species] (2) at (3, 2.5) {$2$};
	\node [style=species] (3) at (2, 1) {$3$};
	\node [style=species] (4) at (5, 2.5) {$4$};
	\node [style=species] (5) at (7, 2.5) {$5$};
	\node [style=species] (6) at (5, 1) {$6$};
	\node [style=species] (7) at (7, 1) {$7$};
	\node [style=species] (8) at (3, -1) {$8$};
	\node [style=species] (9) at (5, -1) {$9$};
	\node [rectangle, dashed, fill = none, draw = black, minimum width = 220, minimum height = 150] () at (4, 0.75) {};
	\node [style=species] (11) at (-1, 6.5) {$1$};
	\node [style=species] (12) at (1, 6.5) {$2$};
	\node [style=species] (13) at (0, 5) {$3$};
	\node [rectangle, dashed, fill = none, draw = black, minimum width = 90, minimum height = 80] () at (0, 5.75) {};
	\node [style=species] (14) at (3, 6.5) {$1$};
	\node [style=species] (15) at (5, 6.5) {$2$};
	\node [style=species] (16) at (3, 5) {$3$};
	\node [style=species] (17) at (5, 5) {$4$};
	\node [rectangle, dashed, fill = none, draw = black, minimum width = 90, minimum height = 80] () at (4, 5.75) {};
	\node [style=species] (18) at (7, 5.75) {$1$};
	\node [style=species] (19) at (9, 5.75) {$2$};
	\node [rectangle, dashed, fill = none, draw = black, minimum width = 90, minimum height = 80] () at (8, 5.75) {};
	\node [style = none] (a) at (-1,8) {};
	\node [style = none] (b) at (1,8) {};
	\node [style = none] (c) at (3,8) {};
	\node [style = none] (d) at (4,8) {};
	\node [style = none] (e) at (5,8) {};
	\node [style = none] (f) at (7,8) {};
	\node [style = none] (g) at (9,8) {};
	\node [style = none] (h) at (-1,7.14) {};
	\node [style = none] (i) at (1,7.14) {};
	\node [style = none] (j) at (3,7.14) {};
	\node [style = none] (k) at (4,7.14) {};
	\node [style = none] (l) at (5,7.14) {};
	\node [style = none] (m) at (7,7.14) {};
	\node [style = none] (n) at (9,7.14) {};
	\node [style = none] (o) at (0,4.36) {};
	\node [style = none] (p) at (4,4.36) {};
	\node [style = none] (q) at (8,4.36) {};
	\node [style = none] (r) at (2,3.37) {};
	\node [style = none] (s) at (4,3.37) {};
	\node [style = none] (t) at (6,3.37) {};
	\node [style = none] (u) at (4,-1.85) {};
	\node [style = none] (v) at (4,-3) {};
\end{pgfonlayer}
\begin{pgfonlayer}{edgelayer}
	\draw [style=simple] (1) to (2);
	\draw [style=simple] (3) to (6);
	\draw [style=simple] (4) to (6);
	\draw [style=simple] (6) to (8);
	\draw [style=simple] (11) to (12);
	\draw [style=simple] (12) to (13);
	\draw [style=simple] (14) to (15);
	\draw [style=simple] (15) to (16);
	\draw [style=simple] (16) to (17);
	\draw [style=simple] (18) to (19);
	\draw [style=simple] (a) to (h);
	\draw [style=simple] (b) to (i);
	\draw [style=simple] (c) to (j);
	\draw [style=simple] (d) to (k);
	\draw [style=simple] (e) to (l);
	\draw [style=simple] (f) to (m);
	\draw [style=simple] (g) to (n);
	\draw [style=simple] (o) to (r);
	\draw [style=simple] (p) to (s);
	\draw [style=simple] (q) to (t);
	\draw [style=simple] (u) to (v);
\end{pgfonlayer}
\end{tikzpicture}}
\begin{tikzpicture}
\begin{pgfonlayer}{nodelayer}
	\node [style = none] () at (0,0) {=};
	\node [style = none] () at (1.5,-3) {};
	\node [style = none] () at (-1,0) {};
\end{pgfonlayer}
\begin{pgfonlayer}{edgelayer}
\end{pgfonlayer}
\end{tikzpicture}
\scalebox{0.6}{
\begin{tikzpicture}
\begin{pgfonlayer}{nodelayer}
	\node [style=species] (1) at (1, 2.5) {$1$};
	\node [style=species] (2) at (3, 2.5) {$2$};
	\node [style=species] (3) at (2, 1) {$3$};
	\node [style=species] (4) at (5, 2.5) {$4$};
	\node [style=species] (5) at (7, 2.5) {$5$};
	\node [style=species] (6) at (5, 1) {$6$};
	\node [style=species] (7) at (7, 1) {$7$};
	\node [style=species] (8) at (3, -1) {$8$};
	\node [style=species] (9) at (5, -1) {$9$};
	\node [rectangle, dashed, fill = none, draw = black, minimum width = 220, minimum height = 150] () at (4, 0.75) {};
	\node [style = none] (a) at (1,4.36) {};
	\node [style = none] (b) at (2,4.36) {};
	\node [style = none] (c) at (3,4.36) {};
	\node [style = none] (d) at (4,4.36) {};
	\node [style = none] (e) at (5,4.36) {};
	\node [style = none] (f) at (6,4.36) {};
	\node [style = none] (g) at (7,4.36) {};
	\node [style = none] (h) at (1,3.37) {};
	\node [style = none] (i) at (2,3.37) {};
	\node [style = none] (j) at (3,3.37) {};
	\node [style = none] (k) at (4,3.37) {};
	\node [style = none] (l) at (5,3.37) {};
	\node [style = none] (m) at (6,3.37) {};
	\node [style = none] (n) at (7,3.37) {};
	\node [style = none] (u) at (4,-1.85) {};
	\node [style = none] (v) at (4,-3) {};
\end{pgfonlayer}
\begin{pgfonlayer}{edgelayer}
	\draw [style=simple] (1) to (2);
	\draw [style=simple] (2) to (3);
	\draw [style=simple] (3) to (6);
	\draw [style=simple] (4) to (5);
	\draw [style=simple] (4) to (6);
	\draw [style=simple] (5) to (6);
	\draw [style=simple] (6) to (7);
	\draw [style=simple] (6) to (8);
	\draw [style=simple] (8) to (9);
	\draw [style=simple] (a) to (h);
	\draw [style=simple] (b) to (i);
	\draw [style=simple] (c) to (j);
	\draw [style=simple] (d) to (k);
	\draw [style=simple] (e) to (l);
	\draw [style=simple] (f) to (m);
	\draw [style=simple] (g) to (n);
	\draw [style=simple] (u) to (v);
\end{pgfonlayer}
\end{tikzpicture}}
\]

This example is fairly elementary, and it is probably not too difficult for someone comfortable with the notions to define this operad. However, it is not just simple graphs that one needs when talking about managing and tasking complex networks of various sorts of agents with various forms of communication and capabilities. One could continue replicating the procedure for constructing network operads for each type of network whenever needed, but this is an inefficient strategy. Instead, we devised a general recipe for constructing such an operad for a given network type, and a general method for specifying a network type in an efficient way, using what we call a \emph{network model}. All of this is done in the language of category theory, so we also have a theory of how morphisms between network models give morphisms between their operads.

\section*{Constructing Network Operads}
There is a well-known trick for extracting an operad from any symmetric monoidal category. An operation in the operad is defined to be a morphism from a tensor product of a finite family of objects to a single object. This is called the \emph{underlying operad} of the symmetric monoidal category. So now we have shifted the problem of defining an operad where the operations are networks to defining a symmetric monoidal category where the morphisms are networks. To achieve this, we can use the famous \emph{Grothendieck construction}---though we need to enhance it to suit our purposes. 

\section*{Monoidal Grothendieck Construction}
The Grothendieck construction is a well-known trick for turning a family of categories indexed by the objects of some other category into a single category in an intelligent way \cite{SGAI}. What we really would like is that morphisms in the indexing category translate into morphisms in our total category between objects from the corresponding indices. The classic example is the family of categories $\Mod_R$ of $R$-modules, indexed by the objects of $\Ring$, the category of rings. Sometimes, one would like to talk about a single category of modules over all possible rings to study the interactions between such modules. The naive thing to do would be to just take the coproduct, defining \[\Mod = \coprod_{R \in \Ring} \Mod_R.\] However, in this category an $R$-module and an $S$-module would have no morphisms between them. This runs counter to the goal of having a single category for reasoning about the interactions of modules over potentially different rings. If $f \maps R \to S$ is a ring homomorphism, there is a way of turning $S$-modules into $R$-modules using $f$, called \emph{pullback}. If $M$ is an $S$-module, $m \in M$, and $r \in R$, pulling back $M$ along $f$ defines an $R$-module structure on the underlying abelian group of $M$. We define the action of $r \in R$ on $m \in M$ by the following formula.
\[
    r \cdot m = f(r) \cdot m
\]
This construction turns out to give a functor 
\[f^\ast \maps \Mod_S \to \Mod_R.\]
We should hope also that the data of these functors is included in the total category we construct. Indeed, the Grothendieck construction accomplishes precisely this. 

However, it is not simply a category that we need, but a symmetric monoidal category. So we built an enhanced version of the Grothendieck construction, which takes family of categories indexed by a \emph{symmetric monoidal} category and constructs a \emph{symmetric monoidal} category \cite{NetworkModels}. Christina Vasilakopoulou and I extended this modification to solve the monoidality problem in the Grothendieck correspondence \cite{MonGroth}.

These two steps constitute the construction of the desired network operad: we start with a monoidal indexed category, use the monoidal Grothendieck construction to produce a symmetric monoidal category, and then take its underlying operad. This leads to another question though: what monoidal indexed categories should we feed into this construction in order to produce network operads?

\section*{Network Models}
The answer is that we should take a monoidal version of Joyal's combinatorial species \cite{especies}. A combinatorial species is a functor $F \maps \FinBij \to \Set$. One way of looking at this is as a family of symmetric group actions, one for each natural number. Another way of looking at it is as a particular type of indexed category, where there is a family of discrete categories (sets) indexed by the natural numbers, and functors (functions) between them corresponding to the morphisms in $\FinBij$. So this is something to which we can apply the Grothendieck construction. The resulting total category is a groupoid which has all the elements in all the sets as the objects, and an isomorphism between these elements if they are in the same orbit under the symmetric group action.

Recall our example of a network operad where an operation is a simple graph. To build this, we can start with the species of simple graphs $\SG \maps \FinBij \to \Set$. We give this the structure of a lax monoidal functor $(\FinBij, +) \to (\Set, \times)$ by equipping it with a natural map $\SG(m) \times \SG(n) \to \SG(m+n)$ given by disjoint union. We include the data of the overlaying of graphs as a monoid structure on the set $\SG(n)$ of simple graphs on $n$ nodes. The product of two graphs on $n$ nodes is another graph on $n$ nodes given by identifying corresponding nodes, and including an edge wherever either of the original graphs had one. So now we have a lax symmetric monoidal functor $(\SG, \sqcup) \maps (\FinBij, +) \to (\Mon, \times)$. We call such a map a \emph{network model}. When we take the Grothendieck construction of this, we treat the monoids as one-object categories. By doing this, the resulting category has objects given by finite sets, a morphism $n \to n$ is given by a simple graph on $n$ nodes, composition overlays the graphs, and tensor sets them side by side. 

\section*{Constructing Network Models}

Network operads are constructed from network models. How do we get our hands on some network models? We know about a few examples of network models: simple graphs, directed graphs, multigraphs, colored vertices, etc. Ideally, we would have a (functorial) way to generate network models from some simple description of what we want a network to look like. 

We can begin by examining the basic example: simple graphs. It consists of a family of monoids $\SG(n)$ where the elements are simple graphs on $n$ nodes, with symmetric group actions which permute the nodes, and a ``disjoint union'' operation $\sqcup \maps \SG(m) \times \SG(n) \to \SG(m+n)$. The level-$0$ and level-$1$ monoids are both trivial. The first interesting one is level-$2$, where the monoid is isomorphic to the Boolean truth values with the ``or'' operation. The rest of the monoids in this network model can be seen as built from $\SG(2)$. A simple graph with $n$ nodes has $n\choose2$ places where it can either have or not have an edge. We can define the monoid $\SG(n)$ to be the product of $n\choose2$ copies of $\SG(1)$, indexed by distinct pairs of nodes. This leads to the general construction: given a monoid $M$, let $\overline M(n)$ be the monoid given by the product $n \choose2$ copies of $M$. Then the collection of these monoids $\overline M$ is a network model, where a network has an element of $M$ between every pair of nodes, and overlaying two networks simply requires performing the monoid operation at every pair of nodes. This construction covers the example of simple graphs by design, but also includes multigraphs, directed graphs, graphs with colored edges, and many other examples. 

\section*{Noncommutative Network Models}

Another property we wanted to be able to represent within the network operads framework was forms of communication which had a built-in limitation on the number of connections. This is a natural issue in the search and rescue domain problem \cite{NoncommNetMods}.

There is no natural way to decide which edges not to include when the limit of connections is reached. This means that the network must have some extra data built into it. In particular, it must remember the order in which the connections were added to each node. For this, we need the edge components of the constituent monoids to not commute with each other. Due to a variant of the Eckmann-Hilton argument, edge components of a network model's constituent monoid actually must commute with each other if they do not share any of their nodes. This means the most we can ask for is that edge components of the network model do not commute with each other when edges have a node in common. 

We cannot simply take iterated products of the monoid as we did before because the edge components of the resulting monoids always commute with each other. We also cannot simply take coproducts because the edge components do not commute with each other in way that are necessary for a network model. Therefore, we must have a mix of products and coproducts depending on which edges share a node and which do not. Specifically, if two edges share a node, then elements of the corresponding edge components of the monoid must not commute with each other, and if they do not share a node, they must commute with each other. Such a monoid can be constructed using \emph{graph products of monoids}, introduced for groups in Elisabeth Green's thesis \cite{Green}. The idea is to produce a new monoid from a finite set of monoids by assigning them to the nodes in a graph, taking the coproduct of them all, then imposing commutativity relations between elements coming from monoids which had an edge between them in the graph.

What indexing graph should we use though? We want a copy of the monoid for every possible edge. So our indexing graph should have $n\choose2$ nodes, one for every subset of cardinality $2$. We want to impose commutativity between two edge components whenever the corresponding edges do not share a node, so we add an edge for each pair of cardinality $2$ subsets which have empty intersection. This is precisely the definition of what are called the \emph{Kneser graphs}! The first few non-empty ones are depicted below.
\[
\vcenter{\hbox{\begin{tikzpicture} % KG_3,2
	\begin{pgfonlayer}{nodelayer}
		\node [style=reddot] (a) at (0, 0.44) {};
		\node [style=reddot] (e) at (-0.5, -.44) {};
		\node [style=reddot] (f) at (0.5, -.44) {};
		\node [style=none] () at (2, 0) {};
	\end{pgfonlayer}
	\begin{pgfonlayer}{edgelayer}
	\end{pgfonlayer}
\end{tikzpicture}}}
\vcenter{\hbox{\begin{tikzpicture} % KG_4,2
	\begin{pgfonlayer}{nodelayer}
		\node [style=reddot] (a) at (1, 0) {};
		\node [style=reddot] (b) at (0.5, .87) {};
		\node [style=reddot] (c) at (-0.5, .87) {};
		\node [style=reddot] (d) at (-1, 0) {};
		\node [style=reddot] (e) at (-0.5, -.87) {};
		\node [style=reddot] (f) at (0.5, -.87) {};
		\node [style=none] () at (2, 0) {};
	\end{pgfonlayer}
	\begin{pgfonlayer}{edgelayer}
		\draw (a) to (b);
		\draw (c) to (d);
		\draw (e) to (f);
	\end{pgfonlayer}
\end{tikzpicture}}}
\vcenter{\hbox{\begin{tikzpicture} % Petersen graph
	\begin{pgfonlayer}{nodelayer}
		\node [style=reddot] (a) at (0, -1) {};
		\node [style=reddot] (b) at (0.95, -0.31) {};
		\node [style=reddot] (c) at (0.59, 0.81) {};
		\node [style=reddot] (d) at (-0.59, 0.81) {};
		\node [style=reddot] (e) at (-0.95, -0.31) {};
		\node [style=reddot] (A) at (0, -2) {};
		\node [style=reddot] (B) at (1.9, -0.62) {};
		\node [style=reddot] (C) at (1.18, 1.62) {};
		\node [style=reddot] (D) at (-1.18, 1.62) {};
		\node [style=reddot] (E) at (-1.9, -0.62) {};
	\end{pgfonlayer}
	\begin{pgfonlayer}{edgelayer}
		\draw (a) to (A);
		\draw (b) to (B);
		\draw (c) to (C);
		\draw (d) to (D);
		\draw (e) to (E);
		\draw (A) to (B);
		\draw (B) to (C);
		\draw (C) to (D);
		\draw (D) to (E);
		\draw (E) to (A);
		\draw (a) to (c);
		\draw (b) to (d);
		\draw (c) to (e);
		\draw (d) to (a);
		\draw (e) to (b);
	\end{pgfonlayer}
\end{tikzpicture}}}
\]

% \[
% \begin{tikzpicture} % Petersen pentagram
% 	\begin{pgfonlayer}{nodelayer}
% 		\node [style=reddot] (a) at (0, -1) {};
% 		\node [style=reddot] (b) at (0.95, -0.31) {};
% 		\node [style=reddot] (c) at (0.59, 0.81) {};
% 		\node [style=reddot] (d) at (-0.59, 0.81) {};
% 		\node [style=reddot] (e) at (-0.95, -0.31) {};
% 		\node [style=reddot] (A) at (0, -2) {};
% 		\node [style=reddot] (B) at (1.9, -0.62) {};
% 		\node [style=reddot] (C) at (1.18, 1.62) {};
% 		\node [style=reddot] (D) at (-1.18, 1.62) {};
% 		\node [style=reddot] (E) at (-1.9, -0.62) {};
% 	\end{pgfonlayer}
% 	\begin{pgfonlayer}{edgelayer}
% 		\draw (a) to (A);
% 		\draw (b) to (B);
% 		\draw (c) to (C);
% 		\draw (d) to (D);
% 		\draw (e) to (E);
% 		\draw [bend right] (A) to (B);
% 		\draw [bend right] (B) to (C);
% 		\draw [bend right] (C) to (D);
% 		\draw [bend right] (D) to (E);
% 		\draw [bend right] (E) to (A);
% 		\draw (a) to (c);
% 		\draw (b) to (d);
% 		\draw (c) to (e);
% 		\draw (d) to (a);
% 		\draw (e) to (b);
% 	\end{pgfonlayer}
% \end{tikzpicture}
% \]
For a given monoid $M$, we thus define the corresponding network model to be the graph product of $M$ with itself indexed by the corresponding Kneser graph. In fact, this construction gives the free network model on $M$, forming a left adjoint to the functor which evaluates a network model at $2$. This provides a solution to the problem of representing degree limited networks in the language of network operads. This construction gives a network operad where the networks are graphs such that every vertex has degree $\leq N$, and the network does not take an edge if this limit would be exceeded.

\section*{Petri Nets with Catalysts}

Network models are also able to describe scenarios where there is an agent or agents that can manipulate and transport resources within the network \cite{Catalysts}. Baez, Foley, and I use a simple structure called a \emph{Petri net} to represent resources and processes that transform them \cite{BaezBiamonte}. A Petri net can be drawn as a directed graph with vertices of two kinds: \emph{places} or \emph{species}, which we draw as yellow circles below, and \emph{transitions}, which we draw as blue squares:
\[
\vcenter{\hbox{\scalebox{0.8}{
\begin{tikzpicture}
\begin{pgfonlayer}{nodelayer}
    \node [style=species] (C) at (-1, 0) {$\quad\;$};
    \node [style=species] (B) at (-4, -0.5) {$\quad\;$};
    \node [style=species] (A) at (-4, 0.5) {$\quad\;$};
    \node [style=transition] (tau1) at (-2.5, 0.6) {$\big.\;\;\;\, $};
    \node [style=transition] (tau2) at (-2.5, -0.7) {$\big.\;\;\;\, $};
\end{pgfonlayer}
\begin{pgfonlayer}{edgelayer}
    \draw [style=inarrow] (A) to (tau1);
    \draw [style=inarrow] (B) to (tau1);
    \draw [style=inarrow, bend left=15, looseness=1.00] (tau1) to (C);
    \draw [style=inarrow, bend left=15, looseness=1.00] (C) to (tau2);
    \draw [style=inarrow, bend left=15, looseness=1.00] (tau2) to (B); 
    \draw [style=inarrow, bend right =15, looseness=1.00] (tau2) to (B); 
\end{pgfonlayer}
\end{tikzpicture}
}}}
\]
Petri nets are intended to model resources in a network of processes. Sometimes, we represent the resources by a finite number of \emph{tokens} in each place:
\[
\vcenter{\hbox{\scalebox{0.8}{
\begin{tikzpicture}
\begin{pgfonlayer}{nodelayer}
	\node [style=species] (C) at (-1, 0) {$\quad\;$};
	\node [style=species] (B) at (-4, -0.5) {$\;\bullet\;$};
	\node [style=species] (A) at (-4, 0.5) {$\bullet\bullet$};
	\node [style=transition] (tau1) at (-2.5, 0.6) {$\big.\;\;\;\, $};
    \node [style=transition] (tau2) at (-2.5, -0.7) {$\big.\;\;\;\, $};
\end{pgfonlayer}
\begin{pgfonlayer}{edgelayer}
	\draw [style=inarrow] (A) to (tau1);
	\draw [style=inarrow] (B) to (tau1);
	\draw [style=inarrow, bend left=15, looseness=1.00] (tau1) to (C);
    \draw [style=inarrow, bend left=15, looseness=1.00] (C) to (tau2);
    \draw [style=inarrow, bend left=15, looseness=1.00] (tau2) to (B); 
    \draw [style=inarrow, bend right =15, looseness=1.00] (tau2) to (B); 
\end{pgfonlayer}
\end{tikzpicture}
}}}
\]
This is called a \emph{marking}. We can then ``run'' the Petri net by repeatedly changing the marking using the transitions. For example, the above marking can change to this:
\[
\vcenter{\hbox{\scalebox{0.8}{
\begin{tikzpicture}
\begin{pgfonlayer}{nodelayer}
    \node [style=species] (C) at (-1, 0) {$\;\bullet\;$};
    \node [style=species] (B) at (-4, -0.5) {$\quad\;$};
    \node [style=species] (A) at (-4, 0.5) {$\;\bullet\;$};
    \node [style=transition] (tau1) at (-2.5, 0.6) {$\big.\;\;\;\, $};
    \node [style=transition] (tau2) at (-2.5, -0.7) {$\big.\;\;\;\, $};
\end{pgfonlayer}
\begin{pgfonlayer}{edgelayer}
	\draw [style=inarrow] (A) to (tau1);
	\draw [style=inarrow] (B) to (tau1);
	\draw [style=inarrow, bend left=15, looseness=1.00] (tau1) to (C);
    \draw [style=inarrow, bend left=15, looseness=1.00] (C) to (tau2);
    \draw [style=inarrow, bend left=15, looseness=1.00] (tau2) to (B); 
    \draw [style=inarrow, bend right =15, looseness=1.00] (tau2) to (B); 
\end{pgfonlayer}
\end{tikzpicture}
}}}
\]
and then this:
\[
\vcenter{\hbox{\scalebox{0.8}{
\begin{tikzpicture}
\begin{pgfonlayer}{nodelayer}
	\node [style=species] (C) at (-1, 0) {$\quad\;$};
	\node [style=species] (B) at (-4, -0.5) {$\bullet\bullet$};
	\node [style=species] (A) at (-4, 0.5) {$\;\bullet\;$};
	\node [style=transition] (tau1) at (-2.5, 0.6) {$\big.\;\;\;\, $};
    \node [style=transition] (tau2) at (-2.5, -0.7) {$\big.\;\;\;\, $};
\end{pgfonlayer}
\begin{pgfonlayer}{edgelayer}
	\draw [style=inarrow] (A) to (tau1);
	\draw [style=inarrow] (B) to (tau1);
	\draw [style=inarrow, bend left=15, looseness=1.00] (tau1) to (C);
    \draw [style=inarrow, bend left=15, looseness=1.00] (C) to (tau2);
    \draw [style=inarrow, bend left=15, looseness=1.00] (tau2) to (B); 
    \draw [style=inarrow, bend right =15, looseness=1.00] (tau2) to (B); 
\end{pgfonlayer}
\end{tikzpicture}
}}}
\]
Thus, the places represent different \emph{types} of resource, and the transitions describe ways that one collection of resources of specified types can turn into another such collection. 

An agent might pick up a box and carry it over to a truck, and then drive the truck over to a new warehouse, and then unload the box. In this scenario, the gasoline in the truck might be a resource that considered to be consumed by this process, but the agent is not. This qualitative difference between the agent as a resource and the gasoline as a resource leads to a quantitative difference. Specifically, the number of agents in this network is never changing, but the number of gallons of gasoline is. What this means for the Petri net model of this network is that there is no combination of transition firing that change the number of agents. This gives us a fibration of the commutative monoidal category of executions for the Petri net. However, unlike the monoidal fibrations described earlier, the fibres here are only \emph{pre}monoidal in general, not quite monoidal. This gives an example of a generalized network model, one where the monoids in the original definition are replaced with categories.

\section*{Outline of the Thesis}
I begin by laying out the theory of network models and network operads in \cref{ch:NetworkModels}. \cref{sec:netmod} and \cref{sec:netmod_C} contain basic definitions and examples. The construction of a network operad from a network model and several examples of algebras of network operads are given in \cref{sec:netoperads}. 

In \cref{ch:NNM}, more constructions of network models are given. The construction of free network models from a given monoid is detailed in \cref{sec:funnetmod}. This depends on a generalization of Green's graph products given in \cref{sec:graphs}. In section 3.4, an example of an algebra for a noncommutative network model arising from limitations on communication networks is given.

\cref{ch:PetriNets} discusses the construction of network models from Petri nets with catalysts. In \cref{sec:petri}, the basic notions for the categorical treatment of Petri nets are recalled. \cref{sec:catalysts} explains what it means for a Petri net to have catalysts. \cref{sec:premonoidal} describes how catalysts induce a premonoidal fibration on the category of executions, and explain how this gives an example of a generalized network model. 

I finish with a self-contained treatment of the monoidal Grothendieck construction in \cref{ch:MonGroth}. As the theoretical underpinning of the theory of network models, it is the most technically dense, and thus saved for the most enduring of readers. \cref{sec:monfib} and \cref{sec:monicat} describe monoidal fibrations and indexed categories. \cref{sec:monequiv} details the corresponding Grothendieck constructions for each monoidal variant. \cref{sec:cartesiancase} discusses the special case of when the base category is co/cartesian. In \cref{sec:summary}, we give a nuts-and-bolts description of the monoidal structures constructed in various scenarios. \cref{sec:applications} demonstrates the potential usefulness of the construction with examples from categorical algebra and dynamical systems. 

I wanted to include my own explanations and several references for preliminary materials, but did not want this to clutter the primary narrative of the thesis. I have included much of this in several appendices. I discuss some of the basic theory of monoidal categories in \cref{app:monoidalcats}; monoidal 2-categories and Gray monoids in \cref{app:Monoidal2cats}; fibrations, indexed categories, and the Grothendieck construction in \cref{app:fibicat}; and combinatorial species and operads in \cref{app:speciesandoperads}.

}{\ssp
\setcounter{chapter}{1}
\chapter{Network Models}
\label{ch:NetworkModels}

\section{Introduction}
\label{sec:intro}

In this chapter, we study operads suited for designing networks. These could be networks where the vertices represent fixed or moving agents and the edges represent communication channels. More generally, they could be networks where the vertices represent entities of various types, and the edges represent relationships between these entities, e.g.\ that one agent is committed to take some action involving the other. The work done is this chapter arose from an example where the vertices represent planes, boats and drones involved in a search and rescue mission in the Caribbean \cite{CommNet, CompTask}. However, even for this one example, we want a flexible formalism that can handle networks of many kinds, described at a level of detail that the user is free to adjust.

To achieve this flexibility, we introduce a general concept of \emph{network model}. Simply put, a network model is a \emph{kind} of network. Any network model gives an operad whose operations are ways to build larger networks of this kind by gluing smaller ones. This operad has a canonical algebra where the operations act to assemble networks of the given kind. But it also has other algebras, where it acts to assemble networks of this kind \emph{equipped with extra structure and properties}. This flexibility is important in applications. 

What exactly is a kind of network?  At the crudest level, we can model networks as simple graphs. If the vertices are agents of some sort and the edges represent communication channels, this means we allow at most one channel between any pair of agents. However, simple graphs are too restrictive for many applications. If we allow multiple communication channels between a pair of agents, we should replace simple graphs with multigraphs. Alternatively, we may wish to allow directed channels, where the sender and receiver have different capabilities: for example, signals may only be able to flow in one direction. This requires replacing simple graphs with directed graphs. To combine these features we could use directed multigraphs. It is also important to consider graphs with colored vertices, to specify different types of agents, and colored edges, to specify different types of channels. This leads us to colored directed multigraphs. All these are examples of what we mean by a kind of network. Even more complicated kinds, such as hypergraphs or Petri nets, are likely to become important as we proceed. Thus, instead of separately studying all these kinds of networks, we introduce a unified notion that subsumes all these variants: a \emph{network model}. Namely, given a set $C$ of vertex colors, a \define{network model} $F$ is a lax symmetric monoidal functor $F \maps \S(C) \to \Cat$, where $\S(C)$ is the free strict symmetric monoidal category on $C$ and $\Cat$ is the category of small categories, considered with its cartesian monoidal structure. Unpacking this definition takes a little work. It simplifies in the special case where $F$ takes values in $\Mon$, the category of monoids. It simplifies further when $C$ is a singleton, since then $\S(C)$ is the groupoid $\S$, where objects are natural numbers and morphisms from $m$ to $n$ are bijections $\sigma \maps \{1,\dots,m\} \to \{1,\dots,n\}$. If we impose both these simplifying assumptions, we have what we call a \define{one-colored network model}: a lax symmetric monoidal functor $F \maps \S \to \Mon$. As we shall see, the network model of simple graphs is a one-colored network model, and so are many other motivating examples.

Joyal began an extensive study of functors $F \maps \S \to \Set$, which are now commonly called \emph{species} \cite{especies, AnalyticFunctors, BLL}. Any type of extra structure that can be placed on finite sets and transported along bijections defines a species if we take $F(n)$ to be the set of structures that can be placed on the set $\{1, \dots, n\}$. From this perspective, a one-colored network model is a species with some extra operations. 

This perspective is helpful for understanding what a one-colored network model $F \maps \S \to \Mon$ is actually like. If we call elements of $F(n)$ \emph{networks with $n$ vertices}, then:
\begin{enumerate}
    \item Since $F(n)$ is a monoid, we can \define{overlay} two networks with the same number of vertices and get a new one. We denote this operation by
    \[
        \cup \colon F(n) \times F(n) \to F(n). 
    \]
    For example:
    \[\scalebox{0.8}{
    \begin{tikzpicture}
	\begin{pgfonlayer}{nodelayer}
	    \node [style=none, scale = 1.2] () at (5,2) {$\cup$};
	    \node [style=none, scale = 1.2] () at (9,2) {=};
		\node [style=species]  (1) at (3.75, 2.75) {2};
		\node [style=species]  (2) at (2.25, 2.75) {1};
		\node [style=species]  (3) at (2.25, 1.25) {4};
		\node [style=species]  (4) at (3.75, 1.25) {3};
		\node [style=species]  (5) at (7.75, 2.75) {2};
		\node [style=species]  (6) at (6.25, 2.75) {1};
		\node [style=species]  (7) at (6.25, 1.25) {4};
		\node [style=species]  (8) at (7.75, 1.25) {3};
		\node [style=species]  (9) at (11.75, 2.75) {2};
		\node [style=species]  (10) at (10.25, 2.75) {1};
		\node [style=species]  (11) at (10.25, 1.25) {4};
		\node [style=species]  (12) at (11.75, 1.25) {3};
	\end{pgfonlayer}
	\begin{pgfonlayer}{edgelayer}
		\draw [style=simple] (2) to (1);
		\draw [style=simple] (3) to (4);
		\draw [style=simple] (6) to (5);
		\draw [style=simple] (5) to (7);
		\draw [style=simple] (10) to (9);
		\draw [style=simple] (9) to (11);
		\draw [style=simple] (11) to (12);
	\end{pgfonlayer}
    \end{tikzpicture}}\]
    \item Since $F$ is a functor, the group $S_n$ acts on the monoid $F(n)$. Thus, for each $\sigma \in S_n$, we have a monoid automorphism that we call
    \[\sigma \maps F(n) \to F(n)  . \]
    For example, if $\sigma = (2\,3) \in S_3$, then
    \[\scalebox{0.8}{
    \begin{tikzpicture}
	\begin{pgfonlayer}{nodelayer}
	    \node [style=none, scale = 1.2] () at (5,2) {$\sigma\maps$};
	    \node [style=none, scale = 1.2] () at (9,2) {$\mapsto$};
		\node [style=species]  (1) at (7.75, 2.75) {2};
		\node [style=species]  (2) at (6.25, 2.75) {1};
		\node [style=species]  (3) at (7, 1.25) {3};
		\node [style=species]  (13) at (11.75, 2.75) {2};
		\node [style=species]  (14) at (10.25, 2.75) {1};
		\node [style=species]  (15) at (11, 1.25) {3};
	\end{pgfonlayer}
	\begin{pgfonlayer}{edgelayer}
		\draw [style=simple] (2) to (1);
		\draw [style=simple] (1) to (3);
		\draw [style=simple] (14) to (15);
		\draw [style=simple] (15) to (13);
	\end{pgfonlayer}
    \end{tikzpicture}}\]
    \item Since $F$ is lax monoidal, we have an operation
    \[
        \sqcup \colon F(m) \times F(n) \to F(m+n)
    \]
    We call this operation the \define{disjoint union} of networks. For example:
    \[\scalebox{0.8}{
    \begin{tikzpicture}
    	\begin{pgfonlayer}{nodelayer}
    	    \node [style=none, scale = 1.2] () at (5,2) {$\sqcup$};
    	    \node [style=none, scale = 1.2] () at (9,2) {=};
    		\node [style=species]  (1) at (3.75, 2.75) {2};
    		\node [style=species]  (2) at (2.25, 2.75) {1};
    		\node [style=species]  (3) at (3, 1.25) {3};
    		\node [style=species]  (5) at (7.75, 2.75) {2};
    		\node [style=species]  (6) at (6.25, 2.75) {1};
    		\node [style=species]  (7) at (6.25, 1.25) {4};
    		\node [style=species]  (8) at (7.75, 1.25) {3};
    		\node [style=species]  (9) at (14.75, 2.75) {5};
    		\node [style=species]  (10) at (13.25, 2.75) {4};
    		\node [style=species]  (11) at (13.25, 1.25) {7};
    		\node [style=species]  (12) at (14.75, 1.25) {6};
    		\node [style=species]  (13) at (11.75, 2.75) {2};
    		\node [style=species]  (14) at (10.25, 2.75) {1};
    		\node [style=species]  (15) at (11, 1.25) {3};
    	\end{pgfonlayer}
    	\begin{pgfonlayer}{edgelayer}
    		\draw [style=simple] (2) to (1);
    		\draw [style=simple] (1) to (3);
    		\draw [style=simple] (6) to (5);
    		\draw [style=simple] (5) to (7);
    		\draw [style=simple] (7) to (8);
    		\draw [style=simple] (10) to (9);
    		\draw [style=simple] (9) to (11);
    		\draw [style=simple] (11) to (12);
    		\draw [style=simple] (14) to (13);
    		\draw [style=simple] (15) to (13);
    	\end{pgfonlayer}
    \end{tikzpicture}}\]
\end{enumerate}
The first two operations are present whenever we have a functor $F \maps \S \to \Mon$. The last two are present whenever we have a lax symmetric monoidal functor $F \maps \S \to \Set$. When $F$ is a one-colored network model we have all three---and unpacking the definition further, we see that they obey some equations, which we list in \cref{thm:equations}. For example, the interchange law 
\[(g \cup g') \sqcup (h \cup h') = (g \sqcup h) \cup (g' \sqcup h') \]
holds whenever $g,g' \in F(m)$ and $h, h' \in F(n)$.

In \cref{sec:netmod} we study one-colored network models more formally, and give many examples. In \cref{sec:models_from_monoids} we describe a systematic procedure for getting one-colored network models from monoids. In \cref{sec:netmod_C} we study general network models and give examples of these. In \cref{sec:cat_netmod} we describe a category $\NetMod$ of network models, and show that the procedure for getting network models from monoids is functorial. We also make $\NetMod$ into a symmetric monoidal category, and give examples of how to build new networks models by tensoring old ones.

Our main result is that any network model gives a typed operad, also known as a \emph{colored operad} or \emph{symmetric multicategory} \cite{Yau}. A typed operad describes ways of sticking together things of various types to get new things of various types. An algebra of the operad gives a particular specification of these things and the results of sticking them together. We review the definitions of operads and their algebras in \cref{app:operads}. A bit more precisely, a typed operad $O$ has:
\begin{itemize}
    \item a set $T$ of \define{types},
    \item sets of \define{operations} $O(t_1,...,t_n ; t)$ where $t_i, t \in T$,
    \item ways to \define{compose} any operation
    \[f \in O(t_1,\dots,t_n ;t) \]
    with operations 
    \[g_i \in O(t_{i1},\dots,t_{i k_i}; t_i)   \qquad (1 \le i \le n) \] 
    to obtain an operation 
    \[f \circ (g_1,\dots,g_n) \in O(t_{1i}, \dots, t_{1k_1}, \dots, 
    t_{n1}, \dots t_{n k_n}; t), \]
    \item and ways to permute the arguments of operations,
    \end{itemize}
which obey some rules \cite{Yau}. An algebra $A$ of $O$ specifies a set $A(t)$ for each type $t \in T$ such that the operations of $O$ act on these sets. Thus, it has:
\begin{itemize}
    \item for each type $t \in T$, a set $A(t)$ of things of type $t$,
    \item ways to apply any operation
    \[f \in O(t_1, \dots, t_n ; t) \]
    to things
    \[a_i \in A(t_i)  \qquad (1 \le i \le n) \]
    to obtain a thing
    \[\alpha(f)(a_1, \dots, a_n) \in A(t). \]
\end{itemize}
Again, we demand that some rules hold \cite{Yau}. 

 In \cref{thm:one-colored_network_operads} we describe the typed operad $\O_F$ arising from a one-colored network model $F$. The set of types is $\N$, since we can think of `network with $n$ vertices' as a type. The sets of operations are given as follows:
\[
    \O_F(n_1, \dots, n_k; n) = \left\{ 
    \begin{array}{cl}  S_n \times F(n) & \textrm{if } 
    n_1 + \cdots + n_k = n \\
    \emptyset & \textrm{otherwise.} 
    \end{array}  \right. 
\]
The key idea here is that we can overlay a network in $F(n)$ on the disjoint union of networks with $n_1, \dots, n_k$ vertices and get a new network with $n$ vertices as long as $n_1 + \cdots n_k = n$. We can also permute the vertices; this accounts for the group $S_n$. But the most important fact is that \emph{networks serve as operations to assemble networks}, thanks to our ability to overlay them.

Using this fact, we show in \cref{ex:canonical_algebra} that the operad $\O_F$ has a canonical algebra $A_F$ whose elements are simply networks of the kind described by $F$:
\[A_F(n) = F(n) .\]
In this algebra any operation
\[
    (\sigma,g) \in  \O_F(n_1, \dots , n_k; n) = S_n \times F(n) 
\] 
acts on a $k$-tuple of networks
\[
    h_i \in A_F(n_i) = F(n_i)   \qquad (1 \le i \le k) 
\]
to give the network
\[
    \alpha(\sigma,g)(h_1, \dots, h_k) =  g \cup \sigma(h_1 \sqcup \cdots \sqcup h_k) \in A_F(n).
\]
In other words, we first take the disjoint union of the networks $h_i$, then permute their vertices with $\sigma$, and then overlay the network $g$.

An example is in order, since the generality of the formalism may hide the simplicity of the idea. The easiest example of our theory is the network model for simple graphs. In \cref{ex:simple_graph} we describe a one-colored network model $\SG \maps \S \to \Mon$ such that $\SG(n)$ is the collection of simple graphs with vertex set $\n = \{1,\dots,n\}$. Such a simple graph is really a collection of 2-element subsets of $\n$, called \emph{edges}. Thus, we may overlay simple graphs $g,g' \in \SG(n)$ by taking their union $g \cup g'$. This operation makes $\SG(n)$ into a monoid. 

Now consider an operation $f \in \O_\SG(3,4,2;9)$. This is an element of $S_9 \times \SG(9)$: a permutation of the set $\{1,\dots, 9\}$ together with a simple graph having this set of vertices. If we take the permutation to be the identity for simplicity, this operation is just a simple graph $g \in \SG(9)$. We can draw an example as follows:
\[\scalebox{0.8}{
\begin{tikzpicture}
	\begin{pgfonlayer}{nodelayer}
		\node [style=species] (0) at (2, 1) {$3$};
		\node [style=species] (1) at (4.75, -3.25) {$9$};
		\node [style=species] (2) at (7.5, 2.5) {$5$};
		\node [style=species] (3) at (1, 2.5) {$1$};
		\node [style=none] (4) at (0, 3) {};
		\node [style=none] (5) at (7, -2.5) {};
		\node [style=species] (6) at (4.75, -1.75) {$8$};
		\node [style=bounding] (7) at (4.75, -2.45) {};
		\node [style=bounding] (8) at (2, 2) {};
		\node [style=none] (9) at (8.75, 3) {};
		\node [style=species] (10) at (6.25, 2.5) {$4$};
		\node [style=species] (11) at (6.25, 1) {$6$};
		\node [style=bounding] (12) at (7, 1.75) {};
		\node [style=species] (13) at (3, 2.5) {$2$};
		\node [style=species] (14) at (7.5, 1) {$7$};
	\end{pgfonlayer}
	\begin{pgfonlayer}{edgelayer}
		\draw [style=simple] (0) to (11);
		\draw [style=simple] (3) to (13);
	\end{pgfonlayer}
\end{tikzpicture}
}\]
The dashed circles indicate that we are thinking of this simple graph as an element of $\O(3,4,2;9)$: an operation that can be used to assemble simple graphs with 3, 4, and 2 vertices, respectively, to produce one with 9 vertices.

Next let us see how this operation acts on the canonical algebra $A_\SG$, whose elements are simple graphs. Suppose we have elements $a_1 \in A_\SG(3)$, $a_2 \in A_\SG(4)$ and $a_3 \in A_\SG(2)$:
\[
\scalebox{0.8}{
\begin{tikzpicture}
	\begin{pgfonlayer}{nodelayer}
		\node [style=species] (0) at (2, 1) {$3$};
		\node [style=species] (1) at (4.75, -3.25) {$2$};
		\node [style=species] (2) at (7.5, 2.5) {$2$};
		\node [style=species] (3) at (1, 2.5) {$1$};
		\node [style=none] (4) at (0, 3) {};
		\node [style=none] (5) at (7, -2.5) {};
		\node [style=species] (6) at (4.75, -1.75) {$1$};
		\node [style=bounding] (7) at (4.75, -2.45) {};
		\node [style=bounding] (8) at (2, 2) {};
		\node [style=none] (9) at (8.75, 3) {};
		\node [style=species] (10) at (6.25, 2.5) {$1$};
		\node [style=species] (11) at (6.25, 1) {$3$};
		\node [style=bounding] (12) at (7, 1.75) {};
		\node [style=species] (13) at (3, 2.5) {$2$};
		\node [style=species] (14) at (7.5, 1) {$4$};
	\end{pgfonlayer}
	\begin{pgfonlayer}{edgelayer}
		\draw [style=simple] (13) to (0);
		\draw [style=simple] (10) to (2);
		\draw [style=simple] (2) to (11);
		\draw [style=simple] (11) to (14);
		\draw [style=simple] (6) to (1);
	\end{pgfonlayer}
\end{tikzpicture}
}
\]
We can act on these by the operation $f$ to obtain $\alpha(f)(a_1,a_2,a_3) \in A_\SG(9)$. It looks like this:
\[\scalebox{0.8}{
\begin{tikzpicture}
	\begin{pgfonlayer}{nodelayer}
		\node [style=species] (0) at (2, 1) {$3$};
		\node [style=species] (1) at (4.75, -2.5) {$9$};
		\node [style=species] (2) at (7.5, 2.5) {$5$};
		\node [style=species] (3) at (1, 2.5) {$1$};
		\node [style=none] (4) at (0, 3) {};
		\node [style=none] (5) at (7, -2.5) {};
		\node [style=species] (6) at (4.75, -1) {$8$};
		\node [style=none] (9) at (8.75, 3) {};
		\node [style=species] (10) at (6.25, 2.5) {$4$};
		\node [style=species] (11) at (6.25, 1) {$6$};
		\node [style=species] (13) at (3, 2.5) {$2$};
		\node [style=species] (14) at (7.5, 1) {$7$};
		\node [style=triplebounding] (15) at (4.25, .55) {};
	\end{pgfonlayer}
	\begin{pgfonlayer}{edgelayer}
		\draw [style=simple] (3) to (13);
		\draw [style=simple] (13) to (0);
		\draw [style=simple] (10) to (2);
		\draw [style=simple] (0) to (11);
		\draw [style=simple] (2) to (11);
		\draw [style=simple] (11) to (14);
		\draw [style=simple] (6) to (1);
	\end{pgfonlayer}
\end{tikzpicture}}\]
We have simply taken the disjoint union of $a_1$, $a_2$, and $a_3$ and then overlaid $g$, obtaining a simple graph with 9 vertices.

The canonical algebra is one of the simplest algebras of the operad $O_\SG$. We can define many more interesting algebras for this operad. For example, we might wish to use this operad to describe communication networks where the communicating entities have locations and the communication channels have limits on their range. To include location data, we can choose $A(n)$ for $n \in \N$ to be the set of all graphs with $n$ vertices where each vertex is an actual point in the plane $\R^2$. To handle range-limited communications, we could instead choose $A(n)$ to be the set of all graphs with $n$ vertices in $\R^2$ where an edge is permitted between two vertices only if their Euclidean distance is less than some specified value. This still gives a well-defined algebra: when we apply an operation, we simply omit those edges from the resulting graph that would violate this restriction.

Besides the plethora of interesting algebras for the operad $O_\SG$, and useful homomorphisms between these, one can also modify the operad by choosing another network model. This provides additional flexibility in the formalism. Different network models give different operads, and the construction of operads from network models is functorial, so morphisms of network models give morphisms of operads. 

In \cref{sec:netoperads} we apply the machinery provided by \cref{ch:MonGroth} to build operads from network models. We also describe some algebras of these operads, and in \cref{ex:range_limit_algebra} we discuss an algebra whose elements are networks of range-limited communication channels. 
\section{One-Colored Network Models}
\label{sec:netmod}

We begin with a special class of network models: those where the vertices of the network have just one color. To define these, we use $\S$ to stand for a skeleton of the groupoid of finite sets and bijections:

\begin{defn}
    Let $\S$, the \define{symmetric groupoid}, be the groupoid for which:
    \begin{itemize}
        \item objects are natural numbers $n \in \N$,
        \item a morphism from $m$ to $n$ is a bijection $\sigma \colon \{1,\dots,m\} \to
        \{1,\dots,n\}$
    \end{itemize}
    and bijections are composed in the usual way.
\end{defn}

There are no morphisms in $\S$ from $m$ to $n$ unless $m = n$. For each $n \in \N$, the endomorphisms of $n$ form the symmetric group $S_n$. It is convenient to write $\n$ for the set $\{1,\dots,n\}$, so that a morphism $\sigma \maps n \to n$ in $\S$ is the same as a bijection $\sigma \maps \n \to \n$.

There is a functor $+ \maps \S \times \S \to \S$ defined as follows. Given $m, n \in \N$ we let $m + n$ be the usual sum, and given $\sigma \in S_m$ and $\tau \in S_n$, let $\sigma+\tau \in S_{m+n}$ be as follows:
\begin{equation}
\label{eq:plus}
    (\sigma + \tau)(j)=
    \begin{cases}
        \sigma(j)&\text{if } j\leq m
        \\\tau(j-m)+m&\text{otherwise.}
    \end{cases}
\end{equation}
For objects $m, n \in \S$, let $B_{m,n}$ be the block permutation of $m+n$ which swaps the first $m$ with the last $n$. For example $B_{4,3} \maps 7 \to 7$ is the permutation $(1473625)$:
\[
\begin{tikzpicture}
	\begin{pgfonlayer}{nodelayer}
		\node [style=empty] (1) at (1, 1.5) {};
		\node [style=empty] (2) at (1.5, 1.5) {};
		\node [style=empty] (3) at (2, 1.5) {};
		\node [style=empty] (4) at (2.5, 1.5) {};
		\node [style=empty] (5) at (3, 1.5) {};
		\node [style=empty] (6) at (3.5, 1.5) {};
		\node [style=empty] (7) at (4, 1.5) {};
		\node [style=empty] (1a) at (1, 0) {};
		\node [style=empty] (2a) at (1.5, 0) {};
		\node [style=empty] (3a) at (2, 0) {};
		\node [style=empty] (4a) at (2.5, 0) {};
		\node [style=empty] (5a) at (3, 0) {};
		\node [style=empty] (6a) at (3.5, 0) {};
		\node [style=empty] (7a) at (4, 0) {};
	\end{pgfonlayer}
	\begin{pgfonlayer}{edgelayer}
		\draw [style=simple] (1.center) to (4a.center);
		\draw [style=simple] (2.center) to (5a.center);
		\draw [style=simple] (3.center) to (6a.center);
		\draw [style=simple] (4.center) to (7a.center);
		\draw [style=simple] (5.center) to (1a.center);
		\draw [style=simple] (6.center) to (2a.center);
		\draw [style=simple] (7.center) to (3a.center);
	\end{pgfonlayer}
\end{tikzpicture} \]
The tensor product $+$ and braiding $B$ give $\S$ the structure of a strict symmetric monoidal category. This follows as a special case of \cref{prop:free}. 

\begin{defn}
\label{defn:network_model}
    A \define{one-colored network model} is a lax symmetric monoidal functor 
    \[F \maps  \S \to \Mon .\]
    Here $\Mon$ is the category with monoids as objects and monoid homomorphisms as morphisms, considered with its cartesian monoidal structure.
\end{defn}

Algebraically, a network model is a family of monoids $\{M_n\}_{n \in \N}$ each with a group action of the corresponding symmetric group $S_n$, such that the product of any two embed into the one indexed by the sum of their indices equivariantly, i.e.\ in a way which respects the group action: $M_m \times M_n \hookrightarrow M_{m+n}$. 

Many examples of network models are given below. A pedestrian way to verify that these examples really are network models is to use the following result:

\begin{thm}
\label{thm:equations}
    A one-colored network model $F \maps \S \to \Mon$ is the same as:
    \begin{itemize}
        \item a family of sets $\{F(n)\}_{n\in \N}$
        \item distinguished \define{identity} elements $e_n \in F(n)$
        \item a family of \define{overlay} functions $\cup \maps F(n) \times F(n) \to F(n)$
        \item a bijection $\sigma \maps F(n) \to F(n)$ for each $\sigma \in S_n$
        \item a family of \define{disjoint union} functions $\sqcup \maps F(m) \times F(n) \to F(m+n)$
    \end{itemize}
    satisfying the following equations:
    \begin{multicols}{2}
    \begin{enumerate}
        \item $e_n \cup g = g \cup e_n = g$
        \item $g_1 \cup (g_2 \cup g_3) = (g_1 \cup g_2) \cup g_3$
        \item $\sigma(g_1 \cup g_2) = \sigma g_1 \cup \sigma g_2$
        \item $\sigma e_n = e_n$
        \item $(\sigma_2 \sigma_1) g = \sigma_2 (\sigma_1 g)$
        \item $(g_1 \cup g_2) \sqcup (h_1 \cup h_2) = (g_1 \sqcup h_1) \cup (g_2 \sqcup h_2)$
        \item $1 (g) = g$
        \item $e_m \sqcup e_n = e_{m+n}$
        \item $\sigma g \sqcup \tau h = (\sigma + \tau) (g \sqcup h)$
        \item $g_1 \sqcup (g_2 \sqcup g_3) = (g_1 \sqcup g_2) \sqcup g_3$
        \item $e_0 \sqcup g = g \sqcup e_0 = g$
        \item $B_{m,n} (h \sqcup g) = g \sqcup h$
    \end{enumerate}
    \end{multicols}
    \noindent
    for $g, g_i \in F(n)$, $h, h_i \in F(m)$, $\sigma, \sigma_i \in S_n$, $\tau \in S_m$, and $1$ the identity of $S_n$.
\end{thm}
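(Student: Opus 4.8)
The plan is to prove the correspondence by directly unwinding the definition of a lax symmetric monoidal functor $F \maps \S \to \Mon$ into elementary data and, conversely, assembling such a functor from the listed data and equations. Since in both directions the underlying data is literally the same, the two passages will be manifestly mutually inverse, and all the content lies in matching structural axioms to the twelve equations.

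Starting from $F$, I would extract: the underlying sets of the monoids $F(n)$, with their units $e_n$ and multiplications $\cup$; the underlying functions $\sigma \maps F(n) \to F(n)$ of the monoid homomorphisms $F(\sigma)$, $\sigma \in S_n$; and the underlying functions $\sqcup \maps F(m) \times F(n) \to F(m+n)$ of the laxator components $\phi_{m,n}$. The laxator unit is a monoid homomorphism from the terminal monoid $\mathbf 1$ to $F(0)$, hence is forced to pick out $e_0$ and contributes nothing extra. I would then read the equations off the axioms: (1)--(2) are the monoid axioms for $F(n)$; (3)--(4) say each $F(\sigma)$ is a monoid homomorphism; (5) and (7) are functoriality of $F$ on composites and on identities; (6) and (8) say each $\phi_{m,n}$ is a homomorphism $F(m)\times F(n) \to F(m+n)$ for the product monoid structure; (9) is naturality of $\phi$ at a morphism $(\sigma,\tau)$ of $\S\times\S$; (10) and (11) are the associativity and unit coherence axioms of a lax monoidal functor; and (12) is the single compatibility axiom between the braiding $B_{m,n}$ on $\S$ and the symmetry (swap) on $(\Mon,\times)$.

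Conversely, given the data and equations, I would build $F$ by: defining the monoid $F(n)$ from $e_n$ and $\cup$ using (1)--(2); setting $F(\sigma)$ to be the given bijection $\sigma$, a monoid homomorphism by (3)--(4), with functoriality from (5) and (7), so that $F \maps \S \to \Mon$ is a functor; declaring $\phi_{m,n} := \sqcup$, checking it is a monoid homomorphism via (6) and (8) and natural at every morphism $(\sigma,\tau)$ via (9); and finally verifying the lax monoidal coherence squares (associativity from (10), unitality from (11)) and the symmetric coherence square (from (12)). The two constructions are inverse on the nose, since each simply recovers the data fed to the other.

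The only place where care is genuinely required is the interaction with the non-strict cartesian structure on $\Mon$: the associator $(A\times B)\times C \cong A\times(B\times C)$, the unitors $\mathbf 1 \times A \cong A \cong A \times \mathbf 1$, and the symmetry $A\times B \cong B\times A$ are the canonical isomorphisms, not identities. So I would need to check that the official coherence diagrams for a (symmetric) lax monoidal functor, which carry these isomorphisms on the $\Mon$ side while the $\S$ side is strict (by \cref{prop:free}), really do collapse to the clean equations (10), (11), (12). In each case the relevant isomorphism is merely a reshuffling of tuples and passes through transparently; writing those diagrams out once is the substantive step, and everything else is bookkeeping.
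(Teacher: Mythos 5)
Your proposal is correct and follows essentially the same route as the paper: unwind the definition of a lax symmetric monoidal functor $\S \to \Mon$ and match each structural axiom to one of the twelve equations, with the two directions being inverse on the nose. Your grouping of the equations is in fact the accurate one — (5) and (7) for functoriality, (6) and (8) for $\sqcup$ being a monoid homomorphism — and your remark about checking that the canonical associator, unitors, and symmetry of the cartesian structure on $\Mon$ pass transparently through the coherence diagrams is a reasonable point of care that the paper leaves implicit.
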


\begin{proof}
    Having a functor $F \maps \S \to \Mon$ is equivalent to having the first four items satisfying Equations 1--6. The binary operation $\cup$ gives the set $F(n)$ the structure of a monoid, with $e_n$ acting as the identity. Equation 1 tells us $e_n$ acts as an identity, and Equation 2 gives the associativity of $\cup$. Equations 3 and 4 tell us that $\sigma$ is a monoid homomorphism. Equations 5 and 6 say that the map $(\sigma,g) \mapsto \sigma g$ defines an action of $S_n$ on $F(n)$ for each $n$. All of these actions together give us the functor $F \maps \S \to \Mon$.
    
    That the functor is lax monoidal is equivalent to having item 5 satisfying Equations 7--11. Equations 7 and 8 tell us that $\sqcup$ is a family of monoid homomorphisms. Equation 9 tells us that it is a natural transformation. Equation 10 tells us that the associativity hexagon diagram for lax monoidal functors commutes for $F$. Equation 11 implies the commutativity of the left and right unitor square diagrams. That the lax monoidal functor is symmetric is equivalent to Equation 12. It tells us that the square diagram for symmetric monoidal functors commutes for $F$. 
\end{proof}

This is one of the simplest examples of a network model:

\begin{expl}[\textbf{Simple graphs}] \label{ex:simple_graph}
    Let a \define{simple graph} on a set $V$ be a set of 2-element subsets of $V$, called \define{edges}. There is a one-colored network model $\SG \maps \S \to \Mon$ such that $\SG(n)$ is the set of simple graphs on $\n$.
    
    To construct this network model, we make $\SG(n)$ into a monoid where the product of simple graphs $g_1, g_2 \in \SG(n)$ is their union $g_1 \cup g_2$. Intuitively speaking, to form their union, we `overlay' these graphs by taking the union of their sets of edges. The simple graph on $\n$ with no edges acts as the unit for this operation. The groups $S_n$ acts on the monoids $\SG(n)$ by permuting vertices, and these actions define a functor $\SG \maps \S \to \Mon$.
    
    Given simple graphs $g \in \SG(m)$ and $h \in \SG(n)$ we define $g \sqcup h \in \SG(m + n)$ to be their disjoint union. This gives a monoid homomorphism $\sqcup \maps \SG(m) \times \SG(n) \to \SG(m + n)$ because 
    \[(g_1 \cup g_2) \sqcup (h_1 \cup h_2) = (g_1 \sqcup h_1) \cup (g_2 \sqcup h_2). \]
    This in turn gives a natural transformation with components
    \[\sqcup_{m, n} \maps \SG(m) \times \SG(n) \to \SG(m + n), \] 
    which makes $\SG$ into lax symmetric monoidal functor. 
    
    One can prove this construction really gives a network model using either \cref{thm:equations}, which requires verifying a list of equations, or \cref{thm:graph_model}, which gives a  general procedure for getting a network model from a monoid $M$ by letting elements of $\Gamma_M(n)$ be maps from the complete graph on $\n$ to $M$. If we take $M  = \Boole = \{F,T\}$ with `or' as the monoid operation, this procedure gives the network model $\SG = \Gamma_\Boole$. We explain this in \cref{ex:simple_graph_2}.
\end{expl}
 
There are many other kinds of graph, and many of them give network models:

\begin{expl}[\textbf{Directed graphs}] \label{ex:directed_graph}
    Let a \define{directed graph} on a set $V$ be a collection of ordered pairs $(i,j) \in V^2$ such that $i \ne j$. These pairs are called \define{directed edges}. There is a network model $\DG \maps \S \to \Mon$ such that $\DG(n)$ is the set of directed graphs on $\n$. As in \cref{ex:simple_graph}, the monoid operation on $\DG(n)$ is union.
\end{expl}

\begin{expl}[\textbf{Multigraphs}] \label{ex:multigraph}
    Let a \define{multigraph} on a set $V$ be a multiset of 2-element subsets of $V$. If we define $\MG(n)$ to be the set of multigraphs on $\n$, then there are at least two natural choices for the monoid operation on $\MG(n)$. 
    The most direct generalization of $\SG$ of \cref{ex:simple_graph} is the network model $\MG \maps \S \to \Mon$ with 
     values $(\MG(n), \cup)$ where $\cup$  is now union of edge multisets.
     That is, the multiplicity of $\{ i, j \}$ in  $g \cup h$ is maximum of the  multiplicity of $\{ i, j \}$ in $g$ and the  multiplicity of $\{ i, j \}$ in $h$.
    Alternatively, there is another network model  $\MGplus \maps \S \to \Mon$ with 
     values $(\MG(n), +)$ where $+$  is multiset sum. That is, $g + h$ obtained by adding multiplicities of corresponding edges. 
\end{expl}
 
\begin{expl}[\textbf{Directed multigraphs}] \label{ex:directed_multigraph}
    Let a \define{directed multigraph} on a set $V$ be a multiset of ordered pairs $(i,j) \in V^2$ such that $i \ne j$. There is a network model $\DMG \maps \S \to \Mon$ such that $\DMG(n)$ is the set of directed multigraphs on $\n$ with monoid operation the union of multisets. Alternatively, there is a network model with values $(\DMG(n), +)$ where $+$  is multiset sum.
 \end{expl}
 
\begin{expl}[\textbf{Hypergraphs}]
 \label{ex:hypergraph}
   Let a \define{hypergraph} on a set $V$ be a set of nonempty subsets of $V$, called \define{hyperedges}. There is a network model $\HG \maps \S \to \Mon$ such that $\HG(n)$ is the set of hypergraphs on $\n$. The monoid operation $\HG(n)$ is union.
\end{expl}

\begin{expl}[\textbf{Graphs with colored edges}]
\label{ex:graphs_with_colored_edges}
    Fix a set $B$ of \define{edge colors} and let $\SG \maps \S \to \Mon$ be the network model of simple graphs as in \cref{ex:simple_graph}. Then there is a network model $H \maps \S \to \Mon$ with
    \[
        H(n) = \SG(n)^B
    \]
    making the product of $B$ copies of the monoid $\SG(n)$ into a monoid in the usual way. In this model, a network is a $B$-tuple of simple graphs, which we may view as a graph with at most one edge of each color between any pair of distinct vertices. We describe this construction in more detail in \cref{ex:graphs_with_colored_edges_2}.
\end{expl}

There are also examples of network models not involving graphs:

\begin{expl}[\textbf{Partitions}]
\label{ex:partitions}
    A poset is a lattice if every finite subset has both an infimum and a supremum. If $L$ is a lattice, then $(L, \vee)$ and $(L, \wedge)$ are both monoids, where $x \vee y$ is the supremum of $\{x,y\} \subseteq L$ and $x \wedge y$ is the infimum. 
    
    Let $P(n)$ be the set of partitions of the set $\n$. This is a lattice where $\pi \le \pi'$ if the partition $\pi$ is finer than $\pi'$. Thus, $P(n)$ can be made a monoid in either of the two ways mentioned above. Denote these monoids as $P^{\vee}(n)$ and $P^{\wedge}(n)$. These monoids extend to give two network models $P^{\vee}, P^{\wedge} \maps \S \to \Mon$. 
\end{expl}
 
\subsection{One-colored network models from monoids}
\label{sec:models_from_monoids}

There is a systematic procedure that gives many of the network models we have seen so far. To do this, we take networks to be ways of labelling the edges of a complete graph by elements of some monoid $M$. The operation of overlaying two of these networks is then described using the monoid operation. 

For example, consider the Boolean monoid $\Boole$: that is, the set $\{F,T\}$ with `inclusive or' as its monoid operation. A complete graph with edges labelled by elements of $\Boole$ can be seen as a simple graph if we let $T$ indicate the presence of an edge between two vertices and $F$ the absence of an edge. To overlay two simple graphs $g_1, g_2$ with the same set of vertices we simply take the `or' of their edge labels. This gives our first example of a network model, \cref{ex:simple_graph}. 

To formalize this we need some definitions. Given $n \in \N$, let $\E(n)$ be the set of 2-element subsets of $\n = \{1, \dots, n\}$. We call the members of $\E(n)$ \define{edges}, since they correspond to edges of the complete graph on the set $\n$. We call the elements of an edge $e \in \E(n)$ its \define{vertices}.

Let $M$ be a monoid. For $n \in \N$, let $\Gamma_M(n)$ be the set of functions $g \maps \E(n) \to M$. Define the operation $\cup \colon \Gamma_M(n) \times \Gamma_M(n) \to \Gamma_M(n)$ by $(g_1 \cup g_2)(e) = g_1(e) g_2(e)$ for $e \in \E(n)$.  Define the map $\sqcup \colon \Gamma_M(m) \times \Gamma_M(n) \to \Gamma_M(m+n)$ by
\[
    (g_1 \sqcup g_2)(e) =
    \left\{\begin{array}{cl}
    g_1(e) & {\rm if \; both \; vertices \; of \;} e {\rm \; are \;}\leq m 
    \\
    g_2(e) & {\rm if \; both \; vertices \; of \;} e {\rm \; are \;} > m 
    \\
    {\rm the \; identity \; of \; } M & {\rm otherwise} 
    \\
    \end{array}
    \right.
\]
The symmetric group $S_n$ acts on $\Gamma_M(n)$ by $\sigma(g)(e) = g(\sigma^{-1}(e))$.

\begin{thm}
\label{thm:graph_model}
    For each monoid $M$ the data above gives a one-colored network model $\Gamma_M \maps \S \to \Mon$.
\end{thm}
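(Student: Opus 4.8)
The plan is to verify the twelve equations of \cref{thm:equations} for the data $(\Gamma_M(n),\, e_n,\, \cup,\, \sigma,\, \sqcup)$, where $e_n\maps\E(n)\to M$ is the constant function at the identity $1_M$. Most of these unwind immediately into the monoid axioms for $M$ applied pointwise on $\E(n)$: since $(g_1\cup g_2)(e)=g_1(e)\,g_2(e)$, the unit law and associativity for $\cup$ (Equations~1--2) are exactly the unit law and associativity in $M$ evaluated edge by edge, and $1(g)=g$ (Equation~7) is immediate from $\sigma(g)(e)=g(\sigma^{-1}(e))$. For the action: a bijection of $\n$ carries $2$-element subsets to $2$-element subsets, so each $\sigma$ really is a bijection of $\Gamma_M(n)$; then $\sigma e_n=e_n$ (Equation~4) holds because $e_n$ is constant and $\sigma$ merely permutes $\E(n)$; $\sigma(g_1\cup g_2)=\sigma g_1\cup\sigma g_2$ (Equation~3) holds because precomposing with $\sigma^{-1}$ distributes over the pointwise product; and $(\sigma_2\sigma_1)g=\sigma_2(\sigma_1 g)$ (Equation~5) is just the identity $(\sigma_2\sigma_1)^{-1}=\sigma_1^{-1}\sigma_2^{-1}$, which is what makes $(\sigma,g)\mapsto\sigma g$ a genuine (left) action and hence assembles these bijections into a functor $\S\to\Mon$.

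The real content is in the equations involving $\sqcup$ (Equations~6, 8--12), and the method throughout is a single case split on an edge $e\in\E(m+n)$ (or $\E(m+n+p)$) according to whether both its vertices lie in the first block, both lie in the second block, or it is a ``crossing'' edge with one vertex in each. In the first two cases $\sqcup$ returns the corresponding value of one of its arguments and in the crossing case it returns $1_M$, and one checks that both sides of the equation agree case by case. Equation~8 ($e_m\sqcup e_n=e_{m+n}$) is immediate, every case giving $1_M$; Equation~6 (the interchange law) is the remark that the pointwise product commutes with the case split, using $1_M\cdot 1_M=1_M$ on crossing edges; Equation~11 holds because $\E(0)=\emptyset$, so $\Gamma_M(0)$ is terminal and in $e_0\sqcup g$ every edge falls in the block carrying $g$; and Equation~10 (associativity of $\sqcup$) is a slightly longer but purely mechanical split on which of the three blocks the two vertices of $e$ occupy.

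The one genuinely fiddly point---and the step I would flag as the main obstacle---is keeping the bookkeeping straight in Equations~9 and~12, which couple $\sqcup$ with the symmetric-group action. Here one must track how the block sum $\sigma+\tau$ of \cref{eq:plus} and the block transposition $B_{m,n}$ act on an edge through the inverse-image convention $\sigma(g)(e)=g(\sigma^{-1}(e))$. The facts to record are $(\sigma+\tau)^{-1}=\sigma^{-1}+\tau^{-1}$ and $B_{m,n}^{-1}=B_{n,m}$, together with the observation that both $\sigma+\tau$ and $B_{m,n}$ permute the three classes of edges among themselves---$\sigma+\tau$ fixing each class setwise (and acting as $\sigma^{-1}$, resp.\ a shift of $\tau^{-1}$, within a block), $B_{m,n}$ interchanging the two block-classes and fixing the crossing class. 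Granting this, Equation~9 ($\sigma g\sqcup\tau h=(\sigma+\tau)(g\sqcup h)$, which simultaneously encodes naturality of the structure maps $\sqcup_{m,n}$ and their compatibility with \cref{eq:plus}) and Equation~12 ($B_{m,n}(h\sqcup g)=g\sqcup h$, the symmetry axiom) both fall out of the same three-case evaluation used everywhere else. With all twelve equations verified, \cref{thm:equations} repackages them as the claimed lax symmetric monoidal functor $\Gamma_M\maps\S\to\Mon$.

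Finally, it is worth noting that this computation can be organized more conceptually: $\E$ is itself a functor $\S\to\Set$, so $\Gamma_M=M^{\E(-)}$ is a functor $\S\to\Mon$ (using that $\Mon$ has products and that $\S$ is a groupoid to handle the variance), and the structure map $\Gamma_M(m)\times\Gamma_M(n)\to\Gamma_M(m+n)$ is ``extension by $1_M$'' along the inclusion $\E(m)\sqcup\E(n)\hookrightarrow\E(m+n)$; from this viewpoint the lax-monoidal-functor axioms reduce to elementary combinatorics of these inclusions. Since \cref{thm:equations} is already available, however, the direct verification above is the shortest path.
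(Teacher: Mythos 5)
Your proof is correct, but it takes a different route from the paper's. You verify the twelve equations of \cref{thm:equations} directly, organizing everything around the three-way case split on an edge of $\E(m+n)$ and isolating the only delicate points, namely that $(\sigma+\tau)^{-1}=\sigma^{-1}+\tau^{-1}$ and $B_{m,n}^{-1}=B_{n,m}$ permute the three edge classes compatibly with the definition of $\sqcup$; all of these checks go through as you describe. The paper instead avoids the equation-checking almost entirely: it factors $\Gamma_M$ as the composite of a lax symmetric monoidal functor $\E \maps \S \to \Inj$ (with laxator the inclusions $\E(m)+\E(n)\to\E(m+n)$) and a symmetric monoidal functor $M^- \maps \Inj \to \Mon$ sending an injection $f$ to the ``extend by $1_M$ off the image of $f$'' homomorphism, and then invokes the fact that a composite of lax symmetric monoidal functors is lax symmetric monoidal. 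This is essentially the conceptual reorganization you sketch in your closing paragraph, except that the paper makes $M^-$ \emph{covariant} on $\Inj$ rather than appealing to $\S$ being a groupoid to fix the variance of $M^{(-)}$ --- the covariance is what makes the laxator land in the right place. What the paper's route buys is reuse: the factorization through $M^-$ is exactly what drives the proof of \cref{thm:models_from_monoids}, where a monoid homomorphism $f \maps M \to M'$ is whiskered into a morphism of network models. What your route buys is self-containedness and an explicit record of where each axiom of \cref{thm:equations} comes from; the paper itself flags this as the legitimate ``pedestrian'' verification, and even its own proof ends by deferring to \cref{thm:equations} to match the abstract composite with the concrete formulas in the theorem statement.
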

\begin{proof}
We can define $\Gamma_M$ as the composite of two functors, $\E \maps \S \to \Inj$ and $M^{-} \maps \Inj \to \Mon$, where $\Inj$ is the category of sets and injections.

The functor $\E \maps \S \to \Inj$ sends each object $n \in \S$ to $\E(n)$, and it sends each morphism $\sigma \maps n \to n$ to the permutation of $\E(n)$ that maps any edge $e = \{x,y\} \in \E(n)$ to $\sigma(e) = \{\sigma(x), \sigma(y)\}$. The category $\Inj$ does not have coproducts, but it is closed under coproducts in $\Set$. It thus becomes symmetric monoidal with $+$ as its tensor product and the empty set as the unit object. For any $m, n \in \S$ there is an injection
\[\mu_{m,n} \maps \E(m) + \E(n) \to \E(m+n) \]
expressing the fact that a 2-element subset of either $\m$ or $\n$ gives a 2-element subset of $\m+\n$. The functor $\E \maps \S \to \Inj$ becomes lax symmetric monoidal with these maps $\mu_{m,n}$ giving the lax preservation of the tensor product.

The functor $M^- \maps \Inj \to \Mon$ sends each set $X$ to the set $M^X$ made into a monoid with pointwise operations, and it sends each function $f \maps X \to Y$ to the monoid homomorphism $M^f \maps M^X \to M^Y$ given by
\[(M^f g)(y) = \left\{ \begin{array}{ccl}
g(f^{-1}(y)) & \textrm{if } y \in \mathrm{im}(f) \\
1 & \textrm{otherwise} 
\end{array} \right.\]
for any $g \in M^X$. Using the natural isomorphisms $M^{X + Y} \cong M^X \times M^Y$ and $M^{\emptyset} \cong 1$ this functor can be made symmetric monoidal. 

As the composite of the lax symmetric monoidal functor $\E \maps \S \to \Inj$ and the symmetric monoidal functor $M^- \maps \Inj \to \Mon$, the functor $\Gamma_M \maps \S \to \Mon$ is lax symmetric monoidal, and thus a network model. With the help of \cref{thm:equations}, it is easy to check that this description of $\Gamma_M$ is equivalent to that in the theorem statement.
\end{proof}

\begin{expl}[\textbf{Simple graphs, revisited}]
\label{ex:simple_graph_2}
    Let $\Boole = \{F,T\}$ be the Boolean monoid. If we interpret $T$ and $F$ as `edge' and `no edge' respectively, then $\Gamma_{\Boole}$ is just $\SG$, the network model of simple graphs discussed in \cref{ex:simple_graph}.
\end{expl}

Recall from \cref{ex:multigraph} that a multigraph on the set $\n$ is a multisubset of $\E(n)$, or in other words, a function $g \maps \E(n) \to \N$. There are many ways to create a network model $F \maps \S \to \Mon$ for which $F(n)$ is the set of multigraphs on the set $\n$, since $\N$ has many monoid structures. Two of the most important are these:

\begin{expl}[\textbf{Multigraphs with addition for overlaying}]
\label{ex:multigraph_2}
    Let $(\N, +)$ be $\N$ made into a monoid with the usual notion of addition as $+$. In this network model, overlaying two multigraphs $g_1, g_2 \maps \E(n) \to \N$ gives a multigraph $g \maps \E(n) \to \N$ with $g(e) = g_1(e) + g_2(e)$. In fact, this notion of overlay corresponds to forming the multiset sum of edge multisets and $\Gamma_{(\N,+)}$ is the network model of multigraphs called $\MGplus$ in \cref{ex:multigraph}. 
\end{expl}

\begin{expl}[\textbf{Multigraphs with maximum for overlaying}]
\label{ex:multigraph_3}
    Let $(\N, \max)$ be $\N$ made into a monoid with $\max$ as the monoid operation. Then $\Gamma_{(\N,\max)}$ is a network model where overlaying two multigraphs $g_1, g_2 \maps \E(n) \to \N$ gives a multigraph $g \maps \E(n) \to \N$ with $g(e) = g_1(e) \max g_2(e)$.
    For this monoid structure overlaying two copies of the same multigraph gives the same multigraph. In other words, every element in each monoid $\Gamma_{(\N,\max)}(n)$ is idempotent and $\Gamma_{(\N,\max)}$ is the network model of multigraphs called $\MG$ in \cref{ex:multigraph}. 
\end{expl}

\begin{expl}[\textbf{Multigraphs with at most $k$ edges between vertices}]
\label{ex:multigraph_with_at_most_k}
    For any $k \in \N$, let $\Boole_k$ be the set $\{0,\dots,k\}$ made into a monoid with the monoid operation $\oplus$ given by 
    \[x \oplus y = (x + y) \min k \]
    and $0$ as its unit element. For example, $\Boole_0$ is the trivial monoid and $\Boole_1$ is isomorphic to the Boolean monoid. There is a network model $\Gamma_{\Boole_k}$ such that $\Gamma_{\Boole_k}(n)$ is the set of multigraphs on $\n$ with at most $k$ edges between any two distinct vertices. 
\end{expl}

\section{General Network Models}
\label{sec:netmod_C}

The network models described so far allow us to handle graphs with colored edges, but not with colored vertices. Colored vertices are extremely important for applications in which we have a network of agents of different types. Thus, network models will involve a set $C$ of vertex colors in general. This requires that we replace $\S$ by the free strict symmetric monoidal category generated by the color set $C$. Thus, we begin by recalling this category.

For any set $C$, there is a category $\SC$ for which:
\begin{itemize}
    \item Objects are formal expressions of the form
    \[
        c_1 \otimes \cdots \otimes c_n 
    \]
    for $n \in \N$ and $c_1, \dots, c_n \in C$. 
    We denote the unique object with $n = 0$ as $I$.
    \item There exist morphisms from $c_1 \otimes \cdots \otimes c_m$ to $c'_1 \otimes \cdots \otimes c'_n$ only if $m = n$, and in that case a morphism is a permutation $\sigma \in S_n$ such that $c'_{\sigma(i)} = c_i$ for all $i$.
    \item Composition is the usual composition of permutations. 
\end{itemize}

Note that elements of $C$ can be identified with certain objects of $\S(C)$, namely the one-fold tensor products. We do this in what follows.

\begin{prop}
\label{prop:free}
    $\S(C)$ can be given the structure of a strict symmetric monoidal category making it into the free strict symmetric monoidal category on the set $C$. Thus, if $\A$ is any strict symmetric monoidal category and $f \maps C \to \Ob(\A)$ is any function from $C$ to objects of the $\A$, there exists a unique strict symmetric monoidal functor $F \maps \S(C) \to \A$ with $F(c) = f(c)$ for all $c \in C$.
\end{prop}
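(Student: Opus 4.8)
The plan is to first equip $\S(C)$ with explicit monoidal data and then verify the universal property directly, leaning on coherence for symmetric monoidal categories at the one point where it is genuinely needed.

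\textbf{The monoidal structure.} On objects, take the tensor product of $c_1 \otimes \cdots \otimes c_m$ and $c'_1 \otimes \cdots \otimes c'_n$ to be the concatenation $c_1 \otimes \cdots \otimes c_m \otimes c'_1 \otimes \cdots \otimes c'_n$, with unit the empty word $I$. On morphisms, given permutations $\sigma \maps c_1\otimes\cdots\otimes c_m \to \bar c_1\otimes\cdots\otimes\bar c_m$ and $\tau \maps c'_1\otimes\cdots\otimes c'_n \to \bar c'_1\otimes\cdots\otimes\bar c'_n$, let $\sigma \otimes \tau$ be the block sum $\sigma + \tau$ of \cref{eq:plus}; from the color-matching condition $\bar c_{\sigma(i)} = c_i$ one checks immediately that $\sigma+\tau$ again matches colors, hence is a legitimate morphism. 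Take the braiding to be the block permutation $B_{m,n}$ already introduced, which also satisfies the color-matching condition by inspection. First I would verify the routine facts: $\otimes$ is a functor (block sums of composites are composites of block sums, and identities go to identities); concatenation of words and block sums of permutations are strictly associative and strictly unital, so there are no associators or unitors; and the symmetry axioms — naturality of $B$, the two hexagons, and $B_{n,m}\circ B_{m,n} = \mathrm{id}$ — all reduce to short identities among block permutations. This is pure bookkeeping with the explicit formulas.

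\textbf{The universal property.} Let $\A$ be strict symmetric monoidal and $f \maps C \to \Ob(\A)$. Define $F$ on objects by $F(c_1\otimes\cdots\otimes c_n) = f(c_1)\otimes\cdots\otimes f(c_n)$ and $F(I)$ the unit of $\A$; strictness of $\A$ makes the right-hand side unambiguous, and strict preservation of $\otimes$ and $I$ is then automatic. A morphism $\sigma \maps c_1\otimes\cdots\otimes c_n \to c'_1\otimes\cdots\otimes c'_n$ of $\S(C)$ is a permutation with $c'_{\sigma(i)} = c_i$; because of this condition the symmetry of $\A$ supplies a canonical isomorphism $f(c_1)\otimes\cdots\otimes f(c_n) \to f(c'_1)\otimes\cdots\otimes f(c'_n)$ that permutes the tensor factors according to $\sigma$, and we set $F(\sigma)$ equal to it. The one genuinely delicate point — and the step I expect to be the main obstacle — is checking that this assignment is functorial: that the isomorphisms induced by $\sigma$ and $\tau$ compose to the one induced by $\tau\sigma$, and in particular that the isomorphism attached to a permutation is independent of how that permutation is written as a product of adjacent transpositions. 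This is exactly the coherence theorem for symmetric monoidal categories (see \cref{app:monoidalcats}); granting it, functoriality, together with $F(\sigma\otimes\tau) = F(\sigma)\otimes F(\tau)$ and $F(B_{m,n}) = s_{F(\vec c),F(\vec d)}$ (the symmetry of $\A$), are immediate, so $F$ is a strict symmetric monoidal functor with $F(c)=f(c)$ for all $c\in C$.

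\textbf{Uniqueness.} Any strict symmetric monoidal $G \maps \S(C)\to\A$ with $G(c)=f(c)$ agrees with $F$ on objects by strict preservation of $\otimes$ and $I$. On morphisms, every permutation in $S_n$ is a composite of adjacent transpositions, and the transposition of positions $i,i+1$ in $c_1\otimes\cdots\otimes c_n$ equals $\mathrm{id}_{c_1\otimes\cdots\otimes c_{i-1}} \otimes B_{c_i,c_{i+1}} \otimes \mathrm{id}_{c_{i+2}\otimes\cdots\otimes c_n}$; thus all morphisms of $\S(C)$ are generated under composition and $\otimes$ by the elementary braidings. Since $G$ preserves composition, identities, $\otimes$, and braidings, it is forced to equal $F$ on every morphism, giving uniqueness. (Alternatively, one could deduce the proposition by identifying $\S(C)$ with the standard construction of the free symmetric strict monoidal category on the discrete category with object set $C$, but the argument above is self-contained modulo coherence.)
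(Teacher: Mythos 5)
Your proof is correct and follows the same construction as the paper: the concatenation/block-sum monoidal structure on $\S(C)$ with braiding $B_{m,n}$, and $F$ defined on objects by applying $f$ letterwise. The paper simply cites Sassone and Gambino--Joyal for the verification that you carry out in detail, and your identification of coherence for symmetric monoidal categories as the one nontrivial ingredient (well-definedness and functoriality of $F$ on permutations) is exactly the right place to put the weight.
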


\begin{proof}
This is well-known; see for example Sassone \cite[Sec.\ 3]{Sassone} or Gambino and Joyal \cite[Sec.\ 3.1]{GambinoJoyal}. The tensor product of objects is $\otimes$, the unit for the tensor product is $I$, and the braiding 
\[(c_1 \otimes \cdots \otimes c_m) \otimes (c'_1 \otimes \cdots \otimes c'_n) \to (c'_1 \otimes \cdots \otimes c'_n)  \otimes (c_1 \otimes \cdots \otimes c_m) \]
is the block permutation $B_{m, n}$. Given $f \maps C \to \Ob(\A)$, we define $F\maps \S(C) \to \A$ on objects by
 \[F(c_1 \otimes \cdots \otimes c_n) = f(c_1) \otimes \cdots \otimes f(c_n) , \]
and it is easy to check that $F$ is strict symmmetric monoidal, and the unique functor with the required properties.
\end{proof}

\begin{defn}
\label{defn:colored_network_model}
    Let $C$ be a set, called the set of \define{vertex colors}. A
    $C$\define{-colored network model} is a lax symmetric monoidal functor 
     \[F \maps  \SC \to \Cat. \] 
    A \define{network model} is a $C$-colored network model for some set $C$.
\end{defn}

If $C$ has just one element, $\S(C) \cong \S$ and a $C$-colored network model is a one-colored network model in the sense of \cref{defn:network_model}. Here are some more interesting examples:

\begin{expl}[\textbf{Simple graphs with colored vertices}]
\label{ex:simple_graphs_with_colored_vertices}
    There is a network model of simple graphs with $C$-colored vertices. To construct this, we start with the network model of simple graphs $\SG \maps \S \to \Mon$ given in \cref{ex:simple_graph}. There is a unique function from $C$ to the one-element set. By \cref{prop:free}, this function extends uniquely to a strict symmetric monoidal functor 
    \[F \maps \S(C) \to \S . \]
    An object in $\S(C)$ is formal tensor product of $n$ colors in $C$; applying $F$ to this object we forget the colors and obtain the object $n \in \S$. Composing $F$ and $\SG$, we obtain a lax symmetric monoidal functor
    \[
        \S(C) \stackrel{F}{\longrightarrow} \S  \stackrel{\SG}{\longrightarrow} \Mon
    \]
    which is the desired network model. We can use the same idea to `color' any of the network models in \cref{sec:netmod}.
    
    Alternatively, suppose we want a network model of simple graphs with $C$-colored vertices where an edge can only connect two vertices of the same color. For this we take a cartesian product of $C$ copies of the functor $\SG$, obtaining a lax symmetric monoidal functor 
    \[{\SG}^C \maps \S^C \to \Mon^C. \]  
    There is a function $h \maps C \to \Ob(\S^C)$ sending each $c \in C$ to the object of $S^\C$ that equals $1 \in \S$ in the $c$th place and $0 \in \S$ elsewhere. Thus, by \cref{prop:free}, $h$ extends uniquely to a strict symmetric monoidal functor 
    \[H_C \maps \S(C) \to \S^C  .\]
    Furthermore, the product in $\Mon$ gives a symmetric monoidal functor
    \[\Pi \maps \Mon^C \to \Mon .\]
    Composing all these, we obtain a lax symmetric monoidal functor
    \[
        \SC \stackrel{H_C}{\longrightarrow} \S^C \stackrel{\SG^C}{\longrightarrow} \Mon^C \stackrel{\Pi}{\longrightarrow} \Mon
    \]
    which is the desired network model. 
    
    More generally, if we have a network model $F_c \maps \S \to \Mon$ for each color $c \in C$, we can use the same idea to create a network model:
    \[
    \begin{tikzcd}
        \S(C) 
        \arrow[r, "H_C"]
        &
        \S^C
        \arrow[r, "\prod_{c \in C} F_c"]
        &
        \Mon^C
        \arrow[r, "\prod"]
        &
        \Mon
    \end{tikzcd}\]
    in which the vertices of color $c \in C$ partake in a network of type $F_c$.
    \label{ex:colors}
\end{expl}

\begin{expl}[\textbf{Petri nets}]
    Petri nets are a kind of network widely used in computer science, chemistry and other disciplines \cite{RxNet}. A \define{Petri net} $(S, T, i, o)$ is a pair of finite sets and a pair of functions $i, o \maps S \times T \to \N$. Let $P(m, n)$ be the set of Petri nets $(\m, \n, i, o)$. This becomes a monoid with product
    \[
        (\m, \n, i, o) \cup (\m, \n, i', o')
        = (\m, \n, i+i', o+o')
    \]
    The groups $S_m\times S_n$ naturally act on these monoids, so we have a functor 
    \[P \maps \S^2 \to \Mon . \]
    There are also `disjoint union' operations
    \[\sqcup \maps P(m, n) \times P(m', n') \to P(m+m', n+n') \]
    making $P$ into a lax symmetric monoidal functor. In \cref{ex:simple_graphs_with_colored_vertices} we described a strict symmetric monoidal functor $H_C \maps \S(C) \to \S^C$ for any set $C$. In the case of the 2-element set this gives
    \[H_2 \maps \S(2) \to \S^2 .\]
    We define the network model of Petri nets to be the composite
    \[\S(2) \stackrel{H_2}{\longrightarrow} \S^2 \stackrel{P}{\longrightarrow} \Mon .\]
\end{expl}

\subsection{Categories of network models}
\label{sec:cat_netmod}

For each choice of the set $C$ of vertex colors, we can define a category $\NetMod_C$ of $C$-colored network models. However, it is useful to create a larger category $\NetMod$ containing all these as subcategories, since there are important maps between network models that involve changing the vertex colors. 

\begin{defn}
\label{defn:NM_C}
For any set $C$, let $\NetMod_C$ be the category for which:
\begin{itemize}
    \item an object is a $C$-colored network model, that is, a lax symmetric monoidal functor $F \maps \SC\to \Cat$, 
    \item a morphism is a monoidal natural transformation between such functors:
    \[\begin{tikzcd}
    \SC
    \arrow[r, "F", bend left=40]
    \arrow[bend left=40]{r}[name=LUU, below]{}
    \arrow[r, "F'", bend right=40, swap, pos=0.45]
    \arrow[bend right=40, pos = 0.53]{r}[name=LDD]{}
    \arrow[Rightarrow, to path=(LUU) -- (LDD)\tikztonodes]{r}{\gn}
    & 
    \Cat
    \end{tikzcd}\]
    and composition is the usual composition of monoidal natural transformations.
    \end{itemize}
\end{defn}

In particular, $\NetMod_1$ is the category of one-colored network models. For an example involving this category, consider the network models built from monoids in \cref{sec:models_from_monoids}. Any monoid $M$ gives a one-colored network model $\Gamma_M$ for which an element of $\Gamma_M(n)$ is a way of labelling the edges of the complete graph on $\n$ by elements of $M$. Thus, we should expect any homomorphism of monoids $f \maps M \to M'$ to give a morphism of network models $\Gamma_f \maps \Gamma_M \to \Gamma_{M'}$ for which 
\[\Gamma_f(n) \maps \Gamma_M(n) \to \Gamma_{M'}(n)  \]
applies $f$ to each edge label. 

Indeed, this is the case. As explained in the proof of \cref{thm:graph_model}, the network model $\Gamma_M$ is the composite
\[\S \stackrel{\E}{\longrightarrow} \Inj 
\stackrel{M^{-}}{\longrightarrow} \Mon .\]
The homomorphism $f$ gives a natural transformation
\[f^{-} \maps M^{-} \To M'^{-}  \]
that assigns to any finite set $X$ the monoid homomorphism
\[\begin{array}{rccl} 
f^X \maps & M^X & \to     & M'^X  \\
          &  g  & \mapsto & f \circ g .
\end{array}
\]      
It is easy to check that this natural transformation is monoidal. Thus, we can whisker it with the lax symmetric monoidal functor $\E$ to get a morphism of network models:
\[\begin{tikzcd}
    \S \arrow[r, "\E"] &
    \Inj
    \arrow[r, "M^-", bend left=40]
    \arrow[bend left=40]{r}[name=LUU, below]{}
    \arrow[r, "M'^-", bend right=40, swap, pos=0.45]
    \arrow[bend right=40]{r}[name=LDD]{}
    \arrow[Rightarrow, to path=(LUU) -- (LDD)\tikztonodes]{r}{f^-}
    & 
    \Mon
    \end{tikzcd}\]
and we call this $\Gamma_f \maps \Gamma_M \to \Gamma_{M'}$.

\begin{thm}
\label{thm:models_from_monoids}
There is a functor 
\[\Gamma \maps \Mon \to \NetMod_1  \]
sending any monoid $M$ to the network model $\Gamma_M$ and any homomorphism of monoids $f \maps M \to M'$ to the morphism of network models $\Gamma_f \maps \Gamma_M \to \Gamma_{M'}$.
\end{thm}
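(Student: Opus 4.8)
The plan is to leverage the factorizations set up just before the statement: $\Gamma_M = M^- \circ \E$, and $\Gamma_f$ is obtained by whiskering the natural transformation $f^- \maps M^- \To M'^-$ with $\E \maps \S \to \Inj$. So $\Gamma$ is, by construction, the composite of two operations, and I would prove functoriality by showing each is functorial: first, that $M \mapsto M^-$, $f \mapsto f^-$ is a functor from $\Mon$ to the category of lax symmetric monoidal functors $\Inj \to \Mon$ with monoidal natural transformations; second, that whiskering with the fixed functor $\E$ (lax symmetric monoidal by the proof of \cref{thm:graph_model}) is a functor from that category to $\NetMod_1$.

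For the first point I would begin by checking that $f^-$ really is a monoidal natural transformation. Naturality says that for every injection $\phi \maps X \to Y$ the square relating $f^X$, $f^Y$, $M^\phi$, $M'^\phi$ commutes; evaluating both composites at $g \in M^X$ gives, at a point $y \in \mathrm{im}(\phi)$, the element $f(g(\phi^{-1}(y)))$, and at a point outside $\mathrm{im}(\phi)$ the unit of $M'$ — the two computations agreeing off the image precisely because $f$ preserves units. Monoidality of $f^-$ is compatibility of the $f^X$ with the structure isomorphisms $M^{X+Y}\cong M^X\times M^Y$ and $M^\emptyset\cong 1$, which again reduces to $f$ being a monoid homomorphism since the first isomorphism is pointwise and the second is forced. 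That $M\mapsto M^-$ preserves identities and composition is then immediate from the pointwise formula $f^X(g)=f\circ g$: one has $(\mathrm{id}_M)^X(g)=g$ and $(f'\circ f)^X(g)=f'\circ(f\circ g)={f'}^X(f^X(g))$.

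For the second point I would invoke the standard $2$-categorical facts that whiskering a monoidal natural transformation by a lax symmetric monoidal functor again produces a monoidal natural transformation, and that whiskering by a fixed functor preserves identity natural transformations and vertical composition. Together with the first point this shows $\Gamma_f$ is a morphism $\Gamma_M\to\Gamma_{M'}$ in $\NetMod_1$ and that $\Gamma$ — the functor $M\mapsto M^-$ followed by whiskering with $\E$ — is a functor. The one step with any real content, hence the place to be careful rather than a genuine obstacle, is the monoidality check for $f^-$ together with its stability under whiskering with $\E$; the remainder is formal bookkeeping with the explicit description of $M^-$.
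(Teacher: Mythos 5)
Your proposal is correct and follows essentially the same route as the paper: both rest on the factorization $\Gamma_M = M^- \circ \E$, the observation that $f^- \maps M^- \To M'^-$ is a monoidal natural transformation, and the identity $(f'f)^- = f'^- f^-$ (plus preservation of identities) to get functoriality after whiskering with $\E$. The only difference is one of exposition — you spell out the naturality and monoidality checks for $f^-$ that the paper dismisses as ``easy to check'' — and your computations there are accurate.
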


\begin{proof}
To check that $\Gamma$ preserves composition, note that
\[\begin{tikzcd}[column sep=huge]
    \S 
    \arrow[r, "\E"] 
    &
    \Inj
    \arrow[r, "M^-", bend left=80]
    \arrow[r, ""{name=TOP}, bend left=80, swap, pos=0.455]
    \arrow[r, "M'^-"{name=Ml}] % Ml = middle with label
    \arrow[Rightarrow, from=TOP, to=Ml, "f^-", pos=0.3]
    \arrow[r, ""{name=M}, swap]
    \arrow[r, "M''^-", bend right=80, swap]
    \arrow[r, ""{name=BOT}, bend right=80, pos=0.45]
    \arrow[Rightarrow, from=M, to=BOT, "f'^-", pos=0.5]
    & 
    \Mon
\end{tikzcd}\]
equals
\[\begin{tikzcd}[column sep=huge]
    \S \arrow[r, "\E"] &
    \Inj
    \arrow[r, "M^-", bend left=80]
    \arrow[bend left=80, pos=0.47]{r}[name=LUU, below]{}
    \arrow[r, "M''^-", bend right=80, swap, pos=0.5]
    \arrow[bend right=80, pos=0.44]{r}[name=LDD]{}
    \arrow[Rightarrow, from=LUU, to=LDD, "(f'f)^-"]
    & 
    \Mon
    \end{tikzcd}   \]
since $f'^- f^- = (f'f)^-$. Similarly $\Gamma$ preserves identities. \end{proof}

It has been said that category theory is the subject in which even the examples need examples. So, we give an example of the above result:

\begin{expl}[\textbf{Imposing a cutoff on the number of edges}]
\label{ex:cutoff}
In \cref{ex:multigraph_2} we described the network model of multigraphs $\MGplus$ as $\Gamma_{(\N, +)}$. In \cref{ex:multigraph_with_at_most_k} we described a network model $\Gamma_{\Boole_k}$ of multigraphs with at most $k$ edges between any two distinct vertices. There is a homomorphism of monoids
\begin{align*}
    f \maps (\N, +) &\to \Boole_k\\
    n &\mapsto n \min k
\end{align*}
and this induces a morphism of network models
\[\Gamma_f \maps \Gamma_{(\N, +)} \to \Gamma_{\Boole_k} .\]
This morphism imposes a cutoff on the number of edges between any two distinct vertices: if there are more than $k$, this morphism keeps only $k$ of them. In particular, if $k = 1$, $\Boole_k$ is the Boolean monoid, and 
\[\Gamma_f \maps \MGplus \to \SG \]
sends any multigraph to the corresponding simple graph.
\end{expl}

One useful way to combine $C$-colored networks is by `tensoring' them. This makes $\NetMod_C$ into a symmetric monoidal category:

\begin{thm}
\label{thm:tensor_product_of_network_models}
For any set $C$, the category $\NetMod_C$ can be made into a symmetric monoidal category with the tensor product defined pointwise, so that for objects $F, F' \in \NetMod_C$ we have 
\[(F \otimes F')(x) = F(x) \times F'(x) \]
for any object or morphism $x$ in $\S(C)$, and for morphisms $\phi, \phi'$ in $\NetMod_C$ we have
\[(\phi \otimes \phi')_x = \phi_x \times \phi'_x \]
for any object $x \in \S(C)$.
\end{thm}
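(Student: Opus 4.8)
The plan is to build the monoidal structure on $\NetMod_C$ directly from the symmetric monoidal structure on $\Cat$ (with its cartesian product), transported along the pointwise construction, and then verify the axioms by reducing them to the axioms for $\Cat$. First I would observe that since $\Cat$ is cartesian monoidal, the category $[\S(C), \Cat]$ of all functors and natural transformations is itself symmetric monoidal under the pointwise product $(F \otimes F')(x) = F(x) \times F'(x)$; this is the standard fact that a functor category inherits any (symmetric) monoidal structure from its target pointwise, with the unit the constant functor at the terminal category $\mathbf 1$. The braiding, associator, and unitors of this pointwise structure are defined componentwise from those of $\Cat$, so all coherence diagrams commute because they commute in $\Cat$ at each object $x \in \S(C)$.

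The substantive step is to check that the pointwise tensor restricts from all functors to \emph{lax symmetric monoidal} functors, and from all natural transformations to \emph{monoidal} ones — in other words, that $\NetMod_C$ is closed under $\otimes$ inside $[\S(C),\Cat]$. Given lax symmetric monoidal $F, F' \maps \S(C) \to \Cat$ with laxity maps $\phi_{x,y} \maps F(x) \times F(y) \to F(x \otimes y)$, $\phi' _{x,y}\maps F'(x) \times F'(y) \to F'(x\otimes y)$ and units $\phi_I \maps \mathbf 1 \to F(I)$, $\phi'_I \maps \mathbf 1 \to F'(I)$, I would equip $F \otimes F'$ with the laxity map obtained as the composite
\[
F(x)\times F'(x) \times F(y) \times F'(y) \xrightarrow{\ \sim\ } \bigl(F(x)\times F(y)\bigr) \times \bigl(F'(x) \times F'(y)\bigr) \xrightarrow{\phi_{x,y} \times \phi'_{x,y}} F(x\otimes y) \times F'(x \otimes y),
\]
where the first arrow is the middle-four-interchange isomorphism in $\Cat$, and the unit map $\mathbf 1 \cong \mathbf 1 \times \mathbf 1 \xrightarrow{\phi_I \times \phi'_I} F(I) \times F'(I)$. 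Naturality in $x,y$ is immediate since every map in sight is natural, and the lax associativity hexagon, the unit triangles, and the symmetry square for $F \otimes F'$ each split, after shuffling through the interchange iso, as the product of the corresponding diagram for $F$ and for $F'$, hence commute. For a pair of monoidal natural transformations $\phi \maps F \To G$, $\phi' \maps F' \To G'$, the component $(\phi \otimes \phi')_x = \phi_x \times \phi'_x$ is monoidal because compatibility with laxity maps and units again decomposes as the product of the compatibility conditions for $\phi$ and $\phi'$. One must also confirm that the structural isomorphisms (associator, unitors, braiding) of the pointwise structure, whose components are the cartesian structure maps of $\Cat$, are themselves \emph{monoidal} natural transformations between the relevant lax symmetric monoidal functors; this is the coherence of $\Cat$'s cartesian structure with the interchange isomorphisms, and is routine.

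The main obstacle is bookkeeping rather than conceptual: one must carefully track the interchange isomorphism $(A \times B) \times (C \times D) \cong (A \times C) \times (B \times D)$ through the lax-monoidal hexagon and the symmetry square, and check it intertwines the laxity data of $F$, $F'$, and $F \otimes F'$ correctly — essentially verifying that this interchange iso is itself a map of lax symmetric monoidal functors from the pointwise product of the "doubled" diagrams. Once that compatibility is isolated as a lemma (it follows formally from $\Cat$ being symmetric monoidal, via the general principle that $[\J, \V]$ is symmetric monoidal whenever $\V$ is), everything else is a componentwise appeal to $\Cat$. I would therefore organize the proof as: (1) recall the pointwise symmetric monoidal structure on $[\S(C), \Cat]$; (2) exhibit the laxity and unit data making $F \otimes F'$ lax symmetric monoidal, and verify its axioms by componentwise reduction; (3) check $(\phi \otimes \phi')$ is monoidal; (4) check the structure $2$-cells are monoidal natural transformations; (5) conclude that all coherence axioms hold since they hold pointwise in $\Cat$.
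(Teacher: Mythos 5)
Your proposal is correct and follows essentially the same route as the paper: the paper simply observes that for any symmetric monoidal categories $\A$ and $\B$ the category of lax symmetric monoidal functors and monoidal natural transformations carries a pointwise symmetric monoidal structure, citing Hyland and Power for the strong case and noting the lax case works the same way. Your componentwise verification (interchange isomorphism feeding the product of laxators, product of unit maps, and reduction of every coherence axiom to its counterpart in $\Cat$) is exactly the content of that cited general result, spelled out in detail.
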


\begin{proof}
    More generally, for any symmetric monoidal categories $\A$ and $\B$, there is a symmetric monoidal category $\Sym\Mon\Cat(\A, \B)$ whose objects are lax symmetric monoidal functors from $\A$ to $\B$ and whose morphisms are monoidal natural transformations, with the tensor product defined pointwise. The proof in the `weak' case was given by Hyland and Power \cite{HP}, and the lax case works the same way.
\end{proof}

If $F, F' \maps \S(C) \to \Mon$ then their tensor product again takes values in $\Mon$. There are many interesting examples of this kind:

\begin{expl}[\textbf{Graphs with colored edges, revisited}]
\label{ex:graphs_with_colored_edges_2}
    In \cref{ex:graphs_with_colored_edges} we described network models of simple graphs with colored edges. The above result lets us build these network models starting from more basic data. To do this we start with the network model for simple graphs, $\SG \maps \S \to \Mon$, discussed in \cref{ex:simple_graph}. Fixing a set $B$ of `edge colors', we then take a tensor product of copies of $\SG$, one for each $b \in B$. The result is a network model $\SG^{\otimes B} \maps \S \to \Mon$ with 
    \[\SG^{\otimes B}(n) = \SG(n)^B  \]
    for each $n \in \N$.
\end{expl}

\begin{expl}[\textbf{Combined networks}]
\label{ex:mixed_networks}
    We can also combine networks of different kinds. For example, if $\DG \maps \S \to \Mon$ is the network model of directed graphs given in \cref{ex:directed_graph} and $\MG \maps \S \to \Mon$ is the network model of multigraphs given in \cref{ex:multigraph}, then   
    \[\DG \otimes \MG \maps \S \to \Mon \]
    is another network model, and we can think of an element of $(\DG \otimes \MG)(n)$ as a directed graph with red edges together with a multigraph with blue edges on the set $\n$.
\end{expl}

Next we describe a category $\NetMod$ of network models with arbitrary color sets, which includes all the categories $\NetMod_C$ as subcategories. To do this, first we introduce `color-changing' functors. Recall that elements of $C$ can be seen as certain objects of $\S(C)$, namely the 1-fold tensor products. If $f \maps C \to C'$ is a function, there exists a unique strict symmetric monoidal functor $f_* \maps \S(C) \to \S(C')$ that equals $f$ on objects of the form $c \in C$. This follows from \cref{prop:free}.

Next, we define an indexed category $\NetMod_{-} \maps \Set\op \to \CAT$ that sends any set $C$ to $\NetMod_C$ and any function $f \maps C \to D$ to the functor that sends any $D$-colored network model $F \maps \S(D) \to \Cat$ to the $C$-colored network model given by the composite
\[\S(C) \xrightarrow{f_*} \S(D) \xrightarrow{F} \Cat .\]
Applying the Grothendieck construction (see \cref{app:fibicat}) to this indexed category, we define the category of network models to be
\[\NetMod = \int \NetMod_-. \]
In elementary terms, $\NetMod$ has:
\begin{itemize}
    \item pairs $(C, F)$ for objects, where $C$ is a set and $F \maps \S(C) \to \Cat$ is a $C$-colored network model.
    \item pairs $(f, g) \maps (C, F) \to (D, G)$ for morphisms, where $f \maps C \to D$ is a function and $g \maps F \Rightarrow G \circ f_*$ is a morphism of network models.
\end{itemize}

\begin{expl}[\textbf{Simple graphs with colored vertices, revisited}]
    In \cref{ex:simple_graphs_with_colored_vertices} we constructed the network model of simple graphs with colored vertices. We started with the network model for simple graphs, which is a one-colored network model $\SG \maps \S \to \Mon$. The unique function $! \maps C \to 1$ gives a strict symmetric monoidal functor $!_* \maps \S(C) \to \S(1) \cong \S$. The network model of simple graphs with $C$-colored vertices is the composite 
     \[
       \S(C) \xrightarrow{!_*} \S \xrightarrow{\SG} \Mon
    \]
    and there is a morphism from this to the network model of simple graphs, which has the effect of forgetting the vertex colors.
\end{expl}

In fact, $\NetMod$ can be understood as a subcategory of the following category:

\begin{defn} 
\label{defn:SMICat}
Let $\Sym\Mon\ICat$ be the category where:
\begin{itemize}
    \item objects are pairs $(\C, F)$ where $\C$ is a small symmetric monoidal category and $F \maps \C \to \Cat$ is a lax symmetric monoidal functor, where $\Cat$ is considered with its cartesian monoidal structure.
    \item morphisms from $(\C, F)$ to $(\C', F')$ are pairs $(G, \gn)$ where $G \maps \C \to \C'$ is a lax symmetric monoidal functor and $\gn \maps F \To F' \circ G $ is a symmetric monoidal natural transformation:
    \[\begin{tikzcd}
        \C
        \arrow[dr, "F"]
        \arrow[dr, ""{name=F}, swap]
        \arrow[dd, "G", swap]
        \\&
        \Cat
        \\
        \C'
        \arrow[ur, "F'", swap]
        \arrow[ur, ""{name=F'}, pos=0.43]
        \arrow[Rightarrow, from = F, to = F', "\gn", swap]
    \end{tikzcd}\]
\end{itemize}
\end{defn}

We shall use this way of thinking in the next two sections to build operads from network models. It must be said that $\Sym\Mon\ICat$  is naturally a 2-category where a 2-morphism $\xi \maps (G, \gn) \To (G', \gn')$ is a natural transformation $\xi \maps G \to G'$ such that 
\[\begin{tikzcd}
    \C 
    \arrow[dd, bend right=90, "G", pos=0.495, swap]
    \arrow[dd, bend right=90, ""{name=L, right}, swap, phantom, pos = 0.49]
    \arrow[dd, "G'"{name=R, left}, swap]
    \arrow[dr, "F", pos=0.4]
    \arrow[dr, ""{name=U, below}, pos=0.4, phantom]
    & & &  
    \C'
    \arrow[dd, "G"{name=R2, left}, pos=0.52, swap]
    \arrow[dr, "F", pos=0.4]
    \arrow[dr, ""{name=U2, below}, pos=0.44, phantom]
    \\
    & \Cat & = & &  \Cat.
    \\
    \C
    \arrow[ur, "F'", swap, pos=0.4]
    \arrow[ur, ""{name=D}, pos=0.35, phantom]
    \arrow[Rightarrow, from=U, to=D, "\gn'", swap]
    \arrow[Rightarrow, from=L, to=R, "\xi"{above}, swap]
    & & & 
    \C'
    \arrow[ur, "F'", swap, pos=0.4]
    \arrow[ur, ""{name=D2}, phantom, pos = 0.4]
    \arrow[Rightarrow, from=U2, to=D2, "\gn", swap, pos=0.55]
\end{tikzcd}
\]
and here we are considering its 1-dimensional truncation. The 2-dimensional structure is detailed in \cref{app:fibicat}, and utilized in \cref{ch:MonGroth}. This lets us define 2-morphisms between network models, extending $\NetMod$ to a 2-category. We do not seem to need these 2-morphisms in our applications, so we suppress 2-categorical considerations in most of what follows. 

\section{Operads from Network Models}
\label{sec:netoperads}

Next we describe the operad associated to a network model. This construction is given in two steps. For the first step, we can use the strict symmetric Grothendieck construction of \cref{sec:monequiv} to define a strict symmetric monoidal category $\Int F$ from a given network model $F \maps \S(C) \to \Cat$. For the second step, we then use the \emph{underlying operad} construction (recalled in \cref{prop:operad_from_symmoncat}) to build an operad $\O_F$.
\begin{defn}
\label{defn:CN}
    Given a network model $F \maps \S(C) \to \Cat$, define the \define{network operad} $\O_F$ to be $\Op(\Int F)$. 
\end{defn} 
For the sake of the unfamiliar reader, we give a brief description of these constructions in the specific context of network models, which does not assume prior knowledge. We recall the ordinary Grothendieck construction in \cref{app:fibicat}, and \cref{ch:MonGroth} is entirely dedicated to studying the (braided/symmetric) monoidal variants of it. We give a nuts-and-bolts description of the symmetric monoidal category $(\int F, \otimes_\phi)$ built from a network model $(F, \phi) \maps (\S(C), \otimes ) \to (\Mon, \times)$. 

The objects of $\Int F$ correspond to objects of $\S(C)$, which are formal expressions of the form $c_1 \otimes \cdots \otimes c_n$ with $n \in \N$ and $c_i \in C$. The morphisms of $\int F$ are pairs $(\sigma, g)$ where $\sigma \maps c_1 \otimes \dots \otimes c_n \to c_{\sigma1} \otimes \dots \otimes c_{\sigma n}$ is a morphism in $\S(C)$, and $g$ is an element of the monoid $F(c_{\sigma1} \otimes \dots \otimes c_{\sigma n})$. Composition is given by $(\sigma, g) \circ (\tau, h) = (\sigma \tau, g \cdot F \sigma(h))$. The tensor product of two objects is given by concatenation. The tensor product of two morphisms is given by $(\sigma, g) \otimes (\tau, h) = (\sigma \otimes \tau, \phi(g,h))$. The unit object is $(I, \phi_0)$, where $I$ is the monoidal unit for $\S(C)$ and $\phi_0$ is the unit laxator for $F$. 

For a one-object network model $F$, a more compact description of the category $\int F$ can be given by the following formula, where monoids and groups are being considered as one-object categories by default.
\[\int F \cong \coprod_{n \in \N} F(n) \rtimes S_n\]

The network operad $\O_F$ is a typed operad where the types are ordered $k$-tuples of elements of $C$. For objects $x_i, x$ of $\int F$, the operations in $\O_F$ are given by $\O_F(x_1, \dots, x_n; x) = \Int F(x_1 \otimes \cdots \otimes x_n, x)$.

Now suppose that $F$ is a one-colored network model, so that $F \maps \S \to \Mon$. Then the objects of $\S$ are simply natural numbers, so $\O_F$ is an $\N$-typed operad. Given $n_1, \dots, n_k, n \in \N$, we have
\[
    \O_F(n_1, \dots, n_k; n) = \hom_{\Int \! F}(n_1 + \cdots + n_k, n). 
\]
By definition, a morphism in this homset is a pair consisting of a bijection $\sigma \maps n_1 + \cdots + n_k \to n$ and an element of the monoid $F(n)$. So, we have
\begin{equation}
\label{eq:operations_in_CN}
    \O_F(n_1, \dots, n_k; n) = \left\{ 
    \begin{array}{cl}  S_n \times F(n) & \textrm{if } n_1 + \cdots n_k = n \\
    \emptyset & \textrm{otherwise.} \\
    \end{array} \right. 
\end{equation}

Here is the basic example:

\begin{expl}[\textbf{Simple network operad}]
\label{ex:simple_network_operad} 
    If $\SG \maps \S \to \Mon$ is the network model of simple graphs in \cref{ex:simple_graph}, we call $\O_\SG$ the \define{simple network operad}. By \cref{eq:operations_in_CN}, an operation in $\O_\SG(n_1, \dots, n_k; k)$ is an element of $S_n$ together with a simple graph having $\mathbf{n} = \{1, \dots, n\}$ as its set of vertices. 
\end{expl}

The operads coming from other one-colored network models work similarly. For example, if $\DG \maps \S \to \Mon$ is the network model of directed graphs from \cref{ex:directed_graph}, then an operation in $\O_\SG(n_1, \dots, n_k; n)$ is an element of $S_n$ together with a directed graph having $\n$ as its set of vertices.

In \cref{thm:equations} we gave a pedestrian description of one-colored network models. We can describe the corresponding network operads in the same style:

\begin{thm}
\label{thm:one-colored_network_operads}
    Suppose $F$ is a one-colored network model. Then the network operad $\O_F$ is the $\N$-typed operad for which the following hold:
    \begin{enumerate}
        \item The sets of operations are given by
        \[\O_F(n_1, \dots, n_k; n) = \left\{ 
        \begin{array}{cl}  S_n \times F(n) & \textrm{if } 
        n_1 + \cdots n_k = n \\
        \emptyset & \textrm{otherwise.} 
        \end{array}  \right. \]
        \item Composition of operations is given as follows. Suppose that
        \[(\sigma, g) \in S_n \times F(n) = \O_F(n_1, \dots, n_k; n) \]
        and for $1 \le i \le k$ we have
        \[(\tau_i, h_i) \in S_{n_i} \times F(n_i) =
        \O_F(n_{i1}, \dots, n_{ij_i}; n_i). \]
        Then 
        \[(\sigma, g) \circ ((\tau_1, h_1), \dots, (\tau_k, h_k)) = (\sigma (\tau_1 + \cdots + \tau_k), g \cup \sigma(h_1 \sqcup \cdots \sqcup h_k)) \]
        where $+$ is defined in \cref{eq:plus}, while $\cup$ and $\sqcup$ are defined in \cref{thm:equations}.
        \item The identity operation in $\O_F(n;n)$ is 
        $(1, e_n)$, where $1$ is the identity in $S_n$ and $e_n$ is the identity in the monoid $F(n)$.
        \item The right action of the symmetric group $S_k$ on $\O_F(n_1, \dots, n_k;n)$ is given as follows. Given $(\sigma, g) \in \O_F(n_1, \dots, n_k;n)$ and $\tau \in S_k$, we have
        \[(\sigma, g) \tau = (\sigma\tau, g) . \]
    \end{enumerate}
\end{thm}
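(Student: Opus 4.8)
My plan is to unwind the definition $\O_F = \Op(\int F)$, feeding the explicit ``nuts-and-bolts'' description of the strict symmetric monoidal category $\int F$ given just above into the underlying-operad construction of \cref{prop:operad_from_symmoncat}, which sets $\O_F(x_1, \dots, x_k; x) = \hom_{\int F}(x_1 \otimes \cdots \otimes x_k, x)$. First I would spell out what this description says in the one-colored case: the objects of $\int F$ are natural numbers; a morphism $m \to n$ exists only when $m = n$, and is then a pair $(\sigma, g)$ with $\sigma \in S_n$ and $g \in F(n)$; composition is $(\sigma, g) \circ (\tau, h) = (\sigma\tau,\, g \cup F\sigma(h))$; the identity on $n$ is $(1, e_n)$; the tensor of objects is $+$; the tensor of morphisms is $(\sigma, g) \otimes (\tau, h) = (\sigma + \tau,\, g \sqcup h)$; and $\int F$ is strict monoidal. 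Here $F\sigma$ is the $S_n$-action written $\sigma(-)$, and the monoid multiplication of $F(n)$ is $\cup$.

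Item (1) is then immediate: $\O_F(n_1, \dots, n_k; n) = \hom_{\int F}(n_1 + \cdots + n_k, n)$, which is empty unless $n_1 + \cdots + n_k = n$ since $\S$ has no morphisms between distinct objects, and equals $S_n \times F(n)$ otherwise. For item (2), operadic composition in an underlying operad is $f \circ (g_1, \dots, g_k) = f \circ (g_1 \otimes \cdots \otimes g_k)$, with no intervening coherence isomorphisms because $\int F$ is strict monoidal. Writing $f = (\sigma, g)$ and $g_i = (\tau_i, h_i)$, I would compute the iterated tensor $g_1 \otimes \cdots \otimes g_k = (\tau_1 + \cdots + \tau_k,\; h_1 \sqcup \cdots \sqcup h_k)$ — unambiguous because $\otimes$ is strictly associative, i.e.\ by Equation 10 of \cref{thm:equations} — and then apply the composition rule in $\int F$ with $\tau = \tau_1 + \cdots + \tau_k$ and $h = h_1 \sqcup \cdots \sqcup h_k$, obtaining $(\sigma(\tau_1 + \cdots + \tau_k),\; g \cup \sigma(h_1 \sqcup \cdots \sqcup h_k))$. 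This is precisely the claimed formula.

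Item (3) is just the observation that the identity operation at the type $n$ is by definition $\mathrm{id}_n$ in $\int F$, which is $(1, e_n)$. Item (4) is the observation that the right $S_k$-action of an underlying operad is precomposition with the symmetry of $\int F$ that permutes the $k$ tensor factors $n_1, \dots, n_k$ by $\tau$; over the base $\S$ this symmetry is the block permutation of $\{1, \dots, n\}$ determined by $\tau$, and the strict symmetric Grothendieck construction makes its fibre component the monoid unit $e_n$ (the symmetry hexagon for $F$ forces this component to be an identity). So composing $(\sigma, g)$ with it gives $(\sigma, g) \mapsto (\sigma\tau,\, g \cup \sigma(e_n)) = (\sigma\tau, g)$, writing $\sigma\tau$ for $\sigma$ postcomposed with the block permutation.

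The only genuinely delicate point — everything else being bookkeeping with definitions already in hand — is pinning down the symmetry isomorphism of $\int F$ used in item (4): one must check from the construction in \cref{sec:monequiv} that the braiding lying over a block permutation of the base carries the monoid unit as its fibre component, and one must keep the left/right conventions for composition in $\int F$ consistent throughout so that the permutation appearing in item (2) is $\sigma(\tau_1 + \cdots + \tau_k)$ and not its reverse.
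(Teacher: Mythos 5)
Your proposal is correct and is exactly the argument the paper intends: the paper's proof consists of the single sentence that the result is "a straightforward combination of the underlying operad of a symmetric monoidal category and the symmetric monoidal structure on $\int F$," and you have simply carried out that combination explicitly, using the nuts-and-bolts description of $\int F$ from \cref{sec:netoperads} together with \cref{prop:operad_from_symmoncat}. Your attention to the fibre component of the braiding in item (4) fills in the one detail the paper leaves implicit, and your computation agrees with the stated formulas.
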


\begin{proof}
    This is a straightforward combination of the underlying operad of a symmetric monoidal category and the symmetric monoidal structure on $\int F$.
\end{proof}

The construction of operads from symmetric monoidal categories described in \cref{prop:operad_from_symmoncat} is functorial, so the construction of operads from network models is as well. 

\begin{thm} 
\label{thm:O}
    The assignment of a network model $F \maps \S(C) \to \Cat$ to the operad $\O_F = \Op(\Int G)$ and a morphism of network models $(G, \gn) \maps (C, F) \to (C', F' G')$ to the operad morphism $\O_G = \Op(\widehat\Gamma)$ is a functor
    \[
        \O \maps \NetMod \to \Opd.
    \]
\end{thm}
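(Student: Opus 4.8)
The plan is to exhibit $\O$ as a composite of three functors, two of which are already available. By \cref{defn:CN} we have $\O_F = \Op(\Int F)$, where $\Int$ is the strict symmetric monoidal Grothendieck construction of \cref{sec:monequiv} and $\Op$ is the underlying-operad functor of \cref{prop:operad_from_symmoncat}. The only new ingredient is a functor $J \maps \NetMod \to \Sym\Mon\ICat$ that reinterprets a network model $(C,F)$, with $F \maps \S(C) \to \Cat$, as the object $(\S(C), F)$ of $\Sym\Mon\ICat$ (\cref{defn:SMICat}). Then $\O = \Op \circ \Int \circ J$, all regarded as functors of $1$-categories (the $1$-truncations of the relevant $2$-categories), and the theorem follows once each factor is known to be functorial.

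First I would check that $J$ is a functor. On objects there is nothing to do: $F \maps \S(C) \to \Cat$ is lax symmetric monoidal by \cref{defn:colored_network_model}, so $(\S(C), F)$ is an object of $\Sym\Mon\ICat$. On morphisms, a map $(f,g) \maps (C,F) \to (D,G)$ of $\NetMod$ consists of a function $f \maps C \to D$, which by \cref{prop:free} induces a unique strict (hence lax) symmetric monoidal functor $f_* \maps \S(C) \to \S(D)$, together with a monoidal natural transformation $g \maps F \To G \circ f_*$; this is exactly a morphism $(f_*, g) \maps (\S(C),F) \to (\S(D),G)$ in $\Sym\Mon\ICat$, so we set $J(f,g) = (f_*, g)$. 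Preservation of identities and composition reduces to two facts: that $f \mapsto f_*$ is functorial (immediate from the uniqueness clause of \cref{prop:free}, giving $(\mathrm{id}_C)_* = \mathrm{id}_{\S(C)}$ and $(f'f)_* = f'_* f_*$), and that the composite of the natural-transformation components in $\NetMod = \int \NetMod_-$ agrees with composition in $\Sym\Mon\ICat$, which holds because both are computed by the same Grothendieck-style paste.

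With $J$ in hand, the result follows by invoking functoriality of the other two factors. The strict symmetric monoidal Grothendieck construction is, by \cref{ch:MonGroth}, a functor $\Int \maps \Sym\Mon\ICat \to \SymMonCat$ sending $(\C, F)$ to $\int F$ and a morphism $(G,\gn)$ to the induced lax symmetric monoidal functor $\widehat\Gamma \maps \int F \to \int F'$ — on objects it acts by $G$ on the base component, and on a morphism $(\sigma, g)$ of $\int F$ it returns $(G\sigma, \gn(g))$ — and it preserves composition and identities. Likewise $\Op \maps \SymMonCat \to \Opd$ is a functor by \cref{prop:operad_from_symmoncat}. Composing, $\O = \Op \circ \Int \circ J$ is a functor $\NetMod \to \Opd$, and unwinding the definitions shows it sends $(C,F)$ to $\O_F$ and $(f,g)$ to the operad morphism $\Op(\widehat\Gamma)$, as claimed.

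I expect the one delicate point to be purely bookkeeping of coherence data: one must confirm that the tensor, composition $(\sigma, g) \circ (\tau, h) = (\sigma\tau,\, g \cdot F\sigma(h))$, and unit $(I, \phi_0)$ on $\int F$ (described concretely in \cref{sec:netoperads}) are transported correctly by $\widehat\Gamma$, compatibly with the change of base along $f_*$. But this is precisely the content of the functoriality of the monoidal Grothendieck construction proved in \cref{ch:MonGroth}, so here it is only cited. As a sanity check one can verify directly via \cref{thm:one-colored_network_operads} that for a monoid homomorphism $f \maps M \to M'$ the operad morphism $\O_{\Gamma_f}$ sends an operation $(\sigma, g) \in S_n \times \Gamma_M(n)$ to $(\sigma,\, f \circ g)$ and respects the composition formula given there.
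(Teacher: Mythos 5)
Your proof is correct and follows essentially the same route as the paper: the paper also obtains $\O$ by restricting the strict symmetric monoidal Grothendieck construction to $\NetMod$ (i.e.\ precomposing $\Int$ with the inclusion $\NetMod \hookrightarrow \Sym\Mon\ICat$, which you spell out as your functor $J$) and then composing with the underlying-operad functor $\Op$ of \cref{prop:functoriality_of_operads_from_ssmcs}. Your only slip is a citation: the functoriality of $\Op$ is \cref{prop:functoriality_of_operads_from_ssmcs}, not \cref{prop:operad_from_symmoncat}, which only gives the construction on objects.
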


\begin{proof}
    There is a functor 
    \[
        \textstyle{\Int} \maps \NetMod \to \Sym\Mon\Cat 
    \]
    given by restricting the strict symmetric monoidal Grothendieck construction of \cref{thm:mainthm} to $\NetMod$. Composing this with the functor
    \[
        \Op \maps \Sym\Mon\Cat \to \Opd 
    \]
    constructed in \cref{prop:functoriality_of_operads_from_ssmcs} we obtain a functor $\O \maps \NetMod \to \Opd$ with the properties stated in the theorem. Since these properties specify how $\O$ acts on objects and morphisms, it is unique.
\end{proof}

\subsection{Algebras of network operads}
\label{sec:algebras}

Our interest in network operads comes from their use in designing and tasking networks of mobile agents. The operations in a network operad are ways of assembling larger networks of a given kind from smaller ones. To describe how these operations act in a concrete situation we need to specify an algebra of the operad. The flexibility of this approach to system design takes advantage of the fact that a single operad can have many different algebras, related by homomorphisms.  

An algebra $A$ of a typed operad $O$ specifies a set $A(t)$ for each type $t \in T$ such that the operations of $O$ can be applied to act on these sets. That is, each algebra $A$ specifies: 

\begin{itemize}
    \item for each type $t \in T$, a set $A(t)$, and
    \item for any types $t_1, \dots, t_n, t \in T$, a function
    \[
        \alpha \maps O(t_1, \dots, t_n;t) \to \hom(A(t_1) \times \cdots \times A(t_n), A(t)) 
    \] 
\end{itemize}
obeying some rules that generalize those for the action of a monoid on a set \cite{Yau}. All the examples in this section are algebras of network operads constructed from one-colored network models $F \maps \S \to \Mon$. This allows us to use \cref{thm:one-colored_network_operads}, which describes $\O_F$ explicitly.

The most basic algebra of such a network operad $\O_F$ is its `canonical algebra', where it acts on the kind of network described by the network model $F$:

\begin{expl}[\textbf{The canonical algebra}]
\label{ex:canonical_algebra}
    Let $F \maps \S \to \Mon$ be a one-colored network model. Then the operad $\O_F$ has a \define{canonical algebra} $A_F$ with
    \[
        A_F(n) = F(n) 
    \]
    for each $n \in N$, the type set of $\O_F$.
    In this algebra any operation
    \[
        (\sigma, g) \in  \O_F(n_1, \dots , n_k; n) = S_n \times F(n) 
    \] 
    acts on a $k$-tuple of elements
    \[
        h_i \in A_F(n_i) = F(n_i)   \qquad (1 \le i \le k)
    \]
    to give
    \[
        \alpha(\sigma, g)(h_1, \dots, h_k) = g \cup \sigma(h_1 \sqcup \cdots \sqcup h_k) \in A(n) .
    \]
    Here we use \cref{thm:equations}, which gives us the ability to overlay networks using the monoid structure $\cup \maps F(n) \times F(n) \to F(n)$, take their `disjoint union' using maps $\sqcup \maps F(m) \times F(m') \to F(m + m')$, and act on $F(n)$ by elements of $S_n$. Using the equations listed in this theorem one can check that $\alpha$ obeys the axioms of an operad algebra.
\end{expl}

When we want to work with networks that have more properties than those captured by a given network model, we can equip elements of the canonical algebra with extra attributes. Three typical kinds of network attributes are vertex attributes, edge attributes, and `global network' attributes. For our present purposes, we focus on vertex attributes. Vertex attributes can capture internal properties (or states) of agents in a network such as their locations, capabilities, performance characteristics, etc.

\begin{expl}[\textbf{Independent vertex attributes}]
\label{ex:vertex_attribute_algebra}
    For any one-colored network model $F\maps \S \to \Mon$ and any set $X$, we can form an algebra $A_X$ of the operad $\O_F$ that consists of networks whose vertices have attributes taking values in $X$. To do this, we define 
    \[
        A_X(n) = F(n) \times X^n  .
    \]
    In this algebra, any operation
    \[
        (\sigma, g) \in  \O_F(n_1, \dots , n_k; n) = S_n \times F(n) 
    \] 
    acts on a $k$-tuple of elements
    \[
        (h_i, x_i) \in F(n_i) \times X^{n_i} \qquad (1 \le i \le k) 
    \]
    to give
    \[
        \alpha_X(\sigma, g) = (g \cup \sigma(h_1 \sqcup \cdots \sqcup h_k), \sigma(x_1, \dots, x_k)). 
    \]
    Here $(x_1, \dots, x_k) \in X^n$
    is defined using the canonical bijection
    \[
        X^n \cong \prod_{i=1}^k X^{n_i} 
    \]
    when $n_1 + \cdots + n_k = n$, and $\sigma \in S_n$ acts on $X^n$ by permutation of coordinates. In other words, $\alpha_X$ acts via $\alpha$ on the $F(n_i)$ factors while permuting the vertex attributes $X^n$ in the same way that the vertices of the network $h_1 \sqcup \cdots \sqcup h_k$ are permuted.
    
    One can easily check that the projections 
    $F(n) \times X^n \to F(n)$ 
    define a homomorphism of $\O_F$-algebras, which we call
    \[
        \pi_X \maps A_X \to A  .
    \]
    This homomorphism `forgets the vertex attributes' taking values in the set $X$.
\end{expl}

\begin{expl}[\textbf{Simple networks with a rule obeyed by edges}]
\label{ex:edge_exception_algebra}
    Let $\O_\SG$ be the simple network operad as explained in \cref{ex:simple_network_operad}. We can form an algebra of the operad $\O_\SG$ that consists of simple graphs whose vertices have attributes taking values in some set $X$, but where an edge is permitted between two vertices only if their attributes obey some condition. We specify this condition using a symmetric function 
    \[
        p \maps X\times X \to \Boole 
    \] 
    where $\Boole = \{F, T\}$. An edge is not permitted between vertices with attributes $(x_1, x_2) \in X \times X$ if this function evaluates to $F$.
    
    To define this algebra, which we call $A_p$, we let $A_p(n) \subseteq \SG(n) \times X^n$ be the set of pairs $(g, x)$ such that for all edges $\{i, j\} \in g$ the attributes of the vertices $i$ and $j$ make $p$ true:
    \[
        p(x(i), x(j)) = T .
    \]
    There is a function 
    \[
        \tau_p \maps A_X(n) \to A_p(n) 
    \]
    that discards edges $\{i, j\}$ for which $p(x(i), x(j)) = F$. Recall that $A_X(n) = \SG(n) \times X^n$, and recall from \cref{ex:simple_graph_2} that we can regard $\SG(n)$ as the set of functions $g \maps \E(n) \to \Boole$. Then we define $\tau_p$ by
    \[ 
        \tau_p(g, x) = (\overline{g}, x) 
    \]
    where
    \[
        \overline{g}\{i, j\} = 
        \left\{  \begin{array}{ccl} g\{i, j\} & \textrm{if} & p(x(i), x(j)) = T \\
        F & \textrm{if} & p(x(i), x(j)) = F.
        \end{array} \right.
    \]
    We can define an action $\alpha_p$ of $\O_\SG$ on the sets $A_p(n)$ with the help of this function. Namely, we take $\alpha_p$ to be the composite
    \[
    \begin{tikzcd}
        \O_\SG(n_1, \dots, n_k ; n) \times A_p(n_1) \times \cdots \times A_p(n_k)  \arrow[d, hookrightarrow]
        \\
        \O_\SG(n_1, \dots, n_k ; n) \times A_X(n_1) \times \cdots \times A_X(n_k)  \arrow[d, "\alpha_X"]
        \\
        A_X(n)\arrow[d, "\tau_p"]
        \\
        A_p(n)
    \end{tikzcd}\]
    where the action $\alpha_X$ was defined in \cref{ex:vertex_attribute_algebra}. One can check that $\alpha_p$ makes the sets $A_p(n)$ into an algebra of $\O_\SG$, which we call $A_p$. One can further check that the maps $\tau$ define a homomorphism of $\O_\SG$-algebras, which we call
    \[
        \tau_p \maps A_X \to A_p  .
    \]
\end{expl}

\begin{expl}[\textbf{Range-limited networks}]
\label{ex:range_limit_algebra}
    We can use the previous examples to model range-limited communications between entities in a plane. First, let $X = \R^2$ and form the algebra $A_X$ of the simple network operad $\O_\SG$. Elements of $A_X(n)$ are simple graphs with vertices in the plane. 
    
    Then, choose a real number $L \ge 0$ and let $d$ be the usual Euclidean distance function on the plane. Define $p \maps X \times X \to \Boole$ by setting $p(x, y)=T$ if $d(x, y) \le L$ and $p(x, y) = F$ otherwise. Elements of $A_p(n)$ are simple graphs with vertices in the plane such that no edge has length greater than $L$.
\end{expl}
   
\begin{expl}[\textbf{Networks with edge count limits}]
\label{ex:edge_count_algebra}
    Recall the network model for multigraphs $\MGplus$, defined in \cref{ex:multigraph} and clarified in \cref{ex:multigraph_2}. An element of $\MGplus(n)$ is a multigraph on the set $\n$, namely a function $g \maps \E(n) \to \N$ where $\E(n)$ is the set of 2-element subsets of $\n$.
    If we fix a set $X$ we obtain an algebra $A_X$ of $\O_{\MGplus}$ as in \cref{ex:vertex_attribute_algebra}. The set $A_X(n)$ consists of multigraphs on $\n$ where the vertices have attributes taking values in $X$. 
    
    Starting from $A_X$ we can form another algebra  where there is an upper bound on how many edges are allowed between two vertices, depending on their attributes. We specify this upper bound using a symmetric function
    \[b \maps X \times X \to \N. \]
    
    To define this algebra, which we call $A_b$, let
    $A_b(n) \subseteq \MGplus(n) \times X^n$ be the set of pairs $(g, x)$ such that for all $\{i, j\} \in \E(n)$ we have 
    \[g(i, j) \le b(x(i), x(j)) .\]
    Much as in \cref{ex:edge_exception_algebra} there is function 
    \[\pi \maps A_X(n) \to A_b(n) \]
    that enforces this upper bound: for each $g \in A_X(n)$ its image  $\pi(g)$ is obtained by reducing the number of edges between vertices $i$ and $j$ to the minimum of $g(i, j)$ and $\beta(i, j)$:
    \[\pi(g)(i, j) = g(i, j) \min \beta(i, j) .\]
    We can define an action $\alpha_b$ of $\O_\MG$ on the sets $A_b(n)$ as follows:
    \[\begin{tikzcd}
    \O_\MG(n_1, \dots, n_k ; n) \times A_p(n_1) \times \cdots \times A_p(n_k)  \arrow[d, hookrightarrow]
    \\
    \O_\MG(n_1, \dots, n_k ; n) \times A_X(n_1) \times \cdots \times A_X(n_k)  \arrow[d, "\alpha_X"]
    \\
    A_X(n)\arrow[d, "\pi"]
    \\
    A_p(n)
    \end{tikzcd}\]
    One can check that $\alpha_b$ indeed makes the sets $A_b(n)$ into an algebra of $\O_\MGplus$, which we call $A_b$, and that the maps $\pi_p$ define a homomorphism of $\O_\MGplus$-algebras, which we call
    \[\pi_p \maps A_X \to A_b  .\]
\end{expl}
    
\begin{expl}[\textbf{Range-limited networks, revisited}]
\label{ex:range_limit_algebra_2}
    We can use \cref{ex:edge_count_algebra} to model entities in the plane that have two types of communication channel, one of which has range $L_1$ and another of which has a lesser range $L_2 < L_1$. To do this, take $X = \R^2$ and define $b \maps X \times X \to \N$ by 
    \[b(x, y)= \left\{ \begin{array}{cl}
    0 & \textrm{if } d(x, y) > L_1 \\
    1 & \textrm{if } L_2 < d(x, y) \le L_1  \\
    2 & \textrm{if } d(x, y) \le L_2 
    \end{array}  \right.
    \]
    Elements of $A_b(n)$ are multigraphs with vertices in the plane having no edges between vertices whose distance is $> L_1$, at most one edge between vertices whose distance is $\le L_1$ but $> L_2$, and at most two edges between vertices whose distance is $\le L_2$. 
    
    Moreover, the attentive reader may notice that the action $\alpha_b$ of $\O_\MGplus$ for this specific choice of $b$ factors through an action of $\O_{\Gamma_{\Boole_2}}$, where $\Gamma_{\Boole_2}$ is the network model defined in \cref{ex:multigraph_with_at_most_k}. That is, operations $\O_{\Gamma_{\Boole_2}}(n_1, \dots , n_k; n) = S_n \times \Gamma_{\Boole_2}(n)$ where
    $\Gamma_{\Boole_2}(n)$ is the set of multigraphs on $\n$ \emph{with at most $2$ edges between vertices} are sufficient to compose these range-limited networks. This is due to the fact that the values of this $b \maps X \times X \to \N$ are at most 2. 
\end{expl}

These examples indicate that vertex attributes and constraints can be systematically added to the canonical algebra to build more interesting algebras, which are related by homomorphisms. \cref{ex:vertex_attribute_algebra} illustrates how adding extra attributes to the networks in some algebra $A$ can give networks that are elements of an algebra $A'$ equipped with a homomorphism $\pi \maps A' \to A$ that forgets these extra attributes. \cref{ex:edge_count_algebra} illustrates how imposing extra constraints on the networks in some algebra $A$ can give an algebra $A'$ equipped with a homomorphism $\tau \maps A \to A'$ that imposes these constraints: this works only if there is a well-behaved systematic procedure, defined by $\tau$, for imposing the constraints on any element of $A$ to get an element of $A'$.

The examples given so far scarcely begin to illustrate the rich possibilities of network operads and their algebras. In particular, it is worth noting that all the specific examples of network models described here involve commutative monoids. However, noncommutative monoids are also important. Suppose, for example, that we wish to model entities with a limited number of point-to-point communication interfaces---e.g. devices with a finite number $p$ of USB ports. More formally, we wish to act on sets of degree-limited networks $A_{\rm deg} (n)\subset \SG(n) \times \N^n$ made up  of pairs $(g, p)$ such that the degree of each vertex $i$, ${\rm deg}(i), $ is at most the degree-limiting attribute of $i$: ${\rm deg}(i) \le p(i)$. Na\"ively, we might attempt to construct a map $\tau_{\rm deg} \maps A_\N \to A_{\rm deg}$ as in \cref{ex:edge_count_algebra} to obtain an action of the simple network operad $\O_\SG$. However, this is turns out to be impossible. For example, if attempt to build a network from devices with a single USB port, and we attempt to connect multiple USB cables to one of these devices, the relevant network operad must include a rule saying which attempts, if any, are successful. Since we cannot prioritize links from some vertices over others---which would break the symmetry built into any network model---the order in which these attempts are made must be relevant. Since the monoids $\SG(n)$ are commutative, they cannot capture this feature of the situation.

The solution is to use a class of noncommutative monoids dubbed `graphic monoids' by Lawvere \cite{GeneralizedGraphs}: namely, those that obey the identity $aba = ab$. These allow us to construct a one-colored network model $\Gamma \maps \S \to \Mon$ whose network operad $\O_\Gamma$ acts on $A_{\rm deg}$. For our USB device example, the relation $aba = ab$ means that first attempting to connect some USB cables between some devices ($a$), second attempting to connect some further USB cables ($b$), and third attempting to connect some USB cables \emph{precisely as attempted in the first step} ($a$, again) has the same result as only performing the first two steps ($ab$). We explore more applications of noncommutativity in network models in \cref{ch:NNM}.

}{\ssp
\setcounter{chapter}{2}
\chapter{Noncommutative Network Models}
\label{ch:NNM}

\section{Introduction}
\label{sec:NNMIntro}

In \cref{thm:graph_model}, we gave a functorial construction of a network model from a monoid, which we call the \emph{ordinary network model for weighted graphs}. In this chapter, we provide a different construction in order to realize a larger class of networks as algebras of network operads, which we call the \emph{free varietal network model for weighted graphs}. In Section \ref{sec:commitment}, we give an example of a family of networks which cannot form an algebra for any ordinary network model for weighted graphs, but does for a varietal one.  In this chapter, we give a construction for the free network model on a given monoid. This describes networks which look like the given monoid when you restrict to looking at the combinatorial behavior at a single pair of nodes. In \cref{sec:funnetmod}, we give a concrete construction of a left adjoint to the functor which evaluates a network model at its second level. This requires a categorical treatment and generalization of Green's theory of products of groups indexed by a graph, (i.e.\ \emph{graph products of groups}) \cite{Green}, which we give in \cref{sec:graphs}. 

This construction is designed to model networks which carry information on the edges. For example, with $\N$ a monoid under addition, $\Gamma_\N$ is a network model for loopless undirected multigraphs where overlaying is given by adding the number of edges. A similar example is $\Gamma_\Boole = \SG$. There is a monoid homomorphism $\N \to \Boole$ which sends all but $0$ to $T$. This induces a map of network models $\Gamma_\N \to \Gamma_\Boole$. Essentially this map reduces the information of a graph from the number of connections between each pair of vertices to just the existence of any connection.

%this example works
\begin{expl}[\bf Algebra for range-limited communication]
    Consider a communication network where each node represents a boat and an edge between two nodes represents a working communication channel between the corresponding boats. Some forms of communication are restricted by the distance between those communicating. Assume that there is a known maximal distance over which our boats can communicate. Networks of this sort form an algebra of the simple graphs operad in the following way.
    
    Let $(X,d)$ be a metric space, and $0 \leq L \in \R$. Our boats will be located at points in this space. The operad $\O_\SG$ has an algebra $(A_{d,L}, \alpha)$ defined as follows. The set $A_{d,L}(\n)$ is the set of pairs $(h,f)$ where $h \in \SG(\n)$ is a simple graph and $f \maps \n \to X$ is a function such that if $\{v_1,v_2\}$ is an edge in $g$ then $d(f(v_1),f(v_2)) \leq L$. The number $L$ represents the maximal distance over which the boat's communication channels operate. Notice that this condition does not demand that all connections within range must be made. An operation $(\sigma, g)\in \O_\SG(\n_1, \dots, \n_k; \n)$ acts on a $k$-tuple $(h_i, f_i) \in A_{d,L}(\n_i)$ by 
    \[
        \alpha(\sigma, g)((h_1, f_1), \dots, (h_k, f_k)) = (g \cup \sigma(h_1 \sqcup \dots \sqcup h_k), f_1 \sqcup \dots \sqcup f_k).
    \] 
    Elements of this algebra are simple graphs in the space $X$ with an upper limit on edge lengths. When an operation acts on one of these, it tries to put new edges into the graph, but fails to when the range limit is exceeded \cite{NetworkModels}.
\end{expl}

A characteristic of the construction given in Theorem \ref{thm:graph_model} is that elements of the resulting monoids that correspond to different edges automatically commute with each other. 
For example, for a monoid $M$, the fourth constituent monoid of the ordinary $M$ network model is $\Gamma_M(4) = M^6$.
Then the element $(m_1, 0, 0, 0, 0, 0)$ represents a graph with one edge with weight $m_1 \in M$, the element $(0, m_2, 0, 0, 0, 0)$ represents a graph with a different edge with weight $m_2 \in M$, and 
\begin{align*}
    (m_1, 0, 0, 0, 0, 0) \cup (0, m_2, 0, 0, 0, 0) 
    &= (m_1, m_2, 0, 0, 0, 0) 
    \\&= (0, m_2, 0, 0, 0, 0) \cup (m_1, 0, 0, 0, 0, 0).
\end{align*}

This commutativity between edges means that networks given by ordinary network models cannot record information about the order in which edges were added to it. 
The ability to record such information about a network is desirable, for example, if one wishes to model networks which have a limit on the number of connections each agent can make to other agents. 

The \define{degree} of a vertex in a simple graph is the number of edges which include that vertex. The \define{degree} of a graph is the maximum degree of its vertices. A graph is said to have \define{degree bounded by $k$}, or simply \define{bounded degree}, if the degree of each vertex is less than or equal to $k$. Let $B_k(\n)$ denote the set of networks with $\n$ vertices and degree bound $k$. One might guess that the family of such networks could form an algebra for the simple graphs operad. 

\begin{question}
    Does the collection of networks of bounded degree form an algebra of a network operad? If so, is there such an algebra which is useful in applications?
\end{question}

Specifically, can networks of bounded degree form an algebra of $\O_\SG$, the simple graph operad? Setting two graphs next to each other will not change the degree of any of the vertices. Overlaying them almost definitely will, which makes defining an action of $\SG(\n)$ on $B_k(\n)$ less obvious. 

Ordinary network models are not sufficient to model this type of network because the graph monoids it produced could not remember the order that edges were added into a network. Even if $M$ is a noncommutative monoid, since $\Gamma_M$ is a product of several copies of $M$, one for each pair of vertices, it cannot distinguish the order that two different edges touching $v_1$ were added to a network if their other endpoints are different. 

Instead of taking the product of $\binom{n}{2}$ copies of $M$, we consider taking the coproduct, so as not to impose any commutativity relations between the edges. Since the lax structure map $\sqcup \maps F(\m) \times F(\n) \to F(\m+\n)$ associated to a network model $F \maps \S \to \Mon$ must be a monoid homomorphism, then \[(a \sqcup b) \cup (c \sqcup d) = (a \cup c) \sqcup (b \cup d).\] In particular, if we let $\emptyset$ denote the the identity of $F(\n)$ for any $\n$, then 
\begin{align*}
    (a \sqcup \emptyset) \cup (\emptyset \sqcup b) 
    &= (a \cup \emptyset) \sqcup (\emptyset \cup b)
    \\&= (\emptyset \cup a) \sqcup (b \cup \emptyset)
    \\&= (\emptyset \sqcup b) \cup (a \sqcup \emptyset).
\end{align*}
This is reminiscent of the Eckmann--Hilton argument (see \cref{app:monoidalcats}), but notice that the domains of the operations $\cup$ and $\sqcup$ are not the same. This equation says that elements which correspond to disjoint edges must commute with each other. Simply taking the coproduct of $\binom{n}{2}$ copies of $M$ cannot give the constituent monoids of a network model.

For a collection of monoids $\{M_i\}_{i \in I}$, elements of the product monoid which come from different components always commute with each other. In the coproduct, they never do. A \emph{graph product} (in the sense of Green \cite{Green}) of such a collection allows one to impose commutativity between certain components and not others by indicating such relations via a simple graph. The calculation above shows that the constituent monoids of a network model must satisfy certain partial commutativity relations. We use graph products to construct a family of monoids with the right amount of commutativity to both answer the question above and satisfy the conditions of being a network model. The following theorems are proven in Section \ref{sec:funnetmod}.

\begin{thm}
    The functor $\NetMod \to \Mon$ defined by $F \mapsto F(\2)$ has a left adjoint \[\Gamma_{-,\Mon} \maps \Mon \to \NetMod.\]
\end{thm}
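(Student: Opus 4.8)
The plan is to produce $\Gamma_{-, \Mon}$ explicitly and then check its universal property, following the template of the proof of \cref{thm:graph_model}. For a monoid $M$ and $n \in \N$, I would let $\Gamma_{M, \Mon}(n)$ be the graph product — in the generalized sense developed in \cref{sec:graphs} — of copies of $M$ indexed by the Kneser-type graph $\mathsf{Kn}(n)$ whose vertices are the ordered pairs $(i,j)$ with $i \ne j$ in $\n$ and in which $(i,j)$ is adjacent to $(k,l)$ exactly when $\{i,j\} \cap \{k,l\} = \emptyset$. Thus the copies of $M$ attached to disjoint edges are forced to commute while the two copies attached to a single edge stay free; in particular $\Gamma_{M,\Mon}(\2) \cong M \ast M$, the free monoid-with-$S_2$-action on $M$, with $S_2$ exchanging the two factors. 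The symmetric group $S_n$ acts by permuting the vertices of $\mathsf{Kn}(n)$; the disjoint-union laxator $\Gamma_{M,\Mon}(m) \times \Gamma_{M,\Mon}(n) \to \Gamma_{M,\Mon}(m+n)$ is the map of graph products induced by the inclusion of $\mathsf{Kn}(m) \sqcup \mathsf{Kn}(n)$ as the induced subgraph of $\mathsf{Kn}(m+n)$ on the vertices whose underlying edge does not straddle the cut; and a homomorphism $f \maps M \to M'$ induces $\Gamma_f$ by applying $f$ in each factor.

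To see this is genuinely a network model I would present it, as in \cref{thm:graph_model}, as a composite of (lax) symmetric monoidal functors. First, $\mathsf{Kn}$ is a lax symmetric monoidal functor $\S \to \Grph$, where $\Grph$ is given the graph join as tensor product and the empty graph as unit, the laxator being exactly the subgraph inclusion above; here one uses that the join of $\mathsf{Kn}(m)$ and $\mathsf{Kn}(n)$ is precisely the induced subgraph of $\mathsf{Kn}(m+n)$ on the non-straddling vertices. Second, ``graph product of copies of $M$'' is a \emph{strong} symmetric monoidal functor $\Grph \to \Mon$, since a graph product over a join is the product of the graph products and a graph product over the empty graph is trivial — this is the structural fact that the results of \cref{sec:graphs} are needed to license. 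A composite of a lax with a strong symmetric monoidal functor is lax symmetric monoidal, so $\Gamma_{M,\Mon}$ is a network model; whiskering $\mathsf{Kn}$ with the evident natural transformation between the ``graph product of copies of $(-)$'' functors induced by $f$ then makes $\Gamma_{-, \Mon}$ into a functor, exactly as $\Gamma$ was made functorial in \cref{thm:models_from_monoids}. (If a pedestrian check is wanted instead, the equations of \cref{thm:equations} can be verified directly.)

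For the adjunction, the unit $\eta_M \maps M \to \Gamma_{M, \Mon}(\2) \cong M \ast M$ is the inclusion of the factor attached to the directed edge $(1,2)$. Given a network model $F$ and a homomorphism $\phi \maps M \to F(\2)$, I would build the transpose $\tilde\phi \maps \Gamma_{M, \Mon} \To F$ levelwise from the universal property of the graph product: to each vertex $(i,j)$ of $\mathsf{Kn}(n)$ assign the homomorphism $M \to F(n)$ got by composing $\phi$ with the ``place a level-$\2$ network on the edge $\{i,j\}$'' map $F(\2) \to F(n)$, which is a laxator $F(\2) \times F(n-2) \to F(n)$ (identity fed into the second slot) followed by $F(\sigma)$ for a permutation $\sigma$ carrying $\{1,2\}$ to $\{i,j\}$; the two directed edges underlying $\{i,j\}$ are interchanged by the $S_2$-action, so the assignment on the second factor of each $M \ast M$ is forced to be $F(\tau)\circ\phi$. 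One then checks that the images in $F(n)$ of the copies attached to disjoint edges commute — this is exactly the Eckmann--Hilton-type consequence of the interchange law $(g_1 \cup g_2) \sqcup (h_1 \cup h_2) = (g_1 \sqcup h_1)\cup(g_2 \sqcup h_2)$ of \cref{thm:equations} anticipated in the introduction — so the universal property of the graph product produces a well-defined $\tilde\phi_n$. Naturality in $\S$ and compatibility with the laxators are then routine; in particular naturality at the swap $\tau \in S_2$ works precisely because $\Gamma_{M,\Mon}(\2)$ is the \emph{free} $S_2$-monoid on $M$ rather than $M$ itself (for a network model such as directed graphs the $S_2$-action on $F(\2)$ is nontrivial, so $M$ with its trivial action could not possibly work). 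Finally $\tilde\phi_{\2} \circ \eta_M = \phi$ by construction, and uniqueness holds because every $\Gamma_{M, \Mon}(n)$ is generated, under overlay, disjoint union and the $S_n$-action, by the images of these edge-placement maps applied to $\Gamma_{M, \Mon}(\2)$, while $\Gamma_{M, \Mon}(\2)$ is generated as an $S_2$-monoid by $\eta_M(M)$.

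I expect the real work — and the main obstacle — to sit inside \cref{sec:graphs}: establishing that ``graph product of copies of $M$'' is a well-behaved, join-monoidal functor on graphs, i.e.\ the generalization of Green's theory of graph products from groups to monoids, where normal-form arguments and the embedding of the factors are more delicate than in the group case; and, hand in hand with this, verifying that the commuting relations encoded by $\mathsf{Kn}(n)$ really do hold in every network model $F$, which is the categorical avatar of the Eckmann--Hilton argument and is exactly why a graph product, rather than a plain coproduct, is the construction that is forced. An abstract existence proof is also available — the relevant category is locally presentable and $F \mapsto F(\2)$ preserves limits and filtered colimits — but it would not yield the explicit, combinatorial left adjoint that the theorem and its intended applications require, so the hands-on construction above is the route I would take.
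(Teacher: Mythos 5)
Your overall strategy coincides with the paper's: realize $\Gamma_{M,\Mon}(n)$ as a Green product of copies of $M$ over a Kneser-type graph recording disjointness of edges, use the interchange law of \cref{thm:equations} (via \cref{prop:knesercomm}) to see that copies attached to disjoint edges are forced to commute in any network model, and obtain the transpose of a homomorphism $M \to F(\2)$ from the universal property of the Green product by feeding it the ``edge-placement'' maps $c_{i,j} = F((1\,i)(2\,j)) \circ \Phi_{\2,\mathbf{n-2}}(-,e)$. Your packaging of the laxator --- making the Kneser construction lax monoidal into graphs-with-join and showing that Green products carry joins to products --- is a tidier route to the same lemma that the paper proves by hand, namely that $\phi \circ (i_1 p_1 \cup i_2 p_2)$ is a monoid homomorphism (the long string-diagram computation in \cref{sec:funnetmod}).

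Where you genuinely diverge is the indexing graph, and the divergence is not cosmetic. The paper uses $KG_{n,2}$ on the $\binom{n}{2}$ \emph{unordered} pairs, so that $\Gamma_{M,\V}(\2)=M$ with trivial $S_2$-action and the unit of the adjunction is the identity; your graph on \emph{ordered} pairs gives $M \ast M$ at level $\2$ with $S_2$ swapping the free factors and a nontrivial unit. These are non-isomorphic functors, so you are not constructing the object the paper constructs --- in particular the examples computed in \cref{sec:commitment} (words in undirected edges $e_{p,q}$, the graphic-monoid variant, the bounded-degree algebra) all live in the unordered version. Your reason for the change is a real issue rather than a quibble: naturality of a morphism $\Gamma_{M,\V} \To F$ at the swap in $\S$ forces its level-$\2$ component to land in the $S_2$-fixed points of $F(\2)$ when the source carries the trivial action, so for a target such as $\DG$ of \cref{ex:directed_graph} the map $\alpha \mapsto \alpha_\2 \circ \eta$ cannot hit every monoid homomorphism $M \to F(\2)$; correspondingly, the paper's counit maps $c_{i,j}$ depend on an orientation of each unordered edge ($c_{i,j}$ and $c_{j,i}$ differ by $F$ of a transposition), and equivariance of $(\epsilon_F)_n$ is exactly what is at stake there. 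So your construction is the one that is left adjoint to evaluation-at-$\2$ valued in bare $\Mon$ (it factors as the free $S_2$-monoid $M \mapsto M\ast M$ followed by the free network model on an $S_2$-monoid), whereas the paper's is the left adjoint if one reads the right adjoint as valued in $S_2$-equivariant monoids or restricts to network models whose level-$\2$ action is trivial. You should state explicitly which reading you are proving and flag that your $\Gamma_{-,\Mon}$ is larger than the paper's; otherwise your uniqueness and triangle-identity arguments, and the acknowledged reliance on \cref{sec:graphs} for the normal-form theory of monoid graph products, are in order.
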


The fact that this construction is a left adjoint tells us that the network models constructed are ones in which the only relations that hold are those that follow from the defining axioms of network models. 

A \emph{variety} of monoids is the class of all monoids satisfying a given set of identities. For example, $\Mon$ has subcategories $\CMon$ of commutative monoids and $\GMon$ of graphic monoids which are varieties of monoids satisfying the equations \[ab=ba \quad\text{and}\quad aba=ab\] respectively. Given a variety of monoids $\V$, let $\NetMod_\V$ be the subcategory of $\NetMod$ consisting of $\V$-valued network models. We recreate graph products in varieties of monoids to obtain a more general result.

% V-varietal M network model
\begin{thm}
    The functor $\NetMod_\V \to \V$ defined by $F \mapsto F(\2)$ has a left adjoint $\Gamma_{-,\V} \maps \V \to \NetMod_\V$.
\end{thm}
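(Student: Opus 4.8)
The plan is to reproduce the construction and proof used for the $\Mon$-valued case, replacing Green's graph product of monoids by the graph product internal to the variety $\V$. The one genuinely new input is that graph products exist in $\V$ with the expected universal property: because $\V$ is a variety it is closed in $\Mon$ under coproducts and under quotients by congruences, so given a simple graph $G$ on a vertex set $I$ and a family $\{M_i\}_{i\in I}$ of objects of $\V$, one forms $\coprod_{i\in I}M_i$ in $\V$ and quotients by the congruence generated by $xy=yx$ for $x\in M_i$, $y\in M_j$ with $\{i,j\}$ an edge of $G$. The result, still an object of $\V$, corepresents families $\{f_i\maps M_i\to N\}_{i\in I}$ of $\V$-morphisms whose images commute pairwise in $N$ along the edges of $G$. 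The normal-form theory needed to show moreover that each $M_i$ \emph{embeds} into this graph product is the generalization of Green's work carried out in \cref{sec:graphs}; this is what makes the laxators below honest embeddings, as one wants of a network model.

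Given $M\in\V$, define $\GMV(n)$ to be the graph product of $\E(n)$ copies of $M$ indexed by the Kneser graph on the $2$-element subsets of $\n$, two such subsets being adjacent exactly when disjoint — verbatim the recipe sketched in the introduction. The group $S_n$ acts on $\GMV(n)$ through its permutation action on $\E(n)$. The laxator $\sqcup\maps\GMV(m)\times\GMV(n)\to\GMV(m+n)$ is assembled from the two index-set inclusions $\E(m)\hookrightarrow\E(m+n)$ and $\E(n)\hookrightarrow\E(m+n)$ together with the rule that a ``crossing'' pair $\{i,j\}$ with $i\le m<j$ is sent to the unit; this is a well-defined $\V$-morphism because every $2$-subset of $\m$ is disjoint from every $2$-subset of $\{m{+}1,\dots,m{+}n\}$, so the two images commute in $\GMV(m+n)$, and a pair of homomorphisms out of $A$ and $B$ with commuting images assembles into a homomorphism out of $A\times B$ (the product in $\V$ has the underlying monoid of the product in $\Mon$). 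Checking that $(\GMV,\sqcup)$ is a lax symmetric monoidal functor $\S\to\V$ — that is, verifying the equations of \cref{thm:equations} governing $\sqcup$, including associativity, the unit laws, and the block-permutation identity involving $B_{m,n}$ — is the same routine combinatorics of $2$-subsets as in the $\Mon$ case, and functoriality of $M\mapsto\GMV$ is clear, since a morphism $M\to M'$ in $\V$ induces a map on every copy of $M$, hence on every graph product.

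For the adjunction, note first that $\E(2)$ is a single point, so $\GMV(\2)\cong M$; this identification serves as the unit $\eta_M$. Given $F\in\NetMod_\V$ and a $\V$-morphism $f\maps M\to F(\2)$, each edge $e\in\E(n)$ determines a ``plant at $e$'' homomorphism $M\cong\GMV(\2)\to\GMV(n)$, namely $\sqcup$ with identities followed by a permutation carrying $\{1,2\}$ to $e$; composing with the analogous map $F(\2)\to F(n)$ and with $f$ yields, for each $e$, a morphism $M\to F(n)$ in $\V$. Along disjoint edges these morphisms have commuting images, by the Eckmann--Hilton-style identity $(a\sqcup\emptyset)\cup(\emptyset\sqcup b)=(\emptyset\sqcup b)\cup(a\sqcup\emptyset)$ recorded earlier in this chapter, so the universal property of the graph product produces a unique $\hat f_n\maps\GMV(n)\to F(n)$; one checks directly that the $\hat f_n$ are $S_n$-equivariant and compatible with the laxators, hence form a morphism $\hat f\maps\GMV\To F$ of network models restricting to $f$ at level $2$. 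For uniqueness, any morphism $\GMV\To F$ is determined by its level-$2$ component, because each $\GMV(n)$ is generated as an object of $\V$ by the images of the plant-at-$e$ maps. This gives the natural bijection $\NetMod_\V(\GMV,F)\cong\V(M,F(\2))$.

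The main obstacle is the first step — setting up graph products inside an arbitrary variety $\V$ and proving the normal-form and embedding results that underwrite them — since everything afterward is essentially a transcription of the $\Mon$ argument. A secondary nuisance is the bookkeeping needed to confirm that $\GMV$ satisfies all twelve equations of \cref{thm:equations} and that the maps $\hat f_n$ genuinely patch into a monoidal natural transformation; these verifications are unenlightening but must be carried out.
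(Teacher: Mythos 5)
Your proposal is correct and follows essentially the same route as the paper: the key step in both is to use the universal property of the Kneser-graph Green product, feeding it the ``place an element of $F(\2)$ at edge $\{i,j\}$'' homomorphisms (built from the laxator $\Phi_{\2,\mathbf{n-2}}$ and the $S_n$-action) together with the observation that disjoint edges yield commuting images because $F$ is a network model. The only difference is presentational --- the paper exhibits the adjunction via the counit $\epsilon_F\maps\Gamma_{F(\2),\V}\To F$ and the triangle identities rather than via the hom-set bijection, and it has already established the varietal Green products and the lax monoidal structure of $\GMV$ in earlier results, so your first two paragraphs restate prior work rather than the proof of this theorem.
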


In particular, if $\V = \CMon$, since products and coproducts are the same in $\CMon$, the ordinary $M$ network model and the $\CMon$ varietal $M$ network model are also the same. Note that this does not indicate that $\Gamma_{-,\V}$ is a complete generalization of $\Gamma_-$ from Theorem \ref{thm:graph_model}, since $\Gamma_M$ is not an example of $\Gamma_{-,\V}$ when $M$ is not commutative. 

The ordinary construction for a network model given a monoid $M$ has constituent monoids given by finite cartesian powers of $M$. To include the networks described in the question above into the theory of network models, we must construct a network model from a given monoid which does not impose as much commutativity as the ordinary construction does, specifically among elements corresponding to different edges. The first attempt at a solution is to use coproducts instead of products. However, in this section we saw that we cannot create the constituent monoids of a network model simply by taking them to be coproducts of $M$ instead of products. There must be some commutativity between different edges, specifically between edges which do not share a vertex. 

Given a monoid $M$, we want to create a family of monoids indexed by $\N$, the $n$th of which looks like a copy of $M$ for each edge in the complete graph on $\n$, has minimal commutativity relations between these edge components, but does have commutativity relations between disjoint edges. Partial commutativity like this can be described with Green's graph products, which we describe in Section \ref{sec:graphprod}. The type of graph which describes disjointness of edges in a graph as we need is called a \emph{Kneser graph}, which we describe in Section \ref{sec:kneser}. Besides concerning ourselves with relations between edge components, sometimes we also want the constituent monoids in a network model to obey certain relations which $M$ obeys. In Section \ref{sec:varmon} we describe \emph{varieties of monoids} and a construction which produces monoids in a chosen variety. In Section \ref{sec:funnetmod} we prove this construction is functorial, and in Section \ref{sec:commitment} we use this construction to give a positive answer to the question.

\section{Graph Products}
\label{sec:graphs}

This section is dedicated to constructing the constituent monoids for the network models we want. In this section there are two different ways that graphs are being used. It is important that the reader does not get these confused. One way is the graphs which are elements of the constituent monoids of the network models we are constructing. The other way we use graphs is to index the \emph{Green product} (which we define in \cref{sec:graphprod}) to describe commutativity relations in the constituent monoids of the network models we are constructing. 

A network model is essentially a family of monoids with properties similar to the simple graphs example, so we think of the elements of these monoids as graphs, and we think of the operation as overlaying the graphs. 
These monoids have partial commutativity relations they must satisfy, as we see in \cref{sec:netmod}.
The graphs we use in the Green product, the Kneser graphs, are there to describe the partial commutativity in the constituent monoids. 

\subsection{Green Products}
\label{sec:graphprod}

Given a family of monoids $\{M_v\}_{v\in V}$ indexed by a set $V$, there are two obvious ways to combine them to get a new monoid, the product and the coproduct.
From an algebraic perspective, a significant difference between these two is whether or not elements that came from different components commute with each other. In the product they do. In the coproduct they do not. 
\emph{Green products}, or commonly \emph{graph products}, of groups were introduced in 1990 by Green \cite{Green}, and later generalized to monoids by Veloso da Costa \cite{Veloso}.
The idea provides something of a sliding scale of relative commutativity between components. We follow \cite{Fountain} in the following definitions.

By a \define{simple graph} $G=(V, E)$, we mean a set $V$ which we call the set of vertices, and a set $E \subseteq \binom{V}{2}$, which we call the set of edges. A \define{map of simple graphs} $f \maps (V, E) \to (V', E')$ is
a function $f \maps V \to V'$ such that if $\{u, v\} \in E$ then $\{f(u), f(v)\} \in E'$. 
Let $\sGrph$ denote the category of simple graphs and maps of simple graphs.

For a set $V$, a family of monoids $\{M_v\}_{v\in V}$, and a simple graph $G = (V, E)$, the \define{$G$ Green product} (or simply \define{Green product} when unambiguous) of $\{M_v\}_{v\in V}$, denoted $G(M_v)$, is 
\[
    G(M_v) = \left( \coprod_{v\in V} M_v \right) /R_G
\]
where $R_G$ is the congruence generated by the relation 
\[
    \{ (m n, n m) |\, m \in M_v, n\in M_u, u, v \text{ are adjacent in }G \}
\]
where the operation in the free product is denoted by concatenation. 
If $G$ is the complete graph on $n$ vertices, then $G(M_v) \cong \prod M_v$. If $G$ is the $n$-vertex graph with no edges, then $G(M_v) \cong \coprod M_v$. 

We call each $M_v$ a \define{component} of the Green product. Elements of $G(M_v)$ are written as \define{expressions} as in the free product, $m^{v_1}_1\dots m^{v_k}_k \in G(M_v)$ where the superscript indicates that $m_i \in M_{v_i}$. We often consider Green products of several copies of the same monoid, so this notation allows one to distguish elements coming from different components of the product, even if they happen to come from the same monoid. The intention and result of the imposed relations is that for an expression $m^{v_1}_1\dots m^{v_k}_k$ of an element, if there is an $i$ such that $\{v_i, v_{i+1}\} \in E$, then we can rewrite the expression by replacing $m^{v_i}_i m^{v_{i+1}}_{i+1}$ with $m^{v_{i+1}}_{i+1} m^{v_i}_i$. This move is called a \define{shuffle}, and two expressions are called \define{shuffle equivalent} if one can be obtained from the other by a sequence of shuffles. An expression $m^{v_1}_1\dots m^{v_k}_k$ is \define{reduced} if whenever $i<j$ and $v_i = v_j$, there exists $l$ with $i<l<j$ and $\{v_i, v_l\} \notin E$. If two reduced expressions are shuffle equivalent, they are clearly expressions of the same element. The converse is also true.

\begin{thm}[\cite{Fountain}, Thm.\ 1.1]
\label{thm:shuffle}
    Every element of $M$ is represented by a reduced expression. Two reduced expressions represent the same element of $M$ if and only if they are shuffle equivalent.
\end{thm}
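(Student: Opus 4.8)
The plan is to establish the normal form theorem for Green products of monoids by a rewriting-systems argument, following the standard approach for graph products (and the reference \cite{Fountain}). The statement has two halves: (existence) every element has a reduced expression, and (uniqueness up to shuffle) two reduced expressions represent the same element iff they are shuffle equivalent. The existence half is easy: start from any expression, and repeatedly apply length-reducing moves. There are two such moves — deleting a factor $m_i^{v_i}$ when $m_i$ is the identity of $M_{v_i}$, and, when $i < j$, $v_i = v_j$, and every intermediate $v_l$ ($i < l < j$) is adjacent to $v_i$ in $G$, shuffling $m_j^{v_j}$ leftward past $m_{i+1}^{v_{i+1}}, \dots, m_{j-1}^{v_{j-1}}$ (all legal shuffles) until it is adjacent to $m_i^{v_i}$, then multiplying the two factors together in $M_{v_i}$. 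Each move strictly decreases the number of factors, so the process terminates, and by construction it terminates at a reduced expression representing the same element of $G(M_v)$.

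For uniqueness, the clean strategy is \textbf{not} to attempt confluence of the rewriting system directly, but to build a well-defined action of $G(M_v)$ on the set $N$ of shuffle-equivalence classes of reduced expressions, à la the standard proof for right-angled Artin/Coxeter groups and trace monoids. Concretely: for each $v \in V$ and each $m \in M_v$, define a function $\lambda_{m,v} \maps N \to N$ by the rule "prepend $m^v$ to a reduced representative and re-reduce." One checks that prepending $m^v$ to a reduced expression $w = m_1^{v_1}\cdots m_k^{v_k}$ requires at most one reduction step: either $m^v w$ is already reduced; or there is a (necessarily unique, up to shuffle) leftmost position $i$ where $v_i = v$ and all $v_l$ for $l < i$ are adjacent to $v$, in which case we shuffle $m^v$ into position $i$ and absorb it into $m_i$ (possibly deleting the factor if the product is the identity). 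One then verifies $\lambda_{1_v, v} = \mathrm{id}_N$ and $\lambda_{m,v}\lambda_{n,v} = \lambda_{mn,v}$ (so each $M_v$ acts), and that $\lambda_{m,v}\lambda_{n,u} = \lambda_{n,u}\lambda_{m,v}$ whenever $\{u,v\} \in E$ (so the relations $R_G$ are respected). By the universal property of $G(M_v)$ as a quotient of the coproduct, this extends to a monoid homomorphism $\lambda \maps G(M_v) \to \mathrm{End}(N)$. Evaluating $\lambda(x)$ at the class of the empty expression recovers, for $x$ represented by a reduced expression $w$, exactly the class $[w] \in N$; hence if two reduced expressions $w, w'$ represent the same $x$, they have the same image under this map, i.e.\ $[w] = [w']$, meaning they are shuffle equivalent. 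The converse — shuffle equivalent reduced expressions represent the same element — is immediate from the defining relations $R_G$.

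\textbf{The main obstacle} is the bookkeeping in the uniqueness half: showing that $\lambda_{m,v}$ is genuinely well-defined on shuffle-equivalence classes (independent of which reduced representative one picks) and that "prepend and re-reduce" is itself well-defined, i.e.\ that the leftmost absorbable position, if it exists, is unique modulo shuffle and yields a shuffle-equivalence class independent of the order in which shuffles are performed. This is where one must be careful about the combinatorics of which factors can be permuted past which, exploiting that adjacency in $G$ controls commutation. Verifying the commutation relation $\lambda_{m,v}\lambda_{n,u} = \lambda_{n,u}\lambda_{m,v}$ for $\{u,v\}\in E$ also requires a short case analysis depending on whether $v$ and/or $u$ already occur (in absorbable position) in the given reduced expression. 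None of this is deep, but it is the part that needs genuine care rather than routine manipulation; everything else (termination of reduction, the universal-property extension, the final evaluation-at-empty-word step) is formal. Since this is precisely Theorem 1.1 of \cite{Fountain}, I would in practice cite that result and only sketch the action argument for the reader's benefit.
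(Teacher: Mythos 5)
The paper offers no proof of this statement: it is imported wholesale as Theorem~1.1 of the cited reference \cite{Fountain}, so there is no internal argument to compare yours against. Your sketch is the standard one for normal form theorems of this kind --- Green's technique of letting the graph product act on the set of shuffle-equivalence classes of reduced words via ``prepend and re-reduce,'' then evaluating at the empty word --- and this is essentially how the cited source (following Green's thesis for the group case) establishes the result. The outline is sound: existence by length-reducing rewrites, uniqueness by the well-defined action, with the converse direction immediate from the defining relations.

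Two remarks. First, as you acknowledge, all of the real content sits in the well-definedness of $\lambda_{m,v}$ on shuffle classes and in the verification that $\lambda_{m,v}\lambda_{n,u}=\lambda_{n,u}\lambda_{m,v}$ for $\{u,v\}\in E$; the plan correctly identifies where the case analysis lives but does not carry it out, so as written this is a citation-plus-sketch rather than a self-contained proof --- which is exactly the posture the thesis itself takes. Second, the thesis's definition of \emph{reduced} omits the requirement that no factor $m_i$ be the identity of $M_{v_i}$; without that clause the uniqueness half is false (the empty word and the one-letter word $1^v$ would be distinct reduced expressions for the same element). Your inclusion of identity-deletion as a reduction move silently restores the correct convention from \cite{Fountain}, and any full write-up should make that explicit.
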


In this section, we use a categorical description of Green products to define a similar construction in a more general context. The relevant property of $\Mon$ that we need for this generalization is that $\Mon$ is a \emph{pointed category}.

Let $\C$ be a category. An object of $\C$ which is both initial and terminal is called a \define{zero object}. If $\C$ has such an object, $\C$ is called a \define{pointed category} \cite{HomotAlg}. For any two objects $A, B$ of a pointed category, there is a unique map $0 \maps A \to B$ which is the composite of the unique map from $A$ to the zero object, and the unique map from the zero object to $B$.
If $\C$ is a pointed category with finite products, then for two objects $A, B$ of $\C$, the objects admit canonical maps $A \to A \times B$.
\[
\begin{tikzcd}
    &
    A
    \arrow[ddl, "1", bend right, swap]
    \arrow[d, "\exists!i_A"]
    \arrow[ddr, "0", bend left]
    \\&
    A \times B
    \arrow[dl, "\pi_A"]
    \arrow[dr, "\pi_B", swap]
    \\
    A
    &&
    B
\end{tikzcd}\]
So we have the following maps
\[
\begin{tikzcd}
    A
    \arrow[dr, "i_A"]
    &&
    B
    \arrow[dl, "i_B", swap]
    \\&
    A \times B
    \arrow[dr, "\pi_B", swap]
    \arrow[dl, "\pi_A"]
    \\
    A
    &&
    B
\end{tikzcd}\]
satisfying the following properties.
\begin{align*}
    \pi_A i_A &= 1_A&
    \pi_B i_B &= 1_B\\
    \pi_B i_A &= 0&
    \pi_A i_B &= 0
\end{align*}
This is suggestive of a biproduct, but in a general pointed category $A \times B$ is not necessarily isomorphic to $A + B$.

In \cref{sec:funnetmod}, we use a generalized Green product to construct network models. 
A generalized Green product is a colimit of a diagram whose shape is derived from a given graph. We describe the shapes of the diagrams here with directed multi-graphs. We refer to them here as \emph{quivers} to help distinguish them from other variants of graphs and the role they play in this chapter. A \define {quiver} is a pair of sets $E$, $V$, respectively called the \emph{set of edges} and \emph{set of vertices}, and a pair of functions $s, t \maps E \to V$ assigning to each edge its \emph{starting} vertex and its \emph{terminating} vertex respectively. A \define{map of quivers} is a pair of functions 
\begin{displaymath}
\begin{tikzcd}
    E_1
    \arrow[d, shift right = 1, "s_1", swap]
    \arrow[d, shift left = 1, "t_1"]
    \arrow[r, "f_E"]
    &
    E_2
    \arrow[d, shift right = 1, "s_2", swap]
    \arrow[d, shift left = 1, "t_2"]
    \\
    V_1
    \arrow[r, "f_V", swap]
    &
    V_2
\end{tikzcd}
\end{displaymath}
such that the $s$-square and the $t$-square both commute.

We will use the word \define{cospan} to refer to the quiver with the following shape. \[\bullet \to \bullet \leftarrow \bullet\]
Define a functor $IC \maps \sGrph \to \Quiv$ which replaces every edge with a cospan ($IC$ stands for `insert cospan'). Specifically, given a simple graph $(V, E)$ where $E \subseteq \binom{V}{2}$, define the quiver $Q_1 \rightrightarrows Q_0$ where $Q_0 = V \sqcup E$ and $Q_1 = \{(v, e) \in V \times E |\, v \in e\}$, then define the source map $s \maps Q_1 \to Q_0$ by projection onto the first component, and the target map $t \maps Q_1 \to Q_0$ by projection onto the second component. For example, the simple graph 
\[
\begin{tikzpicture}
    \node[style=species] (1) {$1$};
    \node[style=species] (2) [right = 1.5cm of 1]  {$2$};
    \node[style=species] (3) [below = 1.5cm and 1.5cm of 2] {$3$};
    \node[style=species] (4) [below = 1.5cm of 1] {$4$};

    \path[draw, thick]
    (1) edge node {} (2)
    (2) edge node {} (3)
    (3) edge node {} (1)
    (4) edge node {} (1);
\end{tikzpicture}\] 
gives the quiver 
\[
\begin{tikzcd}
    1
    \arrow[r]
    \arrow[dr]
    \arrow[d]
    &
    \{1, 2\}
    &
    2
    \arrow[l]
    \arrow[d]
    \\
    \{1, 4\}
    &
    \{1, 3\}
    &
    \{2, 3\}
    \\
    4
    \arrow[u]
    &&
    3
    \arrow[u]
    \arrow[ul]
\end{tikzcd}\] 

Let $G=(V, E)$ and $G' = (V', E')$ be simple graphs, and $f \maps G \to G'$ a map of simple graphs. Define a map of quivers $ICf \maps IC(G) \to IC(G')$ by $ICf_0 = f_V \sqcup f_E$ and $ICf_1(v, e) = (f_V(v), f_E(e))$. 

\[
\begin{tikzcd}[row sep = large]
    IC(G)_1
    \arrow[d, "s_G", bend right, swap]
    \arrow[d, "t_G", bend left]
    \arrow[r, "ICf_1"]
    &
    IC(G')_1
    \arrow[d, "s_{G'}", bend right, swap]
    \arrow[d, "t_{G'}", bend left]
    \\
    IC(G)_0
    \arrow[r, "IC f_0", swap]
    &
    IC(G')_0
\end{tikzcd}\]
This construction gives a coproduct preserving functor $IC \maps \sGrph \to \Quiv$.

Let $F \maps \Quiv \to \Cat$ denote the free category (or path category) functor \cite{MacLane}. Since $F$ is a left adjoint, it preserves colimits. Notice that any quiver of the form $IC(G)$ would never have a path of length greater than 1. Thus the free path category on $IC(G)$ simply has identity morphisms adjoined. 

The objects in the category $F(IC(G))$ come from two places. There is an object for each vertex of $G$, and there is an object at the apex of the cospan for each edge in $G$. We call these two subsets of objects \define{vertex objects} and \define{edge objects}. We abuse notation and refer to the object given by the vertex $u$ by the same name, and similar for edge objects.

If $\{M_v\}_{v\in V}$ is a family of monoids indexed by the set $V$, that means that there is a functor $M \maps V \to \Mon$ from the set $V$ thought of as a discrete category. Notice that if $G$ is a simple graph with vertex set $V$, then the discrete category $V$ is a subcategory of $F(IC(G))$. 
We can then extend the functor $M$ to \[D \maps F(IC(G)) \to \Mon\] in the following way. Obviously we let $D(u) = M_u$ for a vertex object $u$. If $\{u, v\}$ is an edge in $G$, then $D(\{u, v\}) = M_u \times M_v$. The morphism $(u, \{u, v\})$ is sent to the canonical map $M_u \to M_u \times M_v$. For example, for a family of monoids $\{M_1, \dots M_4\}$, we have the following diagram.

\[
\begin{tikzcd}
    M_1
    \arrow[r]
    \arrow[dr]
    \arrow[d]
    &
    M_1 \times M_2
    &
    M_2
    \arrow[l]
    \arrow[d]
    \\
    M_1 \times M_4
    &
    M_1\times M_3
    &
    M_2\times M_3
    \\
    M_4
    \arrow[u]
    &&
    M_3
    \arrow[u]
    \arrow[ul]
\end{tikzcd}\] 
Since there are no non-trivial pairs of composable morphisms in categories of the form $F(IC(G))$, nothing further needs to be checked to confirm $D$ is a functor.

Despite the way we are denoting these products, we are not considering them to be ordered products. Alternatively, we could have used a more cumbersome notation that does not suggest any order on the factors.

\begin{thm}
    Let $V$ be a set, $\{M_v\}_{v \in V}$ be a family of monoids indexed by $V$, and $G = (V, E)$ be a simple graph with vertex set $V$. The $G$ Green product of $M_v$ is the colimit of the diagram $D \maps F(IC(G)) \to \Mon$ defined as above.
    \[
        G(M_v) \cong \colim D.
    \]
\end{thm}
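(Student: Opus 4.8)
The plan is to show directly that the Green product $G(M_v)$, together with a suitable cocone under $D$, satisfies the universal property of a colimit of $D$; this simultaneously proves that the colimit exists (as it must, $\Mon$ being cocomplete) and identifies it with $G(M_v)$. First I would exhibit the cocone. For each vertex object $u$, let $\iota_u \maps M_u \to G(M_v)$ send $m$ to the class $\overline{m^u}$ of the length-one expression, i.e.\ $\iota_u$ is the coproduct inclusion $M_u \hookrightarrow \coprod_v M_v$ followed by the quotient onto $G(M_v)$. Fixing for each edge $e = \{u,v\} \in E$ an ordering so that $D(e) = M_u \times M_v$, let $\iota_e \maps M_u \times M_v \to G(M_v)$ be $(m,n) \mapsto \overline{m^u}\,\overline{n^v}$; this is a monoid homomorphism precisely because $\overline{m^u}$ and $\overline{n^v}$ commute in $G(M_v)$ whenever $u$ and $v$ are adjacent, which is exactly the defining relation $R_G$. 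To see $(\iota_u, \iota_e)$ is a cocone it suffices to check, for each edge $e = \{u,v\}$, the two generating triangles $\iota_e \circ i_u = \iota_u$ and $\iota_e \circ i_v = \iota_v$, where $i_u, i_v$ are the canonical maps into the product: indeed $\iota_e(i_u(m)) = \iota_e(m,1) = \overline{m^u}\,\overline{1^v} = \overline{m^u}$, and symmetrically, and there are no composites of length $>1$ in $F(IC(G))$ to account for.

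Next I would establish universality. Let $(\psi_u \maps M_u \to N,\ \psi_e \maps D(e) \to N)$ be any cocone under $D$. The vertex components $(\psi_u)_{u \in V}$ induce a unique homomorphism $\widetilde\psi \maps \coprod_v M_v \to N$. This factors through the quotient onto $G(M_v)$: it is enough that $\widetilde\psi$ identifies the generating pairs of $R_G$, i.e.\ that $\psi_u(m)$ and $\psi_v(n)$ commute in $N$ whenever $\{u,v\} \in E$; but applying the homomorphism $\psi_e$ to the identity $(m,1)(1,n) = (m,n) = (1,n)(m,1)$ in $M_u \times M_v$, and using the cocone equations $\psi_e \circ i_u = \psi_u$ and $\psi_e \circ i_v = \psi_v$, yields exactly $\psi_u(m)\psi_v(n) = \psi_v(n)\psi_u(m)$. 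Hence there is a unique homomorphism $\overline\psi \maps G(M_v) \to N$ with $\overline\psi \circ \iota_u = \psi_u$ for all $u$. One then checks $\overline\psi \circ \iota_e = \psi_e$ by evaluating both sides on the generators $(m,1)$ and $(1,n)$ of $M_u \times M_v$, where they agree by the triangle identities above; and $\overline\psi$ is the unique map satisfying all the cocone equations because $G(M_v)$ is generated by $\bigcup_u \mathrm{im}(\iota_u)$. This is precisely the universal property of $\colim D$.

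The routine points I would only sketch: that $\iota_e$ and $\widetilde\psi$ are well defined, the standard description of colimits in $\Mon$ as free products modulo congruences, and that the choice of ordering $M_u \times M_v$ versus $M_v \times M_u$ is immaterial since swapping factors is an isomorphism of the diagram $D$. I expect the one step needing genuine care — and the one most easily botched if rushed — is the factorization claim: one must be sure the cocone structure forces \emph{only} the adjacency-commutativity relations on $\coprod_v M_v$, so that the edge objects $M_u \times M_v$ introduce neither extra identifications nor new elements. Checking the universal property as above is what makes this transparent; the alternative, fiddlier route is to compute the colimiting congruence by hand — write $\colim D$ as $\big(\coprod_v M_v \ast \coprod_e (M_{e_1} \times M_{e_2})\big)/{\sim}$, note each $(m,n)$ in an edge component is identified with $\overline{m}\,\overline{n}$ so that the edge components are redundant, and match the residual congruence with $R_G$ term by term.
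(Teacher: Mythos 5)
Your proposal is correct, and its overall strategy coincides with the paper's: both exhibit the cocone $(j_u, j_{u,v})$ and verify the universal property directly, and both hinge on the same key computation — that for any competing cocone the edge component being a monoid homomorphism satisfying $f_{u,v}\circ i_{u,v}=f_u$ forces $f_u(m)f_v(n)=f_{u,v}(m,n)=f_v(n)f_u(m)$ whenever $\{u,v\}\in E$. The one genuine difference is how well-definedness of the induced map $G(M_v)\to Q$ is handled. The paper defines $\phi$ directly on expressions $m_1^{v_1}\cdots m_k^{v_k}$ and invokes the normal-form result (\cref{thm:shuffle}, that two reduced expressions name the same element iff they are shuffle equivalent) to reduce well-definedness to invariance under a single shuffle. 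You instead route through the universal property of the coproduct in $\Mon$ and the universal property of the quotient by the congruence generated by $R_G$: the vertex maps induce $\widetilde\psi$ on $\coprod_v M_v$, and the commutativity computation shows $\widetilde\psi$ kills the generating relations, so it factors. Your route is marginally cleaner in that it needs no normal-form theorem for $G(M_v)$ — only the defining presentation — whereas the paper's route gives you the explicit formula for $\phi$ on arbitrary expressions for free. Your closing caution about the edge objects being ``redundant'' is exactly the right thing to worry about, and your cocone-based argument does dispose of it: the edge components contribute no new generators (their images are products of vertex images) and no relations beyond adjacency-commutativity (since that is the only constraint a homomorphism out of $M_u\times M_v$ restricting to given maps on the factors imposes).
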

\begin{proof}
    We show that $G(M_v)$ satisfies the necessary universal property. The vertex objects in the diagram have inclusion maps into the edge objects $i_{u, v} \maps M_u \to M_u \times M_v$, and all the objects have inclusion maps into $G(M_v)$, $j_u \maps M_u \to G(M_v)$ and $j_{u, v} \maps M_u \times M_v \to G(M_v)$ such that $j_{u, v} \circ i_{u, v} = j_u$. Note that due to the fact that we have unordered products for objects, there is some redundancy in our notation, namely $j_{u, v} = j_{v, u}$.
    If we have a monoid $Q$ and maps $f_u \maps M_u \to Q$ and $f_{u, v} \maps M_u \times M_v \to Q$ such that
    \begin{align*}
        f_{u, v} &= f_{v, u}\\
        f_{u, v} \circ i_{u, v} &= f_u, 
    \end{align*}
    then we define a map $\phi \maps G(M_v) \to Q$ by $\phi(m^{v_1}_1 \dots m^{v_k}_k) = f_{v_1}(m_1) \dots f_{v_k}(m_k)$. Since this map is defined via expressions of elements, \cref{thm:shuffle} tells us that to check this map is well-defined, we need only check that the values of two expressions that differ by a shuffle are the same. Let $m^{v_1}_1 \dots m^{v_k}_k$ be an expression, and $i$ such that $\{v_i, v_{i+1}\} \in E$. 
    \begin{align*}
        \phi (m^{v_i}_i m^{v_{i+1}}_{i+1})
        &= f_{v_i}(m_i) f_{v_{i+1}}(m_{i+1})
        \\&= f_{v_i, v_{i+1}} (m_i, m_{i+1})
        \\&= f_{v_{i+1}}(m_{i+1}) f_{v_i}(m_i)
        \\&= \phi (m^{v_{i+1}}_{i+1} m^{v_i}_i)
    \end{align*}
    It is clear that
    \[
        \phi (m^{v_1}_1 \dots m^{v_k}_k) = \phi (m^{v_1}_1 \dots m^{v_{i-1}}_{i-1}) \phi( m^{v_i}_im^{v_{i+1}}_{i+1}) \phi(m^{v_{i+2}}_{i+2} \dots m^{v_k}_k), 
    \] 
    so two shuffle equivalent expressions have the same value under $\phi$, and $\phi$ is well-defined. It is clearly a monoid homomorphism, and has the property $\phi \circ j_u = f_u$ and $\phi \circ j_{u, v} = f_{u, v}$. To show this map is unique, assume there is another such map $\psi \maps G(M_v) \to Q$. Since $\psi \circ j_u = f_u$, then $\psi(m_u) = f(u)$, and
    \begin{align*}
        \psi(m^{v_1}_1 \dots m^{v_k}_k) 
        &= \psi(m^{v_1}_1) \dots \psi(m^{v_k}_k)
        \\&= f_{v_1}(m_1) \dots f_{v_k}(m_k)
        \\&= \phi(m^{v_1}_1 \dots m^{v_k}_k).\qedhere
    \end{align*}
\end{proof}

This result makes it reasonable to generalize Green products in the following way.

\begin{defn}
    Let $\C$ be a pointed category with finite products and finite colimits, $V$ a set, $\{A_v\}_{v \in V}$ a family of objects of $\C$ indexed by $V$, and $G$ a simple graph with vertex set $V$. Let $D \maps F(IC(G)) \to \C$ be the diagram defined by $v \mapsto A_v$, $\{u, v\} \mapsto A_u \times A_v$, and the morphism $(u, \{u, v\})$ is mapped to the inclusion $A_u \to A_u \times A_v$ as above. The \define{$G$ Green product} of $\{A_v\}_{v \in V}$ is the colimit of $D$ in $\C$, 
    \[
        G^\C(A_v) = \colim D.
    \]
    If $\C=\Mon$, we denote the Green product simply as $G(A_v)$.
\end{defn}

In \cref{sec:funnetmod}, we use this general notion of graph products in \emph{varieties of monoids} to construct network models whose constituent monoids are in those varieties.
Note that since $F \circ IC$ is a functor, the group $\Aut(G)$ of graph automorphisms of $G$ naturally acts on $G^\C(A_v)$.

\subsection{Kneser Graphs}
\label{sec:kneser}

We focus here on a special family of simple graphs known as the \emph{Kneser graphs} \cite{Lovasz}. The \define{Kneser graph} $KG_{n,m}$ has vertex set $\binom{n}{m}$, the set of $m$-element subsets of an $n$-element set, and an edge between two vertices if they are disjoint subsets. Since a simple graph is defined as a collection of two-element subsets of an $n$-element set, the Kneser graph $KG_{n,2}$ has a vertex for each edge in the complete graph on $n$, and has an edge between every pair of vertices which correspond to disjoint edges. So the Kneser graph $\KG_{n,2}$ can be thought of as describing the disjointness of edges in the complete graph on $n$. For instance, the complete graph on $5$ is 
\[
\begin{tikzpicture} % Petersen graph
\begin{pgfonlayer}{nodelayer}
	\node [style=species] (a) at (0, -1) {};
	\node [style=species] (b) at (0.95, -0.31) {};
	\node [style=species] (c) at (0.59, 0.81) {};
	\node [style=species] (d) at (-0.59, 0.81) {};
	\node [style=species] (e) at (-0.95, -0.31) {};
\end{pgfonlayer}
\begin{pgfonlayer}{edgelayer}
	\draw (a) to (b);
	\draw (b) to (c);
	\draw (c) to (d);
	\draw (d) to (e);
	\draw (e) to (a);
	\draw (a) to (c);
	\draw (b) to (d);
	\draw (c) to (e);
	\draw (d) to (a);
	\draw (e) to (b);
\end{pgfonlayer}
\end{tikzpicture}\]
and the corresponding Kneser graph $\KG_{5,2}$ is the Petersen graph:
\[
\begin{tikzpicture} % Petersen graph
\begin{pgfonlayer}{nodelayer}
	\node [style=species] (a) at (0, -1) {};
	\node [style=species] (b) at (0.95, -0.31) {};
	\node [style=species] (c) at (0.59, 0.81) {};
	\node [style=species] (d) at (-0.59, 0.81) {};
	\node [style=species] (e) at (-0.95, -0.31) {};
	\node [style=species] (A) at (0, -2) {};
	\node [style=species] (B) at (1.9, -0.62) {};
	\node [style=species] (C) at (1.18, 1.62) {};
	\node [style=species] (D) at (-1.18, 1.62) {};
	\node [style=species] (E) at (-1.9, -0.62) {};
\end{pgfonlayer}
\begin{pgfonlayer}{edgelayer}
	\draw (a) to (A);
	\draw (b) to (B);
	\draw (c) to (C);
	\draw (d) to (D);
	\draw (e) to (E);
	\draw (A) to (B);
	\draw (B) to (C);
	\draw (C) to (D);
	\draw (D) to (E);
	\draw (E) to (A);
	\draw (a) to (c);
	\draw (b) to (d);
	\draw (c) to (e);
	\draw (d) to (a);
	\draw (e) to (b);
\end{pgfonlayer}
\end{tikzpicture}\]
For sets $X,Y$ and a function $f \maps X \to Y$, let $f[U] = \{f(x) |\, x \in U\}$ for $U \subseteq X$. Let $\FinInj$ denote the category of finite sets and injective functions.

\begin{lem}
    For $k \in \N$, there is a functor $\bink{-} \maps \FinInj \to \FinInj$ which sends $X$ to $\bink{X}$ the set of $k$-element subsets of $X$, and injections $f \maps X \to Y$ to the functions $\bink{f} \maps \bink{X} \to \bink{Y}$ defined by $\bink{f}(U) = f[U]$. 
\end{lem}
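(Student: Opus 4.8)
The plan is to verify the three defining conditions of a functor directly, the only non-formal point being that the assignment lands in $\FinInj$ (i.e.\ preserves injectivity) rather than merely in $\FinSet$.

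First I would check that the proposed object and morphism assignments are well defined. On objects this is immediate: if $X$ is finite then $\bink X$ is a finite set (of cardinality $\binom{|X|}{k}$). For a morphism $f \maps X \to Y$ in $\FinInj$ and a subset $U \subseteq X$ with $|U| = k$, the restriction $f|_U \maps U \to f[U]$ is a bijection because $f$ is injective, so $|f[U]| = k$ and hence $\bink f(U) = f[U] \in \bink Y$. Thus $\bink f$ is a well-defined function $\bink X \to \bink Y$.

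Next I would show $\bink f$ is itself injective, so that it is a legitimate morphism of $\FinInj$. Suppose $U, U' \in \bink X$ with $f[U] = f[U']$. Applying $f\inv$ (as a relation on subsets, using injectivity of $f$) gives $U = f\inv[f[U]] = f\inv[f[U']] = U'$; equivalently, one argues elementwise using that $f$ is injective. Hence $\bink f$ is an injection.

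Finally I would verify the two functoriality equations. For the identity, $\bink{\mathrm{id}_X}(U) = \mathrm{id}_X[U] = U$, so $\bink{\mathrm{id}_X} = \mathrm{id}_{\bink X}$. For composition, given $f \maps X \to Y$ and $g \maps Y \to Z$ in $\FinInj$ and $U \in \bink X$, we have $(g \circ f)[U] = g[f[U]]$, which says exactly $\bink{g \circ f}(U) = \bigl(\bink g \circ \bink f\bigr)(U)$. This completes the verification. I do not anticipate any genuine obstacle here; the only thing worth stating carefully is the preservation of injectivity, since without the hypothesis that $f$ is injective the map $U \mapsto f[U]$ could both drop cardinality and fail to be injective, so the construction really does require the domain category $\FinInj$ rather than $\FinSet$.
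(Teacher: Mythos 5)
Your proposal is correct and follows essentially the same route as the paper's proof: both verify that injectivity of $f$ preserves the cardinality of $U$ (so $\bink f$ is well defined) and then check injectivity of $\bink f$ by an elementwise argument. You additionally spell out the identity and composition equations, which the paper leaves implicit; this is a harmless and slightly more complete presentation of the same argument.
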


Note that this result holds for $\Inj$ the category of sets and injective functions, but we only require $\FinInj$ for our purposes.

\begin{proof}
    If $f \maps X \to Y$ is an injection, then $|f[U]| = |U|$ for $U \subseteq X$. It then makes sense to restrict the induced map on power sets to subsets of a fixed cardinality. The map $\bink{f} \maps \bink{m} \to \bink{n}$ defined by $\bink{f}(U) = f[U]$ is then well defined. If $f[U]=f[V]$ and $x \in U$, then $f(x) \in f[U] = f[V]$, which implies there is a $y \in V$ such that $f(y) = f(x)$. Since $f$ is injective, then $x = y \in V$. Thus $U = V$ by symmetry.
\end{proof}

Let $i_X$ and $i_Y$ denote the following inclusion maps.
\[
\begin{tikzcd}
    X
    \arrow[dr,"i_X"]
    &&
    Y
    \arrow[dl,"i_Y",swap]
    \\&
    X+Y
\end{tikzcd}\]
Since these maps are injective, they induce maps $\bink{i_X}, \bink{i_Y}$, and we get a map $\Phi_{X,Y} \maps \bink{X}+\bink{Y} \to \bink{X+Y}$ by the universal property in the following way.
\[
\begin{tikzcd}
    \bink{X}
    \arrow[dr,"j_X"]
    \arrow[ddr,"\bink{i_X}",bend right,swap]
    &&
    \bink{Y}
    \arrow[dl,"j_Y",swap]
    \arrow[ddl,"\bink{i_Y}",bend left]
    \\&
    \bink{X}+\bink{Y}
    \arrow[d,"\exists!\Phi_{X,Y}",dashed]
    \\&
    \bink{X+Y}
\end{tikzcd}\]

\begin{lem}
\label{lem:chooselax}
    The functor $\bink{-}$ is made lax symmetric monoidal \[(\bink{-},\Phi, \phi) \maps (\FinInj,+, \emptyset) \to (\FinInj, +, \emptyset)\] where the components of $\Phi$ are defined as above.
\end{lem}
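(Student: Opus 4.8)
The data of a lax symmetric monoidal functor consists of the underlying functor (supplied by the previous lemma), the laxity constraint $\Phi$ (already defined via the universal property of the coproduct), and a unit constraint $\phi \maps \emptyset \to \bink{\emptyset}$; for $k \ge 1$ the set $\bink{\emptyset}$ is empty, so $\phi$ is forced to be $\mathrm{id}_\emptyset$ and there is nothing to choose. The plan is first to unpack $\Phi_{X,Y}$ concretely: composing with the coproduct inclusion $j_X$ yields $\bink{i_X}$, which sends a $k$-subset $U \subseteq X$ to the same set $U$ now regarded as a $k$-subset of $X+Y$, and symmetrically for $j_Y$; so $\Phi_{X,Y}$ is nothing but ``relabel a $k$-subset of $X$, or of $Y$, as a $k$-subset of the disjoint union $X+Y$.'' The one genuine thing to check here is that $\Phi_{X,Y}$ is a morphism of $\FinInj$, i.e.\ injective: each of $\bink{i_X},\bink{i_Y}$ is injective by the preceding lemma, and their images in $\bink{X+Y}$ are disjoint since a $k$-subset contained in the $X$-summand and one contained in the $Y$-summand could coincide only if both were empty---impossible for $k \ge 1$. (For $k=0$ every set in sight is a singleton and the statement is degenerate.)

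With that explicit description in hand, the remaining verifications are routine and all have the same shape: each diagram is a map out of a coproduct, so it suffices to test it on each summand, and on each summand both composites ``do nothing but relabel which disjoint piece a $k$-subset sits in.'' For naturality of $\Phi$ with respect to injections $f \maps X \to X'$, $g \maps Y \to Y'$, one checks on the $\bink{X}$-summand that $\bink{f+g}\circ\Phi_{X,Y}$ and $\Phi_{X',Y'}\circ(\bink{f}+\bink{g})$ both carry $U$ to $f[U]$ viewed inside $X'+Y'$, and symmetrically on the $\bink{Y}$-summand. The associativity hexagon degenerates because $(\FinInj,+,\emptyset)$ is a \emph{strict} symmetric monoidal category: both legs, restricted to any one of $\bink{X},\bink{Y},\bink{Z}$, send a $k$-subset to the corresponding $k$-subset of $X+Y+Z$. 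The unit axioms follow because $\phi=\mathrm{id}_\emptyset$ and $\Phi_{\emptyset,X}$ (resp.\ $\Phi_{X,\emptyset}$) restricted to the $\bink{X}$-summand is the canonical identification $\bink{X}\cong\bink{\emptyset+X}$. And the symmetry axiom---that $\bink{B_{X,Y}}\circ\Phi_{X,Y}=\Phi_{Y,X}\circ B_{\bink{X},\bink{Y}}$ for the evident swap $B$ of $+$---again reduces, summand by summand, to the identity: both sides carry a $k$-subset $U$ of $X$ to $U$ viewed in the $X$-part of $Y+X$.

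There is no serious obstacle. The only point that needs any care at all is the first one---confirming $\Phi_{X,Y}$ lands in $\FinInj$, which is exactly where the hypothesis $k \ge 1$ is used---together with the minor bookkeeping of the strict identifications $\emptyset+X=X$ and $(X+Y)+Z=X+(Y+Z)$ in $\FinInj$ that make the coherence diagrams have the correct (degenerate) shape in the first place.
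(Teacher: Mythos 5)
Your proof is correct and follows essentially the same route as the paper's, which simply asserts that naturality, the unit laws, and the associativity and symmetry coherences are trivial or straightforward; your summand-by-summand verification of maps out of a coproduct is exactly what those assertions unpack to. The one detail you supply that the paper omits entirely is the check that $\Phi_{X,Y}$ is actually injective (hence a morphism of $\FinInj$), and your observation that this is where $k \ge 1$ matters is a genuine point the paper glosses over --- indeed for $k=0$ the domain $\bink{X}+\bink{Y}$ is a two-element set mapping onto the singleton $\bink{X+Y}$, so the lemma needs $k \ge 1$ to hold as stated.
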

\begin{proof}
    The family of maps $\{\Phi_{X,Y}\}$ is clearly a natural transformation.
    There is no choice for the map $\phi \maps \emptyset \to \bink{\emptyset}$.
    The left and right unitor laws hold trivially. Checking the coherence conditions for the associator and the symmetry are straightforward computations.
\end{proof}

For $n,k \in \N$, the simple graph $KG_{n,k}$ has vertex set $V = \bink{n}$ and edge set $\{\{u,v\} \subseteq \binom{V}{2} |\, u \cap v = \emptyset\}$. 
If $f \maps m \to n$ is injective, then we get a map $\bink{f}$ between the vertex sets of $KG_{m,k}$ and $KG_{n,k}$. Let $\{u,v\} \in \binom{V}{2}$ be an edge in $KG_{m,k}$. Then $f[u] \cap f[v] = \emptyset$ by injectivity, so $\{f[u],f[v]\}$ is an edge of $KG_{n,k}$. An injection $f$ then induces a map of graphs, denoted $KG_{f,k} \maps KG_{m,k} \to KG_{n,k}$. 
Since $\bink{f}$ is injective, $KG_{f,k}$ is an embedding.
Nothing about this construction requires finiteness of the sets involved, but our applications only call for finite graphs. 

\begin{prop}
    For $k \in \N$, there is functor $KG_{-,k} \maps \FinInj \to \sGrph$ which sends $n$ to $KG_{n,k}$ and $f\maps m \to n$ to $KG_{f,k}$.
\end{prop}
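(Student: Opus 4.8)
The plan is to derive this from the functoriality of the ``$k$-element subsets'' functor $\binom{-}{k}\maps\FinInj\to\FinInj$ proved above, together with the observation (made in the paragraph preceding the statement) that an injection induces a map of simple graphs between the associated Kneser graphs. So I would first fix notation: $KG_{-,k}$ sends an object $n$ to the simple graph with vertex set $\binom{n}{k}$ and with edges the disjoint pairs, and sends a morphism $f\maps m\to n$ of $\FinInj$ to $KG_{f,k}$, the map of simple graphs whose underlying vertex function is $\binom{f}{k}$.

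The only substantive point is that $KG_{f,k}$ really is a morphism in $\sGrph$, i.e.\ that $\binom{f}{k}$ carries edges to edges, and here injectivity of $f$ is essential. If $u,v\in\binom{m}{k}$ are disjoint, then $f[u]\cap f[v]=f[u\cap v]=\emptyset$ because $f$ is injective; moreover $f[u]\neq f[v]$ since $u\neq v$ and $f$ is injective, so $\{f[u],f[v]\}=\{\binom{f}{k}(u),\binom{f}{k}(v)\}$ is a genuine $2$-element subset of $\binom{n}{k}$, hence an edge of $KG_{n,k}$. This is exactly the argument sketched just before the statement, so in the proof I would simply invoke it.

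Functoriality is then a formality. A morphism of simple graphs is determined by its underlying function on vertices, so it suffices to check preservation of identities and composition at the level of vertex functions. For identities, $\binom{\mathrm{id}_n}{k}=\mathrm{id}_{\binom{n}{k}}$ by the lemma on $\binom{-}{k}$, whence $KG_{\mathrm{id}_n,k}=\mathrm{id}_{KG_{n,k}}$. For composition, given $f\maps m\to n$ and $g\maps n\to p$ in $\FinInj$, both $KG_{gf,k}$ and $KG_{g,k}\circ KG_{f,k}$ have underlying vertex function $\binom{gf}{k}=\binom{g}{k}\circ\binom{f}{k}$, again by that lemma, so the two coincide.

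I expect no real obstacle: the statement is essentially bookkeeping layered on top of the $\binom{-}{k}$ functoriality lemma. The one place that warrants a moment's attention is confirming that $KG_{f,k}$ lands in $\sGrph$ at all, which---via edge-preservation together with the distinctness $f[u]\neq f[v]$---is precisely why one must work with $\FinInj$ rather than with all finite sets.
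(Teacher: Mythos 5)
Your proposal is correct and follows essentially the same route as the paper: the paper's argument is exactly the observation (given in the paragraph preceding the statement) that injectivity of $f$ forces $f[u]\cap f[v]=\emptyset$ for disjoint $u,v$, so $\binom{f}{k}$ carries edges of $KG_{m,k}$ to edges of $KG_{n,k}$, with identities and composition inherited from the functoriality of $\binom{-}{k}$. Your added check that $f[u]\neq f[v]$ is a harmless refinement (for $k\geq 1$ it already follows from disjointness of nonempty sets).
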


Not only does $KG_{m,k}$ embed into $KG_{n,k}$ when $m<n$, but $KG_{m,k} + KG_{n,k}$ embeds into $KG_{m+n,k}$. 
We construct the embedding $KG_{m,k} + KG_{n,k} \to KG_{m+n,k}$ by using the lax structure map from \cref{lem:chooselax} for the vertex map, $\Phi_{m, n} \maps \bink{m} + \bink{n} \to \bink{m+n}$. Restricting this map to either $\bink{m}$ (resp. $\bink{n}$) gives the map $\bink{i_m}$ (resp. $\bink{i_n}$) which we already know induces a map of graphs. Thus $\Phi_{m, n}$ induces a map of graphs, which we call $\Psi_{m, n}$.

\begin{prop}
    The functor $KG_{-,k}$ is made lax (symmetric) monoidal \[(KG_{-,k},\Psi) \maps (\Inj,+) \to (\sGrph, +)\] where the components of $\Psi$ are defined as above.
\end{prop}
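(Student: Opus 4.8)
The plan is not to chase the coherence diagrams for $(KG_{-,k},\Psi)$ by hand, but to deduce them from \cref{lem:chooselax} by transporting structure along a faithful strict monoidal functor. (The statement names $(\Inj,+)$ rather than $(\FinInj,+)$, but this is harmless: every assertion in \cref{lem:chooselax} holds verbatim over $\Inj$, finiteness playing no role.) First I would introduce the vertex-set functor $U\maps\sGrph\to\Set$, sending a simple graph $(V,E)$ to $V$ and a map of simple graphs to its underlying function on vertices. Since a map of simple graphs \emph{is}, by definition, such a function subject to a condition on edges, $U$ is faithful. Next I would check --- routinely --- that $U$ is strict symmetric monoidal for $(+,\emptyset)$ on both sides: the vertex set of a disjoint union of graphs is the disjoint union of the vertex sets, the empty graph has empty vertex set, and $U$ carries the braiding of $(\sGrph,+)$ (a relabelling by a block transposition, inherited from $\S$) to the braiding of $(\Set,+)$. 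The payoff is that, by the very definition of $\Psi$ and of the graph maps $KG_{f,k}$, we have $U(KG_{f,k})=\bink{f}$, $U(\Psi_{m,n})=\Phi_{m,n}$, and $U(\phi)$ is the unit map $\emptyset\to\bink{\emptyset}$; that is, $U\circ KG_{-,k}$ together with the transported structure maps is \emph{exactly} the lax symmetric monoidal functor $(\bink{-},\Phi,\phi)$ of \cref{lem:chooselax}.

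With this in place the conclusion is formal. Naturality of $\Psi_{m,n}$ in $m$ and $n$, the associativity hexagon, the two unitor triangles, and the braiding square are all equations between morphisms of $\sGrph$ whose edges are morphisms already named: components of $\Psi$, the map $\phi$, graph maps $KG_{f,k}$, and the associator, unitors and braiding of $(\sGrph,+)$ (all induced from $(\S,+)$). Applying the monoidal functor $U$ to such an equation produces precisely the corresponding equation for $(\bink{-},\Phi,\phi)$, which holds by \cref{lem:chooselax}; since $U$ is faithful it reflects the original equation back into $\sGrph$. Hence $(KG_{-,k},\Psi)$ satisfies every axiom of a lax symmetric monoidal functor.

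The only point needing genuine care is the bookkeeping in the setup --- confirming that $U$ really is strict monoidal on the nose and that each structure map of $KG_{-,k}$ lies over the corresponding structure map of $\bink{-}$ --- after which no computation remains, all the real diagram chasing having been carried out once in \cref{lem:chooselax}. An entirely self-contained alternative would be to verify the unitor, associativity and symmetry conditions directly on vertices, but that merely repeats the proof of \cref{lem:chooselax}, so the transport argument is the cleaner route.
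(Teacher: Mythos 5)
Your argument is correct and is essentially the paper's proof, which simply states that all the necessary properties of $\Psi$ are inherited immediately from $\Phi$ of \cref{lem:chooselax}. Your transport along the faithful strict monoidal vertex-set functor $U\maps\sGrph\to\Set$ is just a careful spelling-out of the mechanism behind that one-line inheritance claim.
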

\begin{proof}
    All the necessary properties for $\Psi$ are inherited immediately from $\Phi$.
\end{proof}

Let $(L, \Lambda) \maps (\Inj,+) \to (\Cat,+)$ be the composite $L = F \circ IC \circ KG_{-,2}$ with the obvious laxator. Let $M$ be a monoid. Then from the construction given in the previous subsection, for each $n$ we get a diagram $D_n \maps L(n) \to \Mon$ which sends all vertex objects to $M$, all edge objects to $M \times M$, and all nontrivial morphisms to inclusions $M \to M \times M$. 
Taking the colimit of $D_n$ then gives the Green product $\KG_{n,2}(M)$. Note that we identify constituent monoids with the corresponding submonoid of the graph product when this can be done without confusion.

\begin{prop}
\label{prop:knesercomm}
    Let $M_{p,q}$ be a $\binom{m+n}{2}$ family of monoids, and $G_1$ and $G_2$ be graphs with $m$ and $n$ vertices respectively. Let $a_1 \in M_{p_1,q_1}$ with $p_1,q_1 \leq m$ and $a_2 \in M_{p_2,q_2}$ with $p_2,q_2>m$, and let $\overline a_1, \overline a_2$ be their values under the canonical inclusions $M_{p,q} \hookrightarrow (G_1 \sqcup G_2)(M_{p,q})$. Then $\overline a_1 \overline a_2 = \overline a_2 \overline a_1$ in $(G_1 \sqcup G_2)(M_{p,q})$.
\end{prop}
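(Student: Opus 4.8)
The plan is to reduce the claim to the shuffle theorem, \cref{thm:shuffle}. First I would note that $\overline a_1$ and $\overline a_2$ are by construction the images of the single elements $a_1 \in M_{p_1,q_1}$ and $a_2 \in M_{p_2,q_2}$ under the inclusions of two components of the Green product, so each is represented by a word of length at most one, $a_1^{\{p_1,q_1\}}$ and $a_2^{\{p_2,q_2\}}$. If either $a_i$ is the identity of its monoid, then $\overline a_i$ is the identity of $(G_1\sqcup G_2)(M_{p,q})$ (the inclusions are monoid homomorphisms) and the asserted equation is immediate, so I may assume both words have length exactly one. Then $\overline a_1\overline a_2$ and $\overline a_2\overline a_1$ are the two-letter words $a_1^{\{p_1,q_1\}}a_2^{\{p_2,q_2\}}$ and $a_2^{\{p_2,q_2\}}a_1^{\{p_1,q_1\}}$, and both are reduced, since their two letters come from distinct components (so the reducedness condition on repeated components is vacuous).

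The crux is then to verify that the vertices $\{p_1,q_1\}$ and $\{p_2,q_2\}$ are joined by an edge in the graph indexing $(G_1\sqcup G_2)(M_{p,q})$. Since $p_1,q_1\le m$ while $p_2,q_2>m$, the $2$-element sets $\{p_1,q_1\}$ and $\{p_2,q_2\}$ are disjoint subsets of $\{1,\dots,m+n\}$, which is precisely the adjacency condition built into the Kneser graph $\KG_{m+n,2}$; hence the edge is present, and correspondingly the diagram $D\maps F(IC(G_1\sqcup G_2))\to\Mon$ has an edge object lying over this edge that $D$ sends to $M_{p_1,q_1}\times M_{p_2,q_2}$, with both components including into it. Granting this edge, the two reduced words displayed above differ by a single shuffle, so \cref{thm:shuffle} gives $\overline a_1\overline a_2=\overline a_2\overline a_1$.

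As an alternative that sidesteps \cref{thm:shuffle}, I could argue straight from the colimit description of the generalized Green product: the colimit identifies $\overline a_1$ with $(a_1,e)$ and $\overline a_2$ with $(e,a_2)$ inside the edge monoid $M_{p_1,q_1}\times M_{p_2,q_2}$, and since $(a_1,e)(e,a_2)=(a_1,a_2)=(e,a_2)(a_1,e)$ the two products agree; this is the same computation that appears in the proof that the Green product is $\colim D$. I expect the only genuine obstacle to be bookkeeping: pinning down exactly which graph indexes $(G_1\sqcup G_2)(M_{p,q})$ and confirming, by unwinding the definitions of $\KG_{-,2}$, $IC$ and the free-category functor, that the Kneser edge forced by disjointness of $\{p_1,q_1\}$ and $\{p_2,q_2\}$ really is there; once that is settled, the commutativity is purely formal.
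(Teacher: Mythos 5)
Your proof is correct and follows essentially the same route as the paper's: both hinge on the single observation that $p_1,q_1\le m<p_2,q_2$ forces $\{p_1,q_1\}$ and $\{p_2,q_2\}$ to be disjoint, hence adjacent in the Kneser graph $\KG_{m+n,2}$, so the defining relation of the Green product makes the corresponding components commute. The paper states exactly this in two sentences; your appeal to \cref{thm:shuffle} (or, equivalently, to the colimit/edge-object description) merely spells out why that adjacency ``imposes the desired commutativity relation.''
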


\begin{proof}
    By definition, there is an edge in the Kneser graph $KG_{m+n,2}$ between the vertices ${p_1,q_1}$ and ${p_2,q_2}$. This imposes the desired commutativity relation.
\end{proof}

\subsection{Varieties of Monoids}

A \define{finitary algebraic theory} or \define{Lawvere theory} is a category $T$ with finite products in which every object is isomorphic to a finite cartesian power $x^n = \prod^n x$ of a distinguished object $x$ \cite{LawvereThesis, ALR}. An \define{algebra} of a theory $T$, or \define{$T$-algebra}, is a product preserving functor $T \to \Set$. Let $T\Alg$ denote the category of $T$-algebras with natural transformations for morphisms. We are primarily concerned with monoids in this chapter. The theory of monoids $T_\Mon$ has morphisms $m \maps x \times x \to x$ and $e \maps x^0 \to x$, which makes the following diagrams commute.
\[
\begin{tikzcd}
    x^3
    \arrow[d, "m \times 1_x", swap]
    \arrow[r, "1_x \times m"]
    &
    x^2
    \arrow[d, "m"]
    &
    x \times x^0
    \arrow[r, "1_x \times e"]
    \arrow[dr, "\simeq", swap]
    &
    x^2
    \arrow[d, "m"]
    &
    x^0 \times x
    \arrow[l, "e \times 1_x", swap]
    \arrow[dl, "\simeq"]
    \\
    x^2
    \arrow[r, "m", swap]
    &
    x
    &&
    x
\end{tikzcd}\]

A \define{variety} of $T$-algebras is a full subcategory of $T\Alg$ which is closed under products, subobjects, and homomorphic images. Birkhoff's theorem implies that this is equivalent to the category $T'\Alg$ of algebras of another theory $T'$ which has the same morphisms, but satisfies more commutative diagrams \cite{universal}. For example, commutative monoids are given by algebras of the theory of commutative monoids $T_\CMon$, which has morphisms $m,e$ as in $T_\Mon$, satisfies the same commutative diagrams as $T_\Mon$, but also satisfies the following commutative diagram
\[
\begin{tikzcd}
    x^2
    \arrow[dr, "m", swap]
    \arrow[r, "b"]
    &
    x^2
    \arrow[d, "m"]
    \\&
    x
\end{tikzcd}\]
where $b: x^2 \to x^2$ is the braid isomorphism. We only use varieties of monoids in this chapter, so we give these ``extra'' conditions by equations, e.g.\ commutative monoids are those which satisfy the equation $ab=ba$ for all elements $a,b$. We call the extra equations the \emph{defining equations} of the variety.

A \define{graphic monoid} is a monoid which satisfies the \emph{graphic identity}: $aba=ab$ for all elements $a,b$. Graphic monoids are algebras of a theory $T_\GMon$. A semigroup obeying this relation is known as a \emph{left regular band} \cite{MSS}. The term \emph{graphic monoid} was introduced by Lawvere \cite{taco}. Let $M$ be a graphic monoid. If we let $b$ be the unit of $M$, then the graphic relation says that $a^2=a$. Every element of $M$ is idempotent. If $a,c \in M$, then $ca=c$ if $c$ already has $a$ as a factor. 

Graphic monoids are present when talking about types of information where a piece of information cannot contain the same piece of information twice. A simple example can be seen in the powerset of a given set $X$, given the structure of a monoid by union. Of course, this example is overly simple because the operation is commutative idempotent, which is stronger than graphic. A more interesting example can be seen by considering the following simple graph.
\[
\begin{tikzpicture}
	\begin{pgfonlayer}{nodelayer}
		\node [style=construct] (1) at (-2, 0) {a};
		\node [style=construct] (2) at (0, 0) {b};
		\node [style=construct] (3) at (2, 0) {c};
		\node [style=none] (4) at (-1, 0.25) {x};
		\node [style=none] (5) at (1, 0.25) {y};
	\end{pgfonlayer}
	\begin{pgfonlayer}{edgelayer}
		\draw (1) to (2);
		\draw (2) to (3);
	\end{pgfonlayer}
\end{tikzpicture}\]
We will define a monoid structure on the set $M = \{1, a, b, c, x, y\}$ in the following way. First, $1$ is a freely adjoined identity element. For $p,q \in M \setminus \{1\}$, define $pq$ as follows. Pick a generic point $f$ in $p$ and a generic point $g$ in $q$. Then move a small distance along a straight line path from $f$ to $g$. We define the product $pq$ to be the component of the graph you land in. Here are some example computations:
\begin{align*}
    &ab = x
    &aa = a
    \\
    &bc = y
    &xb = x
    \\
    &ac = x
    &ca = y
\end{align*}
The last two demonstrate that this monoid is not commutative. More complicated examples can be constructed by using the same idea for the operation, but applying it to different spaces.

The following fact is critical in \cref{ch:NetworkModels}. It follows immediately from the definitions.
\begin{lem}
\label{lem:varietiesarepointed}
    Every variety of monoids is a pointed category and has finite colimits.
\end{lem}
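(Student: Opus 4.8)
The plan is to exhibit a concrete zero object and then to deduce cocompleteness from the fact, already recalled above, that a variety of monoids is equivalent to the category of algebras of a Lawvere theory. So the proof splits into two independent parts: ``pointed'' and ``has finite colimits''.

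For the pointed part, I would take the one-element monoid $1$ as the candidate zero object. It lies in every variety $\V$: any equation $s = t$ in the language of monoids holds in $1$, since both sides evaluate to the unique element (equivalently, $1$ is the empty product and $\V$ is closed under products). In $\Mon$ the monoid $1$ is terminal, because for every monoid $M$ there is exactly one homomorphism $M \to 1$; and it is also \emph{initial}, because a homomorphism $1 \to M$ must send the unique element of $1$, which is its identity, to the identity of $M$, so it is uniquely determined. Since $\V$ is a full subcategory of $\Mon$ containing $1$, the object $1$ is still both initial and terminal in $\V$, hence a zero object, and $\V$ is pointed.

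For finite colimits, I would invoke Birkhoff's theorem in the form recalled above: $\V$ is equivalent to $T'\Alg$ for a finitary algebraic (Lawvere) theory $T'$. The category of algebras of any Lawvere theory is complete and cocomplete --- limits are computed pointwise in $\Set$, and colimits exist because the forgetful functor to $\Set$ is monadic for a finitary monad, so colimits of algebras are built as quotients of free algebras on coproducts of underlying sets \cite{ALR}. In particular $\V$ has all finite colimits. (Equivalently one can argue that $\V$ is a reflective subcategory of the cocomplete category $\Mon$, the reflector sending a monoid $M$ to its quotient by the congruence generated by all instances of the defining equations of $\V$; a reflective subcategory of a cocomplete category is cocomplete, with colimits obtained by applying the reflector to the colimit computed in $\Mon$.)

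There is essentially no serious obstacle: both assertions follow immediately once the correct ambient facts are brought in. The only point that deserves a moment's care is verifying that $1$ is genuinely \emph{initial} in $\Mon$ --- not merely terminal --- which relies precisely on the requirement that monoid homomorphisms preserve the unit, so that the image of the single element of $1$ is forced.
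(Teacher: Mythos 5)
Your proposal is correct. The paper offers no actual argument here --- its ``proof'' is the single sentence that the claim follows immediately from the definitions --- and what you have written is precisely the standard elaboration it is leaning on: the trivial monoid satisfies every identity, hence lies in each variety, and being both initial and terminal in $\Mon$ it remains so in any full subcategory containing it; cocompleteness follows from Birkhoff's theorem (a variety is the category of algebras of a Lawvere theory, hence cocomplete) or equivalently from reflectivity in $\Mon$.
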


\subsection{Varietal Network Models}
\label{sec:varmon}

Our motivation for using graphic monoids is that we use the graphic relation to model ``commitment'' in the following way. Let $M$ be a graphic monoid, where we think of an element of $M$ as a task or list of tasks. If we first commit to doing task $x$, and then commit to doing task $y$, then we have the element $xy$ as our task list, indicating that we committed to $x$ before $y$. If we then try to commit to to doing $x$, the graphic relation saves us from recording this information twice. The relation also preserves the order in which we committed to $x$ and $y$: if $x$ is a task list of the form $x = ab$, and we have committed to $xy$, and then try to commit to $bc$, we get $(xy)(bc) = (aby)(bc) = a (byb) c = a (by) c = abyc = xyc$.

We want to construct a network model from a monoid in a variety $\V$ which has constituent monoids that are also in $\V$. If $M$ is a monoid in a variety $\V$, then each constituent monoid $\Gamma_M(n)$ is a product of several copies of $M$, and so is also in $\V$ by definition. Thus the ordinary network model (given in \cref{thm:graph_model}) restricted to a variety gives a functor $\V \to \NetMod_\V$, where $\NetMod_\V$ denotes the category of $\V$-valued network models.

The free product of two monoids is a monoid, $M+N$ an element of which is given by a list with entries in the set $M \sqcup N$ such that if two consecutive entries of a list are either both elements of $M$ or both elements of $N$, then the list is identified with the list that is the same everywhere except that those two entries are reduced to one entry occupied by their product. Note that the empty list is identified with both the singleton list consisting of the identity element of $M$, and the singleton list consisting of the identity element of $N$. Free products of monoids gives the coproduct in the category of monoids $\Mon$. Free products of monoids are very similar to free products of groups, which can be found in most books introducing group theory \cite{Hungerford}. 

If two monoids $M$ and $N$ are in a variety $\V$, taking their free product will not necessarily produce a monoid in $\V$, i.e.\ varieties are not necessarily closed under the coproduct of $\Mon$. It is easy to find an example demonstrating this. 
Consider $\IMon$, the variety of idempotent monoids, i.e.\ monoids satisfying the equation $x^2 = x$ for all elements $x$. The boolean monoid $\B$ is an object in $\IMon$. The free product of $\B$ with itself $\B+\B$ can be generated by elements $a$ and $b$ which correspond to the element $1$ in each copy of $\B$. The element $ab \in \B+\B$ is not idempotent, as $abab \neq ab$. However, every variety $\V$ does have coproducts. The coproduct in a variety of monoids is the quotient of the free product by the congruence relation generated by the variety's defining equations. In \cref{sec:funnetmod} we give a construction $\V \to \NetMod_\V$ which uses colimits in order to impose minimal relations.

\cref{lem:varietiesarepointed} tells us that it makes sense to talk about Green products in a variety, which we call \emph{varietal Green products}. In the next section, we use varietal Green products with Kneser graphs to construct network models.

\section{Free Network Models}
\label{sec:funnetmod}

In this section, we state and prove the main result of this chapter.
It says that given a monoid $M$ in a variety $\V$, we can construct a network model whose constituent monoids are also in $\V$, while avoiding to impose commutativity relations when possible. In the following section, we see how this construction resolves the dilemma presented in the question.

Let $M$ be a monoid in a variety $\V$. Define $\GMV(n)$ to be the $KG_{n,2}$ Green product of $\binom{n}{2}$ copies of $M$.

\begin{thm} 
\label{thm:main}
    For $\V$ a variety of monoids, $\Gamma_{-,\V} \maps \V \to \NetMod_\V$ is a functor, as given above. The network model $\GMV$ is called the \define{$\V$-varietal network model for $M$-weighted graphs}, or just the \define{varietal $M$ network model}.
\end{thm}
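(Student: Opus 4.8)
The plan is to realize $\Gamma_{-,\V}$ as a composite of constructions that are each manifestly functorial and (lax) monoidal, so that the network-model axioms of \cref{thm:equations} are inherited from the graph-theoretic facts of \cref{sec:kneser} rather than checked by hand --- the latter route would in any case require a shuffle normal form for \emph{varietal} Green products, which \cref{thm:shuffle} does not supply. Fix $M \in \V$. Since a variety of monoids is closed under products and, by \cref{lem:varietiesarepointed}, is pointed with finite colimits, the general Green-product construction of \cref{sec:graphprod} applies inside $\V$: for each $n$ the finite diagram $D_n \maps F(IC(KG_{n,2})) \to \V$ (vertex objects to $M$, edge objects to $M \times M$, nontrivial arrows to the canonical inclusions) has a colimit in $\V$, and we set $\GMV(n)$ to be this colimit, so $\GMV(n) \in \V$ by construction.

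For functoriality in $n$, restrict the functor $KG_{-,2} \maps \FinInj \to \sGrph$ to the groupoid $\S \subseteq \FinInj$; then each bijection $\sigma \maps \n \to \n$ yields a graph automorphism $KG_{\sigma,2}$, and applying $F \circ IC$ produces an automorphism of the indexing category $F(IC(KG_{n,2}))$ carrying $D_n$ to itself (the products being unordered, the identifications $M = M$ and $M \times M = M \times M$ are literal). The induced automorphism of $\colim D_n$ is the action of $\sigma$, and functoriality of $F \circ IC$ makes $n \mapsto \GMV(n)$, $\sigma \mapsto \sigma_*$ a functor $\S \to \V$; this is precisely the action of $\Aut(KG_{n,2})$ on the Green product noted after its definition, restricted along $S_n \to \Aut(KG_{n,2})$.

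The lax symmetric monoidal structure is where the real work lies. First, $G \mapsto G^{\V}(M)$ (the constant family $M$) extends to a coproduct-preserving functor $\sGrph \to \V$ --- on arrows a graph map $f$ induces $F(IC(f))$, which carries the constant-$M$ diagram over $G'$ back to the one over $G$ --- and composing the lax symmetric monoidal functor $KG_{-,2} \maps (\Inj,+) \to (\sGrph,+)$ of \cref{lem:chooselax} (and the two propositions following it) with it gives natural maps $\GMV(m) + \GMV(n) \to \GMV(m+n)$, coproducts taken in $\V$. The point is to promote these to maps \emph{out of the product}: the two composites $\GMV(m) \to \GMV(m+n)$ and $\GMV(n) \to \GMV(m+n)$ have elementwise-commuting images --- an edge inside $\m$ and an edge inside $\{m+1,\dots,m+n\}$ are disjoint, hence adjacent in $KG_{m+n,2}$, so the colimit argument of \cref{prop:knesercomm} applies verbatim in $\V$ --- and in any variety of monoids a pair of homomorphisms into a common object with commuting images factors uniquely through their product. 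This produces the disjoint-union maps $\sqcup_{m,n} \maps \GMV(m) \times \GMV(n) \to \GMV(m+n)$; their naturality over $\S \times \S$ descends from that of the coproduct maps, and the associativity hexagon, the unit conditions (using $KG_{0,2} = \emptyset$, so $\GMV(0)$ is the initial and hence also terminal trivial monoid of $\V$), and the symmetry square reduce, via these universal properties, to the corresponding identities among the Kneser-graph embeddings $\Psi_{m,n}$, the commutativity facts above, and the fact that the braidings $B_{m,n}$ of $\S$ act as graph isomorphisms $KG_{m+n,2} \cong KG_{n+m,2}$. Thus $\GMV \maps (\S,+) \to (\V,\times)$ is a $\V$-valued network model.

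Finally, a homomorphism $\phi \maps M \to M'$ in $\V$ induces for each $n$ a natural transformation $D_n^M \Rightarrow D_n^{M'}$ (apply $\phi$ at vertex objects and $\phi \times \phi$ at edge objects, compatibly with the inclusions), hence a map $\Gamma_{\phi,\V}(n) \maps \GMV(n) \to \Gamma_{M',\V}(n)$ on colimits; these commute with the $\S$-action (which touches only the graph coordinate) and with $\sqcup$ (both composites agree after restriction to the two generating submonoids, hence agree), so $\Gamma_{\phi,\V}$ is a morphism in $\NetMod_\V$, and $\Gamma_{-,\V}$ evidently preserves identities and composition since it applies $\phi$ componentwise on Green-product expressions. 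I expect the main obstacle to be obtaining the \emph{cartesian} lax monoidal structure of the third paragraph honestly: the factorization of the coproduct-level structure maps through the product using \cref{prop:knesercomm}, and then the patient verification that the hexagon, unitor, and symmetry coherences survive that factorization. By contrast, the existence of the colimits in $\V$, the $\S$-action, and functoriality in $M$ are routine once the graph-level statements of \cref{sec:kneser} are in hand.
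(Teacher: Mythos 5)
Your proposal is correct and follows essentially the same route as the paper: $\GMV(n)$ as the varietal Green product over $KG_{n,2}$, the $S_n$-action via graph automorphisms, the laxator obtained from the canonical map $\GMV(m)+\GMV(n)\to\GMV(m+n)$ together with the commutativity of disjoint-edge components (\cref{prop:knesercomm}), and functoriality in $M$ by universal properties of the colimits. The only presentational difference is that where you invoke the general fact that two homomorphisms with commuting images factor uniquely through the product, the paper verifies the resulting interchange law $(g\sqcup h)\cup(g'\sqcup h')=(g\cup g')\sqcup(h\cup h')$ by an explicit string-diagram computation; the mathematical content is the same.
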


In order to prove this, we must first show that a monoid $M$ gives a network model, i.e.\ a lax symmetric monoidal functor. The laxator for $\GMV$ is canonically defined, but perhaps it is not as immediate as the one for the ordinary $M$ network model. We treat this first before returning to the proof of the main theorem.

Let $A$ and $B$ be objects in a pointed category with finite products and coproducts. Let $p_A \maps A \times B \to A$ and $p_B \maps A \times B \to B$ denote the canonical projections, and $i_A \maps A \to A+B$ and $i_B \maps B \to A + B$ the canonical inclusions.
The category $\CMon$ of commutative monoids is such a category. Recall that the operation of a monoid is a monoid homomorphism if and only if the monoid is commutative.
We have
\[
\begin{tikzcd}
    &
    A \times B
    \arrow[dl, "p_A", swap]
    \arrow[dr, "p_B"]
    \arrow[d, dashed]
    \\
    A
    \arrow[d, "i_A", swap]
    &
    (A + B) \times (A + B)
    \arrow[d,"\ast"]
    \arrow[dl]
    \arrow[dr]
    &
    B
    \arrow[d, "i_B"]
    \\
    A + B
    &
    A + B
    &
    A + B
\end{tikzcd}
\]
where $\ast$ denotes the operation in the commutative monoid $A+B$, and the dashed arrow is $<i_A p_A, i_B p_B>$ given by universal property.
The composite of the two maps going down the middle is the inverse to the canonical map $A + B \to A \times B$.
The operation in a noncommutative monoid is not a monoid homomorphism, but all the above maps still exist \emph{as functions}.
Recall that we let $\cup$ denote the operation in the monoids $\GMV(n)$. There is always a homomorphism $\phi_{m, n} \maps \GMV(m) + \GMV(n) \to \GMV(m+n)$ by universal property of coproducts. 
Let \[\gamma \maps (\GMV(m) + \GMV(n)) \times (\GMV(m) + \GMV(n)) \to \GMV(m) + \GMV(n)\] denote the monoid operation of the coproduct.

\[\adjustbox{scale = 0.75}{
\begin{math}\begin{tikzcd}[row sep = 50]
    &
    \GMV(m) \times \GMV(n)
    \arrow[dl, "p_1", swap]
    \arrow[dr, "p_2"]
    \arrow[d, dashed]
    \\
    \GMV(m)
    \arrow[d, "i_1", swap]
    &
    (\GMV(m)+\GMV(n))\times(\GMV(m)+\GMV(n))
    \arrow[d,"\gamma"]
    \arrow[dl]
    \arrow[dr]
    &
    \GMV(n)
    \arrow[d, "i_2"]
    \\
    \GMV(m)+\GMV(n)
    &
    \GMV(m)+\GMV(n)
    \arrow[d, "\phi"]
    &
    \GMV(m)+\GMV(n)
    \\&
    \GMV(m+n)
\end{tikzcd}\end{math}
}\]
The monoids $\GMV(n)$ are constructed specifically so that $\phi \circ \gamma \circ <i_1 \circ p_1, i_2 \circ p_2>$ is a monoid homomorphism despite the fact that $\gamma$ is not.

In the proof of the following theorem, we utilize a string diagrammatic calculus suited for reasoning in a symmetric monoidal category. We refer the reader to Selinger's thorough exposition of such string diagramatic languages and their use in category theory \cite{Selinger}.

\begin{lem}
    The function $\GMV(m) \times \GMV(n) \to \GMV(m + n)$ given by $\phi \circ (i_1 \circ p_1 \cup i_2 \circ p_2)$ is a monoid homomorphism. Moreover, the family of maps of this form gives a natural transformation, denoted $\sqcup$.
\end{lem}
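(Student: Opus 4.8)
The plan is to first reduce the definition to something manageable. I would set $\iota_1 = \phi_{m,n}\circ i_1\maps\GMV(m)\to\GMV(m+n)$ and $\iota_2 = \phi_{m,n}\circ i_2\maps\GMV(n)\to\GMV(m+n)$; these are monoid homomorphisms, being composites of coproduct injections with the homomorphism $\phi_{m,n}$. Since $\phi_{m,n}$ is a homomorphism, $\phi_{m,n}\big((i_1 p_1 \cup i_2 p_2)(a,b)\big) = \phi_{m,n}\big(i_1(a)\cup i_2(b)\big) = \iota_1(a)\cup\iota_2(b)$, so the map under study is simply $(a,b)\mapsto\iota_1(a)\cup\iota_2(b)$. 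Concretely, $\iota_1$ sends $\GMV(m)=KG_{m,2}(M)$ to the submonoid of $\GMV(m+n)=KG_{m+n,2}(M)$ generated by the $M$-components indexed by $2$-subsets of $\{1,\dots,m\}$, and $\iota_2$ sends $\GMV(n)$ to the one generated by the components indexed by $2$-subsets of $\{m+1,\dots,m+n\}$. Unit preservation is then immediate: $\iota_1(e)\cup\iota_2(e)=e\cup e=e$.

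For multiplicativity I would expand both sides. On one hand,
\[ \sqcup_{m,n}\big((a,b)\cup(a',b')\big) = \iota_1(a\cup a')\cup\iota_2(b\cup b') = \iota_1(a)\cup\iota_1(a')\cup\iota_2(b)\cup\iota_2(b'); \]
on the other,
\[ \sqcup_{m,n}(a,b)\cup\sqcup_{m,n}(a',b') = \iota_1(a)\cup\iota_2(b)\cup\iota_1(a')\cup\iota_2(b'). \]
So the homomorphism property is equivalent to the single identity $\iota_1(a')\cup\iota_2(b)=\iota_2(b)\cup\iota_1(a')$ for all $a'\in\GMV(m)$, $b\in\GMV(n)$ — that is, the submonoids $\iota_1(\GMV(m))$ and $\iota_2(\GMV(n))$ of $\GMV(m+n)$ commute elementwise. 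This reduction is the heart of the matter, and it is exactly what Kneser graphs are built to deliver: any $2$-subset of $\{1,\dots,m\}$ is disjoint from any $2$-subset of $\{m+1,\dots,m+n\}$, so the corresponding vertices are adjacent in $KG_{m+n,2}$, and hence the two $M$-components they index commute inside the Green product — which is \cref{prop:knesercomm}. Since the generating sets of the two submonoids commute elementwise, so do the submonoids. In the write-up this computation is best organized with the string-diagram calculus for symmetric monoidal categories: $\sqcup_{m,n}$ is assembled from the boxes $p_1,p_2,i_1,i_2$, the coproduct multiplication, and $\phi_{m,n}$, and the homomorphism law becomes a short sequence of wire moves whose only non-formal step is the transposition licensed by \cref{prop:knesercomm}. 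I expect getting this bookkeeping exactly right — which components do and do not commute — to be the one real technical point; the rest is formal.

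It then remains to verify naturality. Given $\sigma\in S_m$, $\tau\in S_n$, the square amounts to $\GMV(\sigma+\tau)\circ\sqcup_{m,n}=\sqcup_{m,n}\circ\big(\GMV(\sigma)\times\GMV(\tau)\big)$, and I would deduce it from: (i) $\GMV(\sigma+\tau)$ is a homomorphism; and (ii) $\iota_1,\iota_2$ are natural, since they are induced by the graph inclusions $KG_{m,2}\hookrightarrow KG_{m+n,2}$ and $KG_{n,2}\hookrightarrow KG_{m+n,2}$ appearing in the lax structure map $\Psi$ of $KG_{-,2}$, whose naturality was established in \cref{sec:kneser}, together with functoriality of $IC$, the free-category functor, the diagram assignment, and the colimit. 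Concretely,
\[ \GMV(\sigma+\tau)\big(\iota_1(a)\cup\iota_2(b)\big) = \iota_1\big(\GMV(\sigma)(a)\big)\cup\iota_2\big(\GMV(\tau)(b)\big) = \sqcup_{m,n}\big(\GMV(\sigma)(a),\GMV(\tau)(b)\big), \]
as required. Nothing in the argument uses the variety $\V$ beyond the existence of the defining colimits (\cref{lem:varietiesarepointed}); the same proof works verbatim in $\Mon$.
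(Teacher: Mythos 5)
Your proof is correct and follows essentially the same route as the paper's: both reduce the homomorphism property to the single commutation $\iota_1(a')\cup\iota_2(b)=\iota_2(b)\cup\iota_1(a')$ supplied by \cref{prop:knesercomm}, and both derive naturality from the homomorphism property of $\GMV(\sigma+\tau)$ together with naturality of the inclusions. The only difference is presentational — the paper carries out the expand-swap-regroup computation in string diagrams, while you do it elementwise — and your explicit note that the commutation of generators must be propagated to the full submonoid images is a small point the paper leaves implicit.
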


\begin{proof}
    We have the following actors in play:
    \begin{itemize}
        \item the monoid operations $\cup_k \maps \GMV(\mathbf k)$ for $k = m, n, m+n$ (we leave off the subscripts below)
        \item the monoid operation of the coproduct \[\gamma \maps (\GMV(m) + \GMV(n)) \times (\GMV(m) + \GMV(n)) \to \GMV(m) + \GMV(n)\]
        \item the canonical inclusion maps $i_1 \maps \GMV(m) \to \GMV(m) \to \GMV(n)$ and $i_2 \maps \GMV(n) \to \GMV(m) \to \GMV(n)$
        \item the canonical map $\phi \maps \GMV(m) + \GMV(n) \to \GMV(m+n)$
    \end{itemize}
    We represent these string diagramatically (read from top to bottom) as follows. Note that these are digrams in $\Set$ with its cartesian monoidal structure, because the monoid operations $\cup_k$ and $\gamma$ are not necessarily monoid homomorphisms.
    \[
    \begin{tikzpicture}
    \begin{pgfonlayer}{nodelayer}
    	\node [style=construct] (U) at (0, 0) {$\cup$};
        \node [style=none] (1) at (-0.5, 1) {};
    	\node [style=none] (2) at (-0.5, 0.5) {};
    	\node [style=none] (3) at (0.5, 0.5) {};
    	\node [style=none] (4) at (0, -1) {};
    	\node [style=none] (5) at (0.5, 1) {};
        \node [style=none] () at (0.75, 0) {,};
        \node [style=none] () at (1, 0) {}; %spacing
    \end{pgfonlayer}
    \begin{pgfonlayer}{edgelayer}
    	\draw [bend right] (2.center) to (U);
    	\draw [bend left] (3.center) to (U);
    	\draw [] (2.center) to (1);
    	\draw [] (3.center) to (5);
    	\draw (4.center) to (U);
    \end{pgfonlayer}
    \end{tikzpicture}
    \begin{tikzpicture}
    \begin{pgfonlayer}{nodelayer}
    	\node [style=construct] (g) at (0, 0) {$\gamma$};
        \node [style=none] (1) at (-0.5, 1) {};
    	\node [style=none] (2) at (-0.5, 0.5) {};
    	\node [style=none] (3) at (0.5, 0.5) {};
    	\node [style=none] (4) at (0, -1) {};
    	\node [style=none] (5) at (0.5, 1) {};
        \node [style=none] () at (0.75, 0) {,};
        \node [style=none] () at (1, 0) {}; %spacing
    \end{pgfonlayer}
    \begin{pgfonlayer}{edgelayer}
    	\draw [bend right] (2.center) to (g);
    	\draw [bend left] (3.center) to (g);
    	\draw [] (2.center) to (1);
    	\draw [] (3.center) to (5);
    	\draw (4.center) to (g);
    \end{pgfonlayer}
    \end{tikzpicture}
    \begin{tikzpicture}
    \begin{pgfonlayer}{nodelayer}
    	\node [style=construct] (1) at (0, 0) {$i_1$};
    	\node [style=none] (2) at (0, 1) {};
    	\node [style=none] (3) at (0, -1) {};
        \node [style=none] (5) at (0.75, 0) {,};
        \node [style=none] (5) at (1, 0) {}; %spacing
    \end{pgfonlayer}
    \begin{pgfonlayer}{edgelayer}
    	\draw (2.center) to (1);
    	\draw (3.center) to (1);
    \end{pgfonlayer}
    \end{tikzpicture}
    \begin{tikzpicture}
    \begin{pgfonlayer}{nodelayer}
    	\node [style=construct] (1) at (0, 0) {$i_2$};
    	\node [style=none] (2) at (0, 1) {};
    	\node [style=none] (3) at (0, -1) {};
        \node [style=none] (5) at (0.75, 0) {,};
        \node [style=none] (5) at (1, 0) {}; %spacing
    \end{pgfonlayer}
    \begin{pgfonlayer}{edgelayer}
    	\draw (2.center) to (1);
    	\draw (3.center) to (1);
    \end{pgfonlayer}
    \end{tikzpicture}
    \begin{tikzpicture}
    \begin{pgfonlayer}{nodelayer}
    	\node [style=construct] (1) at (0, 0) {$\phi$};
    	\node [style=none] (2) at (0, 1) {};
    	\node [style=none] (3) at (0, -1) {};
    \end{pgfonlayer}
    \begin{pgfonlayer}{edgelayer}
    	\draw (2.center) to (1);
    	\draw (3.center) to (1);
    \end{pgfonlayer}
    \end{tikzpicture}
    \]
    We define $\sqcup \maps \GMV(m) \times \GMV(n) \to \GMV(m + n)$ as follows.
    \begin{equation}\label{def:disjoint}
    \begin{tikzpicture}[baseline=(current bounding  box.center)]
    \begin{pgfonlayer}{nodelayer}
    	\node [style=construct] (U) at (0, 0) {$\sqcup$};
    	\node [style=none] (1) at (-0.5, 1) {};
    	\node [style=none] (2) at (0.5, 1) {};
    	\node [style=none] (3) at (-0.5, 0.5) {};
    	\node [style=none] (4) at (0.5, 0.5) {};
    	\node [style=none] (5) at (0, -1) {};
        \node [style=none] () at (1.2, 0) {=};
        \node [style=none] () at (1.8, 0) {}; %spacing
        \node [style=none] () at (0, -1.6) {}; %spacing
    \end{pgfonlayer}
    \begin{pgfonlayer}{edgelayer}
    	\draw [bend right] (3.center) to (U);
    	\draw [bend left] (4.center) to (U);
    	\draw [] (3.center) to (1);
    	\draw [] (4.center) to (2);
    	\draw (5.center) to (U);
    \end{pgfonlayer}
    \end{tikzpicture}
    \begin{tikzpicture}[baseline=(current bounding  box.center)]
    \begin{pgfonlayer}{nodelayer}
    	\node [style=none] (4) at (0, -1) {};
    	\node [style=construct] (6) at (-0.5, 1) {$\phi$};
    	\node [style=construct] (7) at (0.5, 1) {$\phi$};
    	\node [style=construct] (15) at (0, 0) {$\cup$};
    	\node [style=none] (16) at (-0.5, 3) {};
    	\node [style=none] (17) at (0.5, 3) {};
    	\node [style=construct] (18) at (-0.5, 2) {$i_1$};
    	\node [style=construct] (19) at (0.5, 2) {$i_2$};
    \end{pgfonlayer}
    \begin{pgfonlayer}{edgelayer}
    	\draw (15) to (4.center);
    	\draw [bend right] (6) to (15);
    	\draw [bend left] (7) to (15);
    	\draw (18) to (6);
    	\draw (19) to (7);
    	\draw (16.center) to (18);
    	\draw (17.center) to (19);
    \end{pgfonlayer}
    \end{tikzpicture}
    \end{equation}
    \cref{prop:knesercomm} gives the following equation.
    \begin{equation}\label{prop13}
    \begin{tikzpicture}[baseline=(current  bounding  box.center)]
    \begin{pgfonlayer}{nodelayer}
    	\node [style=none] (4) at (0, -1) {};
    	\node [style=construct] (6) at (-0.5, 1) {$\phi$};
    	\node [style=construct] (7) at (0.5, 1) {$\phi$};
    	\node [style=construct] (15) at (0, 0) {$\cup$};
    	\node [style=none] (16) at (-0.5, 3) {};
    	\node [style=none] (17) at (0.5, 3) {};
    	\node [style=construct] (18) at (-0.5, 2) {$i_2$};
    	\node [style=construct] (19) at (0.5, 2) {$i_1$};
    	\node [style=none] (1) at (1.8, 1) {=};
    \end{pgfonlayer}
    \begin{pgfonlayer}{edgelayer}
    	\draw (15) to (4.center);
    	\draw [bend right] (6) to (15);
    	\draw [bend left] (7) to (15);
    	\draw (18) to (6);
    	\draw (19) to (7);
    	\draw (16.center) to (18);
    	\draw (17.center) to (19);
    \end{pgfonlayer}
    \end{tikzpicture}
    \begin{tikzpicture}[baseline=(current  bounding  box.center)]
    \begin{pgfonlayer}{nodelayer}
        \node [style=none] () at (-1.8, 1) {}; %spacer
    	\node [style=construct] (f1) at (-0.5, 1) {$\phi$};
    	\node [style=construct] (f2) at (0.5, 1) {$\phi$};
    	\node [style=construct] (U) at (0, 0) {$\cup$};
    	\node [style=construct] (i1) at (-0.5, 2) {$i_1$};
    	\node [style=construct] (i2) at (0.5, 2) {$i_2$};
    	\node [style=none] (1) at (0, -1) {};
    	\node [style=none] (2) at (-0.5, 2.5) {};
    	\node [style=none] (3) at (0.5, 2.5) {};
    	\node [style=none] (4) at (0, 3) {};
    	\node [style=none] (5) at (-0.5, 3.5) {};
    	\node [style=none] (6) at (0.5, 3.5) {};
    \end{pgfonlayer}
    \begin{pgfonlayer}{edgelayer}
    	\draw [bend right] (5) to (4.center);
    	\draw [bend left] (6) to (4.center);
    	\draw [bend left] (4.center) to (3.center);
    	\draw [bend right] (4.center) to (2.center);
    	\draw (2.center) to (i1);
    	\draw (3.center) to (i2);
    	\draw (i1) to (f1);
    	\draw (i2) to (f2);
    	\draw [bend right] (f1) to (U);
    	\draw [bend left] (f2) to (U);
    	\draw (U) to (1);
    \end{pgfonlayer}
    \end{tikzpicture}
    \end{equation}
    Since $\phi$ is a homomorphism, we get the following equation.
    \begin{equation}\label{phiishom}
    \begin{tikzpicture}[baseline=(current  bounding  box.center)]
    \begin{pgfonlayer}{nodelayer}
    	\node [style=construct] (1) at (0, 0) {$\cup$};
    	\node [style=construct] (2) at (-0.5, 1) {$\phi$};
    	\node [style=construct] (3) at (0.5, 1) {$\phi$};
    	\node [style=none] (4) at (-0.5, 2) {};
    	\node [style=none] (5) at (0.5, 2) {};
    	\node [style=none] (6) at (0, -1) {};
    	\node [style=none] (7) at (1.8, 0.5) {=};
    	\node [style=none] (8) at (2.8, 0) {};
    \end{pgfonlayer}
    \begin{pgfonlayer}{edgelayer}
    	\draw (4) to (2);
    	\draw (5) to (3);
    	\draw [bend right] (2) to (1);
    	\draw [bend left] (3) to (1);
    	\draw (1) to (6);
    \end{pgfonlayer}
    \end{tikzpicture}
    \begin{tikzpicture}[baseline=(current  bounding  box.center)]
    \begin{pgfonlayer}{nodelayer}
    	\node [style=construct] (g) at (0, 1) {$\gamma$};
    	\node [style=construct] (f) at (0, 0) {$\phi$};
    	\node [style=none] (1) at (-0.5, 2) {};
    	\node [style=none] (2) at (0.5, 2) {};
    	\node [style=none] (3) at (-0.5, 1.5) {};
    	\node [style=none] (4) at (0.5, 1.5) {};
    	\node [style=none] (5) at (0, -1) {};
    \end{pgfonlayer}
    \begin{pgfonlayer}{edgelayer}
        \draw (1) to (3.center);
        \draw (2) to (4.center);
    	\draw [bend right] (3.center) to (g);
    	\draw [bend left] (4.center) to (g);
    	\draw (g) to (f);
    	\draw (f) to (5);
    \end{pgfonlayer}
    \end{tikzpicture}
    \end{equation}
    Since $i_1$ and $i_2$ are homomorphisms, we get the following equations.
    \begin{equation}\label{isarehoms}
    \begin{tikzpicture}[baseline=(current  bounding  box.center)]
    \begin{pgfonlayer}{nodelayer}
    	\node [style=construct] (1) at (0, 0) {$\gamma$};
    	\node [style=construct] (2) at (-0.5, 1) {$i_j$};
    	\node [style=construct] (3) at (0.5, 1) {$i_j$};
    	\node [style=none] (4) at (-0.5, 2) {};
    	\node [style=none] (5) at (0.5, 2) {};
    	\node [style=none] (6) at (0, -1) {};
    	\node [style=none] (7) at (1.8, 0.5) {=};
    	\node [style=none] (8) at (2.8, 0) {};
    \end{pgfonlayer}
    \begin{pgfonlayer}{edgelayer}
    	\draw (4) to (2);
    	\draw (5) to (3);
    	\draw [bend right] (2) to (1);
    	\draw [bend left] (3) to (1);
    	\draw (1) to (6);
    \end{pgfonlayer}
    \end{tikzpicture}
    \begin{tikzpicture}[baseline=(current  bounding  box.center)]
    \begin{pgfonlayer}{nodelayer}
    	\node [style=construct] (g) at (0, 1) {$\gamma$};
    	\node [style=construct] (f) at (0, 0) {$i_j$};
    	\node [style=none] (1) at (-0.5, 2) {};
    	\node [style=none] (2) at (0.5, 2) {};
    	\node [style=none] (3) at (-0.5, 1.5) {};
    	\node [style=none] (4) at (0.5, 1.5) {};
    	\node [style=none] (5) at (0, -1) {};
    \end{pgfonlayer}
    \begin{pgfonlayer}{edgelayer}
        \draw (1) to (3.center);
        \draw (2) to (4.center);
    	\draw [bend right] (3.center) to (g);
    	\draw [bend left] (4.center) to (g);
    	\draw (g) to (f);
    	\draw (f) to (5);
    \end{pgfonlayer}
    \end{tikzpicture}
    \end{equation}
    
    We want to show that $(g \sqcup h) \cup (g' \sqcup h') = (g \cup g') \sqcup (h \cup h')$. We compute:
    \[
    \begin{tikzpicture}
    \begin{pgfonlayer}{nodelayer}
    	\node [style=construct] (s1) at (-0.75, 1) {$\sqcup$};
    	\node [style=construct] (s2) at (0.75, 1) {$\sqcup$};
    	\node [style=construct] (U) at (0, 0) {$\cup$};
    	\node [style=none] (1) at (-1.25, 1.5) {};
    	\node [style=none] (2) at (0.25, 1.5) {};
    	\node [style=none] (3) at (-0.25, 1.5) {};
    	\node [style=none] (4) at (1.25, 1.5) {};
    	\node [style=none] (5) at (0, -1) {};
    	\node [style=none] (6) at (-1.25, 2) {};
    	\node [style=none] (7) at (0.25, 2) {};
    	\node [style=none] (8) at (-0.25, 2) {};
    	\node [style=none] (9) at (1.25, 2) {};
    	\node [style=none] () at (2.2, 0.5) {=};
    	\node [style=none] () at (2.2, 0.9) {(\ref{def:disjoint})};
    	\node [style=none] () at (2.8, 1) {}; %spacing
    \end{pgfonlayer}
    \begin{pgfonlayer}{edgelayer}
        \draw (6) to (1.center);
        \draw (8) to (3.center);
        \draw (7) to (2.center);
        \draw (9) to (4.center);
        \draw [bend right] (1.center) to (s1);
        \draw [bend left] (3.center) to (s1);
        \draw [bend right] (2.center) to (s2);
        \draw [bend left] (4.center) to (s2);
        \draw [bend right] (s1) to (U);
        \draw [bend left] (s2) to (U);
        \draw (U) to (5);
    \end{pgfonlayer}
    \end{tikzpicture}
    \begin{tikzpicture}
    \begin{pgfonlayer}{nodelayer}
    	\node [style=construct] (U1) at (-1, -1) {$\cup$};
    	\node [style=construct] (U2) at (1, -1) {$\cup$};
    	\node [style=construct] (U3) at (0, -2) {$\cup$};
    	\node [style=construct] (i1) at (-1.5, 1) {$i_1$};
    	\node [style=construct] (i2) at (-0.5, 1) {$i_2$};
    	\node [style=construct] (i1') at (0.5, 1) {$i_1$};
    	\node [style=construct] (i2') at (1.5, 1) {$i_2$};
    	\node [style=construct] (f1) at (-1.5, 0) {$\phi$};
    	\node [style=construct] (f2) at (-0.5, 0) {$\phi$};
    	\node [style=construct] (f3) at (0.5, 0) {$\phi$};
    	\node [style=construct] (f4) at (1.5, 0) {$\phi$};
    	\node [style=none] (1) at (-1.5, 2) {};
    	\node [style=none] (2) at (0.5, 2) {};
    	\node [style=none] (3) at (-0.5, 2) {};
    	\node [style=none] (4) at (1.5, 2) {};
    	\node [style=none] (5) at (0, -3) {};
    \end{pgfonlayer}
    \begin{pgfonlayer}{edgelayer}
    	\draw (1) to (i1);
    	\draw (3) to (i2);
    	\draw (2) to (i1');
    	\draw (4) to (i2');
    	\draw (i1) to (f1);
    	\draw (i2) to (f2);
    	\draw (i1') to (f3);
    	\draw (i2') to (f4);
    	\draw [bend right] (f1) to (U1);
    	\draw [bend left] (f2) to (U1);
    	\draw [bend right] (f3) to (U2);
    	\draw [bend left] (f4) to (U2);
    	\draw [bend right = 40] (U1) to (U3);
    	\draw [bend left = 40] (U2) to (U3);
    	\draw (U3) to (5);
    \end{pgfonlayer}
    \end{tikzpicture}
    \]\[
    \begin{tikzpicture}
    \begin{pgfonlayer}{nodelayer}
    	\node [style=construct] (i1) at (-1.5, 2) {$i_1$};
    	\node [style=construct] (i2) at (-0.5, 2) {$i_2$};
    	\node [style=construct] (i1') at (0.5, 2) {$i_1$};
    	\node [style=construct] (i2') at (1.5, 2) {$i_2$};
    	\node [style=construct] (f1) at (-1.5, 1) {$\phi$};
    	\node [style=construct] (f2) at (-0.5, 1) {$\phi$};
    	\node [style=construct] (f3) at (0.5, 1) {$\phi$};
    	\node [style=construct] (f4) at (1.5, 1) {$\phi$};
    	\node [style=construct] (U1) at (0, 0) {$\cup$};
    	\node [style=construct] (U2) at (-0.75, -0.75) {$\cup$};
    	\node [style=construct] (U3) at (0.375, -1.75) {$\cup$};
    	\node [style=none] (1) at (-1.5, 3) {};
    	\node [style=none] (2) at (-0.5, 3) {};
    	\node [style=none] (3) at (0.5, 3) {};
    	\node [style=none] (4) at (1.5, 3) {};
    	\node [style=none] (5) at (0.375, -2.75) {};
    	\node [style=none] (6) at (-1.5, -0.25) {};
    	\node [style=none] (7) at (0, -0.25) {};
    	\node [style=none] (8) at (1.5, -1) {};
    	\node [style=none] (9) at (-0.75, -1) {};
    	\node [style=none] () at (2.5, 0) {=};
    	\node [style=none] () at (2.5, 0.4) {(\ref{prop13})};
    	\node [style=none] () at (-2.5, 0) {=};
    	\node [style=none] () at (3.3, 0) {}; %spacing
    \end{pgfonlayer}
    \begin{pgfonlayer}{edgelayer}
        \draw (1) to (i1);
        \draw (2) to (i2);
        \draw (3) to (i1');
        \draw (4) to (i2');
        \draw (i1) to (f1);
        \draw (i2) to (f2);
        \draw (i1') to (f3);
        \draw (i2') to (f4);
        \draw (f1) to (6.center);
        \draw [bend right] (f2) to (U1);
        \draw [bend left] (f3) to (U1);
        \draw [bend right] (6.center) to (U2);
        \draw (U1) to (7.center);
        \draw [bend left] (7.center) to (U2);
        \draw (U2) to (9.center);
        \draw [bend right = 40] (9.center) to (U3);
        \draw (f4) to (8.center);
        \draw [bend left = 40] (8.center) to (U3);
        \draw (U3) to (5);
    \end{pgfonlayer}
    \end{tikzpicture}
    \begin{tikzpicture}
	\begin{pgfonlayer}{nodelayer}
		\node [style=construct] (3) at (-1.5, 0.75) {$i_1$};
		\node [style=construct] (4) at (0.5, 0.75) {$i_2$};
		\node [style=construct] (5) at (-0.5, 0.75) {$i_1$};
		\node [style=construct] (6) at (1.5, 0.75) {$i_2$};
		\node [style=construct] (17) at (-1.5, -0.15) {$\phi$};
		\node [style=construct] (18) at (-0.5, -0.15) {$\phi$};
		\node [style=construct] (19) at (0.5, -0.15) {$\phi$};
		\node [style=construct] (20) at (1.5, -0.15) {$\phi$};
		\node [style=construct] (21) at (0, -1) {$\cup$};
		\node [style=construct] (22) at (-0.75, -1.75) {$\cup$};
		\node [style=construct] (23) at (0.375, -2.5) {$\cup$};
		\node [style=none] (8) at (-1.5, 2) {};
		\node [style=none] (9) at (0.25, 1.25) {};
		\node [style=none] (11) at (-0.25, 1.25) {};
		\node [style=none] (12) at (1.5, 2) {};
		\node [style=none] (13) at (-0.25, 1.5) {};
		\node [style=none] (14) at (0.25, 1.5) {};
		\node [style=none] (15) at (-0.5, 2) {};
		\node [style=none] (16) at (0.5, 2) {};
		\node [style=none] (24) at (0.375, -3.5) {};
		\node [style=none] (25) at (1.5, -1.75) {};
		\node [style=none] (26) at (-1.5, -1) {};
		\node [style=none] () at (2.5, -0.7) {=};
		\node [style=none] () at (3.3, -0.7) {}; %spacing
	\end{pgfonlayer}
	\begin{pgfonlayer}{edgelayer}
		\draw (8.center) to (3.center);
		\draw [bend left, looseness=1] (9.center) to (4.center);
		\draw [bend right, looseness=1] (11.center) to (5.center);
		\draw (12.center) to (6.center);
		\draw (11.center) to (14.center);
		\draw [bend right, looseness=1] (14.center) to (16.center);
		\draw [bend right, looseness=1] (15.center) to (13.center);
		\draw (13.center) to (9.center);
		\draw (3.center) to (17.center);
		\draw (5.center) to (18.center);
		\draw (4.center) to (19.center);
		\draw (6.center) to (20.center);
		\draw (17.center) to (26.center);
		\draw [bend right=45] (26.center) to (22.center);
	    \draw [bend right=45] (18.center) to (21.center);
	    \draw [bend left=45] (19.center) to (21.center);
	    \draw (20.center) to (25.center);
	    \draw [bend left=45] (21.center) to (22.center);
	    \draw [bend left=45] (25.center) to (23.center);
	    \draw [bend right=45] (22.center) to (23.center);
		\draw (23.center) to (24.center);
	\end{pgfonlayer}
    \end{tikzpicture}
    \begin{tikzpicture}
	\begin{pgfonlayer}{nodelayer}
		\node [style=construct] (3) at (-1.5, 0.75) {$i_1$};
		\node [style=construct] (4) at (0.5, 0.75) {$i_2$};
		\node [style=construct] (5) at (-0.5, 0.75) {$i_1$};
		\node [style=construct] (6) at (1.5, 0.75) {$i_2$};
		\node [style=construct] (17) at (-1.5, -0.15) {$\phi$};
		\node [style=construct] (18) at (-0.5, -0.15) {$\phi$};
		\node [style=construct] (19) at (0.5, -0.15) {$\phi$};
		\node [style=construct] (20) at (1.5, -0.15) {$\phi$};
		\node [style=construct] (21) at (-1, -1) {$\cup$};
		\node [style=construct] (22) at (1, -1) {$\cup$};
		\node [style=construct] (23) at (0, -2) {$\cup$};
		\node [style=none] (8) at (-1.5, 2) {};
		\node [style=none] (9) at (0.25, 1.25) {};
		\node [style=none] (11) at (-0.25, 1.25) {};
		\node [style=none] (12) at (1.5, 2) {};
		\node [style=none] (13) at (-0.25, 1.5) {};
		\node [style=none] (14) at (0.25, 1.5) {};
		\node [style=none] (15) at (-0.5, 2) {};
		\node [style=none] (16) at (0.5, 2) {};
		\node [style=none] (24) at (0, -3) {};
	\end{pgfonlayer}
	\begin{pgfonlayer}{edgelayer}
		\draw (8.center) to (3.center);
		\draw [bend left, looseness=1] (9.center) to (4.center);
		\draw [bend right, looseness=1] (11.center) to (5.center);
		\draw (12.center) to (6.center);
		\draw (11.center) to (14.center);
		\draw [bend right, looseness=1] (14.center) to (16.center);
		\draw [bend right, looseness=1] (15.center) to (13.center);
		\draw (13.center) to (9.center);
		\draw (3.center) to (17.center);
		\draw (5.center) to (18.center);
		\draw (4.center) to (19.center);
		\draw (6.center) to (20.center);
		\draw [bend right] (17.center) to (21.center);
		\draw [bend left] (18.center) to (21.center);
		\draw [bend right] (19.center) to (22.center);
		\draw [bend left] (20.center) to (22.center);
		\draw [bend right] (21.center) to (23.center);
		\draw [bend left] (22.center) to (23.center);
		\draw (23.center) to (24.center);
	\end{pgfonlayer}
    \end{tikzpicture}
    \]\[
    \begin{tikzpicture}
	\begin{pgfonlayer}{nodelayer}
		\node [style=construct] (3) at (-1.5, 0.75) {$i_1$};
		\node [style=construct] (4) at (0.5, 0.75) {$i_2$};
		\node [style=construct] (5) at (-0.5, 0.75) {$i_1$};
		\node [style=construct] (6) at (1.5, 0.75) {$i_2$};
		\node [style=construct] (17) at (-1, 0) {$\gamma$};
		\node [style=construct] (18) at (1, 0) {$\gamma$};
		\node [style=construct] (19) at (-1, -1) {$\phi$};
		\node [style=construct] (20) at (1, -1) {$\phi$};
		\node [style=construct] (21) at (0, -2) {$\cup$};
		\node [style=none] (22) at (0, -3) {};
		\node [style=none] (8) at (-1.5, 2) {};
		\node [style=none] (9) at (0.25, 1.25) {};
		\node [style=none] (11) at (-0.25, 1.25) {};
		\node [style=none] (12) at (1.5, 2) {};
		\node [style=none] (13) at (-0.25, 1.5) {};
		\node [style=none] (14) at (0.25, 1.5) {};
		\node [style=none] (15) at (-0.5, 2) {};
		\node [style=none] (16) at (0.5, 2) {};
		\node [style=none] () at (-2.5, -1) {=};
		\node [style=none] () at (-2.5, -0.6) {(\ref{phiishom})};
		\node [style=none] () at (2.5, -1) {=};
		\node [style=none] () at (2.5, -0.6) {(\ref{isarehoms})};
		\node [style=none] () at (3.3, -1) {};
	\end{pgfonlayer}
	\begin{pgfonlayer}{edgelayer}
		\draw (8.center) to (3.center);
		\draw [bend left, looseness=1] (9.center) to (4.center);
		\draw [bend right, looseness=1] (11.center) to (5.center);
		\draw (12.center) to (6.center);
		\draw (11.center) to (14.center);
		\draw [bend right, looseness=1] (14.center) to (16.center);
		\draw [bend right, looseness=1] (15.center) to (13.center);
		\draw (13.center) to (9.center);
		\draw [bend right] (3.center) to (17.center);
		\draw [bend left] (5.center) to (17.center);
		\draw [bend right] (4.center) to (18.center);
		\draw [bend left] (6.center) to (18.center);
		\draw (18.center) to (20.center);
		\draw [bend left](20.center) to (21.center);
		\draw (17.center) to (19.center);
		\draw [bend right](19.center) to (21.center);
		\draw (21.center) to (22.center);
	\end{pgfonlayer}
    \end{tikzpicture}
    \begin{tikzpicture}
	\begin{pgfonlayer}{nodelayer}
		\node [style=construct] (0) at (0, -2.7) {$\sqcup$};
		\node [style=construct] (1) at (-1, 0) {$\cup$};
		\node [style=construct] (2) at (1, 0) {$\cup$};
		\node [style=construct] (17) at (-1, -0.9) {$i_1$};
		\node [style=construct] (18) at (-1, -1.8) {$\phi$};
		\node [style=construct] (19) at (1, -0.9) {$i_2$};
		\node [style=construct] (20) at (1, -1.8) {$\phi$};
		\node [style=none] (3) at (-1.5, 0.25) {};
		\node [style=none] (4) at (0.5, 0.25) {};
		\node [style=none] (5) at (-0.5, 0.25) {};
		\node [style=none] (6) at (1.5, 0.25) {};
		\node [style=none] (7) at (0, -3.6) {};
		\node [style=none] (8) at (-1.5, 1.5) {};
		\node [style=none] (9) at (0.25, 0.75) {};
		\node [style=none] (11) at (-0.25, 0.75) {};
		\node [style=none] (12) at (1.5, 1.5) {};
		\node [style=none] (13) at (-0.25, 1) {};
		\node [style=none] (14) at (0.25, 1) {};
		\node [style=none] (15) at (-0.5, 1.5) {};
		\node [style=none] (16) at (0.5, 1.5) {};
		\node [style=none] () at (2.5, -1.6) {=};
		\node [style=none] () at (2.5, -1.2) {(\ref{def:disjoint})};
		\node [style=none] () at (3.3, 0) {};
	\end{pgfonlayer}
	\begin{pgfonlayer}{edgelayer}
		\draw [bend right=55, looseness=1.25] (3.center) to (1.center);
		\draw [bend right=55, looseness=1.25] (4.center) to (2.center);
		\draw [bend left=55, looseness=1.25] (5.center) to (1.center);
		\draw [bend left=55, looseness=1.25] (6.center) to (2.center);
		\draw (0.center) to (7.center);
		\draw (8.center) to (3.center);
		\draw [bend left, looseness=1] (9.center) to (4.center);
		\draw [bend right, looseness=1] (11.center) to (5.center);
		\draw (12.center) to (6.center);
		\draw (11.center) to (14.center);
		\draw [bend right, looseness=1] (14.center) to (16.center);
		\draw [bend right, looseness=1] (15.center) to (13.center);
		\draw (13.center) to (9.center);
		\draw (1.center) to (17.center);
		\draw (2.center) to (19.center);
		\draw (17.center) to (18.center);
		\draw (19.center) to (20.center);
		\draw [bend right] (18.center) to (0.center);
		\draw [bend left] (20.center) to (0.center);
	\end{pgfonlayer}
    \end{tikzpicture}
    \begin{tikzpicture}
	\begin{pgfonlayer}{nodelayer}
		\node [style=construct] (0) at (0, -1) {$\sqcup$};
		\node [style=construct] (1) at (-1, 0) {$\cup$};
		\node [style=construct] (2) at (1, 0) {$\cup$};
		\node [style=none] (3) at (-1.5, 0.25) {};
		\node [style=none] (4) at (0.5, 0.25) {};
		\node [style=none] (5) at (-0.5, 0.25) {};
		\node [style=none] (6) at (1.5, 0.25) {};
		\node [style=none] (7) at (0, -2) {};
		\node [style=none] (8) at (-1.5, 1.5) {};
		\node [style=none] (9) at (0.25, 0.75) {};
		\node [style=none] (11) at (-0.25, 0.75) {};
		\node [style=none] (12) at (1.5, 1.5) {};
		\node [style=none] (13) at (-0.25, 1) {};
		\node [style=none] (14) at (0.25, 1) {};
		\node [style=none] (15) at (-0.5, 1.5) {};
		\node [style=none] (16) at (0.5, 1.5) {};
	\end{pgfonlayer}
	\begin{pgfonlayer}{edgelayer}
		\draw [bend right=55, looseness=1.25] (3.center) to (1.center);
		\draw [bend right=55, looseness=1.25] (4.center) to (2.center);
		\draw [bend left=55, looseness=1.25] (5.center) to (1.center);
		\draw [bend left=55, looseness=1.25] (6.center) to (2.center);
		\draw [bend right = 55] (1.center) to (0.center);
		\draw [bend left = 55] (2.center) to (0.center);
		\draw (0.center) to (7.center);
		\draw (8.center) to (3.center);
		\draw [bend left, looseness=1] (9.center) to (4.center);
		\draw [bend right, looseness=1] (11.center) to (5.center);
		\draw (12.center) to (6.center);
		\draw (11.center) to (14.center);
		\draw [bend right, looseness=1] (14.center) to (16.center);
		\draw [bend right, looseness=1] (15.center) to (13.center);
		\draw (13.center) to (9.center);
	\end{pgfonlayer}
\end{tikzpicture}
    \]
    
    Let $\sigma \in S_m$ and $\tau \in S_n$. Then 
    \begin{align*}
        \GMV(\sigma + \tau)(g \sqcup h)
        &= \GMV(\sigma + \tau)\phi(i_1(g) \cup i_2(h))
        \\&= \GMV(\sigma)\phi(i_1(g)) \cup \GMV(\tau)\phi(i_2(h))
        \\&= \GMV(\sigma(g)) \sqcup \GMV(\tau(h)),
    \end{align*}
    so the following diagram commutes.
    \[
    \begin{tikzcd}
        \GMV(m) \times \GMV(n)
        \arrow[r, "\sqcup"]
        \arrow[d, "\GMV(\sigma) \times \GMV(\tau)", swap]
        &
        \GMV(m+n)
        \arrow[d, "\GMV(\sigma + \tau)"]
        \\
        \GMV(m) \times \GMV(n)
        \arrow[r, "\sqcup", swap]
        &
        \GMV(m+n)
    \end{tikzcd}
    \]
    Thus $\sqcup$ is a natural transformation.
\end{proof}

\begin{proof}[Proof of Theorem \ref{thm:main}]
    Checking the coherence conditions for $\sqcup$ to be a laxator is a straightforward computation.
    Let $f \maps M \to N$. Then define the natural transformation $f_\V \maps \GMV \to \Gamma_{N,\V}$ with components $(f_\V)_n \maps \GMV(n) \to \Gamma_{N,\V}(n)$ given by the universal property. Composition is clearly preserved.
\end{proof}

\begin{thm}
    The functor $\Gamma_{-,\V}$ is left adjoint to $E \maps \NetMod_\V \to \V$ where $E(F) = F(\mathbf 2)$ for $(F,\Phi) \maps (\S, +) \to (\V, \times)$ a $\V$-network model.
\end{thm}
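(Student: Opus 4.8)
\noindent The plan is to prove the adjunction by producing, for each monoid $M$ in $\V$, a universal arrow from $M$ to $E$. Observe first that the Kneser graph $KG_{2,2}$ has exactly one vertex --- the unique two-element subset of a two-element set --- and no edges, so the diagram $D_{\2}\maps L(\2)\to\V$ whose colimit defines $\GMV(\2)$ is a single-object diagram sending that object to $M$; hence $\GMV(\2)$ is canonically isomorphic to $M$ in $\V$. Write $\eta_M\maps M\to E(\GMV)=\GMV(\2)$ for this isomorphism; its naturality in $M$ is immediate from the action of $\Gamma_{-,\V}$ on morphisms (\cref{thm:main}). It then suffices to show that $\eta_M$ is universal, i.e. that for every $\V$-network model $(F,\sqcup)$ and every homomorphism $f\maps M\to F(\2)$ in $\V$ there is a unique morphism of network models $\hat f\maps\GMV\to F$ with $\hat f_{\2}\circ\eta_M=f$.

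For the construction of $\hat f$, I would use that every $\V$-network model $F$ carries, for $n\ge 2$ and each two-element subset $e=\{i,j\}\subseteq\n$, an \emph{edge-inclusion} homomorphism $\iota^F_{n,e}\maps F(\2)\to F(n)$, namely $x\mapsto\sigma_e(x\sqcup e_{n-2})$ where $e_{n-2}$ is the identity of $F(n-2)$ and $\sigma_e\in S_n$ is the order-preserving bijection with image $\{i,j\}$ on $\{1,2\}$; this is a homomorphism by Equations 3, 4, 6, 8 of \cref{thm:equations}. Composing with $f$ gives homomorphisms $\iota^F_{n,e}\circ f\maps M\to F(n)$, one per vertex $e$ of $KG_{n,2}$. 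On a pair of \emph{disjoint} edges $e,e'$ one pairs these into a map $M\times M\to F(n)$, $(m,m')\mapsto(\iota^F_{n,e}\circ f)(m)\cup(\iota^F_{n,e'}\circ f)(m')$, which is a homomorphism \emph{exactly because elements of disjoint edges commute in any network model} --- the Eckmann--Hilton identity underlying \cref{prop:knesercomm}, which is precisely the relation imposed at the edge $\{e,e'\}$ of $KG_{n,2}$. Together these exhibit a cocone over $D_n$, and since $F(n)$ lies in $\V$ the universal property of the Green product $\GMV(n)=\colim D_n$ in $\V$ yields a unique homomorphism $\hat f_n\maps\GMV(n)\to F(n)$.

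Next I would check that $\hat f=(\hat f_n)_n$ is a morphism in $\NetMod_\V$, i.e. a monoidal natural transformation $\GMV\To F$. Naturality for the $S_n$-actions (which on $\GMV$ come from the graph automorphisms of the Kneser graphs) and compatibility with the laxators are both established by the same device: the two homomorphisms out of $\GMV(n)$ that one wants to equate agree on the generating edge-components because the $\iota^F_{n,e}$ are themselves natural in $\n$ and suitably compatible with $\sqcup$ --- formal consequences of the network-model axioms for $F$ --- whence they coincide by the universal property of the colimit. The triangle identity $\hat f_{\2}\circ\eta_M=f$ holds because at level $2$ the single edge-inclusion $\iota^F_{2,\{1,2\}}$ is the identity of $F(\2)$ (using $e_0\sqcup x=x$), so through $\eta_M$ the map $\hat f_{\2}$ is the one induced by $f$ on the sole component. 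For uniqueness, if $g\maps\GMV\To F$ is any morphism of network models with $g_{\2}\circ\eta_M=f$, then naturality and laxator-compatibility of $g$ force $g_n\circ j_e=\iota^F_{n,e}\circ f$ on each component inclusion $j_e\maps M\to\GMV(n)$; since every element of the Green product $\GMV(n)$ is a word in its components (\cref{thm:shuffle}) and $g_n$ is a homomorphism, $g_n=\hat f_n$. Naturality of the resulting bijection $\NetMod_\V(\GMV,F)\cong\V(M,F(\2))$ in $M$ and $F$ then follows from these uniqueness statements.

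I expect the main obstacle to be showing that $\hat f$ respects the lax monoidal structure. Equivariance for the symmetric groups is comparatively transparent, but the laxator of $\GMV$ is not a plain pairing: it is the composite $\phi\circ\gamma\circ\langle i_1p_1,i_2p_2\rangle$ assembled in the preceding lemma, so matching it under $\hat f$ against the structure map $\sqcup$ of $F$ is genuine computation. I would carry it out in the same string-diagrammatic style used in that lemma, reducing every required equation to an identity between homomorphisms out of a Green product, so that it suffices to check the effect on a single edge-component, where it follows from the axioms for $F$.
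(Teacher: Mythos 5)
Your proposal is correct and takes essentially the same approach as the paper: the paper constructs the counit $\epsilon_F \maps \Gamma_{F(\2),\V} \to F$ from exactly your edge-inclusion maps $F((1\,i)(2\,j))(\Phi_{\2,\n-\2}(-,e))$ together with the paired maps on disjoint edges (homomorphisms by the same disjoint-edge commutativity you cite), and then invokes the universal property of the Green product; your $\hat f$ is precisely $\epsilon_F\circ\Gamma_{f,\V}$. The only difference is packaging --- you phrase the adjunction via universal arrows, which is why you need the explicit uniqueness argument via reduced expressions, whereas the paper exhibits the unit and counit and checks the triangle identities directly.
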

Because of this, we call $\Gamma_{M,\V}$ the \define{free $\V$-valued network model on the monoid $M$} or the \define{free $\V$ network model on $M$}.

\begin{proof}
    By construction, $\Gamma_{M,\V}(\2) = M$, so let the unit $\eta = 1_{1_\V} \maps 1_\V \to \Gamma_{-,\V}(\2)$. 
    
    We use the universal property of $\GMV$ to construct the counit. We define a map $F(\2) \to F(n)$ for each vertex in $\KG_{n,2}$, and a map $F(\2) \times F(\2) \to F(n)$ for each edge in $\KG_{n,2}$. 
    
    If $i,j \leq n$, then $F((1\; i)(2\; j)) \maps F(n) \to F(n)$. If $e$ is the unit of the monoid $F (\mathbf{n - 2} )$, and $m \in F(\2)$, then $\Phi_{\2, n - \2} (m, e) \in F(n)$. Define maps $c_{i,j} \maps F(\2) \to F(n)$ by 
    \[
        c_{i, j} = F((1\; i)(2\; j)) (\Phi_{\2, n - \2}(m, e)).
    \]
    The intuition here is that $m$ is a value on one edge of the graph, and $e$ is a graph with $n-2$ vertices and no edges. Then $\Phi(m,e)$ is the graph with $n$ vertices, and just one $m$-valued edge between vertices $1$ and $2$. Then the permutation $(1\; i)(2\; j)$ permutes this one-edge graph to put $m$ between vertex $i$ and vertex $j$. So the map $c_{i,j}$ places the one-edge monoid $M$ at the $i,j$-position in the $n$-vertex monoid.
    
    Define maps $c_{i,j,p,q} \maps F(\2) \times F(\2) \to F(n)$ by $c_{i,j,p,q} (m, m') = c_{i,j} (m) c_{p,q} (m')$. The second gives a monoid homomorphism precisely because $(F,\Phi)$ is a network model. 
    
    Then we get a map $(\epsilon_F)_n \maps \Gamma_{F(\2),\V}(n) \to F(n)$ by universal property, which gives a monoidal natural transformation automatically. That these maps form the components of a natural transformation can be seen by a routine computation.
    
    Notice that 
    \begin{align*}
        (\epsilon \Gamma_{-,\V})_M = \epsilon_{\Gamma_{M,\V}} = 1_{\Gamma_{M,\V}},\\
        (\Gamma_{-,\V} \eta)_M = \Gamma_{1_M,\V} = 1_{\Gamma_{M,\V}},\\
        (E \epsilon)_F = E(\epsilon_F) = (\epsilon_F)_2 = 1_{F(2)},\\
        (\eta E)_F = \eta_{F(2)} = 1_{F(2)}.
    \end{align*}
    Thus, checking that the snake equations hold is routine. 
\end{proof}

\begin{expl}
\label{ex:oldnew}
    In $\CMon$, products and coproducts are isomorphic. In particular, for a commutative monoid $M$, $\Gamma_{M,\CMon} \cong \Gamma_M$.
\end{expl}

Note that this does not indicate that varietal network models completely encompass ordinary network models. If $M$ is a noncommutative monoid,then $\Gamma_{M, \CMon}$ is not defined, but $\Gamma_M$ is.

\section{Commitment Networks}
\label{sec:commitment}

The motivating example of network models in general is $\SG$, the network model of simple graphs. By Example \ref{ex:oldnew}, this network model is an example of the main construction of this chapter, $\SG = \Gamma_{\B,\CMon}$. The boolean monoid is not only an object in $\CMon$, it is also an object in $\GMon$, the variety of graphic monoids. Then we can consider the network models $\Gamma_{\B,\Mon}$ and $\Gamma_{\B,\GMon}$.

\begin{expl}
    Elements of the monoid $\Gamma_{\B,\Mon}(n)$ are words $e_{p_1,q_1} \dots e_{p_k,q_k}$. These words are interpreted as graphs with edges that look like they were built with popsicle sticks, and if two edges lie directly on top of each other, they are identified.
    Besides that relation, you can stack edges as high as you want by placing them between different pairs of vertices, but sharing one vertex.  
\end{expl}

There are networks one could imagine building with this popsicle stick intuition which are not allowed by this formalism. For instance, consider a network with three nodes and an edge for each pair of nodes, each overlapping exactly one of its neighbors, forming an Escher-esque ever-ascending staircase. This sort of network is not allowed by the formalism, since networks are actually equivalence classes of words, where letters have a definite position relative to each other. This is an important feature for this network model as it is necessary to guarantee that the procedure in the following example is well-defined, giving an algebra of the related network operad. What this means in terms of popsicle stick intuition is that allowed networks are built by placing popsicle sticks one at a time. 

\begin{expl}
    Elements of the $\Gamma_{\B,\GMon}(n)$ are similar to those in the previous example, except that they must obey the graphic identity, $xyx = xy$ for all $x,y \in \Gamma_{\B,\GMon}(n)$. What this means in the graphical interpretation is that all edges can be identified with the lowest occuring instance of an edge on the same vertex pair. This means that these networks in reduced form have at most as many edges as the complete simple graph with the same number of edges. Essentially these networks are simple graphs with a partial order on the edges which respects disjointness of edges.
\end{expl}

The networks in the previous example have exactly what we need in a network model to realize networks of bounded degree as an algebra of a network operad.

\begin{expl}[\bf Networks of bounded degree, revisited]
    The \define{degree} of a vertex in a simple graph is the number of edges in the graph which contain that vertex. For $k \in \N$, we say that a simple graph is \define{$k$-bounded} if all vertices have degree less than or equal to $k$. Then we can consider the set $B_k(n)$ of $k$-bounded simple graphs. We can define an action of $\Gamma_{\B, \GMon}(n)$ on $B_k(n)$ in the following way. Let $g = e_1 \dots e_l \in \Gamma_{\B, \GMon}(n)$ and $h \in B_k(n)$. Choose a graph $h' \in \Gamma_{\B, \GMon}(n)$ which has the same edges as $h$. Define $h_0 = h'$, then define $h_i = h_{i-1}e_i$ if that is $k$-bounded, else $h_i= h_{i-1}$. Let $hg$ denote $h_l$, which is a $k$-bounded element of $\Gamma_{\B, \GMon}(n)$. Let $\Gamma^k_{\B, \GMon}(n)$ denote the set of $k$-bounded elements of $\Gamma_{\B, \GMon}(n)$. There is a function $s \maps \Gamma^k_{\B, \GMon}(n) \to B_k(n)$. So we define $h g$ to be $s(h_l)$. This is independent of the choice of $h'$ and defines an action of $\Gamma_{\B,\GMon}$ on $B_k(n)$.
\end{expl}

The networks in the question in \cref{sec:NNMIntro} can be represented by simple graphs with vertex degrees bounded by $k$. Then $B_k(n)$ gives an algebra of the operad $\O_{\B,\GMon}$. This resolves the conflict encountered in the question in \cref{sec:NNMIntro}. Ordinary network models could not record the order in which edges were added to a network, which was necessary to define a systematic way of attempting to add new connections to a network which has degree limitations on each vertex.

}{\ssp
\setcounter{chapter}{3}
\chapter{Petri Nets}
\label{ch:PetriNets}

\section{Introduction} 

Petri nets are a widely studied formalism for describing collections of entities of different types, and how they turn into other entities \cite{GiraultValk, Peterson}. In this chapter, we combine Petri nets with network models. This is worthwhile because while both formalisms involve networks, they serve different functions, and are in some sense complementary.

A Petri net can be drawn as a bipartite directed graph with vertices of two kinds: \emph{places}, drawn as circles below, and \emph{transitions} drawn as squares:
\[
\vcenter{\hbox{\scalebox{0.8}{
\begin{tikzpicture}
	\begin{pgfonlayer}{nodelayer}
        \node [style=species] (C) at (-1, 0) {$\quad\;$};
        \node [style=species] (B) at (-4, -0.5) {$\quad\;$};
        \node [style=species] (A) at (-4, 0.5) {$\quad\;$};
        \node [style=transition] (tau1) at (-2.5, 0.6) {$\big.\;\;\;\, $};
        \node [style=transition] (tau2) at (-2.5, -0.7) {$\big.\;\;\;\, $};
	\end{pgfonlayer}
	\begin{pgfonlayer}{edgelayer}
        \draw [style=inarrow] (A) to (tau1);
        \draw [style=inarrow] (B) to (tau1);
        \draw [style=inarrow, bend left=15, looseness=1.00] (tau1) to (C);
        \draw [style=inarrow, bend left=15, looseness=1.00] (C) to (tau2);
        \draw [style=inarrow, bend left=15, looseness=1.00] (tau2) to (B); 
        \draw [style=inarrow, bend right =15, looseness=1.00] (tau2) to (B); 
	\end{pgfonlayer}
\end{tikzpicture}
}}}
\]
In applications to chemistry, places are also called \emph{species}. When we run a Petri net, we start by placing a finite number of \emph{tokens} in each place:
\[
\vcenter{\hbox{\scalebox{0.8}{
\begin{tikzpicture}
	\begin{pgfonlayer}{nodelayer}
		\node [style=species] (C) at (-1, 0) {$\quad\;$};
		\node [style=species] (B) at (-4, -0.5) {$\;\bullet\;$};
		\node [style=species] (A) at (-4, 0.5) {$\bullet\bullet$};
		\node [style=transition] (tau1) at (-2.5, 0.6) {$\big.\;\;\;\, $};
        \node [style=transition] (tau2) at (-2.5, -0.7) {$\big.\;\;\;\, $};
	\end{pgfonlayer}
	\begin{pgfonlayer}{edgelayer}
		\draw [style=inarrow] (A) to (tau1);
		\draw [style=inarrow] (B) to (tau1);
		\draw [style=inarrow, bend left=15, looseness=1.00] (tau1) to (C);
        \draw [style=inarrow, bend left=15, looseness=1.00] (C) to (tau2);
        \draw [style=inarrow, bend left=15, looseness=1.00] (tau2) to (B); 
        \draw [style=inarrow, bend right =15, looseness=1.00] (tau2) to (B); 
	\end{pgfonlayer}
\end{tikzpicture}
}}}
\]
This is called a \emph{marking}. Then we repeatedly change the marking using the transitions. For example, the above marking can change to this:
\[
\vcenter{\hbox{\scalebox{0.8}{
\begin{tikzpicture}
	\begin{pgfonlayer}{nodelayer}
        \node [style=species] (C) at (-1, 0) {$\;\bullet\;$};
        \node [style=species] (B) at (-4, -0.5) {$\quad\;$};
        \node [style=species] (A) at (-4, 0.5) {$\;\bullet\;$};
        \node [style=transition] (tau1) at (-2.5, 0.6) {$\big.\;\;\;\, $};
        \node [style=transition] (tau2) at (-2.5, -0.7) {$\big.\;\;\;\, $};
	\end{pgfonlayer}
	\begin{pgfonlayer}{edgelayer}
		\draw [style=inarrow] (A) to (tau1);
		\draw [style=inarrow] (B) to (tau1);
		\draw [style=inarrow, bend left=15, looseness=1.00] (tau1) to (C);
        \draw [style=inarrow, bend left=15, looseness=1.00] (C) to (tau2);
        \draw [style=inarrow, bend left=15, looseness=1.00] (tau2) to (B); 
        \draw [style=inarrow, bend right =15, looseness=1.00] (tau2) to (B); 
	\end{pgfonlayer}
\end{tikzpicture}
}}}
\]
and then this:
\[
\vcenter{\hbox{\scalebox{0.8}{
\begin{tikzpicture}
	\begin{pgfonlayer}{nodelayer}
		\node [style=species] (C) at (-1, 0) {$\quad\;$};
		\node [style=species] (B) at (-4, -0.5) {$\bullet\bullet$};
		\node [style=species] (A) at (-4, 0.5) {$\;\bullet\;$};
		\node [style=transition] (tau1) at (-2.5, 0.6) {$\big.\;\;\;\, $};
        \node [style=transition] (tau2) at (-2.5, -0.7) {$\big.\;\;\;\, $};
	\end{pgfonlayer}
	\begin{pgfonlayer}{edgelayer}
		\draw [style=inarrow] (A) to (tau1);
		\draw [style=inarrow] (B) to (tau1);
		\draw [style=inarrow, bend left=15, looseness=1.00] (tau1) to (C);
        \draw [style=inarrow, bend left=15, looseness=1.00] (C) to (tau2);
        \draw [style=inarrow, bend left=15, looseness=1.00] (tau2) to (B); 
        \draw [style=inarrow, bend right =15, looseness=1.00] (tau2) to (B); 
	\end{pgfonlayer}
\end{tikzpicture}
}}}
\]
Thus, the places represent different \emph{types} of entity, and the transitions describe ways that one collection of entities of specified types can turn into another such collection. 

Network models serve a different function than Petri nets: they are a general tool for working with networks of many kinds. A network model is a lax symmetric monoidal functor $G \maps \S(C) \to \Cat$, where $\S(C)$ is the free strict symmetric monoidal category on a set $C$. Elements of $C$ represent different kinds of ``agents''. Unlike in a Petri net, we do not usually consider processes where these agents turn into other agents. Instead, we wish to study everything that can be done with a fixed collection of agents. Any object $x \in \S(C)$ is of the form $c_1 \otimes \cdots \otimes c_n$ for some $c_i \in C$; thus, it describes a collection of agents of various kinds. The functor $G$ maps this object to a category $G(x)$ that describes everything that can be done with this collection of agents. 

In many examples considered so far, $G(x)$ is a category whose morphisms are graphs whose nodes are agents of types $c_1, \dots, c_n$. Composing these morphisms corresponds to \emph{overlaying} graphs. Network models of this sort let us design networks where the nodes are agents and the edges are communication channels or shared commitments. In \cref{ch:NetworkModels}, the operation of overlaying graphs was always commutative. In \cref{ch:NNM} we introduced more general noncommutative overlay operations. This lets us design networks where each agent has a limit on how many communication channels or commitments it can handle; the noncommutativity allows us to take a first come, first served approach to resolving conflicting commitments.

Here we take a different tack: we instead take $G(x)$ to be a category whose morphisms are \emph{processes that the given collection of agents, $x$, can carry out}. Composition of morphisms corresponds to carrying out first one process and then another.

This idea meshes well with Petri net theory, because any Petri net $P$ determines a symmetric monoidal category $FP$ whose morphisms are processes that can be carried out using this Petri net. More precisely, the objects in $FP$ are markings of $P$, and the morphisms are sequences of ways to change these markings using transitions, e.g.:
\[
\vcenter{\hbox{\scalebox{0.8}{
\begin{tikzpicture}
	\begin{pgfonlayer}{nodelayer}
		\node [style=species] (C) at (-1, 0) {$\;\bullet\;$};
		\node [style=species] (B) at (-4, -0.5) {$\;\bullet\;$};
		\node [style=species] (A) at (-4, 0.5) {$\;\bullet\;$};
		\node [style=transition] (tau1) at (-2.5, 0.6) {$\big.\;\;\;\, $};
        \node [style=transition] (tau2) at (-2.5, -0.7) {$\big.\;\;\;\, $};
	\end{pgfonlayer}
	\begin{pgfonlayer}{edgelayer}
		\draw[->, line width=1.00] (-0.3, 0) to (0.2, 0);
		\draw [style=inarrow] (A) to (tau1);
		\draw [style=inarrow] (B) to (tau1);
        \draw [style=inarrow, bend left=15, looseness=1.00] (tau1) to (C);
        \draw [style=inarrow, bend left=15, looseness=1.00] (C) to (tau2);
        \draw [style=inarrow, bend left=15, looseness=1.00] (tau2) to (B); 
        \draw [style=inarrow, bend right =15, looseness=1.00] (tau2) to (B); 
	\end{pgfonlayer}
\end{tikzpicture}
\begin{tikzpicture}
	\begin{pgfonlayer}{nodelayer}
        \node [style=species] (C) at (-1, 0) {$\bullet\bullet$};
        \node [style=species] (B) at (-4, -0.5) {$\quad\;$};
        \node [style=species] (A) at (-4, 0.5) {$\quad\;$};
        \node [style=transition] (tau1) at (-2.5, 0.6) {$\big.\;\;\;\, $};
        \node [style=transition] (tau2) at (-2.5, -0.7) {$\big.\;\;\;\, $};
        \node [style=empty] at (0, 0) {{$\to$}};
	\end{pgfonlayer}
	\begin{pgfonlayer}{edgelayer}
        \draw[->, line width=1.00] (-0.3, 0) to (0.2, 0);
        \draw [style=inarrow] (A) to (tau1);
        \draw [style=inarrow] (B) to (tau1);
        \draw [style=inarrow, bend left=15, looseness=1.00] (tau1) to (C);
        \draw [style=inarrow, bend left=15, looseness=1.00] (C) to (tau2);
        \draw [style=inarrow, bend left=15, looseness=1.00] (tau2) to (B); 
        \draw [style=inarrow, bend right =15, looseness=1.00] (tau2) to (B); 
	\end{pgfonlayer}
\end{tikzpicture}
\begin{tikzpicture}
	\begin{pgfonlayer}{nodelayer}
		\node [style=species] (C) at (-1, 0) {$\;\bullet\;$};
		\node [style=species] (B) at (-4, -0.5) {$\bullet\bullet$};
		\node [style=species] (A) at (-4, 0.5) {$\quad\;$};
		\node [style=transition] (tau1) at (-2.5, 0.6) {$\big.\;\;\;\, $};
        \node [style=transition] (tau2) at (-2.5, -0.7) {$\big.\;\;\;\, $};
	\end{pgfonlayer}
	\begin{pgfonlayer}{edgelayer}
		\draw [style=inarrow] (A) to (tau1);
		\draw [style=inarrow] (B) to (tau1);
        \draw [style=inarrow, bend left=15, looseness=1.00] (tau1) to (C);
        \draw [style=inarrow, bend left=15, looseness=1.00] (C) to (tau2);
        \draw [style=inarrow, bend left=15, looseness=1.00] (tau2) to (B); 
        \draw [style=inarrow, bend right =15, looseness=1.00] (tau2) to (B); 
	\end{pgfonlayer}
\end{tikzpicture}
}}}
\]

Given a Petri net, then, how do we construct a network model $G \maps \S(C) \to \Cat$, and in particular, what is the set $C$? In a network model the elements of $C$ represent different kinds of agents. In the simplest scenario, these agents persist in time. Thus, it is natural to take $C$ to be some set of ``catalysts''. In chemistry, a reaction may require a catalyst to proceed, but it neither increases nor decrease the amount of this catalyst present. For a Petri net, \emph{catalysts} are species that are neither increased nor decreased in number by any transition. For example, species $a$ is a catalyst in the following Petri net, so we outline it in red:
\[
\vcenter{\hbox{\scalebox{0.8}{
\begin{tikzpicture}
	\begin{pgfonlayer}{nodelayer}
		\node [style=species] (C) at (-1, -0.1) {$\;\;c\;\;$};
		\node [style=species] (B) at (-4, -0.1) {$\;\;b\;\;$};
		\node [style=catalyst] (A) at (-2.5, 2) {$\;\;a\;\;$};
		\node [style=transition] (tau1) at (-2.5, 0.6) {$\;\phantom{\Big{|}}\tau_1\;$};
        \node [style=transition] (tau2) at (-2.5, -0.8){$\;\phantom{\Big{|}}\tau_2\;$};
	\end{pgfonlayer}
	\begin{pgfonlayer}{edgelayer}
		\draw [style=inarrow, bend right=70, looseness=1.00, red] (A) to (tau1);
		\draw [style=inarrow, bend left=15, looseness=1.00] (B) to (tau1);
		\draw [style=inarrow, bend right=70, looseness=1.00, red] (tau1) to (A);
		\draw [style=inarrow, bend left=15, looseness=1.00] (tau1) to (C);
	    \draw [style=inarrow, bend left=15, looseness=1.00] (C) to (tau2);
        \draw [style=inarrow, bend left=15, looseness=1.00] (tau2) to (B); 
	\end{pgfonlayer}
\end{tikzpicture}
}}}
\]
but neither $b$ nor $c$ is a catalyst. The transition $\tau_1$ requires one token of type $a$ as input to proceed, but it also outputs one token of this type, so the total number of such tokens is unchanged. Similarly, the transition $\tau_2$ requires no tokens of type $a$ as input to proceed, and it also outputs no tokens of this type, so the total number of such tokens is unchanged. 

In Theorem \ref{thm:petri_network_model} we prove that given any Petri net $P$, and any subset $C$ of the catalysts of $P$, there is a network model $G \maps \S(C) \to \Cat$. An object $x \in \S(C)$ says how many tokens of each catalyst are present; $G(x)$ is then the subcategory of $FP$ where the objects are markings that have this specified amount of each catalyst, and morphisms are processes going between these. 

From the functor $G \maps \S(C) \to \Cat$ we can construct a category $\int G$ by the Grothendieck construction. Because $G$ is symmetric monoidal we can make $\int G$ into a symmetric monoidal category by the monoidal Grothendieck construction of \cref{ch:MonGroth}. The tensor product in $\int G$ describes doing processes in parallel. The category $\int G$ is similar to $FP$, but it is better suited to applications where agents each have their own individuality, because $FP$ is actually a \emph{commutative} monoidal category, where permuting agents has no effect at all, while $\int G$ is not so degenerate. In Theorem \ref{thm:grothendieck} we make this precise by more concretely describing $\int G$ as a symmetric monoidal category, and clarifying its relation to $FP$.

There are no morphisms between an object of $G(x)$ and an object of $G(x')$ unless $x \cong x'$, since no transitions can change the amount of catalysts present. The category $FP$ is thus a disjoint union, or more precisely a coproduct, of subcategories $FP_i$ where $i$, an element of free commutative monoid on $C$, specifies the amount of each catalyst present. The tensor product on $FP$ has the property that tensoring an object in $FP_i$ with one in $FP_j$ gives an object in $FP_{i+j}$, and similarly for morphisms. 

However, in Prop.\ \ref{prop:monoidal} we show that each subcategory $FP_i$ also has its own tensor product, which describes doing one process and then another while reusing catalyst tokens.  This tensor product makes $FP_i$ into a \emph{premonoidal} category---an interesting generalization of a monoidal category which we recall.  Finally, in Theorem \ref{thm:lift} we show that these monoidal structures define a lift of the functor $G \maps \S(C) \to \Cat$ to a functor $\hat{G} \maps \S(C) \to \PreMonCat$, where $\PreMonCat$ is the category of strict premonoidal categories.

\section{Petri Nets}
\label{sec:petri}

A Petri net generates a symmetric monoidal category whose objects are tensor products of species and whose morphisms are built from the transitions by repeatedly taking composites and tensor products. There is a long line of work on this topic starting with the papers of Meseguer--Montanari \cite{Petrinetsaremonoids} and Engberg--Winskel \cite{EW}, both dating to roughly 1990. It continues to this day, because the issues involved are surprisingly subtle \cite{DMM, Sassone, SassoneCategory, SassoneAxiomatization, Congruence, GeneralizedPetriNets}. In particular, there are various kinds of symmetric monoidal categories to choose from. Following the work of Master and Baez \cite{OpenPetriNets} we use `commutative' monoidal categories. These are just commutative monoid objects in $\Cat$, so their associator: 
\[  
    \alpha_{a, b, c} \colon (a \otimes b) \otimes c \stackrel{\sim}{\longrightarrow} a \otimes (b \otimes c), 
\]
their left and right unitor:
\[  
    \lambda_a \maps I \otimes a \stackrel{\sim}{\longrightarrow} a , \qquad
    \rho_a \maps a \otimes I \stackrel{\sim}{\longrightarrow} a , 
\]
and even their braiding:
\[   
    \sigma_{a, b} \maps a \otimes b \stackrel{\sim}{\longrightarrow} b \otimes a 
\]
are all identity morphisms. While every symmetric monoidal category is equivalent to one with trivial associator and unitors, this ceases to be true if we also require the braiding to be trivial. However, it seems that  Petri nets most naturally serve to present symmetric monoidal categories of this very strict sort. Thus, we shall describe a functor from the category of Petri nets to the category of commutative monoidal categories, which we call $\CMon\Cat$:
\[ 
    F \colon \Petri \to \CMon\Cat .
\]

To begin, let $\CMon$ be the category of commutative monoids and monoid homomorphisms. There is a forgetful functor from $\CMon$ to $\Set$ that sends commutative monoids to their underlying sets and monoid homomorphisms to their underlying functions. It has a left adjoint $\N \maps \Set \to \CMon$ sending any set $X$ to the free commutative monoid on $X$. An element $a \in \N[X]$ is formal linear combination of elements of $X$:
\[    a = \sum_{x \in X} a_x \, x ,\]
where the coefficients $a_x$ are natural numbers and all but finitely many are zero. The set $X$ naturally includes in $\N[X]$, and for any function $f \maps X \to Y$, $\N[f] \maps \N[X] \to \N[Y]$ is the unique monoid homomorphism that extends $f$. We often abuse language and use $\N[X]$ to mean the underlying set of the free commutative monoid on $X$. 

\begin{defn} % Petri nets 
    A \define{Petri net} is a pair of functions of the following form:
    \[\begin{tikzcd}
        T
        \arrow[r, shift left = 1, "s"]
        \arrow[r, shift right = 1, "t", swap]
        &
        \N[S].
    \end{tikzcd}\]
    We call $T$ the set of \define{transitions}, $S$ the set of \define{places} or \define{species}, $s$ the \define{source} function, and $t$ the \define{target} function. We call an element of $\N[S]$ a \define{marking} of the Petri net.
\end{defn}
For example, in this Petri net:
\[
\vcenter{\hbox{\scalebox{0.8}{
\begin{tikzpicture}
	\begin{pgfonlayer}{nodelayer}
        \node [style=species] (C) at (-1, 0) {$\quad\;$};
        \node [style=species] (B) at (-4, -0.5) {$\quad\;$};
        \node [style=species] (A) at (-4, 0.5) {$\quad\;$};
        \node [style=transition] (tau1) at (-2.5, 0.6) {$\big.\;\;\;\, $};
        \node [style=transition] (tau2) at (-2.5, -0.7) {$\big.\;\;\;\, $};
        \node [style = none] () at (-2.5, 0.6) {$\tau_1$};
        \node [style = none] () at (-2.5, -0.69) {$\tau_2$};
        \node [style = none] () at (-4, 0.5) {$a$};
        \node [style = none] () at (-4, -0.5) {$b$};
        \node [style = none] () at (-1, 0) {$c$};
	\end{pgfonlayer}
	\begin{pgfonlayer}{edgelayer}
        \draw [style=inarrow] (A) to (tau1);
        \draw [style=inarrow] (B) to (tau1);
        \draw [style=inarrow, bend left=15, looseness=1.00] (tau1) to (C);
        \draw [style=inarrow, bend left=15, looseness=1.00] (C) to (tau2);
        \draw [style=inarrow, bend left=15, looseness=1.00] (tau2) to (B); 
        \draw [style=inarrow, bend right =15, looseness=1.00] (tau2) to (B); 
	\end{pgfonlayer}
\end{tikzpicture}
}}}
\]
we have $S = \{a,b,c\}$, $T = \{\tau_1, \tau_2\}$, and 
\[
\begin{array}{ll} s(\tau_1) = a+b & t(\tau_1) = c \\
  s(\tau_2) = c & t(\tau_2) = 2b.
\end{array}
\]
The term `species' is used in applications of Petri nets to chemistry. Since the concept of `catalyst' also arose in chemistry, we henceforth use the term `species' rather than `places'. 

\begin{defn} % Petri morphism 
    A \define{Petri net morphism} from the Petri net $P$ to the Petri net $P'$ is a pair of functions ($f \maps T \to T'$, $g \maps S \to S'$) such that the following diagrams commute:
    \[
    \begin{tikzcd}
        T
        \arrow[r, "s"]
        \arrow[d, "f", swap]
        &
        \N[S]
        \arrow[d, "\N\lbrack g \rbrack"]
        \\
        T'
        \arrow[r, swap, "s'"]
        &
        \N[S']
    \end{tikzcd}
    \quad
    \begin{tikzcd}
        T
        \arrow[r, "t"]
        \arrow[d, "f", swap]
        &
        \N[S]
        \arrow[d, "\N\lbrack g \rbrack"]
        \\
        T'
        \arrow[r, swap, "t'"]
        &
        \N[S']
    \end{tikzcd}
    \]
    Let $\Petri$ denote the category of Petri nets and Petri net morphisms with composition defined by \[(f, g) \circ (f', g') = (f \circ f', g \circ g').\]
\end{defn}

\begin{defn} 
  A \define{commutative monoidal category} is a commutative monoid object in $(\Cat, \times)$. Let $\CMon\Cat$ denote the category of commutative monoid objects in $(\Cat,\times)$.
\end{defn}

More concretely, a commutative monoidal category is a strict monoidal category for which $a \otimes b = b \otimes a$ for all pairs of objects and all pairs of morphisms, and the braid isomorphism $a \otimes b \to b \otimes a$ is the identity map.

Every Petri net $P = \left( s, t \maps T \to \N[S] \right)$ gives rise to a commutative monoidal category $FP$ as follows. We take the commutative monoid of objects $\Ob(FP)$ to be the free commutative monoid on $S$. We construct the commutative monoid of morphisms $\Mor(FP)$ as follows. First we generate morphisms recursively:
\begin{itemize}
    \item for every transition $\tau \in T$ we include a morphism $\tau \maps s(\tau) \to t(\tau)$;
    \item for any object $a$ we include a morphism $1_a \maps a \to a$;
    \item for any morphisms $f \maps a \to b$ and $g \maps a' \to b'$ we include a morphism denoted $f+g \maps a +a' \to b +b'$ to serve as their tensor product;
    \item for any morphisms $f \maps a \to b$ and $g \maps b \to c$ we include a morphism $g\circ f \maps a \to c$ to serve as their composite.
\end{itemize}
Then we quotient by an equivalence relation on morphisms that imposes the laws of a commutative monoidal category, obtaining the commutative monoid $\Mor(FP)$.	

Similarly, morphisms between Petri nets give morphisms between their commutative monoidal categories. Given a Petri net morphism 
\[
\begin{tikzcd}
    T
    \arrow[r, shift left = 1]
    \arrow[r, shift right = 1]
    \arrow[d, "f", swap]
    &
    \N[S]
    \arrow[d, "\N\lbrack g\rbrack"]
    \\
    T'
    \arrow[r, shift left = 1]
    \arrow[r, shift right = 1]
    &
    \N[S']
\end{tikzcd}
\]
we define the functor $F(f, g) \maps FP \to FP'$ to be $\N[g]$ on objects, and on morphisms to be the unique map extending $f$ that preserves identities, composition, and the tensor product. This functor is strict symmetric monoidal. 

\begin{prop}
    There is a functor $F \maps \Petri \to \CMon\Cat$ defined as above.
\end{prop}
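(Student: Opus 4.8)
The plan is to verify directly that the assignment $P \mapsto FP$ and $(f,g) \mapsto F(f,g)$ respects identities and composition, and that each $F(f,g)$ is a well-defined functor. The construction of $FP$ and $F(f,g)$ has already been given in the discussion preceding the statement, so the work is to confirm functoriality of $F$ itself, plus the well-definedness that was glossed over in that discussion.

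First I would address well-definedness of $F$ on objects and morphisms. On objects, $FP$ has object commutative monoid $\N[S]$, and this is clearly determined by $P$. For morphisms, the subtle point is that $\Mor(FP)$ is defined as a quotient of a recursively generated set of formal expressions by the congruence imposing the commutative monoidal category laws; I would note that this congruence is generated by a set of equations that makes sense for any Petri net, so $FP$ is a well-defined commutative monoidal category. Then for a Petri net morphism $(f,g) \maps P \to P'$, I would check that sending $\tau \mapsto f(\tau)$ (viewed as a morphism $s'(f\tau) \to t'(f\tau)$, which matches $\N[g](s\tau) \to \N[g](t\tau)$ by the commuting squares in the definition of a Petri net morphism), $1_a \mapsto 1_{\N[g]a}$, $f_1 + f_2 \mapsto F(f,g)(f_1) + F(f,g)(f_2)$, and $f_1 \circ f_2 \mapsto F(f,g)(f_1) \circ F(f,g)(f_2)$ is compatible with the congruence: since the target $FP'$ also satisfies the commutative monoidal category laws, any equation forced in $FP$ maps to a valid equation in $FP'$, so the map descends to the quotient. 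This gives a well-defined strict (symmetric) monoidal functor $F(f,g) \maps FP \to FP'$.

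Next I would check the two functoriality axioms. For identities: $F(1_T, 1_S)$ acts as $\N[1_S] = 1_{\N[S]}$ on objects and as the unique map extending $1_T$ and preserving the structure on morphisms; since the identity functor on $FP$ has exactly these properties and such a map is unique by the recursive/universal description of $\Mor(FP)$, we get $F(1_P) = 1_{FP}$. For composition: given $(f,g) \maps P \to P'$ and $(f',g') \maps P' \to P''$, both $F(f',g') \circ F(f,g)$ and $F(f' \circ f, g' \circ g)$ act as $\N[g'] \circ \N[g] = \N[g' \circ g]$ on objects and are structure-preserving maps on morphisms extending $f' \circ f$ on transitions; again by the uniqueness clause in the definition of $F$ on morphisms, they coincide. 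Hence $F(f',g') \circ F(f,g) = F((f',g') \circ (f,g))$, and $F \maps \Petri \to \CMon\Cat$ is a functor.

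The main obstacle here is not any deep argument but the bookkeeping around well-definedness on the quotient $\Mor(FP)$: one must be careful that the generating equations of the congruence on $FP$ are sent by the structure-preserving map to equations that already hold in $FP'$ (so nothing extra needs to be quotiented, and nothing is identified that shouldn't be). Since $FP'$ is by construction a commutative monoidal category, every generating relation—associativity and unitality of $\circ$ and $+$, the interchange law, strict commutativity $a + b = b + a$ on objects and morphisms, triviality of the braiding—holds there, so the map is compatible. Everything else is a routine invocation of the uniqueness built into the definitions of $FP$ and $F(f,g)$; I would present it briskly rather than expanding each recursion.
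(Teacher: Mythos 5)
Your proposal is correct and takes essentially the same route as the paper, which simply declares the verification "straightforward" and omits the details; you have filled in exactly the bookkeeping (well-definedness of $F(f,g)$ on the quotient $\Mor(FP)$, and the identity/composition checks via the uniqueness clause in the definition of $F(f,g)$) that the paper leaves implicit. No gaps.
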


\begin{proof}
    This is straightforward; the proof that $F$ is a left adjoint is harder \cite{GeneralizedPetriNets}, but we do not need this here.
\end{proof}

\section{Catalysts}
\label{sec:catalysts}

One thinks of a transition $\tau$ of a Petri net as a process that consumes the source species $s(\tau)$ and produces the target species $t(\tau)$. An example of something that can be represented by a Petri net is a chemical reaction network \cite{BaezBiamonte, RxNet}. Indeed, this is why Carl Petri originally invented them. A `catalyst' in a chemical reaction is a species that is necessary for the reaction to occur, or helps lower the activation energy for reaction, but is neither increased nor depleted by the reaction. We use a modest generalization of this notion, defining a \emph{catalyst} in a Petri net to be a species that is neither increased nor depleted by \emph{any} transition in the Petri net. 

Given a Petri net $s, t \maps T \to \N[S]$, recall that for any marking $a \in \N[S]$ we have
\[    a = \sum_{x \in S} a_x x \]
for certain coefficients $a_x \in \N$. Thus, for any transition $\tau$ of a Petri net, $s(\tau)_x$ is the coefficient of the place $x$ in the source of $\tau$, while $t(\tau)_x$ is its coefficient in the target of $\tau$.

\begin{defn}
    A species $x \in S$ in a Petri net $P = (s, t \maps T \to \N[S])$ is called a \define{catalyst} if $s(\tau)_x = t(\tau)_x$ for every transition $\tau \in T$. Let $S_{\mathrm{cat}} \subseteq S$ denote the set of catalysts in $P$.
\end{defn}

\begin{defn} 
    A \define{Petri net with catalysts} is a Petri net $P = (s, t \maps T \to \N[S])$ with a chosen subset $C \subseteq S_{\mathrm{cat}}$. We denote a Petri net $P$ with catalysts $C$ as $(P, C)$.
\end{defn}

Suppose we have a Petri net with catalysts $(P, C)$. Recall that the set of objects of $FP$ is the free commutative monoid $\N[C]$. We have a natural isomorphism 
\[
    \N[S] \cong \N[C] \times \N[S \setminus C]. 
\]
We write
\[
    \pi_C \maps \N[S] \to \N[C] 
\]
for the projection. Given any object $a \in FP$, $\pi_C(a)$ says how many catalysts of each species in $C$ occur in $a$.

\begin{defn} 
    Given a Petri net with catalysts $(P, C)$ and any $i \in \N[C]$, let $FP_i$ be the full subcategory of $FP$ whose objects are objects $a \in FP$ with $\pi_C (a) = i$.
\end{defn}

Morphisms in $FP_i$ describe processes that the Petri net can carry out with a specific fixed amount of every catalyst. Since no transition in $P$ creates or destroys any catalyst, if $f \maps a \to b$ is a morphism in $FP$ then 
\[
    \pi_C(a) = \pi_C(b) .
\]
Thus, $FP$ is the coproduct of all the 
subcategories $FP_i$:
\[
    FP \cong \coprod_{i \in \N[C]} FP_i
\]
as categories. The subcategories $FP_i$ are not generally monoidal subcategories because if $a, b \in FP$ and $a+b$ is their tensor product then 
\[
    \pi_C(a+b) = \pi_C(a) + \pi_C(b) 
\]
so for any $i, j \in \N[C]$ we have
\[
    a \in FP_i, \; b \in FP_j \Rightarrow a + b \in FP_{i + j}
\]
and similarly for morphisms.
Thus, we can think of $FP$ as a commutative monoidal category `graded' by $\N[C]$. But note we are free to reinterpret any process as using a \emph{greater} amount of various catalysts, by tensoring it with identity morphism on this \emph{additional} amount of 
catalysts. That is, given any morphism in $FP_i$, we can always tensor it with the identity on $j$ to get a morphism in $FP_{i+j}$.

Since $\N[C]$ is a commutative monoid we can think of it as a commutative monoidal category with only identity morphisms, and we freely do this in what follows. Network models rely on a similar but less trivial way of constructing a symmetric monoidal category from a set $C$. Namely, for any set $C$ there is a category $\S(C)$ for which:
\begin{itemize}
    \item Objects are formal expressions of the form
    \[
           c_1 \otimes \cdots \otimes c_n 
    \]
    for $n \in \N$ and $c_1, \dots, c_n \in C$. When $n = 0$ we write this expression as $I$.
    \item There exist morphisms
    \[
           f \maps c_1 \otimes \cdots \otimes c_m \to c'_1 \otimes \cdots \otimes c'_n 
    \]
    only if $m = n$, and in that case a morphism is a permutation $\sigma \in S_n$ such that $c'_{\sigma(i)} = c_i$ for all $i=1, \dots , n$.
    \item Composition is the usual composition of permutations. 
\end{itemize}
In short, an object of $\S(C)$ is a list of catalysts, possibly empty, and allowing repetitions. A morphism is a permutation that maps one list to another list. 

As shown in \cref{prop:free}, $\S(C)$ is the free strict symmetric monoidal category on the set $C$. There is thus a strict symmetric monoidal functor
\[
    p \maps \S(C) \to \N[C] 
\]
sending each object $c_1 \otimes \cdots \otimes c_n$ to the object $c_1 + \cdots + c_n$, and sending every morphism to an identity morphism. This can also be seen directly. In what follows, we use this functor $p$ to construct a lax symmetric monoidal functor $G \maps \S(C) \to \Cat$, where $\Cat$ is made symmetric monoidal using its cartesian product.

\begin{prop} 
    Given a Petri net with catalysts $(P, C)$, there exists a unique functor $G \maps \S(C) \to \Cat$ sending each object $x \in \S(C)$ to the category $FP_{p(x)}$ and each morphism in $\S(C)$ to an identity functor.
\end{prop}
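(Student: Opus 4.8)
The plan is to realize $G$ as a composite of two functors, using the strict symmetric monoidal functor $p \maps \S(C) \to \N[C]$ recalled just above. First I would observe that, regarding the commutative monoid $\N[C]$ as a category in the usual way, it is discrete: its object set is $\N[C]$ and its only morphisms are identities. Consequently a functor $\N[C] \to \Cat$ is nothing more than a choice of a small category for each $i \in \N[C]$, the assignment of identity morphisms to identity functors being automatic. The preceding definition supplies exactly such data, namely $i \mapsto FP_i$, so this assignment is at once a functor $\Phi \maps \N[C] \to \Cat$ with no coherence to check.

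Next I would set $G = \Phi \circ p$ and verify it has the advertised properties. On objects, $G(x) = \Phi(p(x)) = FP_{p(x)}$, as required. On a morphism $\sigma \maps x \to x'$ of $\S(C)$, since $p$ sends every morphism of $\S(C)$ to an identity morphism of $\N[C]$, we obtain $G(\sigma) = \Phi(\mathrm{id}_{p(x)}) = \mathrm{id}_{FP_{p(x)}}$, which is an identity functor. Functoriality of $G$ is inherited from functoriality of $p$ and $\Phi$, so nothing further is needed for existence.

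For uniqueness, suppose $G'$ is any functor with the stated properties. It agrees with $G$ on objects by hypothesis. For a morphism $\sigma \maps x \to x'$, write $x = c_1 \otimes \cdots \otimes c_n$ and $x' = c'_1 \otimes \cdots \otimes c'_n$; since $\sigma \in S_n$ satisfies $c'_{\sigma(i)} = c_i$, re-indexing the sum gives $p(x) = \sum_i c_i = \sum_j c'_j = p(x')$. Hence $G'(x) = FP_{p(x)} = FP_{p(x')} = G'(x')$, and the only identity functor from $G'(x)$ to $G'(x')$ is $\mathrm{id}_{FP_{p(x)}} = G(\sigma)$. Therefore $G' = G$.

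The only step that requires any genuine thought is this last computation: it rests on the fact that a morphism of $\S(C)$ merely permutes a list of catalysts and so cannot change its image in $\N[C]$, which is precisely what makes the source and target categories $FP_{p(x)}$ and $FP_{p(x')}$ literally coincide, so that the prescription ``send every morphism to an identity functor'' is coherent. Everything else is bookkeeping, and I do not anticipate a real obstacle; the whole point of routing through $p$ is to make the functoriality and uniqueness transparent.
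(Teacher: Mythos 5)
Your proof is correct and takes essentially the same route as the paper: the paper also factors $G$ as the composite of $p$ with a functor $H \maps \N[C] \to \Cat$ (your $\Phi$), which exists because $\N[C]$ has only identity morphisms. The only difference is that you spell out the uniqueness argument, which the paper simply declares clear.
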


\begin{proof} 
    The uniqueness is clear. For existence, note that since $\N[C]$ has only identity morphisms there is a functor $H \maps \N[C] \to \Cat$ sending each object $x \in \N[C]$ to the category $FP_{p(x)}$. If we compose $H$ with the functor $p \maps \S(C) \to \N[C]$ described above we obtain the functor $G$. 
\end{proof}

\begin{thm} 
\label{thm:petri_network_model}
    The functor $G \maps \S(C) \to \Cat$ becomes lax symmetric monoidal with the lax structure map
    \[
        \Phi_{x, y} \maps FP_{p(x)} \times FP_{p(y)} \to FP_{p(x \otimes y)}
    \]
    given by the tensor product in $FP$, and the map
    \[
        \phi \maps 1 \to FP_0 
    \]
    sending the unique object of the terminal category $1 \in \Cat$ to the unit for the tensor product in $FP$, which is the object $0 \in FP_0$.
\end{thm}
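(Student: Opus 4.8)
The plan is to leverage the factorization of $G$ already obtained in the preceding proposition: $G = H \circ p$, where $p \maps \S(C) \to \N[C]$ is the strict symmetric monoidal functor of \cref{prop:free} that collapses $c_1 \otimes \cdots \otimes c_n$ to $c_1 + \cdots + c_n$ and sends every morphism to an identity, and $H \maps \N[C] \to \Cat$ sends $i \in \N[C]$ to the full subcategory $FP_i$. Since the composite of a strict symmetric monoidal functor with a lax symmetric monoidal functor is again lax symmetric monoidal, and $p$ is strict, it suffices to exhibit $H$ as a lax symmetric monoidal functor $(\N[C], +, 0) \to (\Cat, \times, 1)$ with laxator $\Phi_{i,j} \maps FP_i \times FP_j \to FP_{i+j}$ given by restricting the tensor product $+$ of $FP$, and unit $\phi \maps 1 \to FP_0$ picking out the object $0 \in FP_0$; composing this structure with $p$ recovers exactly the maps $\Phi_{x,y}$ and $\phi$ in the statement, because $p(x \otimes y) = p(x) + p(y)$ on the nose.

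First I would check that $\Phi_{i,j}$ is well defined as a functor. The catalyst-counting map $\pi_C \maps \N[S] \to \N[C]$ is a monoid homomorphism, so $\pi_C(a+b) = \pi_C(a) + \pi_C(b)$; hence $a \in FP_i$ and $b \in FP_j$ imply $a + b \in FP_{i+j}$, and likewise on morphisms, since a morphism $f \maps a \to b$ of $FP$ forces $\pi_C(a) = \pi_C(b)$ (no transition of $P$ alters any catalyst). Thus the functor $+ \maps FP \times FP \to FP$ restricts to $FP_i \times FP_j \to FP_{i+j}$. Naturality of $\Phi$ is automatic, since $\N[C]$ has only identity morphisms.

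The substance of the argument is then the three coherence diagrams of a lax symmetric monoidal functor. In each of them the source $\N[C]$ is strict and $H$ sends every morphism to an identity functor, so the diagram collapses to a bare identity about $+$ in $FP$, to be verified on objects and on morphisms: associativity reduces to $(a+b)+c = a+(b+c)$, which holds because $FP$ is a monoid object in $(\Cat,\times)$ and hence strictly associative; the two unit triangles reduce, after the cartesian projections $1 \times \mathcal{D} \cong \mathcal{D} \cong \mathcal{D} \times 1$, to $0 + a = a = a + 0$; and the symmetry square reduces to $a + b = b + a$. With these identities in hand, $H$, and therefore $G = H \circ p$, is lax symmetric monoidal with the asserted structure maps.

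There is no deep obstacle here; the only point demanding care is the bookkeeping forced by the non-strict cartesian monoidal structure on $\Cat$ — whose associator, unitors, and symmetry are genuine (if ``identity-like'') isomorphisms — against the strictness of the source and the strict associativity, strict unitality, and commutativity of $+$ in $FP$. The last of these is the essential input: it is precisely because $FP$ is a \emph{commutative} monoid object in $\Cat$ (i.e. $FP \in \CMon\Cat$) that the symmetry coherence holds, so that $G$ is lax \emph{symmetric} monoidal and not merely lax monoidal.
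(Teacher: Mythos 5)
Your proposal is correct and follows essentially the same route as the paper: factor $G = H \circ p$, equip $H \maps \N[C] \to \Cat$ with the laxator given by restricting the tensor product of $FP$, and conclude by composing with the strict symmetric monoidal $p$. The paper leaves the coherence checks as "one can check"; your write-up simply supplies those details (well-definedness of the restricted tensor via $\pi_C$ being a monoid homomorphism, and the collapse of the coherence diagrams to the strict associativity, unitality, and commutativity of $+$ in the commutative monoidal category $FP$).
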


\begin{proof} 
    Recall that $G$ is the composite of $p \maps \S(C) \to \N[C]$ and $H \maps \N[C] \to \Cat$. The functor $p$ is strict symmetric monoidal. The functor $p$ is strict symmetric monoidal. One can check that the functor $H$ becomes lax symmetric monoidal if we equip it with the lax structure map
    \[
        FP_i \times FP_j \to FP_{i+j} 
    \]
    given by the tensor product in $FP$, and the map 
    \[
        1 \to FP_0 
    \]
    sending the unique object of $1 \in \Cat$ to the unit for the tensor product in $FP$, namely $0 \in \N[S] = \Ob(FP)$. Composing the lax symmetric monoidal functor $H$ and with the strict symmetric monoidal functor $p$, we obtain the lax symmetric monoidal functor $G$ described in the theorem statement. 
\end{proof}

We defined \define{$C$-colored network model} in \cref{ch:NetworkModels} to be a lax symmetric monoidal functor from $\S(C)$ to $\Cat$. 

\begin{defn}
    We call the $C$-colored network model $G \maps \S(C) \to \Cat$ of Theorem \ref{thm:petri_network_model} the \define{Petri network model} associated to the Petri net with catalysts $(P, C)$.
\end{defn}

\begin{expl}
\label{ex:1}
    The following Petri net $P$ has species $S = \{a, b, c, d, e\}$ and transitions $T = \{\tau_1, \tau_2\}$:
    \[
    \vcenter{\hbox{\scalebox{0.8}{
    \begin{tikzpicture}
    	\begin{pgfonlayer}{nodelayer}
    	    \node [style=catalyst] (A) at (-2, 2) {$\;\;a\;\;$};
    	    \node [style=catalyst] (B) at (2, 2) {$\;\;b\;\;$};
    		\node [style=species] (C) at (-4, 0.6) {$\;\;c\;\;$};
    		\node [style=species] (D) at (0, 0.6) {$\;\;d\;\;$};
    		\node [style=species] (E) at (4, 0.6) {$\;\;e\;\;$};
    		\node [style=transition] (tau1) at (-2, 0.6) {$\;\phantom{\Big{|}}\tau_1\;$};
            \node [style=transition] (tau2) at (2, 0.6) {$\;\phantom{\Big{|}}\tau_2\;$};
    	\end{pgfonlayer}
    	\begin{pgfonlayer}{edgelayer}
    		\draw [style=inarrow, bend right=70, looseness=1.00, red] (A) to (tau1);
    		\draw [style=inarrow, bend right=70, looseness=1.00, red] (tau1) to (A);
    		\draw [style=inarrow, bend right=70, looseness=1.00, red] (B) to (tau2);
    		\draw [style=inarrow, bend right=70, looseness=1.00, red] (tau2) to (B);
    		\draw [style=inarrow, bend left=12, looseness=1] (C) to (tau1);
    		\draw [style=inarrow, bend right=12, looseness=1] (C) to (tau1);
    	    \draw [style=inarrow, bend left=12, looseness=1] (tau1) to (D);
    	    \draw [style=inarrow, bend right=12, looseness=1] (tau1) to (D);
    	    \draw [style=inarrow] (D) to (tau2);
    	    \draw [style=inarrow] (tau2) to (E);
    	\end{pgfonlayer}
    \end{tikzpicture}
    }}}
    \]
    Species $a$ and $b$ are catalysts, and the rest are not. We thus can take $C = \{a, b\}$ and obtain a Petri net with catalysts $(P, C)$, which in turn gives a Petri network model $G \maps \S(C) \to \Cat$.  We outline catalyst species in red, and also draw the edges connecting them to transitions in red.
    
    Here is one possible interpretation of this Petri net. Tokens in $\, c\, $ represent people at a base on land, tokens in $\, d\, $ are people at the shore, and tokens in $\, e\, $ are people on a nearby island. Tokens in $\, a\, $ represent jeeps, each of which can carry two people at a time from the base to the shore and then return to the base. Tokens in $\, b\, $ represent boats that carry one person at a time from the shore to the island and then return. 
    
    Let us examine the effect of the functor $G \maps \S(C) \to \Cat$ on various objects of $\S(C)$. The object $a \in \S(C)$ describes a situation where there is one jeep present but no boats. The category $G(a)$ is isomorphic to $FX$, where $X$ is this Petri net:
    \[
    \vcenter{\hbox{\scalebox{0.8}{
    \begin{tikzpicture}
    	\begin{pgfonlayer}{nodelayer}
    		\node [style=species] (C) at (-4, 0.6) {$\;\;c\;\;$};
    		\node [style=species] (D) at (0, 0.6) {$\;\;d\;\;$};
    		\node [style=species] (E) at (4, 0.6) {$\;\;e\;\;$};
    		\node [style=transition] (tau1) at (-2, 0.6) {$\;\phantom{\Big{|}}\tau_1\;$};
    	\end{pgfonlayer}
    	\begin{pgfonlayer}{edgelayer}
    		\draw [style=inarrow, bend left=12, looseness=1] (C) to (tau1);
    		\draw [style=inarrow, bend right=12, looseness=1] (C) to (tau1);
    	    \draw [style=inarrow, bend left=12, looseness=1] (tau1) to (D);
    	    \draw [style=inarrow, bend right=12, looseness=1] (tau1) to (D);
    	\end{pgfonlayer}
    \end{tikzpicture}
    }}}
    \]
    That is, people can go from the base to the shore in pairs, but they cannot go to the island. Similarly, the object $b$ describes a situation with one boat present but no jeeps, and the category $G(b)$ is isomorphic to $FY$, where $Y$ is this Petri net:
    \[
    \vcenter{\hbox{\scalebox{0.8}{
    \begin{tikzpicture}
    	\begin{pgfonlayer}{nodelayer}
    		\node [style=species] (C) at (-4, 0.6) {$\;\;c\;\;$};
    		\node [style=species] (D) at (0, 0.6) {$\;\;d\;\;$};
    		\node [style=species] (E) at (4, 0.6) {$\;\;e\;\;$};
            \node [style=transition] (tau2) at (2, 0.6) {$\;\phantom{\Big{|}}\tau_2\;$};
    	\end{pgfonlayer}
    	\begin{pgfonlayer}{edgelayer}
    	    \draw [style=inarrow] (D) to (tau2);
    	    \draw [style=inarrow] (tau2) to (E);
    	\end{pgfonlayer}
    \end{tikzpicture}
    }}}
    \]
    Now people can only go from the shore to the island, one at a time.
        
    The object $a \otimes b \in \S(C)$ describes a situation with one jeep and one boat. The category $G(a \otimes b)$ is isomorphic to $FZ$ for this Petri net $Z$:
    \[
    \vcenter{\hbox{\scalebox{0.8}{
    \begin{tikzpicture}
    	\begin{pgfonlayer}{nodelayer}
    		\node [style=species] (C) at (-4, 0.6) {$\;\;c\;\;$};
    		\node [style=species] (D) at (0, 0.6) {$\;\;d\;\;$};
    		\node [style=species] (E) at (4, 0.6) {$\;\;e\;\;$};
    		\node [style=transition] (tau1) at (-2, 0.6) {$\;\phantom{\Big{|}}\tau_1\;$};
            \node [style=transition] (tau2) at (2, 0.6) {$\;\phantom{\Big{|}}\tau_2\;$};
    	\end{pgfonlayer}
    	\begin{pgfonlayer}{edgelayer}
    		\draw [style=inarrow, bend left=12, looseness=1] (C) to (tau1);
    		\draw [style=inarrow, bend right=12, looseness=1] (C) to (tau1);
    	    \draw [style=inarrow, bend left=12, looseness=1] (tau1) to (D);
    	    \draw [style=inarrow, bend right=12, looseness=1] (tau1) to (D);
    	    \draw [style=inarrow] (D) to (tau2);
    	    \draw [style=inarrow] (tau2) to (E);
    	\end{pgfonlayer}
    \end{tikzpicture}
    }}}
    \]
    Now people can go from the base to the shore in pairs and also go from the shore to the island one at a time.
    
    Surprisingly, an object $x \in \S(C)$ with additional jeeps and/or boats always produces a category $G(x)$ that is isomorphic to one of the three just shown: $G(a), G(b)$ and $G(a \otimes b)$. For example, consider the object $b \otimes b \in \S(C)$, where there are two boats present but no jeeps. There is an isomorphism of categories
    \[  
         - + b \maps G(b)  \to  G(b \otimes b) 
    \]
    defined as follows. Recall that $G(b) = FP_b$ and $G(b \otimes b) = FP_{b+b}$, where $FP_b$ and $FP_{b+b}$ are subcategories of $FP$. The functor
    \[
        - + b \maps FP_b \to FP_{b+b}
    \]
    sends each object $x \in FP_b$ to the object $ x + b$, and sends each morphism $f \maps x \to y$ in $FP_b$ to the morphism $1_b + f \maps b + x \to b + y$. That this defines a functor is clear; the surprising part is that it is an isomorphism. One might have thought that the presence of a second boat would enable one to carry out a given task in more different ways.
        
    Indeed, while this is true in real life, the category $FP$ is \emph{commutative} monoidal, so tokens of the same species have no `individuality': permuting them has no effect. There is thus, for example, no difference between the following two morphisms in $FP_{b+b}$:
    \begin{itemize}
        \item using one boat to transport one person from the base to shore and another boat to transport another person, and
        \item using one boat to transport first one person and then another.
    \end{itemize} 
    
    It is useful to draw morphisms in $FP$ as string diagrams, since such diagrams serve as a general notation for morphisms in monoidal categories \cite{JoyalStreet}.   For expository treatments, see \cite{BaezStay,Selinger}.   The rough idea is that objects of a monoidal category are drawn as labelled wires, and a morphism $f \maps x_1 \otimes \cdots \otimes x_m \to y_1 \otimes \cdots \otimes y_n$ is drawn as a box with $m$ wires coming in on top and $n$ wires coming out at the bottom.  Composites of morphisms are drawn by attaching output wires of one morphism to input wires of another, while tensor products of morphisms are drawn by setting pictures side by side.  In symmetric monoidal categories, the braiding is drawn as a crossing of wires.   The rules governing string diagrams let us manipulate them while not changing the morphisms they denote.  In the case of symmetric monoidal categories, these rules are well known \cite{JoyalStreet,Selinger}.   For \emph{commutative} monoidal categories there is one additional rule:
    \[
    \vcenter{\hbox{\scalebox{0.8}{
    \begin{tikzpicture}
    	\begin{pgfonlayer}{nodelayer}
    		\node [style=empty] (a) at (0, 1) {$x$};
    		\node [style=empty] (b) at (1, 1) {$y$};
            \node [style=empty] (c) at (0, -1) {$y$};
            \node [style=empty] (d) at (1, -1) {$x$};
    	\end{pgfonlayer}
    	\begin{pgfonlayer}{edgelayer}
    		\draw [line width=1.5 pt] (a) to (d);
    		\draw [line width=1.5 pt] (b) to (c);
        	\end{pgfonlayer}
    \end{tikzpicture}
    }}}
    =
    \vcenter{\hbox{\scalebox{0.8}{
    \begin{tikzpicture}
    	\begin{pgfonlayer}{nodelayer}
    		\node [style=empty] (a) at (0, 1) {$x$};
    		\node [style=empty] (b) at (1, 1) {$y$};
            \node [style=empty] (c) at (1, -1) {$y$};
            \node [style=empty] (d) at (0, -1) {$x$};
    	\end{pgfonlayer}
    	\begin{pgfonlayer}{edgelayer}
    		\draw [line width=1.5 pt] (a) to (d);
    		\draw [line width=1.5 pt] (b) to (c);
        	\end{pgfonlayer}
    \end{tikzpicture}
    }}}
    \]
    This says both that $x \otimes y = y \otimes x$ and that the braiding $\sigma_{x,y} \maps x \otimes y \to y \otimes x$ is the identity.
    
    Here is the string diagram notation for the equation we mentioned between two morphisms in $FP$:
    \[
    \vcenter{\hbox{\scalebox{0.8}{
    \begin{tikzpicture}
    	\begin{pgfonlayer}{nodelayer}
    		\node [style=empty, red] (b) at (0, 4) {$b$};
    		\node [style=empty, red] (b') at (1.5, 4) {$b$};
    		\node [style=empty] (d) at (3, 4) {$d$};
    		\node [style=empty] (d') at (4.5, 4) {$d$};
    		\node [style=morphism] (tau1) at (2.25, 2.2) {$\;\phantom{\Big|}\tau_2\phantom{\Big|}\;$};
    		\node [style=empty] (equals) at (5.5, 1) {$=$};
            \node [style=morphism] (tau2) at (2.25, -0.2) {$\;\phantom{\Big|}\tau_2\phantom{\Big|\;}$};
            \node [style=empty, red] (b'') at (0, -2) {$b$};
            \node [style=empty, red] (b''') at (1.5, -2) {$b$};
            \node [style=empty] (e) at (3, -2) {$e$};
            \node [style=empty] (e') at (4.5, -2) {$e$};
    	\end{pgfonlayer}
    	\begin{pgfonlayer}{edgelayer}
    		\draw [line width=1.5 pt, red] (b) to (tau2);
    		\draw [line width=1.5 pt, red] (b') to (tau1);
    		\draw [line width=1.5 pt] (d) to (tau1);
    		\draw [line width=1.5 pt] (d') to (tau2);
    		\draw [line width=1.5 pt, red] (tau1) to (b'');
    		\draw [line width=1.5 pt] (tau1) to (e');
    		\draw [line width=1.5 pt, red] (tau2) to (b''');
    		\draw [line width=1.5 pt] (tau2) to (e);
        	\end{pgfonlayer}
    \end{tikzpicture}
    }}}
    =
    \vcenter{\hbox{\scalebox{0.8}{
    \begin{tikzpicture}
    	\begin{pgfonlayer}{nodelayer}
    		\node [style=empty, red] (b) at (0, 4) {$b$};
    		\node [style=empty, red] (b') at (1.5, 4) {$b$};
    		\node [style=empty] (d) at (3, 4) {$d$};
    		\node [style=empty] (d') at (4.5, 4) {$d$};
    		\node [style=morphism] (tau1) at (2.25, 2.2) {$\;\phantom{\Big|}\tau_2\phantom{\Big|}\;$};
            \node [style=morphism] (tau2) at (2.25, -0.2) {$\;\phantom{\Big|}\tau_2\phantom{\Big|\;}$};
            \node [style=empty, red] (b'') at (0, -2) {$b$};
            \node [style=empty, red] (b''') at (1.5, -2) {$b$};
            \node [style=empty] (e) at (3, -2) {$e$};
            \node [style=empty] (e') at (4.5, -2) {$e$};
    	\end{pgfonlayer}
    	\begin{pgfonlayer}{edgelayer}
    		\draw [line width=1.5 pt, bend left=20, looseness=2, red] (b) to (b'');
    		\draw [line width=1.5 pt, red] (b') to (tau1);
    		\draw [line width=1.5 pt] (d) to (tau1);
    		\draw [line width=1.5 pt] (d') to (tau2);
    		\draw [line width=1.5 pt, bend right =30, looseness=1.5, red] (tau1) to (tau2);
    		\draw [line width=1.5 pt] (tau1) to (e');
    		\draw [line width=1.5 pt, red] (tau2) to (b''');
    		\draw [line width=1.5 pt] (tau2) to (e);
        	\end{pgfonlayer}
        \end{tikzpicture}
        }}}
    \]
    We draw the object $b$ (standing for a boat) in red to emphasize that it serves as a catalyst.  At left we are first using one boat to transport one person from the base to shore, and then using another boat to transport another person. At right we are using the same boat to transport first one person and then another, while another boat stands by and does nothing.  These morphisms are equal because they differ only by the presence of the braiding $\sigma_{b,b} \maps b + b \to b + b$ in the left hand side, and this is an identity morphism.
\end{expl}

The above example illustrates an important point: in the commutative monoidal category $FP$, permuting catalyst tokens has no effect. Next we construct a symmetric monoidal category $\int G$ in which permuting such tokens has a nontrivial effect.  One reason for wanting this is that in applications, the catalyst tokens may represent agents with their own individuality. For example, when directing a boat to transport a person from base to shore, we need to say \emph{which} boat should do this.  For this we need a symmetric monoidal category that gives the catalyst tokens a nontrivial braiding.

To create this category, we use the symmetric monoidal Grothendieck construction of \cref{ch:MonGroth}. Given any symmetric monoidal category $X$ and any lax symmetric monoidal functor $F \maps X \to \Cat$, this construction gives a symmetric monoidal category $\int F$ equipped with a functor (indeed an opfibration) $\int F \to X$. In \cref{ch:NetworkModels}, we used this construction to build an operad from any network model, whose operations are ways to assemble larger networks from smaller ones. Now this construction has a new significance.

Starting from a Petri network model $G \maps \S(C) \to \Cat$, the symmetric monoidal Grothendieck construction gives a symmetric monoidal category $\int G$ in which:
\begin{itemize}
    \item an object is a pair $(x, a)$ where $x \in \S(C)$ and $a \in FP_{p(x)}$.
    \item a morphism from $(x, a)$ to $(x', a')$ is a pair $(\sigma, f)$ where $\sigma \maps x \to x'$ is a morphism in $\S(C)$ and $f \maps a \to a'$ is a morphism in $FP$.
    \item morphisms are composed componentwise.
    \item the tensor product is computed componentwise: in particular, the tensor product of objects $(x, a)$ and $(x', a')$ is $(x \otimes x', a + a')$.
    \item the associators, unitors and braiding are also computed componentwise (and hence are trivial in the second component, since $FP$ is a commutative monoidal category).
\end{itemize}
The functor $\int G \to \S(C)$ simply sends each pair to its first component.

This is simpler than one typically expects from the Grothendieck construction. There are two main reasons: first, $G$ maps every morphism in $\S(C)$ to an identity morphism in $\Cat$, and second, the lax structure map for $G$ is simply the tensor product in $FP$. However, this construction still has an important effect: it makes the process of switching two tokens of the same catalyst species into a nontrivial morphism in $\int G$.
More formally, we have:

\begin{thm}
\label{thm:grothendieck}
    If $G \maps \S(C) \to \Cat$ is the Petri network model associated to the Petri net with catalysts $(P, C)$, then $\int G$ is equivalent, as a symmetric monoidal category, to the full subcategory of $\S(C) \times FP$ whose objects are those of the form $(x, a)$ with $x \in \S(C)$ and $a \in FP_{p(x)}$. 
\end{thm}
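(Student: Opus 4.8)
The plan is to exhibit a strict symmetric monoidal isomorphism between $\int G$ and the indicated full subcategory of $\S(C) \times FP$; the stated equivalence then follows a fortiori. The argument is a direct comparison of two explicit descriptions: the description of $\int G$ recorded just before the theorem, which is what the symmetric monoidal Grothendieck construction of \cref{ch:MonGroth} yields in this case, and the product symmetric monoidal structure on $\S(C)\times FP$ restricted to the subcategory. Two degeneracies of the Petri network model $G$ drive everything: $G$ factors through $\N[C]$, which has only identity morphisms, so $G(\sigma)$ is an identity functor for each morphism $\sigma$ of $\S(C)$; and by \cref{thm:petri_network_model} the laxator $\Phi_{x,y}$ is literally the tensor product $+$ of $FP$, while the unit map $\phi$ names the object $0 \in FP_0$.

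First I would set $\mathcal{D}$ to be the full subcategory of $\S(C)\times FP$ on the pairs $(x,a)$ with $a \in FP_{p(x)}$, equivalently $\pi_C(a) = p(x)$, and check that it is a symmetric monoidal subcategory. It is closed under the pointwise tensor product and contains the unit $(I,0)$: indeed $\pi_C(a+a') = \pi_C(a)+\pi_C(a') = p(x)+p(x') = p(x\otimes x')$, using that $\pi_C$ is a monoid homomorphism and that $p \maps \S(C)\to\N[C]$ is strict symmetric monoidal (\cref{prop:free}), and $\pi_C(0)=0=p(I)$. Hence $\mathcal{D}$ inherits a symmetric monoidal structure in which the tensor product, associator, unitors and braiding are all computed componentwise; the $FP$-components are identities since $FP$ is commutative monoidal, and the $\S(C)$-components are the strict structure maps of $\S(C)$ (braiding $(B_{m,n},\mathrm{id})$, the rest identities).

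Next I would write down the comparison functor $\Xi \maps \int G \to \mathcal{D}$: it is the identity on objects, and a morphism $(\sigma,f) \maps (x,a)\to(x',a')$ of $\int G$ — consisting of $\sigma \maps x \to x'$ in $\S(C)$ together with $f \maps G(\sigma)(a) \to a'$ in $FP_{p(x')}$, hence, since $G(\sigma)=\mathrm{id}$ and $FP_{p(x')}$ is full in $FP$, simply a morphism $f \maps a \to a'$ of $FP$ — is sent to $(\sigma,f)$ viewed in $\S(C)\times FP$. This is visibly a bijection on objects, and full and faithful because composition in $\int G$ is componentwise (again using $G(\sigma)=\mathrm{id}$), so $\Xi$ is an isomorphism of underlying categories. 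To promote it to a symmetric monoidal functor, observe that its coherence constraints are identities: $\Xi(x,a)\otimes_{\mathcal{D}}\Xi(x',a') = (x\otimes x',\, a+a')$ equals $\Xi\bigl((x,a)\otimes_{\int G}(x',a')\bigr) = \bigl(x\otimes x',\,\Phi_{x,x'}(a,a')\bigr) = (x\otimes x',\, a+a')$ because $\Phi_{x,x'}=+$, and the unit constraint is an identity because $\phi$ names $0$. Since, by the componentwise description coming from \cref{ch:MonGroth}, the associator, unitors and braiding of $\int G$ are exactly the pairs of the corresponding structure maps of $\S(C)$ and of $FP$, $\Xi$ carries them onto those of $\mathcal{D}$, so the hexagon and triangle axioms for $\Xi$ hold trivially. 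Thus $\Xi$ is an isomorphism of symmetric monoidal categories, and in particular an equivalence, which is the assertion.

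The one nonroutine point — the step invoked above as ``the componentwise description coming from \cref{ch:MonGroth}'' — is to unwind the general formulas for the symmetric monoidal structure on a Grothendieck construction $\int F$ (where the tensor product of morphisms, the associator, the unitors and the braiding are assembled from those of the base together with the laxator and its coherence cells) and to verify that for our degenerate data — all $G(\sigma)$ identities, $\Phi=+$, $\phi=0$, base $\S(C)$ strict, fibres $FP_i$ commutative monoidal — each of these constraints collapses to the strict componentwise form, with no residual coherence isomorphism surviving. This is precisely the content of the bulleted list preceding the theorem; once that specialization is carried out, the rest of the proof is the bookkeeping above.
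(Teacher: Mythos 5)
Your proposal is correct and takes essentially the same route as the paper: the paper's proof is the one-line remark that the theorem "can be read off from the description of $\int G$ given above," and your argument is precisely that reading-off carried out in detail (identity-on-objects comparison functor, degeneracy of $G$ on morphisms, laxator equal to $+$, componentwise coherence data). You in fact establish the slightly stronger statement that the comparison is a strict monoidal isomorphism, which subsumes the claimed equivalence.
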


\begin{proof} 
    One can read this off from the description of $\int G$ given above.
\end{proof}

The difference between $\int G$ and $FP$ is that the former category keeps track of processes where catalyst tokens are permuted, while the latter category treats them as identity morphisms.  In the terminology of Glabbeek and Plotkin, $\int G$  implements the `individual token philosophy' on catalysts, in which permuting tokens of the same catalyst is regarded as having a nontrivial effect \cite{GlabbeekPlotkin}.  By contrast, $FP$ implements the `collective token philosophy', where all that matters is the \emph{number} of tokens of each catalyst, and permuting them has no effect.  

There is a map from $\int G$ to $FP$ that forgets the individuality of the catalyst tokens. A morphism in $\int G$ is a pair $(\sigma, f)$ where $\sigma \maps x \to x'$ is a morphism in $\S(C)$ and $f \maps a \to a'$ is a morphism in $FP$ with $a \in G(x), a' \in G(x')$. There is a symmetric monoidal functor
\[
    \textstyle{\int} G \to FP 
\]
that discards this extra information, mapping $(\sigma, f)$ to $f$. The symmetric monoidal Grothendieck construction also gives a symmetric monoidal opfibration 
\[ \textstyle{\int} G \to \S(C)  \]
which maps $(\sigma, f)$ to $\sigma$, by \cref{ch:MonGroth}.

\begin{expl}
\label{ex:2}
Let $(P, C)$ be the Petri net with catalysts in Ex.\ \ref{ex:1}, and $G \maps \S(C) \to \Cat$ the resulting Petri network model. In $\int G$ the following two morphisms are not equal:
    \[
    \vcenter{\hbox{\scalebox{0.8}{
    \begin{tikzpicture}
    	\begin{pgfonlayer}{nodelayer}
    		\node [style=empty, red] (b) at (0, 4) {$b$};
    		\node [style=empty, red] (b') at (1.5, 4) {$b$};
    		\node [style=empty] (d) at (3, 4) {$d$};
    		\node [style=empty] (d') at (4.5, 4) {$d$};
    		\node [style=morphism] (tau1) at (2.25, 2.2) {$\;\phantom{\Big|}\tau_2\phantom{\Big|}\;$};
            \node [style=morphism] (tau2) at (2.25, -0.2) {$\;\phantom{\Big|}\tau_2\phantom{\Big|\;}$};
            \node [style=empty, red] (b'') at (0, -2) {$b$};
            \node [style=empty, red] (b''') at (1.5, -2) {$b$};
            \node [style=empty] (e) at (3, -2) {$e$};
            \node [style=empty] (e') at (4.5, -2) {$e$};
    	\end{pgfonlayer}
    	\begin{pgfonlayer}{edgelayer}
    		\draw [line width=1.5 pt, red] (b) to (tau2);
    		\draw [line width=1.5 pt, red] (b') to (tau1);
    		\draw [line width=1.5 pt] (d) to (tau1);
    		\draw [line width=1.5 pt] (d') to (tau2);
    		\draw [line width=1.5 pt, red] (tau1) to (b'');
    		\draw [line width=1.5 pt] (tau1) to (e');
    		\draw [line width=1.5 pt, red] (tau2) to (b''');
    		\draw [line width=1.5 pt] (tau2) to (e); 
    	\end{pgfonlayer}
    \end{tikzpicture}
    }}}
    \ne
    \vcenter{\hbox{\scalebox{0.8}{
    \begin{tikzpicture}
    	\begin{pgfonlayer}{nodelayer}
    		\node [style=empty, red] (b) at (0, 4) {$b$};
    		\node [style=empty, red] (b') at (1.5, 4) {$b$};
    		\node [style=empty] (d) at (3, 4) {$d$};
    		\node [style=empty] (d') at (4.5, 4) {$d$};
    		\node [style=morphism] (tau1) at (2.25, 2.2) {$\;\phantom{\Big|}\tau_2\phantom{\Big|}\;$};
            \node [style=morphism] (tau2) at (2.25, -0.2) {$\;\phantom{\Big|}\tau_2\phantom{\Big|\;}$};
            \node [style=empty, red] (b'') at (0, -2) {$b$};
            \node [style=empty, red] (b''') at (1.5, -2) {$b$};
            \node [style=empty] (e) at (3, -2) {$e$};
            \node [style=empty] (e') at (4.5, -2) {$e$};
    	\end{pgfonlayer}
    	\begin{pgfonlayer}{edgelayer}
    		\draw [line width=1.5 pt, bend left=20, looseness=2, red] (b) to (b'');
    		\draw [line width=1.5 pt, red] (b') to (tau1);
    		\draw [line width=1.5 pt] (d) to (tau1);
    		\draw [line width=1.5 pt] (d') to (tau2);
    		\draw [line width=1.5 pt, bend right =30, looseness=1.5, red] (tau1) to (tau2);
    		\draw [line width=1.5 pt] (tau1) to (e');
    		\draw [line width=1.5 pt, red] (tau2) to (b''');
    		\draw [line width=1.5 pt] (tau2) to (e); (a) to (f);
    	\end{pgfonlayer}
    \end{tikzpicture}
    }}}
    \]
because the braiding of catalyst species in $\int G$ is nontrivial.  This says that in $\int G$ we consider these two processes as different:
\begin{itemize}
    \item using one boat to transport one person from the base to shore and another boat to transport another person, and
    \item using one boat to transport first one person and then another.
\end{itemize} 

On the other hand, in $\int G$ we have
    \[
    \vcenter{\hbox{\scalebox{0.8}{
    \begin{tikzpicture}
    	\begin{pgfonlayer}{nodelayer}
    		\node [style=empty, red] (b) at (0, 4) {$b$};
    		\node [style=empty, red] (b') at (1.5, 4) {$b$};
    		\node [style=empty] (d) at (3, 4) {$d$};
    		\node [style=empty] (d') at (4.5, 4) {$d$};
    		\node [style=morphism] (tau1) at (2.25, 2.2) {$\;\phantom{\Big|}\tau_2\phantom{\Big|}\;$};
            \node [style=morphism] (tau2) at (2.25, -0.2) {$\;\phantom{\Big|}\tau_2\phantom{\Big|\;}$};
            \node [style=empty, red] (b'') at (0, -2) {$b$};
            \node [style=empty, red] (b''') at (1.5, -2) {$b$};
            \node [style=empty] (e) at (3, -2) {$e$};
            \node [style=empty] (e') at (4.5, -2) {$e$};
    	\end{pgfonlayer}
    	\begin{pgfonlayer}{edgelayer}
    		\draw [line width=1.5 pt, red] (b) to (tau2);
    		\draw [line width=1.5 pt, red] (b') to (tau1);
    		\draw [line width=1.5 pt] (d) to (tau1);
    		\draw [line width=1.5 pt] (d') to (tau2);
    		\draw [line width=1.5 pt, red] (tau1) to (b'');
    		\draw [line width=1.5 pt] (tau1) to (e');
    		\draw [line width=1.5 pt, red] (tau2) to (b''');
    		\draw [line width=1.5 pt] (tau2) to (e);
    	\end{pgfonlayer}
    \end{tikzpicture}
    }}}
    =
    \vcenter{\hbox{\scalebox{0.8}{
    \begin{tikzpicture}
    	\begin{pgfonlayer}{nodelayer}
    		\node [style=empty, red] (b) at (0, 4) {$b$};
    		\node [style=empty, red] (b') at (1.5, 4) {$b$};
    		\node [style=empty] (d) at (3, 4) {$d$};
    		\node [style=empty] (d') at (4.5, 4) {$d$};
    		\node [style=morphism] (tau1) at (2.5, 2.2) {$\;\phantom{\Big|}\tau_2\phantom{\Big|}\;$};
            \node [style=morphism] (tau2) at (2.5, -0.2) {$\;\phantom{\Big|}\tau_2\phantom{\Big|\;}$};
            \node [style=empty, red] (b'') at (0, -2) {$b$};
            \node [style=empty, red] (b''') at (1.5, -2) {$b$};
            \node [style=empty] (e) at (3.5, -2) {$e$};
            \node [style=empty] (e') at (4.75, -2) {$e$};
    	\end{pgfonlayer}
    	\begin{pgfonlayer}{edgelayer}
    		\draw [line width=1.5 pt, red] (b) to (tau2);
    		\draw [line width=1.5 pt, red] (b') to (tau1);
    		\draw [line width=1.5 pt, bend left=45, looseness=1] (d) to (tau2);
    		\draw [line width=1.5 pt] (d') to (tau1);
    		\draw [line width=1.5 pt, red] (tau1) to (b'');
    		\draw [line width=1.5 pt] (tau1) to (e');
    		\draw [line width=1.5 pt, red] (tau2) to (b''');
    		\draw [line width=1.5 pt] (tau2) to (e);
    	\end{pgfonlayer}
    \end{tikzpicture}
    }}}
    \]
because these morphisms differ only by two people on the shore switching place before they board the boats, and the braiding of non-catalyst species is the identity.  In short, the $\int G$ construction implements the individual token philosophy only for catalyst tokens; tokens of other species are governed by the collective token philosophy.
\end{expl}

\section{Premonoidal Categories}
\label{sec:premonoidal}

We have seen that for a Petri net $P$, a choice of catalysts $C$ lets us write the category $FP$ as a coproduct of subcategories $FP_i$, one for each possible amount $i \in \N[C]$ of the catalysts. The subcategory $FP_i$ is only a \emph{monoidal} subcategory when $i = 0$. Indeed, only $FP_0$ contains the monoidal unit of $FP$.  However, we shall see that each subcategory $FP_i$ can be given the structure of a \emph{premonoidal} category, as defined by Power and Robinson \cite{PowerRobinson}.  We motivate our use of this structure by describing two failed attempts to make $FP_i$ into a monoidal category.

Given two morphisms in $FP_i$ we typically cannot carry out these two processes simultaneously, because of the limited availability of catalysts. But we can do first one and then the other.  For example, imagine that two people are trying to walk through a doorway, but the door is only wide enough for one person to walk through. The door is a resource that is not depleted by its use, and thus a catalyst. Both people can use the door, but not at the same time: they must make an arbitrary choice of who goes first. 

We can attempt to define a tensor product on $FP_i$ using this idea.  Fix some amount of catalysts $i \in \N[C]$. Objects of $FP_i$ are of the form $i + a$ with $a \in \N[S - C]$.  On objects we define
\[(i + a) \otimes_i (i + a' ) = i +  a + a'.\]
The unit object for $\otimes_i$ is therefore $i + 0$, or simply $i$. For morphisms
\begin{align*} 
    f &\maps i + a \to i + b \\ 
    f' &\maps i +  a' \to i + b'
\end{align*}
we define 
\[
    f \otimes_i f' = (f + 1_{b'}) \circ (1_{a} + f').
\]

The tensor product
$f \otimes_i f' = (f + 1_{b'}) \circ (1_{a} + f') $
of morphisms in $FP_i$ involves an arbitrary choice: namely, the choice to do $f'$ first. This is perhaps clearer if we draw this morphism as a string diagram in $FP$.
\[
\vcenter{\hbox{\scalebox{1}{
\begin{tikzpicture}
	\begin{pgfonlayer}{nodelayer}
		\node [style=empty, red] (i) at (0, -3) {$i$};
		\node [style=empty] (a) at (1.5, 1.5) {$a$};
		\node [style=empty] (a') at (3.5, 1.5) {$a'$};
		\node [style=empty] (m) at (2.5, -0.75) {};
		\node [style=morphism] (f) at (1.5, -1.5) {$\;\phantom{\Big|}f\phantom{\Big|}\;$};
        \node [style=morphism] (f') at (3.5, 0) {$\;\phantom{\Big|}f'\phantom{\Big|\;}$};
        \node [style=empty] (b) at (1.5, -3) {$b$};
        \node [style=empty] (b') at (3.5, -3) {$b'$};
        \node [style=empty, red] (i') at (5, 1.5) {$i$};
	\end{pgfonlayer}
	\begin{pgfonlayer}{edgelayer}
		\draw [line width=1.5 pt, bend left = 40, looseness=1, red] (i') to (f');
		\draw [line width=1.5 pt] (a') to (f');
		\draw [line width=1.5 pt] (a) to (f);
		\draw [line width=1.5 pt, bend right = 25, red] (f) to (m.center);
		\draw [line width=1.5 pt, bend left = 25, red] (m.center) to (f');
		\draw [line width=1.5 pt] (f') to (b');
		\draw [line width=1.5 pt, bend right = 40, looseness=1, red] (f) to (i);
		\draw [line width=1.5 pt] (f) to (b);
	\end{pgfonlayer}
\end{tikzpicture}
}}}
\]
If instead we choose to do $f$ first, we can define a tensor product ${}_i \otimes$ which is the same on objects but given on morphisms by
\[     
    f \, {}_i \!\otimes f' = (1_b + f') \circ (f + 1_{a'})  .
\]
It looks like this:
\[
\vcenter{\hbox{\scalebox{1}{
\begin{tikzpicture}
	\begin{pgfonlayer}{nodelayer}
		\node [style=empty, red] (i) at (0, 1.5) {$i$};
		\node [style=empty] (a) at (1.5, 1.5) {$a$};
		\node [style=empty] (a') at (3.5, 1.5) {$a'$};\node [style=empty] (m) at (2.5, -0.75) {};
		\node [style=morphism] (f) at (1.5, 0) {$\;\phantom{\Big|}f\phantom{\Big|}\;$};
        \node [style=morphism] (f') at (3.5, -1.5) {$\;\phantom{\Big|}f'\phantom{\Big|\;}$};
        \node [style=empty] (b) at (1.5, -3) {$b$};
        \node [style=empty] (b') at (3.5, -3) {$b'$};
        \node [style=empty, red] (i') at (5, -3) {$i$};
	\end{pgfonlayer}
	\begin{pgfonlayer}{edgelayer}
		\draw [line width=1.5 pt, bend right = 40, looseness=1, red] (i) to (f);
		\draw [line width=1.5 pt] (a') to (f');
		\draw [line width=1.5 pt] (a) to (f);
		\draw [line width=1.5 pt, bend left = 25, red] (f) to (m.center);
		\draw [line width=1.5 pt, bend right = 25, red] (m.center) to (f');
		\draw [line width=1.5 pt] (f') to (b');
		\draw [line width=1.5 pt, bend left = 40, looseness=1, red] (f') to (i');
		\draw [line width=1.5 pt] (f) to (b);
	\end{pgfonlayer}
\end{tikzpicture}
}}}
\]
Unfortunately, neither of these tensor products makes $FP_i$ into a monoidal category! Each makes the set of objects $\Ob(FP_i)$ and the set of morphisms $\Mor(FP_i)$ into a monoid in such a way that the source and target maps $s,t \maps \Mor(FP_i) \to \Ob(FP_i)$, as well as the identity-assigning map $i \maps \Ob(FP_i) \to \Mor(FP_i)$, are monoid homomorphisms.  The problem is that neither obeys the interchange law, so neither of these tensor products defines a functor from $FP_i \times FP_i$ to $FP_i$. For example, 
\[     
    (1 \otimes_i f')\circ (f \otimes_i 1) \ne (f \otimes_i 1) \circ (1 \otimes_i f') .
\]
The other tensor product suffers from the same problem.

What is going on here?  It turns out that $FP_i$ is a `strict premonoidal category'.   While these structures first arose in computer science \cite{PowerRobinson}, they are also mathematically natural, for the following reason.  There are only two symmetric monoidal closed structures on $\Cat$, up to isomorphism \cite{FKL}.  One is the the cartesian product.  The other is the `funny tensor product' \cite{Weber}.   A monoid in $\Cat$ with its cartesian product is a strict monoidal category, but a monoid in $\Cat$ with its funny tensor product is a strict premonoidal category. The \define{funny tensor product} $\C \square \D$ of categories $\C$ and $\D$ is defined as the following pushout in $\Cat$:
\[\begin{tikzcd}
    \C_0 \times \D_0
    \arrow[d, "i \times 1", swap]
    \arrow[r, "1 \times j"]
    &
    \C_0 \times \D
    \arrow[d]
    \\
    \C \times \D_0 
    \arrow[r]
    &
    \C \square \D
\end{tikzcd}\]
Here $\C_0$ is the subcategory of $\C$ consisting of all the objects and only identity
morphisms, $i \maps \C_0 \to \C$ is the inclusion, and similarly for $j \maps \D_0 \to \D$.
Thus, given morphisms $f \maps x \to y$ in $\C$ and $f' \maps x' \to y'$ in $\C$, the category $C \square D$ in contains a square of the form
\[    \begin{tikzcd}
        x \square x'
        \arrow[d, swap, "f \square 1"]   \arrow[r, "1 \square f' "]
        &
        x \square y'
        \arrow[d,  "f \square 1"]
        \\
        x' \square y
        \arrow[r, swap, "1 \square f' "]
        &
        x' \square y',
  \end{tikzcd}\]
but in general this square does not commute, unlike the corresponding square in
$\C \times \D$.

\begin{defn}
A \define{strict premonoidal category} is a category $\C$ equipped with a functor
$\boxtimes \maps \C \square \C \to \C$ that obeys the associative law and an object $I \in \C$ that serves as a left and right unit for $\boxtimes$. 
\end{defn}

Given two morphisms $f \maps x \to y$, $f' \maps x' \to y'$ in a strict premonoidal category $\C$ we obtain a square
 \[\begin{tikzcd}
    x \boxtimes x'
    \arrow[d, swap, "f \boxtimes 1"]   \arrow[r,  "1 \boxtimes f' "]
    &
    x \boxtimes y'
    \arrow[d,  "f \boxtimes 1"]
    \\
    x' \boxtimes y
    \arrow[r, swap, "1 \boxtimes f' "]
    &
    x' \boxtimes y',
\end{tikzcd}\]
but this square may not commute.  There are thus two candidates for a morphism from
$x \boxtimes x'$ to $y \boxtimes y'$.  When these always agree, we can make $\C$
monoidal by setting $f \boxtimes f'$ equal to either (and thus both) of these candidates.   We shall give $FP_i$ a strict premonoidal structure where these two candidates do not agree: one is $f \otimes_i f'$ while the other is $f \, {}_i \! \otimes f'$.  This explains the meaning of these two failed attempts to give $FP_i$ a monoidal structure.

Thanks to the description of $\C \square \C$ as a pushout, to know the tensor product $\boxtimes$ in a strict premonoidal category $\C$ it suffices to know $x \boxtimes y$, $x \boxtimes f$ and $f \boxtimes y$ for all objects $x,y$ and morphisms $f$ of $\C$.  (Here we find it useful to write
$x \boxtimes f$ for $1_x \boxtimes f$ and $f \boxtimes y$ for $f \boxtimes 1_y$.)  In the case at hand, we define
\[    \boxtimes_i \maps FP_i \square FP_i \to FP_i \]
on objects by setting
\[(i + a) \boxtimes_i (i + a' ) = i +  a + a'\]
for all $a,a' \in \N[S-C]$, while for morphisms
\begin{align*} 
    f &\maps i + a \to i + b \\ 
    f' &\maps i +  a' \to i + b'
\end{align*}
we set
\[
	a \boxtimes f' = f' + 1_a, \qquad
     f \boxtimes a' = f + 1_{a'}.
\]

\begin{prop}
\label{prop:monoidal}
    The tensor product $\boxtimes_i$ makes $FP_i$ into a strict premonoidal category.
\end{prop}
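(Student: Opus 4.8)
The plan is to lean entirely on the universal property of the funny tensor product, set up just before the statement, to reduce everything to a few strict equalities in the commutative monoidal category $FP$. The only genuinely conceptual point is to resist defining $\boxtimes_i$ directly on the product $FP_i \times FP_i$: the interchange law really does fail (this is exactly the displayed inequality $(1 \otimes_i f')\circ(f \otimes_i 1) \ne (f \otimes_i 1)\circ(1 \otimes_i f')$ preceding the statement), so it is the pushout $FP_i \square FP_i$, not the product, that makes the construction legitimate, and the premonoidal (rather than monoidal) conclusion is the best one can hope for.

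First I would check that the stated data do define a functor $\boxtimes_i \maps FP_i \square FP_i \to FP_i$. Since $FP_i \square FP_i$ is the pushout of $(FP_i)_0 \times FP_i \leftarrow (FP_i)_0 \times (FP_i)_0 \rightarrow FP_i \times (FP_i)_0$, it suffices by the universal property to give functors $F_1 \maps (FP_i)_0 \times FP_i \to FP_i$ and $F_2 \maps FP_i \times (FP_i)_0 \to FP_i$ agreeing on $(FP_i)_0 \times (FP_i)_0$. I take $F_1$ to send $(i+a, i+a')$ to $i+a+a'$ and $(1_{i+a}, f')$ to $f' + 1_a$; this is a functor because $+$ on morphisms of $FP$ preserves identities ($1_{i+a'} + 1_a = 1_{i+a+a'}$, using that $FP$ is commutative so $i+a'+a = i+a+a'$) and composition ($(g'+1_a)\circ(f'+1_a) = (g'\circ f') + 1_a$, using bifunctoriality of $+$ and $1_a\circ 1_a = 1_a$). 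Define $F_2$ symmetrically by $(f, 1_{i+a'}) \mapsto f + 1_{a'}$. They agree on $(FP_i)_0 \times (FP_i)_0$: both send objects $(i+a,i+a')$ to $i+a+a'$, and the only morphisms there are identities, sent to identities. This produces the required functor $\boxtimes_i$, and, crucially, imposes no commutativity on the interchange square.

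Next I would verify strict associativity, namely that the two composites $FP_i \square FP_i \square FP_i \to FP_i$ obtained from $\boxtimes_i$ coincide. On objects both send $(i+a,i+a',i+a'')$ to $i+a+a'+a''$, using strict associativity of $+$ in $FP$. A functor out of the iterated funny tensor product is determined by its values on morphisms that are identities in all but one slot, so it is enough to treat $f \boxtimes y \boxtimes z$, $x \boxtimes f \boxtimes z$, $x \boxtimes y \boxtimes f$; each reduces to an equation of the shape $(f+1_{a'})+1_{a''} = f + 1_{a'+a''} = f + (1_{a'}+1_{a''})$, which holds since $+$ on morphisms of $FP$ is strictly associative and $1_{a'}+1_{a''} = 1_{a'+a''}$.

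Finally I would check that $i$ (that is, $i+0$) is a strict two-sided unit. Since $0 \in \N[S]$ is the monoidal unit of $FP$, we get $i \boxtimes_i (i+a') = i+0+a' = i+a'$ and $i \boxtimes_i f' = 0 \boxtimes_i f' = f' + 1_0 = f'$, so $i \boxtimes_i -$ is literally the identity functor on $FP_i$; symmetrically $- \boxtimes_i i$ is the identity functor. Hence $\boxtimes_i$ satisfies the associative and unit laws of a strict premonoidal structure, completing the proof. I do not expect any step to be a real obstacle; the only place warranting care is the bookkeeping with the pushout — making sure $F_1$ and $F_2$ are honest functors and that they glue — after which associativity and the unit laws are purely formal consequences of the strictness of $FP$ as a commutative monoidal category.
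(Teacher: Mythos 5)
Your proof is correct, but it takes a different route from the paper. The paper's proof of this proposition is essentially a citation: it observes that $FP_i$ with $\boxtimes_i$ is an instance of Power and Robinson's ``linear state passing'' construction, which produces a premonoidal category $C_i$ from any object $i$ of a symmetric monoidal category $C$, and then notes that commutativity of $FP$ forces all coherence isomorphisms to be identities, so the result is strict. You instead carry out the direct verification that the paper waves at with ``this can be checked directly'': you build $\boxtimes_i$ as the functor induced by the pushout defining $FP_i \square FP_i$, checking that the two partial functors $F_1, F_2$ are well defined (which quietly but correctly uses commutativity of $+$ on objects so that $(i+a')+a = i+a+a'$) and agree on $(FP_i)_0 \times (FP_i)_0$, and then reduce strict associativity and unitality to strict associativity, unitality, and bifunctoriality of $+$ in the commutative monoidal category $FP$. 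Your emphasis that the interchange square genuinely fails to commute --- so that the pushout, not the product, is the only legitimate domain for $\boxtimes_i$ --- is exactly the right conceptual point, and it matches the paper's motivating discussion of the two failed tensor products $\otimes_i$ and ${}_i\otimes$. What the paper's approach buys is generality (the construction works for any object of any symmetric monoidal category, with the strictness isolated as a consequence of commutativity); what yours buys is self-containedness and an explicit accounting of where each strict equality comes from. Both are complete proofs.
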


\begin{proof}
This can be checked directly, but this is also a special case of a construction in Power and Robinson's paper on premonoidal categories \cite[Ex.\ 3.4]{PowerRobinson}.  They describe a construction, sometimes called `linear state passing' \cite{MogelbergStaton}, that takes any object $i$ in any symmetric monoidal category $C$ and yields a premonoidal category $C_i$ where objects are of the form $i \otimes c$ for $c \in C$ and morphisms are morphisms in $C$ of the form $f \maps i \otimes c \to i \otimes c'$.  We are considering the special case where $C = FP$, and because $FP$ is commutative monoidal the resulting premonoidal category is strict: all the coherence isomorphisms are identities.
\iffalse
Using the description of $FP_i \square FP_i$ as a pushout, to show $\boxtimes_i
\maps FP_i \square FP_i \to FP_i$ is a well-defined functor it suffices to check these equations: 
\begin{align*}
   s(f \boxtimes_i 1_{i+a'}) &= s(f) \boxtimes_i a'  \\  
     t(f \boxtimes_i 1_{i+a'}) &= t(f) \boxtimes_i a' \\
     (f \circ g) \boxtimes_i 1_{i+a'} &= (f \boxtimes_i 1_{i+a'}) \circ (g \boxtimes_i 1_{i+a'}) \\
     1_{(i+a) \boxtimes_i (i+a')} &= 1_{i+a} \boxtimes 1_{i+a'}
\end{align*}
and the analogous equations with the factors switched.    To check associativity of $\boxtimes_i$, given morphisms
\begin{align*}
        f &\maps i + a \to i + b
        \\
        f' &\maps i + a' \to i + b'
        \\
        f'' &\maps i + a'' \to i + b''
    \end{align*}
it suffices to check that
\begin{align*}
(a \boxtimes_i a') \boxtimes_i a'' &= a \boxtimes_i (a' \boxtimes_i a'')  \\ 
(f \boxtimes_i a') \boxtimes_i a'' &= f \boxtimes_i (a' \boxtimes_i a'') \\
(a \boxtimes_i f') \boxtimes_i a'' &= a \boxtimes_i (f' \boxtimes_i a'')  \\ 
(a \boxtimes_i a') \boxtimes_i f'' &= a \boxtimes_i (a' \boxtimes_i f'')  .
\end{align*}
To complete the proof it suffices to show that the object $i \in FP_i$ serves as a left
and right unit for $\boxtimes_i$, which amounts to checking these equations:
\[   i \boxtimes_i (i+a) = i+a = (i+a) \boxtimes_i i \]
\[   i \boxtimes f = f = f \boxtimes_i i .\]
All these calculations are straightforward.
\fi
\end{proof}

Finally, we show that the tensor products $\boxtimes_i$ on the categories $FP_i$ let us lift our network model $G$ from $\Cat$ to the category of strict premonoidal categories.

\begin{defn}
Let $\PreMonCat$ be the category of strict premonoidal categories and \define{strict premonoidal functors}, meaning functors between strict premonoidal categories that strictly preserve the tensor product.  Let $U \maps \PreMonCat \to \Cat$ denote the forgetful functor which sends a strict premonoidal category to its underlying category.
\end{defn}

\begin{thm}
\label{thm:lift}
    The network model $G \maps \S(C) \to \Cat$ lifts to a functor $\hat G \maps \S(C) \to \PreMonCat$:
    \[\begin{tikzcd}
        &
        \PreMonCat
        \arrow[d, "U"]
        \\
        \S(C)
        \arrow[r, "G", swap]
        \arrow[ur, "\hat G"]
        &
        \Cat
    \end{tikzcd}\]
    where $\hat G(x) = FP_{p(x)}$ with the strict premonoidal structure described in Prop.~\ref{prop:monoidal}.
\end{thm}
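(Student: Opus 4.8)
The plan is to unwind the definitions: the assignment $\hat G$ is essentially forced on objects and morphisms, and all that remains is to check that it lands in $\PreMonCat$, that it is functorial, and that it composes with $U$ to recover $G$. Almost all of the real work has already been done, in \cref{prop:monoidal} (well-definedness of $\boxtimes_i$) and \cref{thm:petri_network_model} (that $G$ is a genuine lax symmetric monoidal functor).

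On objects, define $\hat G(x) = FP_{p(x)}$ equipped with the tensor product $\boxtimes_{p(x)}$ and unit object $p(x)$. By \cref{prop:monoidal} this is a strict premonoidal category, so $\hat G(x)$ is a well-defined object of $\PreMonCat$, and its underlying category is $FP_{p(x)} = G(x)$, i.e.\ $U\hat G(x) = G(x)$.

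On morphisms, recall that a morphism $\sigma \maps c_1 \otimes \cdots \otimes c_n \to c'_1 \otimes \cdots \otimes c'_n$ in $\SC$ is a permutation $\sigma \in S_n$ with $c'_{\sigma(i)} = c_i$ for all $i$; in particular the two objects have the same underlying multiset of colours, so $p$ sends both to the same element of $\N[C]$. Hence $\hat G$ takes the source and target of $\sigma$ to \emph{literally the same} strict premonoidal category, since the construction $i \mapsto (FP_i, \boxtimes_i)$ depends only on $i \in \N[C]$. We may therefore set $\hat G(\sigma)$ to be the identity functor on $FP_{p(x)}$, which trivially strictly preserves $\boxtimes$ and the unit (verifying $H(x \boxtimes f) = Hx \boxtimes Hf$ and $H(f \boxtimes y) = Hf \boxtimes Hy$ reduces to $f = f$), so it is a strict premonoidal functor. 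Since $G(\sigma)$ is likewise an identity functor, $U\hat G(\sigma) = G(\sigma)$; functoriality of $\hat G$ then follows from that of $G$, as $\hat G$ sends every morphism of $\SC$ to an identity. Combining the object and morphism statements gives $U\hat G = G$, and $\hat G(x) = FP_{p(x)}$ carries the premonoidal structure of \cref{prop:monoidal}, as claimed.

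The only point that deserves a moment's attention is the observation that a morphism of $\SC$ cannot alter the underlying multiset of vertex colours — immediate from the description of $\SC$ recalled before \cref{prop:free} — so that $p$, and with it the assignment $x \mapsto (FP_{p(x)}, \boxtimes_{p(x)})$, sends every morphism of $\SC$ to a genuine identity rather than merely an isomorphism. Once this is recorded the premonoidal lift is completely forced; the substantive content lies upstream, in the well-definedness of $\boxtimes_i$ via the linear state-passing construction (\cref{prop:monoidal}) and in \cref{thm:petri_network_model}.
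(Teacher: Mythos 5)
Your proposal is correct and follows exactly the same route as the paper's own (one-sentence) proof: since $G$ sends every morphism of $\SC$ to an identity functor, $\hat G$ must do so too, and identity functors are trivially strict premonoidal, with all substantive content residing in \cref{prop:monoidal}. Your version merely spells out the observation that $p$ sends source and target of any morphism of $\SC$ to the same element of $\N[C]$, which the paper leaves implicit.
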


\begin{proof}
    Since $G$ sends each morphism in $\S(C)$ to an identity functor, so must $\hat G$. 
\end{proof}
}

{\ssp 
\setcounter{chapter}{4}
\chapter{Monoidal Grothendieck Construction}
\label{ch:MonGroth}

\section{Introduction}

The Grothendieck construction \cite{SGAI} exhibits one of the most fundamental relations in category theory, namely the equivalence between contravariant pseudofunctors into $\Cat$ and fibrations. In previous chapters, we have described how the to construct a \emph{total category}, denoted $\int F$, from a functor of the form $F \maps \X\op \to \Cat$. Actually, we really could have been using pseudofunctors, since $\Cat$ is more naturally thought of as a 2-category. We refer to pseudofunctors of the form $F \maps \X\op \to \Cat$ as \emph{indexed categories}. The construction of $\int F$ from a given indexed category essentially forgets the distinction between the categories $Fx$ for $x \in \X$, and incorporates the functors $Ff \maps Fy \to Fx$ as maps between the objects of $Fy$ and $Fx$. The distinction between these categories could be remembered via a \emph{fibration}, a special sort of functor $P \maps \int F \to \X$, which tells you how to take preimage categories of the objects, $P\inv(x)$, and turn certain maps in $\int F$ into functors between the preimage categories. For a general fibration $P \maps \A \to \X$, the category $\X$ is called the \emph{base category} and the category $\A$ is called the \emph{total category}. For an object $x \in \X$, the preimage category $P\inv(x)$ is called the \emph{fibre} of $P$ over $x$. A fibration is precisely what is needed to reconstruct all the data in the indexed category from its total category. Indeed, the Grothendieck construction gives an equivalence between the 2-categories $\ICat$ of indexed categories, and $\Fib$ of fibrations. This equivalence allows us to move between the worlds of indexed categories and fibred categories, providing access to tools and results from both. We recall the basic theory of fibrations, indexed categories, and the Grothendieck construction in \cref{sec:fibrations}. 

Due to the importance of the Grothendieck construction, it is only natural that one would be interested in extra structure these objects may have, and how the correspondence extends. In particular, a version which handles monoidal structures on the various categories in play could potentially be very useful, as monoidal categories are of central interest in both pure and applied category theory. There are several categories to consider as equipped with monoidal structures in this scenario: fibers $P\inv(x)$ of a fibration $P \maps \A \to \X$, its base category $\X$, its total category $\A$, the indexing category $\X$ of an indexed category $F \maps \X\op \to \Cat$, and the categories $Fx$ indexed by $F$. Of course these options are not really all distinct. The base of the fibrations correspond to the indexing category under the equivalence, and the fibres correspond to the individual categories selected by the indexed category. This boils our options down to two monoidal variants: fibre-wise, and global.

In the first variant---the fibre-wise approach---the fibres are equipped with a monoidal structure, and the reindexing functors are equipped with a strict monoidal structure. The additional structure this gives to the corresponding indexed categories turns them into pseudofunctors into $\Mon\Cat$, which were called \emph{indexed monoidal categories} by Hofstra and de Marchi \cite{DescentForMonads}. In the second variant---the global approach---the total category and the base category of the fibration are each equipped with the structure of a monoidal category, and the fibration is equipped with a strict monoidal structure. The corresponding structure equipped to the related indexed category is a little less obvious. The indexing category is equipped with a monoidal structure as in the fibration side of the picture, and the pseudofunctor is now equipped with the structure of a lax monoidal structure into $\Cat$ with its cartesian structure. We call these \emph{monoidally indexed categories} or just \emph{monoidal indexed categories}.

Both of these variants can be seen as special cases of a much more general phenomenon. Pseudomonoids are a categorification of monoid objects internal to a monoidal category. It would be reasonable to call it ``monoidal category internal to a monoidal 2-category''. We can see both of the monoidal variants of both fibrations and indexed categories described above as examples of pseudomonoids in certain 2-categories of fibrations or indexed categories.

The 2-category $\ICat(\X)$ of indexed categories over a fixed base category has finite products, and thus a cartesian monoidal structure. Pseudomonoids taken with respect to this monoidal structure are precisely pseudofunctors $\X\op \to \Mon\Cat$, i.e.\ the fibre-wise monoidal indexed categories described above. Similarly, the 2-category $\Fib(\X)$ of fibrations over a fixed base category has a cartesian monoidal structure, for which pseudomonoids are precisely the fibre-wise monoidal fibrations described above.

The 2-category $\ICat$ of indexed categories over different base categories has finite products, and thus a cartesian monoidal structure. Pseudomonoids taken with respect to this monoidal structure are precisely lax monoidal pseudofunctors $(\X\op, \otimes) \to (\Cat, \times)$, i.e.\ the global monoidal indexed categories described above. Similarly, the 2-category $\Fib$ of fibrations over different base categories has a cartesian monoidal structure, for which pseudomonoids are precisely the global monoidal fibrations described above.

An immediate consequence of this perspective on these objects is that the Grothendieck construction lifts naturally into both settings. The 2-category of fibre-wise monoidal fibrations is equivalent to the 2-category of fibre-wise monoidal indexed categories, c.f.\ \cref{thm:mainthm}. Similarly, the 2-category of global monoidal fibrations is equivalent to the 2-category of global monoidal indexed categories, c.f.\ \cref{thm:fibrmonGr}.

When $\X$ is cartesian monoidal, a global monoidal structure can be constructed from fibre-wise monoidal data, and vice versa, c.f.\ \cref{thm:fibrewise=global}. We use our high-level perspective to give a new proof of the result of Shulman giving an equivalence between fibre-wise monoidal indexed categories and global monoidal fibrations over cartesian base categories \cite{FramedBicats}.

The fact that the monoidal Grothendieck construction naturally arises in diverse settings is what motivated the theoretical clarification presented here. We gather a few examples in the last section of the chapter to exhibit the various constructions concretely, and we are convinced that many more exist and would benefit from such a viewpoint. The examples include standard (op)fibrations such as the (co)domain (op)fibration and families classified in their monoidal contexts, as well as certain special algebraic cases of interest such as monoid-(co)algebras as objects in monoidal Grothendieck categories. Moreover, global categories of (co)modules for (co)monoids in any monoidal category, as well as (co)modules for (co)monads in monoidal double categories also naturally fit in this context. Finally, certain categorical approaches to systems theory employ algebras for monoidal categories, namely monoidal indexed categories, as their basic compositional tool for nesting of systems; clearly these also fall into place, giving rise to total monoidal categories of systems with new potential to be explored.

In \cref{sec:monfib} and \cref{sec:monicat}, we give the fibre-wise and global monoidal versions of fibrations and indexed categories. In the first version, the fibers are equipped with a monoidal structure. In the second, the base and total categories are monoidal, and the fibration is (strict) monoidal. In \cref{sec:monequiv}, we lift the Grothendieck construction into these monoidal settings as well. In \cref{sec:summary}, we give a detailed description of the monoidal structures given by the correspondences. 

\section{Monoidal Fibres and Monoidal Fibrations}\label{sec:monfib}

We begin by describing the two monoidal variants of fibrations. This requires familiarity with notions such as monoidal 2-categories, pseudomonoids, and the 2-categories $\Fib$ and $\Fib(\X)$. The 2-categories $\Fib$ and $\OpFib$ of (op)fibrations over arbitrary bases, explained in \cref{sec:fibrations}, have a cartesian monoidal structure inherited from $\Cat^\2$. For two fibrations $P$ and $Q$, their product in $\Cat^\2$
\begin{equation}\label{Fib_cart}
    P \times Q \maps \A \times \B \to \X \times \Y
\end{equation}
is also a fibration, where a cartesian lifting is a pair consisting of a $P$-lifting and a $Q$-lifting; similarly for opfibrations. The monoidal unit is the trivial (op)fibration $1_\1 \maps \1 \to \1$. Since the monoidal structure is cartesian, they are both symmetric monoidal 2-categories. We refer to a pseudomonoid in $(\Fib, \times, 1_\1)$ as a \define{monoidal fibration}. By the following result, this aligns with the common notion of monoidal fibration \cite{FramedBicats}.

\begin{prop}\label{def:monoidal_fibration}
    A monoidal fibration $P \maps \A \to \X$ is a fibration for which both the total $\A$ and base category $\X$ are monoidal, $P$ is a strict monoidal functor and the tensor product $\otimes_\A$ of $\A$ preserves cartesian liftings.
\end{prop}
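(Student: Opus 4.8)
The plan is to unwind the definition of a pseudomonoid in the symmetric monoidal 2-category $(\Fib, \times, 1_\1)$ and verify that this data is exactly what the statement describes. Recall that a pseudomonoid in a monoidal 2-category consists of an object $M$, a multiplication 1-cell $m \maps M \otimes M \to M$, a unit 1-cell $u \maps I \to M$, and invertible associativity and unitality 2-cells subject to pentagon and triangle coherence. So first I would set $M = (P \maps \A \to \X)$ and spell out each piece of this structure inside $\Fib$, using three facts already recorded: that the monoidal product of fibrations is the product functor $P \times Q \maps \A \times \B \to \X \times \Y$ of \cref{Fib_cart}, that a morphism in $\A \times \A$ is $(P \times P)$-cartesian precisely when both of its components are $P$-cartesian, and that a 1-cell of $\Fib$ is a strictly commuting square whose upper functor preserves cartesian arrows while a 2-cell is a pair of natural transformations compatible via whiskering with the fibrations.

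Unwinding the multiplication 1-cell $m \maps P \times P \to P$ gives a pair of functors $\otimes_\A \maps \A \times \A \to \A$ and $\otimes_\X \maps \X \times \X \to \X$ with $P \circ \otimes_\A = \otimes_\X \circ (P \times P)$ on the nose, the condition that the upper functor $\otimes_\A$ preserve cartesian arrows of $P \times P$ being, by the characterization of cartesian arrows in a product fibration, exactly the requirement that $\otimes_\A$ send a pair of $P$-cartesian morphisms to a $P$-cartesian morphism, i.e.\ that $\otimes_\A$ preserve cartesian liftings. Likewise the unit 1-cell $u \maps 1_\1 \to P$ amounts to objects $I_\A \in \A$ and $I_\X \in \X$ with $P(I_\A) = I_\X$, with the cartesian-preservation clause on $u$ automatic since $\1$ has only an identity morphism. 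The associativity and unitality 2-cells, being 2-cells of $\Fib$, are pairs $(\assoc_\A, \assoc_\X)$, $(\leftor_\A, \leftor_\X)$, $(\rightor_\A, \rightor_\X)$ of invertible natural transformations whose components satisfy $P\big((\assoc_\A)_{a,b,c}\big) = (\assoc_\X)_{Pa,Pb,Pc}$ and the analogous identities for the unitors; combined with the strict commutativities above, this says precisely that $P$ strictly preserves tensor, unit, associator and unitors, that is, $P$ is a strict monoidal functor. Finally, the pentagon and triangle coherence for the pseudomonoid is in each case a pair of equations, one in $\A$ and one in $\X$, which are exactly the pentagon and triangle axioms making $(\A, \otimes_\A, I_\A, \assoc_\A, \leftor_\A, \rightor_\A)$ and $(\X, \otimes_\X, I_\X, \assoc_\X, \leftor_\X, \rightor_\X)$ monoidal categories.

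For the converse I would check that a fibration $P$ with the three stated properties reassembles into a pseudomonoid: the monoidal structures on $\A$ and $\X$ furnish the candidate 1-cells and 2-cells; strict monoidality of $P$ guarantees that the relevant squares commute strictly and that the 2-cell compatibilities hold, so these are genuine cells of $\Fib$; preservation of cartesian liftings by $\otimes_\A$ is exactly what makes the multiplication a legitimate 1-cell of $\Fib$; and the monoidal-category axioms of $\A$ and $\X$ provide the pentagon and triangle of the pseudomonoid. The whole argument is a component-by-component translation, so the only place requiring care -- the ``main obstacle,'' such as it is -- is the identification of $(P \times P)$-cartesian morphisms with pairwise $P$-cartesian morphisms and the resulting equivalence between ``the upper functor of $m$ preserves cartesian arrows'' and the literal assertion ``$\otimes_\A$ preserves cartesian liftings,'' together with noting that $\otimes_\X$, being merely the lower functor of a 1-cell in $\Fib$, carries no cartesian-preservation constraint. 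Everything else is routine bookkeeping with the pseudomonoid axioms.
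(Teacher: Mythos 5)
Your proposal is correct and follows essentially the same route as the paper: unwind the pseudomonoid data in $(\Fib,\times,1_\1)$, use the fact that a $(P\times P)$-cartesian morphism is a pair of $P$-cartesian morphisms to translate the fibred-1-cell condition on the multiplication into ``$\otimes_\A$ preserves cartesian liftings,'' and read the strictly commuting squares plus the componentwise associativity/unitality 2-cells as the monoidal structures on $\A$ and $\X$ together with strict monoidality of $P$. The paper's proof is just a terser version of the same component-by-component translation, leaving the converse reassembly implicit where you spell it out.
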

\begin{proof}
    The multiplication and unit are fibred 1-cells $\mlt = (\otimes_\A, \otimes_\X) \maps P \times  P \to  P$ and $\uni = (I_\A, I_\X) \maps \1 \to  P$
    displayed as follows.
    \begin{equation} \label{multunitmonoidalfibr}
    \begin{tikzcd}
        \A \times \A
        \arrow[r, "\otimes_\A"]
        \arrow[d, swap, "P \times P"]
        &
        \A
        \arrow[d, "P"]
        &\text{and}&
        \1
        \arrow[r, "I_\A"]
        \arrow[d, swap, "1_\1"]
        &
        \A
        \arrow[d, "P"]
        \\
        \X \times \X 
        \arrow[r, swap, "\otimes_\X"]
        &
        \X
        &&
        \1
        \arrow[r, swap, "I_\X"]
        &
        \X
    \end{tikzcd}
    \end{equation}
    A morphism $(\phi_1, \phi_2)$ in $\A \times \A$ is $P\times P$-cartesian if and only if $\phi_1$ and $\phi_2$ are both $P$-cartesian. The condition of $(\otimes_\A, \otimes_\X)$ forming a fibred 1-cell tells us precisely that $\phi_1 \otimes_\A \phi_2$ is $P$-cartesian. The pieces of associativity and unitality 2-cells corresponding to $\A$ and to $\X$ give precisely the associativity and unitality structures for each to be given the structure of a monoidal category. The functor $P$ is strict with respect to these monoidal structures on $\A$ and $\X$ due to the fact that the diagrams above commute.
\end{proof}

A \define{monoidal fibred 1-cell} between two monoidal fibrations $P \maps \A \to \X$ and $Q \maps \B \to \Y$ is a (strong) morphism of pseudomonoids between them, as defined in \cref{app:Monoidal2cats}. \begin{prop}\label{prop:monoidalfibred1cell}
    A monoidal fibred 1-cell between two monoidal fibrations $P$ and $Q$ is a fibred 1-cell $(H, F)$ where both functors are monoidal, $(H, \phi, \phi_0)$ and $(F, \psi, \psi_0)$, such that $Q (\phi_{a, b})=\psi_{ P a, P b}$ and $Q \phi_0=\psi_0$.
\end{prop}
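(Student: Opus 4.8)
The plan is to unwind both sides of the claimed equivalence---``strong morphism of pseudomonoids in $\Fib$'' versus ``fibred 1-cell that is componentwise monoidal and compatible over the base''---and check they carry the same data. First I would recall from \cref{app:Monoidal2cats} that a strong morphism of pseudomonoids $(P,\mlt_P,\uni_P) \to (Q,\mlt_Q,\uni_Q)$ in a monoidal 2-category consists of a 1-cell $(H,F) \maps P \to Q$ in $\Fib$ together with invertible 2-cells
\[
\begin{tikzcd}
P \times P \arrow[r, "(H,F)\times(H,F)"] \arrow[d, swap, "\mlt_P"] & Q \times Q \arrow[d, "\mlt_Q"] \\
P \arrow[r, swap, "(H,F)"] & Q
\end{tikzcd}
\qquad
\begin{tikzcd}
\1 \arrow[r, "\uni_P"] \arrow[rd, swap, "\uni_Q"] & P \arrow[d, "(H,F)"] \\
& Q
\end{tikzcd}
\]
subject to the three coherence axioms relating these 2-cells to the associativity and unitality 2-cells of the two pseudomonoids. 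The bulk of the argument is then to translate each piece through the description of $\mlt_P = (\otimes_\A,\otimes_\X)$, $\uni_P = (I_\A,I_\X)$ from \cref{def:monoidal_fibration}: a 2-cell between fibred 1-cells is a pair of natural transformations, one living over $\X$-side and one over the $\Y$-side, compatible with the projections $P$ and $Q$.

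Concretely, the first key step is to observe that a 2-cell filling the multiplication square above is exactly a pair $(\phi, \psi)$ where $\phi$ is a natural isomorphism with components $\phi_{a,b} \maps H(a \otimes_\A b) \simrightarrow Ha \otimes_\B Hb$ and $\psi$ is a natural isomorphism with components $\psi_{x,y} \maps F(x \otimes_\X y) \simrightarrow Fx \otimes_\Y Fy$, and that the condition for this pair to constitute a 2-cell \emph{in $\Fib$} (i.e.\ to respect the two fibrations) is precisely the compatibility equation $Q(\phi_{a,b}) = \psi_{Pa,Pb}$. Similarly, a 2-cell filling the unit triangle is a pair $(\phi_0,\psi_0)$ with $\phi_0 \maps H(I_\A) \simrightarrow I_\B$ and $\psi_0 \maps F(I_\X) \simrightarrow I_\Y$ satisfying $Q(\phi_0) = \psi_0$. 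The second key step is to match the three pseudomonoid-morphism coherence axioms with the monoidal-functor axioms: the associativity coherence for the morphism of pseudomonoids, once one plugs in that the associativity 2-cells of $P$ and $Q$ are the pairs of associators of $(\A,\X)$ and $(\B,\Y)$, decomposes into the hexagon/pentagon-type axiom for $(H,\phi,\phi_0)$ on the $\A$--$\B$ component and the analogous axiom for $(F,\psi,\psi_0)$ on the $\X$--$\Y$ component; likewise the two unit coherences decompose into the left/right unit axioms for $H$ and for $F$ separately. This decomposition works because every structure 2-cell in $\Fib$ is literally a pair of 2-cells in $\Cat$, so each axiom in $\Fib$ is equivalent to the conjunction of its two $\Cat$-components. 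Finally I would note the converse direction is immediate: given a fibred 1-cell $(H,F)$ with monoidal structures $(H,\phi,\phi_0)$, $(F,\psi,\psi_0)$ satisfying $Q\phi_{a,b} = \psi_{Pa,Pb}$ and $Q\phi_0 = \psi_0$, the pairs $(\phi,\psi)$ and $(\phi_0,\psi_0)$ are 2-cells in $\Fib$ by the compatibility equations, and they satisfy the pseudomonoid-morphism axioms because their components do; one also should check that $(H,F)$ being a morphism of fibrations already guarantees $H$ sends $P$-cartesian maps to $Q$-cartesian maps, which is part of being a 1-cell in $\Fib$, so no extra hypothesis is needed.

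The main obstacle I anticipate is purely bookkeeping: carefully verifying that the associativity coherence axiom for a strong morphism of pseudomonoids, when the ambient 2-cells are the (trivial-looking but still present) associators of the product monoidal structure on $\Fib$, genuinely separates into the two independent monoidal-functor coherence diagrams without any cross-terms linking the $\A$--$\B$ data to the $\X$--$\Y$ data. The reason this is safe is that $\mlt_P$, $\uni_P$ and all the structural 2-cells of the pseudomonoid $P$ are defined \emph{componentwise} (see the proof of \cref{def:monoidal_fibration}), so the whole diagram lives in $\Fib \subseteq \Cat^\2$ and can be projected to each factor; but writing this out cleanly, and being careful about the direction and invertibility of each 2-cell, is where the actual work lies. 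I would present this as a short verification rather than a long calculation, citing \cref{app:Monoidal2cats} for the definition of strong morphism of pseudomonoids and \cref{def:monoidal_fibration} for the componentwise description of the pseudomonoid structure on a monoidal fibration, and leaving the axiom-chase itself as routine.
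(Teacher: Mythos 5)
Your proposal is correct and follows essentially the same route as the paper's proof: unpack the strong-morphism-of-pseudomonoids data in $\Fib$, observe that the filling 2-cells are fibred 2-cells (pairs of natural isomorphisms with the top one lying above the bottom one, which is exactly $Q(\phi_{a,b})=\psi_{Pa,Pb}$ and $Q\phi_0=\psi_0$), and note that the coherence axioms project componentwise onto the strong monoidal functor axioms for $H$ and $F$. The only cosmetic difference is that you orient $\phi_{a,b}$ in the oplax direction $H(a\otimes b)\to Ha\otimes Hb$ whereas the paper (following \cref{eq:laxmorphism}) uses $Ha\otimes Hb\to H(a\otimes b)$; since the 2-cells are invertible this is immaterial.
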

\begin{proof}
    A monoidal fibred 1-cell amounts to a fibred 1-cell, i.e.\ a commutative square
    \begin{equation}\label{eq:HF}
    \begin{tikzcd}
        \A    
        \arrow[r, "H"]    
        \arrow[d, "P"'] 
        &  
        \B    
        \arrow[d, "Q"]    
        \\
        \X  
        \arrow[r, "F"'] 
        &  
        \Y  
    \end{tikzcd}
    \end{equation}
    where $H$ preserves cartesian liftings, 
    along with invertible 2-cells \cref{eq:laxmorphism} in $\Fib$ satisfying \cref{eq:axiomslaxmorphism}. By \cref{eq:fibred2cell}, these are fibred 2-cells
    \begin{displaymath}
    \begin{tikzcd}
        &&
        \B \times \B
        \arrow[dr, bend left, "\otimes_\B"]
        \arrow[ddd, "Q"]
        \arrow[ddl, Rightarrow, swap, "\phi"]
        \\
        \A \times \A
        \arrow[ddd, swap, "P \times P"]
        \arrow[urr, bend left, "H \times H"]
        \arrow[dr, bend right, "\otimes_\A"]
        &&&
        \B
        \arrow[ddd, "Q"]
        \\&
        \A
        \arrow[urr, bend right, -, white, line width = 5]
        \arrow[urr, swap, bend right, "H", pos = 0.6]
        \\&&
        \Y \times \Y
        \arrow[dr, bend left, swap, "\otimes_\Y"]
        \arrow[ddl, Rightarrow, "\psi"]
        \\
        \X \times \X
        \arrow[urr, bend left, "F \times F"]
        \arrow[dr, bend right, swap, "\otimes_\X"]
        &&&
        \Y
        \\&
        \X
        \arrow[urr, bend right, swap, "F"]
        \arrow[uuu, -, white, line width = 5]
        \arrow[uuu, leftarrow, "P"]
    \end{tikzcd}
    \qquad\qquad
    \begin{tikzcd}
        &&
        \\
        1
        \arrow[dd, swap, equal]
        \arrow[rrr, bend left, "I_\B"]
        \arrow[rrr, bend left, swap, phantom, ""{name = IB}, pos = 0.7]
        \arrow[dr, bend right, "I_\A"]
        &&&
        \B
        \arrow[dd, "Q"]
        \\&
        \A
        \arrow[urr, bend right, -, white, line width = 5]
        \arrow[urr, swap, bend right, "H", pos = 0.6]
        \arrow[to = IB, Leftarrow, "\phi_0"]
        \\
        1
        \arrow[rrr, bend left, swap, "I_\Y", pos = 0.6]
        \arrow[rrr, bend left, phantom, swap, ""{name = IY}, pos = 0.7]
        \arrow[dr, bend right, swap, "I_\X"]
        &&&
        \Y
        \\&
        \X
        \arrow[urr, bend right, swap, "F"]
        \arrow[uu, -, white, line width = 5]
        \arrow[uu, leftarrow, "P"]
        \arrow[to = IY, Leftarrow, "\psi_0", swap]
    \end{tikzcd}
    \end{displaymath}
    where $\phi$ and $\psi$ are natural isomorphisms with components
    \begin{displaymath}
        \phi_{a, b} \maps Ha \otimes Hb \xrightarrow{\sim} H(a \otimes b), \quad \psi_{x, y} \maps Fx \otimes Fy \xrightarrow{\sim} F(x \otimes y)
    \end{displaymath}
    such that $\phi$ is above $\psi$, i.e.\ the following diagram commutes:
    \begin{displaymath}
    \begin{tikzcd}[column sep=.6in]
        Q (Ha \otimes Hb) 
        \arrow[r, "{ Q \phi_{a, b}}"]
        \arrow[d, equal, "{\cref{multunitmonoidalfibr}}"'] 
        &  
        Q H(a \otimes b) 
        \arrow[d, equal, "{\cref{eq:HF}}"] 
        \\
        Q H a \otimes Q H b 
        \arrow[d, equal, "{\cref{eq:HF}}"' ] 
        & 
        F P(a \otimes b) 
        \arrow[d, equal, "{\cref{multunitmonoidalfibr}}"] 
        \\
        F P a \otimes F P b 
        \arrow[r, "{\psi_{ P a, P b}}"']
        & 
        F( P a \otimes  P b)
    \end{tikzcd}
    \end{displaymath}
    Similarly, $\phi_0$ and $\psi_0$ have single components $\phi_0 \maps I_\B \xrightarrow{\sim} H(I_\A)$ and $ \psi_0 \maps I_\Y \xrightarrow{\sim} F(I_\X)$ such that $Q (\phi_0) = \psi_0$.
    These two conditions in fact say that the identity transformation, a.k.a.\ commutative square \cref{eq:HF} is a monoidal one, as expressed in \cite[12.5]{FramedBicats}.
    The relevant axioms dictate that $(\phi, \phi_0)$ and $(\psi, \psi_0)$ give $H$ and $F$ the structure of strong monoidal functors.
\end{proof}

For lax or oplax morphisms of pseudomonoids in $\Fib$, we obtain appropriate notions of monoidal fibred 1-cells, where the top and bottom functors of \cref{eq:HF} are lax or oplax monoidal respectively.

Finally, a \define{monoidal fibred 2-cell} is a 2-cell between morphisms $(H, F)$ and $(K, G)$ of pseudomonoids $P$, $Q$ in $\Fib$. 

\begin{prop}\label{prop:monfib2cell}
    A monoidal fibred 2-cell between two monoidal fibred 1-cells is an ordinary fibred 2-cell $(\alpha, \beta)$ where both natural transformations are monoidal.
\end{prop}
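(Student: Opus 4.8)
The plan is to unwind the definition of a 2-cell between morphisms of pseudomonoids in the 2-category $\Fib$, exactly as was done for 1-cells in the proof of \cref{prop:monoidalfibred1cell}, and to show it reduces to a pair of natural transformations that are separately monoidal and compatible over the base. First I would recall from \cref{app:Monoidal2cats} what a 2-cell $\Theta \maps (H,F) \To (K,G)$ between morphisms of pseudomonoids consists of: it is a 2-cell in the ambient 2-category (here $\Fib$) between the underlying 1-cells, subject to the two coherence conditions asserting compatibility with the multiplication 2-cells $\phi, \phi'$ and the unit 2-cells $\phi_0, \phi_0'$ of the two morphisms. Since a 2-cell in $\Fib$ is precisely a fibred 2-cell, by the description recalled around \cref{eq:fibred2cell}, such a $\Theta$ is a pair $(\alpha, \beta)$ where $\alpha \maps H \To K$ and $\beta \maps F \To G$ are natural transformations with $Q\alpha = \beta P$ (i.e. $\alpha$ lies above $\beta$); this is just the content of being a fibred 2-cell and requires no new argument beyond citing the earlier material.

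Next I would translate the two pseudomonoid-morphism-2-cell axioms into elementary terms. The axiom relating $\Theta$ to the multiplications says that the pasting of $\alpha \times \alpha$ (respectively $\beta \times \beta$) with $\phi$ equals the pasting of $\phi'$ with $\alpha$ (respectively $\beta$); componentwise on the $\A$-part this is exactly the commuting square
\[
\begin{tikzcd}
Ha \otimes Hb \arrow[r, "\phi_{a,b}"] \arrow[d, "\alpha_a \otimes \alpha_b"'] & H(a\otimes b) \arrow[d, "\alpha_{a\otimes b}"] \\
Ka \otimes Kb \arrow[r, "\phi'_{a,b}"'] & K(a\otimes b)
\end{tikzcd}
\]
which is precisely the statement that $\alpha \maps (H,\phi,\phi_0) \To (K,\phi',\phi_0')$ is a monoidal natural transformation, together with the analogous square for the unit laxators $\phi_0, \phi_0'$. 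The $\X$-part of the same pasting equation yields the corresponding squares for $\beta$, $\psi_{x,y}$, $\psi'_{x,y}$ and $\psi_0, \psi_0'$, i.e. $\beta$ is a monoidal natural transformation between $(F,\psi,\psi_0)$ and $(G,\psi',\psi_0')$. The compatibility condition $Q\alpha = \beta P$ inherited from being a fibred 2-cell then guarantees these two monoidal structures are compatible over $P$ and $Q$, so no extra condition beyond "both are monoidal" is needed; conversely, given such $(\alpha,\beta)$ one checks directly that the pasting axioms hold, establishing the equivalence in both directions.

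The main obstacle, as in the previous two propositions, is bookkeeping rather than conceptual: one must carefully match the abstract pasting-diagram axioms for a 2-cell between morphisms of pseudomonoids against the fibred 2-cell description, keeping straight which whiskering lands in the $\A$-component and which in the $\X$-component, and verifying that the single ``above'' condition $Q\alpha=\beta P$ is automatically preserved under the monoidality requirement. I expect the proof to be short, proceeding exactly parallel to \cref{prop:monoidalfibred1cell}: state the unwinding, cite \cref{eq:fibred2cell} for the fibred 2-cell data, observe the two coherence axioms become the monoidal-natural-transformation squares for $\alpha$ and $\beta$ respectively, and note these are plainly equivalent to the asserted condition. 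I would also remark that, just as for 1-cells, taking lax or oplax morphisms of pseudomonoids yields lax or oplax monoidal $\alpha$ and $\beta$, so the statement specializes correctly in those variants.
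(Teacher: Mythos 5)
Your proposal is correct and follows essentially the same route as the paper: unwind the 2-cell-between-pseudomonoid-morphisms data in $\Fib$, identify it as a fibred 2-cell via \cref{eq:fibred2cell}, and observe that the coherence axioms \cref{monoidal2cell} are exactly the monoidal naturality squares for the two component transformations. The only discrepancy is notational — the paper labels the transformation over $\A\to\B$ as $\beta$ and the one over $\X\to\Y$ as $\alpha$, the reverse of your convention — which does not affect the argument.
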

\begin{proof}
    Unpacking the definition, we see that a monoidal fibred 2-cell is a fibred 2-cell as described in \cref{sec:fibrations}
    \begin{displaymath}
    \begin{tikzcd}[row sep=.45in, column sep=.7in]
        \A
        \arrow[d, " P"']
        \arrow[r, bend left = 20, "H"]
        \arrow[r, bend right = 20, "K"']
        \arrow[r, phantom, "\Downarrow {\scriptstyle\beta}" description] 
        & \B \arrow[d, " Q "] \\
        \X    \arrow[r, bend left = 20, "F"]
        \arrow[r, bend right = 20, "G"']
        \arrow[r, phantom, "\Downarrow{\scriptstyle\alpha}" description] &  \Y
    \end{tikzcd}
    \end{displaymath}
    satisfying the axioms \cref{monoidal2cell}. These amount to the fact that both $\beta$ and $\alpha$ are monoidal natural transformations between the respective lax monoidal functors.
\end{proof}

We denote by $\PsMon(\Fib)= \MonFib$ the 2-category of monoidal fibrations, monoidal fibred 1-cells and monoidal fibred 2-cells.
By changing the notion of morphisms between pseudomonoids to lax or oplax, we obtain 2-categories $\MonFib_\lax$ and $\MonFib_\opl$.
There are also 2-categories $\BrMonFib$ and $\SymMonFib$ of \define{braided} (resp. \define{symmetric}) \define{monoidal fibrations}, \define{braided} (resp. \define{symmetric}) \define{monoidal fibred 1-cells} and monoidal fibred 2-cells, defined to be $\BrPsMon(\Fib)$ and $\SymPsMon(\Fib)$ respectively; see \cref{prop:BrPsMon}.

Dually, we have appropriate 2-categories of \define{monoidal opfibrations}, \define{monoidal opfibred 1-cells} and \define{monoidal opfibred 2-cells} and their braided and symmetric variations, $\MonOpFib$, $\BrMonOpFib$ and $\SymMonOpFib$. All the structures are constructed dually, where a monoidal opfibration, namely
a pseudomonoid in the cartesian monoidal $(\OpFib, \times, 1_\1)$, 
is a strict monoidal functor such that the tensor product of the total category preserves cocartesian liftings.

All the above 2-categories have sub-2-categories of monoidal (op)fibrations over a fixed monoidal base $(\X, \otimes, I)$, e.g.\ $\MonFib(\X)$ and $\MonOpFib(\X)$. The morphisms are \define{monoid\-al (op)fibred functors}, i.e.\ fibred 1-cells of the form $(H, 1_\X)$ with $H$ monoidal, and the 2-cells are \define{monoidal (op)fibred natural transformations}, i.e.\ fibred 2-cells of the form $(\beta, 1_{1_\X})$ with $\beta$ monoidal. These 2-categories correspond to the `global' monoidal part of the story.

Moreover, the above constructions can be adjusted accordingly to the context of split fibrations. Explicitly, the 2-category $\PsMon(\Fib_\spl)=\MonFib_\spl$ has as objects \define{monoidal split fibrations}, namely split fibrations $P \maps \A\to\X$ between monoidal categories which are strict monoidal functors and $\otimes_\A$ strictly preserves cartesian liftings (compare to \cref{def:monoidal_fibration}). Furthermore, the hom-categories $\MonFib_\spl(P, Q)$ between monoidal split fibrations are full subcategories of $\MonFib(P, Q)$ spanned by the monoidal fibred 1-cells which are split as fibred 1-cells, namely $(H, F)$ as in \cref{prop:monoidalfibred1cell} where $H$ strictly preserves cartesian liftings.

We end this section by considering a different monoidal object in the context of (op)fibra\-tions, starting over from the usual 2-categories of (op)fibrations over a fixed base $\X$, (op)fibred functor and (op)fibred natural transformations $\Fib(\X)$ and $\OpFib(\X)$. Notice that contrary to the earlier devopment, there is no monoidal structure on $\X$. Both these 2-categories are also cartesian monoidal, but in a different manner than $\Fib$ and $\OpFib$, due to the cartesian monoidal structure of $\Cat/\X$; see for example \cite[1.7.4]{Jacobs}. Explicitly, for fibrations $P \maps \A\to\X$ and $Q \maps \B \to \X$, their tensor product $P \boxtimes Q$ is given by any of the two equal functors to $\X$ from the following pullback
\begin{equation}\label{Fib_X_cart}
\begin{tikzcd}[column sep=.5in, row sep=.5in]   \A \times_\X \B
        \arrow[r]
        \arrow[d]
        \arrow[dr, phantom, very near start, "\lrcorner"]\arrow[dr, dashed, bend left, "P\boxtimes Q"description]
        & 
        \A
        \arrow[d, "P"] 
        \\
        \B
        \arrow[r, "Q"'] 
        & 
        \X
    \end{tikzcd}
\end{equation}
since fibrations are closed under pullbacks and of course composition. The monoidal unit is $1_\X \maps \X \to \X$.

A pseudomonoid in $(\Fib(\X), \boxtimes, 1_\X)$ is an ordinary fibration $P \maps \A\to\X$ equip\-ped with two fibred functors $(\mlt, 1_\X) \maps P\boxtimes P\to P$ and $(\uni, 1_\X) \maps 1_\X\to P$ displayed as
\begin{equation}\label{eq:fibrewisetensor}
    \begin{tikzcd}
        \A \times_\X \A 
        \arrow[dr, "P\boxtimes P"']
        \arrow[rr, "\mlt"] 
        && 
        \A
        \arrow[dl, "P"] 
        \\
        & 
        \X 
        &
    \end{tikzcd}\qquad
    \begin{tikzcd}
        \X
        \arrow[rr, "\uni"]
        \arrow[dr, "1_X"'] 
        && 
        \A
        \arrow[dl, "P"] 
        \\
        & 
        \X 
        &
        \end{tikzcd}
    \end{equation}
along with invertible fibred 2-cells satisfying the usual axioms. 
In more detail, the pullback $\A \times_\X \A$  consists of pairs of objects of $\A$ which are in the same fibre of $P$, and $P\boxtimes P$ sends such a pair to their underlying object defining their fibre. 
The functor $\mlt$ maps any $(a, b)\in\A_x$ to some $m(a, b):=a\otimes_x b\in\A_x$ and the map $\uni$ sends an object $x \in \X$ to a chosen one, $I_x$, in its fibre. The invertible 2-cells and the axioms guarantee that these maps define a monoidal structure on each fibre $\A_x$, providing the associativity, left and right unitors. The fact that $\mlt$ and $\uni$ preserve cartesian liftings translate into a strong monoidal structure on the reindexing functors: for any $f \maps x\to y$ and $a, b\in\A_y$, $f^*a\otimes_x f^*b\cong f^*(a\otimes_y b)$ and $I_y\cong f^*(I_x)$.

A (lax) morphism between two such fibrations is a fibred functor \cref{eq:fibredfunctor} such that the induced functors $H_x \maps \A_x \to \B_x$ between the fibres as in \cref{eq:functorbetweenfibres} are (lax) monoidal, whereas a 2-cell between them is a fibred natural transformation $\beta \maps H\Rightarrow K$ \cref{eq:fibrednaturaltrans} which is monoidal when restricted to the fibers, $\beta_x|_{\A_x} \maps H_x\Rightarrow K_x$. In this way, we obtain the 2-category $\PsMon(\Fib(\X))$ and dually $\PsMon(\OpFib(\X))$. These 2-categories correspond to the `fibrewise' monoidal part of the story.

Finally, taking pseudomonoids in the 2-category of split fibrations over a fixed base, we obtain the 2-category $\PsMon(\Fib_\spl(\X))$ with objects split fibrations equipped with a fibrewise tensor product and unit as above, but now the reindexing functors strictly preserve that monoidal structure since the top functors of \cref{eq:fibrewisetensor} strictly preserve cartesian liftings: $f^*a \otimes_x f^*b = f^* (a \otimes_y b)$ and $I_y = f^* (I_x)$. Moreover, $\PsMon (\Fib_s(\X)) (P, Q)$ is the full subcategory of $\PsMon(\Fib(\X))(P, Q)$ spanned by split fibred functors, namely $H \maps \A\to\B$ which strictly preserve cartesian liftings but still $H_x$ are monoidal functors between the monoidal fibres as before.

As is evident from the above descriptions, the 2-categories $\MonFib(\X)$ and $\PsMon(\Fib(\X))$ are  different in general. A monoidal fibration over $\X$ is a strict monoidal functor, whereas a pseudomonoid in fixed-base fibrations is a fibration with monoidal fibres in a coherent way: none of the base or the total category need to be monoidal. 

\section{Indexed Categories and Monoidal Structures}\label{sec:monicat}

The 2-categories of indexed and opindexed categories $\ICat$ and $\OpICat$, explained in \cref{sec:fibrations}, are both monoidal. Explicitly, given two indexed categories $\M \maps \X\op \to \Cat$ and $\psN \maps \Y\op \to \Cat$, their tensor product $\M \otimes \psN \maps (\X \times \Y)\op \to \Cat$ is the composite 
\begin{equation}\label{ICat_cart}
    (\X \times \Y)\op \cong \X\op \times \Y\op \xrightarrow{\M \times \psN} \Cat \times \Cat \xrightarrow{\times} \Cat
\end{equation}
i.e.\ $(\M \otimes \psN)(x, y) = \M(x) \times \psN(y)$ using the cartesian monoidal structure of $\Cat$. The monoidal unit is the indexed category $\Delta \1 \maps \1\op \to \Cat$ that picks out the terminal category $\1$ in $\Cat$, and similarly for opindexed categories. Notice that this monoidal 2-structure, formed pointwise in $\Cat$, is also cartesian.

We call a pseudomonoid in $(\ICat, \otimes, \Delta \1)$ a \define{monoidal indexed category}. 

\begin{prop}\label{def:monoidal_indexedcat}
    A monoidal indexed category is a lax monoidal pseudofunctor \[(\M, \mu, \mu_0) \maps (\X\op, \otimes\op, I) \to (\Cat, \times, \1),\] where $(\X, \otimes, I)$ is an (ordinary) monoidal category.
\end{prop}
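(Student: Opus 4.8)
The plan is to prove this exactly in the style of the proof of \cref{def:monoidal_fibration}: unpack the definition of a pseudomonoid in the cartesian monoidal 2-category $(\ICat, \otimes, \Delta\1)$ and match the resulting data, piece by piece, with that of a lax monoidal pseudofunctor. The key observation is that the tensor product \cref{ICat_cart} on $\ICat$ is formed pointwise from the cartesian product of $\Cat$, so that a 1-cell $\M \otimes \M \to \M$ in $\ICat$ has as its underlying base functor a map $\X \times \X \to \X$, and a 1-cell $\Delta\1 \to \M$ has as its underlying base functor a map $\1 \to \X$, i.e.\ picks out an object of $\X$. First I would record, using the description of $\ICat$ from \cref{sec:fibrations} and of pseudomonoids and their structure 2-cells from \cref{app:Monoidal2cats}, that a pseudomonoid with underlying object $\M \maps \X\op \to \Cat$ consists of a multiplication 1-cell $\mu \maps \M \otimes \M \to \M$, a unit 1-cell $\uni \maps \Delta\1 \to \M$, and invertible associativity and left/right unit 2-cells in $\ICat$ subject to the pentagon and triangle axioms; here each 1-cell of $\ICat$ is a functor between bases together with a pseudonatural transformation of the precomposed pseudofunctors, and each 2-cell is a natural transformation of base functors together with a modification upstairs.

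Next I would separate this data into its ``base'' and ``fibre'' parts. The base parts of $\mu$ and $\uni$ are a functor $\otimes \maps \X \times \X \to \X$ and an object $I \in \X$; the base components of the associativity and unit 2-cells are natural isomorphisms $\assoc, \leftor, \rightor$; and the pentagon and triangle axioms of the pseudomonoid, read off on base categories, are precisely the pentagon and triangle identities. Hence $(\X, \otimes, I, \assoc, \leftor, \rightor)$ is an ordinary monoidal category, and therefore $\X\op$ carries the reversed monoidal structure $(\otimes\op, I)$. The fibre parts are: the pseudonatural transformation underlying $\mu$, whose components $\mu_{x,y} \maps \M x \times \M y \to \M(x \otimes y)$ are the laxator and whose pseudonaturality cells express compatibility of the laxator with the reindexing functors $\M(f)$ up to the structure isomorphisms of the pseudofunctor $\M$; the fibre component of $\uni$, a functor $\1 \to \M(I)$, which is the unit map $\mu_0$; and the fibre components of the associativity and unit 2-cells, which are exactly the invertible coherence cells making $(\M, \mu, \mu_0)$ a lax monoidal pseudofunctor $(\X\op, \otimes\op, I) \to (\Cat, \times, \1)$, with the pentagon and triangle of the pseudomonoid supplying their coherence axioms. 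Since every step merely reorganizes the same data, reversing it shows that each lax monoidal pseudofunctor assembles into a pseudomonoid in $\ICat$ and that the two passages are mutually inverse, which is the claimed identification (the accompanying identification of 1-cells and 2-cells is handled the same way, and I would state it as a remark). One could alternatively deduce this by noting that the Grothendieck 2-equivalence $\ICat \simeq \Fib$ of \cref{thm:mainthm} carries the cartesian tensor product on one side to that on the other, hence restricts to an equivalence on pseudomonoids, and then quote \cref{def:monoidal_fibration}; but I would present the direct unpacking, since it also pins down the explicit names $\mu$ and $\mu_0$.

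The step I expect to be the main obstacle is the bookkeeping of variances together with the pseudofunctor structure isomorphisms. One must verify that ``the squares defining $\mu$ as a 1-cell of $\ICat$ commute up to its mediating cells'' translates, with no condition lost or gained, into ``the laxator $\mu_{x,y}$ is compatible with the composition constraints $\M(g)\M(f) \cong \M(gf)$ of the pseudofunctor,'' and that the pentagon and triangle axioms of the pseudomonoid really do split cleanly into the monoidal-category axioms for $\X$ and the separate lax-monoidal-pseudofunctor coherence axioms for $\M$, with $\otimes\op$ appearing on the nose on $\X\op$. This is routine but notationally heavy, which is precisely the reason \cref{def:monoidal_fibration}-type statements are proved by patiently unpacking definitions rather than by clever shortcuts.
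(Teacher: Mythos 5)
Your proposal is correct and follows essentially the same route as the paper: unpacking the pseudomonoid structure on $\M$ in $(\ICat, \otimes, \Delta\1)$ into its base part (the multiplication and unit indexed 1-cells giving $\otimes$ and $I$, with the 2-cell axioms rendering $\X$ monoidal) and its fibre part (the pseudonatural components $\mu_{x,y}$ and $\mu_0$ with coherence cells giving the lax monoidal structure on $\M$). The extra care you flag about variances and the pseudofunctor constraints is consistent with how the paper handles it.
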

\begin{proof}
    Unpacking the definition, we see that a monoidal indexed category is an indexed category $\M \maps \X\op \to \Cat$ equipped with multiplication and unit indexed 1-cells $(\otimes_\X, \mu) \maps \M \otimes \M \to \M$, $(\eta, \mu_0) \maps \Delta \mathbf 1 \to \M$ which by \cref{eq:indexed1cell} are as follows.
    \[
    \begin{tikzcd}[column sep=.7in, row sep=.2in]
        \X\op \times \X\op
        \arrow[dr, "\M \otimes \M"]
        \arrow[dd, "\otimes\op"'] 
        \\ 
        \arrow[r, phantom, "\Downarrow{\scriptstyle\mu}"description] 
        & 
        \Cat 
        \\
        \X\op 
        \arrow[ur, "\M"']
    \end{tikzcd}\qquad 
    \begin{tikzcd}[column sep=.7in, row sep=.2in]
        \1\op
        \arrow[dr, "\Delta\1"]
        \arrow[dd, "I\op"'] 
        \\ 
        \arrow[r, phantom, "\Downarrow{\scriptstyle\mu_0}"description] 
        & 
        \Cat 
        \\
        \X\op 
        \arrow[ur, "\M"']
    \end{tikzcd}
    \]
    These come equipped with invertible indexed 2-cells 
    as in \cref{alphalambdarho}; the axioms this data is required to satisfy, on the one hand, render $\X$ a monoidal category with $\otimes \maps \X \times \X \to \X$ its tensor product functor and $I \maps \1 \to \X$ its unit. On the other hand, 
    the resulting axioms for the components
    \begin{equation}\label{eq:laxatorunitor}
        \mu_{x, y} \maps \M x \times \M y \to \M (x \otimes y), \qquad
        \mu_0 \maps \1 \to \M (I)
    \end{equation}
    of the above pseudonatural transformations precisely give $\M$ the structure of a lax monoidal pseudofunctor, recalled in \cref{app:Monoidal2cats}.
\end{proof}

We then define a \define{monoidal indexed 1-cell} to be a (strong) morphism between pseudomonoids in $(\ICat, \otimes, \Delta\1)$. 

\begin{prop}\label{prop:moni1cell}
    A monoidal indexed 1-cell between two monoidal indexed categories $\M$ and $\psN$ is an indexed 1-cell $(F, \tau)$, where the functor $F$ is (strong) monoidal and the pseudonatural transformation $\tau$ is monoidal. 
\end{prop}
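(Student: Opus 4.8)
The plan is to prove \cref{prop:moni1cell} by directly unpacking the definition of a \emph{strong morphism of pseudomonoids} in the cartesian monoidal $2$-category $(\ICat,\otimes,\Delta\1)$, following the same pattern used for fibrations in \cref{prop:monoidalfibred1cell}. By definition, such a morphism $\M\to\psN$ consists of an underlying $1$-cell of $\ICat$ between the two pseudomonoids --- that is, an indexed $1$-cell $(F,\tau)$, with $F\maps\X\to\Y$ a functor between the (monoidal, by \cref{def:monoidal_indexedcat}) indexing categories and $\tau\maps\M\To\psN\circ F\op$ a pseudonatural transformation as in \cref{eq:indexed1cell} --- together with invertible indexed $2$-cells $\phi$ and $\phi_0$ witnessing compatibility of $(F,\tau)$ with the multiplication and unit indexed $1$-cells of $\M$ and $\psN$, subject to the three standard morphism-of-pseudomonoid axioms. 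An alternative route, once \cref{sec:monequiv} is available, is to observe that the Grothendieck correspondence induces an equivalence of $2$-categories of pseudomonoids and hence transports \cref{prop:monoidalfibred1cell} across; but I will give the hands-on argument to keep the section self-contained.

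First I would recall that, just as an indexed $1$-cell splits into a base functor and a pseudonatural transformation, an indexed $2$-cell splits into a natural transformation ``downstairs'' on the bases and a modification ``upstairs'' between pseudonatural transformations (as described in \cref{sec:fibrations}, dually to \cref{eq:fibred2cell}). Applying this decomposition to $\phi$ and $\phi_0$, the downstairs parts give natural isomorphisms with components $Fx\otimes Fy\simrightarrow F(x\otimes y)$ and $I_\Y\simrightarrow F(I_\X)$, while the upstairs parts give, for each $x,y$, an invertible $2$-cell in $\Cat$ comparing the two composites $\M x\times\M y\to\psN\bigl(F(x\otimes y)\bigr)$ --- one built from $\tau_x\times\tau_y$ followed by the laxator of $\psN$, the other from the laxator $\mu$ of $\M$ followed by $\tau_{x\otimes y}$ --- together with the analogous unit comparison. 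I would then check that the associativity axiom for a morphism of pseudomonoids restricts, on the bases, to exactly the hexagon making $(F,\phi,\phi_0)$ a strong monoidal functor, and that the two unit axioms restrict to its triangle/unit laws, using that $\X$ and $\Y$ are already monoidal. The residual content of all three axioms --- the part living over the fibres --- is precisely the requirement that the $2$-cell data on $\tau$ obey the coherence conditions of a \emph{monoidal pseudonatural transformation} relative to the lax monoidal pseudofunctors $\M$ and $\psN$, as recalled in \cref{app:Monoidal2cats}. Reading these diagrams in reverse shows that any strong monoidal $F$ together with a monoidal $\tau$ reassembles $\phi$, $\phi_0$ and verifies the pseudomonoid-morphism axioms, so the two descriptions coincide.

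The main obstacle I expect is organizational rather than conceptual: one must carefully track the coherence $2$-cells carried by the pseudonatural transformation $\tau$ (these are invisible in the discrete-fibre situations of \cref{ch:PetriNets} but genuinely present here) against the modification data hidden inside $\phi$, and confirm that the associativity and unit axioms for a morphism of pseudomonoids separate cleanly into a ``base'' piece and a ``fibrewise'' piece with no cross terms. This is the same bookkeeping appearing in \cite{FramedBicats} and in the proof of \cref{prop:monoidalfibred1cell}; crucially, since $(\ICat,\otimes,\Delta\1)$ is cartesian with tensor computed pointwise in $\Cat$ by \cref{ICat_cart}, the structural comparison cells of the monoidal $2$-category itself contribute nothing, which keeps the verification finite and essentially routine.
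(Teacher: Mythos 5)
Your proposal matches the paper's own proof: both unpack a strong morphism of pseudomonoids in $(\ICat,\otimes,\Delta\1)$ into an indexed 1-cell $(F,\tau)$ plus invertible indexed 2-cells, decompose those 2-cells into base-level natural isomorphisms (the strong monoidal structure on $F$) and modifications with components as in \cref{eq:monpseudocomponents} (the monoidal structure on $\tau$), and observe that the pseudomonoid-morphism axioms separate into the coherence laws for each. The only detail worth adding is the paper's remark that the structure isomorphisms initially live in $\Y\op$, so $F\op$ being monoidal makes $F$ monoidal with the inverse structure maps.
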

\begin{proof}
    Unpacking the definition, we see that a monoidal indexed 1-cell is an indexed 1-cell $(F, \tau) \maps \M \to \psN$
    \begin{displaymath}
    \begin{tikzcd}[column sep=.7in, row sep=.2in]
        \X\op
        \arrow[dr, "\M"]
        \arrow[dd, "F\op"'] 
        \\ 
        \arrow[r, phantom, "\Downarrow{\scriptstyle\tau}"description] 
        & 
        \Cat 
        \\
        \Y\op 
        \arrow[ur, "\psN"']
    \end{tikzcd}
    \end{displaymath}
    between two monoidal indexed categories
    $(\M, \mu, \mu_0)$ and $(\psN, \nu, \nu_0)$ equipped with two invertible indexed 2-cells $(\psi, m)$ and $(\psi_0, m_0)$ as in \cref{eq:laxmorphism}, which explicitly consist of natural isomorphisms $\psi$, $\psi_0$ and invertible modifications
    \[
    \begin{tikzcd}[row sep=.3in, column sep=.25in]
        \X\op \times \X\op
        \arrow[d, "\otimes\op"']
        \arrow[dr, "F\op \times F\op" description]
        \arrow[drrr, "\M \otimes \M", bend left=20]
        \arrow[drrr, phantom, bend left=5, "\Downarrow{\scriptstyle \tau \times \tau}"]
        &&&&
        \X\op \times \X\op
        \arrow[drrr, "\M \otimes \M", bend left]
        \arrow[d, "\otimes\op"']\arrow[drrr, phantom, "\Downarrow{\scriptstyle\mu}"description]
        \\
        \X\op
        \arrow[d, "F\op"']
        \arrow[r, phantom, "\Downarrow{\scriptstyle\psi}"description, bend right]
        &
        \Y\op \times \Y\op
        \arrow[rr, "\psN \otimes \psN"description]
        \arrow[dr, "\otimes\op" description]
        \arrow[drr, phantom, "\Downarrow{\scriptstyle\nu}"description]
        &&
        \Cat
        \arrow[r, phantom, "\stackrel{m}{\Rrightarrow}"]
        &
        \X\op
        \arrow[rrr, "\M"description]\arrow[d, "F\op"']\arrow[drrr, phantom, "\Downarrow{\scriptstyle\tau}"description]
        &&&
        \Cat
        \\
        \Y\op\arrow[rr, "\mathrm{id}"']
        &&
        \Y\op\arrow[ur, "\psN"', bend right]
        & \phantom{A} &
        \Y\op\arrow[urrr, "\psN"', bend right]
        &&& \phantom{A}
    \end{tikzcd}
    \]
    
    \[
    \begin{tikzcd}[row sep=.25in, column sep=.3in]
        \1\op\arrow[d, "I\op"']\arrow[drrr, "\Delta\1", bend left=20]
        \arrow[drrr, phantom, "\Downarrow{\scriptstyle\nu_0}"]\arrow[ddrr, "I\op"description]
        &&&&&
        \1\op\arrow[drrr, "\Delta\1", bend left]\arrow[d, "I\op"']\arrow[drrr, phantom, "\Downarrow{\scriptstyle\mu_0}"description]
        \\
        \X\op\arrow[d, "F\op"']\arrow[r, phantom, "\Downarrow{\scriptstyle\psi_0}"description, bend right]
        & \phantom{A} &&
        \Cat\arrow[rr, phantom, "\stackrel{m_0}{\Rrightarrow}"]
        &&
        \X\op\arrow[rrr, "\M"description]\arrow[d, "F\op"']\arrow[drrr, phantom, "\Downarrow{\scriptstyle\tau}"description]
        &&&
        \Cat
        \\
        \Y\op\arrow[rr, "\mathrm{id}"']
        &&
        \Y\op\arrow[ur, "\psN"', bend right]
        & \phantom{A} &&
        \Y\op\arrow[urrr, "\psN"', bend right]
        &&& \phantom{A}
    \end{tikzcd}
    \]
    as dictated by the general form \cref{eq:indexed2cell} of indexed 2-cells. 
    The natural isomorphisms $\psi$ and $\psi_0$ have components
    \begin{displaymath}
        \psi_{x, z} \maps Fx\otimes Fy\xrightarrow{\sim} F(x\otimes y), \quad
        \psi_0 \maps I\xrightarrow{\sim} F(I)\quad \textrm{ in $\Y\op$}
    \end{displaymath}
    whereas the modifications $m$ and $m_0$ are given by families of invertible natural transformations 
    \begin{displaymath}
    \begin{tikzcd}[column sep=.3in, row sep=.2in]
        & \psN Fx \times \psN Fy
        \arrow[r, "\nu_{Fx, Fy}"]
        \arrow[dr, dashed] 
        & 
        \psN (Fx \otimes Fy)
        \arrow[d, "\psN\psi_{x, y}"] 
        \\
        \M x{\times}\M y
        \arrow[ur, "\tau_x\times\tau_y"]
        \arrow[dr, "\mu_{x, y}"']
        \arrow[rr, phantom, "\Downarrow{\scriptstyle m_{x, y}}"description] 
        && 
        \psN F (x\otimes y) 
        \\& 
        \M (x\otimes y)
        \arrow[ur, "\tau_{x\otimes y}"'] 
        &
    \end{tikzcd}\quad
    \begin{tikzcd}[column sep=.2in, row sep=.2in]
        & 
        \psN(I)
        \arrow[dr, "\psN \psi_0"] 
        & \\
        \1\arrow[ur, "\nu_0"]
        \arrow[dr, "\mu_0"']
        \arrow[rr, phantom, "\Downarrow{\scriptstyle m_0}"description] 
        &&
        \psN(FI) 
        \\& 
        \M(I)\arrow[ur, "\tau_I"'] 
    \end{tikzcd}
    \end{displaymath}
    The appropriate coherence axioms ensure that the functor $F \maps \X \to \Y$ has a strong monoidal structure $(F, \psi, \psi_0)$, and that the pseudonatural transformation $\tau \maps \M \Rightarrow \psN \circ F\op$ is monoidal with $m_{x, y}$, $m_0$ as in \cref{eq:monpseudocomponents}. Notice that $F\op$ being monoidal makes $F$ monoidal with inverse structure isomorphisms. 
\end{proof}

Finally, a \define{monoidal indexed 2-cell} is a 2-cell between morphisms of pseudomonoids in $(\ICat, \otimes, \Delta\1)$. 

\begin{prop}\label{prop:moni2cell}
    A monoidal indexed 2-cell between two monoidal indexed 1-cells $(F, \tau)$ and $(G, \sigma)$ is an indexed 2-cell  $(\alpha, m)$ such that $\alpha$ is an ordinary monoidal natural transformation and $m$ is a monoidal modification.
\end{prop}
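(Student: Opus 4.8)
The plan is to proceed exactly as in the proofs of \cref{prop:monfib2cell} and \cref{prop:moni1cell}: unwind the definition of a 2-cell of pseudomonoids in the cartesian monoidal 2-category $(\ICat, \otimes, \Delta\1)$ and match it against the explicit description of indexed 2-cells. By definition, a 2-cell $(F, \tau) \To (G, \sigma)$ of pseudomonoids in $(\ICat, \otimes, \Delta\1)$ is an underlying 2-cell of $\ICat$ between the underlying 1-cells, subject to the two axioms in \cref{monoidal2cell} expressing compatibility with the multiplications and the units of the two pseudomonoids. So the first step is to record, via the general shape of indexed 2-cells \cref{eq:indexed2cell}, that such an underlying 2-cell is a pair $(\alpha, m)$ with $\alpha \maps F \To G$ a natural transformation (equivalently $\alpha\op \maps F\op \To G\op$, which is what literally appears, using that $F\op$ monoidal makes $F$ monoidal with inverse structure isomorphisms as in \cref{prop:moni1cell}) and $m$ a modification relating the pseudonatural transformation $\tau$ to the whiskered composite of $\sigma$ with $\alpha$, given componentwise by natural transformations $m_x$ comparing $\tau_x$ and $\sigma_x$ reindexed along $\alpha_x$. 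At this stage $(\alpha, m)$ is just an indexed 2-cell between the underlying indexed 1-cells, with no monoidality imposed.

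Second, I would unpack the multiplication axiom. The two pseudomonoids carry multiplication 1-cells $(\otimes_\X, \mu)$ and $(\otimes_\Y, \nu)$, the 1-cells carry the structure data $(\psi, m, \psi_0, m_0)$ and its primed counterpart produced in \cref{prop:moni1cell}, and the first axiom of \cref{monoidal2cell} asserts a pasting equality of $\alpha$ and $m$ against all of these. Chasing that equality one component at a time, the portion living over $\X$ and $\Y$ (the ``$\psi$-part'') collapses to the single condition $\psi'_{x, y} \circ (\alpha_x \otimes \alpha_y) = \alpha_{x \otimes y} \circ \psi_{x, y}$, which is exactly the statement that $\alpha$ is a monoidal natural transformation; the remaining ``$m$-part'' collapses to the compatibility relation among $m_{x, y}$, $m'_{x, y}$, $\mu_{x, y}$, $\nu_{Fx, Fy}$ and the $\psi$'s that defines a monoidal modification with respect to the laxators. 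The unit axiom is handled identically, one level down: its ``$\psi_0$-part'' yields $\alpha_I \circ \psi_0 = \psi'_0$ (the unit clause of a monoidal natural transformation) and its ``$m_0$-part'' yields the compatibility of $m_0$, $m'_0$ with $\mu_0$, $\nu_0$ (the unit clause for a monoidal modification). Assembling these, $(\alpha, m)$ satisfies \cref{monoidal2cell} precisely when $\alpha$ is a monoidal natural transformation and $m$ is a monoidal modification; the converse direction is immediate, since the very axioms we verified are the definition of a pseudomonoid 2-cell.

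The main obstacle I anticipate is organizational rather than mathematical: keeping straight the three interlocking layers of data — the natural transformation $\alpha$, the modification $m$, and the ambient $\op$'s on the indexing categories — while teasing the single pseudomonoid-2-cell axiom apart into its ``base'' component (a condition on $\alpha$ alone) and its ``fibre'' component (a condition on $m$). This is the same bookkeeping that made the proof of \cref{prop:moni1cell} long, now carried out one categorical dimension higher, so the pasting diagrams are correspondingly larger. Since modifications and monoidal modifications are both defined componentwise and every structure 2-cell in sight is invertible, no genuinely new phenomenon arises and each reduction is a routine diagram chase; accordingly I would present the argument at the level of ``unpacking \cref{monoidal2cell} and comparing components,'' leaving the explicit pasting verifications to the reader as in the surrounding propositions.
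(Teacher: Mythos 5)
Your proposal is correct and follows essentially the same route as the paper: the paper's proof likewise just unpacks the definition of a 2-cell between (strong) morphisms of pseudomonoids in $(\ICat,\otimes,\Delta\1)$, identifies the underlying indexed 2-cell $(\alpha,m)$ via \cref{eq:indexed2cell}, and observes that the axioms \cref{monoidal2cell} amount precisely to $\alpha$ being a monoidal natural transformation and $m$ satisfying \cref{eq:monoidalmodaxioms}. Your more explicit separation into a ``base'' condition on $\alpha$ and a ``fibre'' condition on $m$ is exactly the bookkeeping the paper leaves implicit.
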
 
\begin{proof}
    Following the definition of \cref{app:Monoidal2cats}, an indexed 2-cell $(a, m) \maps (F, \tau)\Rightarrow(G, \sigma) \maps \M\to\psN$ as in \cref{eq:indexed2cell}, which consists of a natural transformation $\alpha \maps F\Rightarrow G$ and a modification $m$ with components
    \begin{displaymath}
    \begin{tikzcd}[column sep=.5in, row sep=.15in]
        \M x\arrow[dr, bend right=10, "\sigma_x"']\arrow[rr, bend left, "\tau_x"] \arrow[rr, phantom, "\Downarrow{\scriptstyle m_x}"description] && \psN Fx \\
        & \psN Gx\arrow[ur, bend right=10, "\psN\alpha_x"'] & 
    \end{tikzcd}
    \end{displaymath}
    is monoidal, exactly when $\alpha \maps F\Rightarrow G$ is compatible with the strong monoidal structures of $F$ and $G$, and the modification $m \maps \tau \Rrightarrow \psN \alpha\op \circ \sigma$ satisfies \cref{eq:monoidalmodaxioms} for the induced monoidal structures on its domain and target pseudonatural transformations.
\end{proof}

We write $\PsMon(\ICat) = \Mon\ICat$, the 2-category of monoidal indexed categories, monoidal indexed 1-cells and monoidal indexed 2-cells. Moreover, their braided and symmetric counterparts form $\Br\Mon\ICat$ and $\Sym\Mon\ICat$ respectively, as the 2-categories of braided and symmetric pseudomonoids in $(\ICat, \otimes, \Delta\1)$ formally discussed in \cref{app:Monoidal2cats}.
Similarly, we have 2-categories of \define{(braided} or \define{ symmetric) monoidal opindexed} categories, 1-cells and 2-cells $\Mon\OpICat$, $\Br\Mon\OpICat$ and $\Sym\Mon\OpICat$.

All these 2-categories have sub-2-categories of monoidal (op)indexed categories with a fixed monoidal domain $(\X, \otimes, I)$, and specifically
\begin{gather}\label{eq:monicatX}
    \MonICat(\X)=\MonTCat_\pse(\X\op, \Cat) \\ \Mon\OpICat(\X)=\MonTCat_\pse(\X, \Cat)\nonumber
\end{gather}
the functor 2-categories of lax monoidal pseudofunctors, monoidal pseudonatural transformations and monoidal modifications.

Moreover, we can consider pseudomonoids in the strict context. Explicitly, the 2-category $\PsMon(\ICat_\spl)=\MonICat_\spl$ has as objects \define{monoidal strict indexed categories} namely (2-)functors $\M \maps \X\op\to\Cat$ from an ordinary monoidal category $\X$ which are lax monoidal as before, but the laxator and unitor \cref{eq:laxatorunitor} are strictly natural rather than pseudonatural transformations. The hom-categories $\PsMon(\ICat_\spl)(\M, \psN)$ between monoidal strict indexed categories are full subcategories of $\MonICat(\M, \psN)$ spanned by strict natural transformations---which are however still lax monoidal, i.e.\ equipped with isomorphisms \cref{eq:monpseudocomponents}.

Similarly to the previous \cref{sec:monfib} on fibrations, we end this section with the study of pseudomonoids in a different but related monoidal 2-category, namely $\ICat(\X)=\TCat_\pse(\X\op, \Cat)$ of indexed categories with a fixed domain $\X$. Working in this 2-category, or in $\OpICat(\X)$, there is no assumed monoidal structure on $\X$. Their monoidal structure is again cartesian: for two $\X$-indexed categories $\M, \psN \maps \X\op \to \Cat$, their product is
\begin{equation}\label{eq:icatxprod}
    \M\boxtimes\psN \maps \X\op \xrightarrow{\Delta} \X\op \times \X\op \xrightarrow{\M \times \psN} \Cat \times \Cat \xrightarrow{\times} \Cat
\end{equation}
with pointwise components $(\M \boxtimes \psN) (x) = \M (x) \times \psN (x)$ in $\Cat$. The monoidal unit is just $\X\op \xrightarrow{!} \1 \xrightarrow{\Delta \1} \Cat$, which we will also call $\Delta \1$.

A pseudomonoid in $(\ICat(\X), \boxtimes, \Delta\1)$ is a pseudofunctor $\M \maps \X\op \to \Cat$ equipped with indexed functors \cref{eq:ifun} $\mlt \maps \M\boxtimes\M\to\M$ and $\uni \maps \Delta\1\to\M$ namely
\begin{displaymath}
\begin{tikzcd}[row sep=.1in]
    & 
    \X\op \times \X\op
    \arrow[r, "\M\times\M"] 
    & 
    \Cat \times \Cat
    \arrow[dr, "\times"] 
    &&& 
    \1
    \arrow[dr, "\Delta\1"] 
    \\
    \X\op
    \arrow[ur, "\Delta"]
    \arrow[rrr, phantom, "\Downarrow{\scriptstyle\mlt}"description]
    \arrow[rrr, bend right=20, "\M"'] 
    &&&
    \Cat 
    & 
    \X\op
    \arrow[ur, "!"]
    \arrow[rr, bend right, "\M"']
    \arrow[rr, phantom, "\Downarrow{\scriptstyle\uni}"description] 
    && 
    \Cat
\end{tikzcd}
\end{displaymath}
with components $\mlt_x \maps \M x \times \M x \to \M x$ and $\uni_x \maps \1 \to \M x$ which are pseudonatural via
\begin{equation}\label{eq:pseudonaturalmult}
\begin{tikzcd}[column sep=.6in]
    \M x
    \times \M x
    \arrow[d, "\mlt_x"']
    \arrow[r, "\M f\times\M f"]
    \arrow[dr, phantom, "\cong"description] 
    & 
    \M y \times \M y
    \arrow[d, "\mlt_y"] 
    \\
    \M x 
    \arrow[r, "\M f"'] 
    & 
    \M y
\end{tikzcd}\quad
\begin{tikzcd}
    \1
    \arrow[r, "="]
    \arrow[d, "\uni_x"']
    \arrow[dr, phantom, "\cong"description] 
    & 
    \1
    \arrow[d, "\uni_y"] 
    \\
    \M x
    \arrow[r, "\M f"'] 
    & 
    \M y
\end{tikzcd}
\end{equation}
If we denote $\mlt_x=\otimes_x$ and $\uni_x=I_x$, the pseudomonoid invertible 2-cells \cref{alphalambdarho} and the axioms these data satisfy make each $\M x$ into a monoidal category $(\M x, \otimes_x, I_x)$, and each $\M f$ into a strong monoidal functor: the above isomorphisms have components $\M f(a)\otimes_y\M f(b)\cong\M f(a\otimes_x b)$ and $I_y\cong\M f(I_x)$ for any $a, b\in\M x$. 

Such a structure, namely a pseudofunctor $\M \maps \X\op \to \Mon\Cat$ into the 2-category of monoidal categories, strong monoidal functors and monoidal natural transformations, was directly defined as an \emph{indexed strong monoidal category} in \cite{DescentForMonads}, and as \emph{indexed monoidal category} in \cite{PontoShulman}. We will avoid this notation in order to not create confusion with the term \emph{monoidal indexed categories}.

A strong morphism of pseudomonoids \cref{eq:laxmorphism} in $(\ICat(\X), \boxtimes, \Delta\1)$ ends up being a pseudonatural trasformation $\tau \maps \M\Rightarrow\psN \maps \X\op\to\Cat$ (indexed functor) whose components $\tau_x \maps \M x\to\psN x$ are strong monoidal functors, whereas a 2-cell between strong morphisms of pseudomonoids is an ordinary modification
\begin{displaymath}
\begin{tikzcd}
    \X\op
    \arrow[rr, bend left=40, "\M", ""'{name = F}]
    \arrow[rr, bend right=40, "\psN"', ""{name = G}] 
    \arrow[rr, phantom, "\stackrel{m} {\Rrightarrow}"description] 
    && 
    \Cat
    \arrow[from = F, to = G, Rightarrow, "\tau"', bend right=50]
    \arrow[from = F, to = G, Rightarrow, "\sigma", bend left=50]
\end{tikzcd}
\end{displaymath}
whose components $m_x \maps \tau_x\Rightarrow\sigma_x$ are monoidal natural transformations.

We thus obtain the 2-categories $\PsMon (\ICat(\X))$ as well as $\PsMon(\OpICat(\X))$; from the above descriptions, it is clear that  
\begin{gather}\label{eq:imoncats}
    \PsMon(\ICat(\X))=\TCat_\pse(\X\op, \Mon\Cat) \\
    \PsMon(\OpICat(\X))=\TCat_\pse(\X, \Mon\Cat)\nonumber
\end{gather}
which will also be rediscovered by \cref{prop:imoncat_formally}.

Finally, taking pseudomonoids in strict $\X$-indexed categories $\ICat_\spl(\X)=[\X\op, \Cat]$ produces the 2-category $\PsMon(\ICat_\spl(\X))$ with objects functors $\M \maps \X\op\to\Mon\Cat_\mathrm{st}$ into monoidal categories with strict monoidal functors: the isomorphisms \cref{eq:pseudonaturalmult} are now equalities due to strict naturality of the multiplication and unit. Then the hom-categories $\PsMon(\ICat_\spl(\X))(\M, \psN)$ are full subcategories of $\PsMon(\ICat(\X))(\M, \psN)$ spanned by strictly natural transformations $\tau \maps \M\Rightarrow\psN$, still with strong monoidal components $\tau_x$. For example, it would not be correct to write $\PsMon(\ICat_\spl(\X))=[\X\op, \Mon\Cat_{(\mathrm{st})}]$.

It is evident that $\MonICat(\X)$ and $\PsMon(\ICat(\X))$ are in principle different. A monoidal indexed category with base $\X$ is a lax monoidal pseudofunctor into $\Cat$ (and $\X$ is required to be monoidal already), whereas a pseudomonoid in $\X$-indexed categories is a pseudofunctor from an ordinary category $\X$ into $\Mon\Cat$. 
    
\section{Two Monoidal Grothendieck Constructions}
\label{sec:monequiv}

In \cref{sec:fibrations}, we recalled 
the standard equivalence between fibrations and indexed categories via the Grothendieck construction. We will now lift this correspondence to their monoidal versions studied in Sections \ref{sec:monfib} and \ref{sec:monicat}, using general results about pseudomonoids in arbitrary monoidal 2-categories described in \cref{app:Monoidal2cats}.

Since both $\Fib$ and $\ICat$ are cartesian monoidal 2-categories, via \cref{Fib_cart} and \cref{ICat_cart} respectively, our first task is to ensure that they are \emph{monoidally} equivalent.

\begin{lem}\label{lem:monoidalGrfunctor}
    The 2-equivalence $\Fib\simeq\ICat$ between the cartesian monoidal 2-categories of fibrations and indexed categories is (symmetric) monoidal.
\end{lem}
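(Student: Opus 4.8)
The key observation is that both monoidal structures in play are \emph{cartesian}: on $\Fib$ the tensor is the $2$-product $\times$ of \cref{Fib_cart} with unit $1_\1$, and on $\ICat$ it is the $2$-product $\otimes$ of \cref{ICat_cart} with unit $\Delta\1$. Since a (bi)equivalence preserves whatever $2$-limits exist, and since a $2$-functor between $2$-categories with finite $2$-products that preserves them carries a canonical symmetric monoidal structure (the $2$-categorical version of the classical fact for cartesian monoidal $1$-categories), it suffices to check that the Grothendieck $2$-functor $\int(-)\maps\ICat\to\Fib$ preserves binary products and the unit. The plan is therefore: (i) exhibit explicit comparison maps, (ii) observe they are equivalences—indeed isomorphisms of fibrations—and (iii) note that the required coherence for a symmetric monoidal $2$-functor is automatic from cartesianness.

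For step (i)–(ii): given indexed categories $\M\maps\X\op\to\Cat$ and $\psN\maps\Y\op\to\Cat$, the product $\M\otimes\psN$ sends $(x,y)$ to $\M x\times\psN y$. Unpacking the Grothendieck construction, an object of $\int(\M\otimes\psN)$ is a pair $\bigl((x,y),(a,b)\bigr)$ with $a\in\M x$, $b\in\psN y$, and a morphism is a pair of morphisms of $\X\times\Y$ together with a morphism of $\M x\times\psN y$; this is visibly the same data as an object/morphism of $\int\M\times\int\psN$. One thus gets an isomorphism of categories $\int(\M\otimes\psN)\cong\int\M\times\int\psN$ strictly commuting with the projections to $\X\times\Y$, i.e.\ an isomorphism in $\Fib$; and since a cartesian lifting in a product fibration is precisely a pair of cartesian liftings (as recalled after \cref{Fib_cart}), this isomorphism is automatically a morphism of fibrations in both directions. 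Likewise, the Grothendieck construction applied to $\Delta\1\maps\1\op\to\Cat$ is the trivial fibration $1_\1\maps\1\to\1$, so the unit is preserved on the nose. Hence $\int(-)$ comes equipped with comparison $2$-cells that are (strict, a fortiori strong) invertible, and by transport of structure its pseudo-inverse acquires a strong monoidal structure as well.

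For step (iii): in a cartesian monoidal $2$-category the associator, unitors, braiding and symmetry are all the canonical comparison equivalences supplied by the universal property of $2$-products, and between any two parallel composites of such canonical maps there is a unique invertible $2$-cell. Consequently the hexagon, triangle, and symmetry axioms for the would-be symmetric monoidal $2$-functor $\int(-)$ hold automatically, since each reduces to an equality of canonical $2$-cells; this is exactly the kind of coherence-from-universality argument packaged in the general theory of monoidal $2$-categories recalled in \cref{app:Monoidal2cats}. Putting (i)–(iii) together gives that $\int(-)$, and hence the $2$-equivalence $\Fib\simeq\ICat$, is (symmetric) monoidal.

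\textbf{Main obstacle.} There is no deep difficulty here; the only real care needed is bookkeeping—tracking the $(-)\op$ on the indexed side through the comparison, and verifying that the object-and-morphism-level isomorphism genuinely lives in $\Fib$ (preservation of cartesian liftings), which follows immediately from the pointwise description of products of fibrations. The one slightly delicate point is making the phrase ``coherence is free because the structures are cartesian'' precise in the $2$-dimensional setting; I would either cite the relevant statement from \cref{app:Monoidal2cats} or dispatch it by a short, routine diagram chase. This lemma is then exactly the input needed to conclude in \cref{sec:monequiv} that the monoidal Grothendieck construction restricts to equivalences between (braided/symmetric) pseudomonoids on each side, i.e.\ \cref{thm:mainthm} and \cref{thm:fibrmonGr}.
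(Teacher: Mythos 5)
Your proof is correct and follows essentially the same route as the paper: the paper's own argument simply notes that, both monoidal structures being cartesian, the 2-equivalence preserves products (as any equivalence preserves limits) and is therefore monoidal, with the unit/counit isomorphisms monoidal by the universal property of products. Your version spells out the explicit comparison $\int(\M\otimes\psN)\cong\int\M\times\int\psN$ and the coherence-from-cartesianness point in more detail, but the underlying idea is identical.
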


\begin{proof}
    Since they form an equivalence, both 2-functors from \cref{thm:Grothendieck} preserve limits, therefore are monoidal 2-functors. Moreover, it can be verified that the natural isomorphisms with components $\F\cong\F_{P_\F}$ and $P\cong P_{\F_P}$ are monoidal with respect to the cartesian structure, due to universal properties of products.
\end{proof}

\begin{thm}
\label{thm:mainthm}
    There are 2-equivalences 
    \begin{gather*}
        \MonFib\simeq \MonICat \\
        \BrMonFib\simeq\Br\Mon\ICat \\
        \SymMonFib\simeq\Sym\Mon\ICat
    \end{gather*}
    between the 2-categories of monoidal fibrations and monoidal indexed categories, as well as their braided and symmetric versions. Dually, there is a 2-equivalence $\MonOpFib\simeq\MonOpICat$ between the 2-categories of monoidal opfibrations and monoidal opindexed categories, as well as their braided and symmetric versions.
\end{thm}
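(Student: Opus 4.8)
The plan is to derive Theorem~\ref{thm:mainthm} as a formal consequence of \cref{lem:monoidalGrfunctor} together with the general principle that any (symmetric) monoidal 2-equivalence transports pseudomonoids, their morphisms, and their 2-cells, as well as the braided and symmetric refinements of these. First I would invoke the general lemma from \cref{app:Monoidal2cats}: if $\mathcal{F}\maps \mathcal{K}\simeq\mathcal{L}$ is a monoidal 2-equivalence between monoidal 2-categories, then it induces a 2-equivalence $\PsMon(\mathcal{K})\simeq\PsMon(\mathcal{L})$, and likewise $\BrPsMon(\mathcal{K})\simeq\BrPsMon(\mathcal{L})$ and $\SymPsMon(\mathcal{K})\simeq\SymPsMon(\mathcal{L})$ when $\mathcal{F}$ is symmetric monoidal. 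The point is that a monoidal 2-functor sends a pseudomonoid $(A,\mlt,\uni)$ to $(\mathcal{F}A, \mathcal{F}\mlt\circ\chi, \mathcal{F}\uni\circ\iota)$ where $\chi,\iota$ are the monoidal structure constraints of $\mathcal{F}$, and the pseudomonoid axioms are preserved because $\mathcal{F}$ preserves the relevant coherence data; morphisms and 2-cells of pseudomonoids are transported in the same way, and a monoidal pseudoinverse furnishes the pseudoinverse of the induced 2-equivalence on pseudomonoids.

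Next I would apply this to the specific monoidal 2-equivalence of \cref{lem:monoidalGrfunctor}. Since $\Fib\simeq\ICat$ is symmetric monoidal with respect to the cartesian structures \cref{Fib_cart} and \cref{ICat_cart}, the general principle yields
\[
\PsMon(\Fib)\simeq\PsMon(\ICat),\quad
\BrPsMon(\Fib)\simeq\BrPsMon(\ICat),\quad
\SymPsMon(\Fib)\simeq\SymPsMon(\ICat).
\]
By the definitions recorded in \cref{sec:monfib} and \cref{sec:monicat}, these are exactly $\MonFib\simeq\Mon\ICat$, $\BrMonFib\simeq\Br\Mon\ICat$, and $\SymMonFib\simeq\Sym\Mon\ICat$. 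It remains to check that under the transported equivalence the \emph{data} matches the concrete descriptions given in \cref{def:monoidal_fibration}, \cref{prop:monoidalfibred1cell}, \cref{prop:monfib2cell} on one side and \cref{def:monoidal_indexedcat}, \cref{prop:moni1cell}, \cref{prop:moni2cell} on the other. Here I would note that those propositions already unpack each notion into (a) a fibration/indexed category with extra monoidal structure on the appropriate categories, and (b) the condition that the Grothendieck projection, resp.\ laxator, is monoidal; since the unpacking was done precisely by tracking how the cartesian pseudomonoid structure interacts with the Grothendieck 2-functors, the matching is immediate. The dual statement $\MonOpFib\simeq\MonOpICat$ (and its braided/symmetric versions) follows by the same argument applied to $\OpFib\simeq\OpICat$, or more slickly by the duality $(-)^{\mathrm{op}}$ which exchanges fibrations with opfibrations and indexed with opindexed categories while preserving the cartesian monoidal structure.

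The main obstacle I anticipate is not conceptual but bookkeeping: verifying carefully that the structure constraints $\chi,\iota$ of the Grothendieck 2-functors (which exist because an equivalence preserves products) are compatible with the associativity and unit coherence 2-cells of pseudomonoids, so that the transported pseudomonoid axioms really do hold on the nose. This is where one must use that the monoidal structure on both sides is \emph{cartesian}, so that all the comparison isomorphisms are canonically induced by universal properties and hence automatically coherent --- exactly the content invoked in the proof of \cref{lem:monoidalGrfunctor}. Once that compatibility is in hand, the theorem is essentially a one-line corollary, and I would present it as such, referring the reader to \cref{app:Monoidal2cats} for the general transport-of-pseudomonoids lemma and to \cref{lem:monoidalGrfunctor} for the monoidality of the underlying 2-equivalence.
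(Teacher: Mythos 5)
Your proposal is correct and follows essentially the same route as the paper: the paper's proof simply observes that $\MonFib=\PsMon(\Fib)$ and $\MonICat=\PsMon(\ICat)$ and applies \cref{prop:2equivpseudomon} to the monoidal 2-equivalence of \cref{lem:monoidalGrfunctor}, with the braided, symmetric, and opfibration cases handled identically. The additional bookkeeping you flag about matching the transported pseudomonoid data to the concrete descriptions is exactly what the unpacking propositions in \cref{sec:monfib} and \cref{sec:monicat} already supply, so nothing further is needed.
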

\begin{proof}
    Since $\MonFib=\PsMon(\Fib)$ and $\MonICat=\PsMon(\ICat)$, we obtain the equivalence as a special case of \cref{prop:2equivpseudomon}; similar for $\OpFib\simeq\OpICat$.
\end{proof}

\begin{cor}
\label{cor:fixedbasemonoidalGr}
    The above 2-equivalences restrict to the sub-2-categories of fixed bases or domains, which by \cref{eq:monicatX} are
    \begin{gather*}
        \MonFib(\X) \simeq\MonTCat_\pse(\X\op,\Cat) \\
        \MonOpFib(\X) \simeq\MonTCat_\pse(\X\op,\Cat)
    \end{gather*}
\end{cor}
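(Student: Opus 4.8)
The plan is to obtain \cref{cor:fixedbasemonoidalGr} as a routine specialization of \cref{thm:mainthm}, exactly as the one-line proof of \cref{thm:mainthm} itself specialized a general fact about pseudomonoids. Recall that the equivalence $\MonFib \simeq \MonICat$ was produced by applying \cref{prop:2equivpseudomon} to the monoidal 2-equivalence $\Fib \simeq \ICat$ of \cref{lem:monoidalGrfunctor}. That general machinery sends a pseudomonoid whose underlying object is a fibration $P$ to a pseudomonoid whose underlying object is the corresponding indexed category $\F_P$, and conversely; crucially, it does this \emph{over} the underlying 2-equivalence, so the underlying fibration and the underlying indexed category are matched by the same correspondence as in the non-monoidal \cref{thm:Grothendieck}. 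In particular, a monoidal fibration $P \maps \A \to \X$ has base $\X$, and its image under the equivalence is a monoidal indexed category $\F_P \maps \X\op \to \Cat$ with the \emph{same} indexing category $\X$; symmetrically, a monoidal indexed category with domain $\X$ goes to a monoidal fibration with base $\X$. Hence the equivalence carries the full sub-2-category of $\MonFib$ on objects with base $\X$ to the full sub-2-category of $\MonICat$ on objects with domain $\X$, and these are by definition $\MonFib(\X)$ and $\MonICat(\X)$.

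First I would make precise that both $\MonFib(\X)$ and $\MonICat(\X)$ are genuinely \emph{full} sub-2-categories of $\MonFib$ and $\MonICat$ respectively, on the objects lying over (resp.\ having domain) the fixed monoidal category $(\X,\otimes,I)$. On the fibration side this is the content of the description of $\MonFib(\X)$ given at the end of \cref{sec:monfib}: its 1-cells are monoidal fibred 1-cells of the form $(H,1_\X)$ and its 2-cells are monoidal fibred 2-cells of the form $(\beta,1_{1_\X})$, i.e.\ precisely those 1-cells and 2-cells of $\MonFib$ whose base components are identities on $\X$ and $1_\X$. On the indexed side, one reads off from \cref{prop:moni1cell} and \cref{prop:moni2cell} that a monoidal indexed 1-cell $(F,\tau)$ with $F = 1_\X$ is exactly a monoidal pseudonatural transformation, and a monoidal indexed 2-cell with $\alpha = 1_{1_\X}$ is exactly a monoidal modification; so the fixed-domain sub-2-category is the functor 2-category $\MonTCat_\pse(\X\op,\Cat)$ of lax monoidal pseudofunctors, monoidal pseudonatural transformations and monoidal modifications, which is \cref{eq:monicatX}.

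Next I would invoke the fact that an equivalence of 2-categories restricts to an equivalence between full sub-2-categories determined by a property of objects that is preserved and reflected by the equivalence. Here the relevant property is ``the underlying fibration has base $\X$'' on one side and ``the underlying pseudofunctor has domain $\X$'' on the other; the monoidal 2-equivalence of \cref{lem:monoidalGrfunctor}, together with the way \cref{prop:2equivpseudomon} acts on underlying objects, shows these correspond. Because the inclusion of a fixed base/domain is full on 1-cells and 2-cells — by the previous paragraph — the restricted 2-functors are again (local) equivalences, and essential surjectivity is immediate since every object over $\X$ on one side is the image of an object over $\X$ on the other. This yields $\MonFib(\X) \simeq \MonTCat_\pse(\X\op,\Cat)$. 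The opfibration statement $\MonOpFib(\X) \simeq \MonTCat_\pse(\X\op,\Cat)$ is obtained by the same argument applied to the dual equivalence $\MonOpFib \simeq \MonOpICat$ of \cref{thm:mainthm}, noting that $\Mon\OpICat(\X) = \MonTCat_\pse(\X,\Cat)$ by \cref{eq:monicatX}. (I would double-check the variance bookkeeping here, since the corollary as stated writes $\X\op$ for the opfibration case as well; if that is a typo for $\X$, I would correct it, otherwise it is explained by reindexing through $\X\op$.) One can also record the braided and symmetric variants by the same specialization of \cref{thm:mainthm}.

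The main obstacle I anticipate is not conceptual but bookkeeping: one must verify carefully that passing to pseudomonoids via \cref{prop:2equivpseudomon} really does leave the underlying base/domain untouched — i.e.\ that the multiplication and unit 1-cells $\mlt = (\otimes_\A,\otimes_\X)$ and $\uni = (I_\A,I_\X)$ of a monoidal fibration correspond, under the Grothendieck equivalence, to 1-cells whose indexing-category data is exactly $\otimes_\X\op$ and $I_\X\op$ and nothing else, so that the monoidal structure on $\X$ is genuinely ``the same'' on both sides and the fixed-$\X$ restriction is well-defined. This is essentially the observation already made in the proof of \cref{def:monoidal_indexedcat} and \cref{prop:monoidalfibred1cell}, so it amounts to citing those unpackings rather than redoing them; accordingly I expect the whole proof to be short, as the excerpt's own one-sentence proof suggests.
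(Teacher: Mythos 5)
Your proposal is correct and takes essentially the same route as the paper, which states the corollary without a separate proof precisely because it is the immediate restriction of \cref{thm:mainthm} to the full sub-2-categories of fixed base/domain; your unpacking of why the restriction works (fullness of the fixed-base inclusions, preservation of the base under the Grothendieck correspondence) is the intended argument. Your side remark about the variance is also right: by \cref{eq:monicatX} the opfibration case should read $\MonTCat_\pse(\X,\Cat)$, so the $\X\op$ in the second displayed equivalence of the corollary is a typo.
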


These results correspond to the \emph{global} monoidal structure of fibrations and indexed categories. Even though they were directly derived via abstract reasoning, for exposition purposes we briefly describe this equivalence on the level of objects; some relevant details can also be found in \cite[Sec 6]{NetworkModels}. Independently and much earlier, in his thesis \cite{ShulmanPhD} Shulman explores such a fixed-base equivalence on the level of double categories (of monoidal fibrations and monoidal pseudofunctors over the same base).

Suppose that $(\M, \mu, \mu_0) \maps (\X\op, \otimes, I) \to (\Cat, \times, \1)$ is a monoidal indexed category, i.e.\ a lax monoidal pseudofunctor with structure maps \cref{eq:laxatorunitor}. The induced monoidal product $\otimes_\mu \maps \inta \M \times \inta \M \to \inta \M$ on the Grothendieck category is defined on objects by 
\begin{equation}
\label{eq:globalmonstr}
    (x,a) \otimes_\mu (y,b) = (x \otimes y, \mu_{x,y}(a,b))
\end{equation}
and $I_\mu=(I, \mu_0(*))$ is the unit object. Clearly, the induced fibration $\inta \M \to \X$ which maps each pair to the underlying $\X$-object strictly preserves the monoidal structure. Moreover, pseudonaturality of $\mu$ implies that $\otimes_\mu$ preserves cartesian liftings, so all clauses of \cref{def:monoidal_fibration} are satisfied. For a more detailed exposition of the structure, as well as the braided and symmetric version, we refer the reader to the \cref{sec:monoidal}.

We can also restrict to the context of split fibrations and strict indexed categories. 

\begin{thm}
\label{thm:mainthmsplit}
    There are 2-equivalences
    \begin{gather*}
        \MonFib_\spl \simeq \MonICat_\spl \\
        \MonOpFib_\spl \simeq \MonOpICat_\spl
    \end{gather*}
    between monoidal split (op)fibrations and monoidal strict (op)indexed categories, as well as for the fixed-base case.
\end{thm}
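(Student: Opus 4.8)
The plan is to repeat the argument of \cref{thm:mainthm} verbatim in the ``split/strict'' setting, replacing the monoidal $2$-equivalence $\Fib\simeq\ICat$ by its restriction $\Fib_\spl\simeq\ICat_\spl$ and then invoking the general result on pseudomonoids in monoidal $2$-categories. Recall from \cref{sec:fibrations} that the Grothendieck construction restricts to a $2$-equivalence between the $2$-category $\Fib_\spl$ of split fibrations, split fibred functors and fibred natural transformations, and the $2$-category $\ICat_\spl$ of strict indexed categories (honest $2$-functors $\X\op\to\Cat$), strict natural transformations and modifications; dually $\OpFib_\spl\simeq\OpICat_\spl$.

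First I would equip $\Fib_\spl$ and $\ICat_\spl$ with cartesian monoidal $2$-structures compatible with those of \cref{Fib_cart} and \cref{ICat_cart}. On $\ICat_\spl$ the product $\M\otimes\psN\maps(\X\times\Y)\op\to\Cat$ is again the pointwise product $(\M\otimes\psN)(x,y)=\M x\times\psN y$, which is a genuine $2$-functor because the cartesian product of $\Cat$ is strictly functorial, and the unit is $\Delta\1$. On $\Fib_\spl$ the product $P\times Q\maps\A\times\B\to\X\times\Y$ of \cref{Fib_cart} carries the componentwise splitting, i.e.\ a chosen cartesian lifting is a pair of chosen $P$- and $Q$-liftings, and $1_\1$ is the unit. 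Then I would prove the split analogue of \cref{lem:monoidalGrfunctor}: the $2$-equivalence $\Fib_\spl\simeq\ICat_\spl$ is (symmetric) monoidal. Both directions preserve these products essentially by construction --- unstraightening a pointwise product of strict indexed categories gives, by inspection of objects, morphisms and the chosen cleavage, the product of the unstraightenings with its componentwise splitting --- and the unit and the associativity/symmetry constraints are the evident comparison isomorphisms coming from the universal property of products, exactly as in \cref{lem:monoidalGrfunctor}.

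Second, since by definition $\MonFib_\spl=\PsMon(\Fib_\spl)$ and $\MonICat_\spl=\PsMon(\ICat_\spl)$, and likewise for the op-variants, \cref{prop:2equivpseudomon} --- which promotes a (braided/symmetric) monoidal $2$-equivalence to a $2$-equivalence of the associated $2$-categories of (braided/symmetric) pseudomonoids --- immediately yields
\[\MonFib_\spl\simeq\MonICat_\spl,\qquad \MonOpFib_\spl\simeq\MonOpICat_\spl,\]
together with their braided and symmetric versions. Here one should observe that a pseudomonoid in $\Fib_\spl$ is precisely a split fibration whose fibrewise-given tensor $\otimes_\A$ \emph{strictly} preserves the chosen cartesian liftings (the unit $1$-cell being automatically split), compare \cref{def:monoidal_fibration}, and that under the correspondence this is exactly the condition that the laxator and unitor of a monoidal strict indexed category be strictly (not merely pseudo-)natural; so the two sides match on the nose. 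For the fixed-base assertion I would restrict the equivalence along the inclusion of split monoidal (op)fibrations over a fixed monoidal base $(\X,\otimes,I)$: arguing as for \cref{cor:fixedbasemonoidalGr}, a split monoidal fibred functor over $1_\X$ corresponds to a strict monoidal strictly-natural transformation, so the equivalence cuts down to $\MonFib_\spl(\X)\simeq\MonICat_\spl(\X)$ and dually, and the entirely analogous fibrewise split case $\PsMon(\Fib_\spl(\X))\simeq\PsMon(\ICat_\spl(\X))$ follows the same way from monoidality of $\Fib_\spl(\X)\simeq\ICat_\spl(\X)$ with respect to the $\boxtimes$-structure.

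The only step with real content is the monoidality of the split Grothendieck $2$-equivalence, and even there the work is bookkeeping with chosen cleavages: one must check that the componentwise splitting on $P\times Q$ is the one picked out by unstraightening the pointwise product, and that ``strictly preserves the chosen liftings'' transports correctly across the equivalence, so that the sub-$2$-categories $\Fib_\spl\subseteq\Fib$, $\ICat_\spl\subseteq\ICat$ and their pseudomonoid $2$-categories are matched. Once that is in place, everything else is formal and reuses \cref{prop:2equivpseudomon} exactly as in the proof of \cref{thm:mainthm}.
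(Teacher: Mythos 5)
Your proposal is correct and follows essentially the same route as the paper: the paper's proof likewise applies $\PsMon(-)$ to the $2$-equivalence $\ICat_\spl \simeq \Fib_\spl$ and identifies the resulting pseudomonoids with the split/strict structures described in \cref{sec:monfib} and \cref{sec:monicat}, as the strict counterparts of \cref{thm:mainthm} and \cref{cor:fixedbasemonoidalGr}. You spell out the monoidality of the split Grothendieck $2$-equivalence in more detail than the paper does, but the strategy and the key identifications are the same.
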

\begin{proof}
    Again by applying $\PsMon(\textrm{-})$ to the 2-equivalence $\ICat_\spl\simeq\Fib_\spl$, we obtain equivalences between the respective structures discussed in \cref{sec:monfib,sec:monicat}, as the strict counterparts of \cref{thm:mainthm} and \cref{cor:fixedbasemonoidalGr}. Recall that a monoidal strict indexed category is a lax monoidal 2-functor $\X\op\to\Cat$ whose structure maps $(\phi,\phi_0)$ are strictly natural transformations, and corresponds to a split fibration which is monoidal like before, only the tensor product of the total category strictly preserves cartesian liftings.
\end{proof}

We close this section in a similar manner to Sections \ref{sec:monfib} and \ref{sec:monicat}, namely by working in the cartesian monoidal 2-categories $(\Fib(\X), \boxtimes, 1_\X)$ and $(\ICat(\X), \boxtimes, \Delta\1)$ of fibrations and indexed categories with a fixed base category. 

\begin{thm}
\label{thm:fibrmonGr}
    There are 2-equivalences between (op)fibrations with monoidal fibres and strong monoidal reindexing functors, and pseudofunctors into $\Mon\Cat$
    \begin{align*}
        \PsMon(\Fib(\X)) &\simeq \TCat_\pse(\X\op,\Mon\Cat)\quad 
        \\
        \PsMon(\OpFib(\X)) &\simeq \TCat_\pse(\X\op,\Mon\Cat) 
    \end{align*}
    Moreover, these restrict to 2-equivalences between split (op)fibrations with monoidal fibres and strict monoidal reindexing functors, and ordinary functors into $\Mon\Cat_\mathrm{st}$.
\end{thm}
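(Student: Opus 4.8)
The plan is to run the proof of \cref{thm:mainthm} one categorical level down, over a fixed base $\X$, replacing the global cartesian structures on $\Fib$ and $\ICat$ by the fixed-base cartesian structures $(\Fib(\X),\boxtimes,1_\X)$ and $(\ICat(\X),\boxtimes,\Delta\1)$ of \cref{Fib_X_cart} and \cref{eq:icatxprod}, and then applying $\PsMon(-)$. The first step is to check that the Grothendieck 2-equivalence $\Fib(\X)\simeq\ICat(\X)$ is a symmetric monoidal 2-equivalence for these structures. As in \cref{lem:monoidalGrfunctor}, this is automatic: both 2-functors are parts of an equivalence, hence preserve finite 2-limits, and both monoidal products are cartesian --- $P\boxtimes Q$ is the binary product in $\Cat/\X$ computed by the pullback \cref{Fib_X_cart}, $\M\boxtimes\psN$ is the binary product in $\TCat_\pse(\X\op,\Cat)$ computed pointwise \cref{eq:icatxprod}, and $1_\X$, $\Delta\1$ are the respective terminal objects. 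The only concrete point is that $\Int$ carries $\M\boxtimes\psN$ to $\Int\M\times_\X\Int\psN$ over $\X$: an object of the pullback lying over $x\in\X$ is a pair $(a,b)$ with $a\in\M x$, $b\in\psN x$, which is exactly an object of $\Int(\M\boxtimes\psN)$ over $x$, and similarly for morphisms; naturality in $\M,\psN$ and the symmetric monoidal coherence data are then forced by the universal property of products.

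The second step is to apply $\PsMon(-)$ and invoke \cref{prop:2equivpseudomon}, which sends a symmetric monoidal 2-equivalence to a 2-equivalence, obtaining $\PsMon(\Fib(\X))\simeq\PsMon(\ICat(\X))$ (and likewise for $\BrPsMon$, $\SymPsMon$ if one wants the braided and symmetric refinements). Now by the identification \cref{eq:imoncats}, already established in \cref{sec:monicat}, $\PsMon(\ICat(\X))=\TCat_\pse(\X\op,\Mon\Cat)$: a pseudomonoid in $(\ICat(\X),\boxtimes,\Delta\1)$ is precisely a pseudofunctor sending each $x$ to a monoidal category $(\M x,\otimes_x,I_x)$ and each $f$ to a strong monoidal reindexing functor, with 1-cells the pseudonatural transformations whose components are strong monoidal functors and 2-cells the modifications whose components are monoidal natural transformations. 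Composing the two equivalences yields $\PsMon(\Fib(\X))\simeq\TCat_\pse(\X\op,\Mon\Cat)$; the opfibration statement is entirely dual, using $\OpFib(\X)$, $\OpICat(\X)$, and $\PsMon(\OpICat(\X))=\TCat_\pse(\X,\Mon\Cat)$.

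For the split/strict refinement I would repeat verbatim with $\Fib_\spl(\X)$ and $\ICat_\spl(\X)=[\X\op,\Cat]$, whose Grothendieck 2-equivalence is again symmetric monoidal for the same reasons (the $\boxtimes$'s are still computed by pullback, respectively pointwise product). Applying $\PsMon(-)$ and noting that a pseudomonoid in $(\ICat_\spl(\X),\boxtimes,\Delta\1)$ is exactly a functor $\M\maps\X\op\to\Mon\Cat_\mathrm{st}$ --- the pseudonaturality squares \cref{eq:pseudonaturalmult} collapsing to equalities by strict naturality of the multiplication and unit --- with 1-cells the strictly natural transformations having strong monoidal components, gives the stated equivalences with $\TCat_\pse(\X\op,-)$ replaced by the ordinary functor 2-category into $\Mon\Cat_\mathrm{st}$; dually for opfibrations. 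I do not expect a serious obstacle: the substantive content is imported wholesale from \cref{prop:2equivpseudomon}, and the only thing to verify by hand --- the compatibility $\Int(\M\boxtimes\psN)\cong\Int\M\times_\X\Int\psN$ over $\X$ --- is forced by preservation of products, so the remaining work is purely organizational.
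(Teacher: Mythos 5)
Your proposal is correct and follows essentially the same route as the paper: the paper's proof is a one-line appeal to the fact that $\Fib(\X)\simeq\ICat(\X)$ is a monoidal 2-equivalence for the fixed-base cartesian structures, followed by \cref{prop:2equivpseudomon} and the identification \cref{eq:imoncats}. You have simply made explicit the verifications (preservation of $\boxtimes$ via $\Int(\M\boxtimes\psN)\cong\Int\M\times_\X\Int\psN$, and the split/strict case) that the paper leaves implicit.
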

\begin{proof}
    Since $\Fib(\X) \simeq \ICat(\X)$ is also a monoidal 2-equivalence, \cref{prop:2equivpseudomon} applies once more -- recall \cref{eq:imoncats}.
\end{proof}

These equivalences correspond to the \emph{fibrewise} monoidal structure on fibrations and indexed categories. In more detail, a pseudofunctor $\M \maps \X\op \to \Mon\Cat$ maps every object $x$ to a monoidal category $\M x$ and every morphism $f \maps x \to y$ to a strong monoidal functor $\M f \maps \M y \to \M x$; under the usual Grothendieck construction, these are precisely the fibre categories and the reindexing functors between them for the induced fibration, as described at the end of \cref{sec:monfib}. Notice how, in particular, $\X$ is \emph{not} a monoidal category, as was the case in \cref{cor:fixedbasemonoidalGr}.

A very similar, relaxed version of the fibrewise monoidal correspondence seems to connect the concepts of an \emph{indexed monoidal category}, defined in \cite{DescentForMonads} as a pseudofunctor $\M\maps\X\op\to\Mon\Cat_\lax$, and that of of a \emph{lax monoidal fibration}, defined in \cite{LaxMonFibs}. Notice that these terms are misleading with respect to ours: an indexed monoidal category is \emph{not} a monoidal indexed category, and also a lax monoidal fibration is \emph{not} a functor with a lax monoidal stucture.

Briefly, there is a full sub-2-category $\Fib_\opl(\X)\subseteq\Cat/\X$ of fibrations, namely fibred 1-cells \cref{commutativefibredcell} which are not required to have a cartesian functor on top. As discussed in \cite[Prop.3.6]{FramedBicats}, this is 2-equivalent to $\TCat_{ps,opl}(\X\op,\Cat)$, the 2-category of pseudofunctors, oplax natural transformations and modifications. Describing pseudomonoids therein appears to give rise to a fibration with monoidal fibres and \emph{lax} monoidal reindexing functors between them, or equivalently a pseudofunctor into $\Mon\Cat_\lax$. We omit the details so as to not digress from our main development.

\section{Summary of Structures}\label{sec:summary}

The bulk of this chapter is dedicated to proving various monoidal variations of the equivalence between fibrations and indexed categories, using general results in monoidal 2-category theory. In this section, we detail the descriptions of the (braided/symmetric) monoidal structures on the total category of the Grothendieck construction, assuming the appropriate data is present. We also provide a hands-on correspondence that underlies the proof of \cref{thm:fibrewise=global} regarding the transfer of monoidal structure from a functor to its target and vice versa. We hope this section can serve as a quick and clear reference on some fundamental constructions of this chapter.

\subsection{Monoidal Structures}\label{sec:monoidal}

As sketched under \cref{cor:fixedbasemonoidalGr}, let $(\X, \otimes, I)$ be a monoidal category, and \[(\M, \mu, \mu_0) \maps (\X\op, \otimes\op, I) \to (\Cat, \times, \1)\] a monoidal indexed category, a.k.a. lax monoidal pseudofunctor.  Recall that $\mu$ is pseudonatural transformation consisting of functors $\mu_{x,y} \maps \M x \times \M y \to \M(x \otimes y)$ for any objects $x$ and $y$ of $\X$, and natural isomorphisms
\[
\begin{tikzcd}[column sep = 70]
    \M z \times \M w 
    \arrow[r, "\M f \times \M g"]
    \arrow[d, "\mu_{z,w}", swap]
    &
    \M x \times \M y
    \arrow[d, "\mu_{x,y}"]
    \arrow[dl, phantom, "{\scriptstyle\stackrel{\mu_{f,g}}{\cong}}"]
    \\
    \M(z \otimes w)
    \arrow[r, "\M(f \otimes g)", swap]
    &
    \M(x \otimes y)
\end{tikzcd}\]
for any arrows $f \maps x \to z$ and $g \maps y \to w$ in $\X$. Also the unique component of $\mu_0$ is the functor $\mu_0\maps\1\to\M(I)$.

The induced tensor product functor on the total category, denoted as $\otimes_\mu \maps \inta \M \times \inta \M \to \inta \M$, is given on objects by 
\begin{displaymath}
    (x,a) \otimes_\mu (y,b) = (x \otimes y, \mu_{x,y}(a,b))
\end{displaymath}
On morphisms $\left(f\maps x\to z, k\maps a\to(\M f)c\right)$ and $\left(g\maps y\to w,\ell\maps b\to(\M g)d\right)$, we get
\[
    (f, k) \otimes_\mu (g, \ell) = (x\otimes y\xrightarrow{f \otimes g} z\otimes w, \mu_{f,g}(\mu_{x, y}(k, \ell)))
\]
where the latter is the composite morphism
\begin{displaymath}
    \mu_{x,y}(a,b)\xrightarrow{\mu_{x,y}(k,\ell)}\mu_{x,y}\left((\M f)(c),(\M g)(d)\right)\xrightarrow{\sim}\M(f\otimes g)(\mu_{z.w}(c,d))\textrm{ in }\M(x\otimes y).
\end{displaymath}
The monoidal unit is $I_\mu=(I, \mu_0)$.

If $a_{x,y,z} \maps (x \otimes y) \otimes z \to x \otimes (y \otimes z)$ denotes the associator in $\X$, the associator for $(\inta \M, \otimes_\mu, I_\mu)$ is given by
\begin{displaymath}
    \alpha_{(x,b), (y,c), (z,d)} = (\alpha_{x,y,z}, \omega_{x,y,z} (b,c,d))
\end{displaymath}
where $\omega$ is the invertible modification \cref{eq:omega}. 

If $l_x \maps I \otimes x \to x$ and $r_x \maps x \otimes I \to x$ are the left and right unitors in $\X$, the unitors in $\inta \M$ are defined as
\begin{gather*}
    \lambda_x = (l_x, \xi_x^{\text{-}1}(a))\maps (I,\mu_0)\otimes_\mu(x,a)\to(x,a) \\
    \rho_x = (r_x, \zeta_x(a))\maps(x,a)\otimes_\mu(I,\mu_0)\to(x,a)
\end{gather*}
where $\zeta$ and $\xi$ are invertible modifications as in \cref{eq:omega}.

We now turn to the correspondence between 1-cells of \cref{thm:mainthm}:
given a monoidal indexed 1-cell
\[
\begin{tikzcd}
    (\X, \otimes, I)\op
    \arrow[dr, "{(\M, \mu, \mu_0)}"]
    \arrow[dd, "{(F, \psi, \psi_0)\op}", swap]
    \\
    \arrow[r, phantom, "\Downarrow{\scriptstyle\tau}"]
    &
    (\Cat, \times, \1)
    \\
    (\Y, \otimes, I)\op
    \arrow[ur, "{(\psN, \nu, \nu_0)}", swap]
\end{tikzcd}\]
where $\M$ and $\psN$ are lax monoidal pseudofunctors and $F$ is a monoidal functor, as in \cref{prop:moni1cell}, we first of all obtain an ordinary fibred 1-cell $(P_\tau,F)\maps P_\M\to P_\psN$ as explained above \cref{eq:inducedfibred1cell}
\begin{displaymath}
\begin{tikzcd}
    \inta \M
    \arrow[r,"P_\tau"]
    \arrow[d,"P_\M"'] 
    & 
    \inta
    \psN
    \arrow[d,"P_\psN"] 
    \\
    \X
    \arrow[r,"F"'] 
    & 
    \Y
\end{tikzcd}
\end{displaymath}
with $P_\tau (x, a) = (F x, \tau_x (a))$. The functor $F$ is already monoidal, and $P_\tau$ obtains a monoidal structure too: for example, there are isomorphisms
\begin{displaymath}
    P_\tau(x,a) \otimes_\nu P_\tau (y, b) \xrightarrow{\sim} P_\tau ((x, a) \otimes_\mu (y, b)) \quad \textrm{ in } \inta \psN
\end{displaymath}
between the objects
\begin{align*}
    P_\tau (x, a) \otimes_\nu P_\tau (y, b)
    &= (Fx, \tau_x (a)) \otimes_\nu (Fy, \tau_y (b) 
    = (Fx \otimes Fy, \nu_{Fx, Fy} (\tau_x (a), \tau_y (b)) 
    \\
    P_\tau((x,a)\otimes_\mu(y,b))
    &= P_\tau(x\otimes y,\mu_{x,y}(a,b))
    = (F(x\otimes y),\tau_{x\otimes y}(\mu_{x,y}(a,b)))
\end{align*}
given by $\psi_{x,y} \maps Fx \otimes Fy \xrightarrow{\sim} F (x \otimes y)$ and by 
\[
    \nu_{Fx, Fy} (\tau_x (a), \tau_y (b)) \cong \psN (\psi_{x, y}) (\tau_{x \otimes y} (\mu_{x, y} (a, b)))
\]
essentially given by the monoidal pseudonatural isomorphism \cref{eq:monpseudocomponents} for $\tau \maps \M \Rightarrow \psN F\op$. As a result, $(P_\tau,F)$ is indeed a monoidal fibred 1-cell as in \cref{prop:monoidalfibred1cell}.

Finally, it can be verified that starting with a monoidal indexed 2-cell as in \cref{prop:moni2cell}, the induced fibred 2-cell \cref{eq:inducedfibred2cell} is monoidal, i.e.\ $P_m$ satisfies the conditions of a monoidal natural transformation.

Regarding the induced braided and symmetric monoidal structures, suppose that $(\X,\otimes,I)$ is a braided monoidal category, with braiding $b$ with components
\[
    \braid_{x,y} \maps x \otimes y \xrightarrow{\sim} y \otimes x;
\]
then $\X\op$ is braided monoidal with the inverse braiding, namely $(\X\op,\otimes\op,I,\braid^{-1})$.
Now if $(\M,\mu,\mu_0)\maps\X\op\to\Cat$ is a \emph{braided} lax monoidal pseudofunctor, i.e.\ a braided monoidal indexed category,
by \cref{thm:mainthm} we have an induced braided monoidal structure on $(\inta\M, \otimes_\mu, I_\mu)$, namely
\[
    B_{(x, a), (y, b)} \maps (x, a) \otimes_\mu (y,b)=(x\otimes y,\mu_{x,y}(a,b)) \to (y,b) \otimes_\mu (x, a)=(y\otimes x,\mu_{y,x}(b,a))
\] 
are given by $\braid_{x,y} \maps x \otimes y \cong y \otimes x$ in $\X$ and $(v_{x,y})_{(a,b)} \maps \mu_{x,y} (a,b) \cong \M (\braid^{-1}_{x,y}) (\mu_{y,x} (b,a))$, where $v$ is as in \cref{eq:brweakmonpseudo}. 

If $\M$ is a symmetric lax monoidal pseudofunctor, it can be verified that 
\[
    B_{(y, b),(x, a)} \circ B_{(x, a),(y, b)} = 1_{(x,a) \otimes_\mu (y,b)}
\] 
therefore $\inta \M$ is also symmetric monoidal, as is the monoidal fibration $P_\M \maps \inta \M \to \X$.

\subsection{Monoidal Indexed Categories as Ordinary Pseudofunctors}\label{monicat=imoncat}

Here we detail the correspondence between monoidal opindexed categories and a pseudofunctors into $\Mon\Cat$ when the domain is a cocartesian monoidal category, as established by \cref{thm:fibrewise=global}; the one for indexed categories is of course similar. We denote by $\nabla_x \maps x+x \to x$ the induced natural components due to the universal property of coproduct, and $\iota_x \maps x \to x+y$ the inclusion into a coproduct.

Start with a lax monoidal pseudofunctor $\M \maps (\X, +, 0) \to (\Cat, \times, \1)$ equipped with $\mu_{x,y}\maps \M(x) \times \M(y) \to \M(x + y)$ and $\mu_0 \maps \1 \to \M(0)$, which gives the global monoidal structure \cref{eq:globalmonstr} of the corresponding opfibration. There exists an induced monoidal structure on each fibre $\M(x)$ as follows:
\begin{gather}
\label{eq:explicitstructure1}
    \otimes_x \maps \M(x) \times \M(x) \xrightarrow{\mu_{x,x}} \M(x + x) \xrightarrow{\M(\nabla)} \M(x)
    \\
    I_x \maps \1 \xrightarrow{\mu_0} \M(0) \xrightarrow{\M(!)} \M(x) \nonumber
\end{gather}
Moreover, each $\M f \maps \M x \to \M y$ is a strong monoidal functor, with $\phi_{a,b} \maps (\M f)(a) \otimes_y (\M f)(b) \xrightarrow{\sim} \M f(a \otimes_xb)$ and $\phi_0 \maps I_y \xrightarrow{\sim} (\M f) I_x$ essentially given by the following isomorphisms
\begin{equation}\label{eq:strongmonreindex}
\begin{tikzcd}[row sep=.3in,column sep=.8in]
    \M x \times \M x
    \arrow[r,"\M f\times\M f"]
    \arrow[d,"\mu_{x,x}"']
    \arrow[dr,phantom,"{\scriptstyle\stackrel{\mu^{f,f}}{\cong}}"description] 
    & 
    \M y \times \M y
    \arrow[d,"\mu_{y,y}"] 
    \\
    \M(x+x)
    \arrow[d,"\M(\nabla_x)"']
    \arrow[r,"\M(f+f)"description]
    \arrow[dr,phantom,"{\scriptstyle\cong}"description] 
    & \M(y+y)\arrow[d,"\M(\nabla_y)"] 
    \\
    \M x \arrow[r,"\M f"'] 
    & 
    \M y
\end{tikzcd}
\qquad
\begin{tikzcd}
    \1
    \arrow[r,"\mu_0"]
    \arrow[d,"\mu_0"']
    \arrow[ddr,phantom,"{\scriptstyle\cong}"description] 
    & 
    \M(0)
    \arrow[dd,"\M(!)"] 
    \\
    \M(0)
    \arrow[d,"\M(!)"'] 
    &\\
    \M x 
    \arrow[r,"\M f"']
    & 
    \M y
\end{tikzcd}
\end{equation}
since $\nabla$ and $!$ are natural and $\M$ is a pseudofunctor.

In the opposite direction, take an ordinary pseudofunctor $\M\maps\X\to\Mon\Cat$ into the 2-category of monoidal categories, strong monoidal functors and monoidal natural transformations, with $\otimes_x\maps\M(x)\times\M(x)\to\M(x)$ and $I_x$ the fibrewise monoidal structures in every $\M x$. We can use those to endow $\M$ with a lax monoidal structure via 
\begin{gather*}
    \mu_{x,y} \maps \M(x) \times \M(y) \xrightarrow{\M(\iota_x) \times \M(\iota_y)} \M(x+y) \times \M(x+y) \xrightarrow{\otimes_{x+y}} \M(x+y) 
    \\
    \mu_0 \maps \1 \xrightarrow{I_0} \M(0)
\end{gather*}
The fact that all $\M f$ are strong monoidal imply that the above components form pseudonatural transformations, and all appropriate conditions are satisfied. 

In the strict context, a lax monoidal 2-functor $\M\maps(\X,+,0)\to(\Cat,\times,\1)$ with natural laxator and unitor bijectively corresponds to a functor $\X\to\Mon\Cat_\textrm{st}$ since \cref{eq:strongmonreindex} are in fact strictly commutative, by naturality of $\mu,\mu_0$ and functoriality of $\M$.
    
In the even more special case of an ordinary lax monoidal functor $\M\maps(\X,+,0)\to(\Cat,\times,\1)$, the fibres $\M(x)$ turn out to be \emph{strict} monoidal. For example, strict associativity of the tensor is established by
\begin{displaymath}
\begin{tikzcd}[column sep=.15in]
    \M x \times \M x \times \M x
    \arrow[rr, "1 \times \otimes_x"]
    \arrow[dr, "1 \times \mu_{x, x}"']
    \arrow[dddd] 
    &
    \phantom{A}
    & 
    \M x\times \M x
    \arrow[ddrr, bend left, "\otimes_x"]
    \arrow[dr, "\mu_{x, x}"description] 
    \\&
    \M x\times \M (x+x)
    \arrow[dd, phantom, "(*)"]
    \arrow[ur, "1\times\M(\nabla)"description]
    \arrow[dr, "\mu_{x, x+x}"description] 
    && 
    \M (x+x)
    \arrow[dr, "\M\nabla"description]
    \\&& 
    \M(x+x+x)
    \arrow[ur, "\M (1+\nabla)"description]
    \arrow[dr, "\M(\nabla+1)"description] 
    && 
    \M x 
    \\& 
    \M(x+x)\times\M x
    \arrow[ur, "\mu_{x+x, x}"description]
    \arrow[dr, "\M(\nabla)\times1"description] 
    && 
    \M(x+x)
    \arrow[ur, "\M\nabla"description] 
    \\
    \M x\times\M x\times\M x
    \arrow[ur, "\mu_{x, x}\times1"]
    \arrow[rr, "\otimes_x\times1"'] 
    &\phantom{A}&
    \M x\times\M x
    \arrow[ur, "\mu_x"description]
    \arrow[uurr, bend right, "\otimes_x"'] 
    &&
\end{tikzcd}
\end{displaymath}
where the three diamond-shaped diagrams on the right commute due to naturality of $\mu$ as well as associativity of $\nabla$ and functoriality of $\M$ already in the monoidal strict opindexed case, whereas $(*)$ is in general $\omega$ from \cref{eq:omega} which in this case is an identity, and the four triangular diagrams commute due to \cref{eq:explicitstructure1}.

\subsection{Comparison with Higher-Dimensional Grothendieck Constructions}

Monoidal categories are precisely bicategories with one object. As recalled in \cref{sec:fibrations}, there is a theory of fibred bicategories and indexed bicategories, and a corresponding Grothendieck construction. It is natural to consider the possibility that the monoidal Grothendieck constructions presented here are special cases of this bicategorical version. However, it is easy to see that this cannot be the case. When one restricts their view to just the objects, the bicategorical Grothendieck construction is just taking the disjoint union of the object sets of the fibres. If you consider an indexed monoidal category as a special case of an indexed bicategory, where each fibre has one object, then generally you would not expect the total bicategory to have one object. It would have as many objects as the base category. Thus, the result would not be a monoidal category. The construction given here always produces a monoidal category.

\section{The (Co)cartesian Case}\label{sec:cartesiancase}

In the previous section, we obtain two different equivalences between fixed-base fibrations and fixed-domain indexed categories of monoidal flavor: \cref{cor:fixedbasemonoidalGr} where both total and base categories are monoidal, and \cref{thm:fibrmonGr} where only the fibres are monoidal. Clearly, neither of these two cases implies the other in general. The global monoidal structure as defined in \cref{eq:globalmonstr} sends two objects in arbitrary fibres to a new object lying in the fibre of the tensor of their underlying objects in the base, whereas having a fibre-wise tensor products does not give a way of multiplying objects in different fibres of the total category.

In \cite{FramedBicats}, Shulman introduces monoidal fibrations (\cref{def:monoidal_fibration}) as a building block for fibrant double categories. Due to the nature of the examples, the results restrict to the case where the base of the monoidal fibration $P \maps \A \to \X$ is equipped with specifically a cartesian or cocartesian monoidal structure; the main idea is that these fibrations form a ``parameterized family of monoidal categories''. Formally, a central result therein lifts the Grothendieck construction to the monoidal setting, by showing an equivalence between monoidal fibrations over a fixed (co)cartesian base and ordinary pseudofunctors into $\Mon\Cat$.

\begin{thm}[\cite{FramedBicats}]
\label{thm:Shulman}
    If $\X$ is cartesian monoidal, 
    \begin{equation}\label{eq:Shulmanequiv}
        \MonFib(\X) \simeq \TCat_\pse(\X\op, \Mon\Cat)
    \end{equation}
    Dually, if $\X$ is cocartesian monoidal, $\MonOpFib(\X) \simeq \TCat_\pse(\X, \Mon\Cat)$.
\end{thm}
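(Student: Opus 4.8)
The plan is to deduce \cref{thm:Shulman} from the two fixed-base equivalences already obtained, together with the comparison between fibrewise and global monoidal structure that becomes available once the base is (co)cartesian, rather than to argue about fibrations directly. For cartesian $\X$ I would start from \cref{cor:fixedbasemonoidalGr}, which gives $\MonFib(\X)\simeq\MonTCat_\pse(\X\op,\Cat)$ and identifies a monoidal fibration over $\X$ with a lax monoidal pseudofunctor out of $(\X\op,\otimes\op,I)$. Since $\X$ is cartesian, the tensor $\otimes\op$ on $\X\op$ is the \emph{cocartesian} monoidal structure, so the fibrewise-versus-global comparison of \cref{thm:fibrewise=global}, whose object-level content is written out explicitly in \cref{monicat=imoncat}, applies and yields $\MonTCat_\pse(\X\op,\Cat)\simeq\TCat_\pse(\X\op,\Mon\Cat)$. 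Composing these two equivalences is exactly \cref{eq:Shulmanequiv}. If one prefers the intermediate step on the fibration side, \cref{thm:fibrmonGr} re-expresses $\TCat_\pse(\X\op,\Mon\Cat)$ as $\PsMon(\Fib(\X))$, so the same chain also recovers $\MonFib(\X)\simeq\PsMon(\Fib(\X))$ for cartesian $\X$; in other words \cref{thm:Shulman} and \cref{thm:fibrewise=global} are two packagings of one fact, connected by \cref{thm:fibrmonGr}. The cocartesian/opfibration half of the statement follows by the evident dualization, using $\MonOpFib(\X)\simeq\MonTCat_\pse(\X,\Cat)$ and the dual form of the comparison.

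All the real content therefore lies in the fibrewise-versus-global comparison: over a cocartesian monoidal domain, a lax monoidal structure on a pseudofunctor $\M\maps\X\op\to\Cat$ carries exactly the same information as a lift of $\M$ through the forgetful functor $U\maps\Mon\Cat\to\Cat$. The two passages are the formulas recorded in \cref{monicat=imoncat}: from a laxator $\mu$ one builds the fibrewise tensor $\otimes_x=\M(\nabla_x)\circ\mu_{x,x}$ and unit $I_x=\M(!)\circ\mu_0$, using the codiagonal $\nabla_x\maps x+x\to x$ and the unique arrow $!\maps 0\to x$; conversely, from fibrewise tensors one recovers the laxator as $\mu_{x,y}=\otimes_{x+y}\circ(\M(\iota_x)\times\M(\iota_y))$ via the coproduct injections. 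I would first check that each assignment is well defined --- strong monoidality of every reindexing functor $\M f$ on one side, pseudonaturality of $\mu$ and $\mu_0$ on the other --- then that the two passages are mutually inverse up to coherent isomorphism, and finally that monoidal indexed $1$- and $2$-cells transport to pseudonatural transformations and modifications into $\Mon\Cat$ under the same formulas, with both directions $2$-functorial. The universal property of the coproduct is what makes the coherences match: the identities $\nabla_x\circ\iota_x=1_x$, naturality of $\nabla$ and $\iota$, and associativity and commutativity of $\nabla$ are precisely what convert the associativity and unit axioms of a lax monoidal pseudofunctor into those of a pseudofunctor into $\Mon\Cat$, as displayed in diagram $(*)$ of \cref{monicat=imoncat} --- which collapses to an identity in the strict and lax cases and is otherwise the associator modification $\omega$ of the monoidal Grothendieck data.

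The main obstacle is this coherence bookkeeping: one must keep track of how the pseudofunctoriality constraints of $\M$, the laxator-naturality isomorphisms $\mu_{f,g}$, and the modification $\omega$ interact with the coproduct structure maps, and confirm that no axioms are needed beyond those already recorded for lax monoidal pseudofunctors and for pseudomonoids in $\ICat(\X)$. Once that verification is complete, \cref{thm:Shulman} requires no further analysis of fibrations, since \cref{cor:fixedbasemonoidalGr} and \cref{thm:fibrmonGr} were already shown to respect the relevant monoidal $2$-categorical structure; the proof then amounts to transport of structure along those equivalences.
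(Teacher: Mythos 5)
Your proposal is correct and follows essentially the same route as the paper: reduce to the indexed-category side via \cref{cor:fixedbasemonoidalGr}, then establish the exchange of monoidal structure between a pseudofunctor $\X\op\to\Cat$ and its target $\Mon\Cat$ over the cocartesian domain $\X\op$, and compose the two equivalences to obtain \cref{eq:Shulmanequiv}. The only difference is one of packaging: the paper derives that exchange formally from the more general \cref{lem:helplemma} (via $\Mon\Cat\simeq\PsMon(\Cat)\simeq\MonTCat_\pse(\1,\Cat)$ and the currying equivalence over a cocartesian domain), whereas you propose to verify the explicit $\nabla$/$\iota$ formulas of \cref{monicat=imoncat} directly --- but those are exactly the constructions appearing in the proof of \cref{lem:helplemma}, so the underlying computation is the same.
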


Bringing all these structures together, we obtain the following.

\begin{thm}\label{thm:fibrewise=global}
    If $\X$ is a cartesian monoidal category, 
    \begin{displaymath}
    \begin{tikzcd}[ampersand replacement=\&, sep=.25in]
        \MonFib(\X)
        \arrow[r, "\simeq"]
        \arrow[d, "\simeq"'{anchor=south, rotate=90, inner sep=.5mm}]
        \&
        \MonTCat_\pse(\X\op, \Cat)
        \arrow[d, "\simeq"{anchor=south, rotate=270, inner sep=.5mm}] 
        \\
        \PsMon(\Fib(\X))
        \arrow[r, "\simeq"] 
        \& 
        \TCat_\pse(\X\op, \Mon\Cat)
    \end{tikzcd}
    \end{displaymath}
    Dually, if $\X$ is a cocartesian monoidal category, 
    \begin{displaymath}
    \begin{tikzcd}[ampersand replacement=\&, sep=.25in]
        \MonOpFib(\X)
        \arrow[r, "\simeq"]
        \arrow[d, "\simeq"'{anchor=south, rotate=90, inner sep=.5mm}]
        \&
        \MonTCat_\pse(\X, \Cat)
        \arrow[d, "\simeq"{anchor=south, rotate=270, inner sep=.5mm}] 
        \\
        \PsMon(\OpFib(\X))
        \arrow[r, "\simeq"] 
        \& 
        \TCat_\pse(\X, \Mon\Cat)
    \end{tikzcd}
    \end{displaymath}
    In the strict context, the restricted equivalences give a correspondence between monoidal split fibrations over $\X$ and functors $\X^{\op} \to \Mon\Cat_\mathrm{st}$, and between monoidal split opfibrations over $\X$ and functors $\X \to \Mon\Cat_\mathrm{st}$.
\end{thm}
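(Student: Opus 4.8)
The plan is to assemble Theorem~\ref{thm:fibrewise=global} purely formally, as a composite of equivalences already established in this chapter together with Shulman's Theorem~\ref{thm:Shulman}. The square has four corners: $\MonFib(\X)$ (upper left), $\MonTCat_\pse(\X\op,\Cat)$ (upper right), $\PsMon(\Fib(\X))$ (lower left), and $\TCat_\pse(\X\op,\Mon\Cat)$ (lower right). The top edge is exactly \cref{cor:fixedbasemonoidalGr}, the restriction of the global monoidal Grothendieck equivalence of \cref{thm:mainthm} to a fixed monoidal base, recalling the identification $\MonICat(\X)=\MonTCat_\pse(\X\op,\Cat)$ from \cref{eq:monicatX}. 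The bottom edge is \cref{thm:fibrmonGr}, the fibrewise monoidal Grothendieck equivalence, using the identification $\PsMon(\ICat(\X))=\TCat_\pse(\X\op,\Mon\Cat)$ from \cref{eq:imoncats}. The right edge, $\MonTCat_\pse(\X\op,\Cat)\simeq\TCat_\pse(\X\op,\Mon\Cat)$, is the content of \cref{thm:Shulman} transported through the top and bottom equivalences—or, more directly, it is the ``fibrewise versus global'' conversion spelled out concretely in \cref{monicat=imoncat}, which exhibits how a lax monoidal pseudofunctor into $\Cat$ on a cartesian base yields fibrewise monoidal structures via $\otimes_x=\M(\nabla_x)\circ\mu_{x,x}$ and $I_x=\M(!)\circ\mu_0$, and conversely. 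So the only thing left to prove is the left edge $\MonFib(\X)\simeq\PsMon(\Fib(\X))$ and the commutativity of the square.

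First I would establish the left-hand vertical equivalence. Since $\Fib(\X)\simeq\ICat(\X)$ is a monoidal 2-equivalence (this is the fixed-base analogue of \cref{lem:monoidalGrfunctor}; it follows because both 2-functors of \cref{thm:Grothendieck} preserve the relevant limits and pullbacks, and the monoidal structures \cref{Fib_X_cart} and \cref{eq:icatxprod} are both built from pullbacks/products that are preserved), applying $\PsMon(-)$ gives $\PsMon(\Fib(\X))\simeq\PsMon(\ICat(\X))$ by \cref{prop:2equivpseudomon}. Composed with \cref{thm:fibrmonGr} this already identifies $\PsMon(\Fib(\X))$ with $\TCat_\pse(\X\op,\Mon\Cat)$; the left edge $\MonFib(\X)\simeq\PsMon(\Fib(\X))$ is then obtained by transporting along whichever two sides of the square we choose, and the strict-context versions follow by the same argument applied to split fibrations, using \cref{thm:mainthmsplit} and the strict parts of \cref{thm:fibrmonGr} and \cref{thm:Shulman}.

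The remaining work, and the main obstacle, is checking that the square actually commutes (up to natural isomorphism of 2-functors)—i.e., that the two routes from $\MonFib(\X)$ to $\TCat_\pse(\X\op,\Mon\Cat)$ agree. Going across then down sends a monoidal fibration to its associated lax monoidal pseudofunctor $\M\maps\X\op\to\Cat$ and then reads off the fibrewise tensor as in \cref{eq:explicitstructure1}; going down then across first extracts the fibrewise monoidal structure on each fibre $\A_x$ directly from the fact that $\otimes_\A$ preserves cartesian liftings over the diagonal and codiagonal of $\X$, then packages these as a pseudofunctor into $\Mon\Cat$. The verification amounts to unwinding \cref{def:monoidal_fibration}: given $f\maps x\to y$ in $\X$ and the cartesian-lifting-preservation of $\otimes_\A$, one shows that the fibrewise tensor on $\A_x$ obtained from restricting $\otimes_\A$ coincides with $\M(\nabla_x)\circ\mu_{x,x}$, where $\mu$ is the laxator reconstructed from $\otimes_\A$. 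This is a diagram chase with cartesian liftings and the universal property of the cartesian (resp.\ cocartesian) structure on $\X$, exactly the type of computation sketched in \cref{monicat=imoncat} and in \cite[12.7]{FramedBicats}; I would present it as a reference to that concrete correspondence rather than redo it in full. The dual (cocartesian) case is formally identical with opfibrations, and the strict cases require only that the relevant natural isomorphisms \cref{eq:strongmonreindex} degenerate to equalities, which they do by naturality of $\nabla$, $!$ and functoriality of $\M$.
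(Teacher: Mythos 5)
Your proposal is correct as a derivation of the literal statement, but it routes around the one step where the paper's proof has its real content. You obtain the right-hand edge $\MonTCat_\pse(\X\op,\Cat)\simeq\TCat_\pse(\X\op,\Mon\Cat)$ either by transporting \cref{thm:Shulman} through the top and bottom edges or by the explicit conversion of \cref{monicat=imoncat}; the paper instead proves that edge directly and purely formally, with no reference to fibrations and no appeal to Shulman, via
\[
\TCat_\pse(\X\op,\Mon\Cat)\simeq\TCat_\pse\bigl(\X\op,\MonTCat_\pse(\1,\Cat)\bigr)\simeq\MonTCat_\pse(\X\op\times\1,\Cat)\simeq\MonTCat_\pse(\X\op,\Cat),
\]
using \cref{eq:PsMon} and part 2 of \cref{lem:helplemma}, which is the only place the cocartesianness of $\X\op$ enters. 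The top and bottom edges are \cref{cor:fixedbasemonoidalGr} and \cref{thm:fibrmonGr}, as you say, and the left edge is then simply the composite. This ordering matters: the square is intended as a \emph{new proof} of Shulman's theorem, recovered as the diagonal, so your first option inverts the intended logical flow (deriving the square from Shulman rather than Shulman from the square), and your second option amounts to expanding the expository computations of \cref{monicat=imoncat} into exactly the piece-by-piece construction the paper is deliberately avoiding. Relatedly, the commutativity check you identify as the ``main obstacle'' does not arise in the paper's proof: the left edge is not given an independent fibration-side construction against which the composite of the other three must be compared, so there is no diagram chase with cartesian liftings to perform --- that concrete comparison is deferred to the expository \cref{monicat=imoncat} and is not part of the argument.
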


The original proof of \cref{thm:Shulman} is an explicit, piece-by-piece construction of an equivalence, and employs the reindexing functors $\Delta^*$ and $\pi^*$ induced by the diagonal and projections in order to move between the appropriate fibres and build the required structures. The global monoidal structure is therein called \emph{external} and the fibre-wise \emph{internal}.

Here we present a different argument that does not focus on the fibrations side. The equivalence between lax monoidal pseudofunctors $\X\op \to \Cat$ and ordinary pseudofunctors $\X\op \to \Mon\Cat$, which essentially provides a way of transferring the monoidal structure from the target category to the functor itself and vice versa, brings a new perspective on the behavior of such objects.

\begin{lem}\label{lem:helplemma}
For any two monoidal 2-categories $\K$ and $\L$, the following are true.
\begin{enumerate}
    \item For an arbitrary 2-category $\A$, 
    \begin{equation}\label{eq:equiv1}
        \TCat_\pse(\A, \MonTCat_\pse(\K, \L))\simeq\MonTCat_\pse(\K, \TCat_\pse(\A, \L))
    \end{equation}
    \item For a cocartesian 2-category $\A$, 
    \begin{equation}\label{eq:equiv2}
        \TCat_\pse(\A, \MonTCat_\pse(\K, \L))\simeq\MonTCat_\pse(\A\times\K, \L)
    \end{equation}
\end{enumerate}
\end{lem}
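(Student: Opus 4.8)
\textbf{Proof plan for Lemma \ref{lem:helplemma}.}

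The plan is to treat both parts uniformly as consequences of general ``exponential law'' type manipulations for pseudofunctor 2-categories together with the fact that a lax monoidal structure is itself a piece of pseudofunctorial data in a suitable sense. For part (1), the key observation is that $\MonTCat_\pse(\K,\L)$ is defined as the 2-category of lax monoidal pseudofunctors $\K\to\L$, monoidal pseudonatural transformations and monoidal modifications, and that this 2-category is built from $\TCat_\pse(\K,\L)$ by equipping objects with extra structure (the laxator and unitor 1-cells, together with the associativity and unitality 2-cells), subject to axioms. So a pseudofunctor $\A\to\MonTCat_\pse(\K,\L)$ assigns to each object of $\A$ a lax monoidal pseudofunctor $\K\to\L$, to each 1-cell a monoidal pseudonatural transformation, etc., all coherently. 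First I would invoke the (standard) 2-categorical exponential adjunction $\TCat_\pse(\A,\TCat_\pse(\K,\L))\simeq\TCat_\pse(\K,\TCat_\pse(\A,\L))$ --- this is just currying of pseudofunctors, and both sides are equivalent to $\TCat_\pse(\A\times\K,\L)$ when one forgets the monoidal decorations. Then I would check that, under this equivalence, the monoidal decorations on the left-hand side (a lax monoidal structure in $\MonTCat_\pse(\K,\L)$ varying pseudofunctorially over $\A$) correspond exactly to the monoidal decorations on the right-hand side (a lax monoidal structure on a pseudofunctor $\K\to\TCat_\pse(\A,\L)$, where the target has the pointwise monoidal 2-structure). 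This matching is essentially bookkeeping: the laxator $\mu_{k,k'}$ of the left-hand object, being an $\A$-indexed family of monoidal pseudonatural transformations, transposes to a pseudonatural transformation into $\TCat_\pse(\A,\L)$, and its naturality/coherence 2-cells transpose correspondingly.

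For part (2), I would reduce to part (1) using the hypothesis that $\A$ is cocartesian. The point is that when $\A$ has finite coproducts (as a 2-category, i.e.\ is cocartesian monoidal), a pseudofunctor out of $\A$ into any 2-category $\mathcal{M}$ with finite products --- here $\mathcal{M}=\MonTCat_\pse(\K,\L)$, which does have products computed pointwise --- carries a canonical ``pointwise monoidal'' structure, and conversely; this is the same mechanism used in \cref{thm:fibrewise=global} and in the discussion around \cref{eq:explicitstructure1}, transferring a monoidal structure between a functor and its target via the diagonal and codiagonal. Concretely, I would argue $\TCat_\pse(\A,\MonTCat_\pse(\K,\L))\simeq\MonTCat_\pse(\K,\TCat_\pse(\A,\L))$ by part (1), and then observe that since $\A$ is cocartesian, $\TCat_\pse(\A,\L)$ receives a transferred role: applying the cocartesian-base version of the transfer (pseudofunctors out of a cocartesian 2-category correspond to monoidal-pointwise data) lets me rewrite $\MonTCat_\pse(\K,\TCat_\pse(\A,\L))$ as $\MonTCat_\pse(\A\times\K,\L)$. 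Alternatively, and perhaps more cleanly, I would go directly: a pseudofunctor $\A\to\MonTCat_\pse(\K,\L)$ unpacks to a pseudofunctor $\A\times\K\to\L$ (by currying, ignoring monoidal data) equipped with, for each $a$, a lax monoidal structure on $\L(a,-)$ natural in $a$; using the codiagonal $\nabla\colon a+a\to a$ and the unique map $!\colon 0\to a$ in $\A$ exactly as in \cref{eq:explicitstructure1}, this assembles into a single lax monoidal structure on the pseudofunctor $\A\times\K\to\L$ with respect to the monoidal structure $(\A\times\K)^{\times 2}\to\A\times\K$ coming from the coproduct on $\A$ and the tensor on $\K$.

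The main obstacle I anticipate is verifying the coherence axioms rather than constructing the correspondences on underlying data --- in particular, checking that the transposed laxators in part (1) satisfy the pseudonaturality hexagon and the two unit triangles, and that in part (2) the structure built from $\nabla$ and $!$ really is associative and unital up to the required invertible modifications (this is where $\omega$, $\xi$, $\zeta$ from \cref{eq:omega} and the analogue of the pentagon-style diagram displayed just before \cref{sec:cartesiancase} reappear). I expect these to follow from the universal properties of the coproduct in $\A$ (naturality and associativity of $\nabla$) and functoriality/pseudofunctoriality of the data, exactly as the large commuting diagram establishing strict associativity in \cref{monicat=imoncat} does in the one-object strict case; but writing them out carefully is the genuinely laborious part. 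I would organize the proof so that part (1) does the real work and part (2) is a short corollary, and I would explicitly note that both equivalences are 2-equivalences of 2-categories (on 1-cells: monoidal pseudonatural transformations transpose to monoidal pseudonatural transformations; on 2-cells: monoidal modifications to monoidal modifications), since that is what is needed for the downstream application to \cref{thm:fibrewise=global}.
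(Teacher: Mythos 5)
Your overall strategy coincides with the paper's: both parts rest on the standard currying equivalences $\TCat_\pse(\A,\TCat_\pse(\K,\L))\simeq\TCat_\pse(\A\times\K,\L)\simeq\TCat_\pse(\K,\TCat_\pse(\A,\L))$, and the genuine content is matching the monoidal decorations on the two sides. For part (1) your plan is exactly the paper's: the componentwise laxators $\phi^a_{x,y}\maps(\F a)(x)\otimes(\F a)(y)\to(\F a)(x\otimes y)$ transpose into the components of a pseudonatural laxator valued in $\TCat_\pse(\A,\L)$ equipped with its pointwise tensor, and the coherence axioms transpose along with them.

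Your direct construction for part (2), however, reaches for the wrong comparison maps. To equip the uncurried pseudofunctor $\tilde\F\maps\A\times\K\to\L$, $\tilde\F(a,x)=(\F a)(x)$, with a laxator you must produce 1-cells $\tilde\F(a,x)\otimes_\L\tilde\F(b,y)\to\tilde\F(a+b,\,x\otimes_\K y)$ for \emph{distinct} $a$ and $b$, whereas the fibrewise laxators $\phi^c_{x,y}$ only multiply two objects lying over the \emph{same} $c$. The codiagonal $\nabla\maps a+a\to a$ and the map $!\maps 0\to a$ cannot bridge that mismatch; what is needed is to push both factors into the common fibre over $a+b$ along the coproduct \emph{inclusions}, i.e.\ apply $(\F\iota_a)_x\otimes(\F\iota_b)_y$ and then $\phi^{a+b}_{x,y}$, with unit $\psi_0=\phi_0^0$. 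The maps $\nabla$ and $!$ belong to the \emph{opposite} direction of the equivalence, recovering the fibrewise structure from a lax monoidal $\G\maps\A\times\K\to\L$ via $\G(\nabla,1)\circ\psi_{(a,x),(a,y)}$ and $\G(!,1)\circ\psi_0$ --- which is precisely what \cref{eq:explicitstructure1} computes, so citing that display for the forward direction indicates the two directions have been conflated. Once the inclusions are put in the right place your direct route is exactly the paper's argument, and your alternative route through part (1) plus a transfer along the cocartesian structure is a harmless repackaging of the same construction.
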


\begin{proof}
    First of all, recall \cite[1.34]{FibrationsinBicats} that there are equivalences
    \begin{displaymath}
        \TCat_\pse(\A, \TCat_\pse(\K, \L))\simeq
        \TCat_\pse(\A\times\K, \L)\simeq
        \TCat_\pse(\K, \TCat_\pse(\A, \L))
    \end{displaymath}
    which underlie \cref{eq:equiv1} and \cref{eq:equiv2} for the respective pseudofunctors; so the only part needed is the correspondence between the respective monoidal structures. Notice that $\A \times \K$ is a monoidal 2-category since both $\A$ and $\K$ are, and also $\TCat_\pse (\A, \L)$ is monoidal since $\L$ is: define $\otimes_{[]}$ and $I_{[]}$ by $(\F \otimes_{[]} \G)(a) = \F a \otimes_\L \G a$ (similarly to \cref{eq:icatxprod}) and $I_{[]} \maps \A \xrightarrow{!} \1 \xrightarrow{I_\L} \L$.
    
    First, we prove 1. Take a pseudofunctor $\F \maps \A \to \MonTCat_\pse (\K, \L)$. For every $a \in \A$, its image pseudofunctor $\F a$ is lax monoidal, i.e.\ comes equipped with maps in $\L$:
    \begin{equation}
    \label{eq:Faweakmon}
        \phi_{x, y}^a \maps (\F a) (x) \otimes_\L (\F a) (y) \to (\F a) (x \otimes_\K y), \quad \phi_0^a \maps I_\L \to (\F a) I_\K
    \end{equation}
    for every $x, y \in \K$, satisfying coherence axioms.
    
    Now define the pseudofunctor $\overline{\F} \maps \K \to \TCat_\pse(\A, \L)$, with $(\overline{\F} x) (a) := (\F a) (x)$. It has a lax monoidal structure, given by pseudonatural transformations
    \begin{displaymath}
        \overline{\F} x \otimes_{[]} \overline{\F} y \Rightarrow \overline{\F} (x \otimes_\K y), \quad I_{[]} \Rightarrow\overline{\F}(I_\K) 
    \end{displaymath}
    whose components evaluated on some $a \in \A$ are defined to be \cref{eq:Faweakmon}. Pseudonaturality and lax monoidal axioms follow, and in a similar way we can establish the opposite direction and verify the equivalence.
    
    Now, we prove 2. If $\A$ is a cocartesian monoidal 2-category, a lax monoidal pseudofunctor $\F \maps \A \to \MonTCat_\pse (\K, \L)$ induces a pseudofunctor $\tilde{\F} \maps \A \times \K \to \L$ by $\tilde{\F} (a, x) := (\F a) (x)$. Its lax monoidal structure is given by the composite
    \begin{displaymath}
    \begin{tikzcd}[row sep=.1in, column sep=.2in]
        \tilde{\F}(a, x) \otimes_\L \tilde{\F}(b, y)
        \arrow[d, equal]
        \arrow[rr, dashed, "\psi_{(a, x), (b, y)}"] 
        && 
        \tilde{\F}(a + b, x \otimes_\K y)
        \arrow[d, equal] 
        \\
        (\F a)(x) \otimes_\L (\F b)(y)
        \arrow[rdd, "{(\F \iota_a)_x \otimes (\F\iota_b)_y}"'] 
        && 
        (\F(a+b))(x \otimes_\K y) 
        \\\\& 
        (\F(a+b))(x) \otimes_\L(\F(a+b))(y)
        \arrow[uur, "\phi^{a+b}_{x, y}"'] 
        & 
    \end{tikzcd}
    \end{displaymath}
    where $a \xrightarrow{\iota_a} a + b \xleftarrow{\iota_b} b$ are the inclusions, and $\psi_0 \maps I_\L \xrightarrow{\phi_0^0} \tilde{\F} (0, I_\K)$; the respective axioms follow.
    
    In the opposite direction, starting with some pseudofunctor $\G \maps \A \times \K \to \L$ equipped with a lax monoidal structure $\psi_{(a, x), (b, y)}$ and $\psi_0$, we can build $\hat{\G} \maps \A \to \MonTCat_\pse(\K, \L)$ for which every $\hat{\G}a$ is a lax monoidal pseudofunctor, via
    \begin{gather*}
    \begin{tikzcd}[row sep=.1in, column sep=.5in, ampersand replacement=\&]
        (\hat{\G}a)(x) \otimes_\L (\hat{\G}b)(y)
        \arrow[d, equal]
        \arrow[rr, dashed, "\phi^a_{(x, y)}"] 
        \&\& 
        (\hat{\G}a)(x\otimes_\K y)
        \arrow[d, equal] 
        \\
        \G(a, x) \otimes_\L \G(a, y)
        \arrow[r, "{\psi_{(a, x), (a, y)}}"'] 
        \&
        \G(a + a, x \otimes_\K y)
        \arrow[r, "{G(\nabla, 1)}"'] 
        \& 
        \G(a, x\otimes_\K y)
    \end{tikzcd}\\
    \phi_0^a \maps I_\L \xrightarrow{\psi_0} G(0, I_\K) \xrightarrow{G(!, 1)} G(a, I_\K)
    \end{gather*}
    The equivalence follows, using the universal properties of coproducts and initial object.
\end{proof} 

\begin{proof}[Proof of \cref{thm:fibrewise=global}]
The top and bottom right 2-categories of the first square are equivalent as follows, where $\X\op$ is cocartesian.
\begin{align*}
    \TCat_\pse(\X\op, \Mon\Cat)
    & \simeq \TCat_\pse (\X\op, \PsMon(\Cat)) & \text{\cref{eq:PsMon}}\\
    & \simeq \TCat_\pse (\X\op, \MonTCat_\pse(\1, \Cat)) & \text{\cref{eq:equiv2}} \\
    & \simeq \MonTCat_\pse (\X\op \times \1, \Cat)  \\
    & \simeq \MonTCat_\pse(\X\op, \Cat)
\end{align*}
The strict context equivalence can be explicitly verified as a special case of the above, where the corresponding 1-cells and 2-cells are as described in \cref{sec:monfib} and \cref{sec:monicat}.
\end{proof}

The decisive step in the above proof is
the much broader \cref{lem:helplemma}; for a grounded explanation of the correspondence of the relevant structures, see \cref{monicat=imoncat}.
In simpler words, a lax monoidal structure of a pseudofunctor $F\maps(\A, +, 0)\to(\Cat, \times, \1)$ gives a pseudofunctor $F\maps\A\to\Mon\Cat$ and vice versa: in a sense, `monoidality' can move between the functor and its target.

As another corollary of \cref{lem:helplemma}, we can formally deduce that pseudomonoids in $(\ICat(\X), \boxtimes, \Delta\1)$ are functors into $\Mon\Cat$, as described at the end of \cref{sec:monicat}.

\begin{prop}\label{prop:imoncat_formally}
    For any $\X$, $\PsMon(\ICat(\X))\simeq\TCat_\pse(\X\op, \Mon\Cat)$.
\end{prop}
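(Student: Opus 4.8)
The plan is to obtain this as a purely formal corollary of \cref{lem:helplemma}(1), in the same spirit as the proof of \cref{thm:fibrewise=global}, but invoking the \emph{arbitrary-domain} version of the helper lemma so that no (co)cartesian hypothesis on $\X$ is needed. The key observation is that a pseudomonoid in any monoidal 2-category $\K$ is the same datum as a lax monoidal pseudofunctor $\1\to\K$ from the trivial monoidal 2-category: the underlying pseudofunctor names the carrier object, the laxator and the unit component supply the multiplication and unit 1-cells, and the two invertible coherence modifications are exactly the associator and the unitors of the pseudomonoid. This is the content of \cref{eq:PsMon} for $\K=\Cat$, and the argument is entirely formal, hence equally valid for $\K=\ICat(\X)=\TCat_\pse(\X\op,\Cat)$ equipped with its cartesian product $\boxtimes$.

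First I would record that the cartesian monoidal structure $\boxtimes$ on $\ICat(\X)$, described pointwise in \cref{sec:monicat} by $(\M\boxtimes\psN)(x)=\M x\times\psN x$ with unit $\Delta\1$, coincides with the pointwise monoidal structure $\otimes_{[]}$ on $\TCat_\pse(\X\op,\Cat)$ induced from $(\Cat,\times,\1)$ that appears in \cref{lem:helplemma}; this is immediate from the definitions. Then the chain of 2-equivalences
\begin{align*}
  \PsMon(\ICat(\X))
  &\simeq \MonTCat_\pse(\1,\ICat(\X)) \\
  &= \MonTCat_\pse\bigl(\1,\TCat_\pse(\X\op,\Cat)\bigr) \\
  &\simeq \TCat_\pse\bigl(\X\op,\MonTCat_\pse(\1,\Cat)\bigr) \\
  &\simeq \TCat_\pse(\X\op,\PsMon(\Cat)) \\
  &\simeq \TCat_\pse(\X\op,\Mon\Cat)
\end{align*}
completes the argument: the first and penultimate steps are the ``pseudomonoid $=$ lax monoidal pseudofunctor out of $\1$'' identification (the latter applied inside $\TCat_\pse(\X\op,-)$), the middle step is \cref{lem:helplemma}(1) with $\A=\X\op$, $\K=\1$, $\L=\Cat$, and the last step is \cref{eq:PsMon}. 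I would also note that, since each equivalence in this chain respects the 1-cell and 2-cell structures, the composite restricts to the expected correspondence: a pseudofunctor $\M\maps\X\op\to\Mon\Cat$ matches the $\X$-indexed category whose fibres carry the pointwise tensor products $\otimes_x$, strong morphisms of pseudomonoids match pseudonatural transformations with strong monoidal components, and 2-cells match monoidal modifications, exactly as sketched at the end of \cref{sec:monicat}.

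The only point requiring genuine care — and hence the main potential obstacle — is the bookkeeping that the two descriptions of the monoidal structures on the relevant hom-2-categories agree: that the ``$\boxtimes$'' used to define $\PsMon(\ICat(\X))$ is literally the $\otimes_{[]}$ of \cref{lem:helplemma}, and that the identification $\PsMon(\K)\simeq\MonTCat_\pse(\1,\K)$ is compatible not merely at the level of objects but also for the 1-cells (strong morphisms of pseudomonoids versus monoidal pseudonatural transformations from $\1$) and 2-cells. These are routine unwindings of the definitions in \cref{app:Monoidal2cats} and \cref{sec:monicat}, with no real content beyond matching notation; once they are in place the proposition follows with no further work, and in particular it is strictly more general than, and independently reproves, the left-hand equivalence of \cref{thm:fibrewise=global}.
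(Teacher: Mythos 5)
Your proof is correct and is essentially the paper's own argument: the same chain of equivalences, using $\PsMon(\K)\simeq\MonTCat_\pse(\1,\K)$ from \cref{eq:PsMon} at both ends and \cref{lem:helplemma}(1) (i.e.\ \cref{eq:equiv1} with $\A=\X\op$, $\K=\1$, $\L=\Cat$) in the middle. Your additional remarks — that $\boxtimes$ on $\ICat(\X)$ agrees with the pointwise $\otimes_{[]}$ of the helper lemma, and that the equivalences respect 1-cells and 2-cells — are sensible bookkeeping that the paper leaves implicit.
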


\begin{proof} There are equivalences
\begin{align*}
    \PsMon(\ICat(\X))
    & = \PsMon(\TCat_\pse(\X\op, \Cat)) 
    \\& \simeq \MonTCat_\pse (\1, \TCat_\pse(\X\op, \Cat)) & \text{\cref{eq:equiv1}}
    \\& \simeq \TCat_\pse(\X\op, \MonTCat_\pse(\1, \Cat)) & \text{\cref{eq:PsMon}} 
    \\& \simeq \TCat_\pse(\X\op, \PsMon(\Cat))
    \\& \simeq \TCat_\pse(\X\op, \Mon\Cat) 
\end{align*}
as desired.
\end{proof}

As a first and meaningful example of \cref{thm:fibrewise=global}, recall that the categories $\Fib$ and $\ICat$ are themselves fibred over $\Cat$, with fibres $\Fib(\X)$ and $\ICat(\X)$ respectively. The base category in both cases is the cartesian monoidal category $(\Cat, \times, 1)$, therefore \cref{thm:fibrewise=global} applies. The following proposition shows that the monoidal structures of $\Fib$, $\ICat$ and $\Fib(\X)$, $\ICat(\X)$, instrumental for the study of global and fibre-wise monoidal structures, follow the very same abstract pattern.  

\begin{prop}\label{prop:FibICatglobalfibrewise}
    The fibrations $\Fib\to\Cat$ and $\ICat\to\Cat$ are monoidal, and moreover their fibres $\Fib(\X)$ and $\ICat(\X)$ are monoidal and the reindexing functors are strong monoidal.
\end{prop}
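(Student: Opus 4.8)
The plan is to verify directly that the two fibrations $\Fib \to \Cat$ and $\ICat \to \Cat$ carry monoidal structures of exactly the kind demanded by \cref{def:monoidal_fibration}, and that these restrict fibrewise to the cartesian structures described in \cref{sec:monfib} and \cref{sec:monicat}. First I would recall that the total 2-category $\Fib$ sits over $\Cat$ by sending a fibration $P \maps \A \to \X$ to its base $\X$, and that the fibre over a fixed $\X$ is precisely $\Fib(\X)$; likewise for $\ICat$. The claim then splits into three checks: (i) the total categories $\Fib$ and $\ICat$ are monoidal; (ii) the projection functors to $(\Cat, \times, 1)$ are strict monoidal; (iii) the monoidal structure on the total category preserves cartesian liftings. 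Since the monoidal structure on $\Fib$ is the cartesian one inherited from $\Cat^\2$ via \cref{Fib_cart}, and the monoidal structure on $\ICat$ is the one from \cref{ICat_cart}, statements (i) and (ii) are immediate: tensoring $P \maps \A \to \X$ with $Q \maps \B \to \Y$ gives $P \times Q \maps \A \times \B \to \X \times \Y$ whose base is $\X \times \Y$, so the projection to $\Cat$ literally applies the product functor, hence is strict monoidal. For (iii), one observes that cartesian liftings in $\Fib$ (as a 2-category over $\Cat$, i.e.\ a fibred 2-category) are computed componentwise, so the product 1-cell preserves them; the analogous statement for $\ICat$ follows from the pointwise nature of the structure in \cref{ICat_cart}.

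For the fibrewise part, I would invoke that $\Fib(\X)$ and $\ICat(\X)$ are cartesian monoidal via \cref{Fib_X_cart} and \cref{eq:icatxprod}, so each fibre of the big fibration $\Fib \to \Cat$ is monoidal on the nose. The remaining point is that the reindexing functors along a functor $F \maps \X \to \Y$ — which send a $\Y$-fibration to its pullback along $F$, or a $\Y$-indexed category $\M$ to $\M \circ F\op$ — are strong monoidal for these cartesian structures. For $\ICat$ this is transparent: $(\M \times \psN) \circ F\op = (\M \circ F\op) \times (\psN \circ F\op)$ up to the evident isomorphism, since the product in \cref{eq:icatxprod} is computed pointwise in $\Cat$ and precomposition commutes with pointwise products. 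For $\Fib$ one uses that pullback preserves the fibred product \cref{Fib_X_cart}, which holds because pullbacks commute with pullbacks: $(\A \times_\Y \B) \times_\Y \X \cong (\A \times_\Y \X) \times_\X (\B \times_\Y \X)$, and the monoidal unit $1_\Y$ pulls back to $1_\X$. Thus the reindexing functors carry canonical coherent isomorphisms exhibiting them as strong monoidal.

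The main obstacle I anticipate is purely bookkeeping rather than conceptual: one must be careful that ``fibration'' here means a fibred \emph{2-category}, so the notions of cartesian lifting and of the product 1-cell are 2-categorical, and checking that the associativity and unitality 2-cells of the monoidal structure on $\Fib$ (and $\ICat$) descend correctly through the projection to $\Cat$ requires unwinding the definitions of fibred 1-cells and 2-cells from \cref{sec:monfib}. However, because every monoidal structure in sight is cartesian — both on the total categories and on the fibres — all coherence data is forced by universal properties (of products and pullbacks), so no genuine verification of coherence diagrams is needed; the proof is a matter of citing \cref{Fib_cart}, \cref{ICat_cart}, \cref{Fib_X_cart}, \cref{eq:icatxprod}, and noting that the projection functors and reindexing functors respect these universal properties. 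A clean writeup therefore reduces to a short paragraph per fibration, so the proof in the paper can legitimately be a one-line appeal to these facts together with the observation that cartesian monoidal structures and their strict/strong preservation are automatic.
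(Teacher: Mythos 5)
Your proof is correct, but it takes a different route from the paper's. The paper works on the indexed-category side: it exhibits the two pseudofunctors $\Cat\op \to \mathsf{CAT}$ sending $\X$ to $\Fib(\X)$ (with reindexing $F^*$ by pullback) and to $\ICat(\X)$ (with reindexing $-\circ F\op$), observes that these are lax monoidal with laxators precisely \cref{Fib_cart} and \cref{ICat_cart}, so that the corresponding fibrations are monoidal by the monoidal Grothendieck correspondence (\cref{thm:mainthm}, \cref{cor:fixedbasemonoidalGr}); then, because the base $(\Cat,\times,1)$ is cartesian, it invokes \cref{thm:fibrewise=global} to convert the global structure into the fibrewise one, which is exactly \cref{Fib_X_cart} and \cref{eq:icatxprod} with strong monoidal reindexing. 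You instead verify the clauses of \cref{def:monoidal_fibration} directly on the fibration side, and then establish strong monoidality of $F^*$ and $-\circ F\op$ by hand, via the commutation of pullbacks with pullbacks and of precomposition with pointwise products. Your verifications are sound (in particular the isomorphism $(\A\times_\Y\B)\times_\Y\X \cong (\A\times_\Y\X)\times_\X(\B\times_\Y\X)$ and the identification of the unit are exactly what strong monoidality of $F^*$ amounts to). What the paper's route buys is the thematic payoff the proposition is meant to deliver — that $\Fib$ and $\ICat$ are themselves instances of the global-versus-fibrewise equivalence developed in the chapter, with the fibrewise structure \emph{derived} from the global one rather than checked separately; what your route buys is a self-contained argument that makes explicit which universal properties (products, pullbacks) force all the coherence data, at the cost of not exhibiting the example as an application of \cref{thm:fibrewise=global}.
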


\begin{proof}
    The pseudofunctors inducing $\Fib\to\Cat$ and $\ICat\to\Cat$ are
    \begin{displaymath}
    \begin{tikzcd}[row sep=.05in]
        \Cat\op
        \arrow[rr] 
        && 
        \mathsf{CAT} 
        && 
        \Cat\op
        \arrow[rr] 
        && 
        \mathsf{CAT} 
        \\
        \X
        \arrow[mapsto, rr]
        \arrow[dd, "F"'] 
        && 
        \Fib(\X) 
        && 
        \X
        \arrow[mapsto, rr]
        \arrow[dd, "F"'] 
        && 
        \ICat(\X) 
        \\\\
        \Y
        \arrow[mapsto, rr] 
        && 
        \Fib(\Y)
        \arrow[uu, "F^*"'] 
        &&
        \Y
        \arrow[mapsto, rr] 
        && 
        \ICat(\Y)
        \arrow[uu, "-\circ F\op"']
    \end{tikzcd}
    \end{displaymath}
    where $\mathsf{CAT}$ is the 2-category of possibly large categories, $F^*$ takes pullbacks along $F$ and $-\circ F\op$ precomposes with the opposite of $F$. These are both lax monoidal, with the respective structures essentially being \cref{Fib_cart} and \cref{ICat_cart} giving the global monoidal structure on the fibrations.
    
    Since the base of both monoidal fibrations is cartesian, the global monoidal structure is equivalent to a fibre-wise monoidal structure, as per the theme of this whole section. The induced monoidal structure on each $\Fib(\X)$ is given by \cref{Fib_X_cart} and on each $\ICat(\X)$ by \cref{eq:icatxprod}, and $F^*$, $-\circ F\op$ are strong monoidal functors accordingly. 
\end{proof}

The above essentially lifts the global and fibre-wise monoidal structure development one level up, exhibiting fibrations and indexed categories as examples of the monoidal Grothendieck construction themselves.

Concluding this investigation on monoidal structures of fibrations and indexed categories, we consider the (co)cartesian monoidal (op)fibration case; for example, a monoidal fibration $P\maps(\A, \times, 1)\to(\X, \times, 1)$ as in \cref{def:monoidal_fibration} where $P$ preserves products (or coproducts for opfibrations) on the nose. As remarked in \cite[12.9]{FramedBicats}, the equivalence \cref{eq:Shulmanequiv} restricts to one between pseudofunctors which land to cartesian monoidal categories, and monoidal fibrations where the total category is cartesian monoidal. With the appropriate 1-cells and 2-cells that preserve the structure, we can write the respective equivalences as
\begin{gather}
    \TCat_\pse(\X\op, \Cart) \simeq \cMonFib(\X) \textrm{ for cartesian $\X$}\label{eq:cocartspecialcase} \\
    \TCat_\pse(\X, \Cocart) \simeq \cocMonOpFib(\X) \textrm{ for cocartesian $\X$}\nonumber
\end{gather}
where the prefixes $\mathsf{c}$ and $\mathsf{coc}$ correspond to the respective (co)cartesian structures. Explicitly, in order for the total category to specifically be endowed with (co)cartesian monoidal structure, it is required not only that the base category is but also the fibres are and the reindexing functors preserve finite (co)products.

This special case of the monoidal Grothendieck construction that connects the existence of (co)products and initial/terminal object in the fibres and in the total category, is reminiscent (and also an example of) the general theory of \emph{fibred limits} originated from \cite{Grayfibredandcofibred}. Explicitly, \cite[Cor.~4.9]{HermidaFib} deduces that if the base of a fibration $P \maps \A \to \X$ has $\J$-limits for any small category $\J$, then the fibres have and the reindexing functors preserve $\J$-limits if and only if $\A$ has $\J$-limits and $P$ strictly preserves them, and dually for opfibrations and colimits. Hence for finite (co)products in (op)fibrations, \cref{eq:cocartspecialcase} re-discovers that result using the monoidal Grothendieck correspondence. 

Moreover, since the squares of \cref{thm:fibrewise=global} reduce to their (co)cartesian variants, we would like to identify the conditions that the corresponding lax monoidal pseudofunctor into $\Cat$ needs to satisfy in order to give rise to a (co)cartesian monoidal (op)fibration. We employ \cref{prop:cocarthasmonoids} to tackle the opfibration case: if, in a symmetric monoidal category $\X$, there exist monoidal natural transformations with components
\begin{displaymath}
    \nabla_x \maps x \otimes x \to x, \quad u_x \maps I \to x
\end{displaymath}
satisfying the commutativity of
\begin{equation}\label{eq:nabla}
\begin{tikzcd}
    I \otimes x
    \arrow[r, "u_x\otimes1"]
    \arrow[dr, "\sim"{rotate=-30}, "\ell_x"'] 
    & 
    x\otimes x
    \arrow[d, "\nabla_x"] 
    & 
    x\otimes I
    \arrow[dr, "\sim"{rotate=-30}, "r_x"']
    \arrow[r, "1\otimes u_x"] 
    & 
    x\otimes x
    \arrow[d, "\nabla_x"] 
    \\& 
    x
    && 
    x
\end{tikzcd}
\end{equation}
then $\X$ is cocartesian monoidal. In fact, it is the case that a symmetric monoidal category is cocartesian if and only if $\Mon(\X)\cong\X$.

Suppose $(\M, \mu, \mu_0) \maps \X\to\Cat$ is a (symmetric) lax monoidal pseudofunctor, such that the corresponding Grothendieck category $(\inta \M, \otimes_\mu, I_\mu)$ described in \cref{sec:monequiv} is cocartesian monoidal. This means there are monoidal natural transformations with components
\begin{equation*}
    \nabla_{(x, a)} 
    \maps (x, a) \otimes_{\mu} (x, a) 
    \to (x, a)
    \quad\textrm{and}\quad  u_{(x, a)} 
    \maps (I, \mu_0(*)) 
    \to (x, a)
\end{equation*}
making the diagrams \cref{eq:nabla} commute.
Explicitly, by \cref{eq:globalmonstr}, $\nabla_{(x, a)}$ consists of morphisms
$f_x \maps x \otimes x \to x$ in $\X$ and
$\kappa_a\maps (\M f_x)(\mu_{x, x}(a, a)) \to a$ in $\M x$, 
whereas $u_{(x, a)}$ consists of 
$i_x \maps I \to x$ in $\X$ and
$\lambda_a \maps (\M i_x)\mu_0 \to a$
in $\M x$.

The conditions \cref{eq:nabla} say that the composites
\[
    (I, \mu_0) \otimes_\mu (x, a) \xrightarrow{u_{(x, a)} \otimes_\mu 1_{(x, a)}} (x, a) \otimes_\mu (x, a) \xrightarrow{\nabla_{(x, a)}} (x, a)
\]
\[
    (x, a) \otimes_\mu (I, \mu_0) \xrightarrow{1_{(x, a)} \otimes_\mu u_{(x, a)}} (x, a) \otimes_\mu (x, a) \xrightarrow{\nabla_{(x, a)}} (x, a)
\]
are equal to the left and right unitor on $x$, where all respective structures are detailed in \cref{sec:monoidal}. Using the composition inside $\inta \M$ analogously to \cref{eq:comp_intM}, these conditions translate, on the one hand, to the base being cocartesian monoidal $(\X, +, 0)$ with $f_x=\nabla_x$ and $i_x=u_x$. On the other hand, $\kappa_a$ and $\lambda_a$ form natural transformations 
\begin{equation}\label{kappalambda}
\begin{tikzcd}[row sep=.1in, column sep=.2in]
    & 
    \M x \times \M x 
    \arrow[r, "\mu_{x, x}"] 
    & 
    \M (x+x)
    \arrow[dr, "\M(\nabla_x)"] 
    \\
    \M x 
    \arrow[ur, "\Delta"] 
    \arrow[rrr, bend right=20, "1"'] 
    \arrow[rrr, phantom, "\Downarrow {\scriptstyle \kappa^x}"] 
    &&& 
    \M x
\end{tikzcd}\quad
\begin{tikzcd}[row sep=.1in, column sep=.2in]
    & \1
    \arrow[r, "\mu_0"] 
    & 
    \M(0)
    \arrow[dr, "\M(u_x)"]
    \\ 
    \M x
    \arrow[ur, "!"]
    \arrow[rrr, bend right=20, "1"']
    \arrow[rrr, phantom, "\Downarrow{\scriptstyle \lambda^x}"] 
    &&& 
    \M x
\end{tikzcd}
\end{equation}
satisfying the commutativity of
\begin{displaymath}
\begin{tikzcd}[column sep=.2in, row sep=.2in]
    \M (\nabla_x \circ (u_x + 1)) (\mu_{0, x} (\mu_0(*), a))
    \arrow[ddd, "\mathrm{id}"']
    \arrow[rr, "\sim"', "\delta"] 
    && 
    (\M (\nabla_x) \circ \M(u_x + 1)) ((\mu_{0, x}(\mu_0(*), a))
    \arrow[d, "\sim"{anchor=south, rotate=90, inner sep=.5mm}, "{\M(\nabla_x)(\mu_{u_x, 1})}"] 
    \\&& 
    \M (\nabla_x) (\mu_{x, x} (\M(u_x) (\mu_0(*), a)))
    \arrow[d, "{\M (\nabla_x) \left(\mu_{x, x} (\lambda^x_a, \gamma)\right)}"] 
    \\&& 
    \M (\nabla_x) (\mu_{x, x} (a, a))
    \arrow[d, "{\kappa^x_a}"] 
    \\
    \M (\ell_x) (\mu_{0, x} (\mu_0(*), a))
    \arrow[rr, "\xi", "\sim"'] 
    && 
    a
\end{tikzcd}
\end{displaymath}
and a similar one with $\mu_0$ on second arguments. The above greatly simplifies if $\M$ is just a lax monoidal functor:
the first condition becomes $1_a \cong \kappa^x_a \circ \M(\nabla_x) (\mu_{x, x} (\lambda_a^x, 1))$, and the second one $1_a \cong \kappa^x_a \circ \M(\nabla_x) (\mu_{x, x} (1_a, \lambda_a^x))$.

\begin{cor}\label{cor:kappalambda}
    A lax monoidal pseudofunctor $\M\maps(\X, +, 0)\to(\Cat, \times, \1)$ equipped with natural transformations $\kappa$ and $\lambda$ as in \cref{kappalambda} corresponds to an ordinary pseudofunctor $\M\maps\X\to\Cocart$, or equivalently \cref{eq:cocartspecialcase} to a cocartesian monoidal opfibration.
\end{cor}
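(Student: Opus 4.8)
The plan is to read the corollary off of the general correspondence between fibre-wise and global monoidal structure, i.e.\ off of \cref{thm:fibrewise=global} together with \cref{prop:cocarthasmonoids} (the criterion that a symmetric monoidal category is cocartesian exactly when it admits monoidal natural transformations $\nabla_x \maps x\otimes x\to x$, $u_x\maps I\to x$ satisfying \cref{eq:nabla}), rather than re-doing any explicit Grothendieck bookkeeping. The point is that all the hard analysis has already been carried out in the discussion immediately preceding the statement: there we unpacked exactly what it means for $(\inta\M,\otimes_\mu,I_\mu)$ to be cocartesian monoidal, and found that this data is precisely a cocartesian structure $(\X,+,0)$ on the base \emph{together with} the natural transformations $\kappa$ and $\lambda$ of \cref{kappalambda} satisfying the displayed coherence hexagon (and its mirror with $\mu_0$ on the second argument). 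So the corollary is essentially a repackaging of that unpacking.

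The steps, in order. First I would invoke \cref{eq:cocartspecialcase}, the restriction of \cref{thm:Shulman}/\cref{thm:fibrewise=global} to the cocartesian case: a cocartesian monoidal opfibration over a cocartesian base $\X$ is the same thing as a pseudofunctor $\M\maps\X\to\Cocart$, where $\Cocart$ is the 2-category of cocartesian monoidal categories, finite-coproduct-preserving functors, and natural transformations. Second, I would pass along the top-and-right side of the square of \cref{thm:fibrewise=global}: such a pseudofunctor $\M\maps\X\to\Cocart$ corresponds, via the equivalence $\TCat_\pse(\X,\Mon\Cat)\simeq\MonTCat_\pse(\X,\Cat)$ (and its variant incorporating the cocartesian condition), to a lax monoidal pseudofunctor $(\M,\mu,\mu_0)\maps(\X,+,0)\to(\Cat,\times,\1)$ with extra structure. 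Third — the only real content — I would identify that ``extra structure'' with $(\kappa,\lambda)$: when one transports a fibrewise cocartesian structure on the $\M x$ (the finite coproducts $\Delta\dashv$ codiagonal, and the initial object) across the equivalence of \cref{lem:helplemma}/\cref{monicat=imoncat}, the codiagonal functors $\M x\times\M x\to\M x$ and initial-object functors $\1\to\M x$ become, after composing with $\mu_{x,x}$ and $\mu_0$ and with $\M(\nabla_x)$, $\M(u_x)$, exactly the cells $\kappa^x$ and $\lambda^x$ drawn in \cref{kappalambda}; and the triangle identities / coproduct universal properties in the fibres become exactly the commuting hexagon displayed just above the corollary statement. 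Conversely, given $(\kappa,\lambda)$ with those coherences, one recovers a fibrewise cocartesian structure, using the universal properties of coproducts and of the initial object in the base — this is the last sentence of the paragraph preceding the corollary, and the simplification to the plain-lax case ($1_a\cong\kappa^x_a\circ\M(\nabla_x)(\mu_{x,x}(\lambda^x_a,1))$ and its mirror) is the down-to-earth shadow of it.

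The main obstacle is the third step: verifying that the coherence conditions packaged in \cref{kappalambda} are genuinely \emph{equivalent} to the fibres being cocartesian and the reindexing functors preserving finite coproducts, and not merely a necessary consequence. Since \cref{thm:fibrewise=global} already supplies the $2$-equivalence $\PsMon(\OpFib(\X))\simeq\TCat_\pse(\X,\Mon\Cat)$, and since \cite[Cor.~4.9]{HermidaFib} (quoted in the excerpt) handles the transfer of finite (co)limits between total category, base, and fibres, what remains is the bookkeeping check that the specific natural transformations $\nabla_{(x,a)}$, $u_{(x,a)}$ on $\inta\M$ decompose componentwise into $(\nabla_x,\kappa_a)$ and $(u_x,\lambda_a)$ compatibly with the composition formula in $\inta\M$ (the analogue of \cref{eq:comp_intM}), and that the unit axioms \cref{eq:nabla} in $\inta\M$ project to the hexagon. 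This is routine but notation-heavy; I would present it by exhibiting the two composites
\[
    (I,\mu_0)\otimes_\mu(x,a)\xrightarrow{u_{(x,a)}\otimes_\mu 1}(x,a)\otimes_\mu(x,a)\xrightarrow{\nabla_{(x,a)}}(x,a),
\]
computing each factor using \cref{eq:globalmonstr} and the monoidal structure spelled out in \cref{sec:monoidal}, and observing that equality with $\lambda_x$ on the base component forces $(\X,+,0)$ cocartesian while equality on the fibre component is exactly the hexagon. Granting that and \cref{thm:fibrewise=global}, the corollary follows, and the stated simplification for a lax monoidal $\M$ (where $\omega$ in \cref{eq:omega} is an identity) is immediate.
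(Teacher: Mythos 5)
Your proposal is correct and follows essentially the same route as the paper: the paper gives no separate proof for this corollary, treating it as a summary of the immediately preceding discussion, which — exactly as you describe — applies \cref{prop:cocarthasmonoids} to $(\inta\M,\otimes_\mu,I_\mu)$, decomposes the resulting transformations $\nabla_{(x,a)}$ and $u_{(x,a)}$ componentwise into $(\nabla_x,\kappa_a)$ and $(u_x,\lambda_a)$ via \cref{eq:globalmonstr} and \cref{eq:comp_intM}, and reads off that \cref{eq:nabla} forces $(\X,+,0)$ cocartesian on the base and the hexagon of \cref{kappalambda} in the fibres, with \cref{eq:cocartspecialcase} supplying the equivalence to pseudofunctors into $\Cocart$ and to cocartesian monoidal opfibrations. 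Your identification of the componentwise bookkeeping as the only real content, and of the lax-monoidal simplification as the case where $\omega$ is an identity, matches the paper's treatment.
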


\section{Examples}\label{sec:applications}

In this section, we explore certain settings where the equivalence between monoidal fibrations and monoidal indexed categories naturally arises. Instead of going into details that would result in a much longer text, we mostly sketch the appropriate example cases up to the point of exhibition of the monoidal Grothendieck correspondence, providing indications of further work and references for the interested reader.

\subsection{Fundamental Bifibration}\label{sec:fundamentalfib}

For any category $\X$, the \emph{codomain} or \emph{fundamental} opfibration is the usual functor from its arrow category
\[\cod \maps \X^2 \longrightarrow \X\] mapping every morphism to its codomain and every commutative square to its right-hand side leg. It uniquely corresponds to the strict opindexed category, i.e.\ mere functor
\begin{equation}\label{eq:fundamentalindexedcat}
\begin{tikzcd}[row sep=.05in]
    \X
    \arrow[r] 
    & 
    \Cat 
    \\
    x
    \arrow[r, mapsto]
    \arrow[dd, "f"'] 
    &     
    \X/x
    \arrow[dd, "f_!"] 
    \\\\
    y
    \arrow[mapsto, r] 
    & 
    \X/y
\end{tikzcd}
\end{equation}
that maps an object to the slice category over it and a morphism to the post-composition functor $f_!=f\circ-$ induced by it.

If the category has a monoidal structure $(\X, \otimes, I)$, this (2-)functor naturally becomes lax monoidal with structure maps
\begin{equation}\label{eq:slicelaxator}
    \X/x\times\X/y\xrightarrow{\otimes}\X/(x\otimes y), \quad \1\xrightarrow{1_I}\X/I.
\end{equation}
These components form strictly natural transformations, and for example the invertible modification $\omega$ \cref{eq:omega} has components the evident isomorphisms, for $(f, g, h)\in\X/x\times\X/y\times\X/z$, between 
\begin{align}
    a \otimes (b \otimes c)& \xrightarrow{f \otimes (g \otimes h)} x \otimes (y \otimes z) \cong (x \otimes y) \otimes z \label{eq:pseudoassociativity}\\
    (a \otimes b) \otimes c & \xrightarrow{(f \otimes g) \otimes h} (x \otimes y) \otimes z\nonumber
\end{align}
By \cref{thm:mainthmsplit}, this monoidal strict opindexed category correspondes to a monoidal split fibration, i.e.\ $(\X^\2, \otimes, 1_I)$ is monoidal and $\cod$ strict monoidal, where $\otimes_{\X^\2}$ strictly preserves cartesian liftings via $f_!k \otimes g_! \ell = (f \otimes g)_! (k \otimes \ell)$ -- which can of course be independently verified. However in general, the slice categories $\X/x$ do not inherit the monoidal structure: there is no way to restrict the global monoidal structure to a fibrewise one.

According to \cref{thm:fibrewise=global}, there is an induced monoidal structure on the categories $\X/x$ and a strict monoidal structure on all $f_!$ only when the monoidal structure on $\X$ is given by binary coproducts and an initial object (i.e.\ cocartesian). In that case, for each $k\maps a\to x$ and $\ell\maps b\to x$ in the same fibre $\X/x$, their tensor product in $\X/x$ is given by
\begin{displaymath}
    a+b\xrightarrow{\;k+\ell\;}x+x\xrightarrow{\nabla_x}x
\end{displaymath}
as a simple example of \cref{eq:explicitstructure1}.
In fact, this is precisely the coproduct of two objects in $\X/x$, and $0\xrightarrow{!}x$ the initial object, due to the way colimits in the slice categories are constructed. Therefore this falls under the cocartesian-fibres special case \cref{eq:cocartspecialcase}, bijectively corresponding to the cocartesian structure on $\X^\2$ inherited from $\X$.

Now suppose an ordinary category $\X$ has pullbacks. This endows the codomain functor also with a fibration structure, corresponding to the indexed category
\begin{displaymath}
\begin{tikzcd}[row sep=.05in]
    \X\op
    \arrow[r] 
    & 
    \Cat 
    \\
    x
    \arrow[r, mapsto]
    \arrow[dd, "f"'] 
    & 
    \X/x
    \\\\
    y
    \arrow[mapsto, r] 
    & 
    \X/y
    \arrow[uu, "f^*"']
\end{tikzcd}
\end{displaymath}
with the same mapping on objects as \cref{eq:fundamentalindexedcat} but by taking pullbacks rather than post-composing along morphisms, a pseudofunctorial assignment. This gives $\cod\maps \X^2\to\X$ a bifibration structure, also by that classic fact that $f_!\dashv f^*$.

In this case, if $\X$ has a general monoidal structure, there is no naturally induced lax monoidal structure of that pseudofunctor as before: there is no reason for the pullback of a tensor to be isomorphic to the tensor of two pullbacks.  
However, if $\X$ is cartesian monoidal (hence has all finite limits), the components
\begin{displaymath}
    \X/x\times\X/y\xrightarrow{\times}\X/(x\times y), \qquad \1\xrightarrow{\Delta_!}\X/1
\end{displaymath}
are pseudonatural since pullbacks commute with products. Moreover, this bijectively corresponds to monoidal fibres and strong monoidal reindexing functors, in fact also cartesian ones: for morphisms $k \maps a\to x$ and $\ell \maps b\to x$ in $\X/x$, their induced product is given by
\begin{displaymath}
\begin{tikzcd}[sep=.5in]
    \bullet
    \arrow[r]
    \arrow[d, "\delta^*(k\times\ell)"']
    \arrow[dr, phantom, very near start, "\lrcorner"] 
    & 
    a \times b
    \arrow[d, "k\times\ell"] 
    \\
    x
    \arrow[r, "\delta"] 
    & 
    x \times x
\end{tikzcd}
\end{displaymath}
and $1_x \maps x\to x$ is the unit of each slice $\X/x$, this indexed monoidal category also described in \cite{DescentForMonads}.
The monoidal fibration structure on $\cod \maps (\X^2, \times, 1_1) \to (X, \times, 1)$ is the evident one, so it again falls in the special case \cref{eq:cocartspecialcase} now for cartesian fibres, by construction of products in slice categories.

As a final remark, analogous constructions hold for the domain functor which is again a bifibration: its fibration structure comes from pre-composing along morphisms, whereas its opfibration structure comes from taking pushouts along morphisms. 

\subsection{Family Fibration: Zunino and Turaev Categories}\label{sec:familyfib}

Recall that for any category $\C$, the standard \emph{family fibration} is induced by the (strict) functor
\begin{equation}\label{eq:functV}
    [-, \C]\maps \Set\op\to\Cat
\end{equation}
which maps every discrete category $X$ to the functor category $[X, \C]$ and every function $f \maps X \to Y$ to the functor $f^* = [f, 1]$, i.e.\ pre-composition with $f$.
The total category of the induced fibration $\Fam(\C)\to\C$ has as objects pairs $(X, M\maps X\to\C)$ essentially given by a family of $X$-indexed objects in $\C$, written $\{M_x\}_{x\in X}$, whereas the morphisms are \begin{displaymath}
\begin{tikzcd}[column sep=.7in, row sep=.2in]
    X
    \arrow[dr, "M"]
    \arrow[dd, "f"'] 
    \\ 
    \arrow[r, phantom, "\Downarrow {\scriptstyle\alpha}"description] 
    & 
    \C 
    \\
    Y 
    \arrow[ur, "N"']
\end{tikzcd}
\end{displaymath}
namely a function $f\maps X\to Y$ together with families of morphisms $\alpha_x\maps M_x\to N_{fx}$ in $\C$. Notice the similarity of this description with \cref{eq:indexed1cell}, which for the strict indexed categories case looks like a non-discrete version of the family fibration, for $\C=\Cat$. Moreover, it is a folklore fact that $\Fam(\C)$ is the free coproduct cocompletion on the category $\C$. 

On the other hand, we could consider the opfibration induced by the very same functor \cref{eq:functV}, denoted by $\Maf(\C)\to\Set\op$. The objects of $\Maf(\C)$ are the same as $\Fam(\C)$, but morphisms $\{M_x\}_{x\in X}\to\{N_y\}_{y\in Y}$ between them are functions $g\maps Y\to X$ (i.e.\ $X\to Y$ in $\Set\op$) together with families of arrows $\beta_y\maps M_{gy}\to N_y$ in $\C$. Notice that these are now indexed over the set $Y$ rather than $X$ like before, and in fact $\Maf (\X) = \Fam (\X\op)\op$.

In the case that the category is monoidal $(\C, \otimes, I)$, the (2-)functor $[-, \C]$ has a canonical lax monoidal structure. Explicitly, by taking its domain $\Set\op$ to be cocartesian by the usual cartesian monoidal structure $(\Set, \times, 1)$, the structure maps are
\begin{displaymath}
\phi_{X, Y} \maps [X, \C] \times [Y, \C] \to [X \times Y, \C], \qquad \phi_0 \maps \1 \xrightarrow{I_\C} [\1, \C] \cong \C
\end{displaymath}
where $\phi_{X, Y}$ corresponds, under the tensor-hom adjunction in $\Cat$, to
\begin{displaymath}
    [X, \C] \times [Y, \C] \times X \times Y \xrightarrow{\sim} [X, \C] \times X \times [Y, \C] \times Y \xrightarrow{\textrm{ev}_X \times \textrm{ev}_Y} \C \times \C \xrightarrow{\otimes} \C.
\end{displaymath}
These are again natural components, and for example \cref{eq:omega} has components the natural isomorphisms between the assignments $Mx\otimes (Ny\otimes Uz)$ and $(Mx\otimes Ny)\otimes Uz$. 
By \cref{thm:mainthmsplit}, this monoidal strict indexed category endows the corresponding split fibration $\Fam(\X) \to \Set$ with a monoidal structure via $\{M_x\} \otimes \{N_y\} := \{M_x \otimes N_y \}_{X \times Y}$. On the other hand, we could use the dual part of the same theorem, and instead consider the induced monoidal split opfibration $\Maf(\X) \to \Set\op$ corresponding to the same $([-, \C], \phi, \phi_0)$.

Moreover, since $\Set$ is cartesian, \cref{thm:fibrewise=global} also applies in both cases, giving a monoidal structure to the fibres as well: for $M\maps X\to\C$ and $N\maps X\to\C$, their fibrewise tensor product and unit are given by
\begin{displaymath}
X\xrightarrow{\Delta}X\times X\xrightarrow{M\times N}\C\times\C\xrightarrow{\otimes}\C, \qquad
X\xrightarrow{!}1\xrightarrow{I}\C
\end{displaymath}
which are precisely constructed as in \cref{eq:explicitstructure1}. Once again, notice the direct similary with \cref{eq:icatxprod}, the fibrewise monoidal structure on $\ICat(\X)$.

As an interesting example, consider $\C=\Mod_R$ for a commutative ring $R$, with its usual tensor product $\otimes_R$. In \cite{TuraevZunino}, the authors introduce a category $\mathcal{T}$ of \emph{Turaev} $R$-modules, as well as a category $\mathcal{Z}$ of \emph{Zunino} $R$-modules, which serve as symmetric monoidal categories where group-(co)algebras and Hopf group-(co)algebras, \cite{Turaev}, live as (co)monoids and Hopf monoids respectively. 

In more detail, the objects of both $\mathcal{T}$ and $\mathcal{Z}$ are defined to be pairs $(X, M)$ where $X$ is a set and $\{M_x\}_{x\in X}$ is an $X$-indexed family of $R$-modules, and their morphisms are respectively
\begin{displaymath}
    (\mathcal{T})
    \begin{cases}
        s \maps M_{g(y)} \to N_y \textrm{ in } \Mod_R \\
        g \maps Y \to X \textrm{ in } \Set
    \end{cases}
    \quad
    (\mathcal{Z})
    \begin{cases}
        t\maps M_x\to N_{f(x)}\textrm{ in }\Mod_R \\
        f\maps X\to Y\textrm{ in }\Set
    \end{cases}
\end{displaymath}
There is a symmetric pointwise monoidal structure, $\{M_x\otimes_R N_y\}_{X\times Y}$, and there are strict monoidal forgetful functors $\mathcal{T}\to\Set\op$, $\mathcal{Z}\to\Set$.
It is therein shown that comonoids in $\mathcal{T}$ are \emph{monoid-coalgebras} and monoids in $\mathcal{Z}$ are \emph{monoid-algebras}, i.e.\ families of $R$-modules indexed over a monoid, together with respective families of linear maps
\begin{gather*}
    (\mathcal{T})\quad C_{g*h}\to C_g\otimes C_h \qquad (\mathcal{Z})\quad
    A_g\otimes A_h\to A_{g*h} \\
    C_e\to R \qquad \phantom{ZZZZZZZ}R\to A_e
\end{gather*}
satisfying appropriate axioms. Based on the above, it is clear that $\mathcal{T}=\Maf(\Mod_R)$ and $\mathcal{Z}=\Fam(\Mod_R)$, which clarifies the origin of these categories and can be directly used to further generalize the notions of Hopf group-(co)monoids in arbitrary monoidal categories.

\subsection{Global Categories of Modules and Comodules}\label{sec:ModComod}

For any monoidal category $\mathcal{V}$, 
there exist \emph{global} categories of modules and comodules, denoted by $\Mod$ and $\Comod$ \cite[6.2]{VasilakopoulouThesis}.
Their objects are all (co)modules over (co)monoids in $\V$, whereas a morphism between an $A$-module $M$ and a $B$-module $N$ is given by a monoid map $f\maps A\to B$ together with a morphism $k\maps M\to N$ in $\V$ satisfying the commutativity of 
\begin{displaymath}
\begin{tikzcd}
    A\otimes M\arrow[rr, "\mu"]\arrow[d, "1\otimes k"'] && M\arrow[d, "k"] \\
    A\otimes N\arrow[r, "f\otimes1"'] & B\otimes N\arrow[r, "\mu"'] & N
\end{tikzcd}
\end{displaymath}
where $\mu$ denotes the respective action, and dually for comodules.
Both these categories arise as the total categories induced by the Grothendieck construction on the functors 
\begin{equation}\label{eq:functors}
\begin{tikzcd}[row sep = tiny]
    \Mon(\V)\op
    \arrow[r]
    &
    \Cat
    &
    \Comon(\V)
    \arrow[r]
    &
    \Cat
    \\
    A
    \arrow[r, dashed, mapsto]
    \arrow[dd, swap, "f"]
    &
    \Mod_\V(A)
    &
    C
    \arrow[r, dashed, mapsto]
    \arrow[dd, swap, "g"]
    &
    \Comod_\V(C)
    \arrow[dd, "g_!"]
    \\{}\\
    B
    \arrow[r, dashed, mapsto]
    &
    \Mod_\V(B)
    \arrow[uu, swap, "f^*"]
    &
    D
    \arrow[r, dashed, mapsto]
    &
    \Comod_\V(D)
\end{tikzcd}
\end{equation}
where $f^*$ and $g_!$ are (co)restriction of scalars: if $M$ is a $B$-module, $f^*(M)$ is an $A$-module via the action
\begin{displaymath}
    A \otimes M \xrightarrow{f \otimes 1} B \otimes M \xrightarrow{\mu} M.
\end{displaymath}
The induced split fibration and opfibration, $\Mod \to \Mon(\V)$ and $\Comod \to \Comon(\V)$, map a (co)module to its respective (co)monoid.

Recall that when $(\V, \otimes, I, \sigma)$ is braided monoidal, its categories of monoids and como\-noids inherit the monoidal structure: if $A$ and $B$ are monoids, then $A\otimes B$ has also a monoid structure via
\begin{displaymath}
    A \otimes B \otimes A \otimes B \xrightarrow{1 \otimes \sigma \otimes 1} A \otimes A \otimes B \otimes B \xrightarrow{m \otimes m} A \otimes B, \qquad
    I \cong I \otimes I \xrightarrow{j \otimes j} A \otimes B
\end{displaymath}
where $m$ and $j$ give the respective monoid structures. 
In that case, the induced split fibration and opfibration are both monoidal. This can be deduced by directly checking the conditions of \cref{def:monoidal_fibration}, as was the case in the relevant references, or in our setting by using \cref{thm:mainthmsplit} since both (2-)functors \cref{eq:functors} are lax monoidal. For example, for any $A, B \in \Mon(\V)$ there are natural maps
\begin{displaymath}
    \phi_{A, B} \maps \Mod_\V (A) \times \Mod_\V (B) \to \Mod_\V (A \otimes B)\qquad  \phi_0 \maps \1 \to \Mod_\V (I)
\end{displaymath}
with $\phi_{A, B} (M, N) = M \otimes N$, with the $A \otimes B$-module structure being
\begin{displaymath}
    A \otimes B \otimes M \otimes N \xrightarrow{1 \otimes \sigma \otimes 1} A \otimes M \otimes B \otimes N \xrightarrow{\mu \otimes \mu} M \otimes N
\end{displaymath}
and $\phi_0(*)=I$, which are pseudoassociative and pseudounital in the sense that e.g. for any $M, N, P\in\Mod_\mathcal{V}(A)\times\Mod_\mathcal{V}(B)\times\Mod_\mathcal{V}(C)$, $M\otimes(N\otimes P)$ is only isomorphic to $(M\otimes N)\otimes P$ as $(A\otimes B)\otimes C$-modules.

Notice that in general, the monoidal bases $\Mon(\V)$ and $\Comon(\V)$ are not (co)\-ca\-rte\-sian, since they have the same tensor as $(\V, \otimes, I, \sigma)$. Therefore this case does not fall under \cref{thm:fibrewise=global}, hence the fibre categories are not monoidal. For example in $(\mathsf{Vect}_k, \otimes_k, k)$, the $k$-tensor product of two $A$-modules for a $k$-algebra $A$ is not an $A$-module as well.

We remark that the induced monoidal opfibration $\Comod \to \Comon(\V)$ in fact serves as the monoidal base of an \emph{enriched fibration} structure on $\Mod \to \Mon (\V)$ as explained in \cite{OnEnrichedFibrations}, built upon an enrichment between the monoidal bases $\Mon(\V)$ in $\Comon(\V)$ established in \cite{Measuringcomonoid}. Moreover, analogous monoidal structures are induced on the (op)fibrations of monads and comonads in any fibrant monoidal double category, see \cite[Prop. 3.18]{VCocats}.

\subsection{Systems as Monoidal Indexed Categories}\label{sec:systemsasmonicats}

In \cite{DynamicalSystemsSheaves} as well as in earlier works e.g.\ \cite{Vagner.Spivak.Lerman:2015a}, the authors investigate a categorical framework for modeling systems of systems using algebras for a monoidal category. In more detail, systems in a broad sense are perceived as lax monoidal pseudofunctors
\begin{displaymath}
    \mathcal{W}_\mathcal{C}\to\Cat
\end{displaymath}
where $\mathcal{W}_\C$ is the monoidal category of $\C$-\emph{labeled boxes} and \emph{wiring diagrams} with types in a finite product category $\mathcal{C}$. Briefly, the objects in $\mathcal{W}_\mathcal{C}$ are pairs $X=(X^\mathrm{in}, X^\mathrm{out})$ of finite sets equipped with functions to $\ob\mathcal{C}$, thought of as boxes
\begin{displaymath}
\begin{tikzpicture}[oriented WD, bbx=.1cm, bby =.1cm, bb port sep=.15cm]
	\node [bb={3}{3}] (X) {$X$};
	\draw[label]
    	node[left=.1 of X_in1]  {$a_1$}
    	node[left=.1 of X_in2]  {$\dotso$}
    	node[left=.1 of X_in3]  {$a_m$}
    	node[right=.1 of X_out1] {$b_1$}
    	node[right=.1 of X_out2]  {$\dotso$}
    	node[right=.1 of X_out3] {$b_n$};
\end{tikzpicture}
\end{displaymath}
where $X^\mathrm{in}=\{a_1, \ldots, a_m\}$ are the input ports, $X^\mathrm{out}=\{b_1, \ldots, b_n\}$
the output ones and all wires are associated to a $\mathcal{C}$-object expressing the type of information that can go through them. A morphism $\phi\maps X\to Y$ in this category consists
of a pair of functions
\begin{displaymath}
    \left\{\begin{array}{l}
    \inp{\phi}\maps \inp{X}\to\out{X}+\inp{Y} \\
    \out{\phi}\maps \out{Y}\to\out{X}\end{array}\right.
\end{displaymath}
that respect the $\mathcal{C}$-types, 
which roughly express which port is `fed information' by which.
Graphically, we can picture it as

\begin{equation}\label{eq:wiringdiagpic}
\begin{tikzpicture}[oriented WD, baseline=(Y.center), bbx=2em, bby=1.2ex, bb port sep=1.2]
    \node[bb={6}{6}] (X) {};
    \node[bb={2}{3}, fit={($(X.north east)+(0.7, 1.7)$) ($(X.south west)-(.7, .7)$)}] (Y) {};
    \node [circle, minimum size=4pt, inner sep=0, fill] (dot1) at ($(Y_in1')+(.5, 0)$) {};
    \node [circle, minimum size=4pt, inner sep=0, fill] (dot2) at ($(X_out4)+(.5, 0)$) {};
    \draw[ar] (Y_in1') to (dot1);
    \draw[ar] (X_out4) to (dot2);
    \draw[ar] (Y_in2') to (X_in5);
    \draw[ar] (Y_in2') to (X_in4);
    \draw[ar] (X_out5) to (Y_out3');
    \draw[ar] (X_out2) to (Y_out1');
    \draw[ar] (X_out2) to (Y_out2');
    \draw[ar] let \p1=(X.north west), \p2=(X.north east), \n1={\y1+\bby}, \n2=\bbportlen in
    	(X_out1) to[in=0] (\x2+\n2, \n1) -- (\x1-\n2, \n1) to[out=180] (X_in1);
    \draw[ar] let \p1=(X.north west), \p2=(X.north east), \n1={\y1+2*\bby}, \n2=\bbportlen in
    	(X_out1) to[in=0] (\x2+\n2, \n1) -- (\x1-\n2, \n1) to[out=180] (X_in2);
    \draw[ar] let \p1=(X.south west), \p2=(X.south east), \n1={\y1-\bby}, \n2=\bbportlen in
    	(X_out6) to[in=0] (\x2+\n2, \n1) -- (\x1-\n2, \n1) to[out=180] (X_in6);	
    \draw [label] 
        node at ($(Y.north east)-(.5cm, .3cm)$) {$Y$}
        node at ($(X.north east)-(.4cm, .3cm)$) {$X$}
        node[left=.1 of X_in3]  {$\dotso$}
        node[right=.1 of X_out3] {$\dotso$}
        node[above=of Y.north] {$\phi\maps X\to Y$}
        ;
\end{tikzpicture}
\end{equation}
Composition of morphisms can be thought of a zoomed-in picture of three boxes, and the monoidal structure amounts to parallel placement of boxes as in
\begin{displaymath}
\begin{tikzpicture}[oriented WD, baseline=(Y.center), bbx=1.3em, bby=1ex, bb port sep=.06cm]
    \node[bb={3}{3}] (X1) {};
    \node[bb={3}{3}, below =.5 of X1] (X2) {};
    \node[fit=(X1)(X2), draw] {};
    \draw[label] 
    node at ($(X1.west)+(1, 0)$) {$X_1$}
    node at ($(X2.west)+(1, 0)$) {$X_2$}
    node[left=.1 of X1_in2]  {$\dotso$}
    node[right=.1 of X1_out2]  {$\dotso$}
    node[left=.1 of X2_in2]  {$\dotso$}
    node[right=.1 of X2_out2]  {$\dotso$};
    \draw (X1_in1) -- (-2.5, 1.7);
    \draw (X1_out1) -- (2.5, 1.7);
    \draw (X1_in3) -- (-2.5, -1.7);
    \draw (X1_out3) -- (2.5, -1.7);
    \draw (X2_in1) -- (-2.5, -5.6);
    \draw (X2_out1) -- (2.5, -5.6);
    \draw (X2_in3) -- (-2.5, -9.1);
    \draw (X2_out3) -- (2.5, -9.1);
 \end{tikzpicture}
\end{displaymath}
There is a close connection between the definition of $\mathcal{W}_\mathcal{C}$ and that of \emph{Dialectica} categories as well as \emph{lenses}; such considerations are the topic of work in progress \cite{EverythingisDialectica}.

The systems-as-algebras formalism uses lax monoidal pseudofunctors from this category
$\mathcal{W}_\mathcal{C}$ to $\Cat$ that essentially receive a general picture such as \begin{displaymath}
\begin{tikzpicture}[oriented WD, bb min width =.5cm, bbx=.5cm, bb port sep =1, bb port length=.08cm, bby=.15cm]
\node[bb={2}{2}, bb name = {\tiny$X_1$}] (X11) {};
\node[bb={3}{3}, below right=of X11, bb name = {\tiny$X_2$}] (X12) {};
\node[bb={2}{1}, above right=of X12, bb name = {\tiny$X_3$}] (X13) {};
\draw (X11_out1) to (X13_in1);
\draw (X11_out2) to (X12_in1);
\draw (X12_out1) to (X13_in2);
\node[bb={2}{2}, below right = -1 and 1.5 of X12, bb name = {\tiny$X_4$}] (X21) {};
\node[bb={1}{2}, above right=-1 and 1 of X21, bb name = {\tiny$X_5$}] (X22) {};
\draw (X21_out1) to (X22_in1);
\draw let \p1=(X22.north east), \p2=(X21.north west), \n1={\y1+\bby}, \n2=\bbportlen in
         (X22_out1) to[in=0] (\x1+\n2, \n1) -- (\x2-\n2, \n1) to[out=180] (X21_in1);
\node[bb={2}{2}, fit = {($(X11.north east)+(-1, 3)$) (X12) (X13) ($(X21.south)$) ($(X22.east)+(.5, 0)$)}, bb name ={\scriptsize $Y$}] (Z) {};
\draw (Z_in1') to (X11_in2);
\draw (Z_in2') to (X12_in2);
\draw (X12_out2) to (X21_in2);
\draw let \p1=(X22.south east), \n1={\y1-\bby}, \n2=\bbportlen in
  (X21_out2) to (\x1+\n2, \n1) to (Z_out2');
 \draw let \p1=(X12.south east), \p2=(X12.south west), \n1={\y1-\bby}, \n2=\bbportlen in
  (X12_out3) to[in=0] (\x1+\n2, \n1) -- (\x2-\n2, \n1) to[out=180] (X12_in3);
\draw let \p1=(X22.north east), \p2=(X11.north west), \n1={\y2+\bby}, \n2=\bbportlen in
  (X22_out2) to[in=0] (\x1+\n2, \n1) -- (\x2-\n2, \n1) to[out=180] (X11_in1);
\draw let \p1=(X13_out1), \p2=(X22.north east), \n2=\bbportlen in
 (X13_out1) to (\x1+\n2, \y1) -- (\x2+\n2, \y1) to (Z_out1');
\end{tikzpicture}
\end{displaymath}
(which really takes place in the underlying operad of $\mathcal{W}_\mathcal{C}$) and assign systems of a certain kind to all inner boxes; the lax monoidal and pseudo\-functorial structure of this assignment formally produce a system of the same kind for the outer box. 

Examples of such systems are discrete dynamical systems (Moore machines in the finite case), continuous dynamical systems but also more general systems with deterministic or total conditions; details can be found in the provided references. Since all these systems are lax monoidal pseudofunctors from the non-cocartesian monoidal category of wiring diagrams to $\Cat$, i.e.\ monoidal indexed categories, the monoidal Grothendieck construction \cref{thm:mainthm} induces a corresponding monoidal fibration in each system case, and this global structure does not reduce to a fibrewise one. 

For example, the algebra for discrete dynamical systems
\cite[Sec.\ 2.3]{DynamicalSystemsSheaves}
\begin{equation}\label{eq:DDS}
\mathrm{DDS}\maps \mathcal{W}_\Set\to\Cat  
\end{equation}
assigns to each box $X=(X^\mathrm{in}, X^\mathrm{out})$
the category of all discrete dynamical systems with fixed input and output sets being $\prod_{x\in X^\mathrm{in}}x$ and $\prod_{y\in X^\mathrm{out}}y$ respectively. There exist morphisms between systems of the same input and output set, but not between those with different ones. To each morphism, i.e.\ wiring diagram as in \cref{eq:wiringdiagpic}, $\mathrm{DDS}$
produces a functor that maps an inner discrete dynamical system to a new outer one, with changed input and output sets accordingly.
(Pseudo)functoriality of this assignment 
allows the coherent zoom-in and zoom-out on dynamical systems built out of smaller dynamical systems, and monoidality allows the creation of new dynamical systems on parallel boxes.

Being a monoidal indexed category, \cref{eq:DDS} gives rise to a monoidal opfibration over $\mathcal{W}_\Set$. Its  total category $\inta \mathrm{DDS}$ has objects all dynamical systems with arbitrary input and output sets, morphisms that can now go between systems of different inputs/outputs, and also a natural tensor product inherited from that in $\mathcal{W}_\Set$ and the laxator of $\inta\mathrm{DDS}$. In a sense, this category has all the required flexibility for the direct communication (via morphisms in the total category) between any discrete dynamical system, or any composite of systems or parallel placement of them, whereas the wiring diagram algebra \cref{eq:DDS} focuses on the machinery of building new discrete dynamical systems systems from old.

This classic change of point of view also transfers over to maps of algebras, i.e.\ indexed monoidal 1-cells. As an example, see \cite[Sec.\ 5.1]{DynamicalSystemsSheaves}, discrete dynamical systems can naturally be viewed as general \emph{total} and \emph{deterministic} machines denoted by $\mathrm{Mch}^\mathrm{td}$, 
via a monoidal pseudonatural transformation
\[
\begin{tikzcd}
    \mathcal{W}_\Set
    \arrow[dr, "\mathrm{DDS}"]
    \arrow[dd]
    \\
    \arrow[r, phantom, "\Downarrow"]
    &
    \Cat
    \\
    \mathcal{W}_{\widetilde{\mathrm{Int}_N}}
    \arrow[ur, "\mathrm{Mch}^\mathrm{td}", swap]
\end{tikzcd}\]
which also changes the type of input and output wires from sets to \emph{discrete interval sheaves} $\widetilde{\mathrm{Int}_N}$. This gives rise to a monoidal opfibred 1-cell
\begin{displaymath}
\begin{tikzcd}
    \inta\mathrm{DDS}
    \arrow[r]
    \arrow[d] 
    & 
    \inta\mathrm{Mch}^\mathrm{td}
    \arrow[d] 
    \\
    \mathcal{W}_\Set
    \arrow[r] 
    & 
\mathcal{W}_{\widetilde{\mathrm{Int}}_N}
\end{tikzcd}
\end{displaymath}
which provides a direct functorial translation between the one sort of system to the other in a way compatible with the monoidal structure.

As a final note, this method of modeling certain objects as algebras for a monoidal category (a.k.a.\ strict or general monoidal indexed categories) carries over to further contexts than systems and the wiring diagram category. Examples include hypergraph categories as algebras on cospans  \cite{HypergraphCats} and traced monoidal categories as algebras on cobordisms  \cite{TracedMonCatsAlg}. In all these cases, the monoidal Grothendieck construction gives a potentially fruitful change of perspective that should be further investigated.

\subsection{Graphs}

As we show in \cref{expl:grphsoverset}, the category of (directed, multi) graphs, is bifibred over set, where the bifibration $\mathsf V \maps \Grph \to \Set$ is given by sending a graph to its vertex set. 

Since $\mathsf V \maps \Grph \to \Set$ preserves products, then it can be given the structure of a strict monoidal monoidal functor with respect to the cartesian monoidal structures on $\Grph$ and $\Set$. Since the cartesian morphisms are those that form pullback squares, and products in $\Grph$ are given pointwise, then the monoidal structure in $\Grph$ preserves cartesian morphisms. We can then apply \cref{cor:fixedbasemonoidalGr} to obtain a symmetric lax monoidal structure for the pseudofunctor $\Grph^* \maps \Set\op \to \Cat$. The lax structure map $\gamma_{X,Y} \maps \Grph_X \times \Grph_Y \to \Grph_{X \times Y}$ is given by taking the product of the two graphs within $\Grph$. Notice the product has vertex set given by $X \times Y$. Since the base category is cartesian monoidal, we can apply \cref{thm:fibrewise=global}, granting a symmetric monoidal structure to the fibres $\Grph_X$. The monoidal product is given by the following composite.
\[
    \Grph_X \times \Grph_X \xrightarrow{\gamma_{X,X}} \Grph_{X\times X} \xrightarrow{\Delta^*} \Grph_X
\]
Simply put, this operation is given by taking the product of the two graphs on $X$, and then restricting to the vertices on the diagonal. Indeed, this is the cartesian monoidal structure on $\Grph_X$.

Since the category $\Set$ also has all finite colimits, we obtain a symmetric lax monoidal structure for the pseudofunctor $\Grph_* \maps \Set\op \to \Cat$. The lax structure map $\phi_{X,Y} \maps \Grph_X \times \Grph_Y \to \Grph_{X + Y}$ is given by taking the disjoint union of the two graphs. Notice the disjoint union has vertex set given by $X + Y$. Since the base category is cocartesian monoidal, we can apply \cref{thm:fibrewise=global}, granting a symmetric monoidal structure to the fibres $\Grph_X$. The monoidal product is given by the following composite. 
\[
    \Grph_X \times \Grph_X \xrightarrow{\phi_{X,X}} \Grph_{X + X} \xrightarrow{\Delta_*} \Grph_X
\]
Simply put, this operation is given by taking the disjoint union of the edges. Indeed, this is the cocartesian monoidal structure on $\Grph_X$. This is also the overlay operation for the network model of directed multi graphs.
}

{\ssp\appendix
\setcounter{chapter}{0}
\chapter{Monoidal Categories}
\label{app:monoidalcats}

Monoidal categories lie at the center of applied category theory. This section is included mainly to establish notation and terminology used throughout this thesis. Some standard references are \cite{MacLane} and \cite{TensorCategories}.

\section{Definitions}

\subsection{Monoidal, Braided, and Symmetric Categories}

\begin{defn}
\label{def:monoidalcategory}
    A \define{monoidal category} $(\C, \otimes, I, \assoc, \lambda, \rho)$ consists of 
    \begin{itemize}
        \item a category $\C$
        \item a functor $\otimes \maps \C \times \C \to \C$ called the \define{tensor}
        \item a functor $I \maps 1 \to \C$ called the \define{unit}
        \item a natural transformation $\assoc$ with components of the form $\assoc_{x,y,z} \maps (x \otimes y) \otimes z \to x \otimes (y \otimes z)$ called the \define{associator}
        \item a natural transformation $\lambda$ with components of the form $\lambda_x \maps I \otimes x \to x$ called the \define{left unitor}
        \item a natural transformation $\rho$ with components of the form $\rho_x \maps x \otimes I \to x$ called the \define{right unitor}
    \end{itemize}
    such that the following diagrams commute.
    
    \noindent \define{Pentagon identity:}
    \begin{equation}
    \label{monoidalpentagon}
    \begin{tikzcd}[column sep = large]
        &
        (w \otimes x) \otimes (y \otimes z)
        \arrow[ddr, "\alpha_{w,x,y \otimes z}"]
        \\
        ((w \otimes x) \otimes y) \otimes z
        \arrow[ur, "\alpha_{w \otimes x,y,z}"]
        \arrow[dd, swap, "\alpha_{w,x,y} \otimes 1_z"]
        \\&&
        w \otimes (x \otimes (y \otimes z))
        \\
        (w \otimes (x \otimes y)) \otimes z
        \arrow[dr, swap, "\alpha_{w,x \otimes y,z}"]
        \\&
        w \otimes ((x \otimes y) \otimes z)
        \arrow[uur, swap, "1_w \otimes \alpha_{x,y,z}"]
    \end{tikzcd}
    \end{equation}
    
    \noindent \define{Triangle identity:}
    \begin{equation}
    \label{leftorrightor}
    \begin{tikzcd}
        (x \otimes I) \otimes y
        \arrow[rr, "\assoc_{x, I, y}"]
        \arrow[dr, swap, "\rho_x \otimes 1_y"]
        &&
        x \otimes (I \otimes y)
        \arrow[dl, "1_x \otimes \lambda_y"]
        \\&
        x \otimes y
    \end{tikzcd}
    \end{equation}
    A \define{strict monoidal category} is one where the associator, left unitor, and right unitor are all identity.
    
    A \define{braided monoidal category} \cite{BraidedTensorCats} is a monoidal category equipped with a natural transformation $\braid$ called the \define{braiding} with components $\braid_{x,y} \maps x \otimes y \to y \otimes x$, such that the following diagrams commute.
    \begin{equation}
    \begin{tikzcd}
        (x \otimes y) \otimes z
        \arrow[r, "\assoc_{x,y,z}"]
        \arrow[d, swap, "\braid_{x,y} \otimes 1_z"]
        &
        x \otimes (y \otimes z)
        \arrow[d, "\braid_{x, y \otimes z}"]
        \\
        (y \otimes x) \otimes z
        \arrow[d, swap, "\assoc_{y,x,z}"]
        &
        (y \otimes z) \otimes x
        \arrow[d, "\assoc_{y,z,x}"]
        \\
        y \otimes (x \otimes z)
        \arrow[r, swap, "1_y \otimes \braid_{x,z}"]
        &
        y \otimes (z \otimes x)
    \end{tikzcd}
    \qquad
    \begin{tikzcd}
        x \otimes (y \otimes z)
        \arrow[r, "\assoc_{x,y,z}\inv"]
        \arrow[d, swap, "1_x \otimes \braid_{y,z}"]
        &
        (x \otimes y) \otimes z
        \arrow[d, "\braid_{x \otimes y, z}"]
        \\
        x \otimes (z \otimes y)
        \arrow[d, swap, "\assoc_{x,z,y}\inv"]
        &
        z \otimes (x \otimes y)
        \arrow[d, "\assoc_{z,x,y}\inv"]
        \\
        (x \otimes z) \otimes y
        \arrow[r, swap, "\braid_{x,z} \otimes 1_y"]
        &
        (z \otimes x) \otimes y
    \end{tikzcd}
    \end{equation}
    A \define{symmetric monoidal category} is a braided monoidal category where the braiding satisfies the equation $\braid_{y,x} \circ \braid_{x,y} = 1_{x \otimes y}$ for all objects $x,y \in \C$. A \define{commutative monoidal category} is a symmetric monoidal category where the braiding is identity.
\end{defn}

For general (braided/symmetric) monoidal categories, we write $\C$, $\D$, or $\E$.

\subsection{Monoidal, Braided, and Symmetric Functors}

\begin{defn}
\label{def:monoidalfunctor}
    Let $(\C, \otimes_\C, I_\C, \assoc^\C, \leftor^\C, \rightor^\C)$ and $(\D, \otimes_\D, I_\D, \assoc^\D, \leftor^\D, \rightor^\D)$ be monoidal categories. A \define{lax monoidal functor} from $\C$ to $\D$ consists of 
    \begin{itemize}
        \item a functor $F \maps \C \to \D$
        \item a natural transformation with components $\phi_{x,y} \maps Fx \otimes_\D Fy \to F(x \otimes_\C y)$ called the \define{laxator}
        \item a natural transformation with unique component $\phi_0 \maps I_\D \to F I_\C$ called the \define{unit laxator}
    \end{itemize}
    such that the following diagrams commute.
    \begin{equation}
    \begin{tikzcd}[column sep = large]
        (Fx \otimes_\D Fy) \otimes_\D Fz
        \arrow[r, "\assoc^\D_{Fx,Fy,Fz}"]
        \arrow[d, swap, "\phi_{x,y} \otimes_\D 1_{Fz}"]
        &
        Fx \otimes_\D (Fy \otimes_\D Fz)
        \arrow[d, "1_{Fx} \otimes_\D \phi_{y,z}"]
        \\
        F(x \otimes_\C y) \otimes_\D Fz
        \arrow[d, swap, "\phi_{x \otimes_\C y, z}"]
        &
        Fx \otimes_\D F(y \otimes_\C z)
        \arrow[d, "\phi_{x, y \otimes_\C z}"]
        \\
        F((x \otimes_\C y) \otimes_\C z)
        \arrow[r, swap, "F(\assoc^\C_{x,y,z})"]
        &
        F(x \otimes_\C (y \otimes_\C z))
    \end{tikzcd}
    \end{equation}
    \begin{equation}
    \begin{tikzcd}
        I_\D \otimes_\D Fx
        \arrow[r, "\leftor^\D_{Fx}"]
        \arrow[d, swap, "\phi_0 \otimes_\D 1_{Fx}"]
        &
        Fx
        \\
        FI_\C \otimes_\D Fx
        \arrow[r, swap, "\phi_{I_\C, x}"]
        &
        F(I_\C \otimes_\C x)
        \arrow[u, swap, "F(\leftor^\C_x)"]
    \end{tikzcd}
    \qquad
    \begin{tikzcd}
        Fx \otimes_\D I_\D
        \arrow[r, "\rightor^\D_x"]
        \arrow[d, swap, "1_{Fx} \otimes_\D \phi_0"]
        &
        Fx
        \\
        Fx \otimes_\D FI_\C
        \arrow[r, swap, "\phi_{x,I_\C}"]
        &
        F(x \otimes_\C I_\C)
        \arrow[u, swap, "F(\rightor^\C_x)"]
    \end{tikzcd}
    \end{equation}
    We say that $F$ is simply a \define{monoidal functor} when $\phi$ and $\phi_0$ are natural isomorphisms. It is worth noting that there exists a notion of ``oplax'' monoidal functors, where the structure map is reversed: $\phi_{x,y} \maps F(x \otimes y) \to Fx \otimes Fy$. However, oplax monoidal functors do not appear in this thesis, so we spend no further time on them.
    
    A \define{lax braided monoidal functor} is a lax monoidal functor $(F, \phi, \phi_0) \maps (\C, \otimes_\C, I_\C) \to (\D, \otimes_\D, I_\D)$ where $\C$ and $\D$ are braided monoidal categories, with $\braid^\C$ and $\braid^\D$ being the respective braidings, such that the following diagram commutes.
    \begin{equation}
    \begin{tikzcd}
        Fx \otimes_\D Fy
        \arrow[r, "\braid^\D_{Fx,Fy}"]
        \arrow[d, swap, "\phi_{x,y}"]
        &
        Fy \otimes_\D Fx
        \arrow[d, "\phi_{y,x}"]
        \\
        F(x \otimes_\C y)
        \arrow[r, swap, "F\braid^\C_{x,y}"]
        &
        F(y \otimes_\C x)
    \end{tikzcd}
    \end{equation}
    A (lax) braided monoidal functor between symmetric monoidal categories is called a \define{(lax) symmetric monoidal functor} with no further requirements. 
\end{defn}

\begin{lem}
    Composition of lax monoidal functors is strictly associative.
\end{lem}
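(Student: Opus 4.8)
The plan is to unwind the definition of composition of lax monoidal functors and check that the associativity constraint for ordinary functors lifts to the monoidal setting on the nose. Recall first how composites are formed: given lax monoidal functors $(F, \phi^F, \phi^F_0) \maps \C \to \D$ and $(G, \phi^G, \phi^G_0) \maps \D \to \E$, the composite has underlying functor $G \circ F$, laxator with components
\[
    GFx \otimes_\E GFy \xrightarrow{\phi^G_{Fx, Fy}} G(Fx \otimes_\D Fy) \xrightarrow{G\phi^F_{x,y}} GF(x \otimes_\C y),
\]
and unit laxator $I_\E \xrightarrow{\phi^G_0} G I_\D \xrightarrow{G \phi^F_0} GF I_\C$. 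First I would write out the composite $(H,\phi^H,\phi^H_0) \circ \bigl((G,\phi^G,\phi^G_0) \circ (F,\phi^F,\phi^F_0)\bigr)$ and, separately, $\bigl((H,\phi^H,\phi^H_0) \circ (G,\phi^G,\phi^G_0)\bigr) \circ (F,\phi^F,\phi^F_0)$, and compare the three pieces of data (underlying functor, laxator, unit laxator) component by component.

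The underlying functor case is immediate: composition of ordinary functors is strictly associative, so $H \circ (G \circ F) = (H \circ G) \circ F$ as functors, with no coherence isomorphism needed. For the laxator, both bracketings produce, at objects $x, y \in \C$, a morphism $HGFx \otimes_{\mathcal{E}'} HGFy \to HGF(x \otimes_\C y)$ that factors as a string of three maps: an application of $\phi^H$ at the appropriate pair of objects, then $H$ applied to an application of $\phi^G$, then $HG$ applied to an application of $\phi^F_{x,y}$. The only thing to verify is that these three maps occur in the same order and with the same arguments in both bracketings; this follows because functor composition and application of a functor to a morphism are themselves strictly associative operations, so $H(G(-)) = (HG)(-)$ and $H(\phi^G \circ G\phi^F) = H\phi^G \circ HG\phi^F$ on the nose. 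The unit laxator is handled identically: both bracketings give $\phi^H_0$ followed by $H\phi^G_0$ followed by $HG\phi^F_0$.

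There is no serious obstacle here — this is the kind of ``strictness'' statement that holds because every operation involved (composing functors, whiskering a natural transformation by a functor, composing natural transformations) is strictly associative, and the coherence axioms for lax monoidal functors are not even invoked: they guarantee the composite \emph{is} a lax monoidal functor, but associativity of the composition operation is a separate, purely formal fact. If anything, the mild care required is bookkeeping: making sure that when we write $HG\phi^F_{x,y}$ we mean the functor $H \circ G$ applied to the morphism $\phi^F_{x,y}$, and that this genuinely equals $H$ applied to $G\phi^F_{x,y}$, which is the functoriality identity $(H\circ G)(f) = H(G(f))$. So the proof is essentially: ``chase the definition, observe that each of the three components of a lax monoidal functor composes in a strictly associative way, done.'' I would present it in two or three sentences per component.
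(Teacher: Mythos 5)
Your proof is correct. The paper actually states this lemma without any proof at all, treating it as immediate; your argument --- unwinding the composite's three pieces of data and observing that the underlying functor, the laxator $HG\phi^F_{x,y} \circ H\phi^G_{Fx,Fy} \circ \phi^H_{GFx,GFy}$, and the unit laxator come out identical under either bracketing because functor application and composition of morphisms are strictly associative --- is exactly the standard verification the paper leaves implicit, and you are right that the coherence axioms play no role in it.
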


We get categories $\Mon\Cat_\ell$, $\Mon\Cat$, $\Br\Mon\Cat_\ell$, $\Br\Mon\Cat$, $\Sym\Mon\Cat_\ell$, and $\Sym\Mon\Cat$ where the objects are monoidal categories, the functors are monoidal categories, the prefix $\Br$ (resp.\ $\Sym$) indicates the objects and morphisms are braided (resp.\ symmetric), and the subscript $\ell$ indicated the morphisms are lax monoidal.

\subsection{Monoidal Natural Transformations}

\begin{defn}
    Let $(F, \phi, \phi_0)$ and $(G, \gamma, \gamma_0)$ be lax monoidal functors. A \define{monoidal natural transformation} is a natural transformation $\theta \maps F \To G$ such that the following diagrams commute.
    \begin{equation}
    \begin{tikzcd}
        Fx \otimes_\D Fy
        \arrow[r, "\theta_x \otimes_\D \theta_y"]
        \arrow[d, swap, "\phi_{x,y}"]
        &
        Gx \otimes_\D Gy
        \arrow[d, "\gamma_{x,y}"]
        \\
        F(x \otimes_\C y)
        \arrow[r, swap, "\theta_{x \otimes_C y}"]
        &
        G(x \otimes_\C y)
    \end{tikzcd}
    \qquad
    \begin{tikzcd}
        &
        I_\D
        \arrow[dr, "\phi_0"]
        \arrow[dl, swap, "\gamma_0"]
        \\
        FI_\C
        \arrow[rr, swap, "\theta_{I_\C}"]
        &&
        GI_\C
    \end{tikzcd}
    \end{equation}
    There are no new laws which can be imposed on a monoidal natural transformation between braided or symmetric monoidal functors. So we do not specialize this concept any further.
\end{defn}

\section{Examples}

\begin{expl}
    Let $(M, \cdot, e)$ be a monoid. If we can consider $M$ as a discrete category, then it can be given a strict monoidal structure where the tensor is given by $\cdot$ and the unit is $e$. The functor $\Mon \hookrightarrow \Mon\Cat$ which realizes a monoid as a discrete monoidal category is full and faithful. If we think of this as ``forgetting discreteness'', then discreteness is a property.
\end{expl}

\begin{expl}
\label{expl:reversemonoidal}
    Given a monoidal category $(\C, \otimes, I)$, we can define $ \otimes\rev \maps \C \times \C \to \C$ by 
    \[
    \begin{tikzcd}
        \C \times \C
        \arrow[rr, "\otimes\rev"]
        \arrow[dr, swap, "\braid^\Cat_{\C,\C}"]
        &&
        \C
        \\&
        \C \times \C
        \arrow[ur, swap, "\otimes"]
    \end{tikzcd}\]
    This defines an idempotent automorphism on $\Mon\Cat$.
\end{expl}

\begin{expl}
\label{expl:opmonoidal}
    Given a monoidal category $(\C, \otimes, I)$, the category $\C$ can be equipped with a monoidal structure given by $\otimes\op \maps \C\op \times \C\op \to \C\op$ and the same unit object. This defines an idempotent automorphism on $\Mon\Cat$.
\end{expl}

\begin{expl}
\label{expl:cartesianmonoidal}
    Any category $C$ with finite products can be equipped with a symmetric monoidal structure as follows. For every pair of objects $c, d$, choose some object satisfying the universal property of the product of $c$ and $d$, call it $c \times d$. Given a pair of morphisms $f \maps a \to b$ and $g \maps c \to d$, the universal property gives a morphism $f \times g \maps a \times b \to c \times d$ as follows.
    \[
    \begin{tikzcd}[row sep = tiny]
        &
        a \times b 
        \arrow[dl, "\pi_a", swap]
        \arrow[dr, "\pi_b"]
        \arrow[dd, dashed, "\exists!"]
        \\
        a
        \arrow[dd, "f", swap]
        &&
        b
        \arrow[dd, "g"]
        \\&
        c \times d
        \arrow[dl, "\pi_c"]
        \arrow[dr, "\pi_d", swap]
        \\
        c
        &&
        d
    \end{tikzcd}\]
    We claim that this defines a functor $\times \maps C \times C \to C$. Consider a pair of morphisms $(f_1, f_2) \maps (a_1, a_2) \to (b_1, b_2)$ and $(g_1, g_2) \maps (b_1, b_2) \to (c_1, c_2)$. 
    Since $(g_1 \circ f_1) \times (g_2 \circ g_2)$ and $(g_1 \times g_2) \circ (f_1 \times f_2)$ both make the following diagram commute, they must be equal.
    \[
    \begin{tikzcd}[row sep = small]
        &
        a_1 \times a_2 
        \arrow[dl]
        \arrow[dr]
        \arrow[dd, dashed, ""]
        \\
        a_1
        \arrow[d, "f_1", swap]
        &&
        a_2
        \arrow[d, "f_2"]
        \\
        b_1
        \arrow[d, "g_1", swap]
        &
        c_1 \times c_2
        \arrow[dl]
        \arrow[dr]
        &
        b_2
        \arrow[d, "g_2"]
        \\
        c_1
        &&
        c_2
    \end{tikzcd}\]
    Identity maps are preserved because the identity map on $a \times b$ makes the diagram below commute.
    \[
    \begin{tikzcd}[row sep = tiny]
        &
        a \times b 
        \arrow[dl, "\pi_a", swap]
        \arrow[dr, "\pi_b"]
        \arrow[dd, dashed]
        \\
        a
        \arrow[dd, "1_a", swap]
        &&
        b
        \arrow[dd, "1_b"]
        \\&
        a \times b
        \arrow[dl, "\pi_a"]
        \arrow[dr, "\pi_b", swap]
        \\
        a
        &&
        b
    \end{tikzcd}\]

    We define the unit object to be some chosen terminal object, call it $1$. The associator, unitors, pentagon, hexagon, braiding, hexagon law, and symmetric law can all be derived from the universal property of products. This gives $\C$ the structure of a symmetric monoidal category. This is called the \define{cartesian monoidal structure}, and $(\C, \times, 1)$ is called a \define{cartesian monoidal category}.
\end{expl}

\begin{expl}
\label{expl:cocartesianmonoidal}
    Any category with finite coproducts can be equipped with a symmetric monoidal structure by \cref{expl:cartesianmonoidal} and \cref{expl:opmonoidal}.
\end{expl}

\begin{expl}
\label{expl:pointwisemonoidal}
    If $\C$ is monoidal and $\D$ is a category, the functor category $\C^\D$ can be given a \define{pointwise monoidal structure} as follows. Define $\otimes_{pt} \maps \C^\D \times \C^\D \to \C^\D$ by $\otimes_{pt} = \otimes (F \times G) \circ \Delta$. The unit object $1 \to \C^\D$ is given by currying the composite $D \xrightarrow{!} 1 \xrightarrow{I} \C$. The rest of the structures and the necessary properties all carry over from their counterparts in $\C$. Similarly, if $\C$ is braided or symmetric, then $\C^\D$ can be given a pointwise braided or symmetric monoidal structure respectively.
\end{expl}

\begin{expl}
\label{Dayconvolution}
    Let $C$ be a small monoidal category. Then the \define{Day convolution} tensor product \cite{Day}
    \[\otimes_\Day \maps \Set^{\C\op} \times \Set^{\C\op} \to \Set^{\C\op}\]
    is the following left Kan extension.
    \[\begin{tikzcd}
        C\op \times C\op
        \arrow[r, "{(X,Y)}"]
        \arrow[d, "\otimes", swap]
        &
        \Set
        \\
        C\op
        \arrow[ur, "X \otimes_\Day Y", swap, dashed]
    \end{tikzcd}\]
    This can be given by the following coend formula \cite{Coend}.  \[X \otimes_\Day Y \maps c \mapsto \int^{c_1,c_2 \in C} C(c_1 \otimes c_2, c) \times X(c_1) \times Y(c_2)\]
    Similarly, we can define the unit via left Kan extension.
    \[\begin{tikzcd}
        1
        \arrow[r, "\Delta1"]
        \arrow[d, "I", swap]
        &
        \Set
        \\
        C\op
        \arrow[ur, "I_\Day", swap, dashed]
    \end{tikzcd}\]
    Day convolution gives the functor category $\Set^{\C\op}$ a monoidal structure. Many nice properties of this structure can be found in the literature, e.g. \cite{Coend}. However, these properties are not heavily used in this thesis, so we choose to leave them out.
\end{expl}

\section{Monoid Objects}

A monoidal structure is exactly what a category needs to have if we want to define monoid objects in this category.
\begin{defn}
\label{def:monoidobject}
    Let $(\C, \otimes, I)$ be a monoidal category. A \define{monoid object} internal to $\C$ consist of 
    \begin{itemize}
        \item an object $x \in \C$
        \item a morphism $\mu \maps x \otimes x \to x$
        \item a morphism $\varepsilon \maps I \to x$
    \end{itemize}
    such that the following diagrams commute.
    \begin{equation}
    \begin{tikzcd}
        (x \otimes x) \otimes x
        \arrow[rr, "\assoc_{x,x,x}"]
        \arrow[d, swap, "\mu \otimes 1_x"]
        &&
        x \otimes (x \otimes x)
        \arrow[d, "1_x \otimes \mu"]
        \\
        x \otimes x
        \arrow[dr, swap, "\mu"]
        &&
        x \otimes x
        \arrow[dl, "\mu"]
        \\&
        x
    \end{tikzcd}
    \end{equation}
    \begin{equation} 
    \begin{tikzcd}
        I \otimes x
        \arrow[r, "\varepsilon \otimes x"]
        \arrow[dr, swap, "\leftor_x"]
        &
        x \otimes x
        \arrow[d, "\mu"]
        &
        x \otimes I
        \arrow[l, swap, "x \otimes \varepsilon"]
        \arrow[dl, "\rightor_x"]
        \\&
        x
    \end{tikzcd}
    \end{equation}
    Alternatively, we can express these structures with string diagrams as follows.
    \begin{itemize}
        \item multiplication
        \begin{tikzpicture}
        \begin{pgfonlayer}{nodelayer}
        	\node [style=blackdot] (mu) at (0, 0) {};
        	\node [style=none] (2) at (-0.5, 0.5) {};
        	\node [style=none] (3) at (0.5, 0.5) {};
        	\node [style=none] (4) at (0, -0.5) {};
            \node [style=none] () at (1, 0) {}; %spacing
        \end{pgfonlayer}
        \begin{pgfonlayer}{edgelayer}
        	\draw [bend right] (2.center) to (mu);
        	\draw [bend left] (3.center) to (mu);
        	\draw (4.center) to (mu);
        \end{pgfonlayer}
        \end{tikzpicture}
        \item neutral element
        \begin{tikzpicture}
        \begin{pgfonlayer}{nodelayer}
        	\node [style=blackdot] (mu) at (0, 0) {};
        	\node [style=none] (4) at (0, -0.5) {};
            \node [style=none] () at (1, 0) {}; %spacing
        \end{pgfonlayer}
        \begin{pgfonlayer}{edgelayer}
        	\draw (4.center) to (mu);
        \end{pgfonlayer}
        \end{tikzpicture}
    \end{itemize}
    such that
    \begin{equation}
    \begin{tikzpicture}
    \begin{pgfonlayer}{nodelayer}
    	\node [style=none] (1) at (0, 1) {};
    	\node [style=none] (2) at (0.5, 1) {};
    	\node [style=none] (3) at (1, 1) {};
    	\node [style=none] (4) at (1, 0.5) {};
    	\node [style=none] (5) at (0.625, -0.5) {};
        \node [style=blackdot] (b1) at (0.25, 0.5) {};
        \node [style=blackdot] (b2) at (0.625, 0) {};
    	\node [style=none] () at (1.5, 0.25) {=};
    	\node [style=none] () at (2, 0.5) {}; % spacing
    \end{pgfonlayer}
    \begin{pgfonlayer}{edgelayer}
    	\draw [bend right] (1.center) to (b1.center);
    	\draw [bend left] (2.center) to (b1.center);
    	\draw (3.center) to (4.center);
    	\draw [bend left] (4.center) to (b2.center);
    	\draw [bend right] (b1.center) to (b2.center);
    	\draw (b2.center) to (5.center);
    \end{pgfonlayer}
    \end{tikzpicture}
    \begin{tikzpicture}
    \begin{pgfonlayer}{nodelayer}
    	\node [style=none] (1) at (0, 1) {};
    	\node [style=none] (2) at (0.5, 1) {};
    	\node [style=none] (3) at (1, 1) {};
    	\node [style=none] (4) at (0, 0.5) {};
    	\node [style=none] (5) at (0.375, -0.5) {};
        \node [style=blackdot] (b1) at (0.75, 0.5) {};
        \node [style=blackdot] (b2) at (0.375, 0) {};
    \end{pgfonlayer}
    \begin{pgfonlayer}{edgelayer}
    	\draw (1.center) to (4.center);
    	\draw [bend right] (2.center) to (b1.center);
    	\draw [bend left] (3.center) to (b1.center);
    	\draw [bend right] (4.center) to (b2.center);
    	\draw [bend left] (b1.center) to (b2.center);
    	\draw (b2.center) to (5.center);
    \end{pgfonlayer}
    \end{tikzpicture}
    \end{equation}
    \begin{equation}
    \begin{tikzpicture}
    \begin{pgfonlayer}{nodelayer}
    	\node [style=blackdot] (mu) at (0, 0) {};
    	\node [style=blackdot] (ep) at (0.5, 1) {};
        \node [style=none] (1) at (-0.5, 1.5) {};
    	\node [style=none] (2) at (0.5, 0.5) {};
    	\node [style=none] (4) at (-0.5, 0.5) {};
    	\node [style=none] (5) at (0, -0.5) {};
    	\node [style=none] () at (1.25, 0.5) {=};
    	\node [style=none] () at (1.75, 1) {}; % spacing
    \end{pgfonlayer}
    \begin{pgfonlayer}{edgelayer}
    	\draw (1.center) to (4.center);
    	\draw (ep.center) to (2.center);
    	\draw [bend left] (2.center) to (mu.center);
    	\draw [bend right] (4.center) to (mu.center);
    	\draw (mu.center) to (5.center);
    \end{pgfonlayer}
    \end{tikzpicture}
    \begin{tikzpicture}
    \begin{pgfonlayer}{nodelayer}
        \node [style=none] (1) at (0, 1.5) {};
    	\node [style=none] (2) at (0, -0.5) {};
    	\node [style=none] () at (0.5, 0.5) {=};
    	\node [style=none] () at (1, 1) {}; % spacing
    \end{pgfonlayer}
    \begin{pgfonlayer}{edgelayer}
    	\draw (1.center) to (2.center);
    \end{pgfonlayer}
    \end{tikzpicture}
    \begin{tikzpicture}
    \begin{pgfonlayer}{nodelayer}
    	\node [style=blackdot] (mu) at (0.5, 0) {};
    	\node [style=blackdot] (ep) at (0, 1) {};
        \node [style=none] (1) at (1, 1.5) {};
    	\node [style=none] (2) at (0, 0.5) {};
    	\node [style=none] (4) at (1, 0.5) {};
    	\node [style=none] (5) at (0.5, -0.5) {};
    \end{pgfonlayer}
    \begin{pgfonlayer}{edgelayer}
    	\draw (1.center) to (4.center);
    	\draw (ep.center) to (2.center);
    	\draw [bend right] (2.center) to (mu.center);
    	\draw [bend left] (4.center) to (mu.center);
    	\draw (mu.center) to (5.center);
    \end{pgfonlayer}
    \end{tikzpicture}
    \end{equation}
    
    Let $(x, \mu, \varepsilon)$ and $(y, \nu, \delta)$ be monoids in $\C$. A morphism $f \maps x \to y$ is called a \define{monoid homomorphism} if the following diagrams commute.
    \[
    \begin{tikzcd}
        x \otimes x
        \arrow[r, "f \otimes f"]
        \arrow[d, swap, "\mu"]
        &
        y \otimes y
        \arrow[d, "\nu"]
        \\
        x
        \arrow[r, swap, "f"]
        &
        y
    \end{tikzcd}
    \qquad
    \begin{tikzcd}
        &
        1
        \arrow[dl, swap, "\varepsilon"]
        \arrow[dr, "\delta"]
        \\
        x
        \arrow[rr, swap, "f"]
        &&
        y
    \end{tikzcd}
    \]
    In strings, these equations are depicted as follows.
    \[
    \begin{tikzpicture}
    \begin{pgfonlayer}{nodelayer}
    	\node [style=none] (x) at (-0.5, 0.75) {};
        \node [style=none] (y) at (0.5, 0.75) {};
        \node [style=construct] (f1) at (-0.5, 0) {$f$};
        \node [style=construct] (f2) at (0.5, 0) {$f$};
        \node [style=blackdot] (m) at (0, -0.75) {};
        \node [style=none] (q) at (0, -1.25) {};
        \node [style=none] () at (1.3, 0) {=};
        \node [style=none] () at (1.7, 0) {};
    \end{pgfonlayer}
    \begin{pgfonlayer}{edgelayer}
    	\draw (x.center) to (f1.center);
    	\draw (y.center) to (f2.center);
    	\draw [bend right = 50] (f1.center) to (m.center);
    	\draw [bend left = 50](f2.center) to (m.center);
    	\draw (m.center) to (q.center);
    \end{pgfonlayer}
    \end{tikzpicture}
    \begin{tikzpicture}
    \begin{pgfonlayer}{nodelayer}
    	\node [style=none] (x) at (-0.5, 0.75) {};
        \node [style=none] (y) at (0.5, 0.75) {};
        \node [style=blackdot] (m) at (0, 0) {};
        \node [style=construct] (f) at (0, -0.75) {$f$};
        \node [style=none] (q) at (0, -1.25) {};
    \end{pgfonlayer}
    \begin{pgfonlayer}{edgelayer}
    	\draw [bend right = 50] (x.center) to (m.center);
    	\draw [bend left = 50] (y.center) to (m.center);
    	\draw (m.center) to (f.center);
    	\draw (f.center) to (q.center);
    \end{pgfonlayer}
    \end{tikzpicture}
    \qquad\qquad\qquad
    \begin{tikzpicture}
    \begin{pgfonlayer}{nodelayer}
        \node [style=blackdot] (m) at (0, 0) {};
        \node [style=construct] (f) at (0, -0.75) {$f$};
        \node [style=none] (q) at (0, -1.25) {};
        \node [style=none] () at (0.8, -0.5) {=};
        \node [style=none] () at (1.3, 0) {};
    \end{pgfonlayer}
    \begin{pgfonlayer}{edgelayer}
    	\draw (m.center) to (f.center);
    	\draw (f.center) to (q.center);
    \end{pgfonlayer}
    \end{tikzpicture}
    \begin{tikzpicture}
    \begin{pgfonlayer}{nodelayer}
        \node [style=blackdot] (m) at (0, 0) {};
        \node [style=none] (q) at (0, -1.25) {};
    \end{pgfonlayer}
    \begin{pgfonlayer}{edgelayer}
    	\draw (m.center) to (q.center);
    \end{pgfonlayer}
    \end{tikzpicture}
    \]
    Let $\Mon(\C, \otimes)$ denote the category of monoid objects in $\C$ and their homomorphisms. If $\C$ is $\Set$ with its cartesian monoidal structure, we simply denote the category of monoids by $\Mon$.
\end{defn}

\begin{defn}
\label{def:commmon}
    Let $(\C, \otimes, I)$ be a braided monoidal category. A \define{commutative monoid} in $\C$ is a monoid object in $\C$ where the following equation holds.
    \begin{equation}
    \begin{tikzpicture}
    \begin{pgfonlayer}{nodelayer}
        \node [style=none] (3) at (0, -0.5) {};
    	\node [style=blackdot] (r) at (0, 0) {};
        \node [style=none] (1) at (-0.25, 0.5) {};
        \node [style=none] (2) at (0.25, 0.5) {};
    	\node [style=none] () at (1, 0) {=};
    	\node [style=none] () at (1.5, 0) {}; % spacing
    \end{pgfonlayer}
    \begin{pgfonlayer}{edgelayer}
    	\draw [bend right] (1.center) to (r.center);
    	\draw [bend left] (2.center) to (r.center);
    	\draw (r.center) to (3.center);
    \end{pgfonlayer}
    \end{tikzpicture}
    \begin{tikzpicture}
    \begin{pgfonlayer}{nodelayer}
        \node [style=none] (1) at (0, -0.5) {};
    	\node [style=blackdot] (m) at (0, 0) {};
        \node [style=none] (2) at (0, 0.55) {};
        \node [style=none] (3) at (-0.25, 1) {};
        \node [style=none] (4) at (0.25, 1) {};
    \end{pgfonlayer}
    \begin{pgfonlayer}{edgelayer}
    	\draw (1.center) to (m.center);
    	\draw [bend right = 90, looseness = 1.5] (m.center) to (2.center);
    	\draw [bend left = 45] (2.center) to (3.center);
    	\draw [bend left = 90, white, line width = 3, looseness = 1.5] (m.center) to (2.center);
    	\draw [bend right = 45, white, line width = 3] (2.center) to (4.center);
    	\draw [bend left = 90, looseness = 1.5] (m.center) to (2.center);
    	\draw [bend right = 45] (2.center) to (4.center);
    \end{pgfonlayer}
    \end{tikzpicture}
    \end{equation}
    Let $\CMon(\C, \otimes)$ denote the category of commutative monoid objects in $\C$ and their homomorphisms. If $\C$ is $\Set$ with its cartesian monoidal structure, we simply denote the category of commutative monoids by $\CMon$.
\end{defn}

\section{The Eckmann--Hilton Argument}

\begin{thm}
\label{EckmannHilton}
    Let $C$ be a braided monoidal category, and let $x$ be an object equipped with two distinct monoid structures $(x, \mu, \varepsilon)$ and $(x, \nu, \eta)$ such that $\mu$ and $\nu$ are related by the following equation.
    \begin{equation}
    \label{interchange}
        \mu \circ (\nu \otimes \nu) = \nu(\mu \otimes \mu) \circ (1 \otimes \braid \otimes 1)
    \end{equation}
    Then $\varepsilon = \eta$, $\mu = \nu$, and $(x, \mu, \varepsilon)$ is commutative.
\end{thm}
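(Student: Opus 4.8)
The plan is to run the classical Eckmann--Hilton argument, adapted to the braided setting where one cannot presume strict units coincide a priori. First I would establish that the two units agree. The interchange identity \eqref{interchange}, evaluated on the element-like configuration where three of the four inputs are units, collapses in two different ways: feeding $(\eta, \varepsilon, \varepsilon, \eta)$ into both sides and using the unit laws for $(x,\mu,\varepsilon)$ and $(x,\nu,\eta)$ on each side forces $\varepsilon = \eta$ (equivalently, as a morphism equation, precompose $\mu\circ(\nu\otimes\nu)$ with $(\eta\otimes \mathrm{id})\otimes(\mathrm{id}\otimes\eta)$ and chase the unitors and the triviality of the braiding on the unit). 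I would write this step as a short string-diagram computation, since the paper has set up string diagrams for monoid objects in \cref{def:monoidobject}; the diagrams make the ``swallow the unit'' moves visually immediate.

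Second, with $\varepsilon = \eta =: e$ in hand, I would show $\mu = \nu$. Precompose \eqref{interchange} with $(e\otimes\mathrm{id})\otimes(e\otimes\mathrm{id})$: the left side becomes $\mu\circ(\nu\otimes\nu)\circ((e\otimes\mathrm{id})\otimes(e\otimes\mathrm{id}))$, which after the $(x,\nu,e)$-unit laws and unitor bookkeeping is just $\mu$ (up to the associator/unitor isomorphisms of $C$, which I would suppress or handle by strictifying via Mac Lane coherence); the right side, after the $(x,\mu,e)$-unit laws and using that the braiding $\braid_{e,x}$ is coherently trivial, becomes $\nu$. Hence $\mu = \nu$. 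Symmetrically, precomposing instead with $(\mathrm{id}\otimes e)\otimes(\mathrm{id}\otimes e)$ gives the same conclusion through the other pair of unit laws, which is a useful consistency check but not logically needed.

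Third, commutativity. Now that $\mu = \nu$, the interchange identity \eqref{interchange} reads $\mu\circ(\mu\otimes\mu) = \mu\circ(\mu\otimes\mu)\circ(\mathrm{id}\otimes\braid\otimes\mathrm{id})$ as morphisms $x^{\otimes 4}\to x$. Precomposing with $(e\otimes\mathrm{id})\otimes(\mathrm{id}\otimes e)$ and applying the unit laws, the left side reduces to $\mu$ and the right side reduces to $\mu\circ\braid_{x,x}$; therefore $\mu = \mu\circ\braid_{x,x}$, which is exactly the commutativity condition of \cref{def:commmon}. I would present this as the final string diagram, matching the ``bead slides over the crossing'' picture in that definition.

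The main obstacle is bookkeeping rather than conceptual: in a general (non-strict) braided monoidal category, every one of these ``plug in a unit and simplify'' moves is really a composite of unitors, associators, and naturality squares for the braiding, plus the hexagon-derived fact that $\braid$ acts trivially on the unit object. The cleanest route is to invoke Mac Lane's coherence theorem to replace $C$ by an equivalent strict braided monoidal category at the outset, so that $\otimes$ is strictly associative and unital and the only nontrivial structure map left is $\braid$; then the three steps above are genuinely just the string-diagram manipulations drawn in \cref{def:monoidobject} and \cref{def:commmon}. I would state this strictification reduction explicitly as the first sentence of the proof and then carry out the three diagram computations.
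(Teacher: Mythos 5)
Your overall strategy---strictify via Mac Lane coherence, then run the classical unit-insertion argument in string diagrams---is the same as the paper's, and your first and third steps check out. But your second step, the one meant to prove $\mu = \nu$, fails as written. Unwinding the interchange law \eqref{interchange} on a four-fold input $(a,b,c,d)$, the left side is $\mu(\nu(a,b),\nu(c,d))$ while the right side is $\nu(\mu(a,c),\mu(b,d))$: the middle braiding means the two $\mu$'s on the right pair the \emph{first with the third} input and the \emph{second with the fourth}. Precomposing with $(e\otimes\mathrm{id})\otimes(e\otimes\mathrm{id})$ feeds in $(e,b,e,d)$, so the left side is $\mu(\nu(e,b),\nu(e,d))=\mu(b,d)$ as you say, but the right side is $\nu(\mu(e,e),\mu(b,d))=\nu(e,\mu(b,d))=\mu(b,d)$---not $\nu(b,d)$. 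Both sides collapse to $\mu$ and you obtain only the tautology $\mu=\mu$. Your ``consistency check'' $(\mathrm{id}\otimes e)\otimes(\mathrm{id}\otimes e)$ suffers the same fate, yielding $\mu(a,c)=\mu(a,c)$.

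The repair is to put the units in the two \emph{middle} slots: precompose with $(\mathrm{id}\otimes e)\otimes(e\otimes\mathrm{id})$, i.e.\ feed in $(a,e,e,d)$. Then the left side is $\mu(\nu(a,e),\nu(e,d))=\mu(a,d)$ while the right side is $\nu(\mu(a,e),\mu(e,d))=\nu(a,d)$, giving $\mu=\nu$. This is exactly the unit placement in the paper's second string-diagram chain, which then continues with the placement $(e,a,d,e)$---the one you correctly use in your third step---to obtain $\mu=\nu\circ\braid_{x,x}$ in the same calculation, so that equality of the two operations and commutativity drop out together. With that single insertion corrected, the rest of your argument, including the strictification reduction and the identification of commutativity with $\mu=\mu\circ\braid_{x,x}$ as in \cref{def:commmon}, goes through.
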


It is important to note that if $\C$ is $\Set$ or some other concrete category, and the operations are instead denoted by $\circ$ and $\star$, \cref{interchange} becomes $(a \circ b) \star (c \circ d) = (a \star c) \circ (b \star d)$. Due to this formulation, this relation as it appears in many contexts is called the \define{middle-four interchange law}.

\begin{proof}
    We prove it using string diagrams, just for fun. Let the following string diagram components represent $\varepsilon$, $\mu$, $\eta$, and $\nu$, respectively.
    \[
    \begin{tikzpicture}
    \begin{pgfonlayer}{nodelayer}
    	\node [style=reddot] (ru) at (0, 0.5) {};
        \node [style=none] (1) at (0, 0) {};
    	\node [style=none] () at (1, 0.5) {,};
    	\node [style=none] () at (1.5, 0.5) {}; % spacing
    \end{pgfonlayer}
    \begin{pgfonlayer}{edgelayer}
    	\draw (ru.center) to (1.center);
    \end{pgfonlayer}
    \end{tikzpicture}
    \begin{tikzpicture}
    \begin{pgfonlayer}{nodelayer}
    	\node [style=reddot] (rm) at (0, 0.5) {};
        \node [style=none] (1) at (0, 0) {};
        \node [style=none] (2) at (-0.5, 1) {};
        \node [style=none] (3) at (0.5, 1) {};
    	\node [style=none] () at (1, 0.5) {,};
    	\node [style=none] () at (1.5, 0.5) {}; % spacing
    \end{pgfonlayer}
    \begin{pgfonlayer}{edgelayer}
    	\draw [bend left] (3.center) to (rm.center);
    	\draw [bend right] (2.center) to (rm.center);
    	\draw (rm.center) to (1.center);
    \end{pgfonlayer}
    \end{tikzpicture}
    \begin{tikzpicture}
    \begin{pgfonlayer}{nodelayer}
    	\node [style=bluedot] (bu) at (0, 0.5) {};
        \node [style=none] (1) at (0, 0) {};
    	\node [style=none] () at (1, 0.5) {,};
    	\node [style=none] () at (1.5, 0.5) {}; % spacing
    \end{pgfonlayer}
    \begin{pgfonlayer}{edgelayer}
    	\draw (bu.center) to (1.center);
    \end{pgfonlayer}
    \end{tikzpicture}
    \begin{tikzpicture}
    \begin{pgfonlayer}{nodelayer}
    	\node [style=bluedot] (bm) at (0, 0.5) {};
        \node [style=none] (1) at (0, 0) {};
        \node [style=none] (2) at (-0.5, 1) {};
        \node [style=none] (3) at (0.5, 1) {};
    \end{pgfonlayer}
    \begin{pgfonlayer}{edgelayer}
    	\draw [bend left] (3.center) to (bm.center);
    	\draw [bend right] (2.center) to (bm.center);
    	\draw (bm.center) to (1.center);
    \end{pgfonlayer}
    \end{tikzpicture}
    \]
    Then we can draw \cref{interchange} as follows.
    \[
    \begin{tikzpicture}
    \begin{pgfonlayer}{nodelayer}
    	\node [style=none] (r1) at (-0.75, 1) {};
    	\node [style=none] (b1) at (-0.25, 1) {};
    	\node [style=none] (b2) at (0.25, 1) {};
    	\node [style=none] (r2) at (0.75, 1) {};
    	\node [style=bluedot] (b3) at (-0.5, 0.5) {};
    	\node [style=bluedot] (b4) at (0.5, 0.5) {};
    	\node [style=reddot] (r3) at (0, 0) {};
        \node [style=none] (1) at (0, -0.5) {};
    	\node [style=none] () at (1.1, 0) {=};
    	\node [style=none] () at (1.5, 0) {}; % spacing
    \end{pgfonlayer}
    \begin{pgfonlayer}{edgelayer}
    	\draw [bend right] (r1.center) to (b3.center);
    	\draw [bend left] (b1.center) to (b3.center);
    	\draw [bend right] (b2.center) to (b4.center);
    	\draw [bend left] (r2.center) to (b4.center);
    	\draw [bend right] (b3.center) to (r3.center);
    	\draw [bend left] (b4.center) to (r3.center);
    	\draw (r3.center) to (1.center);
    \end{pgfonlayer}
    \end{tikzpicture}
    \begin{tikzpicture}
    \begin{pgfonlayer}{nodelayer}
    	\node [style=none] (r1) at (-0.75, 1) {};
    	\node [style=none] (b1) at (-0.25, 1) {};
    	\node [style=none] (b2) at (0.25, 1) {};
    	\node [style=none] (r2) at (0.75, 1) {};
    	\node [style=reddot] (r3) at (-0.5, 0.5) {};
    	\node [style=reddot] (r4) at (0.5, 0.5) {};
    	\node [style=bluedot] (b3) at (0, 0) {};
        \node [style=none] (1) at (0, -0.5) {};
    \end{pgfonlayer}
    \begin{pgfonlayer}{edgelayer}
    	\draw [bend right] (r1.center) to (r3.center);
    	\draw [bend left] (b2.center) to (r3.center);
    	\draw [bend right, line width = 3, white] (b1.center) to (r4.center);
    	\draw [bend right] (b1.center) to (r4.center);
    	\draw [bend left] (r2.center) to (r4.center);
    	\draw [bend right] (r3.center) to (b3.center);
    	\draw [bend left] (r4.center) to (b3.center);
    	\draw (b3.center) to (1.center);
    \end{pgfonlayer}
    \end{tikzpicture}
    \]
    First, we show that the units coincide.
    \[
    \begin{tikzpicture}
    \begin{pgfonlayer}{nodelayer}
    	\node [style=reddot] (ru) at (0, 0.5) {};
        \node [style=none] (1) at (0, 0) {};
    	\node [style=none] () at (1, 0.5) {=};
    	\node [style=none] () at (1.5, 0.5) {}; % spacing
    \end{pgfonlayer}
    \begin{pgfonlayer}{edgelayer}
    	\draw (ru.center) to (1.center);
    \end{pgfonlayer}
    \end{tikzpicture}
    \begin{tikzpicture}
    \begin{pgfonlayer}{nodelayer}
    	\node [style=reddot] (r1) at (-0.5, 0.5) {};
    	\node [style=reddot] (r2) at (0.5, 0.5) {};
    	\node [style=reddot] (r3) at (0, 0) {};
        \node [style=none] (1) at (0, -0.5) {};
    	\node [style=none] () at (1, 0) {=};
    	\node [style=none] () at (1.5, 0) {}; % spacing
    \end{pgfonlayer}
    \begin{pgfonlayer}{edgelayer}
    	\draw [bend right] (r1.center) to (r3.center);
    	\draw [bend left] (r2.center) to (r3.center);
    	\draw (r3.center) to (1.center);
    \end{pgfonlayer}
    \end{tikzpicture}
    \begin{tikzpicture}
    \begin{pgfonlayer}{nodelayer}
    	\node [style=reddot] (r1) at (-0.75, 1) {};
    	\node [style=bluedot] (b1) at (-0.25, 1) {};
    	\node [style=bluedot] (b2) at (0.25, 1) {};
    	\node [style=reddot] (r2) at (0.75, 1) {};
    	\node [style=bluedot] (b3) at (-0.5, 0.5) {};
    	\node [style=bluedot] (b4) at (0.5, 0.5) {};
    	\node [style=reddot] (r3) at (0, 0) {};
        \node [style=none] (1) at (0, -0.5) {};
    	\node [style=none] () at (1, 0) {=};
    	\node [style=none] () at (1.5, 0) {}; % spacing
    \end{pgfonlayer}
    \begin{pgfonlayer}{edgelayer}
    	\draw [bend right] (r1.center) to (b3.center);
    	\draw [bend left] (b1.center) to (b3.center);
    	\draw [bend right] (b2.center) to (b4.center);
    	\draw [bend left] (r2.center) to (b4.center);
    	\draw [bend right] (b3.center) to (r3.center);
    	\draw [bend left] (b4.center) to (r3.center);
    	\draw (r3.center) to (1.center);
    \end{pgfonlayer}
    \end{tikzpicture}
    \begin{tikzpicture}
    \begin{pgfonlayer}{nodelayer}
    	\node [style=reddot] (r1) at (-0.75, 1) {};
    	\node [style=bluedot] (b1) at (-0.25, 1) {};
    	\node [style=bluedot] (b2) at (0.25, 1) {};
    	\node [style=reddot] (r2) at (0.75, 1) {};
    	\node [style=reddot] (r3) at (-0.5, 0.5) {};
    	\node [style=reddot] (r4) at (0.5, 0.5) {};
    	\node [style=bluedot] (b3) at (0, 0) {};
        \node [style=none] (1) at (0, -0.5) {};
    	\node [style=none] () at (1, 0) {=};
    	\node [style=none] () at (1.5, 0) {}; % spacing
    \end{pgfonlayer}
    \begin{pgfonlayer}{edgelayer}
    	\draw [bend right] (r1.center) to (r3.center);
    	\draw [bend left] (b2.center) to (r3.center);
    	\draw [bend right, line width = 3, white] (b1.center) to (r4.center);
    	\draw [bend right] (b1.center) to (r4.center);
    	\draw [bend left] (r2.center) to (r4.center);
    	\draw [bend right] (r3.center) to (b3.center);
    	\draw [bend left] (r4.center) to (b3.center);
    	\draw (b3.center) to (1.center);
    \end{pgfonlayer}
    \end{tikzpicture}
    \begin{tikzpicture}
    \begin{pgfonlayer}{nodelayer}
    	\node [style=bluedot] (b1) at (-0.5, 0.5) {};
    	\node [style=bluedot] (b2) at (0.5, 0.5) {};
    	\node [style=bluedot] (b3) at (0, 0) {};
        \node [style=none] (1) at (0, -0.5) {};
    	\node [style=none] () at (1, 0) {=};
    	\node [style=none] () at (1.5, 0) {}; % spacing
    \end{pgfonlayer}
    \begin{pgfonlayer}{edgelayer}
    	\draw [bend right] (b1.center) to (b3.center);
    	\draw [bend left] (b2.center) to (b3.center);
    	\draw (b3.center) to (1.center);
    \end{pgfonlayer}
    \end{tikzpicture}
    \begin{tikzpicture}
    \begin{pgfonlayer}{nodelayer}
    	\node [style=bluedot] (bu) at (0, 0.5) {};
        \node [style=none] (1) at (0, 0) {};
    \end{pgfonlayer}
    \begin{pgfonlayer}{edgelayer}
    	\draw (bu.center) to (1.center);
    \end{pgfonlayer}
    \end{tikzpicture}
    \]
    Since they are equal, we denote the unit with a black circle in the remainder of the proof. Next, we show in one calculation that the two operations are equal and commutative. 
    \[
    \begin{tikzpicture}
    \begin{pgfonlayer}{nodelayer}
    	\node [style=reddot] (r) at (0, 0) {};
        \node [style=none] (1) at (-0.5, 0.5) {};
        \node [style=none] (2) at (0.5, 0.5) {};
        \node [style=none] (3) at (0, -0.5) {};
    	\node [style=none] () at (1, 0) {=};
    	\node [style=none] () at (1.5, 0) {}; % spacing
    \end{pgfonlayer}
    \begin{pgfonlayer}{edgelayer}
    	\draw [bend right] (1.center) to (r.center);
    	\draw [bend left] (2.center) to (r.center);
    	\draw (r.center) to (3.center);
    \end{pgfonlayer}
    \end{tikzpicture}
    \begin{tikzpicture}
    \begin{pgfonlayer}{nodelayer}
    	\node [style=blackdot] (b1) at (-0.25, 1) {};
    	\node [style=blackdot] (b2) at (0.25, 1) {};
    	\node [style=bluedot] (bl1) at (-0.5, 0.5) {};
    	\node [style=bluedot] (bl2) at (0.5, 0.5) {};
    	\node [style=reddot] (r) at (0, 0) {};
        \node [style=none] (1) at (-0.75, 1) {};
        \node [style=none] (2) at (0.75, 1) {};
        \node [style=none] (3) at (0, -0.5) {};
    	\node [style=none] () at (1.15, 0) {=};
    	\node [style=none] () at (1.5, 0) {}; % spacing
    \end{pgfonlayer}
    \begin{pgfonlayer}{edgelayer}
    	\draw [bend right] (1.center) to (bl1.center);
    	\draw [bend left] (b1.center) to (bl1.center);
    	\draw [bend right] (b2.center) to (bl2.center);
    	\draw [bend left] (2.center) to (bl2.center);
    	\draw [bend right] (bl1.center) to (r.center);
    	\draw [bend left] (bl2.center) to (r.center);
    	\draw (r.center) to (3.center);
    \end{pgfonlayer}
    \end{tikzpicture}
    \begin{tikzpicture}
    \begin{pgfonlayer}{nodelayer}
    	\node [style=blackdot] (b1) at (-0.25, 1) {};
    	\node [style=blackdot] (b2) at (0.25, 1) {};
    	\node [style=reddot] (r1) at (-0.5, 0.5) {};
    	\node [style=reddot] (r2) at (0.5, 0.5) {};
    	\node [style=bluedot] (b) at (0, 0) {};
        \node [style=none] (1) at (-0.75, 1) {};
        \node [style=none] (2) at (0.75, 1) {};
        \node [style=none] (3) at (0, -0.5) {};
    	\node [style=none] () at (1.15, 0) {=};
    	\node [style=none] () at (1.5, 0) {}; % spacing
    \end{pgfonlayer}
    \begin{pgfonlayer}{edgelayer}
    	\draw [bend right] (1.center) to (r1.center);
    	\draw [bend left] (b2.center) to (r1.center);
    	\draw [bend right, white, line width = 3] (b1.center) to (r2.center);
    	\draw [bend right] (b1.center) to (r2.center);
    	\draw [bend left] (2.center) to (r2.center);
    	\draw [bend right] (r1.center) to (b.center);
    	\draw [bend left] (r2.center) to (b.center);
    	\draw (b.center) to (3.center);
    \end{pgfonlayer}
    \end{tikzpicture}
    \begin{tikzpicture}
    \begin{pgfonlayer}{nodelayer}
    	\node [style=bluedot] (b) at (0, 0) {};
        \node [style=none] (1) at (-0.5, 0.5) {};
        \node [style=none] (2) at (0.5, 0.5) {};
        \node [style=none] (3) at (0, -0.5) {};
    	\node [style=none] () at (1, 0) {=};
    	\node [style=none] () at (1.5, 0) {}; % spacing
    \end{pgfonlayer}
    \begin{pgfonlayer}{edgelayer}
    	\draw [bend right] (1.center) to (b.center);
    	\draw [bend left] (2.center) to (b.center);
    	\draw (b.center) to (3.center);
    \end{pgfonlayer}
    \end{tikzpicture}
    \begin{tikzpicture}
    \begin{pgfonlayer}{nodelayer}
        \node [style=blackdot] (b1) at (-0.75, 1) {};
        \node [style=blackdot] (b2) at (0.75, 1) {};
    	\node [style=reddot] (r1) at (-0.5, 0.5) {};
    	\node [style=reddot] (r2) at (0.5, 0.5) {};
    	\node [style=bluedot] (b) at (0, 0) {};
    	\node [style=none] (1) at (-0.25, 1) {};
    	\node [style=none] (2) at (0.25, 1) {};
        \node [style=none] (3) at (0, -0.5) {};
    	\node [style=none] () at (1.15, 0) {=};
    	\node [style=none] () at (1.5, 0) {}; % spacing
    \end{pgfonlayer}
    \begin{pgfonlayer}{edgelayer}
    	\draw [bend right] (b1.center) to (bl1.center);
    	\draw [bend left] (1.center) to (bl1.center);
    	\draw [bend right] (2.center) to (bl2.center);
    	\draw [bend left] (b2.center) to (bl2.center);
    	\draw [bend right] (bl1.center) to (r.center);
    	\draw [bend left] (bl2.center) to (r.center);
    	\draw (r.center) to (3.center);
    \end{pgfonlayer}
    \end{tikzpicture}
    \begin{tikzpicture}
    \begin{pgfonlayer}{nodelayer}
    	\node [style=none] (b1) at (-0.25, 1) {};
    	\node [style=none] (b2) at (0.25, 1) {};
        \node [style=blackdot] (1) at (-0.75, 1) {};
        \node [style=blackdot] (2) at (0.75, 1) {};
    	\node [style=bluedot] (bl1) at (-0.5, 0.5) {};
    	\node [style=bluedot] (bl2) at (0.5, 0.5) {};
    	\node [style=reddot] (r) at (0, 0) {};
        \node [style=none] (3) at (0, -0.5) {};
    	\node [style=none] () at (1.15, 0) {=};
    	\node [style=none] () at (1.5, 0) {}; % spacing
    \end{pgfonlayer}
    \begin{pgfonlayer}{edgelayer}
    	\draw [bend right] (1.center) to (bl1.center);
    	\draw [bend left] (b2.center) to (bl1.center);
    	\draw [bend right, white, line width = 3] (b1.center) to (bl2.center);
    	\draw [bend right] (b1.center) to (bl2.center);
    	\draw [bend left] (2.center) to (bl2.center);
    	\draw [bend right] (bl1.center) to (r.center);
    	\draw [bend left] (bl2.center) to (r.center);
    	\draw (r.center) to (3.center);
    \end{pgfonlayer}
    \end{tikzpicture}
    \begin{tikzpicture}
    \begin{pgfonlayer}{nodelayer}
        \node [style=none] (1) at (0, -0.5) {};
    	\node [style=reddot] (m) at (0, 0) {};
        \node [style=none] (2) at (0, 0.55) {};
        \node [style=none] (3) at (-0.25, 1) {};
        \node [style=none] (4) at (0.25, 1) {};
    \end{pgfonlayer}
    \begin{pgfonlayer}{edgelayer}
    	\draw (1.center) to (m.center);
    	\draw [bend right = 90, looseness = 1.5] (m.center) to (2.center);
    	\draw [bend left = 45] (2.center) to (3.center);
    	\draw [bend left = 90, white, line width = 3, looseness = 1.5] (m.center) to (2.center);
    	\draw [bend right = 45, white, line width = 3] (2.center) to (4.center);
    	\draw [bend left = 90, looseness = 1.5] (m.center) to (2.center);
    	\draw [bend right = 45] (2.center) to (4.center);
    \end{pgfonlayer}
    \end{tikzpicture}
    \qedhere
    \]
\end{proof}

\begin{cor}
    We have the following equivalences of categories.
    \[\Mon(\Mon, \times) \cong \Mon(\CMon, \times) \cong \CMon(\Mon, \times) \cong \CMon(\CMon, \times) \cong \CMon\]
\end{cor}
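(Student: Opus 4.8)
The plan is to prove the chain of equivalences
\[\Mon(\Mon,\times)\cong\Mon(\CMon,\times)\cong\CMon(\Mon,\times)\cong\CMon(\CMon,\times)\cong\CMon\]
by showing that all four two-sided structures collapse to the single category $\CMon$, using the Eckmann--Hilton argument (\cref{EckmannHilton}) to identify the internal operation with the ambient one. The key observation is that each of the categories on the left is, by \cref{def:monoidobject}, a category whose objects are sets $x$ equipped with \emph{two} monoid structures: an ``outer'' one coming from the internalization and an ``inner'' one coming from the object itself living in $\Mon$ or $\CMon$. Because the tensor product in play is the cartesian product $\times$ in $\Set$, the requirement that the outer multiplication $\mu\maps x\times x\to x$ be a homomorphism for the inner multiplication is \emph{exactly} the middle-four interchange law \cref{interchange} relating the two operations. (Here the braiding in $(\Set,\times)$ is the swap map, which is what appears in \cref{interchange}.)

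First I would unwind the definitions: an object of $\Mon(\Mon,\times)$ is a set $x$ together with monoid structures $(\mu,\varepsilon)$ and $(\nu,\eta)$ such that $\mu$ and $\eta$ are inner-monoid homomorphisms, i.e.\ $\mu\maps(x,\nu,\eta)\times(x,\nu,\eta)\to(x,\nu,\eta)$ and $\varepsilon\maps 1\to(x,\nu,\eta)$ are homomorphisms. Writing out ``$\mu$ is a homomorphism for $\nu$'' via the square in \cref{def:monoidobject} gives precisely $\mu\circ(\nu\times\nu)=\nu\circ(\mu\times\mu)\circ(1\times\sigma\times1)$, which is \cref{interchange}. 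Hence \cref{EckmannHilton} applies verbatim: $\varepsilon=\eta$, $\mu=\nu$, and $(x,\mu,\varepsilon)$ is commutative. So the forgetful-type functor sending $(x,\mu,\varepsilon,\nu,\eta)$ to $(x,\mu,\varepsilon)\in\CMon$ is well-defined on objects; its inverse sends a commutative monoid $(x,m,e)$ to the doubled structure with $\mu=\nu=m$ and $\varepsilon=\eta=e$, and one checks the interchange law holds automatically for a commutative monoid (this is the standard ``a commutative monoid is a monoid in $\Mon$'' computation). The same argument works identically when either or both copies of $\Mon$ are replaced by $\CMon$, since $\CMon$ is a full subcategory of $\Mon$ and the braiding used is still the swap; in those cases one or both of the operations is already assumed commutative, but \cref{EckmannHilton} still forces the two operations to coincide.

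Next I would handle morphisms, to upgrade the object-level bijection to an equivalence (in fact an isomorphism) of categories. A morphism in $\Mon(\Mon,\times)$ from $(x,\mu,\varepsilon,\nu,\eta)$ to $(x',\mu',\varepsilon',\nu',\eta')$ is a function $f\maps x\to x'$ that is simultaneously an outer homomorphism (commutes with $\mu,\mu'$ and $\varepsilon,\varepsilon'$) and an inner homomorphism (commutes with $\nu,\nu'$ and $\eta,\eta'$). Since $\mu=\nu$ and $\mu'=\nu'$ and $\varepsilon=\eta$, $\varepsilon'=\eta'$ by the above, these two conditions are the \emph{same} condition, namely that $f$ is a monoid homomorphism of the (commutative) monoid $(x,\mu,\varepsilon)$. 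Thus the functor to $\CMon$ is fully faithful, and being essentially surjective (indeed bijective on objects under the described correspondence), it is an isomorphism of categories. Running this for all four source categories and composing yields the displayed chain; functoriality and compatibility with composition are immediate since all the functors are ``identity on underlying sets and underlying operation.''

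The main obstacle --- really the only nontrivial point --- is verifying that the homomorphism condition on the internalized multiplication genuinely \emph{is} the middle-four interchange law with the correct braiding, so that \cref{EckmannHilton} is applicable with its hypothesis \cref{interchange} met on the nose rather than up to some reshuffling. Once that identification is made carefully (tracking the swap isomorphism of $(\Set,\times)$ through the source/target squares of \cref{def:monoidobject} and \cref{def:monoidhomomorphism}-style diagrams), everything else is routine bookkeeping. A secondary, purely cosmetic subtlety is that one should remark that $\CMon(\CMon,\times)$ a priori carries \emph{four} commutativity/coherence demands but these are redundant given Eckmann--Hilton; this is worth a sentence but no real work.
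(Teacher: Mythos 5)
Your proposal is correct and is exactly the argument the paper intends: the corollary is stated immediately after \cref{EckmannHilton} precisely because unwinding \cref{def:monoidobject} shows the homomorphism condition on the internal multiplication is the interchange law \cref{interchange}, so the two operations coincide and are commutative. Your additional care with the morphism level (the two homomorphism conditions collapse to one) is the right bookkeeping to upgrade the object-level identification to an isomorphism of categories.
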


\section{Characterizing (co)cartesian monoidal categories}

In the previous section, we saw that a category with finite products can be equipped with a canonical symmetric monoidal structure, and dually so can a category with finite coproducts. In this section, we give conditions under which a symmetric monoidal category is monoidally equivalent to one given by a (co)cartesian structure \cite{Fox}.

\begin{expl}
\label{ex:monoidsincocart}
    Let $\C$ be a category with finite coproducts. By \cref{expl:cocartesianmonoidal}, $\C$ can be equipped with a cocartesian monoidal structure, with tensor denoted by $+$, and the unit (which is an initial object) denoted by $0$. Let $x$ be any object in $\C$. Universal property of coproducts gives a map $\nabla \maps x+x \to x$
    \[
    \begin{tikzcd}
        x
        \arrow[dr, "i_1"]
        \arrow[ddr, swap, bend right, "1_x"]
        &&
        x
        \arrow[dl, swap, "i_2"]
        \arrow[ddl, bend left, "1_x"]
        \\&
        x+x
        \arrow[d, dashed, "\nabla_x"]
        \\&
        x
    \end{tikzcd}\]
    We draw string diagrams with respect to the cocartesian monoidal structure on $\C$. Then the map $\nabla_x$ is depicted as follows.
    \[
    \begin{tikzpicture}
    \begin{pgfonlayer}{nodelayer}
    	\node [style=blackdot] (r) at (0, 0) {};
        \node [style=none] (1) at (-0.5, 0.5) {};
        \node [style=none] (2) at (0.5, 0.5) {};
        \node [style=none] (3) at (0, -0.5) {};
    \end{pgfonlayer}
    \begin{pgfonlayer}{edgelayer}
    	\draw [bend right] (1.center) to (r.center);
    	\draw [bend left] (2.center) to (r.center);
    	\draw (r.center) to (3.center);
    \end{pgfonlayer}
    \end{tikzpicture}
    \]
    Also, the universal property of an initial object gives a map $!_x \maps 0 \to x$, depicted as follows. 
    \[
    \begin{tikzpicture}
    \begin{pgfonlayer}{nodelayer}
    	\node [style=blackdot] (bu) at (0, 0.5) {};
        \node [style=none] (1) at (0, 0) {};
    \end{pgfonlayer}
    \begin{pgfonlayer}{edgelayer}
    	\draw (bu.center) to (1.center);
    \end{pgfonlayer}
    \end{tikzpicture}
    \]
    
    We show that this gives $x$ the structure of a commutative monoid. We begin by finding a formula for the left unitor of the cocartesian monoidal structure on $\C$. Notice that the left unitor makes the following diagram commute (by definition)
    \[
    \begin{tikzcd}
        x
        \arrow[dr, "i_x"]
        \arrow[ddr, swap, "1", bend right]
        &&
        0
        \arrow[dl, swap, "!"]
        \arrow[ddl, "!", bend left]
        \\&
        x+0
        \arrow[d, dashed, "\exists !"]
        \\&
        x
    \end{tikzcd}
    \]
    and thus does so uniquely. Compare this to the diagram
    \[
    \begin{tikzcd}
        x
        \arrow[dr, "i_x"]
        \arrow[d, swap, "1"]
        &&
        0
        \arrow[dl, swap, "!"]
        \arrow[d, "!"]
        \\
        x
        \arrow[dr]
        \arrow[ddr, bend right, swap, "1"]
        &
        x+0
        \arrow[d, "1+!"]
        &
        x
        \arrow[dl]
        \arrow[ddl, bend left, "1"]
        \\&
        x+x
        \arrow[d, "\nabla_x"]
        \\&
        x
    \end{tikzcd}
    \]
    whose frame is equal to that of the previous. Thus we get $\nabla_x \circ (1_x + !_x) \circ i_x= 1_x$ and similarly $\nabla_x \circ (!_x + 1_x) \circ i'_x= 1_x$, which we draw as follows.
    \[
    \begin{tikzpicture}
    \begin{pgfonlayer}{nodelayer}
        \node [style=none] (1) at (0.25, 0.75) {};
    	\node [style=blackdot] (2) at (1, 0.5) {};
        \node [style=blackdot] (3) at (0.625, 0) {};
    	\node [style=none] (4) at (0.625, -0.5) {};
    	\node [style=none] () at (1.5, 0) {=};
    	\node [style=none] () at (2, 0.5) {}; % spacing
    \end{pgfonlayer}
    \begin{pgfonlayer}{edgelayer}
    	\draw [bend left] (2.center) to (3.center);
    	\draw [bend right] (1.center) to (3.center);
    	\draw (3.center) to (4.center);
    \end{pgfonlayer}
    \end{tikzpicture}
    \begin{tikzpicture}
    \begin{pgfonlayer}{nodelayer}
    	\node [style=none] (1) at (0, 1) {};
        \node [style=none] (2) at (0, 0) {};
        \node [style=none] () at (4, 0) {};
    \end{pgfonlayer}
    \begin{pgfonlayer}{edgelayer}
    	\draw (1.center) to (2.center);
    \end{pgfonlayer}
    \end{tikzpicture}
    \begin{tikzpicture}
    \begin{pgfonlayer}{nodelayer}
        \node [style=blackdot] (1) at (0.25, 0.5) {};
    	\node [style=none] (2) at (1, 0.75) {};
        \node [style=blackdot] (3) at (0.625, 0) {};
    	\node [style=none] (4) at (0.625, -0.5) {};
    	\node [style=none] () at (1.5, 0) {=};
    	\node [style=none] () at (2, 0.5) {}; % spacing
    \end{pgfonlayer}
    \begin{pgfonlayer}{edgelayer}
    	\draw [bend left] (2.center) to (3.center);
    	\draw [bend right] (1.center) to (3.center);
    	\draw (3.center) to (4.center);
    \end{pgfonlayer}
    \end{tikzpicture}
    \begin{tikzpicture}
    \begin{pgfonlayer}{nodelayer}
    	\node [style=none] (1) at (0, 1) {};
        \node [style=none] (2) at (0, 0) {};
    \end{pgfonlayer}
    \begin{pgfonlayer}{edgelayer}
    	\draw (1.center) to (2.center);
    \end{pgfonlayer}
    \end{tikzpicture}\]
    
    To show $\nabla_x$ is associative, we want to show that the following equation holds.
    \[
    \begin{tikzpicture}
    \begin{pgfonlayer}{nodelayer}
    	\node [style=none] (1) at (0, 1) {};
    	\node [style=none] (2) at (0.5, 1) {};
    	\node [style=none] (3) at (1, 1) {};
    	\node [style=none] (4) at (1, 0.5) {};
    	\node [style=none] (5) at (0.625, -0.5) {};
        \node [style=blackdot] (b1) at (0.25, 0.5) {};
        \node [style=blackdot] (b2) at (0.625, 0) {};
    	\node [style=none] () at (1.5, 0.25) {=};
    	\node [style=none] () at (2, 0.5) {}; % spacing
    \end{pgfonlayer}
    \begin{pgfonlayer}{edgelayer}
    	\draw [bend right] (1.center) to (b1.center);
    	\draw [bend left] (2.center) to (b1.center);
    	\draw (3.center) to (4.center);
    	\draw [bend left] (4.center) to (b2.center);
    	\draw [bend right] (b1.center) to (b2.center);
    	\draw (b2.center) to (5.center);
    \end{pgfonlayer}
    \end{tikzpicture}
    \begin{tikzpicture}
    \begin{pgfonlayer}{nodelayer}
    	\node [style=none] (1) at (0, 1) {};
    	\node [style=none] (2) at (0.5, 1) {};
    	\node [style=none] (3) at (1, 1) {};
    	\node [style=none] (4) at (0, 0.5) {};
    	\node [style=none] (5) at (0.375, -0.5) {};
        \node [style=blackdot] (b1) at (0.75, 0.5) {};
        \node [style=blackdot] (b2) at (0.375, 0) {};
    \end{pgfonlayer}
    \begin{pgfonlayer}{edgelayer}
    	\draw (1.center) to (4.center);
    	\draw [bend right] (2.center) to (b1.center);
    	\draw [bend left] (3.center) to (b1.center);
    	\draw [bend right] (4.center) to (b2.center);
    	\draw [bend left] (b1.center) to (b2.center);
    	\draw (b2.center) to (5.center);
    \end{pgfonlayer}
    \end{tikzpicture}
    \]
    We have three inclusion maps $i_0, i_1, i_2 \maps x \to x+x+x$, which are given in strings below. 
    \[
    \begin{tikzpicture}
    \begin{pgfonlayer}{nodelayer}
    	\node [style=none] (bu) at (0, 1) {};
        \node [style=none] (1) at (0, 0) {};
        \node [style=blackdot] (2) at (0.5, 0.5) {};
        \node [style=none] (3) at (0.5, 0) {};
        \node [style=blackdot] (4) at (1, 0.5) {};
        \node [style=none] (5) at (1, 0) {};
    	\node [style=none] () at (1.5, 0.5) {,};
    	\node [style=none] () at (2, 0.5) {}; % spacing
    \end{pgfonlayer}
    \begin{pgfonlayer}{edgelayer}
    	\draw (bu.center) to (1.center);
    	\draw (2.center) to (3.center);
    	\draw (4.center) to (5.center);
    \end{pgfonlayer}
    \end{tikzpicture}
    \begin{tikzpicture}
    \begin{pgfonlayer}{nodelayer}
    	\node [style=blackdot] (bu) at (0, 0.5) {};
        \node [style=none] (1) at (0, 0) {};
        \node [style=none] (2) at (0.5, 1) {};
        \node [style=none] (3) at (0.5, 0) {};
        \node [style=blackdot] (4) at (1, 0.5) {};
        \node [style=none] (5) at (1, 0) {};
    	\node [style=none] () at (1.5, 0.5) {,};
    	\node [style=none] () at (2, 0.5) {}; % spacing
    \end{pgfonlayer}
    \begin{pgfonlayer}{edgelayer}
    	\draw (bu.center) to (1.center);
    	\draw (2.center) to (3.center);
    	\draw (4.center) to (5.center);
    \end{pgfonlayer}
    \end{tikzpicture}
    \begin{tikzpicture}
    \begin{pgfonlayer}{nodelayer}
    	\node [style=blackdot] (bu) at (0, 0.5) {};
        \node [style=none] (1) at (0, 0) {};
        \node [style=blackdot] (2) at (0.5, 0.5) {};
        \node [style=none] (3) at (0.5, 0) {};
        \node [style=none] (4) at (1, 1) {};
        \node [style=none] (5) at (1, 0) {};
    \end{pgfonlayer}
    \begin{pgfonlayer}{edgelayer}
    	\draw (bu.center) to (1.center);
    	\draw (2.center) to (3.center);
    	\draw (4.center) to (5.center);
    \end{pgfonlayer}
    \end{tikzpicture}
    \]
    The universal property of coproducts says that if the composites of the morphisms on the left and right side of the associativity equation above with any of the three inclusions is always the identity morphism on $x$, then those two morphisms must be equal. So we compute:
    \[
    \begin{tikzpicture}
    \begin{pgfonlayer}{nodelayer}
    	\node [style=none] (1) at (0, 1.25) {};
    	\node [style=blackdot] (2) at (0.5, 1) {};
    	\node [style=blackdot] (3) at (1, 1) {};
    	\node [style=none] (4) at (1, 0.5) {};
    	\node [style=none] (5) at (0.625, -0.5) {};
        \node [style=blackdot] (b1) at (0.25, 0.5) {};
        \node [style=blackdot] (b2) at (0.625, 0) {};
    	\node [style=none] () at (1.5, 0) {=};
    	\node [style=none] () at (2, 0.5) {}; % spacing
    \end{pgfonlayer}
    \begin{pgfonlayer}{edgelayer}
    	\draw [bend right] (1.center) to (b1.center);
    	\draw [bend left] (2.center) to (b1.center);
    	\draw (3.center) to (4.center);
    	\draw [bend left] (4.center) to (b2.center);
    	\draw [bend right] (b1.center) to (b2.center);
    	\draw (b2.center) to (5.center);
    \end{pgfonlayer}
    \end{tikzpicture}
    \begin{tikzpicture}
    \begin{pgfonlayer}{nodelayer}
        \node [style=none] (1) at (0.25, 0.75) {};
    	\node [style=blackdot] (2) at (1, 0.5) {};
        \node [style=blackdot] (3) at (0.625, 0) {};
    	\node [style=none] (4) at (0.625, -0.5) {};
    	\node [style=none] () at (1.5, 0) {=};
    	\node [style=none] () at (2, 0.5) {}; % spacing
    \end{pgfonlayer}
    \begin{pgfonlayer}{edgelayer}
    	\draw [bend left] (2.center) to (3.center);
    	\draw [bend right] (1.center) to (3.center);
    	\draw (3.center) to (4.center);
    \end{pgfonlayer}
    \end{tikzpicture}
    \begin{tikzpicture}
    \begin{pgfonlayer}{nodelayer}
    	\node [style=none] (1) at (0, 1) {};
        \node [style=none] (2) at (0, 0) {};
        \node [style=none] () at (4, 0) {};
    \end{pgfonlayer}
    \begin{pgfonlayer}{edgelayer}
    	\draw (1.center) to (2.center);
    \end{pgfonlayer}
    \end{tikzpicture}
    \begin{tikzpicture}
    \begin{pgfonlayer}{nodelayer}
    	\node [style=blackdot] (1) at (0, 1) {};
    	\node [style=none] (2) at (0.5, 1.25) {};
    	\node [style=blackdot] (3) at (1, 1) {};
    	\node [style=none] (4) at (1, 0.5) {};
    	\node [style=none] (5) at (0.625, -0.5) {};
        \node [style=blackdot] (b1) at (0.25, 0.5) {};
        \node [style=blackdot] (b2) at (0.625, 0) {};
    	\node [style=none] () at (1.5, 0) {=};
    	\node [style=none] () at (2, 0.5) {}; % spacing
    \end{pgfonlayer}
    \begin{pgfonlayer}{edgelayer}
    	\draw [bend right] (1.center) to (b1.center);
    	\draw [bend left] (2.center) to (b1.center);
    	\draw (3.center) to (4.center);
    	\draw [bend left] (4.center) to (b2.center);
    	\draw [bend right] (b1.center) to (b2.center);
    	\draw (b2.center) to (5.center);
    \end{pgfonlayer}
    \end{tikzpicture}
    \begin{tikzpicture}
    \begin{pgfonlayer}{nodelayer}
        \node [style=none] (1) at (0.25, 0.75) {};
    	\node [style=blackdot] (2) at (1, 0.5) {};
        \node [style=blackdot] (3) at (0.625, 0) {};
    	\node [style=none] (4) at (0.625, -0.5) {};
    	\node [style=none] () at (1.5, 0) {=};
    	\node [style=none] () at (2, 0.5) {}; % spacing
    \end{pgfonlayer}
    \begin{pgfonlayer}{edgelayer}
    	\draw [bend left] (2.center) to (3.center);
    	\draw [bend right] (1.center) to (3.center);
    	\draw (3.center) to (4.center);
    \end{pgfonlayer}
    \end{tikzpicture}
    \begin{tikzpicture}
    \begin{pgfonlayer}{nodelayer}
    	\node [style=none] (1) at (0, 1) {};
        \node [style=none] (2) at (0, 0) {};
    \end{pgfonlayer}
    \begin{pgfonlayer}{edgelayer}
    	\draw (1.center) to (2.center);
    \end{pgfonlayer}
    \end{tikzpicture}
    \]
    \[
    \begin{tikzpicture}
    \begin{pgfonlayer}{nodelayer}
    	\node [style=blackdot] (1) at (0, 1) {};
    	\node [style=blackdot] (2) at (0.5, 1) {};
    	\node [style=none] (3) at (1, 1.25) {};
    	\node [style=none] (4) at (1, 0.5) {};
    	\node [style=none] (5) at (0.625, -0.5) {};
        \node [style=blackdot] (b1) at (0.25, 0.5) {};
        \node [style=blackdot] (b2) at (0.625, 0) {};
    	\node [style=none] () at (1.5, 0) {=};
    	\node [style=none] () at (2, 0.5) {}; % spacing
    \end{pgfonlayer}
    \begin{pgfonlayer}{edgelayer}
    	\draw [bend right] (1.center) to (b1.center);
    	\draw [bend left] (2.center) to (b1.center);
    	\draw (3.center) to (4.center);
    	\draw [bend left] (4.center) to (b2.center);
    	\draw [bend right] (b1.center) to (b2.center);
    	\draw (b2.center) to (5.center);
    \end{pgfonlayer}
    \end{tikzpicture}
    \begin{tikzpicture}
    \begin{pgfonlayer}{nodelayer}
        \node [style=blackdot] (1) at (0.25, 0.5) {};
    	\node [style=none] (2) at (1, 0.75) {};
        \node [style=blackdot] (3) at (0.625, 0) {};
    	\node [style=none] (4) at (0.625, -0.5) {};
    	\node [style=none] () at (1.5, 0) {=};
    	\node [style=none] () at (2, 0.5) {}; % spacing
    \end{pgfonlayer}
    \begin{pgfonlayer}{edgelayer}
    	\draw [bend left] (2.center) to (3.center);
    	\draw [bend right] (1.center) to (3.center);
    	\draw (3.center) to (4.center);
    \end{pgfonlayer}
    \end{tikzpicture}
    \begin{tikzpicture}
    \begin{pgfonlayer}{nodelayer}
    	\node [style=none] (1) at (0, 1) {};
        \node [style=none] (2) at (0, 0) {};
    \end{pgfonlayer}
    \begin{pgfonlayer}{edgelayer}
    	\draw (1.center) to (2.center);
    \end{pgfonlayer}
    \end{tikzpicture}
    \]
    \[
    \begin{tikzpicture}
    \begin{pgfonlayer}{nodelayer}
    	\node [style=none] (1) at (0, 1.25) {};
    	\node [style=blackdot] (2) at (0.5, 1) {};
    	\node [style=blackdot] (3) at (1, 1) {};
    	\node [style=none] (4) at (0, 0.5) {};
    	\node [style=none] (5) at (0.375, -0.5) {};
        \node [style=blackdot] (b1) at (0.75, 0.5) {};
        \node [style=blackdot] (b2) at (0.375, 0) {};
    	\node [style=none] () at (1.5, 0) {=};
    	\node [style=none] () at (2, 0.5) {}; % spacing
    \end{pgfonlayer}
    \begin{pgfonlayer}{edgelayer}
    	\draw (1.center) to (4.center);
    	\draw [bend right] (2.center) to (b1.center);
    	\draw [bend left] (3.center) to (b1.center);
    	\draw [bend right] (4.center) to (b2.center);
    	\draw [bend left] (b1.center) to (b2.center);
    	\draw (b2.center) to (5.center);
    \end{pgfonlayer}
    \end{tikzpicture}
    \begin{tikzpicture}
    \begin{pgfonlayer}{nodelayer}
        \node [style=none] (1) at (0.25, 0.75) {};
    	\node [style=blackdot] (2) at (1, 0.5) {};
        \node [style=blackdot] (3) at (0.625, 0) {};
    	\node [style=none] (4) at (0.625, -0.5) {};
    	\node [style=none] () at (1.5, 0) {=};
    	\node [style=none] () at (2, 0.5) {}; % spacing
    \end{pgfonlayer}
    \begin{pgfonlayer}{edgelayer}
    	\draw [bend left] (2.center) to (3.center);
    	\draw [bend right] (1.center) to (3.center);
    	\draw (3.center) to (4.center);
    \end{pgfonlayer}
    \end{tikzpicture}
    \begin{tikzpicture}
    \begin{pgfonlayer}{nodelayer}
    	\node [style=none] (1) at (0, 1) {};
        \node [style=none] (2) at (0, 0) {};
        \node [style=none] () at (4, 0) {};
    \end{pgfonlayer}
    \begin{pgfonlayer}{edgelayer}
    	\draw (1.center) to (2.center);
    \end{pgfonlayer}
    \end{tikzpicture}
    \begin{tikzpicture}
    \begin{pgfonlayer}{nodelayer}
    	\node [style=blackdot] (1) at (0, 1) {};
    	\node [style=none] (2) at (0.5, 1.25) {};
    	\node [style=blackdot] (3) at (1, 1) {};
    	\node [style=none] (4) at (0, 0.5) {};
    	\node [style=none] (5) at (0.375, -0.5) {};
        \node [style=blackdot] (b1) at (0.75, 0.5) {};
        \node [style=blackdot] (b2) at (0.375, 0) {};
    	\node [style=none] () at (1.5, 0) {=};
    	\node [style=none] () at (2, 0.5) {}; % spacing
    \end{pgfonlayer}
    \begin{pgfonlayer}{edgelayer}
    	\draw (1.center) to (4.center);
    	\draw [bend right] (2.center) to (b1.center);
    	\draw [bend left] (3.center) to (b1.center);
    	\draw [bend right] (4.center) to (b2.center);
    	\draw [bend left] (b1.center) to (b2.center);
    	\draw (b2.center) to (5.center);
    \end{pgfonlayer}
    \end{tikzpicture}
    \begin{tikzpicture}
    \begin{pgfonlayer}{nodelayer}
        \node [style=blackdot] (1) at (0.25, 0.5) {};
    	\node [style=none] (2) at (1, 0.75) {};
        \node [style=blackdot] (3) at (0.625, 0) {};
    	\node [style=none] (4) at (0.625, -0.5) {};
    	\node [style=none] () at (1.5, 0) {=};
    	\node [style=none] () at (2, 0.5) {}; % spacing
    \end{pgfonlayer}
    \begin{pgfonlayer}{edgelayer}
    	\draw [bend left] (2.center) to (3.center);
    	\draw [bend right] (1.center) to (3.center);
    	\draw (3.center) to (4.center);
    \end{pgfonlayer}
    \end{tikzpicture}
    \begin{tikzpicture}
    \begin{pgfonlayer}{nodelayer}
    	\node [style=none] (1) at (0, 1) {};
        \node [style=none] (2) at (0, 0) {};
    \end{pgfonlayer}
    \begin{pgfonlayer}{edgelayer}
    	\draw (1.center) to (2.center);
    \end{pgfonlayer}
    \end{tikzpicture}
    \]
    \[
    \begin{tikzpicture}
    \begin{pgfonlayer}{nodelayer}
    	\node [style=blackdot] (1) at (0, 1) {};
    	\node [style=blackdot] (2) at (0.5, 1) {};
    	\node [style=none] (3) at (1, 1.25) {};
    	\node [style=none] (4) at (0, 0.5) {};
    	\node [style=none] (5) at (0.375, -0.5) {};
        \node [style=blackdot] (b1) at (0.75, 0.5) {};
        \node [style=blackdot] (b2) at (0.375, 0) {};
    	\node [style=none] () at (1.5, 0) {=};
    	\node [style=none] () at (2, 0.5) {}; % spacing
    \end{pgfonlayer}
    \begin{pgfonlayer}{edgelayer}
    	\draw (1.center) to (4.center);
    	\draw [bend right] (2.center) to (b1.center);
    	\draw [bend left] (3.center) to (b1.center);
    	\draw [bend right] (4.center) to (b2.center);
    	\draw [bend left] (b1.center) to (b2.center);
    	\draw (b2.center) to (5.center);
    \end{pgfonlayer}
    \end{tikzpicture}
    \begin{tikzpicture}
    \begin{pgfonlayer}{nodelayer}
        \node [style=blackdot] (1) at (0.25, 0.5) {};
    	\node [style=none] (2) at (1, 0.75) {};
        \node [style=blackdot] (3) at (0.625, 0) {};
    	\node [style=none] (4) at (0.625, -0.5) {};
    	\node [style=none] () at (1.5, 0) {=};
    	\node [style=none] () at (2, 0.5) {}; % spacing
    \end{pgfonlayer}
    \begin{pgfonlayer}{edgelayer}
    	\draw [bend left] (2.center) to (3.center);
    	\draw [bend right] (1.center) to (3.center);
    	\draw (3.center) to (4.center);
    \end{pgfonlayer}
    \end{tikzpicture}
    \begin{tikzpicture}
    \begin{pgfonlayer}{nodelayer}
    	\node [style=none] (1) at (0, 1) {};
        \node [style=none] (2) at (0, 0) {};
    \end{pgfonlayer}
    \begin{pgfonlayer}{edgelayer}
    	\draw (1.center) to (2.center);
    \end{pgfonlayer}
    \end{tikzpicture}
    \]
    Thus we have that $\nabla_x$ is associative.

    Recall that the braiding in $\C$ is derived from the universal property in the following way. 
    \[
    \begin{tikzcd}
        x
        \arrow[dr, "i_1"]
        \arrow[ddr, swap, "i_2", bend right]
        &&
        x
        \arrow[dl, swap, "i_2"]
        \arrow[ddl, "i_1", bend left]
        \\&
        x+x
        \arrow[d, dashed, "\sigma"]
        \\&
        x+x
    \end{tikzcd}\]
    Then the commutative diagram
    \[
    \begin{tikzcd}
        x
        \arrow[dr, "i_1"]
        \arrow[d, swap, "1"]
        &&
        x
        \arrow[dl, swap, "i_2"]
        \arrow[d, "1"]
        \\
        x
        \arrow[dr, swap, "i_1"]
        \arrow[ddr, bend right, swap, "1"]
        &
        x+x
        \arrow[d, "\sigma"]
        &
        x
        \arrow[dl, "i_1"]
        \arrow[ddl, bend left, "1"]
        \\&
        x+x
        \arrow[d, "\nabla"]
        \\&
        x
    \end{tikzcd}
    \]
    has precisely the frame for the universal construction of $\nabla_x$. Thus $\nabla_x \circ \sigma = \nabla_x$, displayed as follows.
    \[
    \begin{tikzpicture}
    \begin{pgfonlayer}{nodelayer}
        \node [style=none] (3) at (0, -0.5) {};
    	\node [style=blackdot] (r) at (0, 0) {};
        \node [style=none] (1) at (-0.25, 0.5) {};
        \node [style=none] (2) at (0.25, 0.5) {};
    	\node [style=none] () at (1, 0) {=};
    	\node [style=none] () at (1.5, 0) {}; % spacing
    \end{pgfonlayer}
    \begin{pgfonlayer}{edgelayer}
    	\draw [bend right] (1.center) to (r.center);
    	\draw [bend left] (2.center) to (r.center);
    	\draw (r.center) to (3.center);
    \end{pgfonlayer}
    \end{tikzpicture}
    \begin{tikzpicture}
    \begin{pgfonlayer}{nodelayer}
        \node [style=none] (1) at (0, -0.5) {};
    	\node [style=blackdot] (m) at (0, 0) {};
        \node [style=none] (2) at (0, 0.55) {};
        \node [style=none] (3) at (-0.25, 1) {};
        \node [style=none] (4) at (0.25, 1) {};
    \end{pgfonlayer}
    \begin{pgfonlayer}{edgelayer}
    	\draw (1.center) to (m.center);
    	\draw [bend right = 90, looseness = 1.5] (m.center) to (2.center);
    	\draw [bend left = 45] (2.center) to (3.center);
    	\draw [bend left = 90, white, line width = 3, looseness = 1.5] (m.center) to (2.center);
    	\draw [bend right = 45, white, line width = 3] (2.center) to (4.center);
    	\draw [bend left = 90, looseness = 1.5] (m.center) to (2.center);
    	\draw [bend right = 45] (2.center) to (4.center);
    \end{pgfonlayer}
    \end{tikzpicture}
    \]
\end{expl}

\begin{prop}
\label{prop:cocarthasmonoids}
    A symmetric monoidal category is cocartesian if and only if each object has a natural commutative monoid structure.
\end{prop}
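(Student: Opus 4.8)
The plan is to prove the two implications separately. Throughout I read ``natural commutative monoid structure'' in the strong sense that makes this a restatement of (the dual of) Fox's theorem \cite{Fox}: a choice, for each object $x$, of a commutative monoid $(x, \mu_x, \varepsilon_x)$ in $\C$ such that the families $\mu = (\mu_x)_x$ and $\varepsilon = (\varepsilon_x)_x$ constitute \emph{monoidal} natural transformations $\otimes \circ \Delta_\C \To \mathrm{Id}_\C$ and $\Delta_I \To \mathrm{Id}_\C$. Concretely this says: every morphism of $\C$ is a monoid homomorphism, $\varepsilon_I = 1_I$, and the monoid structure carried by a tensor product $x \otimes y$ is the one induced on it from the factors via the symmetry.

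$(\Rightarrow)$ Suppose $\C$ is cocartesian, so $\otimes = +$ and $I = 0$. Then \cref{ex:monoidsincocart} has already done the main work: for each $x$ it constructs the fold map $\nabla_x \maps x + x \to x$ and the unique arrow $!_x \maps 0 \to x$ and checks that $(x, \nabla_x, !_x)$ is a commutative monoid. What remains is to note that these assemble naturally and monoidally, and this is forced by universal properties: for $f \maps x \to y$ both $f \circ {!_x}$ and $!_y$ are arrows $0 \to y$, hence equal, while $f \circ \nabla_x$ and $\nabla_y \circ (f + f)$ agree after precomposition with each coproduct injection $x \rightrightarrows x + x$ and so are equal; similarly the coherence of $\nabla$ and $!$ with $+$ and $0$ follows from uniqueness of maps out of coproducts and out of the initial object.

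$(\Leftarrow)$ Assume every object carries a natural commutative monoid structure as above. First, $I$ is initial: $\varepsilon_I = 1_I$ by the unit clause of monoidality of $\varepsilon$, and for any $g \maps I \to x$ naturality of $\varepsilon$ at $g$ yields $g = g \circ \varepsilon_I = \varepsilon_x$, so $\varepsilon_x$ is the only arrow $I \to x$. Next I would show $x \otimes y$ is the coproduct of $x$ and $y$, with injections $\iota_x = (1_x \otimes \varepsilon_y) \circ \rho_x^{-1}$, $\iota_y = (\varepsilon_x \otimes 1_y) \circ \lambda_y^{-1}$ and, for $f \maps x \to z$ and $g \maps y \to z$, copairing $[f, g] = \mu_z \circ (f \otimes g)$. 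The equations $[f,g] \circ \iota_x = f$ and $[f,g] \circ \iota_y = g$ reduce to the unit laws of $(z, \mu_z, \varepsilon_z)$ after rewriting with naturality of $\mu$, the identity $g \circ \varepsilon_y = \varepsilon_z$ (a special case of $g$ being a homomorphism), and naturality of the unitors of $\C$. For uniqueness, given any $h \maps x \otimes y \to z$ I would compute
\[
\mu_z \circ \bigl((h \circ \iota_x) \otimes (h \circ \iota_y)\bigr) = \mu_z \circ (h \otimes h) \circ (\iota_x \otimes \iota_y) = h \circ \bigl(\mu_{x \otimes y} \circ (\iota_x \otimes \iota_y)\bigr),
\]
the last equality being naturality of $\mu$ at $h$, and then use that $\mu_{x \otimes y} \circ (\iota_x \otimes \iota_y) = 1_{x \otimes y}$: here monoidality identifies $\mu_{x \otimes y}$ with $(\mu_x \otimes \mu_y)$ shuffled by the symmetry, after which the equality is a one-factor-at-a-time unit-law computation of the same flavour as in \cref{ex:monoidsincocart}. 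Hence $h = [h \circ \iota_x, h \circ \iota_y]$, so $h$ is pinned down by its restrictions along the injections, establishing the coproduct universal property; that $\otimes$ and $I$, with their coherence data, then \emph{are} the cocartesian monoidal structure is immediate from the constructions.

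The step I expect to be the main obstacle is the uniqueness clause in $(\Leftarrow)$, and within it the identity $\mu_{x \otimes y} \circ (\iota_x \otimes \iota_y) = 1_{x \otimes y}$: this is exactly the point where the bare hypothesis ``every morphism is a homomorphism'' is not enough, and one must invoke the monoidality of $\mu$ and $\varepsilon$ (i.e.\ that the monoid on $x \otimes y$ is the induced one) together with symmetric monoidal coherence. Everything else is either a direct appeal to a universal property or a mechanical rewrite with the monoid and unitor axioms.
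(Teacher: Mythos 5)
Your proof is correct and follows essentially the same route as the paper's: initiality of $I$ from naturality of $\varepsilon$, injections $1\otimes\varepsilon$ and $\varepsilon\otimes 1$, copairing $\mu_z\circ(f\otimes g)$, and uniqueness via naturality of $\mu$ at $h$ together with the identity $\mu_{x\otimes y}\circ(\iota_x\otimes\iota_y)=1_{x\otimes y}$. The only difference is presentational: the paper carries out these computations in string diagrams and leaves the monoidality of $\mu$ (which that last identity genuinely needs) implicit, whereas you state it up front as part of what ``natural commutative monoid structure'' means, which is the more careful reading of the same argument.
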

\begin{proof}
    Given an object $x$ in a cocartesian monoidal category $\C$, we constructed a commutative monoid structure on $x$ in \cref{ex:monoidsincocart}. 
    
    We have to show that the multiplication maps 
    \[\nabla_x \maps x+x \to x \]
    form the components of a natural transformation
    \[\nabla \maps + \circ \Delta \To 1_\C.\]
    For a given morphism $f \maps x \to y$, the naturality square is 
    \[
    \begin{tikzcd}
        x + x
        \arrow[r, "f+f"]
        \arrow[d, swap, "\nabla_x"]
        &
        y + y
        \arrow[d, "\nabla_y"]
        \\
        x
        \arrow[r, swap, "f"]
        &
        y
    \end{tikzcd}
    \]
    Recall that the map $f+f$ is derived from the universal property of coproducts by the following diagram.
    \[
    \begin{tikzcd}
        x
        \arrow[d, swap, "f"]
        \arrow[dr, "i_1"]
        &&
        x
        \arrow[dl, swap, "i_2"]
        \arrow[d, "f"]
        \\
        y
        \arrow[dr, swap, "i_1'"]
        &
        x+x
        \arrow[d, dashed, "f+f"]&
        y
        \arrow[dl, "i_2'"]
        \\&
        y + y
    \end{tikzcd}
    \]
    We want to show that $\nabla_y \circ (f+f) = f \circ \nabla_x$.
    \[
    \begin{tikzcd}
        x
        \arrow[d, swap, "f"]
        \arrow[dr, "i_1"]
        &&
        x
        \arrow[dl, swap, "i_2"]
        \arrow[d, "f"]
        \\
        y
        \arrow[dr, swap, "i_1'"]
        \arrow[ddr, swap, "1_y", bend right]
        &
        x+x
        \arrow[d, "f+f"]&
        y
        \arrow[dl, "i_2'"]
        \arrow[ddl, "1_y'", bend left]
        \\&
        y + y
        \arrow[d, "\nabla_y"]
        \\&y
    \end{tikzcd}
    \qquad\qquad
    \begin{tikzcd}
        x
        \arrow[d, swap, "1_x"]
        \arrow[dr, "i_1"]
        &&
        x
        \arrow[dl, swap, "i_2"]
        \arrow[d, "1_x"]
        \\
        x
        \arrow[dr, swap, "1_x"]
        \arrow[ddr, swap, "f", bend right]
        &
        x + x
        \arrow[d, "\nabla_x"]&
        x
        \arrow[dl, "1_x"]
        \arrow[ddl, "f", bend left]
        \\&
        x
        \arrow[d, "f"]
        \\&y
    \end{tikzcd}
    \]
    The frames of the above diagrams are identical, and they are equal to the frame which produces $\langle f , f \rangle$, the copairing of $f$ with itself. So by universal property, they are equal.
    
    The naturality square of the units collapses into the triangle below.
    \[
    \begin{tikzcd}
        &
        0
        \arrow[dl, swap, "!_x"]
        \arrow[dr, "!_y"]
        \\
        x
        \arrow[rr, swap, "f"]
        &&
        y
    \end{tikzcd}
    \]
    which commutes by initiality of $0$.
    
    We have shown one direction: that if $\C$ is cocartesian monoidal, then each object has a natural commutative monoid structure. Now we must show the converse. Assume that $(\C, \otimes, I)$ is a symmetric monoidal category such that each object has a natural commutative monoid structure $m \maps \otimes \circ \Delta \To 1_\C$ and $\varepsilon \maps \Delta I \To 1_\C$. We represent the components of these structures with string diagrams as follows.
    \[
    \begin{tikzpicture}
    \begin{pgfonlayer}{nodelayer}
    	\node [style=blackdot] (r) at (0, 0) {};
        \node [style=none] (1) at (-0.5, 0.5) {};
        \node [style=none] (2) at (0.5, 0.5) {};
        \node [style=none] (3) at (0, -0.5) {};
        \node [style=none] () at (0.9, 0) {,};
        \node [style=none] () at (1.5, 0) {};
    \end{pgfonlayer}
    \begin{pgfonlayer}{edgelayer}
    	\draw [bend right] (1.center) to (r.center);
    	\draw [bend left] (2.center) to (r.center);
    	\draw (r.center) to (3.center);
    \end{pgfonlayer}
    \end{tikzpicture}
    \begin{tikzpicture}
    \begin{pgfonlayer}{nodelayer}
    	\node [style=blackdot] (bu) at (0, 0.5) {};
        \node [style=none] (1) at (0, 0) {};
    \end{pgfonlayer}
    \begin{pgfonlayer}{edgelayer}
    	\draw (bu.center) to (1.center);
    \end{pgfonlayer}
    \end{tikzpicture}
    \]
    The unit object is initial by naturality of $\varepsilon$.
    \[
    \begin{tikzcd}
        I
        \arrow[d, swap, "\varepsilon_x"]
        \arrow[r, "1_I"]
        &
        I
        \arrow[d, "\varepsilon_y"]
        \\
        x
        \arrow[r, swap, "f"]
        &
        y
    \end{tikzcd}
    \]
    
    Now we must show that the monoidal structure on $\C$ is cocartesian, i.e.\ that the unit object is initial and tensor is coproduct. To show that $x \otimes y$ is actually the coproduct of $x$ and $y$, we first must provide inclusions, and then show that this cone satisfies the appropriate universal property. We propose that the inclusion maps $i_x \maps x \to x\otimes y$ and $i_y \maps y \to x \otimes y$ are given in string diagrams as follows.
    \[
    \begin{tikzpicture}
    \begin{pgfonlayer}{nodelayer}
    	\node [style=none] (bu) at (0, 1) {};
        \node [style=none] (1) at (0, 0) {};
        \node [style=blackdot] (2) at (0.5, 0.5) {};
        \node [style=none] (3) at (0.5, 0) {};
    	\node [style=none] () at (1, 0.5) {,};
    	\node [style=none] () at (1.5, 0.5) {}; % spacing
    \end{pgfonlayer}
    \begin{pgfonlayer}{edgelayer}
    	\draw (bu.center) to (1.center);
    	\draw (2.center) to (3.center);
    \end{pgfonlayer}
    \end{tikzpicture}
    \begin{tikzpicture}
    \begin{pgfonlayer}{nodelayer}
    	\node [style=blackdot] (bu) at (0, 0.5) {};
        \node [style=none] (1) at (0, 0) {};
        \node [style=none] (2) at (0.5, 1) {};
        \node [style=none] (3) at (0.5, 0) {};
    \end{pgfonlayer}
    \begin{pgfonlayer}{edgelayer}
    	\draw (bu.center) to (1.center);
    	\draw (2.center) to (3.center);
    \end{pgfonlayer}
    \end{tikzpicture}
    \]
    Let $q$ be an object of $\C$, and $f \maps x \to q$ and $g \maps y \to q$ be maps in $\C$. Define the map $h \maps x \otimes y \to q$ to be the following composite.
    \[
    \begin{tikzpicture}
    \begin{pgfonlayer}{nodelayer}
    	\node [style=none] (x) at (-0.5, 0.75) {};
        \node [style=none] (y) at (0.5, 0.75) {};
        \node [style=construct] (f) at (-0.5, 0) {$f$};
        \node [style=construct] (g) at (0.5, 0) {$g$};
        \node [style=blackdot] (m) at (0, -0.75) {};
        \node [style=none] (q) at (0, -1.25) {};
    \end{pgfonlayer}
    \begin{pgfonlayer}{edgelayer}
    	\draw (x.center) to (f.center);
    	\draw (y.center) to (g.center);
    	\draw [bend right = 50] (f.center) to (m.center);
    	\draw [bend left = 50](g.center) to (m.center);
    	\draw (m.center) to (q.center);
    \end{pgfonlayer}
    \end{tikzpicture}
    \]
    Then we show the diagram
    \[
    \begin{tikzcd}
        x
        \arrow[dr, "i_x"]
        \arrow[ddr, swap, "f", bend right]
        &&
        y
        \arrow[dl, swap, "i_y"]
        \arrow[ddl, "g", bend left]
        \\&
        x \otimes y
        \arrow[d, "h"]
        \\&q
    \end{tikzcd} 
    \]
    commutes by the following calculations.
    \[
    \begin{tikzpicture}
    \begin{pgfonlayer}{nodelayer}
    	\node [style=none] (x) at (-0.5, 0.75) {};
        \node [style=blackdot] (y) at (0.5, 0.5) {};
        \node [style=construct] (f) at (-0.5, 0) {$f$};
        \node [style=construct] (g) at (0.5, 0) {$g$};
        \node [style=blackdot] (m) at (0, -0.75) {};
        \node [style=none] (q) at (0, -1.25) {};
    	\node [style=none] () at (1.2, -0.25) {=};
    	\node [style=none] () at (1.7, 0) {}; % spacing
    \end{pgfonlayer}
    \begin{pgfonlayer}{edgelayer}
    	\draw (x.center) to (f.center);
    	\draw (y.center) to (g.center);
    	\draw [bend right = 50] (f.center) to (m.center);
    	\draw [bend left = 50] (g.center) to (m.center);
    	\draw (m.center) to (q.center);
    \end{pgfonlayer}
    \end{tikzpicture}
    \begin{tikzpicture}
    \begin{pgfonlayer}{nodelayer}
    	\node [style=none] (x) at (-0.5, 0.75) {};
        \node [style=blackdot] (y) at (0.5, 0) {};
        \node [style=construct] (f) at (-0.5, 0) {$f$};
        \node [style=blackdot] (m) at (0, -0.75) {};
        \node [style=none] (q) at (0, -1.25) {};
    	\node [style=none] () at (1.2, -0.25) {=};
    	\node [style=none] () at (1.7, 0) {}; % spacing
    \end{pgfonlayer}
    \begin{pgfonlayer}{edgelayer}
    	\draw (x.center) to (f.center);
    	\draw [bend right = 50] (f.center) to (m.center);
    	\draw [bend left = 45] (y.center) to (m.center);
    	\draw (m.center) to (q.center);
    \end{pgfonlayer}
    \end{tikzpicture}
    \begin{tikzpicture}
    \begin{pgfonlayer}{nodelayer}
    	\node [style=none] (x) at (0, 0.75) {};
        \node [style=construct] (f) at (0, -0.25) {$f$};
        \node [style=none] (q) at (0, -1.25) {};
    \end{pgfonlayer}
    \begin{pgfonlayer}{edgelayer}
    	\draw (x.center) to (f.center);
    	\draw (f.center) to (q.center);
    \end{pgfonlayer}
    \end{tikzpicture}
    \]
    \[
    \begin{tikzpicture}
    \begin{pgfonlayer}{nodelayer}
    	\node [style=blackdot] (x) at (-0.5, 0.5) {};
        \node [style=none] (y) at (0.5, 0.75) {};
        \node [style=construct] (f) at (-0.5, 0) {$f$};
        \node [style=construct] (g) at (0.5, 0) {$g$};
        \node [style=blackdot] (m) at (0, -0.75) {};
        \node [style=none] (q) at (0, -1.25) {};
    	\node [style=none] () at (1.2, -0.25) {=};
    	\node [style=none] () at (1.7, 0) {}; % spacing
    \end{pgfonlayer}
    \begin{pgfonlayer}{edgelayer}
    	\draw (x.center) to (f.center);
    	\draw (y.center) to (g.center);
    	\draw [bend right = 50] (f.center) to (m.center);
    	\draw [bend left = 50] (g.center) to (m.center);
    	\draw (m.center) to (q.center);
    \end{pgfonlayer}
    \end{tikzpicture}
    \begin{tikzpicture}
    \begin{pgfonlayer}{nodelayer}
    	\node [style=blackdot] (x) at (-0.5, 0) {};
        \node [style=none] (y) at (0.5, 0.75) {};
        \node [style=construct] (g) at (0.5, 0) {$g$};
        \node [style=blackdot] (m) at (0, -0.75) {};
        \node [style=none] (q) at (0, -1.25) {};
    	\node [style=none] () at (1.3, -0.25) {=};
    	\node [style=none] () at (1.8, 0) {}; % spacing
    \end{pgfonlayer}
    \begin{pgfonlayer}{edgelayer}
    	\draw [bend right = 45] (x.center) to (m.center);
    	\draw (y.center) to (g.center);
    	\draw [bend left = 50] (g.center) to (m.center);
    	\draw (m.center) to (q.center);
    \end{pgfonlayer}
    \end{tikzpicture}
    \begin{tikzpicture}
    \begin{pgfonlayer}{nodelayer}
    	\node [style=none] (x) at (0, 0.75) {};
        \node [style=construct] (g) at (0, -0.25) {$g$};
        \node [style=none] (q) at (0, -1.25) {};
    \end{pgfonlayer}
    \begin{pgfonlayer}{edgelayer}
    	\draw (x.center) to (g.center);
    	\draw (g.center) to (q.center);
    \end{pgfonlayer}
    \end{tikzpicture}
    \]
    Let $k \maps x \otimes y \to q$ be a map which makes that diagram commute. Then 
    \[
    \begin{tikzpicture}
    \begin{pgfonlayer}{nodelayer}
    	\node [style=none] (x) at (-0.5, 0.75) {};
        \node [style=none] (y) at (0.5, 0.75) {};
        \node [style=construct] (k) at (0, -0.125) {$h$};
        \node [style=none] (q) at (0, -1.25) {};
        \node [style=none] () at (1.1, 0) {=};
        \node [style=none] () at (1.7, 0) {};
    \end{pgfonlayer}
    \begin{pgfonlayer}{edgelayer}
    	\draw [bend right] (x.center) to (k.center);
    	\draw [bend left] (y.center) to (k.center);
    	\draw (k.center) to (q.center);
    \end{pgfonlayer}
    \end{tikzpicture}
    \begin{tikzpicture}
    \begin{pgfonlayer}{nodelayer}
    	\node [style=none] (x) at (-0.5, 0.75) {};
        \node [style=none] (y) at (0.5, 0.75) {};
        \node [style=construct] (f) at (-0.5, 0) {$f$};
        \node [style=construct] (g) at (0.5, 0) {$g$};
        \node [style=blackdot] (m) at (0, -0.75) {};
        \node [style=none] (q) at (0, -1.25) {};
        \node [style=none] () at (1.25, 0) {=};
        \node [style=none] () at (1.7, 0) {};
    \end{pgfonlayer}
    \begin{pgfonlayer}{edgelayer}
    	\draw (x.center) to (f.center);
    	\draw (y.center) to (g.center);
    	\draw [bend right = 50] (f.center) to (m.center);
    	\draw [bend left = 50](g.center) to (m.center);
    	\draw (m.center) to (q.center);
    \end{pgfonlayer}
    \end{tikzpicture}
    \begin{tikzpicture}
    \begin{pgfonlayer}{nodelayer}
    	\node [style=none] (1) at (-0.875, 0.75) {};
        \node [style=blackdot] (2) at (-0.14, 0.5) {};
    	\node [style=blackdot] (3) at (0.14, 0.5) {};
        \node [style=none] (4) at (0.9, 0.75) {};
        \node [style=construct] (k1) at (-0.5, 0) {$k$};
        \node [style=construct] (k2) at (0.5, 0) {$k$};
        \node [style=blackdot] (m) at (0, -0.75) {};
        \node [style=none] (q) at (0, -1.25) {};
        \node [style=none] () at (1.3, 0) {=};
        \node [style=none] () at (1.7, 0) {};
    \end{pgfonlayer}
    \begin{pgfonlayer}{edgelayer}
    	\draw [bend right = 20] (1.center) to (k1);
    	\draw [bend left = 20] (2.center) to (k1);
    	\draw [bend right = 30] (3.center) to (k2.center);
    	\draw [bend left = 27] (4.center) to (k2.center);
    	\draw [bend right = 50] (k1.center) to (m.center);
    	\draw [bend left = 50] (k2.center) to (m.center);
    	\draw (m.center) to (q.center);
    \end{pgfonlayer}
    \end{tikzpicture}
    \begin{tikzpicture}
    \begin{pgfonlayer}{nodelayer}
    	\node [style=none] (1) at (-0.875, 0.75) {};
        \node [style=blackdot] (2) at (-0.14, 0.5) {};
    	\node [style=blackdot] (3) at (0.14, 0.5) {};
        \node [style=none] (4) at (0.9, 0.75) {};
        \node [style=blackdot] (k1) at (-0.5, 0) {};
        \node [style=blackdot] (k2) at (0.5, 0) {};
        \node [style=construct] (m) at (0, -0.65) {$k$};
        \node [style=none] (q) at (0, -1.25) {};
        \node [style=none] () at (1.3, 0) {=};
        \node [style=none] () at (1.7, 0) {};
    \end{pgfonlayer}
    \begin{pgfonlayer}{edgelayer}
    	\draw [bend right = 20] (1.center) to (k1);
    	\draw [bend left = 20] (2.center) to (k1);
    	\draw [bend right = 30] (3.center) to (k2.center);
    	\draw [bend left = 27] (4.center) to (k2.center);
    	\draw [bend right = 35] (k1.center) to (m.center);
    	\draw [bend left = 35] (k2.center) to (m.center);
    	\draw (m.center) to (q.center);
    \end{pgfonlayer}
    \end{tikzpicture}
    \begin{tikzpicture}
    \begin{pgfonlayer}{nodelayer}
    	\node [style=none] (x) at (-0.5, 0.75) {};
        \node [style=none] (y) at (0.5, 0.75) {};
        \node [style=construct] (k) at (0, -0.125) {$k$};
        \node [style=none] (q) at (0, -1.25) {};
    \end{pgfonlayer}
    \begin{pgfonlayer}{edgelayer}
    	\draw [bend right] (x.center) to (k.center);
    	\draw [bend left] (y.center) to (k.center);
    	\draw (k.center) to (q.center);
    \end{pgfonlayer}
    \end{tikzpicture}
    \]
    Thus $h$ is the unique such map. This demonstrates $x \otimes y$ as the coproduct of $x$ and $y$.
\end{proof}

There is a dual statement which characterizes cartesian monoidal categories, but in order to state it, we must first define \emph{comonoid}.

\begin{defn}
    A \define{comonoid object} in a monoidal category $\C$ is monoid in $\C\op$. Equivalently, a comonoid is an object $x \in \C$ equipped with a \define{comultiplication} map $\mu \maps x \to x \otimes x$ and a \define{counit} map $\varepsilon \maps x \to I$, which we express as 
    \[
    \begin{tikzpicture}
    \begin{pgfonlayer}{nodelayer}
        \node [style=none] (3) at (0, 0.5) {};
    	\node [style=blackdot] (r) at (0, 0) {};
        \node [style=none] (1) at (-0.5, -0.5) {};
        \node [style=none] (2) at (0.5, -0.5) {};
    	\node [style=none] () at (1, 0) {,};
    	\node [style=none] () at (1.5, 0) {}; % spacing
    \end{pgfonlayer}
    \begin{pgfonlayer}{edgelayer}
    	\draw [bend left] (1.center) to (r.center);
    	\draw [bend right] (2.center) to (r.center);
    	\draw (r.center) to (3.center);
    \end{pgfonlayer}
    \end{tikzpicture}
    \begin{tikzpicture}
    \begin{pgfonlayer}{nodelayer}
    	\node [style=blackdot] (r) at (0, 0) {};
        \node [style=none] (1) at (-0.5, -0.5) {};
        \node [style=none] (2) at (0.5, -0.5) {};
        \node [style=none] (3) at (0, 0.5) {};
    \end{pgfonlayer}
    \begin{pgfonlayer}{edgelayer}
    	\draw (r.center) to (3.center);
    \end{pgfonlayer}
    \end{tikzpicture}
    \]
    satisfying the following equations.
    \[
    \begin{tikzpicture}
    \begin{pgfonlayer}{nodelayer}
    	\node [style=none] (1) at (0, -1) {};
    	\node [style=none] (2) at (0.5, -1) {};
    	\node [style=none] (3) at (1, -1) {};
    	\node [style=none] (4) at (1, -0.5) {};
    	\node [style=none] (5) at (0.625, 0.5) {};
        \node [style=blackdot] (b1) at (0.25, -0.5) {};
        \node [style=blackdot] (b2) at (0.625, 0) {};
    	\node [style=none] () at (1.5, -0.25) {=};
    	\node [style=none] () at (2, 0.5) {}; % spacing
    \end{pgfonlayer}
    \begin{pgfonlayer}{edgelayer}
    	\draw [bend left] (1.center) to (b1.center);
    	\draw [bend right] (2.center) to (b1.center);
    	\draw (3.center) to (4.center);
    	\draw [bend right] (4.center) to (b2.center);
    	\draw [bend left] (b1.center) to (b2.center);
    	\draw (b2.center) to (5.center);
    \end{pgfonlayer}
    \end{tikzpicture}
    \begin{tikzpicture}
    \begin{pgfonlayer}{nodelayer}
    	\node [style=none] (1) at (0, -1) {};
    	\node [style=none] (2) at (0.5, -1) {};
    	\node [style=none] (3) at (1, -1) {};
    	\node [style=none] (4) at (0, -0.5) {};
    	\node [style=none] (5) at (0.375, 0.5) {};
        \node [style=blackdot] (b1) at (0.75, -0.5) {};
        \node [style=blackdot] (b2) at (0.375, 0) {};
    \end{pgfonlayer}
    \begin{pgfonlayer}{edgelayer}
    	\draw (1.center) to (4.center);
    	\draw [bend left] (2.center) to (b1.center);
    	\draw [bend right] (3.center) to (b1.center);
    	\draw [bend left] (4.center) to (b2.center);
    	\draw [bend right] (b1.center) to (b2.center);
    	\draw (b2.center) to (5.center);
    \end{pgfonlayer}
    \end{tikzpicture}
    \]
    \[
    \begin{tikzpicture}
    \begin{pgfonlayer}{nodelayer}
        \node [style=none] (1) at (0.25, -0.75) {};
    	\node [style=blackdot] (2) at (1, -0.5) {};
        \node [style=blackdot] (3) at (0.625, 0) {};
    	\node [style=none] (4) at (0.625, 0.5) {};
    	\node [style=none] () at (1.5, -0.25) {=};
    	\node [style=none] () at (2, 0.5) {}; % spacing
    \end{pgfonlayer}
    \begin{pgfonlayer}{edgelayer}
    	\draw [bend right] (2.center) to (3.center);
    	\draw [bend left] (1.center) to (3.center);
    	\draw (3.center) to (4.center);
    \end{pgfonlayer}
    \end{tikzpicture}
    \begin{tikzpicture}
    \begin{pgfonlayer}{nodelayer}
    	\node [style=none] (1) at (0, 1) {};
        \node [style=none] (2) at (0, 0) {};
        \node [style=none] () at (4, 0) {};
    \end{pgfonlayer}
    \begin{pgfonlayer}{edgelayer}
    	\draw (1.center) to (2.center);
    \end{pgfonlayer}
    \end{tikzpicture}
    \begin{tikzpicture}
    \begin{pgfonlayer}{nodelayer}
        \node [style=blackdot] (1) at (0.25, -0.5) {};
    	\node [style=none] (2) at (1, -0.75) {};
        \node [style=blackdot] (3) at (0.625, 0) {};
    	\node [style=none] (4) at (0.625, 0.5) {};
    	\node [style=none] () at (1.5, -0.25) {=};
    	\node [style=none] () at (2, 0.5) {}; % spacing
    \end{pgfonlayer}
    \begin{pgfonlayer}{edgelayer}
    	\draw [bend right] (2.center) to (3.center);
    	\draw [bend left] (1.center) to (3.center);
    	\draw (3.center) to (4.center);
    \end{pgfonlayer}
    \end{tikzpicture}
    \begin{tikzpicture}
    \begin{pgfonlayer}{nodelayer}
    	\node [style=none] (1) at (0, 1) {};
        \node [style=none] (2) at (0, 0) {};
    \end{pgfonlayer}
    \begin{pgfonlayer}{edgelayer}
    	\draw (1.center) to (2.center);
    \end{pgfonlayer}
    \end{tikzpicture}\]
    Let $\Comon(\C, \otimes)$ denote the category of comonoid objects in $\C$ and their homomorphisms.
\end{defn}

\begin{defn}
    A \define{cocommutative comonoid} is a comonoid for which the following equation holds.
    \[
    \begin{tikzpicture}
    \begin{pgfonlayer}{nodelayer}
        \node [style=none] (3) at (0, 0.5) {};
    	\node [style=blackdot] (r) at (0, 0) {};
        \node [style=none] (1) at (-0.25, -0.5) {};
        \node [style=none] (2) at (0.25, -0.5) {};
    	\node [style=none] () at (1, 0) {=};
    	\node [style=none] () at (1.5, 0) {}; % spacing
    \end{pgfonlayer}
    \begin{pgfonlayer}{edgelayer}
    	\draw [bend left] (1.center) to (r.center);
    	\draw [bend right] (2.center) to (r.center);
    	\draw (r.center) to (3.center);
    \end{pgfonlayer}
    \end{tikzpicture}
    \begin{tikzpicture}
    \begin{pgfonlayer}{nodelayer}
        \node [style=none] (1) at (0, 0.5) {};
    	\node [style=blackdot] (m) at (0, 0) {};
        \node [style=none] (2) at (0, -0.55) {};
        \node [style=none] (3) at (-0.25, -1) {};
        \node [style=none] (4) at (0.25, -1) {};
    \end{pgfonlayer}
    \begin{pgfonlayer}{edgelayer}
    	\draw (1.center) to (m.center);
    	\draw [bend left = 90, looseness = 1.5] (m.center) to (2.center);
    	\draw [bend right = 45] (2.center) to (3.center);
    	\draw [bend right = 90, white, line width = 3, looseness = 1.5] (m.center) to (2.center);
    	\draw [bend left = 45, white, line width = 3] (2.center) to (4.center);
    	\draw [bend right = 90, looseness = 1.5] (m.center) to (2.center);
    	\draw [bend left = 45] (2.center) to (4.center);
    \end{pgfonlayer}
    \end{tikzpicture}
    \]
    Let $\CoComon(\C, \otimes)$ denote the category of cocommutative comonoids in $\C$ and their homomorphisms.
\end{defn}

\begin{prop}
\label{prop:carthascomonoids}
    A symmetric monoidal category is cartesian if and only if each object has a natural cocommutative comonoid structure.
\end{prop}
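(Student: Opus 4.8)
The plan is to deduce this as the formal dual of \cref{prop:cocarthasmonoids}, which has just been proved. The key observation is the standard fact that a symmetric monoidal structure on $\C$ induces a symmetric monoidal structure on $\C\op$ with the same tensor and unit (\cref{expl:opmonoidal} extends to the braided/symmetric setting), and that a comonoid in $\C$ is by definition precisely a monoid in $\C\op$. Thus the statement ``each object of $\C$ has a natural cocommutative comonoid structure'' is literally the statement ``each object of $\C\op$ has a natural commutative monoid structure.''

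First I would spell out that $\C$ is a symmetric monoidal category if and only if $\C\op$ is, with the braiding on $\C\op$ being the inverse (equivalently, since the braiding is a symmetry, the same) braiding. Next I would invoke \cref{prop:cocarthasmonoids} applied to $\C\op$: the symmetric monoidal category $\C\op$ is cocartesian if and only if every object of $\C\op$ carries a natural commutative monoid structure. Then I would translate both sides of this equivalence back along $(-)\op$. On the structural side, a natural commutative monoid structure on the objects of $\C\op$ — i.e.\ natural transformations $\otimes \circ \Delta \To 1_{\C\op}$ and $\Delta I \To 1_{\C\op}$ satisfying the commutative monoid axioms in $\C\op$ — is exactly a natural cocommutative comonoid structure on the objects of $\C$. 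On the universal-property side, $\C\op$ being cocartesian means $\C\op$ has an initial object that is the monoidal unit and binary coproducts given by $\otimes$; dualizing, this says $\C$ has a terminal object equal to the monoidal unit and binary products given by $\otimes$, which is precisely the assertion that the monoidal structure on $\C$ is cartesian.

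The only genuine content beyond bookkeeping is making sure the dualization of \emph{naturality} is handled correctly: a natural transformation $m\maps \otimes\circ\Delta\To 1_{\C\op}$ in $\C\op$ corresponds to a natural transformation $1_\C\To\otimes\circ\Delta$ in $\C$, whose components $x\to x\otimes x$ are the desired comultiplications, and the commutative monoid equations in $\C\op$ become exactly the cocommutative comonoid equations in $\C$. I would note that the functors $\otimes$ and $\Delta$ are self-dual under $(-)\op$ (since $(\C\times\C)\op\cong\C\op\times\C\op$ and $\otimes$ is sent to $\otimes\op$), so the diagram shapes match up cleanly. I expect this to be the step requiring the most care, but it is entirely formal; no calculation is needed, since everything follows from \cref{prop:cocarthasmonoids} and the involution on $\Sym\Mon\Cat$ of \cref{expl:opmonoidal}. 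I would conclude with a one-line remark that, alternatively, one could repeat the string-diagram argument of \cref{prop:cocarthasmonoids} verbatim with all diagrams reflected top-to-bottom, but that the dual-category argument makes this unnecessary.
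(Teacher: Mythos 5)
Your proposal is correct and follows exactly the same route as the paper, which simply states that the result is dual to \cref{prop:cocarthasmonoids}; you have merely spelled out the bookkeeping (the involution $(-)\op$ on symmetric monoidal categories, the identification of comonoids in $\C$ with monoids in $\C\op$, and the dualization of the naturality and universal-property conditions) that the paper leaves implicit. No gap here.
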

\begin{proof}
    This is dual to \cref{prop:cocarthasmonoids}.
\end{proof}

\begin{cor}
    Let $(\C, \otimes, I)$ be a symmetric monoidal category. Then $\CMon(\C, \otimes)$ has a cocartesian monoidal structure given by $\otimes$, and $\CoComon(\C, \otimes)$ has a cartesian monoidal structure given by $\otimes$. 
\end{cor}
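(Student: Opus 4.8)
The plan is to derive this corollary directly from Propositions \ref{prop:cocarthasmonoids} and \ref{prop:carthascomonoids}, which characterize (co)cartesian monoidal categories by the existence of natural (co)commutative (co)monoid structures on every object. The two statements are dual to each other, so I would prove the claim about $\CMon(\C, \otimes)$ in detail and obtain the one about $\CoComon(\C, \otimes)$ by passing to $\C\op$.

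First I would recall that $\CMon(\C, \otimes)$ inherits a symmetric monoidal structure with tensor $\otimes$ and unit the trivial monoid on $I$: given commutative monoids $(x, \mu_x, \uni_x)$ and $(y, \mu_y, \uni_y)$, the object $x \otimes y$ carries the monoid structure whose multiplication is $(\mu_x \otimes \mu_y) \circ (1 \otimes \braid_{y,x} \otimes 1)$ and whose unit is built from $\uni_x \otimes \uni_y$, and commutativity of this structure follows from that of the factors together with the hexagon axioms in $\C$; this is standard (it is the same construction used just above in the discussion of $\Mon(\V)$ for $\V$ braided). So $(\CMon(\C,\otimes), \otimes, I)$ is a symmetric monoidal category, and it remains to show it is cocartesian. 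By Proposition \ref{prop:cocarthasmonoids}, it suffices to exhibit on each object of $\CMon(\C, \otimes)$ a natural commutative monoid structure \emph{internal to} $\CMon(\C, \otimes)$.

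The key observation is that for a commutative monoid $A = (x, \mu, \uni)$ in $\C$, the multiplication $\mu \maps x \otimes x \to x$ is itself a monoid homomorphism precisely because $A$ is commutative (via the Eckmann--Hilton-type interchange, or more directly: the relevant square commutes exactly when $\mu$ is commutative and associative), and $\uni \maps I \to x$ is always a monoid homomorphism. Thus $(\mu, \uni)$ equips $A$ with the structure of a \emph{monoid object in $\CMon(\C,\otimes)$}, and this monoid is automatically commutative there since $\mu \circ \braid = \mu$ holds already in $\C$. Naturality in $A$ amounts to checking that for any homomorphism of commutative monoids $f \maps A \to B$ the diagrams $\mu_B \circ (f \otimes f) = f \circ \mu_A$ and $\uni_B = f \circ \uni_A$ commute --- but these are exactly the defining conditions of $f$ being a homomorphism. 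Hence the multiplication and unit maps assemble into natural transformations $\otimes \circ \Delta \To 1$ and $\Delta I \To 1$ on $\CMon(\C,\otimes)$, and Proposition \ref{prop:cocarthasmonoids} yields that $\CMon(\C, \otimes)$ is cocartesian monoidal with tensor $\otimes$.

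Finally, applying this to $\C\op$ with its opposite monoidal structure (Example \ref{expl:opmonoidal}): a comonoid in $\C$ is a monoid in $\C\op$, so $\Comon(\C, \otimes) = \Mon(\C\op, \otimes\op)\op$ and $\CoComon(\C, \otimes) = \CMon(\C\op, \otimes\op)\op$; the previous paragraph shows $\CMon(\C\op, \otimes\op)$ is cocartesian, so its opposite $\CoComon(\C, \otimes)$ is cartesian, again with tensor $\otimes$. I expect the only genuine obstacle is the bookkeeping verifying that $\mu$ is a monoid homomorphism in the braided (non-cartesian) setting --- one must be careful that the interchange law $\mu \circ (\mu \otimes \mu) = \mu \circ (\mu \otimes \mu) \circ (1 \otimes \braid \otimes 1)$ needed here is a consequence of associativity plus commutativity of $\mu$ and the hexagon axioms, not an extra hypothesis; everything else is routine diagram-chasing that I would leave to the reader.
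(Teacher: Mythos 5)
Your proof is correct and follows exactly the route the paper intends: the corollary is stated without proof, and the natural derivation is precisely yours — equip each commutative monoid $(x,\mu,\uni)$ with the internal commutative monoid structure $(\mu,\uni)$ in $\CMon(\C,\otimes)$ (using that $\mu$ is a homomorphism iff the monoid is commutative, a fact the paper itself invokes elsewhere), check naturality via the homomorphism axioms, and apply \cref{prop:cocarthasmonoids}, with the comonoid case by duality via \cref{prop:carthascomonoids}. No gaps.
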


\setcounter{chapter}{1}
\chapter{Monoidal 2-Categories and Pseudomonoids}
\label{app:Monoidal2cats}

There are many sources for the basic theory of 2-categories and bicategories \cite{Benabou, Review, 2-catcompanion, 2DCats}. Below we sketch some basic definitions and constructions regarding monoidal 2-categories, necessary for what follows; relevant references where explicit axioms can be found are \cite{Carmody, CoherenceTricats, Monoidalbicatshopfalgebroids, Mccruddencoalgebroids}.

\section{Monoidal 2-Categories}

A \define{monoidal 2-category} $\K$ is a 2-category equipped with a pseudofunctor $\otimes \maps \K \times \K \to \K$ and a unit object $I \maps 1 \to \K$ which are associative and unital up to coherent equivalence. A \define{lax monoidal pseudofunctor} $\F \maps \K \to \L$ between monoidal 2-categories is a pseudofunctor equipped with pseudonatural transformations
\begin{equation}\label{eq:weakmonpseudo}
    \begin{tikzcd}
        \K \times \K 
        \arrow[r, "\F \times \F"] 
        \arrow[d, "\otimes_\K"']
        & |[alias=doma]| 
        \L \times \L 
        \arrow[d, "\otimes_\L"] 
        \\
        |[alias=coda]| 
        \K 
        \arrow[r, "\F"'] 
        & \L
        \arrow[Rightarrow, from=doma, to=coda, "\mu"', shorten >=.25in, shorten <=.25in]
    \end{tikzcd}
    \quad\quad
    \begin{tikzcd}
        \1\arrow[d, "I_{\K}"']
        \arrow[dr, bend left, "I_{\L}"{name=doma}] 
        \\
        |[alias=coda]| 
        \K
        \arrow[r, "\F"'] 
        & \L
        \arrow[Rightarrow, from=doma, to=coda, "\mu_0"', shorten >=.1in, shorten <=.1in]
    \end{tikzcd} 
\end{equation}
with components $\mu_{a, b} \maps \F a \otimes \F b \to \F (a \otimes b)$, $\mu_0 \maps I \to \F I$, and invertible modifications 
\begin{equation}\label{eq:omega}
\begin{tikzcd}
    & 
    \L^3 
    \arrow[d, phantom, "\Downarrow{\scriptstyle\mu \times 1}"]
    \arrow[r, "\otimes _\L \times 1"] 
    &
    \L^2
    \arrow[dd, phantom, "\Downarrow{\scriptstyle\mu}"]
    \arrow[dr, "\otimes _\L"]
    &&&
    \L^3
    \arrow[dd, phantom, "\Downarrow{\scriptstyle 1 \times \mu}"]
    \arrow[r, "\otimes _\L \times 1"]
    \arrow[dr, "1 \times \otimes _\L"description]
    & 
    \L^2
    \arrow[d, phantom, "{\scriptstyle\cong}"]
    \arrow[dr, "\otimes _\L"]
    \\
    \K^3
    \arrow[ur, "\F \times \F \times \F"]
    \arrow[r, "\otimes _\K \times 1"]
    \arrow[dr, "1 \times \otimes _\K", swap]
    &
    \K^2
    \arrow[ur, "\F \times \F"description]
    \arrow[dr, "\otimes _\K"description]
    &&
    \L
    \arrow[r, phantom, "\stackrel{\omega}{\Rrightarrow}"]
    &
    \K^3
    \arrow[ur, "\F \times \F \times \F"]
    \arrow[dr, "1 \times \otimes _\K", swap]
    &&
    \L^2
    \arrow[d, phantom, "\Downarrow{\scriptstyle\mu}"]
    \arrow[r, "\otimes _\L"]
    &
    \L
    \\&
    \K^2
    \arrow[u, phantom, "{\scriptstyle\cong}"]
    \arrow[r, "\otimes _\K", swap]
    &
    \K 
    \arrow[ur, "\F", swap]
    &&&
    \K^2
    \arrow[ur, "\F \times \F"description]
    \arrow[r, "\otimes _\K", swap]
    &
    \K
    \arrow[ur, "\F", swap]
\end{tikzcd}
\end{equation}
\begin{displaymath}
\begin{tikzcd}[column sep=.25in]
    \K
    \arrow[dr, "1\times I"']
    \arrow[rr, "\F\times I"{name=doma}]
    \arrow[rrd, bend right=80, "1"', "\cong"]
    \arrow[rrr, bend left=30, "\F"]
    \arrow[rrr, phantom, bend left=15, "{\scriptstyle\cong}"description] 
    && 
    \L\times\L
    \arrow[r, "\otimes _\L"'] 
    & 
    \L
    \arrow[Rightarrow, from=doma, to=coda, "1\times\mu_0\;\;"', shorten <=.5em, shorten >=.5em]
    \\& 
    |[alias=coda]|
    \K\times\K
    \arrow[r, "\otimes _\K"']
    \arrow[ur, "\F\times\F"description]
    \arrow[urr, phantom, "\Downarrow{\scriptstyle \mu}"description] 
    &
    \K
    \arrow[ur, "\F"'] 
    &
\end{tikzcd}
\stackrel{\zeta}{\Rrightarrow}
\begin{tikzcd}[column sep=.25in]
    \\
    \K\arrow[rr, bend left, "\F"]\arrow[rr, bend right, "\F"']\arrow[rr, phantom, "\Downarrow{\scriptstyle1}"description] && \L \\
\end{tikzcd}
\stackrel{\xi}{\Rrightarrow}
\begin{tikzcd}[column sep=.25in]
    \K
    \arrow[dr, "I\times1"']
    \arrow[rr, "I\times\F"{name=doma}]
    \arrow[rrd, bend right=80, "1"', "\cong"]
    \arrow[rrr, bend left=30, "\F"]
    \arrow[rrr, phantom, bend left=15, "{\scriptstyle\cong}"description] 
    &&
    \L \times \L
    \arrow[r, "\otimes _\L"'] 
    & 
    \L
    \arrow[Rightarrow, from=doma, to=coda, "\mu_0\times1\;\;"', shorten <=.5em, shorten >=.5em]
    \\& |[alias=coda]|
    \K\times\K
    \arrow[r, "\otimes _\K"']
    \arrow[ur, "\F\times\F"description]
    \arrow[urr, phantom, "\Downarrow{\scriptstyle \mu}"description] 
    &
    \K
    \arrow[ur, "\F"'] 
\end{tikzcd}
\end{displaymath}
subject to coherence conditions which can be found in Definition 2 in \cite{Monoidalbicatshopfalgebroids}. A \define{monoidal pseudonatural transformation} $\tau \maps \F \Rightarrow \G$ between two lax monoidal pseudofunctors $(\F, \mu, \mu_0)$ and $(\G, \nu, \nu_0)$ is a pseudonatural transformation equipped with two invertible modifications
\begin{equation}\label{eq:monpseudonat}
\begin{tikzcd}[column sep = .2in, row sep = .1in]
   \K \times \K 
   \arrow[rr, bend left, "\F \times \F"] 
   \arrow[rr, bend right, "\G \times \G"'] 
   \arrow[dd, "\otimes "'] 
   \arrow[ddrr, phantom, bend right = 10, "\Downarrow{\scriptstyle \nu}"'] 
   \arrow[rr, phantom, description, "\Downarrow{\scriptstyle \tau \times \tau}"] 
   && 
   \L \times \L
   \arrow[dd, "\otimes "] 
   &&
   \K \times \K 
   \arrow[rr, "\F \times \F"]
   \arrow[dd, "\otimes "']
   \arrow[ddrr, phantom, bend left=10, "\Downarrow{\scriptstyle \mu}"] 
   && 
   \L \times \L 
   \arrow[dd, "\otimes "] 
   \\&&& 
   \stackrel{u}{\Rrightarrow} 
   &&&\\
   \K
   \arrow[rr, "\G"'] 
   && 
   \L 
   &&
   \K
   \arrow[rr, bend left, "\F"]
   \arrow[rr, bend right, "\G"']
   \arrow[rr, phantom, description, "\Downarrow{\scriptstyle\tau}"] 
   && 
   \L
\end{tikzcd}
\end{equation}
\begin{displaymath}
\begin{tikzcd}[row sep = .1in, column sep = .3in]
   \1
    \arrow[ddrr, phantom, "\Downarrow{\scriptstyle \nu_0}" description]
    \arrow[rr, "I_\L"]
    \arrow[dd, "1_\K"'] 
    && 
    \L 
    &&
    \1
    \arrow[ddrr, phantom, near start, "\Downarrow{\scriptstyle \mu_0}" description]
    \arrow[rr, "I_\L"]
    \arrow[dd, "I_\K"']
    && 
    \L 
    \\&&&
    \stackrel{u_0}{\Rrightarrow} 
    &&&\\
   \K
    \arrow[uurr, bend right = 40, 
    "\G"']  
    &&
    \phantom{A}
    && 
     \K 
    \arrow[uurr, "\F" description]
    \arrow[uurr, bend right=60, "\G"']
    \arrow[uurr, phantom, bend right, "\Downarrow{\scriptstyle \tau}" description]
    && 
    \phantom{A}
\end{tikzcd}
\end{displaymath}
that consist of natural isomorphisms with components
\begin{equation}\label{eq:monpseudocomponents}
    u_{a, b}\maps \nu_{a, b} \circ (\tau_a \otimes \tau_b) \xrightarrow{\sim} \tau_{a \otimes b} \circ \mu_{a, b}, 
    \quad
    u_0\maps \nu_0 \xrightarrow{\sim}\tau_I \circ \mu_0 
\end{equation}
satisfying coherence conditions which can be found in \cite[Section 3.3]{CoherenceTricats}. 

The above notions of course generalize those of an ordinary monoidal category, lax monoidal functor and monoidal natural transformation. However, in our higher dimensional setting, there is now room for a structure not present for monoidal 1-categories.

A \define{monoidal modification}
between two monoidal pseudonatural transformations $(\tau, u, u_0)$ and $(\sigma, v, v_0)$ is a modification
\begin{displaymath}
\begin{tikzcd}
    \K
    \arrow[rr, bend left=40, "\F", ""'{name = F}]
    \arrow[rr, bend right=40, "\G"', ""{name = G}] 
    \arrow[rr, phantom, "\stackrel{m}{\Rrightarrow}"description] 
    && 
    \L
    \arrow[from = F, to = G, Rightarrow, "\tau"', bend right=50]
    \arrow[from = F, to = G, Rightarrow, "\sigma", bend left=50]
\end{tikzcd}
\end{displaymath}
which consists of pseudonatural transformations $m_a\maps \tau_a\Rightarrow\sigma_a$ compatible with the monoidal structures, in the sense that
\begin{equation}\label{eq:monoidalmodaxioms}
\begin{tikzcd}[column sep=.27in]
    & 
    \G a \otimes \G b
    \arrow[dr, bend left, "\nu_{a, b}"] 
    &&&
    \G a \otimes \G b 
    \arrow[dr, bend left, "\nu_{a, b}"] 
    \\
    \F a\otimes \F b
    \arrow[ur, bend left, "\sigma_a\otimes \sigma_b"]
    \arrow[dr, bend right, "\mu_{a, b}"']
    \arrow[rr, phantom, near start, "\Downarrow{\scriptstyle v_{a, b}}"description] 
    && 
    \G (x\otimes y) 
    \arrow[r, phantom, "="description] 
    & 
    \F a\otimes \F b
    \arrow[dr, bend right, "\mu_{a, b}"']
    \arrow[ur, bend right, "\sigma_a\otimes \sigma_b"']
    \arrow[rr, phantom, near end, "\Downarrow{\scriptstyle u_{a, b}}"description]
    \arrow[ur, bend left, "\tau_a\otimes\tau_b"]
    \arrow[ur, phantom, "\Downarrow{\scriptstyle m_x\otimes m_y}"description] 
    && 
    \G (a\otimes b) 
    \\
    & 
    \F(a\otimes b)
    \arrow[ur, bend left, "\tau_{a\otimes b}"]
    \arrow[ur, bend right, "\sigma_{a\otimes b}"']
    \arrow[ur, phantom, "\Downarrow{\scriptstyle m_{a\otimes b}}"description] 
    &&& 
    \F(a\otimes b)
    \arrow[ur, bend right, "\tau_{a\otimes b}"']
\end{tikzcd}
\end{equation}
\begin{displaymath}
\begin{tikzcd}[row sep=.2in]
    I
    \arrow[dr, bend right, "\mu_0"']
    \arrow[rr, bend left, "\nu_0"]
    \arrow[rr, phantom, near start, "\Downarrow{\scriptstyle v_0}"description] 
    && 
    \G(I)
    \arrow[r, phantom, "="description] 
    & 
    I
    \arrow[rr, phantom, "\Downarrow{\scriptstyle u_0}"description]
    \arrow[dr, bend right, "\mu_0"']
    \arrow[rr, bend left, "\nu_0"]
    && 
    \G(I) 
    \\& 
    \F(I)
    \arrow[ur, bend left, "\tau_I"]
    \arrow[ur, bend right, "\sigma_I"']
    \arrow[ur, phantom, "\Downarrow{\scriptstyle m_I}"description] 
    &&& 
    \F(I)
    \arrow[ur, bend right, "\tau_I"']
\end{tikzcd}
\end{displaymath}

For any monoidal 2-categories $\K, \L$ there are 2-categories $\MonTCat_\pse(\K, \L)$ denoted by $\namedcat{WMonHom}(\K, \L)$ in \cite{Monoidalbicatshopfalgebroids} for bicategories. If we take lax monoidal 2-functors and monoidal 2-transformations, the corresponding sub-2-category is denoted by $\MonTCat(\K, \L)$. 

\section{Pseudomonoids}

A \define{pseudomonoid} in a monoidal 2-category $(\K, \otimes , I)$ is an object $a$ equipped with multiplication $m \maps a\otimes a\to a$, unit $j \maps I \to a$, and invertible 2-cells
\begin{equation}
\label{alphalambdarho}
\begin{tikzcd}
    a \otimes a \otimes a
    \arrow[r, "1 \otimes m"]
    \arrow[d, "m \otimes 1"']
    \arrow[dr, phantom, "\scriptstyle \stackrel{\assoc}{\cong}"description]
    & 
    a \otimes a \arrow[d, "m"] \arrow[d, "m"]
    & a \otimes I \arrow[r, "1 \otimes j"] \arrow[dr, "\sim"'] 
    & a \otimes a \arrow[d, "m", "\stackrel{\lambda}{\cong}\quad"'near start, "\quad\;\stackrel{\rho}{\cong}"near start] 
    & I \otimes a \arrow[l, "j \otimes 1"']
    \arrow[dl, "\sim"]  \\
    a \otimes a\arrow[r, "m"'] 
    & a && a &
\end{tikzcd}
\end{equation} 
expressing associativity and unitality up to isomorphism, that satisfy appropriate coherence conditions. A \define{lax morphism} between pseudomonoids $a, b$ is a 1-cell $f\maps a\to b$ equipped with 2-cells
\begin{equation}\label{eq:laxmorphism}
\begin{tikzcd}[row sep=.5in, column sep=.5in]
    a \otimes a\arrow[d, "m"']\arrow[r, "f\otimes f"] & |[alias=doma]| b\otimes b\arrow[d, "m"] \\
    |[alias=coda]| a\arrow[r, "f"'] & b
    \arrow[Rightarrow, from=doma, to=coda, "\phi"', shorten >=.35in, shorten <=.35in]
\end{tikzcd}
\qquad
\begin{tikzcd}[row sep=.5in, column sep=.5in]
    I\arrow[d, "j"']\arrow[dr, bend left, "j"{name=doma}] \\
    |[alias=coda]| a\arrow[r, "f"'] & b
    \arrow[Rightarrow, from=doma, to=coda, "\phi_0"', shorten >=.15in, shorten <=.15in]
\end{tikzcd}
\end{equation}
such that the following conditions hold:
\begin{equation}\label{eq:axiomslaxmorphism}
\adjustbox{scale=.9, center}{
    \begin{tikzcd}[column sep=.3in]
        & b\otimes b\otimes b\arrow[r, "m\otimes1"]\arrow[d, phantom, "\Downarrow{\scriptstyle\phi\otimes 1_f}"description] & b\otimes b\otimes a\arrow[dr, "m"] & \\
        a\otimes a\otimes a\arrow[r, "m\otimes 1"]\arrow[dr, "1\otimes m"']\arrow[ur, "f\otimes f\otimes f"] &
        a\otimes a\arrow[dr, "m"]\arrow[ur, "f\otimes f"']\arrow[rr, phantom, "\Downarrow{\scriptstyle\phi}"description]
        \arrow[d, phantom, "\stackrel{\assoc}{\cong}"description] && b \\
        & a\otimes a\arrow[r, "m"'] & a\arrow[ur, "f"']
    \end{tikzcd} 
    =
    \begin{tikzcd}[column sep=.3in]
        & b\otimes b\otimes b\arrow[r, "m\otimes1"]\arrow[dr, "1\otimes m"']\arrow[dd, phantom, "\Downarrow{\scriptstyle1_f\otimes \phi}"description] &
        b\otimes b\arrow[dr, "m"]\arrow[d, phantom, "\stackrel{\alpha}{\cong}"description] & \\
        a\otimes a\otimes a\arrow[ur, "f\otimes f\otimes f"]\arrow[dr, "1\otimes m"'] & & b\otimes b\arrow[r, "m"]
        \arrow[d, phantom, "\Downarrow{\scriptstyle\phi}"description] & b \\
        & a\otimes a\arrow[ur, "f\otimes f"]\arrow[r, "m"'] & b\otimes b\arrow[ur, "f"']
    \end{tikzcd}
}
\end{equation}

\begin{displaymath}
\adjustbox{scale=.9, center}{
    \begin{tikzcd}[column sep=.25in]
        a\cong a\otimes I\arrow[dr, "1\otimes j"']\arrow[rr, "f\otimes j"{name=doma}]\arrow[rrd, bend right=60, "1_a"', "\lambda\cong"]
        \arrow[rrr, bend left=30, "f"]\arrow[rrr, phantom, bend left=15, "{\scriptstyle 1_f\otimes\lambda\cong}"description] && b\otimes b\arrow[r, "m"'] & b
        \arrow[Rightarrow, from=doma, to=coda, "1_f\otimes \phi_0\;\;"', shorten <=.5em, shorten >=.5em]\\
        & |[alias=coda]|a\otimes a\arrow[r, "m"']\arrow[ur, "f\otimes f"description]\arrow[urr, phantom, "\Downarrow{\scriptstyle \phi}"description] &
        a\arrow[ur, "f"'] &
    \end{tikzcd}
    =
    \begin{tikzcd}[column sep=.25in]
        \\
        a\arrow[rr, bend left, "f"]\arrow[rr, bend right, "f"']\arrow[rr, phantom, "\Downarrow{\scriptstyle1_f}"description] && b \\
    \end{tikzcd}
    =
    \begin{tikzcd}[column sep=.25in]
        a\cong I\otimes a\arrow[dr, "j\otimes1"']\arrow[rr, "j\otimes 1"{name=doma}]\arrow[rrd, bend right=60, "1_a"', "\rho\cong"]
        \arrow[rrr, bend left=30, "f"]\arrow[rrr, phantom, bend left=15, "{\scriptstyle \rho\otimes1_f\cong}"description] && b\otimes b\arrow[r, "m"'] & b
        \arrow[Rightarrow, from=doma, to=coda, "\phi_0\otimes 1_f\;\;"', shorten <=.5em, shorten >=.5em]\\
        & |[alias=coda]|a\otimes a\arrow[r, "m"']\arrow[ur, "f\otimes f"description]\arrow[urr, phantom, "\Downarrow{\scriptstyle \phi}"description] &
    a\arrow[ur, "f"'] &
    \end{tikzcd}
    }
\end{displaymath}

If $(f, \phi, \phi_0)$ and $(g, \psi, \psi_0)$ are two lax morphisms between pseudomonoids $a$ and $b$, a \define{2-cell} between them $\sigma \maps f \Rightarrow g$ in $\K$ which is compatible with multiplications and units, in the sense that
\begin{equation}\label{monoidal2cell}
    \begin{tikzcd}[row sep=.15in]
        & b\otimes b\arrow[dr, bend left=20, "m"] & \\
        a\otimes a\arrow[ur, bend left, "f\otimes f"]\arrow[ur, bend right, "g\otimes g"']\arrow[dr, bend right=20, "m"']
        \arrow[ur, phantom, "\Downarrow{\scriptstyle\sigma\otimes\sigma}"description]
        \arrow[rr, phantom, "{\scriptstyle\psi}\Downarrow"{description, near end}] && b \\
        & a\arrow[ur, bend right=20, "g"'] &
    \end{tikzcd}
    \quad=\quad
    \begin{tikzcd}[row sep=.15in]
        & b\otimes b\arrow[rd, bend left=20, "m"] & \\
        a\otimes a\arrow[ur, bend left=20, "f\otimes f"]\arrow[dr, bend right=20, "m"']
        \arrow[rr, phantom, "{\scriptstyle\phi}\Downarrow"{description, near start}] && b \\
        & a \arrow[ur, phantom, "\Downarrow{\scriptstyle\sigma}"description]
        \arrow[ur, bend left, "f"]\arrow[ur, bend right, "g"'] \\
    \end{tikzcd}
\end{equation}
\begin{displaymath}
        \begin{tikzcd}[row sep=.15in, column sep=.6in]
            I\arrow[rr, bend left, "j"]\arrow[dr, bend right=20, "j"']\arrow[rr, phantom, "{\scriptstyle\phi_0}\Downarrow"{description, near start}] && b \\
            & a\arrow[ur, bend left, "f"]\arrow[ur, bend right, "g"']\arrow[ur, phantom, "\Downarrow{\scriptstyle \sigma}"description] &
        \end{tikzcd}
        \quad=\quad
        \begin{tikzcd}[row sep=.15in, column sep=.6in]
            I\arrow[dr, bend right=20, "j"']\arrow[rr, bend left, "j"]\arrow[rr, phantom, "\Downarrow{\scriptstyle\psi_0}"description] && b \\
            & a\arrow[ur, bend right=20, "g"'] &
        \end{tikzcd}
    \end{displaymath} 

We obtain a 2-category $\PsMon_\lax(\K)$ for any monoidal 2-category $\K$, which is sometimes
denoted by $\Mon(\K)$ \cite{Hopfmonoidalcomonads}. By changing the direction of the 2-cells in \cref{eq:laxmorphism} and the rest of the axioms appropriately, or asking for them to be invertible, we have 2-categories $\PsMon_\opl(\K)$ and $\PsMon(\K)$ of pseudomonoids with \define{oplax} or \define{(strong) morphisms} between them.

\begin{expl}
    The prototypical example is that of the monoidal 2-category $\K = (\Cat, \times, \1)$ of categories, functors, and natural transformations with the cartesian product of categories and the unit category with a unique object and arrow. A pseudomonoid in $(\Cat, \times, \1)$ is a monoidal category, a lax (resp. oplax, strong) morphism between two of these is precisely a lax (resp. oplax, strong) monoidal functor, and a 2-cell is a monoidal natural transformation. Therefore we obtain the well-known 2-categories $\Mon\Cat_\lax$, $\Mon\Cat_\opl$ and $\Mon\Cat$.
\end{expl}

There is an evident similarity between the structures defined above, e.g.\ \cref{eq:weakmonpseudo} and \cref{eq:laxmorphism}, or \cref{eq:monpseudonat} and \cref{monoidal2cell}. This is due to the fact that monoidal 2-categories, lax monoidal pseudofunctors and monoidal pseudonatural transformations are themselves appropriate pseudomonoid-related notions in a higher level; we do not get into such details, as they are not pertinent to the present work.

For our purposes, we are interested in a different observation: any pseudomonoid $a$ in a monoidal 2-category $\K$ can in fact be expressed as a lax monoidal \define{normal} pseudofunctor $A \maps \1 \to \K$ with $A(*) = a$, namely one where $A(1_*)$ is \emph{equal} to $1_a$. Moreover, a monoidal pseudonatural transformation $\tau\maps A\Rightarrow B\maps \1\to\K$ bijectively corresponds to a \emph{strong morphism} between the pseudomonoids $a$ and $b$, and similarly for monoidal modifications and 2-cells. Since every pseudofunctor is equivalent to a normal one, the 2-category of pseudomonoids $\PsMon(\K)$ can be equivalently viewed as $\MonTCat_{\pse}(\1, \K)$, the 2-category of lax monoidal pseudofunctors $\1\to\K$, monoidal pseudonatural transformations and monoidal modifications.

As was already shown in~\cite[Prop.~5]{Monoidalbicatshopfalgebroids}, any lax monoidal 2-functor $\F\maps \K\to\L$ takes pseudomonoids to pseudomonoids, and in fact
\cite{Mccruddencoalgebroids} there is a functor $\PsMon(\F)$ that commutes with the respective forgetful functors
\begin{displaymath}
    \begin{tikzcd}[column sep=.6in]
        \PsMon(\K)\arrow[r, "\PsMon(\F)"]\arrow[d] & \PsMon(\L)\arrow[d] \\
        \K\arrow[r, "\F"'] & \L.
    \end{tikzcd}
\end{displaymath}
Based on the above, and since every pseudofunctor from $\1$ into a 2-category trivially preserves composition on the nose and every pseudonatural transformation is really 2-natural, we can define a hom-2-functor that clarifies these assignments.

\begin{prop}
    There is a 2-functor
    \begin{equation}
    \label{eq:PsMon}
        \PsMon(-) \simeq \MonTCat_{\pse}(\1, -) \maps \MonTCat \to \TCat
    \end{equation}
    which maps a monoidal 2-category to its 2-category of pseudomonoids, strong morphisms and 2-cells between them.
\end{prop}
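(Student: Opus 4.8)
The plan is to realize $\PsMon(-)$ literally as the hom-$2$-functor $\MonTCat_\pse(\1,-)$ and then to check that, because the indexing $2$-category $\1$ is trivial, this a priori weak construction is an honest (strict) $2$-functor into $\TCat$. On objects one sets $\PsMon(\K):=\MonTCat_\pse(\1,\K)$; by the discussion preceding the statement, a lax monoidal normal pseudofunctor $A\maps\1\to\K$ is the same datum as a pseudomonoid $a=A(*)$ in $\K$, a monoidal pseudonatural transformation between two such is the same as a strong morphism of pseudomonoids, and a monoidal modification is the same as a $2$-cell between strong morphisms, so this definition agrees on the nose with the $2$-category of pseudomonoids, strong morphisms and $2$-cells.

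First I would define the action on $1$-cells. A lax monoidal $2$-functor $\F\maps\K\to\L$ is sent to $\PsMon(\F)\maps\MonTCat_\pse(\1,\K)\to\MonTCat_\pse(\1,\L)$ given by post-composition with $\F$: it takes $A\maps\1\to\K$ to $\F\circ A$, a monoidal pseudonatural transformation $\tau$ to the whiskered $\F\cdot\tau$, and a monoidal modification $m$ to $\F\cdot m$. That $\F\circ A$ is again lax monoidal---hence again a pseudomonoid---is exactly the cited fact that lax monoidal $2$-functors preserve pseudomonoids \cite{Monoidalbicatshopfalgebroids,Mccruddencoalgebroids}; the analogous statements for the whiskered transformation and modification are the routine two-dimensional Godement computations using the lax structure cells \cref{eq:weakmonpseudo} of $\F$, and the $2$-functor axioms for $\PsMon(\F)$ reduce to strict associativity and unitality of composition in $\MonTCat_\pse$.

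Next I would define the action on $2$-cells. A monoidal $2$-transformation $\theta\maps\F\To\G$ of lax monoidal $2$-functors is sent to the $2$-natural transformation $\PsMon(\theta)\maps\PsMon(\F)\To\PsMon(\G)$ whose component at a pseudomonoid $A\maps\1\to\K$ is the precomposition $\theta\cdot A$. The single component of $\theta\cdot A$ is the $1$-cell $\theta_{A(*)}\maps\F(A(*))\to\G(A(*))$ in $\L$, and instantiating the monoidal-transformation axioms \cref{eq:monpseudonat} for $\theta$ along the associativity and unit $2$-cells \cref{alphalambdarho} that exhibit $A(*)$ as a pseudomonoid yields precisely the axioms \cref{eq:laxmorphism}, \cref{eq:axiomslaxmorphism} making $\theta\cdot A$ a strong morphism of pseudomonoids, so $\PsMon(\theta)_A$ is a genuine $1$-cell of $\PsMon(\L)$. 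Strict $2$-naturality of the family $\{\theta\cdot A\}_A$---for every monoidal pseudonatural transformation $\tau\maps A\To B$, and compatibly with $2$-cells---follows from the interchange law in $\L$ together with the (strict) $2$-naturality of $\theta$, and $\PsMon$ visibly preserves identity $2$-transformations and both vertical and horizontal composites since whiskering does; likewise it sends identity $1$-cells of $\MonTCat$ to identities and preserves composition of lax monoidal $2$-functors on the nose.

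The hard part will not be any single verification but the structural claim that the output is a \emph{strict} $2$-functor valued in $\TCat$ rather than some weaker higher morphism. In general $\K\mapsto\MonTCat_\pse(\A,\K)$ is only weakly functorial in $\K$; the remedy here is the triviality of the domain $\1$, which has a unique object, $1$-cell and $2$-cell: a pseudofunctor out of $\1$ carries no nontrivial unitor or compositor, a pseudonatural transformation between two of them is automatically $2$-natural, and a modification adds no data, so the would-be coherence cells of the hom-construction all become identities and it collapses to an honest $2$-functor---provided, as we arrange, that the $1$-cells of $\MonTCat$ are lax monoidal $2$-functors and its $2$-cells are strict monoidal $2$-transformations, so that composition in $\MonTCat$ is strictly associative and unital. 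Making this strictification argument precise, rather than the individual preservation-of-monoidal-structure checks---which are routine and largely recorded in \cite{Monoidalbicatshopfalgebroids,Mccruddencoalgebroids,CoherenceTricats}---is the crux of the proof.
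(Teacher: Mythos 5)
Your proposal is correct and follows essentially the same route as the paper, which establishes this proposition via the preceding discussion: pseudomonoids are identified with lax monoidal normal pseudofunctors out of $\1$ (and likewise for strong morphisms and 2-cells), functoriality comes from post-composition/whiskering using the cited fact that lax monoidal 2-functors preserve pseudomonoids, and strictness follows because pseudofunctors out of $\1$ preserve composition on the nose and pseudonatural transformations between them are automatically 2-natural. You have merely spelled out in more detail what the paper leaves as a sketch.
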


The theory in \cite{Monoidalbicatshopfalgebroids, Mccruddencoalgebroids} extends the above definitions to the case of \emph{braided} and \emph{symmetric} pseudomonoids in \emph{braided} and \emph{symmetric monoidal 2-categories}. Briefly recall that a \define{braiding} for $(\K, \otimes, I)$ is a pseudonatural equivalence with components $\braid_{a, b} \maps a \otimes b \to b \otimes a$ and invertible modifications, whereas a \define{syllepsis} is an invertible modification 
\[
    a \otimes b \xrightarrow{1} a \otimes b \Rrightarrow a \otimes b \xrightarrow{\braid_{a, b}} b \otimes a \xrightarrow{\braid_{b, a}} a \otimes b
\]
which is called \define{symmetry} if it satisfies extra axioms. With the appropriate notions of \emph{braided} and \emph{symmetric} lax monoidal pseudofunctors and monoidal pseudonatural transformations (and usual monoidal modifications), we have 3-categories $\BrMonTCat_{\pse}$ and $\SymMonTCat_{\pse}$. Indicatively, a lax monoidal pseudofunctor comes equipped an invertible modification with components
\begin{equation}\label{eq:brweakmonpseudo}
\begin{tikzcd}
    \F a \otimes \F b 
    \arrow[r, "\mu_{a, b}"]
    \arrow[d, "\braid_{\F a, \F b}"']
    \arrow[dr, phantom, "\Downarrow{\scriptstyle v_{a, b}}" description] 
    & 
    \F b \otimes \F a 
    \arrow[d, "\F(\braid_{a, b})"] 
    \\
    \F b \times \F a 
    \arrow[r, "\mu_{b, a}"'] 
    & 
    \F(b\otimes a)
\end{tikzcd}
\end{equation}
As earlier, there exist 2-categories of braided and symmetric pseudomonoids with strong morphisms between them, expressed as \[
    \BrPsMon(\K)=\BrMonTCat_{(\pse)}(\1, \K)
\] 
and 
\[
    \SymPsMon(\K)=\SymMonTCat_{(\pse)}(\1, \K).
\]

\begin{prop}
\label{prop:BrPsMon}
    There are 2-functors
    \begin{displaymath}
        \BrPsMon\maps \BrMonTCat\to \TCat, \quad
        \SymPsMon\maps \SymMonTCat\to\TCat 
    \end{displaymath}
    which map a braided or symmetric monoidal 2-category to its 2-category of braided or symmetric pseudomonoids.
\end{prop}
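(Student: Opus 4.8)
The plan is to establish $\BrPsMon$ and $\SymPsMon$ as the braided and symmetric counterparts of the hom-$2$-functor $\PsMon(-)\simeq\MonTCat_\pse(\1,-)$ of \cref{eq:PsMon}, so that the entire argument reduces to the undecorated case with braidings (resp.\ symmetries) carried along formally.

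First I would upgrade the identification recalled just before \cref{eq:PsMon} to the decorated setting: for any braided monoidal $2$-category $\K$, a braided pseudomonoid in $\K$ is precisely a braided lax monoidal \emph{normal} pseudofunctor $A\maps\1\to\K$, with $A(*)$ its underlying object. Writing out such an $A$ recovers the multiplication $m\maps a\otimes a\to a$ and the unit $j\maps I\to a$ as the laxators $\mu_{*,*}$, $\mu_0$, the associativity and unitality isomorphisms as the modifications $\omega,\zeta,\xi$, and --- the new ingredient --- the braiding $2$-cell of the pseudomonoid as the component at $a=b=*$ of the modification $v_{a,b}$ of \cref{eq:brweakmonpseudo}; since $\1$ is trivially braided and $A$ is normal, $A(\braid_{*,*})=1_a$, so this $v$ is exactly a $2$-isomorphism $m\circ\braid_{a,a}\Rightarrow m$. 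Matching the coherence axioms on the two sides then gives $\BrPsMon(\K)\simeq\BrMonTCat_\pse(\1,\K)$ on objects, and unwinding strong morphisms and $2$-cells in the same way shows the equivalence is one of $2$-categories. The symmetric case is identical, additionally matching the symmetry condition on $v$ with the one defining a symmetric pseudomonoid.

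Next I would supply functoriality in $\K$. Given a braided monoidal $2$-functor $\F\maps\K\to\L$, postcomposition and whiskering with $\F$ send braided lax monoidal pseudofunctors $\1\to\K$, monoidal pseudonatural transformations, and monoidal modifications to the corresponding data over $\L$ --- this is the braided refinement of the fact (\cite[Prop.~5]{Monoidalbicatshopfalgebroids}, \cite{Mccruddencoalgebroids}) that lax monoidal $2$-functors preserve pseudomonoids --- giving a $2$-functor $\BrPsMon(\F)\maps\BrPsMon(\K)\to\BrPsMon(\L)$ commuting with the forgetful $2$-functors to $\K$ and $\L$. A braided monoidal $2$-transformation $\theta\maps\F\Rightarrow\G$ whiskers on the right with objects of $\BrMonTCat_\pse(\1,\K)$ to a modification $\BrPsMon(\F)\Rightarrow\BrPsMon(\G)$, i.e.\ a $2$-cell of $\TCat$. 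I would then check the $2$-functor axioms: since $\BrPsMon$ is defined purely by postcomposition and whiskering, and since --- as noted before \cref{eq:PsMon} --- every pseudofunctor out of $\1$ preserves composition strictly and every pseudonatural transformation out of $\1$ is $2$-natural, preservation of identities and of composites of $1$- and $2$-cells holds on the nose, and this step is routine. The symmetric version is verbatim with ``braided'' replaced by ``symmetric'' throughout.

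The hard part will be the coherence matching in the first step: verifying that the complete list of axioms for a braided (resp.\ symmetric) pseudomonoid is exactly what the braided (resp.\ symmetric) lax monoidal pseudofunctor axioms collapse to when the source $2$-category is $\1$ and the pseudofunctor is normal. This is bookkeeping rather than genuine difficulty --- it runs in perfect parallel to the undecorated argument behind \cref{eq:PsMon} --- so in practice the proof consists of citing \cite{Monoidalbicatshopfalgebroids,Mccruddencoalgebroids} and observing that inserting braidings and symmetries alters none of the structural steps.
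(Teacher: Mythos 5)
Your proposal is correct and follows essentially the same route as the paper, which records the identification $\BrPsMon(\K)=\BrMonTCat_{(\pse)}(\1,\K)$ and $\SymPsMon(\K)=\SymMonTCat_{(\pse)}(\1,\K)$ and then asserts the proposition as the braided/symmetric analogue of the hom-$2$-functor of \cref{eq:PsMon}, citing \cite{Monoidalbicatshopfalgebroids,Mccruddencoalgebroids} for the coherence bookkeeping. You have merely spelled out in more detail the normalization and whiskering steps that the paper leaves implicit.
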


Finally, recall the notion of a monoidal 2-equivalence arising as the equivalence internal to the 2-category $\MonTCat$.

\begin{defn}
    A \define{monoidal 2-equivalence} is a 2-equivalence $\F\maps \K \simeq \L \maps \G$ where both 2-functors
    are lax monoidal, and the 2-natural isomorphisms $1_\K \cong \F \G$, $\G \F \cong 1_\L$ are monoidal. Similarly for \define{braided} and \define{symmetric} monoidal 2-equivalences.
\end{defn}

As is the case for any 2-functor between 2-categories, $\PsMon$ as well as $\BrPsMon$ and $\SymPsMon$ map equivalences
to equivalences.

\begin{prop}\label{prop:2equivpseudomon}
    Any monoidal 2-equivalence $\K\simeq\L$ induces a 2-equivalence between the respective 2-categories
    of pseudomonoids $\PsMon(\K)\simeq\PsMon(\L)$. Similarly any braided or symmetric monoidal 2-equivalence induces $\BrPsMon(\K)\simeq\BrPsMon(\L)$ or $\SymPsMon(\K)\simeq\SymPsMon(\L)$.
\end{prop}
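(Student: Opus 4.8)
The plan is to derive the statement immediately from the observation recorded just above, namely that $\PsMon$, $\BrPsMon$, and $\SymPsMon$ are genuine 2-functors out of $\MonTCat$, $\BrMonTCat$, and $\SymMonTCat$ respectively (\cref{eq:PsMon} and \cref{prop:BrPsMon}), together with the general fact that any 2-functor between 2-categories preserves equivalences. So the real content is just unwinding what a monoidal 2-equivalence is and quoting functoriality; there is essentially no computation to carry out.

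First I would note that a monoidal 2-equivalence $\F \maps \K \simeq \L \maps \G$ is, by the definition given above, precisely an equivalence internal to the 2-category $\MonTCat$: the 1-cells $\F$ and $\G$ are lax monoidal 2-functors, and the witnessing isomorphisms $1_\K \cong \G\F$ and $\F\G \cong 1_\L$ are monoidal 2-natural transformations, i.e.\ invertible 2-cells of $\MonTCat$. Applying the 2-functor $\PsMon \maps \MonTCat \to \TCat$ then yields 2-functors $\PsMon(\F) \maps \PsMon(\K) \to \PsMon(\L)$ and $\PsMon(\G)$ in the other direction --- on objects these are the assignments of \cite[Prop.~5]{Monoidalbicatshopfalgebroids}, sending pseudomonoids to pseudomonoids and strong morphisms to strong morphisms --- together with the images under $\PsMon$ of the two monoidal 2-natural isomorphisms, which are again invertible 2-natural transformations $1_{\PsMon(\K)} \cong \PsMon(\G)\PsMon(\F)$ and $\PsMon(\F)\PsMon(\G) \cong 1_{\PsMon(\L)}$; hence $\PsMon(\K) \simeq \PsMon(\L)$. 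The braided and symmetric cases run identically, replacing $\MonTCat$, $\PsMon$ by $\BrMonTCat$, $\BrPsMon$ (resp.\ $\SymMonTCat$, $\SymPsMon$). Equivalently, and perhaps more transparently, one may invoke the identification $\PsMon(-) \simeq \MonTCat_{\pse}(\cardinal 1, -)$ of \cref{eq:PsMon} (and its braided/symmetric analogues): for a fixed 2-category such as $\cardinal 1$, the hom-2-functor $\MonTCat_{\pse}(\cardinal 1, -)$ preserves equivalences, since applying it to an equivalence merely postcomposes the equivalence data, and postcomposition with an equivalence is itself an equivalence of hom-2-categories.

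The one point deserving a moment's care --- and the only candidate for a genuine obstacle --- is pinning down the 2-categorical framework in which ``internal equivalence in $\MonTCat$'' is a meaningful notion, since monoidal 2-categories, lax monoidal pseudofunctors, monoidal pseudonatural transformations and monoidal modifications most naturally assemble into a 3-category. For the present claim one only needs the underlying 2-category obtained by discarding the monoidal modifications, and one must check that the data exhibited in the definition of a monoidal 2-equivalence (two lax monoidal 2-functors and two monoidal 2-natural isomorphisms) really does assemble into an internal equivalence there --- that composition of the lax monoidal structures and of the monoidal transformations is coherently associative and unital to the extent required. This is routine coherence bookkeeping, drawing on the material of \cref{app:Monoidal2cats}; once it is granted, functoriality of $\PsMon$, $\BrPsMon$, and $\SymPsMon$ finishes the proof with no further work.
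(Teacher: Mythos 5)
Your proposal is correct and follows exactly the paper's argument: the paper proves this proposition by the single observation immediately preceding it, namely that $\PsMon$, $\BrPsMon$, and $\SymPsMon$ are 2-functors out of $\MonTCat$ (resp.\ its braided/symmetric variants) and that any 2-functor preserves internal equivalences, which is precisely your main line of reasoning. The extra care you take about the 3-categorical setting and the alternative route via $\MonTCat_{\pse}(\cardinal 1,-)$ are reasonable elaborations but do not change the approach.
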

\setcounter{chapter}{2}
\chapter{Fibrations and Indexed Categories}\label{app:fibicat}

We recall some basic facts and constructions from the theory of fibrations and indexed categories, as well as the equivalence between them via the Grothendieck construction. Several indicative references for the general theory are \cite{Grayfibredandcofibred, BenabouFibered, FibredAdjunctions, Handbook2, Jacobs, Elephant1, Vistoli, alaBenabou, 2DCats}.

\section{Fibrations}
\label{sec:fibrations}

Consider a functor $P \maps \A \to \X$. A morphism $\phi \maps a \to b$ in $\A$ over a morphism $f = P(\phi) \maps x \to y$ in $\X$ is called \define{cartesian} if and only if, for all $g \maps x' \to x$ in $\X$ and $\theta \maps a' \to b$ in $\A$ with $P \theta = f \circ g$, there exists a unique arrow $\psi \maps a' \to a$ such that $P \psi = g$ and $\theta = \phi \circ \psi$:
\begin{equation}
\begin{tikzcd}[column sep = huge]
    a'
    \arrow[drr, "\theta"]
    \arrow[dr, dashed, swap, "\exists!\psi"]
    \arrow[dd, dotted, bend right]
    \\&
    a
    \arrow[r, swap, "\phi"]
    \arrow[dd, dotted, bend right]
    &
    b
    \arrow[dd, dotted, bend right]
    &
    \text{in }\A
    \\
    x'
    \arrow[drr, "f \circ g = P \theta"]
    \arrow[dr, swap, "g"]
    \\&
    x
    \arrow[r, swap, "f = P \phi"]
    &
    y
    &
    \text{in }\X
\end{tikzcd}
\end{equation}
For $x \in \Ob\X$, the \define{fibre of $P$ over $x$} written $\A_x$, is the subcategory of $\A$ which consists of objects $a$ such that $P(a) = x$ and morphisms $\phi$ with $P(\phi) = 1_x$, called \define{vertical} morphisms. The functor $P \maps \A \to \X$ is called a \define{fibration} if and only if, for all $f \maps x \to y$ in $\X$ and $b\in\A_Y$, there is a cartesian morphism $\phi$ with codomain $b$ above $f$; it is called a \define{cartesian lifting} of $f$ to $b$. The category $\X$ is then called the \define{base} of the fibration, and $\A $ its \define{total category}.

Dually, the functor $U \maps \C \to \X$ is an \define{opfibration} if $U^\mathrm{op}$ is a fibration, i.e.\ for every $c \in \C _x$ and $h \maps x \to y$ in $\X$, there is a cocartesian morphism with domain $c$ above $h$, the \define{cocartesian lifting} of $h$ to $c$ with the dual universal property:
\begin{displaymath}
\begin{tikzcd}[column sep = huge]
    &&
    d'
    \arrow[dd, dotted, bend right]
    \\
    c
    \arrow[urr, "\gamma"]
    \arrow[r, swap, "\beta"]
    \arrow[dd, dotted, bend right]
    &
    d
    \arrow[ur, dashed, swap, "\exists! \delta"]
    \arrow[dd, dotted, bend right]
    &&
    \text{in }\C
    \\&&
    y'
    \\
    x
    \arrow[urr, "k \circ h = U \gamma"]
    \arrow[r, swap, "h = U \beta"]
    &
    y
    \arrow[ur, swap, "k"]
    &&
    \text{in }\X
\end{tikzcd}
\end{displaymath}
A \define{bifibration} is a functor which is both a fibration and opfibration.

If $P\maps \A \to \X$ is a fibration, assuming the axiom of choice we may select a cartesian arrow over each $f\maps x \to y$ in $\X$ and $b \in \A_y$, denoted by $\Cart(f, b) \maps f^*(b) \to b$. Such a choice of cartesian liftings is called a \define{cleavage} for $P$, which is then called a \define{cloven fibration}; any fibration is henceforth assumed to be cloven. Dually, if $U$ is an opfibration, for any $c \in \C_x$ and $h \maps x \to y$ in $\X$ we can choose a cocartesian lifting of $h$ to $c$, $\Cocart(h,c)\maps c \to h_!(c)$. The choice of (co)cartesian liftings in an (op)fibration induces a so-called \define{reindexing functor} between the fibre categories
\begin{equation}\label{reindexing}
    f^*\maps \A _y \to \A _x\quad\textrm{ and }\quad h_! \maps \C _x \to \C _y
\end{equation}
respectively, for each morphism $f\maps x \to y$ and $h \maps x \to y$ in the base category.
It can be verified by the (co)cartesian universal property that $1_{\A _x}\cong(1_x)^*$ 
and that for composable morphism in the base category, $g^*\circ f^*\cong(g\circ f)^* $, as well as $(1_x)_!\cong1_{\C_x}$ and $(k\circ h)_!\cong k_!\circ h_!$. If these isomorphisms are equalities, we have the notion of a \define{split} (op)fibration.

A \define{fibred 1-cell} $(H,F) \maps P \to Q$ between fibrations $P \maps \A \to \X$ and $Q \maps \B \to \Y$ is given by a commutative square of functors and categories
\begin{equation}
\label{commutativefibredcell}
\begin{tikzcd}[row sep = huge]
    \A
    \arrow[rr, "H"]
    \arrow[d, swap, "P"]
    &&
    \B
    \arrow[d, "Q"]
    \\
    \X
    \arrow[rr , "F", swap]
    &&
    \Y
\end{tikzcd}
\end{equation}
where the top $H$ preserves cartesian liftings, meaning that if $\phi$ is $P$-cartesian, then $H\phi$ is $Q$-cartesian. In particular, when $P$ and $Q$ are fibrations over the same base category, we may consider fibred 1-cells of the form $(H,1_{\X})$ displayed as
\begin{equation}
\label{eq:fibredfunctor}
\begin{tikzcd}[row sep = huge]
    \A
    \arrow[rr, "H"]
    \arrow[dr, swap, "P"]
    &&
    \B
    \arrow[dl, "Q"]
    \\&
    \X
\end{tikzcd}
\end{equation}
and $H$ is then called a \define{fibred functor}. Dually, we have the notion of an \define{opfibred 1-cell} and \define{opfibred functor}. Notice that any such (op)fibred 1-cell induces functors between the fibres, by commutativity of \cref{commutativefibredcell}:
\begin{equation}\label{eq:functorbetweenfibres}
    H_{x} \maps \A_x\longrightarrow\B_{Fx}
\end{equation}

A \define{fibred 2-cell} between fibred 1-cells $(H,F)$ and $(K,G)$ is a pair of natural transformations ($\braid\maps H\Rightarrow K,\alpha\maps F\Rightarrow G$) with $\braid$ above $\alpha$, i.e.\ $Q(\braid_a)=\alpha_{Pa}$ for all $a\in\A $, displayed as
\begin{equation}
\label{eq:fibred2cell}
\begin{tikzcd}[row sep = huge]
    \A
    \arrow[rr, bend left, "H"]
    \arrow[rr, phantom, "\Downarrow \beta"]
    \arrow[rr, bend right, swap, "K"]
    \arrow[d, swap, "P"]
    &&
    \B
    \arrow[d, "Q"]
    \\
    \X
    \arrow[rr, bend left, "F"]
    \arrow[rr, phantom, "\Downarrow \alpha"]
    \arrow[rr, bend right, swap, "G"]
    &&
    \Y
\end{tikzcd}
\end{equation}
A \define{fibred natural transformation} is of the form $(\braid,1_{1_{\X}})\maps(H,1_{\X})\Rightarrow(K,1_\X)$
\begin{equation}\label{eq:fibrednaturaltrans}
\begin{tikzcd}[row sep = huge]
    \A
    \arrow[rr, bend left, "H"]
    \arrow[rr, phantom, "\Downarrow \beta"]
    \arrow[rr, bend right, swap, "K"]
    \arrow[dr, swap, "P"]
    &&
    \B
    \arrow[dl, "Q"]
    \\&
    \X
\end{tikzcd}
\end{equation}
Dually, we have the notion of an \define{opfibred 2-cell} and \define{opfibred natural transformation} between opfibred 1-cells and functors respectively.

We thus obtain a 2-category $\Fib$ of fibrations over arbitrary base categories, fibred 1-cells and fibred 2-cells. There is also a 2-category $\Fib(\X)$ of fibrations over a fixed base category $\X$, fibred functors and fibred natural transformations. Dually, we have the 2-categories $\OpFib$ and $\OpFib(\X)$. Moreover, we also have 2-categories $\Fib_\mathrm{sp}$ and $\OpFib_\mathrm{sp}$ of split (op)fibrations, and (op)fibred 1-cells that preserve the cartesian liftings `on the nose'.

Notice that $\Fib$ and $\OpFib$ are both sub-2-categories of $\Cat^\2 = [\2, \Cat]$, the arrow 2-category of $\Cat$. Similarly, $\Fib(\X)$ and $\OpFib(\X)$ are sub-2-categories of $\Cat/\X$, the slice 2-category of functors into $\X$. Due to that, both these (1-)categories form fibrations themselves. Explicitly, the functor $\cod \maps \Fib \to \Cat$ which maps a fibration to its base is a fibration, with fibres $\Fib(\X)$ and cartesian liftings pullbacks along fibrations. In fact, it is a \emph{2-fibration} \cite{HermidaFib, 2Fibs}.

\section{Indexed Categories}
\label{sec:indexedcats}

We now turn to the world of indexed categories. Given an ordinary category $\X$, an $\X$-\define{indexed category} is a pseudofunctor \[\M \maps \X\op \to \Cat\] where $\X$ is viewed as a 2-category with trivial 2-cells; it comes with natural isomorphisms $\delta_{g,f} \maps (\M g) \circ (\M f) \xrightarrow\sim  \M(g \circ f)$ and $\gamma_x \maps 1_{\M x} \xrightarrow\sim \M (1_x)$ for every $x\in\X$ and composable morphisms $f$ and $g$, satisfying coherence axioms.
Dually, an $\X$-\define{opindexed category} is an $\X\op$-indexed category, i.e.\ a pseudofunctor $\X \to \Cat$.
If an (op)indexed category strictly preserves composition, i.e.\ is a (2-)functor, then it is called \define{strict}.

An \define{indexed $1$-cell} $(F, \tau) \maps \M \to \psN$ between indexed categories $\M \maps \X\op \to \Cat$ and $\psN \maps \Y\op \to \Cat$ consists of an ordinary functor $F \maps \X \to \Y$ along with a pseudonatural transformation $\tau \maps \M \Rightarrow \psN \circ F\op$
\begin{equation}\label{eq:indexed1cell}
\begin{tikzcd}[column sep=.7in,row sep=.2in]
    \X\op
    \arrow[dr, "\M"]
    \arrow[dd, "F\op"'] 
    \\ 
    \arrow[r, phantom, "\Downarrow{\scriptstyle\tau}"description] 
    & 
    \Cat 
    \\
    \Y\op \arrow[ur, "\psN"']
\end{tikzcd}
\end{equation}
with components functors $\tau_x \maps \M x \to \psN Fx$, equipped with coherent natural isomorphisms $\tau_f \maps (\psN Ff) \circ \tau_x \xrightarrow{\sim} \tau_y \circ (\M f)$ for any $f \maps x \to y$ in $\X$.
For indexed categories with the same base, we may consider indexed 1-cells of the form $(1_\X, \tau)$
\begin{equation}\label{eq:ifun}
\begin{tikzcd}[column sep=.7in]
    \X\op 
    \arrow[r, bend left, "\M"]
    \arrow[r, bend right, swap, "\psN"]
    \arrow[r, phantom, "\Downarrow \scriptstyle \tau"]
    &
    \Cat
\end{tikzcd}
\end{equation}
which are called \define{indexed functors}. Dually, we have the notion of an \define{opindexed 1-cell} and \define{opindexed functor}.

An \define{indexed 2-cell} $(\alpha,m)$ between indexed 1-cells $(F, \tau)$ and $(G, \sigma)$, pictured as 
\begin{displaymath}
 \begin{tikzcd}[column sep=.6in,row sep=.4in]
    \X\op
    \arrow[drr, "\M"]
    \arrow[drr, ""{name = M}, swap, pos = 0.5]
     \arrow[dd,bend right=40,"F\op" description]
     \arrow[dd, "G\op"description, bend left=40]
    \arrow[dd,phantom,"{\scriptstyle\stackrel{\alpha\op}{\Leftarrow}}"description] &&
    \\
& & \Cat
    \\
    \Y\op
    \arrow[urr,"\psN",swap]
    \arrow[urr,""{name = H}, pos = 0.4]
    \arrow[from = M, to = H, Rightarrow, "\sigma", pos = 0.4, bend left=45]
    \arrow[from = M, to = H, Rightarrow, "\tau", pos = 0.56, bend right=45,swap]
    \arrow[from = M, to = H, phantom, "\scriptscriptstyle\stackrel{m}{\Rrightarrow}"] &&
 \end{tikzcd} 
\end{displaymath}
consists of an ordinary natural transformation $\alpha \maps F \Rightarrow G$ and a modification $m$
\begin{equation}\label{eq:indexed2cell}
\begin{tikzcd}[column sep=.5in,row sep=.2in]
    \X\op
    \arrow[rr,bend left,"\M"]
    \arrow[dr,bend right=10,"F\op"']
    \arrow[rr,phantom,"\Downarrow{\scriptstyle \tau}"description]  
    && 
    \Cat \arrow[r,phantom,"\stackrel{m}{\Rrightarrow}"description] 
    & 
    \X\op
    \arrow[rr,bend left=30,"\M"]
    \arrow[dr,bend left=35,"G\op"]
    \arrow[dr,bend right=35,"F\op"']
    \arrow[rr, phantom, near end, "\Downarrow{\scriptstyle \sigma}"description]
    \arrow[dr,phantom, "\Downarrow{\scriptstyle \alpha\op}"description]
    && 
    \Cat 
    \\
    & 
    \Y\op
    \arrow[ur,bend right=10,"\psN"'] 
    &&& 
    \Y\op
    \arrow[ur,bend right,"\psN"'] &
\end{tikzcd}
\end{equation}
given by a family of natural transformations $m_x \maps \tau_x \Rightarrow\psN \alpha_x \circ\sigma_x$. Notice that taking opposites is a 2-functor $(-)\op \maps \Cat \to \Cat^{co}$, on which the above diagrams rely.
An \define{indexed natural transformation} between two indexed functors is an indexed 2-cell of the form $(1_{1_\X},m)$.
Dually, we have the notion of an \define{opindexed 2-cell} and \define{opindexed natural transformation} between opindexed 1-cells and functors respectively.

Notice that an indexed 2-cell $(\alpha,m)$ is invertible if and only if both $\alpha$ is a natural isomorphism and the modification $m$ is invertible, due to the way vertical composition is formed. 

We obtain a 2-category $\ICat$ of indexed categories over arbitrary bases, indexed 1-cells and indexed 2-cells. In particular, there is a 2-category $\ICat(\X)$ of indexed categories with fixed domain $\X$, indexed functors and indexed natural transformations, which coincides with the functor 2-category $\TCat_\pse(\X\op,\Cat)$.

Dually, we have the 2-categories $\OpICat$ and $\OpICat(\X)=\TCat_\pse(\X,\Cat)$. Notice that due to the absence of opposites in the world of opindexed categories, opindexed 2-cells have a different form than \cref{eq:indexed2cell}, namely
\begin{displaymath}
\begin{tikzcd}[column sep=.5in, row sep=.15in]
    \X
    \arrow[rr, bend left=30, "\M"]
    \arrow[dr, bend left=30, "F"]
    \arrow[dr, bend right=30, "G"']
    \arrow[rr, phantom, near end, "\Downarrow{\scriptstyle\tau}"description]
    \arrow[dr, phantom, "\Downarrow{\scriptstyle\alpha}"description]  && 
    \Cat 
    \arrow[r, phantom, "\stackrel{m}{\Rrightarrow}"description] 
    & 
    \X
    \arrow[rr, bend left, "\M"]
    \arrow[dr, bend right=10, "G"']
    \arrow[rr, phantom, "\Downarrow{\scriptstyle \sigma}"description]
    && 
    \Cat 
    \\& 
    \Y
    \arrow[ur, bend right, "\psN"'] 
    &&& 
    \Y
    \arrow[ur, bend right=10, "\psN"'] 
    &
\end{tikzcd}
\end{displaymath}
Moreover, we have 2-categories of strict (op)indexed categories and (op)indexed 1-cells that consist of strict natural transformations $\tau$ \cref{eq:indexed1cell}, i.e.\
$\ICat(\X)=[\X\op,\Cat]$ and $\OpICat_\mathrm{sp}(\X)=[\X,\Cat]$ the usual functor 2-categories.

Notice that these categories also form fibrations over $\Cat$, this time essentially using the family fibration also seen in \cref{sec:familyfib}. The functor $\ICat \to \Cat$ sends an indexed category to its domain and an indexed 1-cell to its first component. It is a split fibration, with fibres $\ICat(\X)$ and cartesian liftings pre-composition with functors. In fact, it is also a 2-fibration as explained in \cite[2.3.2]{2Fibs}.

\section{The Grothendieck Construction}

In the first volume of the \emph{S\'eminaire de G\'eom\'etrie Alg\'ebrique du Bois Marie}  \cite{SGAI}, Grothendieck introduced a construction for a fibration $P_\M \maps \inta \M \to \X$ from a given indexed category $\M \maps \X\op \to \Cat$ as follows. If $\delta$ and $\gamma$ are the structure pseudonatural transformations of the pseudofunctor $\M$,
the total category $\inta \M$ has
\begin{itemize}
    \item objects $(x,a)$ with $x \in \X$ and $a \in \M x$;
    \item morphisms $(f,k) \maps (x,a) \to (y,b)$ with $f \maps x \to y$ a morphism in $\X$, and $k \maps a \to (\M f)(b)$ a morphism in $\M x$;
    \item composition $(g, \ell) \circ (f, k) \maps (x,a) \to (y,b) \to (z,c)$ is given by $g \circ f \maps a \to b \to c$ in $\X$ and 
    \begin{equation}\label{eq:comp_intM}
     a\xrightarrow{k}(\M f)(b)\xrightarrow{(\M g)(\ell)}(\M g\circ\M f)(c)\xrightarrow{(\delta_{f,g})_c}\M(g\circ f)(c)\quad\textrm{in }\M x;
    \end{equation}
    \item unit $1_{(x,a)} \maps (x,a) \to (x,a)$ is given by $1_x \maps x \to x$ in $\X$ and \[a=1_{\M x}a\xrightarrow{(\gamma_x)_a}(\M 1_x)(a)\quad\textrm{in }\M x.\]
\end{itemize}

The fibration $P_\M \maps \inta \M \to \X$ is given by $(x,a) \mapsto x$ on objects and $(f,k) \mapsto f$ on morphisms, and the cartesian lifting of 
any $(y,b)$ in $\inta \M$ along $f \maps x \to y$ in $\X$ is precisely $(f,1_{(\M f)b})$. Its fibres are precisely $\M x$ and the reindexing functors between them are $\M f$.

In the other direction, given a (cloven) fibration $P \maps \A \to \X$, we can define an indexed category $\M_P \maps \X\op \to \Cat$ that sends each object $x$ of $\X$ to its fibre category $\A_x$, and each morphism $f \maps x \to y$ to the corresponding reindexing functor $f^* \maps \A_y \to \A_x$ as in \cref{reindexing}. The isomorphisms of cartesian liftings $f^* \circ g^* \cong (g \circ f)^*$ and $1_{\A_x} \cong 1_x^*$  render this assignment pseudofunctorial.

Details of the above, as well as the correspondence between 1-cells and 2-cells, can be found in the provided references. Briefly, given a pseudonatural transformation $\tau \maps \M \to \psN\circ F\op$ \cref{eq:indexed1cell} with components $\tau_x \maps \M x \to \psN Fx$, define a functor $P_\tau \maps \inta\M \to \inta\psN$ mapping $(x\in\X,a\in\M x)$ to the pair $(Fx\in\Y,\tau_x(a)\in\psN Fx)$ and accordingly for arrows. This makes the square
\begin{equation}
\label{eq:inducedfibred1cell}
\begin{tikzcd}
    \inta\M
    \arrow[r,"P_\tau"]
    \arrow[d,"P_\M"'] 
    & 
    \inta\psN
    \arrow[d,"P_\psN"] 
    \\
    \X\arrow[r,"F"'] 
    & 
    \Y
\end{tikzcd}
\end{equation}
commute, and moreover $P_\tau$ preserves cartesian liftings due to pseudonaturality of $\tau$. Moreover, given an indexed 2-cell $(\alpha,m) \maps (F,\tau)\Rightarrow(G,\sigma)$ as in \cref{eq:indexed2cell}, we can form a fibred 2-cell
\begin{equation}\label{eq:inducedfibred2cell}
\begin{tikzcd}[column sep=.8in,row sep=.6in]
    \inta\M
    \arrow[r,bend left,"P_\tau"]
    \arrow[r,bend right,"P_\sigma"']
    \arrow[r,phantom,"\Downarrow{\scriptstyle P_m}"description]
    \arrow[d,"P_\M"'] 
    & 
    \inta\psN
    \arrow[d,"P_\psN"] 
    \\
    \X
    \arrow[r,bend left,"F"]
    \arrow[r,bend right,"G"']
    \arrow[r,phantom,"\Downarrow{\scriptstyle\alpha}"description] 
    & 
    \Y
\end{tikzcd}
\end{equation}
where $\alpha \maps F\Rightarrow G$ is piece of the given structure, whereas $P_m$ is given by components 
\[
    (P_m)_{(x,a)} \maps P_\tau(x,a)=(Fx,\tau_xa) \to P_\sigma(x,a)=(Gx,\sigma_xa)\quad\textrm{in } \inta \psN
\]
explicitly formed by $\alpha_x \maps Fx \to Gx$ in $\Y$ and $(m_x)_a \maps \tau_xa \to (\psN\alpha_x)\sigma_xa$ in $\psN Fx$.

The following theorem summarizes these standard results.
\begin{thm}\label{thm:Grothendieck}
    \leavevmode
    \begin{enumerate}
        \item Every fibration $P \maps \A \to \X$ gives rise to a pseudofunctor $\M_P \maps \X\op { \to }\Cat$.
        \item Every indexed category $\M \maps \X\op \to \Cat$ gives rise to  a fibration $P_\M \maps \inta\M \to \X$.
        \item The above correspondences yield an equivalence of 2-categories 
        \begin{displaymath}
            \ICat(\X) \simeq \Fib(\X)
        \end{displaymath}
        so that $\M_{P_\M} \simeq \M$ and $P_{\M_P} \simeq P$.
        \item The above 2-equivalence extends to one between 2-categories of arbitrary-base fibrations and arbitrary-domain indexed categories
        \begin{equation}\label{eq:Gr_equiv}
        \ICat\simeq\Fib    
        \end{equation}
    \end{enumerate}
\end{thm}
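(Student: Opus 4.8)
The plan is to treat this as the classical Grothendieck correspondence and supply the four standard verifications; since the coherence arguments are well documented, I would cite \cite{SGAI,Jacobs,Vistoli} and only sketch the constructions together with the shape of each check.

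For (1), fix a cleavage for $P \maps \A \to \X$. Send $x \mapsto \A_x$ and $f \maps x \to y$ to the reindexing functor $f^* \maps \A_y \to \A_x$ of \cref{reindexing}, defined on an object $b$ via the chosen cartesian lift $\Cart(f,b) \maps f^*b \to b$ and on vertical morphisms by the cartesian universal property. The comparison isomorphisms $(\M_P g) \circ (\M_P f) \xrightarrow{\sim} \M_P(g \circ f)$ and $1_{\A_x} \xrightarrow{\sim} \M_P(1_x)$ are the unique vertical isomorphisms produced by that universal property, and their coherence (the associativity pentagon and the unit triangles) is forced by uniqueness of factorizations through cartesian morphisms; this yields the pseudofunctor $\M_P$. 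For (2), build $\inta\M$ exactly as recalled just before the statement: objects $(x,a)$, morphisms $(f,k)$ with $k \maps a \to (\M f)b$, composition and units twisted by the structure isomorphisms $\delta,\gamma$ of $\M$. Associativity and the unit laws in $\inta\M$ are precisely the coherence axioms of $\M$. Then $P_\M \maps \inta\M \to \X$ is the first projection, and $(f, 1_{(\M f)b}) \maps (x, (\M f)b) \to (y,b)$ is a cartesian lift of $f$ at $(y,b)$, as one checks directly from the definition of morphisms in $\inta\M$, so $P_\M$ is a fibration with a canonical cleavage.

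For (3), I would show the two passages are mutually quasi-inverse over a fixed base $\X$. For $\M_{P_\M} \simeq \M$, the fibre of $P_\M$ over $x$ is visibly isomorphic to $\M x$ and its reindexing functor along $f$ agrees with $\M f$ up to the canonical isomorphism, and these assemble into a pseudonatural equivalence. For $P_{\M_P} \simeq P$, define $E \maps \inta\M_P \to \A$ by $(x,a) \mapsto a$ and $(f,k) \mapsto \Cart(f,b) \circ k$ for $k \maps a \to f^*b$; this is a functor over $\X$, essentially surjective and fully faithful, hence an equivalence, and it is fibred since it sends the canonical cartesian lifts of $P_{\M_P}$ to cartesian morphisms of $P$. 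One then checks $2$-functoriality of both assignments: a fibred functor over $\X$ restricts to fibrewise functors $H_x$ assembling into an indexed functor (and conversely one reads off $P_\tau$ as in \cref{eq:inducedfibred1cell}), a fibred natural transformation restricts fibrewise, and the induced $2$-cell is $P_m$ as in \cref{eq:inducedfibred2cell}; the round trips are naturally isomorphic to the respective identities. This gives $\ICat(\X) \simeq \Fib(\X)$.

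For (4), I would extend the object-level correspondence to a fibred $1$-cell $(H,F) \maps P \to Q$: keep $F$ on the base and take the pseudonatural transformation $\tau \maps \M_P \Rightarrow \M_Q \circ F\op$ with components $\tau_x = H_x \maps \A_x \to \B_{Fx}$, whose coherence cells exist precisely because $H$ preserves cartesian liftings; in the reverse direction $(F,\tau) \mapsto (P_\tau, F)$ as described before the statement, with $P_\tau$ cartesian-lifting-preserving by pseudonaturality of $\tau$, and on $2$-cells $(\alpha,m) \mapsto (P_m,\alpha)$. The cleanest way to conclude is to invoke that, by \cref{sec:fibrations} and \cref{sec:indexedcats}, both $\Fib \to \Cat$ and $\ICat \to \Cat$ are fibrations with fibres $\Fib(\X)$ and $\ICat(\X)$, and that reindexing along $F \maps \X \to \Y$ --- pullback of fibrations on one side, precomposition with $F\op$ on the other --- is intertwined by the fibrewise equivalences of (3); a fibrewise equivalence compatible with reindexing upgrades to an equivalence of total $2$-categories, giving \cref{eq:Gr_equiv}. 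The main obstacle is bookkeeping rather than depth: one must check that the fibrewise equivalences are genuinely pseudonatural in the base --- that the canonical comparison identifying ``pull back the fibration, then index'' with ``index, then precompose with $F\op$'' is coherent --- and that the mediating $2$-cells assemble without violating the triangle and pentagon coherences. Since this is exactly the content of the standard but lengthy treatments, I would defer the details to \cite{Jacobs,Vistoli} and record only the constructions above.
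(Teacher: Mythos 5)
Your proposal is correct and follows essentially the same route as the paper, which likewise presents the cleavage-to-pseudofunctor and Grothendieck constructions, the induced 1-cells $P_\tau$ and 2-cells $P_m$, and then defers the coherence bookkeeping to the standard references; your observation for part (4) that the fibrewise equivalences over $\Cat$ assemble into a global one matches the paper's diagram combining the equivalence with the fibrations $\Fib\to\Cat$ and $\ICat\to\Cat$. The only difference is that you spell out a few more details (e.g.\ the explicit quasi-inverse $E\maps\inta\M_P\to\A$), which the paper leaves to the cited sources.
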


If we combine the above with the fact that the 2-categories $\Fib$ and $\ICat$ are fibred over $\Cat$ with fibres $\Fib(\X)$ and $\ICat(\X)$ respectively, we obtain the following $\Cat$-fibred equivalence
\begin{equation}\label{FibICat}
\begin{tikzcd}
    \ICat
    \arrow[rr, "\simeq"]
    \arrow[dr]
    &&
    \Fib
    \arrow[dl]
    \\&
    \Cat
\end{tikzcd}
\end{equation}
There is an analogous story for opindexed categories and opfibrations that results into a 2-equivalences $\OpICat(\X)\simeq\OpFib(\X)$ and $\OpICat\simeq\OpFib$, as well as for the split versions of (op)indexed and (op)fibred categories.

\section{Examples}

\subsection{Fundamental Fibration}

Let $2$ denote the category with two objects, and one non-identity morphism $\star \to \bullet$. For a category $\X$, the functor category $\X^2$ then consists of the arrows of $\X$ as objects, and commuting squares between them as the morphisms.

For any category $\X$, the \emph{codomain} or \emph{fundamental} opfibration is the usual functor from its arrow category
\[\cod \maps \X^2 \longrightarrow \X\] mapping every morphism to its codomain and every commutative square to its right-hand side leg. It uniquely corresponds to the strict opindexed category, i.e.\ functor
\begin{equation}
\begin{tikzcd}[row sep=.05in]
    \X\arrow[r] & \Cat \\
    x\arrow[r,mapsto]\arrow[dd,"f"'] &     \X/x\arrow[dd,"f_!"] \\
     \\
    y\arrow[mapsto, r] & \X/y
\end{tikzcd}
\end{equation}
that maps an object to the slice category over it and a morphism to the post-composition functor $f_!=f\circ-$ induced by it.

\subsection{Graphs}
\label{expl:grphsoverset}

Consider (directed, multi) graphs, i.e.\ presheaves on the category $G = \begin{tikzcd} V \arrow[r, shift right, swap, "t"] \arrow[r, shift left, "s"] & E\end{tikzcd}$. For a presheaf $g \maps G\op \to \Set$, the set $g_V$ is the set of vertices of the graph, the set $g_E$ is the set of edges of the graph, and the maps $g_s, g_t \maps g_E \to g_V$ assign to an edge its starting and terminating vertex respectively. Let $\Grph$ denoted the category of graphs, $\Set^{G\op}$. Sometimes it is helpful to think of a graph as a single map of the form $(g_s, g_T) \maps g_E \to g_V \times g_V$. When convenient, we will abuse notation by simply referring to this map as $g \maps g_E \to g_V^2$.

Consider the inclusion of a terminal category $1$ into $G$ which selects the object $V$. This induces a functor $\mathsf V \maps \Grph \to \Set$ by precomposing, which sends a graph $g$ to its vertex set $g_V$. As we show below, this functor is in fact a bifibration. The idea here is that if you have a function $f \maps x \to y$, you can pull a graph on $y$ back to a graph on $x$, and you can also push a graph on $x$ forward to a graph on $y$.

\begin{prop}
\label{prop:pullbackcartesian}
    A morphism $\phi\maps g \to h$ in $\Grph$ is $\mathsf V$-cartesian if and only if the square
    \[
    \begin{tikzcd}
        g_E
        \arrow[d, swap, "{(g_s, g_t)}"]
        \arrow[r, "\phi_E"]
        &
        h_E
        \arrow[d, "{(h_s, h_t)}"]
        \\
        g_V^2
        \arrow[r, swap, "\phi_V^2"]
        &
        h_V^2
    \end{tikzcd}
    \]
    is a pullback in $\Set$.
\end{prop}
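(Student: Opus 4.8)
The plan is to characterize $\mathsf{V}$-cartesian morphisms directly from the universal property of cartesian liftings, and to recognize that condition as precisely the pullback condition in the statement. First I would recall that, by definition, $\phi \maps g \to h$ lying over $f = \mathsf{V}(\phi) = \phi_V \maps g_V \to h_V$ is cartesian iff for every $g' \in \Grph$, every $\psi_V \maps g'_V \to g_V$ in $\Set$, and every $\theta \maps g' \to h$ in $\Grph$ with $\theta_V = \phi_V \circ \psi_V$, there is a unique $\chi \maps g' \to h$ (it should be $\chi \maps g' \to g$) with $\chi_V = \psi_V$ and $\phi \circ \chi = \theta$. I would then unpack what such a $\chi$ amounts to: its vertex component is forced to be $\psi_V$, so the only data to be determined is the edge component $\chi_E \maps g'_E \to g_E$, subject to the two constraints $(g_s, g_t) \circ \chi_E = \psi_V^2 \circ (g'_s, g'_t)$ (so that $\chi$ is a graph morphism) and $\phi_E \circ \chi_E = \theta_E$ (so that $\phi \circ \chi = \theta$ on edges).

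The next step is the key observation: a morphism of graphs $g' \to g$ over a fixed $\psi_V$ on vertices is the \emph{same thing} as a function $g'_E \to g_E$ making the square with $(g'_s,g'_t)$, $(g_s,g_t)$ and $\psi_V^2$ commute. Hence the existence-and-uniqueness clause above says exactly: for every set $g'_E$ equipped with a map $u \maps g'_E \to g_V^2$ (namely $u = \psi_V^2 \circ (g'_s,g'_t)$) and every map $v \maps g'_E \to h_E$ with $(h_s,h_t)\circ v = \phi_V^2 \circ u$, there is a unique $\chi_E \maps g'_E \to g_E$ with $(g_s,g_t)\circ\chi_E = u$ and $\phi_E \circ \chi_E = v$. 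Quantifying over \emph{all} sets $g'_E$ and all such pairs $(u,v)$ — which is legitimate since every set arises as the edge set of some graph on $g_V$ — this is verbatim the statement that the square
\[
\begin{tikzcd}
    g_E \arrow[d, swap, "{(g_s, g_t)}"] \arrow[r, "\phi_E"] & h_E \arrow[d, "{(h_s, h_t)}"] \\
    g_V^2 \arrow[r, swap, "\phi_V^2"] & h_V^2
\end{tikzcd}
\]
has the universal property of a pullback in $\Set$. Conversely, if that square is a pullback, the same unwinding produces the required unique $\chi$, so $\phi$ is cartesian. That gives both implications.

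I do not expect a genuine obstacle here; the only points needing mild care are bookkeeping ones. One is to make sure the correspondence ``graph morphisms over $\psi_V$ $\leftrightarrow$ commuting edge squares'' is stated cleanly, since $g_V^2$ is being used as shorthand for $g_V \times g_V$ and $\phi_V^2$ for $\phi_V \times \phi_V$, consistent with the earlier convention in \cref{expl:grphsoverset}. The other is the quantifier juggling: the cartesian condition ranges over test graphs $g'$, whereas the pullback condition ranges over test sets, and one must note that these ranges match because $g'_V$ is unconstrained (we may take $g'_V$ to be the domain of $\psi_V$, with $g'_E$ and its structure maps arbitrary). Once those are in place the proof is a direct translation, and I would present it as such rather than as a diagram chase.
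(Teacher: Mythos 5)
Your proof is correct and takes the same route as the paper, whose entire proof is the one-line remark that ``a simple manipulation shows that the universal property of $\phi$ forming a pullback square is the same as the universal property for it to be $\mathsf V$-cartesian''; you have simply written out that manipulation, including the one point that genuinely needs checking (that every pullback test cone $u \maps S \to g_V^2$ arises from some test graph, e.g.\ by taking $g'_V = g_V$, $\psi_V = 1$, $g'_E = S$).
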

\begin{proof}
    A simple manipulation shows that the universal property of $\phi$ forming a pullback square is the same as the universal property for it to be $\mathsf V$-cartesian.
\end{proof}

\begin{prop}
\label{prop:Vertexfibration}
    The functor $\mathsf V \maps \Grph \to \Set$ is a fibration.
\end{prop}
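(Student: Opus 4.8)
The plan is to explicitly construct cartesian liftings and verify that they are cartesian using \cref{prop:pullbackcartesian}. Given a function $f \maps x \to h_V$ in $\Set$ and a graph $h$ with $\mathsf V(h) = h_V$, I would define a graph $f^*h$ by pulling back the edge structure of $h$ along $f$. Concretely, set $(f^*h)_V = x$, and take $(f^*h)_E$ to be the pullback in $\Set$
\begin{equation*}
\begin{tikzcd}
    (f^*h)_E
    \arrow[d]
    \arrow[r]
    &
    h_E
    \arrow[d, "{(h_s, h_t)}"]
    \\
    x^2
    \arrow[r, swap, "f^2"]
    &
    h_V^2
\end{tikzcd}
\end{equation*}
with the left-hand vertical map serving as $((f^*h)_s, (f^*h)_t)$. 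The morphism $\phi \maps f^*h \to h$ is given by $\phi_V = f$ on vertices and the top horizontal map of the pullback on edges.

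Next I would verify that $\phi$ is a morphism of graphs (this is immediate from commutativity of the pullback square) and that it lies over $f$, i.e.\ $\mathsf V(\phi) = f$, which holds by construction. Then, since the defining square of $f^*h$ is a pullback by construction, \cref{prop:pullbackcartesian} immediately gives that $\phi$ is $\mathsf V$-cartesian. Since $x$, $f$, and $h$ were arbitrary, this shows every morphism in $\Set$ has a cartesian lifting to any graph over its codomain, which is exactly the definition of a fibration.

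This argument is essentially routine; there is no serious obstacle. The only point requiring a small amount of care is checking that the assignment is well-defined in the sense that the pullback in $\Set$ genuinely produces a presheaf on $G$ — but this is automatic, since a graph is just a pair of sets with two maps to the vertex set, and the pullback delivers exactly that data. One could alternatively phrase the whole construction as composing $h \maps G\op \to \Set$ with the restriction and right Kan extension along the inclusion $1 \hookrightarrow G$, but the hands-on pullback description is cleaner and connects directly to \cref{prop:pullbackcartesian}. A cleavage is then obtained by fixing a choice of pullback for each $(f, h)$, making $\mathsf V$ a cloven fibration, and the induced reindexing functors $f^* \maps \Grph_{h_V} \to \Grph_x$ are precisely the pullback functors just described.
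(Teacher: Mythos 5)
Your proposal is correct and follows essentially the same route as the paper's own proof: both construct the cartesian lifting of $f$ by pulling back the edge set of $h$ along $f^2 \maps x^2 \to h_V^2$ and then invoke \cref{prop:pullbackcartesian} to conclude the resulting morphism is $\mathsf V$-cartesian. Your version simply spells out the verification steps (that the lift is a graph morphism over $f$) that the paper leaves implicit.
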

\begin{proof}
    Let $f \maps x \to y$ be a function, and $g \in \Grph$ with $g_V=y$. Then we can take a pullback of the following diagram.
    \[x^2 \xrightarrow{f^2} y^2 = h_V^2 \xleftarrow{(h_s,h_t)} h_E\]
    By \cref{prop:pullbackcartesian}, this map is a cartesian lift of $f$.
\end{proof}

By the Grothendieck correspondence, there is a indexed category $\Set\op \to \Cat$.This pseudofunctor assigns to a set $X$ the category $\Grph_X$ of graphs which have vertex set $X$, and graph morphisms which fix the vertices. Given a function $f \maps X \to Y$, this pseudofunctor gives a functor $f^* \maps \Grph_Y \to \Grph_X$ which sends a graph $g$ over $Y$ to the pullback, as in the proof of \cref{prop:Vertexfibration}. Since there is also an opindexed category with the same fibres, we denote this by $\Grph^*$, referring to the action on morphisms. 

To show that $\mathsf V$ is also an opfibration, it is actually easier to construct an explicit splitting. We can derive a characterization of the cocartesian maps from there.

\begin{prop}
    The functor $\mathsf V \maps \Grph \to \Set$ is an opfibration.
\end{prop}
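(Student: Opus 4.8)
The plan is to construct an explicit cleavage witnessing that $\mathsf V$ is an opfibration, exactly mirroring the fibration argument but with pushouts in $\Set$ replaced by the far simpler operation of relabelling vertices along a function. First I would fix a function $f \maps X \to Y$ and a graph $g$ with $g_V = X$; I want to produce a cocartesian lifting of $f$ with domain $g$. The natural candidate is the graph $f_!(g)$ with vertex set $Y$, edge set $g_E$, and source/target maps given by the composites $g_E \xrightarrow{(g_s, g_t)} X^2 \xrightarrow{f^2} Y^2$. The morphism $\phi \maps g \to f_!(g)$ has identity on edges and $f$ on vertices, and visibly lies over $f$.

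Next I would verify the universal property. Given any graph $h$ with a morphism $\theta \maps g \to h$ in $\Grph$ lying over some $k \circ f \maps X \to Y \to h_V$, I need a unique $\psi \maps f_!(g) \to h$ over $k$ with $\theta = \psi \circ \phi$. On vertices $\psi_V$ must be $k$, which is forced; on edges, since $\phi_E$ is the identity, $\psi_E$ must equal $\theta_E$, and one checks this is compatible with the source and target maps precisely because $\theta$ was a graph morphism over $k \circ f$. This establishes that $\phi$ is $\mathsf V$-cocartesian, so $\mathsf V$ is an opfibration; combined with \cref{prop:Vertexfibration} it is a bifibration. I would then record, analogously to \cref{prop:pullbackcartesian}, the characterization that $\phi \maps g \to h$ is $\mathsf V$-cocartesian if and only if the square with $\phi_E$ on top and $\phi_V^2$ on the bottom is a pushout in $\Set$ (equivalently, $\phi_E$ is a bijection), since this is the dual statement and it is what gets used to identify $f_!$ with post-composition / disjoint-union behavior.

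Finally, via the Grothendieck correspondence this gives an opindexed category $\Grph_* \maps \Set \to \Cat$ sending $X$ to $\Grph_X$ and $f \maps X \to Y$ to the relabelling functor $f_! \maps \Grph_X \to \Grph_Y$ described above; this is the $\Grph_*$ already referenced in the Graphs section of \cref{ch:MonGroth}. I do not expect any genuine obstacle here: the only mild subtlety is bookkeeping the source/target compatibility in the uniqueness argument, and making sure the splitting is literally functorial (so that $\mathsf V$ is a \emph{split} opfibration), which follows because $(g f)^2 = g^2 \circ f^2$ on the nose. The proof is essentially a one-paragraph dualization of the fibration case, with pullbacks traded for the trivial pushouts along functions of sets.
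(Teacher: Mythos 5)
Your construction of the cocartesian lift is exactly the paper's: take edge set $g_E$ unchanged, postcompose $(g_s,g_t)$ with $f^2$ to relabel the vertices, and observe that the universal property forces $\psi_E = \theta_E$ and $\psi_V = k$. That part is correct and essentially identical to the argument in the text, including the resulting splitting and the induced opindexed category $\Grph_* \maps \Set \to \Cat$.

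One side claim you make is wrong, though: cocartesianness of $\phi \maps g \to h$ is \emph{not} equivalent to the square with $\phi_E$ on top and $\phi_V^2$ on the bottom being a pushout in $\Set$, and in particular ``pushout'' is not equivalent to ``$\phi_E$ is a bijection.'' Take $g$ to be the edgeless graph on two vertices, $h$ the edgeless graph on one vertex, and $\phi$ the collapse map. This is the canonical cocartesian lift (identity on the empty edge set), but the pushout of $h_E \leftarrow g_E \to g_V^2$ is $g_V^2$ itself, a four-element set, whereas $h_V^2$ is a singleton; the square commutes but is not a pushout. The correct characterization, as recorded in the paper's corollary, is simply that $\phi_E$ is a bijection. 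The moral is that the fibration and opfibration structures on $\mathsf V$ are not formally dual to one another: cartesian lifts are computed by a genuine pullback, while cocartesian lifts are computed by mere relabelling, so ``dualize the pullback square'' is not the right slogan even though the rest of your dualization goes through.
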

\begin{proof}
    Let $g \in \Grph$, $y \in \Set$, and $f \maps g_V \to y$ a function. Then we can obtain a graph with vertex set $y$ by taking the following composite.
    \[g_E \xrightarrow{g} g_V^2 = g_V^2 \xrightarrow{f^2} y^2\]
    We claim the induced map of graphs displayed below is in fact a cocartesian lift of $f$. 
    \[
    \begin{tikzcd}
        g_E
        \arrow[r, "1"]
        \arrow[d, swap, "g"]
        &
        g_E
        \arrow[d, "f^2 \circ g"]
        \\
        g_V^2
        \arrow[r, swap, "f^2"]
        &
        y^2
    \end{tikzcd}
    \]
    Let $h$ be a graph, $\phi \maps g \to h$ a map of graphs, and $\phi \maps y \to h_V$.
    \[
    \begin{tikzcd}[column sep = huge]
        &&
        h_E
        \arrow[d, "h"]
        \\&&
        h_V^2
        \arrow[ddll, bend right = 15, leftarrow, swap, "\phi_V^2"]
        \\
        g_E
        \arrow[uurr, bend left = 15, "\phi_E"]
        \arrow[r, -, white, line width = 5]
        \arrow[r, "1", pos = 0.4]
        \arrow[d, swap, "g"]
        &
        g_E
        \arrow[d, swap, "f^2 \circ g"]
        \arrow[uur, -, white, line width = 5]
        \arrow[uur, dashed, ""]
        \\
        g_V^2
        \arrow[r, swap, "f^2"]
        &
        y^2
        \arrow[uur, swap, "1"]
    \end{tikzcd}
    \]
    The only map which may take the place of the dashed arrow is $\phi_E$. 
\end{proof}

\begin{cor}
\label{prop:Vertexopfibration}
    A morphism $\phi\maps g \to h$ in $\Grph$ is $\mathsf V$-cocartesian if and only if it is bijective on edges.
\end{cor}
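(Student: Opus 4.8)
The plan is to bootstrap off the explicit cocartesian liftings produced in the proof that $\mathsf V \maps \Grph \to \Set$ is an opfibration, together with two standard facts about cocartesian morphisms in any opfibration: that they are closed under post-composition with vertical isomorphisms, and that any two cocartesian liftings of the same base morphism with the same domain differ by a (unique) vertical isomorphism.

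First I would fix notation. Given a morphism $\phi \maps g \to h$ in $\Grph$, write $f := \mathsf V(\phi) = \phi_V \maps g_V \to h_V$, and recall that the canonical cocartesian lift of $f$ at $g$ constructed above is the morphism $\chi_f \maps g \to f_!(g)$ which is the identity on edge sets, where $f_!(g)$ is the graph with edge set $g_E$ and structure map $f^2 \circ g \maps g_E \to h_V^2$. In particular $\chi_f$ is a bijection on edges.

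For the ``only if'' direction, suppose $\phi$ is $\mathsf V$-cocartesian. Since $\chi_f$ is also a cocartesian lift of $f$ at $g$, there is a vertical isomorphism $\psi \maps f_!(g) \simrightarrow h$ with $\phi = \psi \circ \chi_f$. A vertical isomorphism in $\Grph$ is in particular an isomorphism of graphs, hence bijective on edges; composing with $\chi_f$, which is bijective on edges, shows $\phi$ is bijective on edges. For the ``if'' direction, suppose $\phi$ is bijective on edges. The commuting square defining $\phi$ as a morphism of graphs reads $h \circ \phi_E = f^2 \circ g$, which, since $\phi_E$ is a bijection, exhibits $\phi_E$ as a morphism of graphs over $h_V$ from $f_!(g)$ to $h$; being bijective on edges and the identity on vertices, it is a vertical isomorphism. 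One checks directly that $\phi = \phi_E \circ \chi_f$ (on edges this is $\phi_E \circ 1_{g_E} = \phi_E$, on vertices $1_{h_V} \circ f = \phi_V$), so $\phi$ is a cocartesian morphism followed by a vertical isomorphism, hence cocartesian.

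I do not expect a genuine obstacle: the only mildly delicate point is tracking which commuting square yields the vertical isomorphism in each direction, and checking the two factorizations hold on the nose. If one prefers to sidestep the standard lemmas, an equally short route is to verify the universal property of a cocartesian morphism for $\phi$ directly, repeating the argument in the proof of the previous proposition with ``identity on edges'' replaced by the bijection $\phi_E$, and conversely to read off bijectivity by comparing $\phi$ against $\chi_f$.
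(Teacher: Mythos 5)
Your proof is correct and follows the route the paper implicitly intends: the corollary is stated without proof, to be read off from the explicit cocartesian lifts (identity on edges) constructed in the opfibration proposition, exactly as you do via the standard facts that cocartesian lifts with common domain differ by a unique vertical isomorphism and that post-composing a cocartesian morphism with a vertical isomorphism yields a cocartesian morphism. Both factorizations you check, and the observation that an edge-bijective, vertex-identity morphism of graphs is a vertical isomorphism, are verified correctly.
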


By the Grothendieck correspondence, there is a corresponding opindexed category $\Grph_* \maps \Set \to \Cat$, again referring to the action on morphisms. This must have the same fibres $\Grph_X$ as the indexed category $\Grph^*$ above. Given a function $f \maps X \to Y$, this pseudofunctor gives a functor $f_* \maps \Grph_X \to \Grph_Y$ which sends a graph $g$ over $X$ to the composite, as in the proof of \cref{prop:Vertexopfibration}.

\begin{cor}
    The functor $\mathsf V \maps \Grph \to \Set$ is a bifibration.
\end{cor}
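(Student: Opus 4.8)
The plan is to observe that this corollary is an immediate consequence of the two preceding propositions together with the definition of a bifibration. First I would recall from \cref{sec:fibrations} that a bifibration is by definition a functor that is simultaneously a fibration and an opfibration, so it suffices to invoke results already in hand.

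Next I would cite \cref{prop:Vertexfibration}, which establishes that $\mathsf V \maps \Grph \to \Set$ is a fibration: the cartesian lifting of a graph $g$ with $g_V = y$ along a function $f \maps x \to y$ is the pullback of $(g_s,g_t) \maps g_E \to y^2$ along $f^2 \maps x^2 \to y^2$, this being cartesian by the pullback characterization of \cref{prop:pullbackcartesian}. Then I would cite the proposition stated immediately before this corollary, which shows $\mathsf V$ is an opfibration: the cocartesian lifting of a graph $g$ along $f \maps g_V \to y$ is obtained by postcomposing the structure map $g \maps g_E \to g_V^2$ with $f^2 \maps g_V^2 \to y^2$, leaving the edge set unchanged (whence the cocartesian morphisms are exactly the ones bijective on edges, as recorded in \cref{prop:Vertexopfibration}). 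Putting these two facts side by side yields that $\mathsf V$ is both a fibration and an opfibration, hence a bifibration.

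Since all of the substantive verification has already been carried out in those earlier proofs, there is no genuine obstacle: the argument is a one-line citation. The only point worth a sentence of care is confirming that the cleavage and cocleavage used are precisely the explicit ones constructed above, which is immediate. Optionally I would add a closing remark that, as for any bifibration, the reindexing functors assemble into adjunctions $f_! \dashv f^*$ for each function $f \maps X \to Y$, matching the description of $\Grph_*$ and $\Grph^*$ given after the proof; this is not needed for the statement but ties the fibred and opfibred pictures together.
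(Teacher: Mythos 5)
Your proposal is correct and matches the paper exactly: the corollary is stated there without a separate proof precisely because it follows immediately from \cref{prop:Vertexfibration} and the opfibration proposition via the definition of a bifibration. Your added remarks about the explicit cleavages and the adjunction $f_! \dashv f^*$ are accurate but, as you note, not needed.
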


\subsection{Ring Modules}

For a ring $R$, denote by $\Mod_R$ the category of $R$-modules and their homomorphisms. Given a ring homomorphism $f \maps R \to S$, and an $S$-module $N$, we can give the underlying abelian group of $N$ the structure of an $R$-module, denoted $f^\ast N$, by the formula
\[r.x := f(r).x\]
where $r \in R$ and $x \in N$.
This pullback construction is functorial:
\[f^\ast \maps \Mod_S \to \Mod_R\]
and preserves ring homomorphism composition.
\[(f\circ g)^\ast = g^\ast f^\ast \]
Indeed, the above defines a functor $\Mod_- \maps \Ring\op \to \Cat$, a (strict) indexed category. Note, one could choose to be persnickety about size here, but we do not. We can then apply the Grothendieck construction, resulting in a category $\Mod := \int \Mod_-$ where 
\begin{itemize}
    \item an object is a pair $(R \in \Ring, M \in \Mod_R)$
    \item a morphism is a pair $(f, \phi)$ where $f \maps R \to S$ is a ring homomorphism, and $\phi \maps M \to f^\ast N$ is an $S$-module homomorphism.
\end{itemize}
The category $\Mod$ admits a fibration $\Mod \to \Ring$ which forgets the module in a ring-module pair.

\setcounter{chapter}{3}
\chapter{Species and Operads}
\label{app:speciesandoperads}

\section{Combinatorial Species}
\label{sec:species}

Combinatorial species were introduced by Joyal \cite{especies}. A standard reference for the combinatorial perspective is \cite{BLL}. In the previous section, we noted that the category $\Set$ can be given both a cartesian and cocartesian monoidal structure. Moreover, these satisfy a distributive law reminiscent of rings.
\[A \times (B + C) \cong A \times B + A \times C\]
Consider the subcategory $\FinBij$ consisting of finite sets and bijections. This subcategory is closed under both finite sums and products, and thus inherits both monoidal structures. However, $\FinBij$ lacks the maps that would be the projections and inclusions necessary for these structures to be cartesian or cocartesian themselves. By abuse of notation, we will continue to denote them by $+$ and $\times$. 

\begin{defn}
    A \define{combinatorial species} or simply \define{species} is a functor $F \maps \FinBij \to \Set$. The category of species is the functor category $\Set^\FinBij$.
\end{defn}

There are several operations which have been defined on species. These operations make up the building blocks of a calculus for counting families of combinatorial gadgets. 

\begin{defn}
    Being a presheaf category, $\Set^\FinBij$ has colimits given pointwise, thus giving it a cocartesian structure. We refer to this operation simply as \define{addition}. On objects, this operation is given by $(F+G)(U) = F(U) + G(U)$.
\end{defn}

\begin{defn}
    If we apply Day convolution as in \cref{Dayconvolution} to the $+$ monoidal structure on $\FinBij$, we get the operation which we refer to as \define{multiplication} of species.
    On objects, this operation is given by the following formula.
    \[(F \cdot G)(U) = \sum_{V \subseteq U} F(V) \times G(U \setminus V)\]
\end{defn}

\begin{defn}
    Being a presheaf category, $\Set^\FinBij$ has products which are given pointwise, thus giving a cartesian monoidal structure. This is called the \define{Hadamard product}. On objects, it is given by $(F \times G) (U) = F(U) \times G(U)$. This tends to be less useful than multiplication, but it certainly has its purposes.
\end{defn}

\begin{defn}
    We define the \define{Dirichlet product} on species to be the Day convolution (as in \cref{Dayconvolution}) of the $\times$ monoidal structure on $\FinBij$.
\end{defn}

To define differentiation of species, we will need to make use of the \define{shift operator}, denoted $+1$, on $\FinBij$, which is defined as the composite
\[
\begin{tikzcd}[column sep = huge]
    \FinBij
    \arrow[r, dashed, "+1"]
    \arrow[d, swap, "\sim"]
    &
    \FinBij
    \\
    \FinBij \times 1
    \arrow[r, swap, "1_{\FinBij} \times \Delta1"]
    &
    \FinBij \times \FinBij
    \arrow[u, swap, "+"]
\end{tikzcd}
\]
where the map $\Delta1 \maps 1 \to \FinBij$ is the monoidal unit with respect to $\times$.

\begin{defn}
    The \define{differentiation operator} on species is given by $D = +1^\ast \maps \Set^\FinBij \to \Set^\FinBij$. In other words, for a given species $F$, the \define{derivative} of $F$ is given by $F' = F \circ +1$, or $F'(U) = F(U+1)$ on objects. The motivation for calling this operation differentiation is that it actually corresponds to taking the formal derivative of its generation series.
\end{defn}

\begin{defn}
\label{def:substitution}
    The \define{composition product} or \define{substitution product} is given by the following formula. 
    \[(F \circ G)(U) = \sum_{\pi \text{ partition of } U} \left( F(\pi) \times \prod_{p \in \pi} G(p) \right)\]
    The species which acts as unit for this product is the singleton indicator functor, i.e.\ $I_\circ (U)$ is a singleton is $U$ is, and is empty otherwise. This monoidal structure is not symmetric.
\end{defn}

\section{Operads}
\label{app:operads}

An operad is a generalization of category which incorporates the notion of an arrow having multiple inputs. A category is an \emph{arrow-like} compositional system, consisting of a collection of directed arrows, a collection of labels for the endpoints called objects, and a rule for turning a path of such arrows into a single arrow which is associative and unital. An operad is also a compositional system, but now \emph{tree-like}. An operad consists of a collection of directed ``short'' trees
\begin{equation*}
\begin{tikzpicture}
\begin{pgfonlayer}{nodelayer}
	\node [style=none] (3) at (0, 0.7) {};
	\node [style=none] () at (0.5, 0.7) {\dots};
    \node [style=none] (1) at (1, 0.7) {};
	\node [style=none] (2) at (0, 0.5) {};
	\node [style=none] (6) at (0.33, 0.5) {};
	\node [style=none] (7) at (0.66, 0.5) {};
	\node [style=none] (4) at (1, 0.5) {};
	\node [style=downtri] (mu) at (0.5, 0) {};
	\node [style=none] (5) at (0.5, -0.7) {};
\end{pgfonlayer}
\begin{pgfonlayer}{edgelayer}
	\draw (1.center) to (4.center);
	\draw (3.center) to (2.center);
	\draw [bend right] (2.center) to (mu.center);
	\draw [bend left] (4.center) to (mu.center);
	\draw (6.center) to (mu.center);
	\draw (7.center) to (mu.center);
	\draw (mu.center) to (5.center);
\end{pgfonlayer}
\end{tikzpicture}
\end{equation*}
a collection of labels for the endpoints called objects, and a rule for turning a big tree of short trees
\begin{equation*}
\begin{tikzpicture}
\begin{pgfonlayer}{nodelayer}
	\node [style=downtri] (a) at (0.5, 2.5) {};
	\node [style=downtri] (b) at (1, 1.5) {};
	\node [style=downtri] (c) at (0, 0.5) {};
	\node [style=none] (1) at (0.25, 3) {};
	\node [style=none] (2) at (0.5, 3) {};
	\node [style=none] (3) at (0.75, 3) {};
	\node [style=none] (5) at (1.25, 2) {};
	\node [style=none] (6) at (-0.5, 1) {};
	\node [style=none] (7) at (-0.125, 1) {};
	\node [style=none] (8) at (0.125, 1) {};
	\node [style=none] (9) at (0, -0.3) {};
\end{pgfonlayer}
\begin{pgfonlayer}{edgelayer}
	\draw [bend right] (1.center) to (a.center);
	\draw (2.center) to (a.center);
	\draw [bend left] (3.center) to (a.center);
	\draw [bend right] (a.center) to (b.center);
	\draw [bend left] (5.center) to (b.center);
	\draw [bend left] (b.center) to (c.center);
	\draw [bend right] (6.center) to (c.center);
	\draw (7.center) to (c.center);
	\draw (8.center) to (c.center);
	\draw (c.center) to (9.center);
\end{pgfonlayer}
\end{tikzpicture}
\end{equation*}
into a single short tree
\begin{equation*}
\begin{tikzpicture}
\begin{pgfonlayer}{nodelayer}
	\node [style=downtri] (a) at (0, 0.5) {};
	\node [style=none] (1) at (-0.375, 1) {};
	\node [style=none] (2) at (-0.25, 1) {};
	\node [style=none] (3) at (-0.125, 1) {};
	\node [style=none] (4) at (0, 1) {};
	\node [style=none] (5) at (0.125, 1) {};
	\node [style=none] (6) at (0.25, 1) {};
	\node [style=none] (7) at (0.375, 1) {};
	\node [style=none] (8) at (0, -0.3) {};
\end{pgfonlayer}
\begin{pgfonlayer}{edgelayer}
	\draw [bend right] (1.center) to (a.center);
	\draw [bend right = 15] (2.center) to (a.center);
	\draw (3.center) to (a.center);
	\draw (4.center) to (a.center);
	\draw (5.center) to (a.center);
	\draw [bend left = 15] (6.center) to (a.center);
	\draw [bend left] (7.center) to (a.center);
	\draw (8.center) to (a.center);
\end{pgfonlayer}
\end{tikzpicture}
\end{equation*}
which is associative and unital. There are several good references for the theory of operads \cite{MarklShniderStasheff, Yau, SetOperads, HDAIII, KellyonMay, HigherOperads}. Here, we follow the treatment given by Yau in \cite{HomotopQFT}. 

\subsection{Definition of Operad}

Let $C$ be a non-empty set, whose elements we call \define{colors}. Recall that we denote the free symmetric monoidal category on $C$ by $\S(C)$. Below, we define operads to be monoids in the presheaf category $\Set^{\S(C) \times C}$ with respect to a certain monoidal structure. First we must define this monoidal structure, which is somewhat involved. 

For this section, we denote objects of $\S(C)$ by either $\underline c$ or $(c_1, \dots, c_n)$ depending on context. We denote the monoidal structure on $\S(C)$ by $+$. For an object $X \in \Set^{\S(C)\op \times C}$, we denote the set assigned to $(\underline c, d) \in \S(C)\op \times C$ by $X(\underline c; d)$.

\begin{defn}
    Let $X, Y \in \Set^{\S(C) \times C}$. For each $\underline c \in \S(C)$, define $Y^{\underline c}$ by the following coend formula.
    \[Y^{\underline c} (\underline b) = \int^{\{\underline a_j\} \in \prod_{j=1}^m \S(C)\op} \S(C)\op (\underline a_1 + \dots + \underline a_m, \underline b) \times \left[\prod_{j=1}^m Y(\underline a_j; c_j) \right]\]
    The \define{$C$-colored circle product} of $X$ and $Y$ is given by
    \[X \circ Y(\underline b; d) = \int^{\underline c \in \S(C)} X(\underline c; d) \otimes Y^{\underline c}(\underline b)\]
    Define the unit object $I$ as follows.
    \[I(\underline c; d) = \begin{cases} 
        1 & \text{if } \underline c = d\\
        \emptyset & \text{otherwise}
    \end{cases}\]
\end{defn}

This reduces to the composition monoidal product of species in the case where $C \cong 1$. 

\begin{prop}
    $(\Set^{\S(C) \times C}, \circ, I)$ is a monoidal category.
\end{prop}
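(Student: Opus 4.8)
The statement to prove is that $(\Set^{\S(C) \times C}, \circ, I)$ is a monoidal category, where $\circ$ is the $C$-colored circle product and $I$ is the unit object just defined. The plan is to verify the three pieces of data required by \cref{def:monoidalcategory}: the associator, the left and right unitors, and then check that the pentagon and triangle identities hold.

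First I would establish the functoriality of $\circ$, i.e.\ that $\circ \maps \Set^{\S(C)\times C} \times \Set^{\S(C)\times C} \to \Set^{\S(C)\times C}$ is genuinely a functor. This follows from the fact that coends are functorial in their inputs and that the construction $Y \mapsto Y^{\underline c}$ is itself functorial; since everything is built from products, coproducts, hom-sets, and coends in $\Set$, which all preserve the relevant structure, this is essentially bookkeeping. The key intermediate observation to isolate and prove is an associativity-style identity for the iterated construction $(-)^{\underline c}$: one shows that $(Z^{\underline b})^{\underline c}$ is naturally isomorphic to a single coend over tuples indexed by the concatenated sequences, using the fact that $\S(C)$ is the free symmetric monoidal category on $C$ (\cref{prop:free}), so that a morphism out of $\underline a_1 + \cdots + \underline a_m$ decomposes coherently. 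This is the combinatorial heart of the proof.

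Next I would construct the associator $\alpha_{X,Y,Z} \maps (X \circ Y) \circ Z \xrightarrow{\sim} X \circ (Y \circ Z)$. Both sides, after expanding the coend definitions and applying the Fubini theorem for coends (coends commute with coends) together with the co-Yoneda lemma to absorb the representable $\S(C)\op$-hom factors, reduce to a common ``normal form'': a coend indexed by a single color-sequence $\underline c$ for the $X$-factor together with a family of sequences feeding each $c_j$, and then further families feeding those. The associator is then the canonical comparison isomorphism between the two bracketings of this iterated colimit. Similarly the unitors $\lambda_X \maps I \circ X \xrightarrow{\sim} X$ and $\rho_X \maps X \circ I \xrightarrow{\sim} X$ come from the co-Yoneda lemma: $I$ is concentrated on length-one sequences with singleton values, so $I \circ X$ collapses the outer coend to recover $X$, and $X \circ I$ collapses each inner factor $Y^{\underline c}$ down to the identity sequence.

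Finally I would verify the pentagon (\cref{monoidalpentagon}) and triangle (\cref{leftorrightor}) coherence diagrams. Since all the structure isomorphisms are instances of canonical comparison maps between multiple iterated coends built from the (strict) monoidal structure $+$ on $\S(C)$, both diagrams commute by the uniqueness clause in the relevant universal properties---any two maps in sight agree after composing with the universal cowedge inclusions, because they are induced by the same underlying reassociation of concatenated color-sequences in the strict symmetric monoidal category $\S(C)$. The main obstacle I anticipate is not any single step but the sheer density of the coend manipulations: correctly tracking the Fubini interchanges and the variance of the $\S(C)\op$-indexing in the definition of $Y^{\underline c}$, and making sure the co-Yoneda reductions are applied to the representables in the right variables. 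A clean way to sidestep much of this bookkeeping is to observe that $\circ$ is the restriction to $\Set^{\S(C)\times C}$ of composition of ``colored analytic'' endofunctors, so that the monoidal structure is inherited from composition of endofunctors on a slice of presheaf categories---but since the excerpt follows Yau's explicit treatment, I would present the direct coend verification and merely remark on this alternative.
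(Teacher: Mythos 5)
The paper offers no proof of this proposition---it is stated bare, with the surrounding development deferring to Yau's treatment---so there is no in-text argument to measure yours against; I can only assess your outline on its own terms. The strategy is the standard one and is sound: functoriality of $\circ$, structure isomorphisms via Fubini for coends and co-Yoneda, coherence via uniqueness of canonical comparison maps out of iterated coends, and unitors collapsing because $I$ is supported on length-one sequences. The one step you underplay, and slightly misstate, is the interchange lemma. After applying Fubini to $(X\circ Y)\circ Z$, the isomorphism you actually need is
\[
(Y \circ Z)^{\underline c}(\underline b) \;\cong\; \int^{\underline e \in \S(C)} Y^{\underline c}(\underline e) \times Z^{\underline e}(\underline b),
\]
i.e.\ compatibility of $(-)^{\underline c}$ with $\circ$; your expression $(Z^{\underline b})^{\underline c}$ does not typecheck, since $Z^{\underline b}$ is a mere functor on $\S(C)\op$ with no output color, so the operation $(-)^{\underline c}$ cannot be applied to it. Proving the displayed isomorphism requires decomposing $Z^{\underline e_1 + \cdots + \underline e_m}(\underline b)$ as a coend over families of input sequences grouped into blocks matching the $\underline e_j$, and this is exactly where the symmetric-group bookkeeping lives: a morphism of $\S(C)$ out of a concatenation $\underline a_1 + \cdots + \underline a_m$ need not respect the blocks, and one must check that the hom-factor distributes over the grouping up to the identifications the coend already imposes. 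Your appeal to $\S(C)$ being the free symmetric monoidal category is the right ingredient for that check, and with the lemma correctly stated the remainder of your argument (unitors, pentagon, triangle) goes through as you describe.
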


\begin{defn}
\label{def:operad}
    Let $C$ be a set. Define the category of operads by \[\Opd_C = \Mon(\Set^{\S(C) \times C}, \circ, I).\] We refer to an object of $\Opd_C$ as a \define{$C$-colored operad}, and a morphism as a \define{color-fixing $C$-operad functor}. Let $f \maps C \to D$ be a function. Let $f(\underline c)$ denote $(f(c_1), \dots, f(c_n))$ for $\underline c \in \S(C)$.
    For a $D$-operad $P$, we can pullback along $f$ to get a $C$-operad given by $f^*P(\underline c; d) = P(f(\underline c); f(d))$. For a color-fixing $D$-operad functor $\phi \maps P \to Q$, we get a color-fixing $C$-operad functor $f^*\phi \maps f^*P \to f^*Q$ which sends an operation $\theta \in f^*P(\underline c; d) = P(f(\underline c); d)$ to $\phi\theta \in Q(f(\underline c); d)$. These assignments give a functor $f^* \maps \Opd_D \to \Opd_D$, and we get an indexed category $\Opd_- \maps \Set\op \to \Cat$. Define $\Opd = \int \Opd_-$. We refer to an object of $\Opd$ as an \define{operad}, and to a morphism as an \define{operad functor}.
\end{defn}

\subsection{Operads from symmetric monoidal categories}

There is a standard method of constructing an single-colored operad from an object $x$ in a strict symmetric monoidal category $\C$. Namely, we define the set of $n$-ary operations to be $\hom_{\C}(x^{\otimes n}, x)$, and compose these operations using composition in $\C$. This gives the so-called \define{endomorphism operad} of $x$. Here we give the generalization of this idea to the multi-color case, using \emph{all} the objects of $\C$ as the objects of the operad. In what follows we let $\Ob(\C)$ be the set of objects of a small category $\C$.

\begin{prop}
\label{prop:operad_from_symmoncat}
    If $\C$ is a small strict symmetric monoidal category then there is an $\Ob(\C)$-colored operad $\Op(\C)$ for which:
    \begin{itemize}
        \item the set of operations $\Op(\C)(c_1, \dots, c_k; c)$ is defined to be $\hom_\C(c_1 \otimes \dots \otimes c_k, c)$,
        \item given operations   
        \[f\in \hom_\C(c_1 \otimes \cdots \otimes c_k; c) \]
        and 
        \[g_i \in \hom_\C(c_{ij_1} \otimes \cdots \otimes c_{ij_i},c_i) 
        \]
        for $1 \le i \le k$, their composite is defined by
        \begin{equation}
        \label{eq:composition_of_operations}
        f \circ (g_1, \dots, g_k) = f \circ (g_1 \otimes \cdots \otimes g_k) .
        \end{equation}
        \item identity operations are identity morphisms in $\C$, and
        \item the action of $S_k$ on $k$-ary operations is defined using the braiding in $\C$.
    \end{itemize}
\end{prop}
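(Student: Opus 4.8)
<br>

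The statement to prove is Proposition~\ref{prop:operad_from_symmoncat}: that the data described genuinely assembles into an $\Ob(\C)$-colored operad $\Op(\C)$.

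\textbf{Approach.} The plan is to verify directly that the proposed data satisfies the operad axioms as recalled in \cref{def:operad,def:operad}. Rather than fully unwinding the monoid-in-$(\Set^{\S(C)\times C},\circ,I)$ definition, I would work with the more elementary ``biased'' presentation of colored operads: a set of operations for each input signature and output color, a symmetric group action, an identity operation for each color, and composition maps satisfying associativity, unitality, and equivariance. The key observation that makes everything go through is that $\C$ is assumed \emph{strict} symmetric monoidal, so $\otimes$ is strictly associative and unital on both objects and morphisms, and the braiding $\sigma$ satisfies the hexagon and symmetry axioms on the nose-modulo-coherence. This is what lets formulas like \cref{eq:composition_of_operations} be unambiguous without inserting associators.

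\textbf{Key steps, in order.} First, I would check that composition is well-defined: given $f\in\hom_\C(c_1\otimes\cdots\otimes c_k,c)$ and $g_i\in\hom_\C(c_{ij_1}\otimes\cdots\otimes c_{ij_i},c_i)$, the morphism $g_1\otimes\cdots\otimes g_k$ has domain $c_{1j_1}\otimes\cdots\otimes c_{kj_k}$ (using strict associativity to flatten the iterated tensor) and codomain $c_1\otimes\cdots\otimes c_k$, so $f\circ(g_1\otimes\cdots\otimes g_k)$ has the correct type. Second, associativity of operadic composition: this reduces to the interchange law $(f'\circ f)\otimes(g'\circ g) = (f'\otimes g')\circ(f\otimes g)$ together with associativity of composition in $\C$, both of which hold since $\otimes$ is a functor and $\C$ is a category. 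Third, unitality: the identity morphisms $1_c$ serve as operadic units because $1_c\circ f = f = f\circ(1_{c_1}\otimes\cdots\otimes 1_{c_k})$, using functoriality of $\otimes$ on identities. Fourth, the $S_k$-action: a permutation $\tau\in S_k$ acts on $\Op(\C)(c_1,\dots,c_k;c)$ by precomposition with the braiding isomorphism $c_{\tau(1)}\otimes\cdots\otimes c_{\tau(k)}\to c_1\otimes\cdots\otimes c_k$ built from $\sigma$; that this is a group action follows from the symmetry and hexagon axioms (equivalently, from the functoriality of the free symmetric monoidal category's braiding, cf.\ \cref{prop:free}). Fifth, equivariance of composition with respect to these actions: this is the statement that braidings of tensor factors commute appropriately with composition, again a consequence of the braiding being a natural transformation satisfying the hexagon identities. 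Finally, I would remark that this biased data is equivalent to a monoid in $(\Set^{\S(C)\times C},\circ,I)$, so we indeed obtain an object of $\Opd_{\Ob(\C)}$, hence of $\Opd$.

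\textbf{Main obstacle.} None of the individual verifications is deep; the real work is bookkeeping. The step I expect to be most delicate is the equivariance axiom combined with associativity in full generality: one must track how a permutation of the $k$ outer inputs interacts with permutations of the inner inputs of each $g_i$, and confirm that the braiding isomorphisms rearrange exactly as the block-permutation structure of $\S(C)$ demands. The cleanest way to discharge this is to invoke the universal property of $\S(C)$ as the free strict symmetric monoidal category (\cref{prop:free}): all the needed coherence identities among iterated braidings are automatic there, so the calculations reduce to naturality squares rather than hand-drawn string-diagram manipulations. Because $\C$ is strict, I expect the proof to be short; the paper's own proof is likely to simply assert these checks, and I would do the same while pointing to strictness and the symmetric monoidal axioms as the reasons.
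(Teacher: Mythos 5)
Your proposal is correct and takes essentially the same route as the paper: the paper's proof likewise reduces associativity to functoriality of $\otimes$ (interchange) plus associativity of composition in $\C$, unitality to the unit laws and functoriality of $\otimes$ on identities, and equivariance to the laws governing the braiding, asserting these checks rather than carrying them out. Your additional remarks on typing the composite and on passing between the biased presentation and the monoid-in-$(\Set^{\S(C)\times C},\circ,I)$ definition are consistent with, and slightly more explicit than, what the paper records.
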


\begin{proof}
    The various axioms of a colored operad can be checked for $\Op(\C)$ using the corresponding laws in the definition of a strict symmetric monoidal category.  The associativity axiom for $\Op(\C)$ follows from associativity of composition and the functoriality of the tensor product in $\C$.  The left and right unit axioms for $\Op(\C)$ follow from the unit laws for composition and the functoriality of the tensor product in $\C$. The two equivariance axioms for $\Op(\C)$ follow from the laws governing the braiding in $\C$. 
\end{proof}

\begin{prop}
\label{prop:functoriality_of_operads_from_ssmcs}
    The assignment $\Op \maps \Sym\Mon\Cat_\spl \to \Opd$ defined on objects as in \cref{prop:operad_from_symmoncat} and sending any strict symmetric monoidal functor $F \maps \C \to \C'$ to the operad morphism $\Op(F) \maps \Op(\C) \to \Op(\C')$ that acts by $F$ on types and also on operations:
    \[
        \Op(F) = F \maps \hom_C(c_1 \otimes \cdots \otimes c_n, c) \to \hom_{\C'}(F(c_1) \otimes \cdots \otimes F(c_n), F(c))
    \] is a functor.
\end{prop}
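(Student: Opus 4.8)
The statement to prove is \cref{prop:functoriality_of_operads_from_ssmcs}: that $\Op$ is a functor $\Sym\Mon\Cat_\spl \to \Opd$. By \cref{prop:operad_from_symmoncat}, we already know $\Op(\C)$ is an $\Ob(\C)$-colored operad for each small strict symmetric monoidal category $\C$. What remains is to check three things: (1) for a strict symmetric monoidal functor $F\maps\C\to\C'$, the claimed data $\Op(F)$ really is an operad functor in the sense of \cref{def:operad}; (2) $\Op$ preserves composition; (3) $\Op$ preserves identities. The plan is to address these in that order, with (1) doing most of the work.

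For (1), recall from \cref{def:operad} that an operad functor consists of a function $f\maps C\to D$ on colors together with a color-fixing operad functor $\phi\maps\Op(\C)\to f^*\Op(\C')$. Here $f = \Ob(F)$ on colors, and the candidate $\phi$ sends an operation $g\in\hom_\C(c_1\otimes\cdots\otimes c_n,c)$ to $F(g)\in\hom_{\C'}(F(c_1)\otimes\cdots\otimes F(c_n),F(c))$, using that $F$ is \emph{strict} monoidal so $F(c_1\otimes\cdots\otimes c_n) = F(c_1)\otimes\cdots\otimes F(c_n)$ on the nose. I would verify that $\phi$ respects the operad structure by checking: it commutes with operadic composition, using \cref{eq:composition_of_operations} together with functoriality of $F$ on composition and the fact that $F$ strictly preserves the tensor product of morphisms (so $F(g_1\otimes\cdots\otimes g_k) = F(g_1)\otimes\cdots\otimes F(g_k)$); it sends identity operations (identity morphisms of $\C$) to identity operations; and it is equivariant with respect to the symmetric group actions, which hold because $F$ strictly preserves the braiding, and the $S_k$-actions on operations in $\Op(\C)$ and $\Op(\C')$ are defined via these braidings (\cref{prop:operad_from_symmoncat}). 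Each of these is a short diagram chase reducing the operad axiom for $\Op(F)$ to a corresponding property of the strict symmetric monoidal functor $F$.

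For (2) and (3), these are essentially immediate once the setup is in place. Given composable strict symmetric monoidal functors $F\maps\C\to\C'$ and $G\maps\C'\to\C''$, the composite operad functor $\Op(G)\circ\Op(F)$ acts as $\Ob(G)\circ\Ob(F)$ on colors and as $G\circ F$ on operations (hom-sets), since both $\Op(F)$ and $\Op(G)$ act by the underlying functor on morphisms; this agrees with $\Op(G\circ F)$ because $G\circ F$ is again strict symmetric monoidal and $\Ob(G\circ F) = \Ob(G)\circ\Ob(F)$. Similarly $\Op(1_\C)$ acts as the identity on colors and on every hom-set, hence equals $1_{\Op(\C)}$. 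One should be slightly careful that composition of operad functors in $\Opd = \int\Opd_-$ is the composition in the Grothendieck construction, so I would unwind \cref{eq:comp_intM} to confirm the color-fixing components compose as expected; this is routine given that the reindexing functors $f^*$ here are plain precomposition.

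The main obstacle, such as it is, is purely bookkeeping: keeping straight the interplay between the Grothendieck-construction packaging of $\Opd$ (colors in the base, color-fixing functors in the fibers) and the concrete description of $\Op(F)$ as ``act by $F$ everywhere.'' No substantive mathematical difficulty arises, precisely because we restrict to \emph{strict} symmetric monoidal functors: all the coherence isomorphisms that would otherwise obstruct the on-the-nose identities $F(c_1\otimes\cdots\otimes c_n) = F(c_1)\otimes\cdots\otimes F(c_n)$ and $F(g_1\otimes\cdots\otimes g_k) = F(g_1)\otimes\cdots\otimes F(g_k)$ are identities. (A lax or even strong monoidal functor would not give an operad functor on the nose, which is why the domain is $\Sym\Mon\Cat_\spl$ rather than $\Sym\Mon\Cat$.)
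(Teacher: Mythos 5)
Your proposal is correct and matches the paper's approach: the paper simply declares this "a straightforward verification," and your outline is exactly that verification carried out — reducing each operad-functor axiom to functoriality and strict preservation of tensor and braiding by $F$, then checking preservation of composition and identities. Nothing further is needed.
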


\begin{proof} 
    This is a straightforward verification.
\end{proof}

\subsection{Operad Algebras}

As a sort of monoid, operads exist to act. The elements of $\O(\underline c;d)$ for some operad $\O$ are meant to be thought of as ``abstract operations'' with $\underline c$ as the input types, and $d$ as the output type. When $\O$ acts on something, it is meant to be thought of as realizing these abstract operations as real operations on some family of sets indexed by the elements of $C$. 

\begin{defn}
    Let $C$ be a set. A \define{$C$-colored set} is a functor $C \to \Set$, where $C$ is thought of as a discrete category. This is of course the same as a function $C \to \ob\Set$. For $\underline c = (c_1, \dots, c_n) \in \S(C)$, let $X_{\underline c}$ denote the set $\prod_{j=1}^n X_{c_j}$. A \define{map of $C$-colored sets} $f \maps X \to Y$ is a natural transformation $f \maps X \To Y$. This is the same as a family of functions $\{f_c \maps X_c \to Y_c\}_{c \in C}$ with no further conditions. For $\underline c = (c_1, \dots, c_n) \in \S(C)$, let $f_{\underline c} \maps X_{\underline c} \to Y_{\underline c}$ denote the function $\prod_{j=1}^n f_{c_j} \maps \prod_{j=1}^n X_{c_j} \to \prod_{j=1}^n Y_{c_j}$.
\end{defn}

\begin{defn}
    Let $\O$ be a $C$-colored operad, with operad composition denoted by $\gamma$, and unit operation denoted by $I$. An \define{$\O$-algebra} consists of
    \begin{itemize}
        \item a $C$-colored set $X \maps C \to \Set$, also denoted $\{X_c\}_{c \in C}$
        \item for $\underline c \in \S(C)$ and $d \in C$, a map $\theta \maps \O(\underline c;d) \times X_{\underline c} \to X_d$
    \end{itemize}
    which makes the following diagrams commute for $c,d, c_j \in C$, $\underline c, \underline b_j \in \S(C)$, $\underline b = \sum^n \underline b_j$, and $\sigma \in S_n$.
    \begin{itemize}
        \item associativity:
        \[
        \begin{tikzcd}
            \O(\underline c, d) \times \prod_{j=1}^n \O(\underline b_j, c_j) \times X_{\underline b}
            \arrow[d, "\cong", "\text{permute}"']
            \arrow[r, "\gamma \times 1"]
            &
            \O(\underline b; d) \times X_{\underline b}
            \arrow[dd, "\theta"]
            \\
            \O(\underline c; d) \times \prod_{j=1}^n [\O(\underline b_j, c_j) \times X_{\underline b_j}]
            \arrow[d, swap, "1 \times \prod_j \theta"]
            \\
            \O(\underline c; d) \times X_{\underline c}
            \arrow[r, swap, "\theta"]
            &
            X_d
        \end{tikzcd}\]
        \item unity:
        \[
        \begin{tikzcd}
            &
            1 \times X_c
            \arrow[dl, swap, "I \times 1"]
            \arrow[dr, "\cong"]
            \\
            \O(c;c) \times X_c 
            \arrow[rr, swap, "\theta"]
            &&
            X_c
        \end{tikzcd}\]
        \item equivariance:
        \[
        \begin{tikzcd}
            \O(\underline c; d) \times X_{\underline c}
            \arrow[dr, swap, "\theta"]
            \arrow[rr, "\sigma \times \sigma\inv"]
            &&
            \O(\underline c \sigma; d) \times X_{\underline c \sigma}
            \arrow[dl, "\theta"]
            \\&
            X_d
        \end{tikzcd}\]
    \end{itemize}
\end{defn}

\begin{defn}
    A \define{map of $\O$-algebras} $(X, \theta) \to (Y, \xi)$ consists of a map of $C$-colored sets $\alpha \maps X \to Y$ such that the following diagram commutes.
    \[
    \begin{tikzcd}
        \O(\underline c; d) \times X_{\underline c}
        \arrow[r, "1 \times f_{\underline c}"]
        \arrow[d, swap, "\theta"]
        &
        \O(\underline c; d) \times Y_{\underline c}
        \arrow[d, "\xi"]
        \\
        X_d
        \arrow[r, swap, "f"]
        &
        Y_d
    \end{tikzcd}\]
    Let $\Alg(\O)$ denote the category of $\O$-algebras and maps of $\O$-algebras.
\end{defn}
}
\bibliographystyle{alpha}
\bibliography{references.bib}

\end{document}